\newtheorem{theorem}{Theorem}
\newtheorem{lemma}[theorem]{Lemma}
\newtheorem{proposition}[theorem]{Proposition}
\newtheorem{corollary}[theorem]{Corollary}
\newtheorem*{remark*}{Remark}
\DeclareMathOperator*{\argmax}{argmax}
\newenvironment{definition}[1][Definition:]{\begin{trivlist}
\item[\hskip \labelsep {\bfseries #1}]}{\end{trivlist}}
\newenvironment{remark}[1][Remark:]{\begin{trivlist}
\item[\hskip \labelsep {\bfseries #1}]}{\end{trivlist}}
\title{High-dimensional nonparametric density estimation via symmetry and shape constraints}
\author{Min Xu\footnote{Email address: mx76@stat.rutgers.edu} \ and Richard J. Samworth\footnote{Email address: r.samworth@statslab.cam.ac.uk} \\ Rutgers University and \\
Statistical Laboratory, University of Cambridge}
\date{\today}
\begin{document}

\maketitle

\begin{abstract}
  We tackle the problem of high-dimensional nonparametric density estimation by taking the class of log-concave densities on $\mathbb{R}^p$ and incorporating within it symmetry assumptions, which facilitate scalable estimation algorithms and can mitigate the curse of dimensionality.  Our main symmetry assumption is that the super-level sets of the density are $K$-homothetic (i.e.~scalar multiples of a convex body $K \subseteq \mathbb{R}^p$).  
  When $K$ is known, we prove that the $K$-homothetic log-concave maximum likelihood estimator based on $n$ independent observations from such a density has a worst-case risk bound with respect to, e.g., squared Hellinger loss, of $O(n^{-4/5})$, independent of $p$.  Moreover, we show that the estimator is adaptive in the sense that if the data generating density admits a special form, then a nearly parametric rate may be attained.  We also provide worst-case and adaptive risk bounds in cases where $K$ is only known up to a positive definite transformation, and where it is completely unknown and must be estimated nonparametrically.  Our estimation algorithms are fast even when $n$ and $p$ are on the order of hundreds of thousands, and we illustrate the strong finite-sample performance of our methods on simulated data.
\end{abstract}

\section{Introduction}

Density estimation emerged as one of the fundamental challenges in Statistics very soon after its inception as a field.  Up until halfway through the last century, approaches based on parametric (often Gaussian) assumptions or histograms/contingency tables were dominant \citep{Fisher1922,Fisher1925}.  However, the restrictions of these techniques has led, since the 1950s, to an enormous research effort devoted to exploring nonparametric methods, primarily based on smoothness assumptions, but also on shape constraints.  These include kernel density estimation \citep{rosenblatt1956remarks,WandJones1995}, wavelets \citep{donoho1996density} and other orthogonal series methods, splines \citep{gu1993smoothing}, as well as techniques based on monotonicity \citep{grenander1956theory}, log-concavity \citep{CSS2010} and others.  Although highly successful for low-dimensional data, these approaches all encounter two serious difficulties in moderate- or high-dimensional regimes: first, theoretical performance is limited by minimax lower bounds that characterise the `curse of dimensionality' \citep[e.g.][]{ibragimov1983estimation}; and second, computational issues may become a bottleneck, often exacerbated by the need to choose (multiple) smoothing parameters.  

In parallel to these developments, modern technology now allows the routine collection of extremely high-dimensional data sets, leading to a great demand for reliable and scalable density estimation algorithms.  To emphasise the challenge here, let $\mathcal{F}_p$ denote the class of upper semi-continuous, log-concave densities on $\mathbb{R}^p$, and let $X_1,\ldots,X_n$ be independent and identically distributed random vectors with density $f_0 \in \mathcal{F}_p$.  \citet{kim2016global} proved that for each $p \in \mathbb{N}$, there exists $c_p > 0$ such that\footnote{In fact, just prior to completion of this work, \citet{dagan2019logconcave} showed that $c_p$ may be chosen independent of $p$.} 
\[
\inf_{\tilde{f}_n} \sup_{f_0 \in \mathcal{F}_p} \mathbb{E}d_{\mathrm{H}}^2(\tilde{f}_n,f_0) \geq \left\{ \begin{array}{ll} c_1 n^{-4/5} & \mbox{if $p=1$} \\
c_p n^{-2/(p+1)} & \mbox{if $p \geq 2$}, \end{array} \right.
\]
where $d_{\mathrm{H}}^2(f,g) := \int_{\mathbb{R}^p} (f^{1/2}-g^{1/2})^2$ denotes the squared Hellinger distance between densities $f$ and $g$ on $\mathbb{R}^p$, and where the infimum is taken over all estimators $\tilde{f}_n$ of $f_0$ based on $X_1,\ldots,X_n$.  This suggests that very large sample sizes would be required for an adequate approximation to the true density, even for $p=5$.  In view of these fundamental theoretical limitations, it is natural to consider imposing additional structure on the problem, while simultaneously seeking to retain the desirable flexibility of the nonparametric paradigm.

In this paper, we propose a new method for high-dimensional, nonparametric density estimation by incorporating symmetry constraints into the shape-constrained class.  We demonstrate that this approach facilitates efficient algorithms that in some cases can even evade the curse of dimensionality in terms of its rate of convergence.  The particular type of symmetry constraint that we consider is what we call \emph{homotheticity}, where the super-level sets of the density are scalar multiples of each other.  Thus, any elliptically symmetric density, for instance, is homothetic, but the class is of course much broader than this.  We combine homotheticity with the shape constraint of log-concavity, which in particular ensures that the super-level sets are convex, compact sets, to yield a flexible yet practical class that facilitates nonparametric density estimation even in moderate or high dimensional problems.

To introduce our contributions, let $\mathcal{K}$ denote the set of compact, convex sets $K \subseteq \mathbb{R}^p$ containing $0$ as an interior point, and let $\Phi$ denote the set of upper semi-continuous, decreasing, concave functions $\phi:[0,\infty) \rightarrow [-\infty,\infty)$. In Section~\ref{Sec:Basic}, we show that we can write any homothetic log-concave density $f$ as $f(\cdot) = e^{ \phi(\| \cdot - \mu\|_K)}$ for some \emph{super-level set} $K \in \mathcal{K}$, \emph{centering vector} $\mu \in \mathbb{R}^p$ and \emph{generator} $\phi \in \Phi$, and where $\| \cdot \|_K$ denotes the Minkowski functional with respect to $K$, whose definition we recall in~\eqref{Eq:Minkowski} below. We thus write $\mathcal{F}^{\mathcal{K}}_p$ for the class of homothetic log-concave densities and, for fixed $K$ and $\mu$, write $\mathcal{F}^{K, \mu}_p$ for the subclass of $\mathcal{F}^{\mathcal{K}}_p$ consisting of homothetic log-concave densities with super-level set $K$ and centering vector $\mu$.  Writing $\mathcal{P}_p$ for the class of probability distributions $P$ on $\mathbb{R}^p$ with finite mean $\mu \in \mathbb{R}^p$ and $P(\{\mu \}) < 1$, we further prove that for fixed $K$ and $\mu$, there exists a well-defined homothetic, log-concave projection $\psi^* \equiv \psi_{K,\mu}^*:\mathcal{P}_p \rightarrow \mathcal{F}_p^{K,\mu}$.  Thus, if $X_1,\ldots,X_n \stackrel{\mathrm{iid}}{\sim} P \in \mathcal{P}_p$, with empirical distribution $\mathbb{P}_n$, then a natural estimator of $\psi^*(P)$ is given by $\hat{f}_n := \psi^*(\mathbb{P}_n)$.  In particular, if $P$ has a density $f_0 \in \mathcal{F}_p^{K,\mu}$, then $\psi^*(P) = f_0$, and the first main aim of our theoretical contribution is to study the performance of $\hat{f}_n$ as an estimator of $f_0$.  On the other hand, if $K$ and $\mu$ are unknown, then we investigate a computationally-efficient plug-in approach where we first construct estimators $\hat{K}$ of $K$ and $\hat{\mu}$ of $\mu$, and then (with a slight abuse of notation) compute $\hat{f}_n := \psi_{\hat{K}, \hat{\mu}}^*(\mathbb{P}_n)$.  

To this end, we focus in Section~\ref{Sec:Kknown} on the case where $K$ and $\mu$ are assumed to be known and where an attractive feature of our estimator is that, even in high-dimensional problems, it does not require the choice of any tuning parameters.  Our results on the theoretical performance of $\hat{f}_n$ are presented in terms of the divergence measure
\[
d_X^2(\hat{f}_n,f_0) := \frac{1}{n}\sum_{i=1}^n \log \frac{\hat{f}_n(X_i)}{f_0(X_i)}.
\]
We show in Proposition~\ref{Prop:ProjectionBasicProperties}(iv) that $d_X^2(\hat{f}_n,f_0) \geq \int_{\mathbb{R}^p} \hat{f}_n \log (\hat{f}_n/f_0) =: d_{\mathrm{KL}}^2(\hat{f}_n,f_0)$, so that our upper bounds on $\mathbb{E} d_X^2(\hat{f}_n,f_0)$ immediately yield the same upper bounds on the expected Kullback--Leibler divergence (as well as the risks in the squared total variation and squared Hellinger distances, for instance).  One of our main results in this section (Theorem~\ref{Thm:KKnownWorstCase}) is that, if $X_1,\ldots,X_n \stackrel{\mathrm{iid}}\sim f_0 \in \mathcal{F}_p^{K,\mu}$, then there exists a universal constant $C > 0$ such that
\[
  \mathbb{E}d_X^2(\hat{f}_n,f_0) \leq Cn^{-4/5}.
\]
Thus, there is no dependence on $p$ (or $K$) in this worst case risk bound, which is verified by our empirical studies in Section~\ref{Sec:Simulations}.  We also elucidate the adaptation behaviour of $\hat{f}_n$.  More precisely, for $k \in \mathbb{N}$, we let $\Phi^{(k)}$ denote the set of $\phi \in \Phi$ that are piecewise linear on $\mathrm{dom}(\phi) := \{r:\phi(r) > -\infty\}$, with at most $k$ linear pieces.  In Theorem~\ref{Thm:KknownAdaptation}, we show that if $f_0 \in \mathcal{F}_p^{K,\mu}$ is of the form $f_0(\cdot) = e^{\phi_0(\|\cdot-\mu\|_K)}$ for some $\phi_0 \in \Phi^{(k)}$, then 
\begin{equation}
  \label{Eq:AdaptKknown}
  \mathbb{E}d_X^2(\hat{f}_n,f_0) \leq C\frac{k}{n}\log^{5/4}\Bigl(\frac{en}{k}\Bigr)
\end{equation}
for a universal constant $C > 0$.  This result reveals that $\hat{f}_n$ adapts to densities $f_0$ whose corresponding $\phi_0$ is $k$-affine with $k$ not too large; in particular, an almost parametric rate can be attained for small $k$.  In fact, we prove a stronger, oracle inequality, version of~\eqref{Eq:AdaptKknown}, where $f_0 \in \mathcal{F}_p^{K,\mu}$ need only be close to a density of the form $e^{\phi(\|\cdot - \mu\|_K)}$ for some $\phi \in \Phi^{(k)}$, and the bound incurs an additional approximation error term.

In Section~\ref{Sec:KEstimated}, we consider the case where the super-level set $K$ and the centering vector $\mu$ are unknown.  We first obtain a general purpose bound on the squared Hellinger risk of $\hat{f}_n$ for arbitrary estimators $\hat{K}$ and $\hat{\mu}$ in terms of deviations between $\hat{K}$ and $K$ and $\hat{\mu}$ and $\mu$.  As an initial application, we take the semiparametric setting and suppose that $K = \Sigma_0^{1/2}K_0$ for some known balanced $K_0 \in \mathcal{K}$ and some unknown positive definite matrix $\Sigma_0 \in \mathbb{R}^{p \times p}$; one important example of this setting is \emph{elliptical symmetry}, where $K_0$ is the unit Euclidean ball. Then we estimate $K$ by $\hat{K} = \hat{\Sigma}^{1/2}K_0$ where $\hat{\Sigma}$ is the sample covariance matrix and estimate $\mu$ by the sample mean $\hat{\mu}$. This yields a worst-case squared Hellinger risk bound of order $p^{3/2}/n^{1/2}$ up to polylogarithmic factors, and moreover we obtain adaptation rates of order $n^{-4/5} + p^3/n$ and $p^3/n$ in cases where $f_0(\cdot) = e^{\phi_0(\|\cdot-\mu\|_K)}$ with $\phi_0$ smooth and $1$-affine respectively, again up to polylogarithmic factors.  In a second application, we consider the nonparametric setting where $K \in \mathcal{K}$ is arbitrary.  Here, we propose a new algorithm to estimate $K$ as the convex hull of estimates of its boundary at a set $\{\theta_m:m = 1,\ldots,M\}$ of randomly chosen directions, where these boundary estimates are obtained as the average Euclidean norm of observations lying in a cone around $\theta_m$.  The resulting estimator $\hat{f}_n$ is shown to have a worst-case squared Hellinger risk bound of order $n^{-1/(p+1)}$, which improves to $n^{-2/(p+1)}$ in cases where $f_0(\cdot) = e^{\phi_0(\|\cdot - \mu\|_K)}$ with $\phi_0$ smooth, up to polylogarithmic factors.  Importantly, this estimator is computable even in high-dimensional settings because there is no need to enumerate the facets of $\hat{K}$; in order to evaluate $\hat{f}_n(x^*)$ for some $x^* \in \mathbb{R}^p$, we need only compute $\|x^*\|_{\hat{K}}$, which can be achieved by a simple linear programme owing to the representation of $\hat{K}$ as a convex hull of $M$ points.

Section~\ref{Sec:Simulations} provides a simulation study that illustrates our theoretical results and confirms the computational feasibility of our estimators.  Proofs of some of our main results are given in the Appendix; several other proofs, as well as auxiliary results, are given in the online supplement. Results in the online supplement have an `S' prefixing their label numbers. 

Other recent work on estimation over the class $\mathcal{F}_p$ of log-concave densities on $\mathbb{R}^p$ includes \citet{robeva2018maximum}, \citet{carpenter2018near}, \citet{feng2018multivariate} and \citet{dagan2019logconcave}; see \citet{samworth2018recent} for a review.  Multivariate shape-constrained density estimation has also been considered over other classes, including the set of block decreasing densities on $[0,1]^p$ \citep{polonik1995density,polonik1998silhouette,biau2003risk,gao2007entropy}, and the class of $s$-concave densities on $\mathbb{R}^p$ \citep{doss2016global,han2016approximation}.  In the former case, for uniformly bounded densities, \citet{biau2003risk} established a minimax lower bound in total variation distance of order $n^{-1/(p+2)}$, while in the latter case, the main interest has been in the classes with $s < 0$, which contain the class $\mathcal{F}_p$, so the same minimax lower bounds apply as for $\mathcal{F}_p$.  Various other simplifying structures and methods have also been considered for nonparametric high-dimensional density estimation, including kernel approaches for forest density estimation \citep{liu2011forest} and star-shaped density estimation \citep{liebscher2016estimation}, as well as nonparametric maximum likelihood methods for independent component analysis \citep{samworth2012independent}.  Perhaps most closely related to this work is the approach of \citet{bhattacharya2012adaptive}, who consider a maximum likelihood approach (as well as spline approximations) to estimating the generator of an elliptically symmetric distribution with decreasing generator.

\textbf{Notation}: Given $n \in \mathbb{N}$, we write $[n] := \{1,\ldots, n\}$. Given $a, b \in \mathbb{R}$, we write $a \vee b := \max(a,b)$ and $a\wedge b := \min(a,b)$. We also say that $a \lesssim b$ if there exists a universal constant $C > 0$ such that $a \leq C b$ and, given also some quantity $s$, that $a \lesssim_s b$ if there exists some $C_s > 0$, depending only on $s$, such that $a \leq C_s b$. For a given function $f \,:\, A \rightarrow \mathbb{R}$ on some domain $A$, let $\| f \|_\infty := \sup_{x \in A} |f(x)|$; for a Borel measurable function $f \,:\, A \rightarrow \mathbb{R}$, we let $\| f \|_{\textrm{esssup}}$ denote the (Lebesgue) essential supremum. Additionally, if $A = \mathbb{R}$ and $f$ is a density, then we let $\mu_f := \int_{-\infty}^\infty x f(x) \, dx$ and $\sigma_f := \bigl\{ \int_{-\infty}^\infty (x - \mu_f)^2 f(x) \,dx \bigr\}^{1/2}$ be the mean and standard deviation of $f$ respectively. If $A \subseteq \mathbb{R}^p$, we let $\mathcal{B}(A)$ be the set of Borel measurable subsets of $A$. If $A \in \mathcal{B}(\mathbb{R}^p)$, we let $\lambda_p(A)$ denote the Lebesgue measure of $A$. We let $B_p(0,1)$ denote the unit ball in $\mathbb{R}^p$ and write $\kappa_p := \lambda_p(B_p(0,1))$. If $x \in \mathbb{R}^n$ is a vector, then we let $\|x \|$ denote its $\ell_2$ norm. If $B \in \mathbb{R}^{m \times n}$, then we let $\| B \|_{\mathrm{op}} := \sup_{x \in \mathbb{R}^n \,:\, \|x\| = 1} \| Bx \|$ be its operator norm. We let $\mathbb{S}^{p \times p}$ denote the set of positive definite $p \times p$ matrices. For a set $D \subseteq \mathbb{R}^p$, we let $\mathrm{conv}(D)$ and denote its convex hull, and if $D$ is convex, we let $\partial D$ denote its boundary.

\section{Minkowski functionals, homothetic log-concave densities and projections}
\label{Sec:Basic}

\subsection{Minkowski functionals}

In this section, we introduce notation and basic results that will be used throughout the paper.
\begin{definition}
  Let $\mathcal{K} := \{ K \subseteq \mathbb{R}^p \,:\, K \textrm{ convex, compact},\, 0 \in \mathrm{int}(K) \}$. For $K \in \mathcal{K}$, we define the \emph{Minkowski functional} $\| \cdot \|_K \,:\, \mathbb{R}^p \rightarrow [0,\infty)$ as
  \begin{equation}
    \label{Eq:Minkowski}
    \| x \|_K := \inf \{ t \in [0, \infty) \,:\, x \in tK \}.
  \end{equation}
\end{definition}
The proposition below presents some standard properties of the Minkowski functional.  In particular, $\| \cdot \|_K$ is not necessarily a norm but it is subadditive and positively homogeneous.
\begin{proposition}
  \label{Prop:BasicK}
Let $K \in \mathcal{K}$ and $x,y \in \mathbb{R}^p$. Then
\begin{enumerate}[(i)]
\item $\|x \|_K < \infty$,
\item $x \in K$ if and only if $\|x \|_K \leq 1$,
\item $x \in \partial K$ if and only if $\|x \|_K = 1$, where $\partial K := K \setminus \mathrm{int}(K)$.
\item $\| x + y \|_K \leq \| x \|_K + \| y \|_K$ and, if $\alpha \geq 0$, then $\| \alpha x \|_K = \alpha \| x \|_K$. 
\end{enumerate}
\end{proposition}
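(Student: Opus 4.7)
The plan is to dispense with parts (i)--(iv) in order, leveraging that $K \in \mathcal{K}$ has both an interior containing $0$ (so $B_p(0,\epsilon) \subseteq K$ for some $\epsilon > 0$) and is bounded (so $K \subseteq B_p(0,R)$ for some $R > 0$). These two inclusions convert between Minkowski functional bounds and Euclidean bounds: $\|x\|_K \leq \|x\|/\epsilon$ and $\|x\|_K \geq \|x\|/R$.

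For (i), the inequality $\|x\|_K \leq \|x\|/\epsilon$ just noted gives finiteness directly, since $x/(\|x\|/\epsilon) \in B_p(0,\epsilon) \subseteq K$ whenever $x \neq 0$, while $\|0\|_K = 0$. For (ii), the forward direction is immediate (take $t=1$); for the converse, I first argue the infimum in~\eqref{Eq:Minkowski} is attained when $\|x\|_K > 0$: taking $t_n \searrow \|x\|_K$ with $x/t_n \in K$ and using compactness of $K$ yields $x/\|x\|_K \in K$. Then convexity of $K$ together with $0 \in K$ gives $x = \|x\|_K \cdot (x/\|x\|_K) + (1-\|x\|_K) \cdot 0 \in K$ whenever $\|x\|_K \leq 1$. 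The case $\|x\|_K = 0$ forces $x=0$ by the bound $\|x\| \leq R\|x\|_K$ coming from boundedness, so $x \in K$ trivially.

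For (iii), in view of (ii) it suffices to show $x \in \mathrm{int}(K) \iff \|x\|_K < 1$. If $\|x\|_K < 1$, pick $t \in (\|x\|_K, 1)$, so $x/t \in K$; then using $tK + (1-t)K = K$ (a standard consequence of convexity for $t \in [0,1]$) together with $B_p(0,\epsilon) \subseteq K$, we get $B_p(x, (1-t)\epsilon) = t(x/t) + (1-t)B_p(0,\epsilon) \subseteq K$, which places $x$ in the interior. Conversely, if $x \in \mathrm{int}(K)$ and $x \neq 0$, then $(1+\delta)x \in K$ for $\delta$ small, giving $\|x\|_K \leq (1+\delta)^{-1} < 1$; and $\|0\|_K = 0 < 1$.

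For (iv), positive homogeneity is a direct change of variables $s = t/\alpha$ in the defining infimum (with the case $\alpha = 0$ handled separately). For subadditivity, given $\epsilon > 0$ pick $s' < \|x\|_K + \epsilon$ and $t' < \|y\|_K + \epsilon$ with $x/s', y/t' \in K$; writing $(x+y)/(s'+t')$ as the convex combination $\tfrac{s'}{s'+t'}(x/s') + \tfrac{t'}{s'+t'}(y/t')$, convexity gives $(x+y)/(s'+t') \in K$, whence $\|x+y\|_K \leq s'+t'$, and letting $\epsilon \downarrow 0$ completes the argument (the degenerate case $\|x\|_K = \|y\|_K = 0$ forces $x = y = 0$ by boundedness). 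No step here is genuinely difficult; the only mild subtlety is ensuring that the infimum is attained (via compactness) before invoking convexity to deduce (ii) and (iii), so I would be careful to state that step explicitly.
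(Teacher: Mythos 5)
Your proof is correct, and for the two parts the paper actually argues in full — (i) and (iii) — your reasoning runs along essentially the same lines. The main divergence is that the paper dispatches (ii) by citing \citet[][Corollary~9.7.1]{rockafellar1997convex} and (iv) by citing \citet[][Proposition~14.24]{RoydenFitzpatrick2010}, whereas you supply self-contained elementary arguments for both. For (ii), you correctly isolate the one genuinely non-trivial point: that closedness of $K$ forces the infimum in the definition of $\|\cdot\|_K$ to be attained, so that $x/\|x\|_K \in K$ whenever $\|x\|_K > 0$; convexity together with $0 \in K$ then finishes the converse. For (iii), your dilation argument $B_p(x,(1-t)\epsilon) = t(x/t) + (1-t)B_p(0,\epsilon) \subseteq tK + (1-t)K = K$ is the same convexity manipulation as the paper's observation that $x \in (1-\epsilon)K$ implies $x + \epsilon K \subseteq K$, just phrased differently. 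For (iv), the convex-combination proof of subadditivity is the standard one. Overall the proposal is sound and, if anything, more self-contained than the paper's treatment.
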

In fact, if $K = -K$, then $\|\cdot\|_K$ is a norm; as a special case, if $K$ is the closed Euclidean unit ball, then $\|\cdot\|_K$ coincides with the Euclidean norm. Conversely, a norm is also a Minkowski functional: let $\| \cdot \|$ be a norm and define a convex body $K = \{ x \in \mathbb{R}^p \,:\, \| x \| \leq 1 \}$, then we have that $\| x \| = \|x \|_K$ for all $x \in \mathbb{R}^p$.

\subsection{Homothetic log-concave densities}
\label{Sec:Homothetic}

We say that a density $f$ on $\mathbb{R}^p$ is \emph{homothetic} if there exist a decreasing function $r:(0,\|f\|_\infty) \rightarrow [0,\infty)$, a measurable subset $A$ of $\mathbb{R}^p$ with $0 \in \mathrm{int}(A)$ and $\mu \in \mathbb{R}^p$ such that $\{x:f(x) \geq t\} = r(t)A + \mu$ for every $t \in (0,\|f\|_\infty)$.  Note that any such set $A$ has the property that if $0 < t_1 \leq t_2 < \|f\|_\infty$, then $r(t_1)A \supseteq r(t_2)A$.  

In fact, this definition also characterises the level set of $f$ at $\| f \|_{\infty}$ since, for any sequence $t_n \in (0, \|f\|_{\infty})$ such that $t_n \nearrow \|f \|_{\infty}$,
\[
  \{x \,:\, f(x) = \| f \|_\infty\} = \bigcap_{n=1}^\infty \{ x \,:\, f(x) \geq t_n \} = \bigcap_{n=1}^\infty r(t_n)A + \mu.
\]
We have that $\bigcap_{n=1}^\infty r(t_n)A + \mu \supseteq \bigl( \lim_n r(t_n) \bigr) A + \mu$, with equality in certain cases. For example, if $\lim_n r(t_n) = 0$, then equality holds when $A$ is bounded; if $\lim_n r(t_n) > 0$, then equality holds when $A$ is closed, which occurs when, e.g.,~$f$ is upper semi-continuous.   

Recall that $\Phi$ denotes the set of upper semi-continuous, concave, decreasing functions $\phi:[0,\infty) \rightarrow [-\infty, \infty)$. The following proposition characterises densities on $\mathbb{R}^p$ that are simultaneously homothetic and log-concave. 
\begin{proposition}
  \label{Prop:ExistencePhi}
  Let $f$ be an upper semi-continuous density on $\mathbb{R}^p$. Then $f$ is homothetic and log-concave if and only if there exist $K \in \mathcal{K}$, $\mu \in \mathbb{R}^p$ and $\phi \in \Phi$ such that $f(\cdot) = e^{\phi(\|\cdot - \mu \|_K)}$.  If $f$ has an alternative representation as $f(\cdot) = e^{\tilde{\phi}(\|\cdot - \tilde{\mu} \|_{\tilde{K}})}$, where $\tilde{K} \in \mathcal{K}$, $\tilde{\mu} \in \mathbb{R}^p$ and $\tilde{\phi} \in \Phi$, then there exist $\sigma > 0$, $\sigma' > 0$ such that $\tilde{K} = \sigma K + \sigma'(\mu - \tilde{\mu})$ and $\tilde{\phi}(\cdot) = \phi(\sigma\cdot)$; moreover, if $f$ is not the uniform distribution, then $\tilde{\mu} = \mu$.
\end{proposition}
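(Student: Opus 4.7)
The plan is to address the three claims in sequence: the easy ``if'' direction, the construction of $(K,\mu,\phi)$ from an upper semi-continuous, homothetic, log-concave $f$, and then the uniqueness statement up to the claimed rescaling, which I expect to be the main obstacle.

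\textbf{The easy direction.} Given $f(\cdot) = e^{\phi(\|\cdot - \mu\|_K)}$, the map $x \mapsto \|x - \mu\|_K$ is convex by Proposition~\ref{Prop:BasicK}(iv), and post-composing with the decreasing concave $\phi$ makes $\log f$ concave. Homotheticity is immediate from Proposition~\ref{Prop:BasicK}(ii): $\{f \geq t\} = r(t) K + \mu$, where $r(t) := \sup\{s \geq 0 : \phi(s) \geq \log t\}$ is decreasing in $t$.

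\textbf{Building the representation.} For the reverse direction, I would start from the decomposition $\{f \geq t\} = r(t)A + \mu$ supplied by homotheticity. Upper semi-continuity and log-concavity make every super-level set closed and convex, so $A$ inherits both properties. The layer-cake identity $\int f = \int_0^{\|f\|_\infty} \lambda_p(\{f \geq t\})\,dt < \infty$ yields some $t_0$ with $\lambda_p(\{f \geq t_0\}) < \infty$; and a closed convex set of finite Lebesgue measure containing $0$ in its interior is bounded (the convex hull of a small ball around $0$ with an alleged far-away point has volume growing linearly in the norm of that point). Setting $K := r(t_0) A \in \mathcal{K}$ and $\tilde r(t) := r(t)/r(t_0)$, the super-level set becomes $\tilde r(t) K + \mu$; Proposition~\ref{Prop:BasicK}(ii) then gives $f(x) \geq t \iff \|x - \mu\|_K \leq \tilde r(t)$, so $f$ factors through $\|\cdot - \mu\|_K$. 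Fixing any $e_0 \in \partial K$ and defining $\phi(s) := \log f(\mu + s e_0)$ (with $\log 0 := -\infty$) then delivers the required representation. To verify $\phi \in \Phi$: upper semi-continuity comes from that of $f$ composed with the continuous line $s \mapsto \mu + s e_0$; monotonicity from the shrinking super-level sets; and concavity, via the identity $\|\lambda(\mu + s_1 e_0) + (1-\lambda)(\mu + s_2 e_0) - \mu\|_K = \lambda s_1 + (1-\lambda) s_2$ for $s_1,s_2 \geq 0$, reduces directly to log-concavity of $f$.

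\textbf{Uniqueness.} Writing $s(t) := \sup\{s : \phi(s) \geq \log t\}$ and $\tilde s(t)$ analogously, the two representations of $f$ yield the level-set identity $s(t) K + \mu = \tilde s(t)\tilde K + \tilde\mu$ for every $t \in (0, \|f\|_\infty)$; equivalently, $\tilde K = (s(t)/\tilde s(t)) K + (\mu - \tilde\mu)/\tilde s(t)$ whenever $\tilde s(t) > 0$. The key step is to show $\sigma := s(t)/\tilde s(t)$ is independent of $t$; this rests on the observation that no $K \in \mathcal{K}$ can satisfy a nontrivial identity $K = \alpha K + w$ with $\alpha \in (0,1)$, since iterating would contract $K$ to the single point $w/(1-\alpha)$, contradicting $0 \in \mathrm{int}(K)$. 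Comparing the identity at two different values of $t$ therefore forces the scalings, and then also the translations, to agree. Writing $\sigma' := 1/\tilde s(t)$ (also constant) yields $\tilde K = \sigma K + \sigma'(\mu - \tilde\mu)$ with $\sigma, \sigma' > 0$. Equating $\tilde\phi(\tilde s(t)) = \log t = \phi(s(t)) = \phi(\sigma \tilde s(t))$ then delivers $\tilde\phi(\cdot) = \phi(\sigma \cdot)$ on the range of $\tilde s$. For the final clause, if $\mu \neq \tilde\mu$, then $\sigma' = 1/\tilde s(t)$ being constant forces $\tilde s$ itself to be constant on $(0,\|f\|_\infty)$, whence all super-level sets of $f$ coincide and $f$ is uniform on its support. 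The most delicate point, which I anticipate as the main obstacle, is handling possible degeneracies in $s(t)$ and $\tilde s(t)$ (for example, intervals on which $\phi$ is flat) and extending the identity $\tilde\phi(\cdot) = \phi(\sigma \cdot)$ from the range of $\tilde s$ to all of $[0,\infty)$.
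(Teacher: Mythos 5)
Your argument is correct and reaches the right conclusion, but the two middle sections take a genuinely different route from the paper's proof. For the construction of $\phi$, the paper works purely with the radius function $r$: it normalizes $r(t^*)=1$, proves that $r$ is left-continuous so that the generalized inverse $r^{-1}$ is well-defined, and then sets $\phi := \log r^{-1}$. You instead observe that $f$ factors through $\|\cdot - \mu\|_K$ (since for each $t$ the level set $\{f \geq t\} = \tilde r(t)K + \mu$ picks out exactly those $x$ with $\|x-\mu\|_K \leq \tilde r(t)$) and then simply restrict $\log f$ to the ray $s \mapsto \mu + se_0$ with $e_0 \in \partial K$; this sidesteps the left-continuity lemma entirely, and concavity of $\phi$ transfers from log-concavity of $f$ via $\|\mu + se_0 - \mu\|_K = s$. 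For uniqueness, the paper fixes two explicit levels $c < c'$ and works out the resulting system of level-set identities; you repackage the same comparison as a ``no nontrivial affine self-map of a convex body'' lemma: $K = \alpha K + w$ with $K \in \mathcal{K}$ forces $\alpha = 1$ and $w = 0$, which one sees even faster by taking volumes than by your iteration argument. Two small remarks. First, the degeneracy issue you flag at the end resolves more easily than you anticipate: once $\tilde\mu = \mu$ and $\tilde K = \sigma K$ are established, the identity $\|\cdot - \mu\|_{\tilde K} = \sigma^{-1}\|\cdot - \mu\|_K$ gives $\tilde\phi(r') = \phi(\sigma r')$ for every $r' \geq 0$ directly from the two representations of $f$, with no need to reason about the range of $\tilde s$. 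Second, the parenthetical ``(also constant)'' attached to $\sigma' := 1/\tilde s(t)$ should be read as conditional: the translation comparison forces $\tilde s$ constant only if $\mu \neq \tilde\mu$ (leading to uniformity); when $\mu = \tilde\mu$ the term $\sigma'(\mu-\tilde\mu)$ vanishes and $\sigma'$ is a free positive scalar, which is why the proposition's statement is nonetheless consistent.
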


Proposition~\ref{Prop:ExistencePhi} states that any upper semi-continuous, homothetic, and log-concave density may be parametrised by a generator $\phi \in \Phi$, a super-level set $K \in \mathcal{K}$, and a centering vector $\mu \in \mathbb{R}^p$. Moreover, as long as $f$ is not the uniform distribution, the only degree of non-identifiability is that we may scale $K$ and horizontally dilate $\phi$ by the same scalar $\sigma > 0$. This degree of non-identifiability is in fact helpful for density estimation because we need only estimate $K$ \emph{up to a scaling factor} in order to estimate the density $f$.

We let $\mathcal{F}^{\mathcal{K}}_p$ denote the set of all upper semi-continuous, homothetic, log-concave densities on $\mathbb{R}^p$, and for $K \in \mathcal{K}$ and $\mu \in \mathbb{R}^p$, let $\mathcal{F}_p^{K,\mu}$ denote the set of $K$-homothetic, log-concave densities of the form given in Proposition~\ref{Prop:ExistencePhi}.  We also write $\mathcal{F}_p^K := \mathcal{F}_p^{K,0}$.  The following proposition can be regarded as an analogue of a known characterisation of elliptically symmetric densities (where $K$ is taken to be an ellipsoid) to the general homothetic, log-concave case.
\begin{proposition}
  \label{Prop:NormDirectionIndep}
Let $f \in \mathcal{F}_p^{\mathcal{K}}$ be of the form $f(\cdot) = e^{\phi(\|\cdot\|_K)}$, for some $K \in \mathcal{K}$ and $\phi \in \Phi$.  Let $R$ be a random variable taking values in $[0,\infty)$ with density $h$, where $h(r) := p\lambda_p(K)r^{p-1}e^{\phi(r)}$ for $r \in [0,\infty)$, and let $Z$ be a random vector, independent of $R$, uniformly distributed on $K$. Then $\frac{Z}{\| Z\|_K} R$ has density $f$. 
\end{proposition}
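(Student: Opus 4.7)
The plan is to reduce the claim to the standard polar-coordinate decomposition associated with the convex body $K$. The key object is the \emph{cone measure} $\mu_K$ on $\partial K$, defined by
\[
\mu_K(A) := \frac{\lambda_p\bigl(\{tu : t \in [0,1],\, u \in A\}\bigr)}{\lambda_p(K)}
\]
for $A \in \mathcal{B}(\partial K)$. This is a probability measure, and I first intend to establish the $K$-polar integration formula: for every Borel measurable $g:\mathbb{R}^p \to [0,\infty)$,
\[
\int_{\mathbb{R}^p} g(x)\,dx = p\,\lambda_p(K) \int_{\partial K}\!\int_0^\infty g(ru)\, r^{p-1}\,dr\, d\mu_K(u).
\]
This is proved first for indicators $g = \mathbf{1}_{tK}$, where both sides equal $t^p\lambda_p(K)$ (on the right, the integrand is $\mathbf{1}_{[0,t]}(r)$ because $u \in \partial K$ gives $\|ru\|_K = r$ by Proposition~\ref{Prop:BasicK}(iii) and (iv), and the integral becomes $t^p/p$). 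I then extend to positively homogeneous rescalings of indicators, use dilations plus translation-homogeneity, and conclude via monotone class or a Dynkin-type argument for general nonnegative Borel $g$.

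Second, I would show that $U := Z/\|Z\|_K$ has distribution $\mu_K$. For any Borel $A \subseteq \partial K$, the set $\{z \in K \setminus \{0\} : z/\|z\|_K \in A\}$ equals $\{tu : t \in (0,1], u \in A\}$, which differs from $\{tu : t \in [0,1], u \in A\}$ only on a Lebesgue-null set containing the origin. Since $Z$ is uniform on $K$, this immediately yields $\mathbb{P}(U \in A) = \mu_K(A)$.

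Third, by independence of $R$ and $Z$ (hence of $R$ and $U$), for any bounded Borel $g:\mathbb{R}^p \to \mathbb{R}$,
\[
\mathbb{E}\bigl[g(RU)\bigr] = \int_0^\infty\!\int_{\partial K} g(ru)\, d\mu_K(u)\, h(r)\,dr = \int_{\partial K}\!\int_0^\infty g(ru) \cdot p\,\lambda_p(K)\, r^{p-1}\, e^{\phi(r)}\,dr\, d\mu_K(u),
\]
where Fubini is justified by boundedness of $g$ and integrability of $h$. Applying the polar integration formula (in reverse) to the function $x \mapsto g(x) e^{\phi(\|x\|_K)}$, together with the identity $\phi(\|ru\|_K) = \phi(r)$ for $u \in \partial K$, I obtain $\mathbb{E}[g(RU)] = \int_{\mathbb{R}^p} g(x) f(x)\,dx$, which identifies $f$ as the density of $RU$.

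The main obstacle is establishing the polar-coordinate integration formula cleanly, since the boundary $\partial K$ of a general convex body need not be smooth. The cleanest route avoids surface measures and Jacobians entirely by defining $\mu_K$ through the cone construction above, so that the formula for indicators $\mathbf{1}_{tK}$ is essentially tautological; the remaining steps are routine measure-theoretic extension. Everything else (independence-based factorisation, Fubini, the change-of-variables identification) is straightforward once this formula is in hand.
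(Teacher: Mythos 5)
Your proposal is correct, and it takes a genuinely different route from the paper's. The paper's proof is an approximation argument: it introduces $Z_n$ uniform on the annulus $K_n := K\setminus(1-1/n)K$, shows $Z_n \stackrel{d}{\rightarrow} Z/\|Z\|_K$ via a characteristic-function identity, sets $X_n := Z_nR$, computes the density $f_n$ of $X_n$ in closed form, verifies $f_n(x)\to f(x)$ at Lebesgue-almost every $x$, and concludes by Scheff\'e's lemma (which is why that proof invokes continuity of $\phi$ at a.e.\ $r$, and why the paper's subsequent remark keeps that hypothesis when extending to star-shaped $K$). You instead bypass any limiting argument by working directly with the cone measure $\mu_K$ and a $K$-polar integration formula, finishing with Fubini; this is cleaner, buys the conclusion without any continuity condition on $\phi$, and cleanly isolates the intrinsic polar decomposition that the paper's annulus construction is implicitly approximating. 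One expository caveat worth noting: verifying the polar formula only for $g = \mathbbm{1}_{tK}$ is not sufficient input for a monotone-class argument, since the dilates $tK$ do not generate $\mathcal{B}(\mathbb{R}^p)$. The right $\pi$-system is the class of conical sectors $\{ru : u \in A,\ r \in (s,t]\}$ with $A \in \mathcal{B}(\partial K)$ and $0\le s<t$, for which both sides equal $(t^p-s^p)\lambda_p(K)\mu_K(A)$ by the same scaling computation; these sets do generate $\mathcal{B}(\mathbb{R}^p\setminus\{0\})$ because $x\mapsto(\|x\|_K,\,x/\|x\|_K)$ is a homeomorphism onto $(0,\infty)\times\partial K$. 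Once that is in place, the monotone-class step, the identification of the law of $Z/\|Z\|_K$ with $\mu_K$, and the Fubini conclusion all go through exactly as you describe.
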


\begin{remark}
The random vector $Z/\|Z\|_K$ is supported on the boundary of $K$. When $K$ is the unit Euclidean ball in $\mathbb{R}^p$, we have that $Z/\|Z\|_K$ is uniformly distributed on the surface of the unit Euclidean sphere.  However, when $K$ is an arbitrary convex body, $Z/\| Z \|_K$ is generally not distributed uniformly on the surface $\partial K$. As a simple example in $\mathbb{R}^2$, we may take $K = B_2(0,1) \cap \{ (x_1,x_2) \,:\, |x_1| \leq 2^{-1/2}\}$. The probability that $Z/\|Z\|_K$ lies on the line segment $\{ (2^{-1/2}, x_2) \,:\, x_2 \in [-2^{-1/2}, 2^{1/2}] \}$ is $1/(2+ \pi)$, whereas the length of the line segment divided by the perimeter of $K$ is $1/(2 + 2^{-1/2} \pi)$.
\end{remark}

\subsection{Projections onto the class of homothetic, log-concave densities}
\label{Sec:Projection}

In this section, we fix $K \in \mathcal{K}$ and consider projections onto $\mathcal{F}_p^K$. For $\phi \in \Phi$ and a probability measure $P$ on $\mathbb{R}^p$, we define
\begin{equation}
  \label{Eq:RpProjection}
\mathcal{L}(\phi, P) \equiv \mathcal{L}_K(\phi, P) = \int_{\mathbb{R}^p} \phi(\| x \|_K) \, dP(x) - p\lambda_p(K)\int_0^{\infty} r^{p-1} e^{\phi(r)} \, dr + 1,
\end{equation}
and write $\varphi^*(P) \equiv \varphi_K^*(P) := \argmax_{\phi \in \Phi} \mathcal{L}(\phi,P)$.  Since $\mathcal{L}(\cdot,P)$ is strictly concave, any maximiser of $\mathcal{L}(\cdot,P)$ over $\Phi$ is unique.

If $\mathcal{L}(\phi,P) \in \mathbb{R}$, then 
\[
\frac{\partial}{\partial c} \mathcal{L}(\phi + c, P) = 1 - e^c p\lambda_p(K)\int_0^\infty r^{p-1}e^{\phi(r)} \, dr,
\]
so $\mathcal{L}(\phi + c, P)$ is maximised by choosing $c = -\log \bigl(p\lambda_p(K)\int_0^\infty r^{p-1}e^{\phi(r)} \, dr\bigr)$.  It follows that if $\varphi^*(P)$ exists, and if $\mathcal{L}(\varphi^*(P),P) \in \mathbb{R}$, then we can define the \emph{$K$-homothetic log-concave projection} $f^*(P) \equiv f_K^*(P) \in \mathcal{F}_p^K$ by $f^*(P)(\cdot) := e^{\varphi^*(P)(\|\cdot\|_K)}$. When the centering vector $\mu$ is not the origin, the projection of a probability measure $P$ onto $\mathcal{F}^{K,\mu}_p$ may be reduced to the case where $\mu = 0$ by translating the probability measure $P$ by $-\mu$, projecting the translated distribution onto $\mathcal{F}^{K}_p$, and then translating the resulting log-concave density back by $\mu$. 

By Proposition~\ref{Prop:ExistencePhi}, for any $\alpha > 0$, it holds that $\mathcal{F}_p^{K} = \mathcal{F}_p^{\alpha K}$; we therefore need to check that $f^*_K(P)$ does not depend on the choice of $K$. To see this, fix $\alpha > 0$ and $\phi \in \Phi$, and define $\phi_\alpha \,:\, [0, \infty) \rightarrow [-\infty, \infty)$ by $\phi_{\alpha}(r) := \phi( \alpha r)$ for $r \in [0,\infty)$. Observe then that $\mathcal{L}_K(\phi, P) = \mathcal{L}_{\alpha K}(\phi_{\alpha}, P)$ and hence, if we write $\phi^* := \varphi^*_K(P)$, then $\phi^*_{\alpha} = \varphi^*_{\alpha K}(P)$ and therefore, $f^*_K(P) = f^*_{\alpha K}(P)$ as desired.

In fact, in order to study $\varphi^*(P)$, it will be convenient also to define a related projection of a one-dimensional probability distribution.  To this end, for $a \geq 0$,  let $\Phi_{a} := \{\phi(\cdot-a):\phi \in \Phi\}$ and set
\begin{align}
  \mathcal{H}_{a} \equiv \mathcal{H}_{a,K} := \biggl\{r \mapsto p\lambda_p(K)r^{p-1}e^{\phi(r)} : \phi \in \Phi_{a}, p\lambda_p(K)\int_{a}^\infty r^{p-1}e^{\phi(r)} \, dr = 1\biggr\}.
  \label{Eqn:HclassDefn}
\end{align}
Here, we incorporate the greater generality of the translation by $a$ in order to facilitate our analysis of the adaptivity properties of the $K$-homothetic log-concave MLE in Section~\ref{Sec:KknownAdaptation}. We continue to write $\Phi = \Phi_0$ and also write $\mathcal{H} = \mathcal{H}_0$ as shorthand. For a probability measure $Q$ on $[a, \infty)$ and $\phi \in \Phi_{a}$, let
\begin{equation}
  \label{Eq:QProjection}
  L(\phi, Q) \equiv L_{a,K}(\phi, Q) := \int_{[a,\infty)} \phi \, dQ - p\lambda_p(K) \int_{a}^\infty r^{p-1} e^{\phi(r)} \, dr + 1.
\end{equation}
  Similarly to before, we let $\phi^*(Q) \equiv \phi_{a,K}^*(Q) := \argmax_{\phi \in \Phi_{a}} L(\phi,Q)$.  Again, any maximiser $\phi^*(Q)$ of $L(\cdot,Q)$ over $\Phi_{a}$ is unique, and if $\phi^*(Q)$ exists with $L(\phi^*(Q), Q) \in \mathbb{R}$, then, writing $h^*(Q)(r) \equiv h_{a,K}^*(Q)(r) := p\lambda_p(K)r^{p-1}e^{\phi^*(Q)(r)}$, we have that $h^*(Q) \in \mathcal{H}_{a}$, so in particular, $h^*(Q)$ is a (log-concave) density.  

The following proposition gives necessary and sufficient conditions for the $K$-homothetic log-concave projection to be well-defined.  We write $\mathcal{P}_p$ for the set of probability distributions on $\mathbb{R}^p$ with $\int_{\mathbb{R}^p} \| x \|_K \, dP(x) < \infty$ and $P(\{0 \}) < 1$; the first of these conditions is equivalent to $\int_{\mathbb{R}^p} \| x \| \, dP(x) < \infty$.  We let $\mathcal{Q}_{a}$ denote the class of probability measures $Q$ on $[a,\infty)$ with $\int_{a}^\infty r \, dQ(r) < \infty$ and $Q(\{a\}) < 1$, and let $\mathcal{Q} := \mathcal{Q}_0$.
\begin{proposition}
\label{Prop:ProjectionExistence}
We have 
\begin{enumerate}[(i)]
\item if $\int_{a}^\infty r \, dQ(r) = \infty$, then $L(\phi, Q) = - \infty$ for all $\phi \in \Phi_{a}$;
\item if $Q(\{a\}) = 1$, then $\sup_{\phi \in \Phi_{a}} L(\phi, Q) = \infty$;
\item if $Q \in \mathcal{Q}_{a}$, then $\sup_{\phi \in \Phi_{a}} L(\phi, Q) \in \mathbb{R}$ and $\phi_{a}^*$ is a well-defined map from $\mathcal{Q}_{a}$ to $\Phi_{a}$;
\item if $P$ is a probability measure on $\mathbb{R}^p$ and we define a probability measure $Q$ on $[0,\infty)$ by $Q\bigl([0,r)\bigr) := P(\{x:\|x\|_K < r\})$, then $\mathcal{L}(\phi,P) = L(\phi,Q)$ for every $\phi \in \Phi$.  In particular, if $P \in \mathcal{P}_p$, then $Q \in \mathcal{Q}$ and $\varphi^*(P) = \phi^*(Q)$.
\end{enumerate}
\end{proposition}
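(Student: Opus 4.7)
The four parts are largely separate, and I handle them in turn.

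For part (i), the key step is to show that any non-constant $\phi \in \Phi_a$ that is not identically $-\infty$ on $(a, \infty)$ decays at least linearly at infinity. Choose $r_1 < r_2$ in $[a, \infty)$ with $\phi(r_1) > \phi(r_2) > -\infty$; concavity then gives $\phi(r) \leq \phi(r_2) - s(r - r_2)$ for all $r \geq r_2$, where $s := \bigl(\phi(r_1) - \phi(r_2)\bigr)/(r_2 - r_1) > 0$. Combining this with $\int r\,dQ(r) = \infty$ forces $\int \phi\,dQ = -\infty$, while the linear decay keeps $p\lambda_p(K)\int_a^\infty r^{p-1} e^{\phi(r)}\,dr$ finite, so $L(\phi, Q) = -\infty$. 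The corner cases (constant $\phi$ makes the exponential integral infinite; $\phi$ identically $-\infty$ on $(a, \infty)$ makes the first integral $-\infty$) are handled directly. For part (ii), take $\phi_n(r) := \log n - n(r - a) \in \Phi_a$. Then $\phi_n(a) = \log n \to \infty$, while the substitution $u = n(r - a)$ shows that $p\lambda_p(K)\int_a^\infty r^{p-1} e^{\phi_n(r)}\,dr = p\lambda_p(K)\int_0^\infty (a + u/n)^{p-1} e^{-u}\,du$ remains bounded as $n \to \infty$; hence $L(\phi_n, Q) \to \infty$.

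Part (iii) is the main technical obstacle, and I plan to follow the standard approach for existence and uniqueness of log-concave projections. Optimizing over additive constants in $\phi$ yields $L(\phi, Q) \leq \int \phi\,dQ - \log B_\phi$, where $B_\phi := p\lambda_p(K)\int r^{p-1} e^{\phi(r)}\,dr$; rewriting normalized candidates in terms of their associated log-concave densities $h_\phi(r) := p\lambda_p(K) r^{p-1} e^{\phi(r)}/B_\phi$ and invoking the standard boundedness of the log-likelihood over log-concave densities at measures with finite first moment and non-degenerate support gives $\sup L < \infty$. A piecewise linear $\phi_0 \in \Phi_a$ with finite $L(\phi_0, Q)$ ensures $\sup L > -\infty$. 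For existence, any maximizing sequence $(\phi_n)$ must be uniformly bounded on compact subsets of $(a, \infty)$ — otherwise a tail-control argument analogous to part (i) forces $L(\phi_n, Q) \to -\infty$ — so Helly's selection theorem extracts a pointwise convergent subsequence with limit $\phi^* \in \Phi_a$. Upper semi-continuity of the objective in $\phi$ (via Fatou's lemma applied to the exponential integral) then identifies $\phi^*$ as a maximizer. Uniqueness follows from the strict concavity of $L$ in $\phi$, inherited from the strict convexity of $t \mapsto e^t$.

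Part (iv) is a routine change of variables. Since $Q$ is by construction the pushforward of $P$ under $x \mapsto \|x\|_K$, we have $\int \phi(\|x\|_K)\,dP(x) = \int \phi(r)\,dQ(r)$, and the remaining terms of $\mathcal{L}(\phi, P)$ and $L(\phi, Q)$ coincide. If $P \in \mathcal{P}_p$, then $\int r\,dQ = \int \|x\|_K\,dP < \infty$, and $Q(\{0\}) = P(\{0\}) < 1$ using $0 \in \mathrm{int}(K)$ to conclude that $\|x\|_K = 0$ only at $x = 0$. Hence $Q \in \mathcal{Q}$, and $\varphi^*(P) = \phi^*(Q)$ follows from the identity of objectives together with the uniqueness from part (iii).
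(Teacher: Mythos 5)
Parts (i), (ii), and (iv) are essentially correct and follow the same spirit as the paper. Two small remarks on (i): the paper splits cleanly on whether $\lim_{r\to\infty}\phi(r)$ is finite (then the exponential integral diverges) or $-\infty$ (then concavity gives a linear upper bound $\phi(r)\leq -\alpha r+\beta$ with $\alpha>0$, so $\int\phi\,dQ=-\infty$). Your dichotomy misses one case: $\phi$ constant on $[a,r_0]$ and equal to $-\infty$ on $(r_0,\infty)$ for some finite $r_0>a$ is neither globally constant nor identically $-\infty$ on $(a,\infty)$, and it has no two distinct finite values, so you cannot form the slope $s$. It is still easily dispatched (here $\int r\,dQ=\infty$ forces $Q((r_0,\infty))>0$, whence $\int\phi\,dQ=-\infty$), but as written there is a hole. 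Your construction for (ii) is different from the paper's ($\phi_n(r)=\log n - n(r-a)$ versus the paper's $e^{\phi_n}=n\mathbbm{1}_{[a,a+1/n]}$) but equally valid.

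Part (iii) has a genuine gap. Your proof of $\sup_{\phi\in\Phi_a}L(\phi,Q)<\infty$ normalises to $L(\phi,Q)\leq\int\log h_\phi\,dQ - \log(p\lambda_p(K)) - (p-1)\int\log r\,dQ$ and then invokes the standard boundedness of $\sup_g\int\log g\,dQ$ over all log-concave $g$ for measures with \emph{non-degenerate support}. But $\mathcal{Q}_a$ contains measures for which this fails: $Q=\delta_b$ with $b>a$ satisfies $Q(\{a\})=0<1$ and $\int r\,dQ=b<\infty$, yet $\sup_{g\text{ log-concave}}\int\log g\,dQ=\sup_g\log g(b)=\infty$, because spikes at $b$ are admissible in the unconstrained log-concave class. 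In the constrained class $\mathcal{H}_a$, however, the monotonicity of $\phi$ forces $\phi(r)\geq\phi(b)$ on $[a,b]$, which ties $\phi(b)$ to the normalisation integral and keeps $L$ bounded; the paper exploits this directly via the explicit bound on $L(\phi,Q)$ in terms of $M=\phi(a)$ and $M'=\phi(a+\delta)$ over the classes $\mathcal{Q}_a(\delta,\epsilon)$. So you cannot reduce to the unconstrained theory here—the shape constraint is essential, and the paper's remark after the proposition (that the $K$-homothetic projection exists under strictly weaker conditions than the ordinary log-concave projection) makes precisely this point. A secondary issue: when $a=0$, the rewriting $L(\phi,Q)=\int\log h_\phi\,dQ - \log(p\lambda_p(K)) - (p-1)\int\log r\,dQ$ can be indeterminate ($\infty-\infty$) for $p\geq2$, since $\int\log r\,dQ$ may equal $-\infty$ for some $Q\in\mathcal{Q}_0$ (e.g.\ densities behaving like $r^{-1}\log^{-2}(1/r)$ near $0$), so the decomposition itself is not always well-defined.
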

\begin{remark}
From Proposition~\ref{Prop:ProjectionExistence}(iv), we see that the conditions on $P$ required for the $K$-homothetic log-concave projection to exist, namely $\int_{\mathbb{R}^p} \| x \|_K \, dP(x) < \infty$ and $P(\{0 \}) < 1$, are weaker than the corresponding conditions for the ordinary log-concave projection to exist, namely $\int_{\mathbb{R}^p} \| x \| \, dP(x) < \infty$ and $P(H) < 1$ for every hyperplane $H$; cf.~\citet[][Theorem~2.2]{dumbgen2011approximation}.
\end{remark}

The next proposition gives some basic properties of the $K$-homothetic log-concave projection. 

\begin{proposition}
  \label{Prop:ProjectionBasicProperties}
Let $Q \in \mathcal{Q}_{a}$, let $\phi^* := \phi_{a}^*(Q)$ and let $h^*(r) := p \lambda_p(K) r^{p-1}e^{\phi^*(r)}$ for $r \in [a,\infty)$.  
  \begin{enumerate}[(i)]
  \item The projection $\phi_{a}^*(\cdot)$ is scale equivariant in the sense that if $\alpha > 0$, and $Q_\alpha \in \mathcal{Q}_{\alpha a}$ is defined by $Q_{\alpha}\bigl( [\alpha a, r)\bigr) := Q\bigl( [a, r/\alpha) \bigr)$ for all $r \in [\alpha a, \infty)$, then $\phi^*_{\alpha a}(Q_\alpha)(r) = \phi^*_{a}(Q)(r/\alpha) - p\log \alpha$, and therefore $h^*_{\alpha a}(Q_\alpha)(r) = (1/\alpha) h^*_a(Q)(r/\alpha)$.
  \item Let $\Delta : [a, \infty) \rightarrow [-\infty, \infty)$ be a function satisfying the property that there exists $t > 0$ such that
    $r \mapsto \phi^*(r) + t \Delta(r) \in \Phi_{a}$. Then
    \[
      \int_a^\infty \Delta \, dQ(r) \leq \int_{a}^\infty \Delta(r) h^*(r) \, dr.
    \]
  \item $\int_{a}^\infty r h^*(r) \, dr \leq \int_a^\infty r \, dQ(r)$.
  \item For any $h_0 \in \mathcal{H}_a$, we have $\int_a^\infty h^* \log (h^*/h_0) \leq \int_a^\infty \log (h^*/h_0) \, dQ$.
  \end{enumerate}
\end{proposition}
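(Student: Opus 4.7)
For part (i), I would verify scale equivariance by a direct change-of-variables computation. The map $\phi \mapsto \psi$ defined by $\psi(r) := \phi(r/\alpha) - p\log\alpha$ is a bijection between $\Phi_a$ and $\Phi_{\alpha a}$ (concavity, monotonicity and upper semi-continuity are preserved). Using that $Q_\alpha$ is the pushforward of $Q$ under $s \mapsto \alpha s$, together with the substitution $s = r/\alpha$ in the exponential integral (whose Jacobian cancels the $\alpha^{-p}$ coming from $e^{\psi(r)}$), one obtains $L_{\alpha a, K}(\psi, Q_\alpha) = L_{a, K}(\phi, Q) - p\log\alpha$. Since the two functionals differ by a constant under this bijection, their unique maximisers correspond, giving the stated formula for $\phi^*_{\alpha a}(Q_\alpha)$; the expression for $h^*_{\alpha a}(Q_\alpha)$ is then immediate from its definition.

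For part (ii), the plan is a first-order optimality argument. Set $g(s) := L_{a, K}(\phi^* + s\Delta, Q)$ for $s \in [0, t]$: this is concave in $s$ (since $L$ is concave in its first argument) and attains its maximum at $s = 0$ by optimality of $\phi^*$, so the right derivative satisfies $g'(0+) \leq 0$. Interchanging derivative and integral yields
\[
g'(0+) = \int \Delta \, dQ - p\lambda_p(K) \int_a^\infty r^{p-1} \Delta(r) e^{\phi^*(r)} \, dr = \int \Delta \, dQ - \int_a^\infty \Delta h^*,
\]
from which the claim follows. The interchange is justified by the elementary observation that $s \mapsto (e^{sx}-1)/s$ is monotone nondecreasing in $s > 0$ with limit $x$ as $s \downarrow 0$, so monotone convergence (applied to the positive and negative parts of $\Delta$, with $\phi^* + t\Delta \in \Phi_a$ providing an integrable envelope against $r^{p-1} e^{\phi^*(r)}$) makes the argument rigorous.

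Parts (iii) and (iv) will then be direct applications of (ii) with well-chosen $\Delta$. For (iii), I would take $\Delta(r) := -r$; then $\phi^*(r) - tr$ is still decreasing, concave and upper semi-continuous, hence in $\Phi_a$ for every $t \geq 0$, and (ii) gives $-\int r \, dQ \leq -\int r h^*$, which rearranges to the claim. For (iv), writing $h_0(r) = p\lambda_p(K) r^{p-1} e^{\phi_0(r)}$ with $\phi_0 \in \Phi_a$, I would take $\Delta := \phi_0 - \phi^*$; then for $t \in [0, 1]$, $\phi^* + t\Delta = (1-t)\phi^* + t\phi_0 \in \Phi_a$ by convexity of $\Phi_a$, so (ii) applies and yields $\int(\phi_0 - \phi^*)\, dQ \leq \int(\phi_0 - \phi^*) h^*$. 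Since $\log(h^*/h_0) = \phi^* - \phi_0$, this is exactly $\int h^* \log(h^*/h_0) \leq \int \log(h^*/h_0) \, dQ$.

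The main obstacle I anticipate is the derivative interchange in (ii): one must handle with care the case where $\phi^*$ and $\Delta$ may take the value $-\infty$ on portions of $[a, \infty)$, ensuring that the envelope used for monotone convergence is genuinely integrable on the effective domain of $\phi^*$. Once this variational inequality is in hand, parts (iii) and (iv) are routine corollaries.
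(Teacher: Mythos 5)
Your proposal mirrors the paper's proof: (i) via the bijection $\phi \mapsto \phi(\cdot/\alpha) - p\log\alpha$ between $\Phi_a$ and $\Phi_{\alpha a}$ together with a change of variables, (ii) via a first-order optimality inequality followed by a limit--integral interchange, and (iii)--(iv) as the specialisations $\Delta(r) = -r$ and $\Delta = \phi_0 - \phi^*$ of (ii). One point worth tightening in (ii): membership of $\phi^* + t_0\Delta$ in $\Phi_a$ does not by itself yield an integrable envelope on $\{\Delta \geq 0\}$ (a decreasing concave function can be constant, making $\int_a^\infty r^{p-1} e^{\phi^*(r) + t_0\Delta(r)}\,dr$ diverge), so as in the paper one must additionally note that $\int_a^\infty r^{p-1}e^{\phi^*(r)}\,dr < \infty$ forces $\phi^*(r) \to -\infty$ and then shrink $t_0$ so that $\int_a^\infty \Delta(r) r^{p-1} e^{\phi^*(r)+t_0\Delta(r)}\,dr < \infty$ on $\{\Delta \geq 0\}$, after which your monotone-convergence argument (decreasing on $\{\Delta \geq 0\}$, increasing on $\{\Delta < 0\}$) goes through and in fact handles the possibility $\int_a^\infty \Delta\, h^* = -\infty$ without a separate case.
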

\begin{remark}
Proposition~\ref{Prop:ProjectionBasicProperties}(iii) reveals a difference between the $K$-homothetic log-concave projection, and the ordinary log-concave projection, which preserves the mean \citep[][Remark~2.3]{dumbgen2011approximation}.  Lemma~\ref{Lem:MLEMeanPreservation} provides control on the extent to which the mean is shrunk by the $K$-homothetic log-concave projection in the special case where $Q$ is an empirical distribution.
  \end{remark}
In particular, consider $X_1,\ldots,X_n \stackrel{\mathrm{iid}}{\sim} P \in \mathcal{P}_p$ with empirical distribution $\mathbb{P}_n$, and for $A \in \mathcal{B}(\mathbb{R})$, let $Q(A) := P(\{x:\|x\|_K \in A\})$.  Let $Z_i := \|X_i\|_K$ for $i \in [n]$ and let $\mathbb{Q}_n$ denote the empirical distribution of $Z_1, \ldots, Z_n$.  Writing $\hat{f}_n := f^*(\mathbb{P}_n)$, $f^* := f^*(P)$, $\hat{h}_n := h^*(\mathbb{Q}_n)$ and $h^* := h^*(Q)$, we have by Proposition~\ref{Prop:ProjectionBasicProperties}(iv) and Lemma~\ref{Lem:ChangeOfVar} that
\begin{align*}
d_X^2(\hat{f}_n,f^*) = \int_{\mathbb{R}^p} \log \frac{\hat{f}_n}{f^*} \, d\mathbb{P}_n = \int_0^\infty \log \frac{\hat{h}_n}{h^*} \, d\mathbb{Q}_n \geq \int_0^\infty \hat{h}_n \log \frac{\hat{h}_n}{h^*} &= \int_{\mathbb{R}^p} \hat{f}_n \log \frac{\hat{f}_n}{f^*} \\
&= d_{\mathrm{KL}}^2(\hat{f}_n,f^*).
\end{align*}
As a final basic property of our projections, we establish continuity with respect to the Wasserstein distance.  Recall that if $P,P'$ are probability measures on a Euclidean space with finite first moments, then the Wasserstein distance between $P$ and $P'$ is defined as
\[
d_\mathrm{W}(P,P') := \inf_{(X,X') \sim (P,P')} \mathbb{E}\|X-X'\|,
\]
where the infimum is taken over all pairs of random vectors $X,X'$, defined on the same probability space, with $X \sim P$ and $Y \sim Q$.  We also recall that if $P$ has a finite first moment, then $d_\mathrm{W}(P_n,P) \rightarrow 0$ if and only if both $P_n \stackrel{d}{\rightarrow} P$ and $\int_{\mathbb{R}^p} \|x\| \, dP_n(x) \rightarrow \int_{\mathbb{R}^p} \|x\| \, dP(x)$.
\begin{proposition}
\label{Prop:Continuity}
Suppose that $P \in \mathcal{P}_p$ and $d_\mathrm{W}(P_n,P) \rightarrow 0$.  Then $\sup_{\phi \in \Phi} \mathcal{L}(\phi,P_n) \rightarrow \sup_{\phi \in \Phi} \mathcal{L}(\phi,P)$, $f^*(P_n)$ is well-defined for large $n$ and 
\begin{equation}
\label{Eq:TVConv}
\int_{\mathbb{R}^p} |f^*(P_n)(x) - f^*(P)(x)| \, dx \rightarrow 0.
\end{equation}
Moreover, given any compact set $D$ contained in the interior of the support of $f^*(P)$, we have $\sup_{x \in D} |f^*(P_n)(x) - f^*(P)(x)| \rightarrow 0$. 
\end{proposition}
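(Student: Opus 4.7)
The plan is to reduce the $p$-dimensional statement to its one-dimensional analogue for the radial projection $h^*$ on $\mathcal{Q}$, and then to adapt standard continuity arguments for log-concave projections to the weighted setting. First, let $Q_n, Q$ be the push-forwards of $P_n, P$ under $x \mapsto \|x\|_K$. Since $0 \in \mathrm{int}(K)$, the functional $\|\cdot\|_K$ is Lipschitz with respect to the Euclidean norm, so $d_\mathrm{W}(Q_n, Q) \to 0$, and in particular $Q_n$ converges weakly to $Q$ with $\int r\,dQ_n(r) \to \int r\,dQ(r) < \infty$; weak convergence together with $P(\{0\}) < 1$ also ensures that $P_n \in \mathcal{P}_p$ for large $n$, so $f^*(P_n)$ is well-defined eventually. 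By Proposition~\ref{Prop:ProjectionExistence}(iv), $\sup_{\phi \in \Phi} \mathcal{L}(\phi, P_n) = \sup_{\phi \in \Phi} L(\phi, Q_n)$ and $\varphi^*(P_n) = \phi^*(Q_n)$, and by the change of variables $\int_{\mathbb{R}^p} g(\|x\|_K)\,dx = p\lambda_p(K) \int_0^\infty g(r) r^{p-1}\,dr$,
\[
\int_{\mathbb{R}^p} |f^*(P_n)(x) - f^*(P)(x)| \, dx = \int_0^\infty |h^*(Q_n)(r) - h^*(Q)(r)| \, dr,
\]
with an analogous reduction for the uniform convergence claim. Hence it suffices to prove the corresponding one-dimensional continuity statement for $h^*$.

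Writing $\phi_n^* := \phi^*(Q_n)$ and $h_n^* := h^*(Q_n)$, the next step is to establish compactness of $(\phi_n^*)$. By Proposition~\ref{Prop:ProjectionBasicProperties}(iii), $\int_0^\infty r\, h_n^*(r)\,dr \leq \int_0^\infty r\,dQ_n(r)$, so $h_n^*$ has uniformly bounded first moment. Combined with the concavity and monotonicity of $\phi_n^*$, this forces the essential support of $h_n^*$ to lie in some common compact interval $[0, M]$ and $\|h_n^*\|_\infty$ to be uniformly bounded; monotonicity and the normalisation $\int_0^\infty h_n^* = 1$ in turn yield uniform pointwise upper bounds on $\phi_n^*$ away from the origin. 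A Helly-type selection theorem for concave functions then produces a subsequence (not relabelled) along which $\phi_n^*$ converges pointwise on $(0, M)$ to some $\phi_\infty \in \Phi$.

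To identify $\phi_\infty$ and upgrade the convergence, I would pass to the limit in the optimality inequality $L(\phi_n^*, Q_n) \geq L(\phi^*(Q), Q_n)$. The right-hand side tends to $L(\phi^*(Q), Q)$ because $\phi^*(Q)$ is continuous with linear growth on its domain and $d_\mathrm{W}(Q_n, Q) \to 0$; on the left, the normalisation integral converges by dominated convergence using the uniform bounds from the previous step, and $\int \phi_n^*\,dQ_n \to \int \phi_\infty\,dQ$ by uniform convergence of $\phi_n^*$ on compact subsets of $(0, M)$ together with $d_\mathrm{W}(Q_n, Q) \to 0$. This gives $L(\phi_\infty, Q) \geq L(\phi^*(Q), Q)$, and strict concavity of $L(\cdot, Q)$ forces $\phi_\infty = \phi^*(Q)$; a standard subsequence-of-subsequence argument upgrades this to pointwise convergence of the full sequence. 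Pointwise convergence of concave functions to a continuous limit is automatically uniform on compact subsets of the interior of the limiting domain, yielding the stated uniform convergence of $f^*(P_n)$; Scheff\'e's lemma applied to the probability densities $h_n^*$ and $h^*$ then gives the total variation convergence, and convergence of the suprema of $\mathcal{L}$ follows from the above.

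The main obstacle lies in the compactness step. Escape of mass to infinity is controlled cleanly by the uniform first-moment bound, but concentration of $h_n^*$ near $r = 0$ is subtler because the weight $r^{p-1}$ in the normalisation constraint vanishes at the origin, so a blow-up $\phi_n^*(0) \to \infty$ is not directly penalised by the normalisation alone. To rule this out, I would exploit the decreasing property of $\phi \in \Phi$ together with the fact that $P(\{0\}) < 1$ forces $Q_n([\epsilon, \infty))$ to be bounded away from zero for some $\epsilon > 0$; any mass of $h_n^*$ fleeing to the origin would force $\phi_n^*$ to be very negative on $[\epsilon, \infty)$, pushing $\int_{[\epsilon,\infty)} \phi_n^*\, dQ_n$ towards $-\infty$ and contradicting the uniform lower bound on $\sup_{\phi \in \Phi} L(\phi, Q_n)$ obtained, for example, by evaluating $L$ at a fixed feasible $\phi$.
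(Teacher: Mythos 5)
Your overall strategy — reduce to the one-dimensional radial problem via the Lipschitz property of $\|\cdot\|_K$, establish compactness of $(\phi_n^*)$, identify the limit through the optimality inequalities, and finish with Scheff\'e together with uniform convergence of concave functions — matches the paper's proof (which, after the reduction, follows the proof of Theorem~4.5 of D\"umbgen et al.). You are also right that the delicate point is ruling out blow-up of $\phi_n^*$ at the origin, and the paper's argument for this is essentially the one you sketch in your final paragraph. However, the compactness step as you wrote it contains a genuine error: a uniform bound on $\int_0^\infty r\,h_n^*(r)\,dr$ does \emph{not} force the essential support of $h_n^*$ to lie in a common compact interval $[0,M]$. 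Densities of the form $h^*(r) = p\lambda_p(K) r^{p-1}e^{\phi^*(r)}$ with $\phi^* \in \Phi$ typically have unbounded support with exponential tails (take, for instance, $\phi^*(r) = -r$), and a finite first moment is entirely compatible with this. Since you then invoke Helly selection on $(0,M)$ and argue $\int \phi_n^*\,dQ_n \to \int \phi_\infty\,dQ$ via uniform convergence on compact subsets of $(0,M)$, this false claim is load-bearing, not cosmetic. The correct compactness argument instead produces \emph{two} pointwise-type bounds, $\limsup_n \sup_r \phi_n^*(r) < \infty$ (from the $Q(\{0\})<1$ argument you outline) and $\liminf_n \phi_n^*(r) > -\infty$ for each $r$ strictly inside the support of $Q$, and then handles the unbounded support and the possibility that $\phi_n^*$ equals $-\infty$ beyond some finite point via the technology of D\"umbgen et al.

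A second, related gap appears in the identification step. You pass to the limit in $L(\phi_n^*, Q_n) \geq L(\phi^*(Q), Q_n)$ and assert $L(\phi^*(Q), Q_n) \to L(\phi^*(Q), Q)$ using linear growth of $\phi^*(Q)$. But $\phi^*(Q)$ may take the value $-\infty$ beyond some finite $r_0$ (whenever $Q$ has bounded support), and Wasserstein convergence of $Q_n$ to $Q$ does not prevent $Q_n$ from placing mass on $(r_0,\infty)$; in that case $L(\phi^*(Q), Q_n) = -\infty$ and the inequality carries no information. The paper sidesteps this by bounding $L(\phi_n^*, Q_n)$ below by $L(\phi, Q_n)$ with the fixed, everywhere-finite test function $\phi(r) = -r$, which converges to $L(\phi, Q) > -\infty$ without any issue. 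To repair your argument you should either use such a fixed benchmark, or approximate $\phi^*(Q)$ by a truncation that is everywhere finite.
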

\begin{remark}
Proposition~\ref{Prop:Continuity} immediately yields a consistency (and robustness to misspecification) result for the $K$-homothetic log-concave MLE.  In particular, suppose that  $X_1,\ldots,X_n \stackrel{\mathrm{iid}}{\sim} P \in \mathcal{P}_p$ with empirical distribution $\mathbb{P}_n$, and let $\hat{f}_n := f^*(\mathbb{P}_n)$, $f^* := f^*(P)$.  Then, by the strong law of large numbers and Varadarajan's theorem \citep[e.g.][Theorem~11.4.1]{dudley2002real}, we have $d_{\mathrm{W}}(\mathbb{P}_n,P) \stackrel{\mathrm{a.s.}}{\rightarrow} 0$, so
\[
\int_{\mathbb{R}^p} |\hat{f}_n - f^*| \stackrel{\mathrm{a.s.}}{\rightarrow} 0.
\]
\end{remark}
\begin{remark}
In fact, the conclusion of Proposition~\ref{Prop:Continuity} also holds for stronger norms than the total variation norm.  In particular, taking $a_0 > 0$ and $b_0 \in \mathbb{R}$ such that $f^*(P)(x) \leq e^{-a_0\|x\|+b_0}$ for all $x \in \mathbb{R}^p$, we have by, e.g., \citet[][Proposition~2]{cule2010theoretical} that for every $a < a_0$,
\[
\int_{\mathbb{R}^p} e^{a\|x\|}|f^*(P_n)(x) - f^*(P)(x)| \, dx \rightarrow 0.
\]
\end{remark}

\section{Risk bounds when \texorpdfstring{$K$}{K} is known}
\label{Sec:Kknown}

In this section we continue to consider $K \in \mathcal{K}$ as fixed (and known) and $\mu=0$. Let $f_0 \in \mathcal{F}_p^{K}$, and suppose that $X_1,\ldots, X_n \stackrel{\mathrm{iid}}{\sim} f_0$ with empirical distribution $\mathbb{P}_n$. Let $\hat{f}_n := f^*(\mathbb{P}_n)$ be the $K$-homothetic log-concave MLE. By Proposition~\ref{Prop:ProjectionExistence}(iv), we may compute $\hat{f}_n$ efficiently by first computing $Z_i = \| X_i \|_K$ for $i\in [n]$ and then, writing $\mathbb{Q}_n$ for the empirical distribution of $Z_1, \ldots, Z_n$, computing $\hat{\phi}_n := \phi^*(\mathbb{Q}_n)$.  Our final estimate is $\hat{f}_n := e^{\hat{\phi}_n}$. We defer algorithmic details to Section~\ref{Sec:Algorithm}.

\subsection{Worst-case bound}

Our first main result below provides a worst-case risk bound for $\hat{f}_n$ as an estimator of $f_0$ in terms of the $d_X^2$ divergence.
\begin{theorem}
\label{Thm:KKnownWorstCase}
Let $X_1,\ldots, X_n \stackrel{\mathrm{iid}}{\sim} f_0 \in \mathcal{F}_p^K$ with empirical distribution $\mathbb{P}_n$. Let $\hat{f}_n := f^*(\mathbb{P}_n)$ be the $K$-homothetic log-concave MLE.  There exists a universal constant $C > 0$ such that for $n \geq 8$,
  \[
    \mathbb{E}d^2_X(\hat{f}_n,f_0) \leq \frac{C}{n^{4/5}}.
  \]
\end{theorem}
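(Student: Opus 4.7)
The plan is to exploit the fact that, thanks to Proposition~\ref{Prop:ProjectionExistence}(iv) and the change-of-variables identity stated just before the theorem, the $p$-dimensional MLE can be recast as a one-dimensional log-concave MLE on $[0,\infty)$, for which Kim--Samworth-type empirical-process machinery is available. The main technical hurdle will be obtaining a rate whose leading constant is independent of $p$ (and of $K$).

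Concretely, set $Z_i := \|X_i\|_K$ and write $\mathbb{Q}_n$ for their empirical distribution. By Proposition~\ref{Prop:NormDirectionIndep}, $Z_1$ has density $h_0(r) = p\lambda_p(K) r^{p-1} e^{\phi_0(r)} \in \mathcal{H}_0$, and by Proposition~\ref{Prop:ProjectionExistence}(iv) one has $\hat{\phi}_n = \phi^*(\mathbb{Q}_n)$ and $\hat{h}_n = h^*(\mathbb{Q}_n)$. The change-of-variables calculation immediately preceding the theorem gives
\[
d_X^2(\hat{f}_n,f_0) \;=\; \int_0^\infty \log\frac{\hat{h}_n}{h_0}\, d\mathbb{Q}_n \;=\; \int_0^\infty (\hat{\phi}_n-\phi_0)\, d\mathbb{Q}_n.
\]
Crucially, although $\hat{h}_n$ and $h_0$ each depend on $p$ through the $r^{p-1}$ factor, that factor \emph{cancels} in the log-ratio, leaving only the difference of two functions in $\Phi$, i.e.\ of two concave decreasing functions.

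Next I would invoke a one-sided empirical-process bound. Adding and subtracting the population expectation,
\[
d_X^2(\hat{h}_n,h_0) \;+\; d_{\mathrm{KL}}(h_0,\hat{h}_n) \;=\; \int_0^\infty (\hat{\phi}_n-\phi_0)\, d(\mathbb{Q}_n-Q_0),
\]
and both summands on the left-hand side are nonnegative. Since each $h \in \mathcal{H}_0$ is a univariate log-concave density on $[0,\infty)$, the function class $\{\hat{\phi}_n-\phi_0\}$ embeds into the class of differences of logarithms of (univariate) log-concave densities. The bracketing-entropy bound of order $\epsilon^{-1/2}$ for this class, established by \citet{kim2016global} for log-concave densities on $\mathbb{R}$, can be lifted to $[0,\infty)$ by a straightforward truncation. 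Combining this with a peeling/localisation argument (Wong--Shen/van~de~Geer) in the Hellinger metric (which controls both summands on the left above) yields the desired $n^{-4/5}$ rate for $\mathbb{E} d_X^2(\hat{h}_n,h_0)$; the exponent $4/5$ comes from balancing $\epsilon^{-1/2}$-type entropy against a quadratic modulus of continuity in the usual way.

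The main obstacle is ensuring the constant $C$ is a \emph{universal} constant, not one depending on $p$ or on the scale of $h_0$. The density $h_0$ can be very peaked, with mode near the point where $\phi_0'(r^*) = -(p-1)/r^*$, so naive envelopes degrade with $p$. I would address this in two moves: first, exploit the scale-equivariance of Proposition~\ref{Prop:ProjectionBasicProperties}(i) to rescale and assume some normalisation such as $\int_0^\infty r\, dQ_0(r) = 1$; second, observe that after normalisation, univariate log-concavity together with Proposition~\ref{Prop:ProjectionBasicProperties}(iii) provides uniform (and $p$-free) tail and envelope bounds sufficient to support the bracketing-entropy/localisation argument. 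This delicate normalisation step is where the bulk of the work lies, since it must simultaneously control the empirical process at small radii and kill any apparent $p$-dependence coming from $\|h_0\|_\infty$ or the location of its mode.
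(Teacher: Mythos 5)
Your proposal follows essentially the same route as the paper: reduce to the one-dimensional projection via Proposition~\ref{Prop:ProjectionExistence}(iv), use the scale equivariance of Proposition~\ref{Prop:ProjectionBasicProperties}(i) to normalise, combine a Kim--Samworth-type $\epsilon^{-1/2}$ bracketing-entropy bound with van~de~Geer localisation, and obtain $p$-independence through moment control of the projected estimator. The one step your sketch leaves thin is that moment control: Proposition~\ref{Prop:ProjectionBasicProperties}(iii) only bounds the mean in one direction, whereas the paper needs Lemmas~\ref{Lem:MLEMeanPreservation}, \ref{Lem:MLEVarPreservation} and~\ref{Lem:MLEMeanVarPreservation} to show that with high probability $\hat{h}_n$ lies in a class with \emph{both} mean and variance confined to $p$-free ranges, which is precisely what licenses the $p$-free bracketing-entropy bound of Lemma~\ref{Lem:NonlocalBracketingEntropy}.
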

As mentioned in the introduction, the attractive aspect of this bound is that it does not depend on $p$.  The proof relies heavily on the special moment preservation properties of the $K$-homothetic log-concave MLE developed in Lemmas~\ref{Lem:MLEMeanPreservation},~\ref{Lem:MLEVarPreservation} and~\ref{Lem:MLEMeanVarPreservation}.

\subsection{Adaptive bounds}
\label{Sec:KknownAdaptation}

We now turn to the adaptation properties of $\hat{f}_n$.  For $k \in \mathbb{N}$ and $a \geq 0$, we say $\phi \in \Phi_a$ is \emph{$k$-affine}, and write $\phi \in \Phi_a^{(k)}$, if there exist $r_0 \in (a,\infty]$ and intervals $I_1,\ldots,I_{k}$ with $I_j = [a_{j-1},a_j]$ for some $a = a_0 < a_1 < \ldots < a_k = r_0$ and such that $\phi$ is affine on each $I_j$ for $j \in [k]$, and $\phi(r) = -\infty$ for $r > r_0$.  Define $\mathcal{H}_a^{(k)} := \bigl\{ h \in \mathcal{H}_a\,:\, h(r) = p\lambda_p(K) r^{p-1}e^{\phi(r)} \textrm{ for some } \phi \in \Phi_a^{(k)}\bigr\}$.  Again, we write $\Phi^{(k)} := \Phi_0^{(k)}$ and $\mathcal{H}^{(k)} := \mathcal{H}_0^{(k)}$.
\begin{theorem}
\label{Thm:KknownAdaptation}
Let $f_0 \in \mathcal{F}_p^K$ be given by $f_0(\cdot) = e^{\phi_0(\|\cdot\|_K)}$ for some $\phi_0 \in \Phi$, and let $X_1,\ldots, X_n \stackrel{\mathrm{iid}}{\sim} f_0$ with empirical distribution $\mathbb{P}_n$. Let $\hat{f}_n := f^*(\mathbb{P}_n)$ be the $K$-homothetic log-concave MLE.  Define $h_0 \in \mathcal{H}$ by $h_0(r) := p\lambda_p(K)r^{p-1}e^{\phi_0(r)}$ for $r \in [0,\infty)$.  Then, writing $\nu_k := 2^{1/2} \wedge \inf_{h \in \mathcal{H}^{(k)}} d_{\mathrm{KL}}(h_0,h)$, there exists a universal constant $C > 0$ such that for $n \geq 8$,
\[
    \mathbb{E}d_X^2(\hat{f}_n,f_0) \leq C \min_{k \in [n]} \biggl(\frac{k^{4/5}\nu_k^{2/5}}{n^{4/5}} \log \frac{en}{k\nu_k} + \frac{k}{n}\log^{5/4}\frac{en}{k}\biggr).
  \]
\end{theorem}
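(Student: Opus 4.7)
The plan is to reduce to a one-dimensional problem on $[0,\infty)$, then apply an adaptive maximum-likelihood analysis in the spirit of the univariate log-concave adaptation theory of Kim, Guntuboyina and Samworth, suitably modified to handle the weighted density class $\mathcal{H}$. By Proposition~\ref{Prop:ProjectionExistence}(iv), writing $Z_i := \|X_i\|_K$ with empirical measure $\mathbb{Q}_n$ and population law $Q$ (which has density $h_0$ by Proposition~\ref{Prop:NormDirectionIndep}), we have $\hat{f}_n(\cdot) = e^{\hat{\phi}_n(\|\cdot\|_K)}$ with $\hat{\phi}_n = \phi^*(\mathbb{Q}_n)$, and by the change of variables discussion preceding Proposition~\ref{Prop:Continuity}, $d_X^2(\hat{f}_n,f_0) = \mathbb{Q}_n\bigl[\log(\hat{h}_n/h_0)\bigr]$. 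So it suffices to bound this one-dimensional quantity.

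The key analytic step is the fundamental identity
\[
d_X^2(\hat{h}_n,h_0) + d_{\mathrm{KL}}(h_0,\hat{h}_n) \;=\; (\mathbb{Q}_n - Q)\bigl[\log(\hat{h}_n/h_0)\bigr],
\]
(which follows from $Q[\log(\hat{h}_n/h_0)] = -d_{\mathrm{KL}}(h_0,\hat{h}_n)$), so that, dropping the non-negative $d_{\mathrm{KL}}$ term, it remains to control the empirical process $(\mathbb{Q}_n - Q)[\log(\hat{h}_n/h_0)]$ in expectation. For a fixed $k \in [n]$, we then pick $h_k \in \mathcal{H}^{(k)}$ nearly attaining $\nu_k$, and split
\[
(\mathbb{Q}_n - Q)\bigl[\log(\hat{h}_n/h_0)\bigr] \;=\; (\mathbb{Q}_n - Q)\bigl[\log(\hat{h}_n/h_k)\bigr] \;+\; (\mathbb{Q}_n - Q)\bigl[\log(h_k/h_0)\bigr].
\]
The second summand is an iid sum of mean-zero variables whose variance scales with $d_{\mathrm{KL}}(h_0,h_k) \le \nu_k^2$ (up to boundedness adjustments coming from the log-concave envelope), so Bernstein's inequality gives fluctuations of order $\nu_k/\sqrt{n}$, contributing to the $\nu_k$-dependent cross term.

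The first summand requires a localised empirical-process bound via peeling on the Hellinger radius $t = d_{\mathrm{H}}(\hat{h}_n,h_k)$. The localised bracketing entropy of $\mathcal{H}$ around a $k$-affine element is of the form $\log N_{[\,]}(\varepsilon,\mathcal{H}\cap B_{\mathrm{H}}(h_k,t),d_{\mathrm{H}}) \lesssim k\log(t/\varepsilon) + (t/\varepsilon)^{1/2}$, reflecting the finite-dimensional part from $k$-affine perturbations and the generic log-concave bracketing bound that underlies Theorem~\ref{Thm:KKnownWorstCase}. Chaining, followed by balancing the local Rademacher complexity against $t^2$, yields the two-piece bound: the $k/n \cdot \log^{5/4}(en/k)$ arises from the finite-dimensional $k$ term when $t$ is small, and the $k^{4/5}\nu_k^{2/5}/n^{4/5} \log(en/(k\nu_k))$ arises from the $(t/\varepsilon)^{1/2}$ term combined with the approximation radius $\nu_k$ through the standard interaction between entropy integral and noise level. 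The case $\nu_k = \sqrt{2}$ reduces to Theorem~\ref{Thm:KKnownWorstCase}.

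The main obstacle will be the last step: establishing $p$-independent local bracketing entropy bounds for $\mathcal{H}$ near a $k$-affine element, in Hellinger distance, of the precise form above. The $r^{p-1}$ weight in $\mathcal{H}$ means one cannot invoke univariate log-concave entropy bounds off the shelf; a direct analysis is required that exploits the cancellation between the $r^{p-1}$ factor and the mass it receives under $h_k$, together with tail decay of log-concave densities, to keep the entropy bound free of $p$, consistent with the $p$-free guarantee of Theorem~\ref{Thm:KKnownWorstCase}. A secondary technical point is carrying through the $\log^{5/4}$ factor (rather than $\log^{3/2}$ or worse) in the parametric-like term, which demands a careful peeling and an improved chaining bound on a suitable envelope for $\log(\hat{h}_n/h_k)$.
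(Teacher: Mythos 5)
Your proposal takes a genuinely different route from the paper, and it contains a gap that you flag but do not fill, and which I do not believe can be filled along the lines you suggest.

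The paper proves Theorem~\ref{Thm:KknownAdaptation} by a divide-and-conquer argument, not by a global entropy bound. It fixes a near-optimal $h_*\in\mathcal{H}^{(k)}$ with generator $\phi_*$ affine on intervals $I_1,\ldots,I_k$, lets $n_j$ be the number of data points in $I_j$, and constructs a surrogate $\tilde\phi_n$ that on each well-populated interval $I_j$ is the \emph{local} MLE (for the conditional data on $I_j$, suitably renormalised by $\log(n/n_j)$). The chain of inequalities~\eqref{Eq:LongDisplay} shows $\tilde\phi_n$ dominates $\hat\phi_n$ in the log-likelihood objective, which yields the decomposition $\mathbb{E}d_X^2(\hat h_n,h_0) \le \sum_j \mathbb{E}\frac{n_j}{n}\bigl(\cdots\bigr) + \mathbb{E}\sum_j\frac{n_j}{n}\log\frac{n_j}{np_j} + \cdots$. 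Conditionally on the interval counts, the per-interval term falls into the scope of Proposition~\ref{Prop:AdaptiveRateAffine} (the $k=1$ case, near a $1$-affine element), and the $k$ resulting bounds are aggregated by several applications of Jensen's inequality. The $\frac{k}{n}\log^{5/4}(en/k)$ parametric-like term emerges from this aggregation, not from a finite-dimensional entropy term.

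The decisive reason the paper does \emph{not} attempt your global approach is that the local bracketing-entropy bound you posit, $\log N_{[\,]}(\varepsilon,\mathcal{H}\cap B_{\mathrm{H}}(h_k,t),d_{\mathrm{H}})\lesssim k\log(t/\varepsilon)+(t/\varepsilon)^{1/2}$, has no basis. The local neighbourhood of a $k$-affine $h_k$ inside $\mathcal{H}$ is infinite-dimensional, not a $k$-parameter family, so a $k\log(t/\varepsilon)$ term cannot appear. Moreover, the mechanism by which the $1$-affine local entropy is small in Lemmas~\ref{Lem:LocalBracketing} and~\ref{Lem:GeneralLocalBracketing} is that for $h_1\in\mathcal{H}^{(1)}$ and any $h\in\mathcal{H}$, the log-ratio $\log(h/h_1)$ is \emph{concave}; this fails for $h_k\in\mathcal{H}^{(k)}$ once $k\ge 2$, because $\log h_k$ is only piecewise concave. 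So the concavity trick that underlies the improved entropy is unavailable globally and only recovers its power when applied interval by interval, which is exactly the paper's strategy. Without that, any entropy bound you could reasonably prove would grow at least linearly in $k$ in the $(\delta/\varepsilon)^{1/2}\log^{5/4}(1/\delta)$ regime, and localization would then give a rate $\asymp k^{4/5}n^{-4/5}\cdot(\mathrm{polylog})$ in all regimes, which cannot reach the near-parametric $\frac{k}{n}\log^{5/4}(en/k)$ when $\nu_k$ is tiny.

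A secondary but real problem is the Bernstein step: $\mathrm{Var}_{h_0}\bigl(\log(h_k/h_0)\bigr)$ is not bounded by $d_{\mathrm{KL}}(h_0,h_k)$ or $\nu_k^2$ in general; a heavy right tail of $h_k/h_0$ can blow up the variance while keeping the KL divergence small, so the claimed $\nu_k/\sqrt n$ fluctuation of $(\mathbb{Q}_n-Q)[\log(h_k/h_0)]$ needs a different argument. Your decomposition identity $d_X^2(\hat h_n,h_0)+d_{\mathrm{KL}}(h_0,\hat h_n)=(\mathbb{Q}_n-Q)[\log(\hat h_n/h_0)]$ is correct and is a fine starting point, but the subsequent entropy and concentration steps do not go through as sketched. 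The missing idea is the interval-wise localization that the paper builds into~\eqref{Eq:LongDisplay}, together with the conditional application of Proposition~\ref{Prop:AdaptiveRateAffine}.
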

\begin{remark}
Taking the universal constant $C > 0$ from the conclusion of Theorem~\ref{Thm:KknownAdaptation} and setting $C_* := \max\{(3C/2)^{5/4},1\}$, we see that if $k \in [n]$ and if $\nu_k^2 \geq C_* \frac{k}{n} \log^{5/4} \bigl(\frac{en}{k}\bigr)$, then
\[
C\frac{k^{4/5}\nu_k^{2/5}}{n^{4/5}}\log\frac{en}{k\nu_k} \leq C\nu_k^2\biggl\{\frac{1}{C_*^{4/5}\log(en/k)}\log\biggl(\frac{en^{3/2}}{C_*^{1/2}k^{3/2} \log^{5/8}(en/k)}\biggr)\biggr\} \leq \nu_k^2.
\]
On the other hand, if $k \in [n]$ and if $\nu_k^2 \leq C_* \frac{k}{n} \log^{5/4} \bigl(\frac{en}{k}\bigr)$, then
\[
\frac{k^{4/5}\nu_k^{2/5}}{n^{4/5}}\log\frac{en}{k\nu_k} \lesssim \frac{k}{n}\log^{1/4}\Bigl(\frac{en}{k}\Bigr)\log\biggl(\frac{en^{3/2}}{k^{3/2}\log^{5/8}(en/k)}\biggr) \lesssim \frac{k}{n}\log^{5/4} \frac{en}{k}.
\]
It follows that Theorem~\ref{Thm:KknownAdaptation} implies the following sharp oracle inequality: there exists a universal constant $C > 0$ such that  
\[
\mathbb{E}d^2_X(\hat{f}_n,f_0) \leq \min_{k \in [n]}\biggl(\nu_k^2 + C\frac{k}{n}\log^{5/4} \frac{en}{k}\biggr).
\]
\end{remark}
The proof of Theorem~\ref{Thm:KknownAdaptation} proceeds by first considering the case $k=1$, described in Proposition~\ref{Prop:AdaptiveRateAffine} below, for which we obtain a slightly different approximation error term. 
\begin{proposition}
  \label{Prop:AdaptiveRateAffine}
  Let $a \in [0, \infty)$ and suppose that $Z_1,\ldots,Z_n \stackrel{\mathrm{iid}}{\sim} h_0$ for some $h_0 \in \mathcal{H}_{a}$ with empirical distribution function $\mathbb{Q}_n$, and let $\hat{h}_n := h_{a}^*(\mathbb{Q}_n)$.  Set $\nu := \inf \{  d_{\mathrm{H}}(h_0, h) : h \in \mathcal{H}_{a}^{(1)},\, h_0 \ll  h\}$.  Then there exists a universal constant $C > 0$ such that for $n \geq 8$,
  \[
 \frac{1}{n}\sum_{i=1}^n \mathbb{E} \log \frac{\hat{h}_n(Z_i)}{h_0(Z_i)} \leq  C\biggl(\frac{\nu^{2/5}}{n^{4/5}}\log\frac{en}{\nu} + \frac{1}{n}\log^{5/4}(en)\biggr).
  \]
\end{proposition}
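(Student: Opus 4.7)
The plan is to combine an algebraic decomposition of the empirical log-likelihood excess risk with a localized, peeled empirical process argument that exploits the small bracketing entropy of the $1$-affine subclass $\mathcal{H}_a^{(1)}$. Fix $\eta>0$ and choose $h_1\in\mathcal{H}_a^{(1)}$ with $h_0\ll h_1$ and $d_\mathrm{H}(h_0,h_1)\le\nu+\eta$; let $P_0$ denote the distribution with density $h_0$. Writing $\log(\hat h_n/h_0)=\log(\hat h_n/h_1)+\log(h_1/h_0)$, taking expectations, and using the two identities $\mathbb{E}\int\log(h_1/h_0)\,d\mathbb{Q}_n=-d_{\mathrm{KL}}(h_0,h_1)$ and $\int h_0\log(\hat h_n/h_1)=d_{\mathrm{KL}}(h_0,h_1)-d_{\mathrm{KL}}(h_0,\hat h_n)$, one obtains the algebraic identity
\[
\mathbb{E}\int\log\frac{\hat h_n}{h_0}\,d\mathbb{Q}_n = \mathbb{E}\int\log\frac{\hat h_n}{h_1}\,d(\mathbb{Q}_n-P_0) - \mathbb{E}\,d_{\mathrm{KL}}(h_0,\hat h_n).
\]
Since the last term is non-negative, the task reduces to controlling the centered empirical process on the right by an appropriate function of $\nu$ and $n$.

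Next I would localize and apply chaining. Theorem~\ref{Thm:KKnownWorstCase} applied to the one-dimensional class $\mathcal{H}_a$, together with Proposition~\ref{Prop:ProjectionBasicProperties}(iv) and the inequality $d_\mathrm{H}^2\le d_{\mathrm{KL}}$, provides the preliminary bound $\mathbb{E}\,d_\mathrm{H}^2(\hat h_n,h_0)\lesssim n^{-4/5}$; triangulating through $h_1$ then places $\hat h_n$ in the Hellinger ball $\{h\in\mathcal{H}_a:d_\mathrm{H}(h,h_1)\le\delta\}$ with $\delta:=r_n+\nu+\eta$ on an event of high probability, where $r_n$ is a deterministic radius to be determined self-consistently. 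On this good event I would bound
\[
\mathbb{E}\sup_{h\in\mathcal{H}_a\,:\,d_\mathrm{H}(h,h_1)\le\delta}\int\log(h/h_1)\,d(\mathbb{Q}_n-P_0)
\]
by a two-scale bracketing-chaining argument. At small radii the low bracketing entropy $\log N_{[\,]}(\epsilon,\mathcal{H}_a^{(1)},d_\mathrm{H})\lesssim\log(1/\epsilon)$ of the two-parameter $1$-affine subclass drives a parametric-type contribution of order $n^{-1}\log^{5/4}(en)$, with the $\log^{5/4}$ exponent emerging from a Bernstein-type variance refinement of Dudley's integral. At large radii the standard log-concave bound $\log N_{[\,]}(\epsilon,\mathcal{H}_a,d_\mathrm{H})\lesssim\epsilon^{-1/2}$ (the same estimate that drives the $n^{-4/5}$ rate of Theorem~\ref{Thm:KKnownWorstCase}) combines with the approximation-error radius $\nu$ to yield the mixed term $\nu^{2/5}n^{-4/5}\log(en/\nu)$. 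The $\nu^{2/5}$ factor arises because the peeling is organised around distances to $h_1$ rather than to $h_0$, so the modulus inherits a dependence on $(r_n+\nu)$ that interpolates between the parametric and worst-case regimes.

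Assembling the pieces yields a self-improving fixed-point inequality for $r_n^2$, since Proposition~\ref{Prop:ProjectionBasicProperties}(iv) bounds $\mathbb{E}\,d_\mathrm{H}^2(\hat h_n,h_0)$ by $\mathbb{E}\int\log(\hat h_n/h_0)\,d\mathbb{Q}_n$, which we have just controlled; solving the fixed point and letting $\eta\downarrow 0$ produces the claimed rate. The main technical obstacle is the chaining step: the integrand $\log(h/h_1)$ has no uniform envelope on the support of $h_0$, so Dudley/Talagrand machinery does not apply off the shelf. The accepted remedies are either to truncate the tails of $h_0$ using the sub-exponential decay implied by log-concavity (and absorb the residual into the lower-order $n^{-1}\log^{5/4}(en)$ term), or to replace $\log(h/h_1)$ by its Hellinger surrogate $2(\sqrt{h/h_1}-1)\ge\log(h/h_1)$, which pairs naturally with the $d_\mathrm{H}$-bracketing entropy. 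A secondary technical concern is to carry the shift parameter $a$ through the bracketing estimates, but this is purely bookkeeping and does not change any exponents.
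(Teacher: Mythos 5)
Your algebraic decomposition and high-level plan (localize via the worst-case rate, then run a peeled chaining argument and solve a fixed point) are consistent in spirit with the paper's approach via van de Geer's Theorem~5.11. However, there is a genuine gap in the heart of the argument: the entropy estimates you propose to feed into the chaining do not produce the claimed $\nu^{2/5}n^{-4/5}$ term.

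You write that ``the standard log-concave bound $\log N_{[\,]}(\epsilon,\mathcal{H}_a,d_\mathrm{H})\lesssim\epsilon^{-1/2}$ \ldots combines with the approximation-error radius $\nu$ to yield the mixed term $\nu^{2/5}n^{-4/5}\log(en/\nu)$.'' This is not correct. If one uses the unrefined bound $H_{[]}(\epsilon,\cdot,d_\mathrm{H})\lesssim\epsilon^{-1/2}$ over a Hellinger ball of radius $\delta = r_n + \nu$, the Dudley modulus is $\Psi(\delta)\sim\delta^{3/4}$, and the resulting fixed-point inequality $r_n^2\lesssim n^{-1/2}(r_n+\nu)^{3/4}$ gives, when $\nu$ dominates $r_n$, the rate $r_n^2\lesssim n^{-1/2}\nu^{3/4}$. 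That is strictly worse than $n^{-4/5}\nu^{2/5}$ for all $\nu\gtrsim n^{-2/5}$ (the ratio $n^{-1/2}\nu^{3/4}/n^{-4/5}\nu^{2/5}=n^{3/10}\nu^{7/20}$ is then $\gg 1$). Neither does the parametric entropy $\log N_{[\,]}(\epsilon,\mathcal{H}_a^{(1)},d_\mathrm{H})\lesssim\log(1/\epsilon)$ enter: $\hat h_n$ is the MLE over all of $\mathcal{H}_a$, not over $\mathcal{H}_a^{(1)}$, so the entropy of the $1$-affine subclass is irrelevant to the empirical-process supremum you must control.

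The missing ingredient is a genuinely local bracketing-entropy bound for the Hellinger ball around $h_0$ (or $h_1$) within the full class $\mathcal{H}_a$. The paper establishes (Lemma~\ref{Lem:GeneralLocalBracketing} and its consequence Lemma~\ref{Lem:LocalBracketing}) that if one restricts to log-concave $h$ with $\log(h/h_1)$ concave --- automatic for every $h\in\mathcal{H}_a$ once $h_1$ is $1$-affine, since $\log(h/h_1)=\phi-\phi_1$ --- then the local bracketing entropy satisfies
\[
H_{[]}\bigl(\epsilon,\mathcal{H}(h_0,\delta),d_\mathrm{H}\bigr)\ \lesssim\ \frac{(\delta+\nu)^{1/2}}{\epsilon^{1/2}}\,\log^{5/4}\frac{1}{\delta},
\]
which improves on $\epsilon^{-1/2}$ by the essential factor $(\delta+\nu)^{1/2}$. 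Only with this refinement does the Dudley modulus become $\Psi(\delta)\sim\delta^{3/4}(\delta+\nu)^{1/4}\log^{5/8}(1/\delta)$, whose fixed point yields both $n^{-1}\log^{5/4}(en)$ (when $\nu\lesssim\delta_n$) and $n^{-4/5}\nu^{2/5}\log(en/\nu)$ (when $\nu\gtrsim\delta_n$). Proving this local entropy bound is the technical heart of the proposition --- the paper explicitly notes it does not follow from \cite{kim2016adaptation} because $\log h_1$ is not affine when $p\ge 2$ --- and it is absent from your proposal. Without it, your argument would terminate with the weaker bound $\min\{n^{-4/5},n^{-1/2}\nu^{3/4}\}$.

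One smaller point: the $\log^{5/4}$ exponent is not produced by a ``Bernstein-type variance refinement of Dudley's integral''; it is already present in the local bracketing-entropy bound itself (via the partition of the support of $h_0$ into $O(\log(1/\delta))$ segments and Jensen's inequality in the proof of Lemma~\ref{Lem:GeneralLocalBracketing}).
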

\begin{remark}
  Since $2^{1/2} \leq n e^{-3/2}$ for $n \geq 8$ and the function $x \mapsto x^{2/5} \log(en /x)$ is increasing for $x \leq n e^{-3/2}$, Proposition~\ref{Prop:AdaptiveRateAffine} remains true if we redefine $\nu := 2^{1/2} \wedge \inf_{ h \in \mathcal{H}_a^{(1)}} d_{\mathrm{KL}}(h_0, h) $. Hence, the conclusion of Proposition~\ref{Prop:AdaptiveRateAffine} is stronger than that obtained by specialising Theorem~\ref{Thm:KknownAdaptation} to the case $k=1$.
\end{remark}

Proposition~\ref{Prop:AdaptiveRateAffine} is analogous to Theorem~5 in \cite{kim2016adaptation}.  However the proof does not follow from the local bracketing entropy analysis in \citet[][Theorem~4]{kim2016adaptation} because, for $h \in \mathcal{H}_{a}^{(1)}$, $\log h$ is not an affine function when $p \geq 2$. To prove Proposition~\ref{Prop:AdaptiveRateAffine}, then, we show in Lemma~\ref{Lem:GeneralLocalBracketing} that the bracketing entropy of a local Hellinger ball around an arbitrary $g_0 \in \mathcal{F}_1$ is small if we further restrict the local ball to include only $g \in \mathcal{F}_1$ such that $\log(g/g_0)$ is concave.  \cite[][Theorem~4]{kim2016adaptation} were interested in the case where $g_0$ is log-affine, for which $\log(g/g_0)$ is necessarily concave for every log-concave $g$, so their result can be considered as a special case of Lemma~\ref{Lem:GeneralLocalBracketing}.

\section{Risk bounds when \texorpdfstring{$K$}{K} is estimated}
\label{Sec:KEstimated}

\subsection{General approach}
\label{Subsec:GeneralApproach}

When $K$ is unknown and needs to be estimated, one approach is to attempt to maximise~\eqref{Eq:RpProjection} jointly in $K$ and $\phi$; however, this appears to be computationally infeasible.  We therefore consider the following plug-in procedure, where for simplicity of exposition we assume an even sample size.  Given $p$-dimensional random vectors $X_1,\ldots,X_{2n}$, we use $X_{n+1}, \ldots, X_{2n}$ to estimate $\hat{K}$ and $\hat{\mu}$ (we give specific examples for how to estimate $\hat{K}$ in Sections~\ref{Sec:Affine} and~\ref{Sec:KNonparametric} below), where we think of $\mathcal{K}$ as being metrised by the Hausdorff metric, and equip $\mathcal{K}$ with the $\sigma$-algebra induced by this metric. We then form $\tilde{Z}_i := \| X_i - \hat{\mu} \|_{\hat{K}} $ for $i \in [n]$ and, writing $\tilde{\mathbb{Q}}_n$ for the empirical distribution of $\tilde{Z}_1,\ldots,\tilde{Z}_n$, compute $\hat{\phi}_n := \phi_{\hat{K}}^*(\tilde{\mathbb{Q}}_n)$. Our final density estimate is $\hat{f}_n(\cdot) := e^{ \hat{\phi}_n( \| \cdot - \hat{\mu} \|_{\hat{K}})}$. 

Our goal in this section is to analyse the performance of the plug-in procedure without restricting our attention to any specific estimators $\hat{K}$ and $\hat{\mu}$. To do this, we assume that $X_1,\ldots,X_{2n}$ are generated independently from a density $f_0 \in \mathcal{F}^{\mathcal{K}}_p$ of the form $f_0(\cdot) = e^{\phi_0(\|\cdot - \mu \|_K)}$ and then bound the Hellinger error $d_{\mathrm{H}}(\hat{f}_n, f_0)$ in terms of the deviations between $\hat{K}$ and $K$ and between $\hat{\mu}$ and $\mu$. To that end, for $K_1,K_2 \in \mathcal{K}$, define a pseudo-metric
\begin{equation}
\label{Eq:dscale}
  d_{\mathrm{scale}}(K_1,K_2) := \inf \biggl\{ \epsilon > 0 \,:\, \frac{1}{1 + \epsilon} K_1 \subseteq K_2 \subseteq (1 + \epsilon) K_1 \biggr\}.
\end{equation}
This notion of distance satisfies all of the axioms for being a metric except for the triangle inequality; it is also scale invariant in the sense that $d_{\mathrm{scale}}(\gamma K_1, \gamma K_2) = d_{\mathrm{scale}}(K_1, K_2)$ for any $\gamma  >0$. For $c_1,c_2 > 0$, let
\begin{align}
  \mathcal{E}_{c_1, c_2} := \biggl\{ \frac{\|\hat{\mu} - \mu\|_K}{\mathbb{E}_{f_0}^{1/2}(\| X_1 - \mu \|_K^2)} p^{3/2} \log (ep) \leq c_1 \biggr\} \, \bigcap \, \bigl\{ p\inf_{\alpha > 0} d_{\mathrm{scale}}(\alpha \hat{K},K)  < c_2 \bigr\}. \label{Eqn:Ec1c2}
\end{align}
Our main result in this subsection is the following:
\begin{proposition}
\label{Prop:KEstimatedGeneralRisk}
Let $X_1,\ldots, X_{2n} \stackrel{\mathrm{iid}}{\sim} f_0 \in \mathcal{F}^{K,\mu}_p$, and let $\hat{f}_n$, $\hat{K}$ and $\hat{\mu}$ be as defined above.  Then there exist universal constants $c_1,c_2 > 0$ such that 
\[
\mathbb{E}_{f_0}\bigl\{d_{\mathrm{H}}^2 (\hat{f}_n, f_0) \bigm| \hat{K},\hat{\mu}\bigr\}\mathbbm{1}_{\mathcal{E}_{c_1,c_2}} \lesssim n^{-4/5} +  p \frac{ \| \hat{\mu} - \mu\|_K }{\mathbb{E}_{f_0}^{1/2}(\| X_1 - \mu \|^2_K)} + p \inf_{\alpha > 0} d_{\mathrm{scale}}(\alpha \hat{K},K).
\]
Moreover, if $f_0 \in \mathcal{F}^{K,\mu}_p$ is of the form $f_0(x) = e^{\phi_0(\| x - \mu \|_K)}$ for some $\phi_0 \in \Phi$ such that $\phi_0'$ is absolutely continuous and such that $\inf_{r \in [0,\infty)} \phi_0''(r) \geq - D_0$ for some $D_0 > 0$, then 
\[
  \mathbb{E}_{f_0}\bigl\{d_{\mathrm{H}}^2 (\hat{f}_n, f_0) \bigm| \hat{K},\hat{\mu}\bigr\}\mathbbm{1}_{\mathcal{E}_{c_1, c_2}} \lesssim n^{-4/5} +  (D_0^2 + 1) \biggl\{ p^2 \frac{\| \hat{\mu} - \mu\|^2_K}{\mathbb{E}_{f_0}(\|X_1 - \mu \|_K^2)} + p^2 \inf_{\alpha > 0} d_{\mathrm{scale}}^2(\alpha \hat{K},K)\biggr\}.
\]
Finally, if $f_0 \in \mathcal{F}^{K,\mu}_p$ is of the form $f_0(\cdot) = e^{-a\| \cdot - \mu \|_K + b}$ for some $a > 0$ and $b\in \mathbb{R}$, then
\[
  \mathbb{E}_{f_0}\bigl\{d_{\mathrm{H}}^2 (\hat{f}_n, f_0) \bigm| \hat{K},\hat{\mu} \bigr\}\mathbbm{1}_{\mathcal{E}_{c_1, c_2}} \lesssim \frac{1}{n}\log^{5/4}(en) +  p^2 \frac{\| \hat{\mu} - \mu\|^2_K}{\mathbb{E}_{f_0}(\| X_1 - \mu \|_K^2)} + p^2\inf_{\alpha > 0} d_{\mathrm{scale}}^2(\alpha \hat{K},K).
\]
\end{proposition}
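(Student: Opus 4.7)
The strategy is a bias--variance style decomposition using an oracle density in the plug-in model class. Conditional on $(\hat K, \hat \mu)$, let $\tilde f_0^* := f^*_{\hat K, \hat \mu}(P_{f_0}) \in \mathcal{F}_p^{\hat K, \hat \mu}$ denote the $\hat K$-homothetic log-concave projection of $f_0$ at centering $\hat \mu$; this is well-defined by Proposition~\ref{Prop:ProjectionExistence}, since $f_0$ has finite first moment. The Hellinger triangle inequality gives
\[
  d_{\mathrm{H}}^2(\hat f_n, f_0) \leq 2 d_{\mathrm{H}}^2(\hat f_n, \tilde f_0^*) + 2 d_{\mathrm{H}}^2(\tilde f_0^*, f_0),
\]
and I would bound each term separately on $\mathcal{E}_{c_1, c_2}$.

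For the stochastic term $\mathbb{E}_{f_0}[d_{\mathrm{H}}^2(\hat f_n, \tilde f_0^*) \mid \hat K, \hat \mu]$, note that conditional on the plug-in estimators the samples $X_1, \ldots, X_n$ are iid from $f_0$, independent of $(\hat K, \hat \mu)$. By Proposition~\ref{Prop:ProjectionExistence}(iv), applied after translating by $\hat \mu$, this reduces to the one-dimensional problem of estimating the projection $\tilde h_0^* := h^*_{\hat K}(\tilde Q) \in \mathcal{H}_{\hat K}$ by the MLE $\hat h_n$ over $\mathcal{H}_{\hat K}$, based on iid draws $\tilde Z_i = \|X_i - \hat \mu\|_{\hat K}$ from the (generally misspecified) distribution $\tilde Q$. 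The bracketing-entropy arguments underlying Theorem~\ref{Thm:KKnownWorstCase} and Proposition~\ref{Prop:AdaptiveRateAffine} carry over to this misspecified setting with $\tilde h_0^*$ replacing $h_0$ as the target, yielding the $n^{-4/5}$ rate in general and the $n^{-1}\log^{5/4}(en)$ rate when $\tilde h_0^*$ is (approximately) $1$-affine. On $\mathcal{E}_{c_1, c_2}$ the comparison $\|x - \hat \mu\|_{\hat K} \asymp \|x - \mu\|_K$ (from the subadditivity of the Minkowski functional and the definition of $d_{\mathrm{scale}}$) lets one transfer the relevant moment and tail bounds from $Q$ to $\tilde Q$, and also shows that when $f_0$ is exponential (so $\phi_0$ is $1$-affine) the projection $\tilde h_0^*$ remains close enough to being $1$-affine for the (sharp oracle inequality version of) Proposition~\ref{Prop:AdaptiveRateAffine} to kick in with $k=1$.

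For the approximation term I would exploit the variational characterisation of the projection, namely $d_{\mathrm{KL}}(f_0 \| \tilde f_0^*) \leq d_{\mathrm{KL}}(f_0 \| g)$ for every $g \in \mathcal{F}_p^{\hat K, \hat \mu}$, combined with $d_{\mathrm{H}}^2 \leq d_{\mathrm{KL}}$. I would take the explicit competitor
\[
  g(x) := c \exp\bigl\{\phi_0\bigl(\alpha^{-1}\|x - \hat \mu\|_{\hat K}\bigr)\bigr\}, \qquad \alpha \in \argmin_{\alpha' > 0} d_{\mathrm{scale}}(\alpha' \hat K, K),
\]
with normalising constant $c = \lambda_p(K)/(\alpha^p \lambda_p(\hat K))$. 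Writing $\epsilon = d_{\mathrm{scale}}(\alpha \hat K, K)$, the subadditivity of $\|\cdot\|_K$ combined with the definition of $d_{\mathrm{scale}}$ yields
\[
  \bigl|\alpha^{-1}\|x - \hat \mu\|_{\hat K} - \|x - \mu\|_K\bigr| \leq \epsilon \|x - \mu\|_K + (1 + \epsilon)\|\hat \mu - \mu\|_K,
\]
while an elementary volume comparison gives $|\log c| \leq p\log(1 + \epsilon) \leq p\epsilon$. Substituting into $d_{\mathrm{KL}}(f_0 \| g) = \mathbb{E}_{f_0}[\phi_0(\|X - \mu\|_K) - \phi_0(\alpha^{-1}\|X - \hat \mu\|_{\hat K})] - \log c$ and using the Lipschitz behaviour of $\phi_0$ (via concavity and monotonicity) gives the general linear bound. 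For the smooth case, a second-order Taylor expansion of $\phi_0$, with remainder controlled by $D_0$, produces the quadratic dependence, after showing that the first-order cross-terms integrate to a higher-order contribution because $f_0$ is a density; in the $1$-affine case the computation is essentially exact.

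The main obstacle is the approximation-error analysis in the general case: $|\phi_0'|$ can diverge at the boundary of $\mathrm{dom}(\phi_0)$ or at the origin, so sharp $p$-dependent bounds on $\mathbb{E}_{f_0}[|\phi_0'(\|X - \mu\|_K)|\,\|X - \mu\|_K]$ and on $\mathbb{E}_{f_0}[|\phi_0'(\|X - \mu\|_K)|] \cdot \mathbb{E}_{f_0}^{1/2}(\|X - \mu\|_K^2)$ are required. These are moments of the score of the log-concave radial density $h_0(r) = p\lambda_p(K) r^{p-1} e^{\phi_0(r)}$, and can be controlled via integration-by-parts identities involving $p$ (e.g.\ $\mathbb{E}_{h_0}[-\phi_0'(R) R] = p - 1$ in the interior case), combined with the Cauchy--Schwarz inequality. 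The analogous second-order identities are what yield the $p^2$-factor and the quadratic dependence on the deviations in the smooth and $1$-affine cases; carefully tracking constants through these estimates, and verifying that the $\mathcal{E}_{c_1, c_2}$ constraints are strong enough to justify the first-order expansions, is the delicate part.
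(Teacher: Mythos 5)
Your bias--variance decomposition is a natural starting point, and your diagnosis of the approximation-term difficulties is mostly sound, but there is a genuine gap in the stochastic term. You pivot around the $\hat{K}$-homothetic log-concave projection $\tilde{f}_0^* := f^*_{\hat{K},\hat{\mu}}(P_{f_0})$ and assert that the bracketing-entropy arguments ``carry over'' to the misspecified setting with its radial component as target. They do not carry over automatically. The van de Geer argument (Lemma~\ref{Lem:EmpiricalProcess}) converts $d_{\mathrm{H}}$-bracketing entropy into $\rho_1$-bracketing entropy, where $\rho_1^2(g) = 2\int(e^{|g|}-1-|g|)\,d\tilde{Q}_n$ integrates against the true (misspecified) density $\tilde{h}_n$ of $\tilde{Z}_1 = \|X_1-\hat{\mu}\|_{\hat{K}}$. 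That conversion (see~\eqref{Eq:PreviousDisplay}) crucially requires a Lebesgue-a.e.\ domination of $\tilde{h}_n$ by a bounded multiple of the proxy target, and the log-concave projection of $P_{f_0}$ provides no such domination: projections smooth, and $\tilde{h}_0^*$ may be much smaller than $\tilde{h}_n$ wherever $\tilde{h}_n$ is locally large. The paper's proof therefore uses a different proxy: the explicitly constructed deformed density $\tilde{h}_0(r) := \gamma^{-1}p\lambda_p(\hat{K})r^{p-1}e^{\tilde{\phi}_0(r)}$ from~\eqref{eqn:phi_tilde_definition}, where $\tilde{\phi}_0$ is the horizontal shift-then-dilation of $\phi_0$ chosen precisely so that $\tilde{\phi}_0(\|x-\hat{\mu}\|_{\hat{K}}) \geq \phi_0(\|x-\mu\|_K)$ for all $x$ and hence $\tilde{h}_n \leq \gamma\tilde{h}_0$ a.e.\ (Lemma~\ref{Lem:PerturbedDensityUpperBound}). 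This $\tilde{h}_0$ simultaneously acts as the competitor in the basic inequality, which your $\tilde{h}_0^*$ would also do, and as the dominating density, which it would not. You also do not address placing $\hat{h}_n$ inside a moment-restricted class $\mathcal{H}(h_0, C_\mu, C_\sigma)$ with high probability under misspecification (Lemma~\ref{Lem:MeanVarianceControl}), which is what keeps the bracketing entropy finite.

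For the approximation term, your Lipschitz/Taylor approach and score-moment identities are indeed what Lemmas~\ref{Lem:dHBestCase1} and~\ref{Lem:KLBestCase} use for the smooth and exponential cases, together with the isotropic facts $\sigma_{h_0}\leq 1$ and $\mu_{h_0}\in[(p-1)^{1/2},p^{1/2}]$. However, in the worst case $\phi_0'$ can diverge near the origin or near the boundary of the support, and the paper avoids bounding it directly: it bounds $d^2_{\mathrm{H}}(\check{f}_n,f_0) \leq d_{\mathrm{TV}}(\check{f}_n,f_0)$ via a Fubini-type level-set representation, and then dominates $\phi_0$ by an explicit envelope $\phi^*$ with controlled derivative before differentiating under the integral (Lemma~\ref{Lem:WorstTVBound}). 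A direct Cauchy--Schwarz attack on $\mathbb{E}_{f_0}\bigl[|\phi_0'(\|X-\mu\|_K)|\cdot\|X-\mu\|_K\bigr]$ would need a similar truncation to survive, and without it the worst-case bound degrades.
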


The first term in the error bounds of Proposition~\ref{Prop:KEstimatedGeneralRisk} arises from the estimation of the generator $\phi_0$, the second term arises from the estimation of the centering vector $\mu$, and the third term arises from the estimation of the super-level set $K$. 

\begin{remark}
When $f_0$ is not a uniform density, the bounds in Proposition~\ref{Prop:KEstimatedGeneralRisk} do not depend on the choice of $K$ in the representation of $f_0$.  More precisely, if $f_0(\cdot) = e^{\tilde{\phi}( \| \cdot - \mu \|_{\tilde{K}})}$ for $\tilde{K} \in \mathcal{K}$ and $\tilde{\phi} \in \Phi$, then, by Proposition~\ref{Prop:ExistencePhi}, there exists $\gamma > 0$ such that $\tilde{K} = \gamma K$. We observe then that the quantities on the right-hand sides of the inequalities in Proposition~\ref{Prop:KEstimatedGeneralRisk} do not change if $K$ is replaced with $\tilde{K}$. If $f_0$ is the uniform distribution, then the centering vector $\mu$ is not unique either, and Proposition~\ref{Prop:KEstimatedGeneralRisk} applies to \emph{any} choice of the centering vector $\mu$.  
\end{remark}

The main difficulty in the proof of Proposition~\ref{Prop:KEstimatedGeneralRisk} is that we cannot make any assumptions about the density of the data points $\tilde{Z}_1, \ldots, \tilde{Z}_n$ since these are constructed from $\hat{\mu}$ and $\hat{K}$ instead of the true $\mu$ and $K$. We overcome this problem through Lemma~\ref{Lem:EmpiricalProcess}, where we apply empirical process theory in the presence of model misspecification.

\subsection{Risk bounds when \texorpdfstring{$K$}{K} is known up to a positive definite transformation}
\label{Sec:Affine}

In this section, we let $K_0 \in \mathcal{K}$ be a balanced convex body in an isotropic position, so that $K_0 = -K_0$ and $\frac{1}{\lambda_p(K_0)} \int_{K_0} x x^\top \, dx = I_p$.  Let $r_1, r_2 > 0$ be such that $r_1 B_p(0,1) \subseteq K_0 \subseteq r_2 B_p(0,1)$ and let $r_0 := r_2/r_1$. Let $\mu \in \mathbb{R}^p$, $\Sigma_0 \in \mathbb{S}^{p \times p}$, $K = \Sigma_0^{1/2} K_0$, $\phi_0 \in \Phi$, and let $f_0 \in \mathcal{F}_p^{K,\mu}$ be such that $f_0(\cdot) = e^{\phi_0(\|\cdot - \mu\|_K)}$. We assume that $K_0$ is known but that $\Sigma_0$ is unknown. Let $\Sigma := \int_{\mathbb{R}^p} (x-\mu)(x-\mu)^\top f_0(x) \, dx$, so that $\Sigma \propto \Sigma_0$.

Throughout this subsection, we assume that $X_1,\ldots,X_n \stackrel{\mathrm{iid}}{\sim} f_0$, and denote the sample covariance matrix by $\hat{\Sigma} := n^{-1} \sum_{i=1}^n (X_i - \hat{\mu})(X_i - \hat{\mu})^\top$, where $\hat{\mu} := n^{-1}\sum_{i=1}^n X_i$.  We let $\hat{K} := \hat{\Sigma}^{1/2} K_0$.  The following proposition controls $d_{\mathrm{scale}}(\hat{K}, K)$ and $\| \hat{\mu} - \mu \|_K$; this first part relies heavily on \citet[][Theorem~4.1]{adamczak2010quantitative}, which provides a tail bound for the operator norm of the difference between the sample covariance matrix and the identity matrix, uniformly over all isotropic log-concave densities.
\begin{proposition}
  \label{Prop:dscaleAffineBound}
  There exists a universal constant $C \geq 1$ such that, if $C \sqrt{ \frac{p}{n}} \log^3 (en) \leq 1/2$, then with probability at least $1 - 2/n$,
  \begin{align}
  \inf_{\alpha > 0} d_{\mathrm{scale}}(\alpha \hat{K}, K) \leq C r_0\frac{p^{1/2}}{n^{1/2}} \log^3 (en), \label{Eqn:dscaleAffineBound1}
\end{align}
and
\[
 p^{1/2} \frac{\| \hat{\mu} - \mu \|_K}{\mathbb{E}_{f_0}^{1/2}( \| X_1 - \mu \|^2_K)} \leq C r_0 \frac{p^{1/2}}{n^{1/2}} \log (en).
\]
\end{proposition}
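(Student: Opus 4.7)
The plan is to apply the sample covariance concentration theorem of \citet{adamczak2010quantitative} to the isotropised observations $Y_i := \Sigma^{-1/2}(X_i - \mu)$, and then convert this concentration into the two stated bounds: via routine norm estimates for the mean bound, and via a more delicate matrix perturbation argument for the $d_{\mathrm{scale}}$ bound.

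Since $\Sigma_0^{-1/2}(X_1 - \mu)$ has a density proportional to $e^{\phi_0(\|u\|_{K_0})}$, Proposition~\ref{Prop:NormDirectionIndep} together with the isotropy of $K_0$ gives $\Sigma = c_0\Sigma_0$ for some $c_0 > 0$, and $\mathbb{E}_{f_0}(\|X_1 - \mu\|_K^2) = c_0 p/(p+2)$.  Hence $Y_1, \ldots, Y_n$ are iid, isotropic and log-concave, so \citet[Theorem~4.1]{adamczak2010quantitative} yields $\|n^{-1}\sum_i Y_iY_i^T - I_p\|_{\mathrm{op}} \leq C\sqrt{p/n}\log^3(en)$ with probability $\geq 1 - 1/n$, provided $C\sqrt{p/n}\log^3(en) \leq 1/2$.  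Moreover $\bar Y := n^{-1}\sum_i Y_i = \Sigma^{-1/2}(\hat\mu - \mu)$ is log-concave with covariance $I_p/n$, so a standard tail bound for isotropic log-concave vectors gives $\|\bar Y\| \leq C\sqrt{p/n}\log(en)$ with probability $\geq 1 - 1/n$.  Writing $\hat\Sigma = \Sigma^{1/2}\{n^{-1}\sum_i Y_iY_i^T - \bar Y\bar Y^T\}\Sigma^{1/2}$ and combining on the intersection of these events yields, with probability $\geq 1 - 2/n$, that $\|\Sigma_0^{-1/2}\hat\Sigma\Sigma_0^{-1/2} - c_0 I_p\|_{\mathrm{op}} \leq c_0\delta$, where $\delta := C\sqrt{p/n}\log^3(en)$.

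The mean bound follows easily: on the above event, $\|\hat\mu - \mu\|_K = \|\Sigma_0^{-1/2}(\hat\mu - \mu)\|_{K_0} \leq \|\Sigma_0^{-1/2}(\hat\mu - \mu)\|/r_1 = \sqrt{c_0}\|\bar Y\|/r_1 \lesssim \sqrt{c_0 p/n}\log(en)/r_1$.  Since $\mathbb{E}_{f_0}^{1/2}(\|X_1 - \mu\|_K^2) = \sqrt{c_0 p/(p+2)}$ and $r_2 \geq \sqrt{p}$ (because $\mathbb{E}\|Z\|^2 = \mathrm{tr}\,I_p = p$ for $Z$ uniform on the isotropic body $K_0$, forcing some $z \in K_0$ to have $\|z\| \geq \sqrt p$), taking the ratio yields $p^{1/2}\|\hat\mu - \mu\|_K/\mathbb{E}_{f_0}^{1/2}(\|X_1-\mu\|_K^2) \lesssim r_0 p^{1/2}n^{-1/2}\log(en)$.

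For the $d_{\mathrm{scale}}$ bound, the affine invariance $d_{\mathrm{scale}}(MK_1, MK_2) = d_{\mathrm{scale}}(K_1, K_2)$ (valid for any invertible $M$, by direct inspection of the defining containments) gives $\inf_{\alpha > 0} d_{\mathrm{scale}}(\alpha\hat K, K) = \inf_{\alpha > 0} d_{\mathrm{scale}}(\alpha\sqrt{c_0}\, SK_0, K_0)$, where $S := c_0^{-1/2}\Sigma_0^{-1/2}\hat\Sigma^{1/2}$ satisfies $SS^T = I_p + E$ with $\|E\|_{\mathrm{op}} \leq \delta$.  The chief obstacle is to improve this to $\|S - I_p\|_{\mathrm{op}} \leq C\delta$, which is strictly stronger than the singular-value control on $S$ already available from $SS^T$.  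This improvement is essential, since an arbitrary orthogonal $S$ would also satisfy $SS^T = I_p$ yet could rotate $K_0$ into a body at $d_{\mathrm{scale}}$-distance $r_0 - 1$ from $K_0$, which would be fatal when $r_0$ is large. The extra structure exploited is that $S$ is a product of two symmetric positive definite matrices with $SS^T$ near identity; a first-order Lyapunov-equation analysis of $\hat\Sigma^{1/2}$ around $\sqrt{c_0}\Sigma_0^{1/2}$ shows that the apparent condition-number factor in the standard operator-Lipschitz bound for the matrix square root is exactly compensated by the Mahalanobis-like hypothesis $\|\Sigma_0^{-1/2}\hat\Sigma\Sigma_0^{-1/2}/c_0 - I_p\|_{\mathrm{op}} \leq \delta$.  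Once $\|S - I_p\|_{\mathrm{op}} \leq C\delta$ is established, for any $y \in K_0$ one has $\|(S - I_p)y\|_{K_0} \leq \|(S-I_p)y\|/r_1 \leq (r_2/r_1)\|S - I_p\|_{\mathrm{op}}\|y\|_{K_0} \leq Cr_0\delta$; combined with the analogous bound applied to $S^{-1}$ (whose singular values are likewise controlled by $\delta$), this gives $SK_0 \subseteq (1 + Cr_0\delta)K_0$ and $K_0 \subseteq (1 + Cr_0\delta)SK_0$, so choosing $\alpha = 1/\sqrt{c_0}$ yields $\inf_{\alpha > 0}d_{\mathrm{scale}}(\alpha\hat K, K) \leq Cr_0\delta$, completing the proof of~\eqref{Eqn:dscaleAffineBound1}.
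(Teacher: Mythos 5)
Your argument for the mean bound is correct and essentially parallels the paper's (the paper uses coordinate-wise Bernstein where you invoke a tail bound for isotropic log-concave vectors; both work, and your identification of $c_0$, of $\mathbb{E}_{f_0}(\|X_1-\mu\|_K^2)$, and of the inequality $r_2 \geq p^{1/2}$ from isotropy of $K_0$ are all accurate). The route to~\eqref{Eqn:dscaleAffineBound1} is where you diverge from the paper. The paper first reduces, without loss of generality, to $\Sigma = I_p$ by asserting a scale-equivariance of the estimator, $\hat K' = \tilde\Sigma^{1/2}\hat K$ under $X_i \mapsto \tilde\Sigma^{1/2}X_i$; after that reduction the relevant operator $c_0^{-1/2}\hat\Sigma^{1/2}$ is \emph{symmetric}, and the square-root step is the elementary eigenvalue inequality $\|\hat\Sigma^{1/2} - I_p\|_{\mathrm{op}} \leq \|\hat\Sigma - I_p\|_{\mathrm{op}}$. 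You skip the reduction and must therefore face the non-symmetric $S = c_0^{-1/2}\Sigma_0^{-1/2}\hat\Sigma^{1/2}$. Your observation that knowing $SS^{\top} \approx I_p$ is not enough (an orthogonal part of $S$ could cost $d_{\mathrm{scale}}$-distance of order $r_0 - 1$) is exactly the right worry; incidentally, the paper's own reduction hinges on the set identity $(\tilde\Sigma^{1/2}\hat\Sigma\tilde\Sigma^{1/2})^{1/2}K_0 = \tilde\Sigma^{1/2}\hat\Sigma^{1/2}K_0$, which fails for a general balanced isotropic $K_0$ unless the orthogonal polar factor of $\tilde\Sigma^{1/2}\hat\Sigma^{1/2}$ fixes $K_0$, so the difficulty you notice is not an artifact of your approach.

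The genuine gap in your proposal is that you never prove the one estimate on which your argument hangs: $\|S - I_p\|_{\mathrm{op}} \leq C\delta$. You gesture at ``a first-order Lyapunov-equation analysis'' and ``exact compensation'' of the condition-number factor, but this does not constitute an argument, and the cheap routes really do not deliver the bound: both operator monotonicity of $t \mapsto t^{1/2}$ and the Thompson-metric contraction only give $\|\Sigma_0^{-1/4}\bigl(c_0^{-1/2}\hat\Sigma^{1/2} - \Sigma_0^{1/2}\bigr)\Sigma_0^{-1/4}\|_{\mathrm{op}} \lesssim \delta$, and moving the $\Sigma_0^{\pm 1/4}$ factors across to obtain $\|\Sigma_0^{-1/2}\bigl(c_0^{-1/2}\hat\Sigma^{1/2} - \Sigma_0^{1/2}\bigr)\|_{\mathrm{op}}$ costs $\kappa(\Sigma_0)^{1/4}$ in general. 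The clean statement you need is that for symmetric positive definite $A, B$ with $\|B^{-1/2}AB^{-1/2} - I_p\|_{\mathrm{op}} \leq \delta \leq 1/2$ one has $\|B^{-1/2}A^{1/2} - I_p\|_{\mathrm{op}} \leq C\delta$. Linearising in $\delta$, this reduces in the eigenbasis of $B^{1/2}$ to a Schur-multiplier bound for the kernel $\mu_j/(\mu_i + \mu_j)$ (whose antisymmetrisation $(\mu_j-\mu_i)/(\mu_i+\mu_j)$ is a $\tanh$-type kernel with uniformly bounded Schur norm), which is plausible; but the non-infinitesimal statement, the control of the remainder, and a proof are all absent from your proposal. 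Until you supply that lemma, or revert to a valid reduction, the bound~\eqref{Eqn:dscaleAffineBound1} is not established.
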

\begin{remark}
Since $K_0$ is balanced and in an isotropic position, we know from John's Ellipsoid Theorem that $r_0 \leq p^{1/2}$, see, e.g., \citet[][Theorem 3.1]{ball1997elementary} or \citet[][Section 3]{john1948extremum}. The extreme case where $r_0 = p^{1/2}$ is realised when $K_0 = [-1,1]^p$. For a general $K_0$ however, $r_0$ may be much smaller. 
\end{remark}
The following corollary is therefore an immediate consequence of Propositions~\ref{Prop:KEstimatedGeneralRisk} and~\ref{Prop:dscaleAffineBound}.
\begin{corollary}
  \label{Cor:Ellipse}
 Let $\hat{K}, \hat{\mu}$ be as in Proposition~\ref{Prop:dscaleAffineBound} and let $\hat{f}_n$ be as in Proposition~\ref{Prop:KEstimatedGeneralRisk}.  Then
\[
  \mathbb{E}_{f_0}d_{\mathrm{H}}^2 (\hat{f}_n, f_0) \lesssim r_0\frac{p^{3/2}}{n^{1/2}} \log^3 (en).
\]
Moreover, if $f_0 \in \mathcal{F}^{K,\mu}_p$ is of the form $f_0(\cdot) = e^{\phi_0(\| \cdot - \mu \|_K)}$ for some $\phi_0 \in \Phi$ such that $\phi_0'$ is absolutely continuous and that $\inf_{r \in [0,\infty)} \phi_0''(r) \geq - D_0$ for some $D_0 > 0$, then 
\[
  \mathbb{E}_{f_0}d_{\mathrm{H}}^2 (\hat{f}_n, f_0) \lesssim \frac{1}{n^{4/5}} + r_0^2(D_0^2+1)\frac{p^3}{n}\log^6 (en).
\]
Finally, if $f_0 \in \mathcal{F}^{K,\mu}_p$ is of the form $f_0(\cdot) = e^{-a\| \cdot - \mu \|_K + b}$ for some $a > 0$ and $b\in \mathbb{R}$, then
\[
  \mathbb{E}_{f_0}d_{\mathrm{H}}^2 (\hat{f}_n, f_0) \lesssim \frac{1}{n}\log^{5/4}(en) +  r_0^2\frac{p^3}{n}\log^6 (en).
\]
\end{corollary}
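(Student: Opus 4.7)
The plan is to combine the conditional risk bounds of Proposition~\ref{Prop:KEstimatedGeneralRisk} with the high-probability deviation bounds of Proposition~\ref{Prop:dscaleAffineBound}. I begin by disposing of a trivial regime: if $r_0 p^{3/2} n^{-1/2} \log^3(en)$ exceeds some universal constant $c_*$, then since $r_0, p, D_0^2+1 \geq 1$, each of the three right-hand sides in the corollary is bounded below by a positive universal constant (the first by $c_*$, and the second and third by $c_*^2$), and since $d_{\mathrm{H}}^2(\hat f_n, f_0) \leq 2$ always, all three conclusions hold trivially.

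In the complementary regime, $c_*$ can be chosen small enough that the side hypothesis $C^*(p/n)^{1/2} \log^3(en) \leq 1/2$ of Proposition~\ref{Prop:dscaleAffineBound} is satisfied, where $C^*$ denotes its universal constant. Let $\mathcal{F}$ be the event of probability at least $1-2/n$ on which both conclusions of that proposition hold. Then on $\mathcal{F}$,
\[
\frac{\|\hat\mu - \mu\|_K}{\mathbb{E}_{f_0}^{1/2}(\|X_1 - \mu\|_K^2)} \leq C^* r_0 n^{-1/2} \log(en), \qquad \inf_{\alpha>0} d_{\mathrm{scale}}(\alpha \hat K, K) \leq C^* r_0 p^{1/2} n^{-1/2} \log^3(en),
\]
and a further reduction of $c_*$ relative to $c_1/C^*$ and $c_2/C^*$ (using $\log(ep) \leq \log(en)$) ensures the inclusion $\mathcal{F} \subseteq \mathcal{E}_{c_1, c_2}$, so that Proposition~\ref{Prop:KEstimatedGeneralRisk} applies upon conditioning on $\hat K, \hat\mu$ and multiplying by $\mathbbm{1}_{\mathcal{F}}$.

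The three claimed bounds then follow from the decomposition
\[
\mathbb{E}_{f_0} d_{\mathrm{H}}^2(\hat f_n, f_0) \leq \mathbb{E}_{f_0}\bigl[\mathbb{E}_{f_0}\{d_{\mathrm{H}}^2(\hat f_n, f_0) \mid \hat K, \hat\mu\} \mathbbm{1}_{\mathcal{F}}\bigr] + 2\mathbb{P}(\mathcal{F}^c).
\]
One substitutes the appropriate inequality from Proposition~\ref{Prop:KEstimatedGeneralRisk}, replaces the deviation terms by the deterministic upper bounds just displayed (entering linearly for the worst-case bound, and in squared form for the smooth and $1$-affine cases), and verifies that every residual contribution such as $n^{-4/5}$, $r_0 p \cdot n^{-1/2}\log(en)$, or $2/n$ is dominated by the stated leading-order term in the current regime. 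The main obstacle is solely the coordinated choice of $c_*$: it must be small enough both for the hypothesis of Proposition~\ref{Prop:dscaleAffineBound} and for the inclusion $\mathcal{F}\subseteq\mathcal{E}_{c_1,c_2}$, yet the complementary event $\{r_0 p^{3/2} n^{-1/2}\log^3(en) > c_*\}$ must still render each of the three corollary bounds at least a universal positive constant. Once $c_*$ is fixed with these three requirements in mind, the remainder is direct substitution.
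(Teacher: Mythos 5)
Your proof is correct and takes the same route the paper intends: the paper simply declares the corollary an ``immediate consequence'' of Propositions~\ref{Prop:KEstimatedGeneralRisk} and~\ref{Prop:dscaleAffineBound}, and you have filled in precisely the expected details, namely the trivial-regime reduction (which is needed because the corollary does not repeat the side hypothesis $C\sqrt{p/n}\log^3(en)\leq 1/2$), the verification that the high-probability event from Proposition~\ref{Prop:dscaleAffineBound} is contained in $\mathcal{E}_{c_1,c_2}$ for a suitable universal threshold $c_*$, the decomposition over $\mathcal{F}$ and $\mathcal{F}^c$ using $d_{\mathrm{H}}^2 \leq 2$, and the substitution of the deterministic deviation bounds into the conditional risk bounds. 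Your bookkeeping of dominance (e.g.\ $n^{-4/5}$ and $2/n$ absorbed by the leading term when $r_0 p^{3/2}n^{-1/2}\log^3(en)\leq c_*$, and $\log(ep)\leq\log(en)$ since the hypothesis of Proposition~\ref{Prop:dscaleAffineBound} forces $p<n$) is accurate.
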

Thus, in particular, Corollary~\ref{Cor:Ellipse} provides risk bounds for estimating elliptically symmetric log-concave densities, where we may take $r_0 = 1$.

\subsection{Risk bounds for general \texorpdfstring{$K$}{K}}
\label{Sec:KNonparametric}

For simplicity, we assume that $\mu = 0$ in this section. In the case where $\mu$ is unknown and $K$ is balanced, we may estimate $\mu$ with the empirical mean $n^{-1} \sum_{i=1}^n X_i$. Our algorithm for estimating $K$ proceeds by estimating the boundary of $K$ at a set of randomly chosen directions and then outputting the convex hull of the estimated boundary points. 

\begin{algorithm}[htp]
  \caption{Estimating $K$}
  \label{alg:estimateK}
  \textbf{Input:} $M \in \mathbb{N}$ and $X_1, \ldots, X_n, X_{n+1}, \ldots, X_{n+M} \in \mathbb{R}^p$.\\
  \textbf{Output:} $\hat{K} \in \mathcal{K}$.

  \begin{algorithmic}[1]
    \State Set $k \leftarrow \lfloor \log n \rfloor$.
    \State For $m \in [M]$, let $\theta_m = X_{n+m}/\| X_{n+m}\|_2$.
    \For{$m \in [M]$}:
       \State $\mathcal{I}^k_m \leftarrow \bigl\{ i \in [n] : X_i^\top \theta_m \geq \textrm{ $k$-th max} \{ X_i^\top \theta_{m'} \,:\, m' \in [M] \} \bigr \}$.
       \State $t_m \leftarrow | \mathcal{I}^k_m |^{-1} \sum_{i \in \mathcal{I}_m^k} \| X_i \|_2$.
       \EndFor
    \State Output $\hat{K} \leftarrow \mathrm{conv} \{ t_1 \theta_1, \ldots, t_m \theta_m \}$. 
  \end{algorithmic}
\end{algorithm}

The set $\{\theta_m/\| \theta_m \|_K\}_{m=1,\ldots,M}$ contains random points on the boundary of $K$. It is shown in Lemma~\ref{Lem:tmDeviation} that $\max_{m \in [M]}\bigl|t_m - 1/\| \theta_m \|_K\bigr|$ is small, which is allows us to control the error of the approximation of $K$ by $\hat{K}$. Figure~\ref{Fig:estimateK} illustrates the behaviour of the algorithm when $p=2$. Recall the definition of $d_{\mathrm{scale}}$ from~\eqref{Eq:dscale}; the next Proposition bounds the deviation $\inf_{\alpha > 0} d_{\mathrm{scale}}(\alpha \hat{K}, K)$. 

\begin{figure}
  \begin{subfigure}{0.24\textwidth}
        \centering
    \includegraphics[scale=0.3, trim=70 50 70 50]{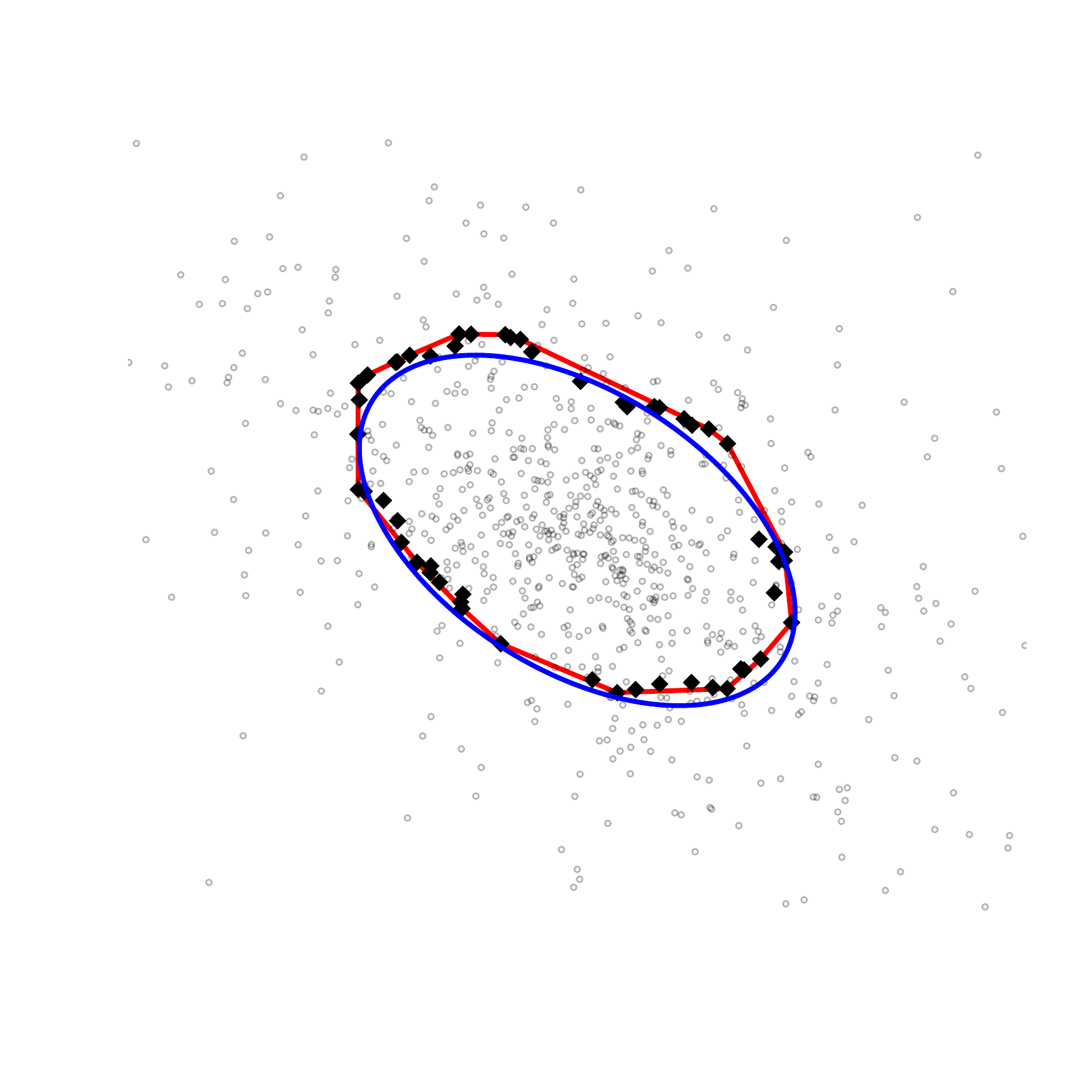}
    \caption{}
    \label{Fig:ball800}
  \end{subfigure}
  \begin{subfigure}{0.24\textwidth}
        \centering
    \includegraphics[scale=0.3, trim=70 50 70 50]{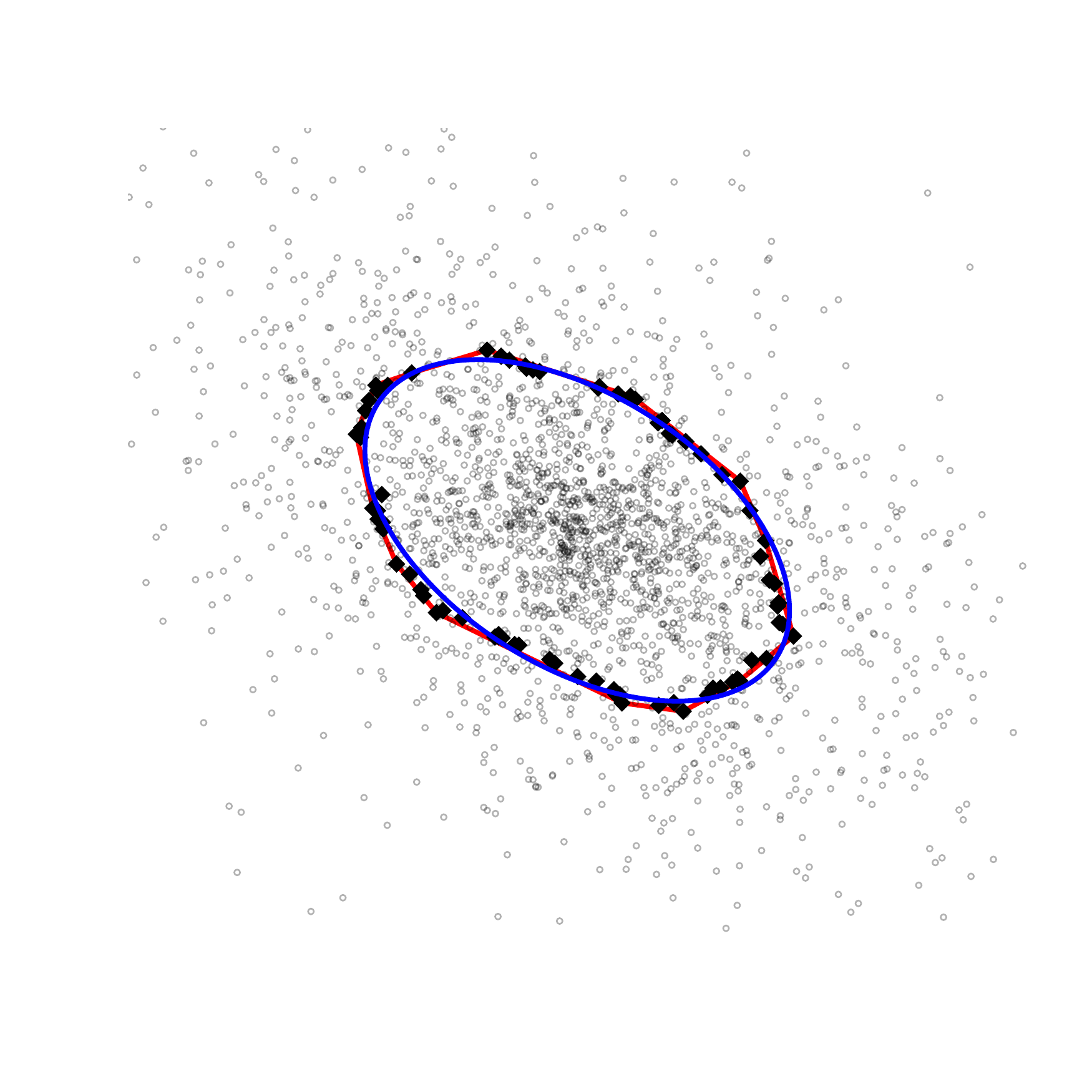}
    \caption{}
    \label{Fig:ball2400}
  \end{subfigure}  
  \begin{subfigure}{0.24\textwidth}
    \centering
    \includegraphics[scale=0.3, trim=70 50 70 50]{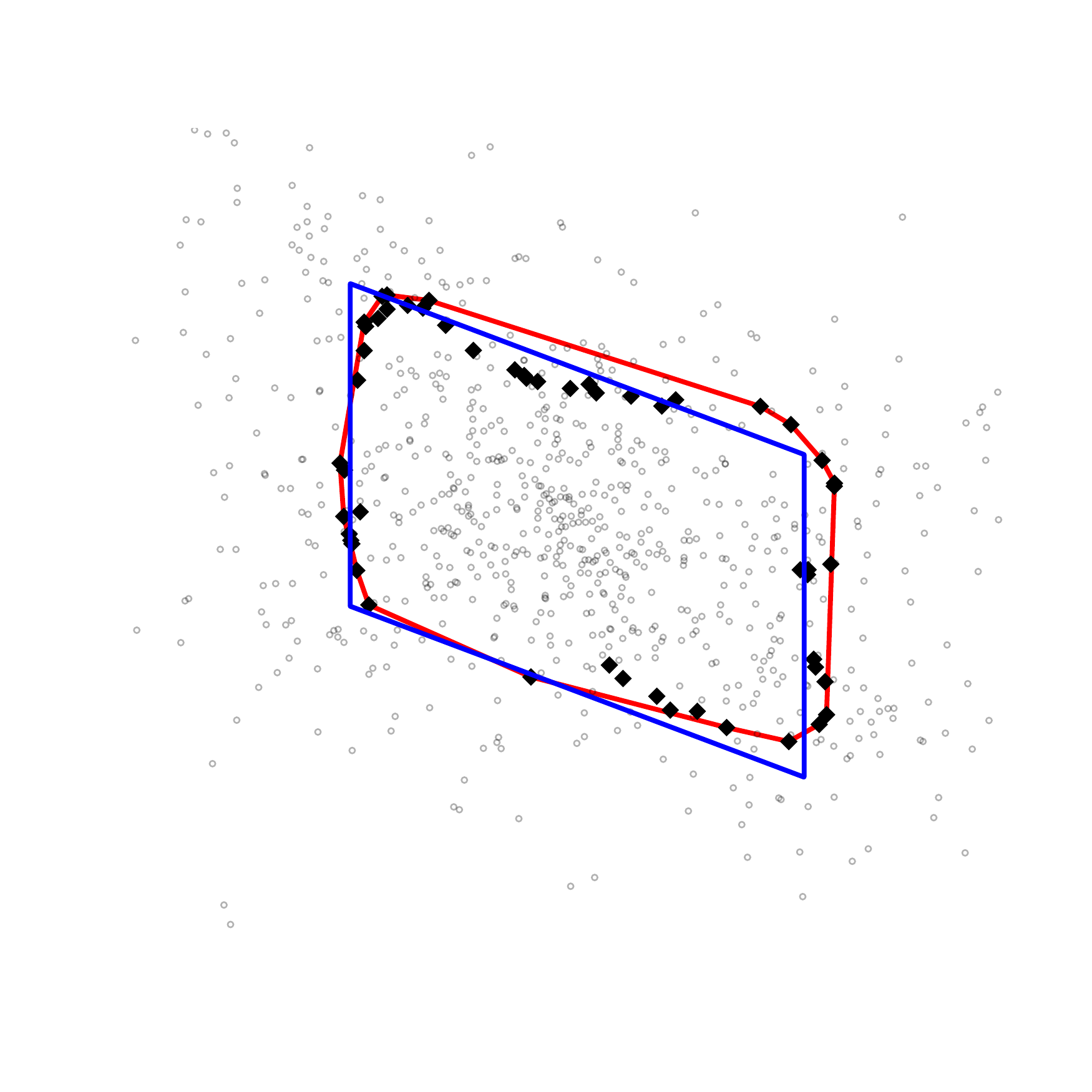}
    \caption{}
    \label{Fig:sq800}
  \end{subfigure}
  \begin{subfigure}{0.24\textwidth}
    \centering
    \includegraphics[scale=0.3, trim=70 50 70 50]{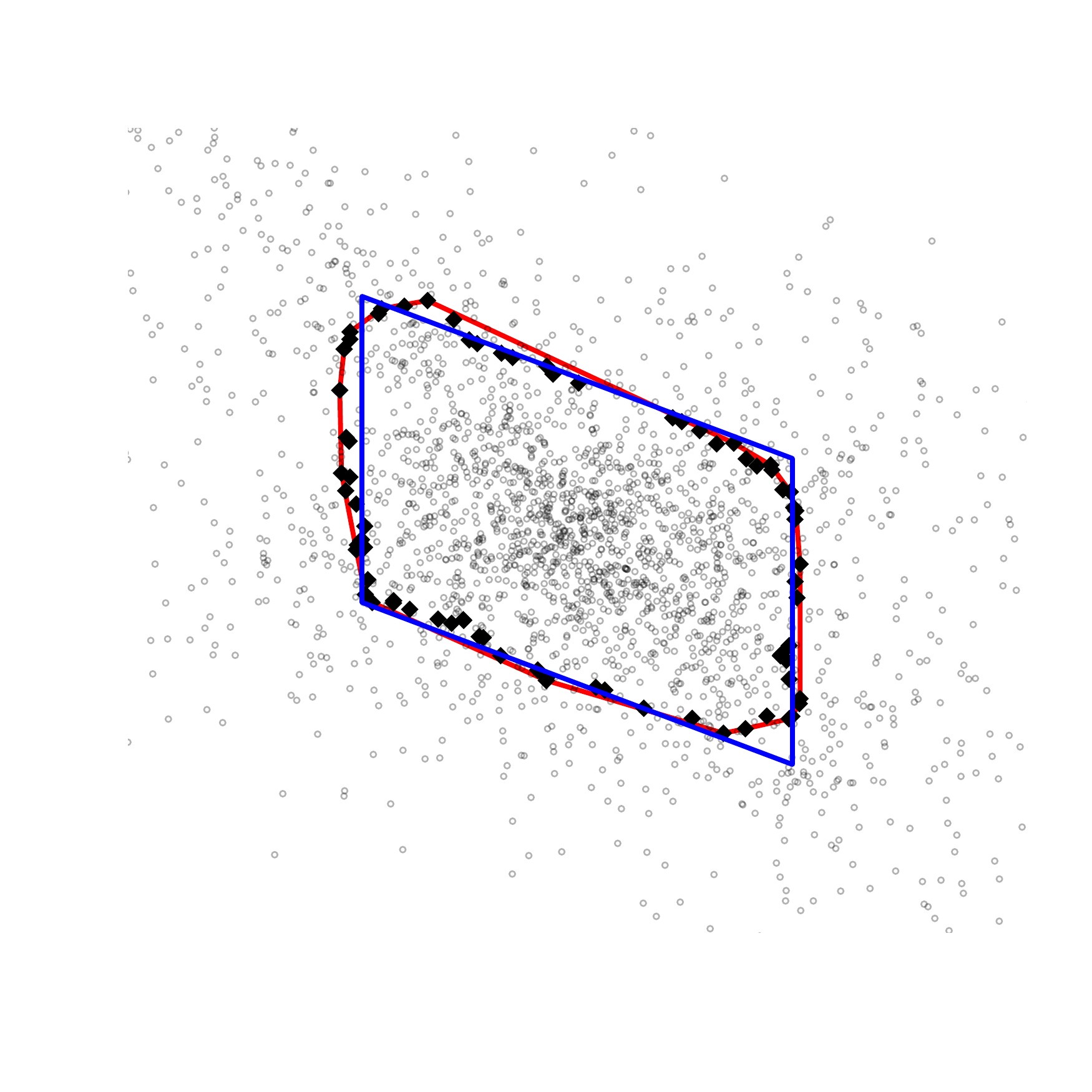}
    \caption{}
    \label{Fig:sq2400}
  \end{subfigure}
  \caption{Illustration of Algorithm~\ref{alg:estimateK} when $p=2$ and $f(\cdot) \propto e^{-\| \cdot \|_K}$. The blue shape is the true $K$; the red shape is the estimated $\hat{K}$.  Points in bold are the estimated boundary points $\{t_m \theta_m\}_{m=1}^M$ with $M = 6 \bigl \lceil n^{\frac{p-1}{p+1}} \bigr \rceil$. Figures~\ref{Fig:ball800} and~\ref{Fig:sq800} use $n=800$, while Figures~\ref{Fig:ball2400} and~\ref{Fig:sq2400} use $n=2400$.}
  \label{Fig:estimateK}
\end{figure}

\begin{proposition}
  \label{Prop:dscaleOverall}
  Suppose that $p \geq 2$, $K \in \mathcal{K}$, that there exist $r_2 \geq r_1 > 0$ such that $r_1 B_p(0,1)\subseteq K \subseteq r_2 B_p(0,1)$, and write $r_0 := r_2/r_1$. Suppose further that $r_0^2 n^{-\frac{1}{p+1}} \log^3 (en) \leq 1/64$ and let $M := \bigl \lceil n^{\frac{p-1}{p+1}} \bigr \rceil$. Let $X_1, \ldots, X_n, X_{n+1}, \ldots, X_{n+M} \stackrel{\mathrm{iid}}{\sim} f_0 \in \mathcal{F}^K_p$ and let $\hat{K}$ be the output of Algorithm~\ref{alg:estimateK}. Then, there exists a constant $C_{p,r_0} > 0$ depending only on $r_0$ and $p$ such that with probability at least $1 - C_{p,r_0} n^{- \frac{p}{p+1}}$,
  \begin{align}
    \inf_{\alpha > 0} d_{\mathrm{scale}}(\alpha \hat{K}, K) \lesssim r_0^2 n^{ - \frac{1}{p+1} } \log^3 (en).
    \label{Eqn:dscaleOverall}
  \end{align}
\end{proposition}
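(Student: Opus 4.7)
The plan is to decompose the proof into three steps: (i) a uniform control on the radial estimates $t_m$, provided by Lemma~\ref{Lem:tmDeviation}; (ii) a covering bound showing that the random directions $\theta_m$ form a fine net of $\partial B_p(0,1)$; and (iii) a convex-geometric combination of (i) and (ii).

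First, I would invoke Lemma~\ref{Lem:tmDeviation} to obtain, with high probability, $\max_{m \in [M]} \bigl|t_m - 1/\|\theta_m\|_K\bigr| \lesssim \epsilon_1$ for some $\epsilon_1$. (Because $d_{\mathrm{scale}}$ is scale-invariant, we are free to rescale $K$ so that $t_m$ is approximately unbiased.) This quantifies the radial error of each vertex $t_m\theta_m$ of $\hat{K}$ from the corresponding ideal boundary point $y_m := \theta_m/\|\theta_m\|_K \in \partial K$.

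Second, by Proposition~\ref{Prop:NormDirectionIndep}, $X_{n+m}/\|X_{n+m}\|_2$ is distributed as $Z/\|Z\|_2$ for $Z$ uniform on $K$; a polar-coordinate computation shows that its density on $\partial B_p(0,1)$ with respect to the uniform surface measure equals $\kappa_p/\bigl(\|\omega\|_K^p \lambda_p(K)\bigr)$, which lies in $[r_0^{-p},r_0^p]$. A union bound over a minimal $\delta/4$-cover of $\partial B_p(0,1)$ (of cardinality $\lesssim \delta^{-(p-1)}$) then shows that for $\delta$ at least a constant multiple of $r_0^{p/(p-1)}\bigl(\log(en)/M\bigr)^{1/(p-1)}$, the random set $\{\theta_m\}$ forms a $\delta$-net of $\partial B_p(0,1)$ with probability $\geq 1 - C_{p,r_0} n^{-p/(p+1)}$. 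With $M = \lceil n^{(p-1)/(p+1)}\rceil$, this produces $\delta$ of order $n^{-1/(p+1)}$ up to logarithmic and $r_0$-dependent factors.

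Third, the map $\omega \mapsto \omega/\|\omega\|_K$ is Lipschitz on $\partial B_p(0,1)$ with constant of order $r_0^2 r_1$ (using $\|\omega\|_K \in [1/r_2,1/r_1]$ together with the triangle inequality for $\|\cdot\|_K$), so $\{y_m\}$ is a $\lesssim r_0^2 r_1 \delta$-net of $\partial K$. Combined with the radial error $\|t_m\theta_m - y_m\|_2 \leq \epsilon_1$, the support function $h_K(\nu) := \sup_{x \in K} \nu^\top x$ then satisfies, for every unit vector $\nu$,
\[
h_K(\nu) - Cr_0^2 r_1 \delta - \epsilon_1 \;\leq\; h_{\hat{K}}(\nu) \;\leq\; h_K(\nu) + \epsilon_1;
\]
the upper bound is immediate from $y_m \in K$, while the lower bound comes from maximising $\nu^\top y_m$ over the $\{y_m\}$ net. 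Dividing through by $h_K(\nu) \geq r_1$ yields $d_{\mathrm{scale}}(\hat{K},K) \lesssim r_0^2 \delta + \epsilon_1/r_1$, which after substituting the bounds on $\delta$ and $\epsilon_1$ delivers the claimed rate.

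The main obstacle will be Lemma~\ref{Lem:tmDeviation}. The quantity $t_m$ is an average of $\|X_i\|_2$ over a data-dependent index set $\mathcal{I}_m^k$ of cardinality $\approx nk/M$, and its deviation from $1/\|\theta_m\|_K$ splits into a bias term arising from the angular width of the effective cone of directions contributing to $\mathcal{I}_m^k$ (where the smoothness of $\omega \mapsto 1/\|\omega\|_K$ used in Step 3 is the relevant ingredient) and a variance term from averaging $\approx k n^{2/(p+1)}$ random radii. Uniformising over the $M$ directions and over the randomness of the sets $\mathcal{I}_m^k$ is the technical heart of the argument, and is what drives the polylogarithmic factors in the final bound.
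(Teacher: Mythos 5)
Your decomposition — a radial-error piece controlled by Lemma~\ref{Lem:tmDeviation} and a directional piece controlling how well the convex hull of the ideal boundary points $y_m := \theta_m/\|\theta_m\|_K$ approximates $K$ — is the same as the paper's, which introduces the intermediate body $\tilde{K} := \mathrm{conv}\{y_1,\ldots,y_M\}$ and then splits via the near-triangle-inequality Lemma~\ref{lem:dscale_properties} into $d_{\mathrm{scale}}(\hat{K},\tilde{K})$ and $d_{\mathrm{scale}}(\tilde{K},K)$. Where you part ways with the paper is the treatment of $d_{\mathrm{scale}}(\tilde{K},K)$: the paper cites Theorem~\ref{Thm:BrunelTheorem} (a Hausdorff-approximation result for random convex hulls) via Lemma~\ref{Lem:Brunel}, whereas you build a DIY covering argument on $\mathbb{S}^{p-1}$ and transfer to $\partial K$ via the Lipschitz continuity of $\omega\mapsto\omega/\|\omega\|_K$.

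The DIY route works in $n$, but it is genuinely lossier in $r_0$, and the loss matters because the $\lesssim$ in~\eqref{Eqn:dscaleOverall} hides only a universal constant. Your spherical-cap density lower bound $r_0^{-p}$ forces $\delta\gtrsim r_0^{p/(p-1)}(\log(en)/M)^{1/(p-1)}$; your Lipschitz constant $\asymp r_0 r_2$ and the division by $h_K(\nu)\geq r_1$ then give $d_{\mathrm{scale}}(\tilde{K},K)\lesssim r_0^2\delta\asymp r_0^{2+p/(p-1)}(\log(en)/M)^{1/(p-1)}$, which for $p=2$ is $r_0^4$ rather than $r_0^2$. The inefficiency is in covering $\mathbb{S}^{p-1}$ by caps of \emph{fixed Euclidean radius} and paying the worst-case density $r_0^{-p}$: you end up over-resolving directions where $K$ is ``round'' in order to resolve the rare ``sharp'' directions adequately. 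The Brunel approach instead works with caps of $K$ of \emph{fixed height} $\epsilon$, i.e.~$C_K(u,\epsilon)=\{x\in K: u^\top x\geq h_K(u)-\epsilon\}$, and the pushforward of the uniform distribution on $K$ onto $\partial K$ satisfies the uniform-in-direction lower bound $\nu\bigl(C_K(u,\epsilon)\bigr)\geq r_1^{p-1}\epsilon^{p-1}/(2p)$. Taking a $(p-1)$-th root of the resulting $\tau_1\propto r_1^{-(p-1)}$ recovers a single factor of $1/r_1$, and after dividing by the inradius $r_1$ one gets $r_0^2$ exactly. If you want to keep your self-contained covering argument, replace the sphere-net by a net of directions such that the $K$-caps (not the sphere-caps) are covered — equivalently, redo your union bound over caps of $K$ of fixed height $\epsilon$ using the lower bound above.

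One more small point: in converting the radial error $\epsilon_1$ to $d_{\mathrm{scale}}(\hat{K},\tilde{K})$ you write $\epsilon_1/r_1$. It is cleaner to note $t_m\theta_m=\alpha_m y_m$ with $\alpha_m=t_m\|\theta_m\|_K$, so $|\alpha_m-1|\leq\|\theta_m\|_K\epsilon_1$, and then $d_{\mathrm{scale}}(\hat{K},\tilde{K})\lesssim\max_m|\alpha_m-1|$ directly, using that $0\in\hat{K}\cap\tilde{K}$; under the normalisation $\mathbb{E}\|X_1\|_K=\mathbb{E}\|X_1\|=1$ one has $\|\theta_m\|_K\leq 1/r_1\leq r_0$, which is what eventually allows the radial term to match the directional term. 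Otherwise your sketch is sound and your closing remark about the content of Lemma~\ref{Lem:tmDeviation} (the data-dependent index set, the bias from the effective cone width, and the averaging variance) describes the paper's proof of that lemma accurately.
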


\begin{remark}
  If $K$ is in an isotropic position, then we have that $r_0 \leq p$ by John's Ellipsoid Theorem \citep[][Section 3]{john1948extremum}. If $K$ is additionally balanced, then we know from the same theorem that $r_0 \leq p^{1/2}$ (see the remark after Proposition~\ref{Prop:dscaleAffineBound}). Therefore, we recommend that Algorithm~\ref{alg:estimateK} be applied to whitened data $\tilde{X}_i = \hat{\Sigma}^{-1/2}(X_i - \hat{\mu})$ where $\hat{\Sigma}$ and $\hat{\mu}$ are the sample covariance matrix and the sample mean respectively. This transformation brings $K$ into an approximately isotropic position, and the error in this approximation is given in Proposition~\ref{Prop:dscaleAffineBound}.
\end{remark}
\begin{remark}
Proposition~\ref{Prop:dscaleOverall} has connections with an extensive line of research on the estimation of a convex body $K$ from observations supported on $K$; see, e.g., \citet{bronstein2008approximation} or \citet{brunel2018methods} for introductions to the field. 
\end{remark}

\begin{corollary}
  \label{Cor:General}
Let $\hat{K}$ be defined as in Proposition~\ref{Prop:dscaleOverall} and let $\hat{f}_n$ be defined as in Proposition~\ref{Prop:KEstimatedGeneralRisk}. Then
  \[
  \mathbb{E}_{f_0}d_{\mathrm{H}}^2 (\hat{f}_n, f_0) \lesssim_{p, r_0}
  n^{-\frac{1}{p+1}} \log^3 (en).
\]
Moreover, if $f_0 \in \mathcal{F}^{K,\mu}_p$ is of the form $f_0(\cdot) = e^{\phi_0(\| \cdot - \mu \|_K)}$ for some $\phi_0 \in \Phi$ such that $\phi_0'$ is absolutely continuous and such that $\inf_{r \in [0,\infty)} \phi_0''(r) \geq - D_0$ for some $D_0 > 0$, then 
\[
  \mathbb{E}_{f_0}d_{\mathrm{H}}^2 (\hat{f}_n, f_0) \lesssim_{p,r_0}
  n^{-\frac{2}{p+1}} \log^6 (en).
\]
\end{corollary}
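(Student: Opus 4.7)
The plan is simply to compose Proposition~\ref{Prop:KEstimatedGeneralRisk} with Proposition~\ref{Prop:dscaleOverall}, handling the low-probability event on which the deterministic bound on $\inf_{\alpha > 0} d_{\mathrm{scale}}(\alpha \hat K,K)$ fails by the trivial bound $d_{\mathrm{H}}^2 \leq 2$. Since we are assuming $\mu = 0$ throughout Section~\ref{Sec:KNonparametric}, we take $\hat{\mu} = 0$ so that $\|\hat\mu - \mu\|_K = 0$; this kills the centering-vector terms in every displayed bound of Proposition~\ref{Prop:KEstimatedGeneralRisk} and also ensures that the first half of the event $\mathcal{E}_{c_1,c_2}$ in~\eqref{Eqn:Ec1c2} is automatically satisfied.

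Let $C > 0$ be the universal constant from~\eqref{Eqn:dscaleOverall} and let $\mathcal{G}$ denote the event on which $\inf_{\alpha > 0} d_{\mathrm{scale}}(\alpha \hat K, K) \leq C r_0^2 n^{-1/(p+1)} \log^3(en)$. Proposition~\ref{Prop:dscaleOverall} gives $\mathbb{P}(\mathcal{G}^c) \leq C_{p,r_0} n^{-p/(p+1)}$ as soon as $r_0^2 n^{-1/(p+1)}\log^3(en) \leq 1/64$. Moreover, for $n$ sufficiently large in a sense depending only on $p, r_0$ and the constant $c_2$ from Proposition~\ref{Prop:KEstimatedGeneralRisk}, we have $pCr_0^2 n^{-1/(p+1)}\log^3(en) < c_2$, whence $\mathcal{G} \subseteq \mathcal{E}_{c_1,c_2}$; smaller values of $n$ will be absorbed into the $\lesssim_{p,r_0}$ constant at the end.

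For the general bound, I would now write $\mathbb{E}_{f_0} d_{\mathrm{H}}^2(\hat f_n, f_0) = \mathbb{E}_{f_0}\bigl[d_{\mathrm{H}}^2 \mathbbm{1}_{\mathcal{G}}\bigr] + \mathbb{E}_{f_0}\bigl[d_{\mathrm{H}}^2 \mathbbm{1}_{\mathcal{G}^c}\bigr]$ and apply the tower property on the first term, conditioning on $(\hat K, \hat\mu)$. By Proposition~\ref{Prop:KEstimatedGeneralRisk} and the definition of $\mathcal{G}$,
\[
\mathbb{E}_{f_0}\bigl[d_{\mathrm{H}}^2 \mathbbm{1}_{\mathcal{G}}\bigr] \lesssim n^{-4/5} + p \cdot C r_0^2 n^{-1/(p+1)} \log^3(en) \lesssim_{p,r_0} n^{-1/(p+1)} \log^3(en),
\]
where the second inequality uses $n^{-4/5} \leq n^{-1/3} \leq n^{-1/(p+1)}$ for $p \geq 2$. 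For the remainder term, $\mathbb{E}_{f_0}\bigl[d_{\mathrm{H}}^2 \mathbbm{1}_{\mathcal{G}^c}\bigr] \leq 2 C_{p,r_0} n^{-p/(p+1)} \leq 2 C_{p,r_0} n^{-2/(p+1)}$ since $p \geq 2$, which is of strictly smaller order than $n^{-1/(p+1)}\log^3(en)$. Combining yields the first conclusion.

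The smooth case is identical in structure: applying the second bound of Proposition~\ref{Prop:KEstimatedGeneralRisk} on $\mathcal{G}$ gives
\[
\mathbb{E}_{f_0}\bigl[d_{\mathrm{H}}^2 \mathbbm{1}_{\mathcal{G}}\bigr] \lesssim n^{-4/5} + (D_0^2+1) p^2 C^2 r_0^4 n^{-2/(p+1)} \log^6(en) \lesssim_{p,r_0} n^{-2/(p+1)}\log^6(en),
\]
again using $n^{-4/5} \leq n^{-2/(p+1)}$ for $p \geq 2$, and the $\mathcal{G}^c$ contribution is still $\lesssim_{p,r_0} n^{-p/(p+1)} \leq n^{-2/(p+1)}$. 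The whole argument is routine; the only mildly delicate point, and the one that requires a moment's care, is verifying that $\mathcal{G} \subseteq \mathcal{E}_{c_1,c_2}$ for large $n$ so that Proposition~\ref{Prop:KEstimatedGeneralRisk} is applicable — there is no serious obstacle beyond this.
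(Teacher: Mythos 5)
Your proof is correct and follows the route the paper intends: the paper presents Corollary~\ref{Cor:General} as an immediate consequence of Propositions~\ref{Prop:KEstimatedGeneralRisk} and~\ref{Prop:dscaleOverall} without spelling out the details, and you have filled them in accurately, including the observation that $\hat\mu = \mu = 0$ kills the centering terms, the absorption of the low-probability event $\mathcal{G}^c$ via the trivial bound $d_{\mathrm{H}}^2 \leq 2$, and the small-$n$ regime into the $\lesssim_{p,r_0}$ constant.
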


\begin{remark}
It is important to observe that the computation of the estimator $\hat{f}_n$ is scalable for large $n$ and $p$. Computing $\hat{K}$ requires at most $O(pn^2 \log n)$ operations, since we represent $\hat{K}$ implicitly in terms of its hull vertices and have no need to enumerate its facets. For any $x \in \mathbb{R}^p$, we may then compute $\| x \|_{\hat{K}}$ through a straightforward linear programme of at most $n+1$ variables; see~\eqref{Eqn:LinearProgram}. Thus, it is also fast to compute $\hat{\phi}_n$ and to evaluate $\hat{f}_n(\cdot) = e^{\hat{\phi}_n(\| \cdot \|_{\hat{K}})}$ at any $x \in \mathbb{R}^p$. 
\end{remark}

\section{Algorithm}
\label{Sec:Algorithm}

In this section, we assume $\hat{K} \in \mathcal{K}$, $\hat{\mu} \in \mathbb{R}^p$ and data $X_1, \ldots, X_n \in \mathbb{R}^p$ with empirical distribution $\mathbb{P}_n$ are given, and describe an efficient algorithm for computing the $\hat{K}$-homothetic log-concave projection of $\mathbb{P}_n$.  Fixing $x \in \mathbb{R}^p$, we first note that in many cases of interest, the Minkowski functional $\| x - \hat{\mu}\|_{\hat{K}}$ is easy to compute when $\hat{K}$ is constructed using the estimation schemes described in Section~\ref{Sec:KEstimated}.  In particular, if $\hat{K}$ is of the form $\hat{\Sigma}^{1/2} K_0$, where $K_0$ is a known convex body whose Minkowski functional is simple to compute, and where $\hat{\Sigma} \in \mathbb{S}^{p \times p}$, as is the case in Section~\ref{Sec:Affine}, then $\| x - \hat{\mu} \|_{\hat{K}} = \| \hat{\Sigma}^{-1/2} (x - \hat{\mu})\|_{K_0}$, so it may also be computed easily.  As another example, if $\hat{K}$ is the convex hull of a set of points $\{Y_1, \ldots, Y_M\}$ in $\mathbb{R}^p$, as is the case in Section~\ref{Sec:KNonparametric}, then $\| x - \hat{\mu} \|_{\hat{K}}$ is the solution to the following linear programme:
\begin{align}
  \max_{(u_1,\ldots,u_{M+1}) \in [0, \infty)^{M+1}} u_{M+1} \quad  \textrm{s.t. }  \sum_{m=1}^M u_m Y_m = u_{M+1} (x - \hat{\mu}) \,\,\, \textrm{and } 
                 \sum_{m=1}^M u_m \leq 1. \label{Eqn:LinearProgram}
\end{align}

Let $Z_i := \| X_i - \hat{\mu}\|_{\hat{K}}$ for $i\in [n]$, and let $\mathbb{Q}_n$ denote the empirical distribution of $Z_1,\ldots,Z_n$.  Proposition~\ref{Prop:ProjectionExistence} shows that, provided at least one of $Z_1,\ldots,Z_n$ is non-zero, the function $\hat{\phi}_n := \phi_{\hat{K}}^*(\mathbb{Q}_n)$ is well-defined, and we can then set $\hat{f}_n(\cdot) := e^{\hat{\phi}_n(\|\cdot-\hat{\mu}\|_{\hat{K}})}$.  Our aim is therefore to provide an algorithm for computing $\hat{\phi}_n$.

Let $\bar{\Phi}$ denote the set of $\phi \in \Phi$ with the property that $\phi$ is constant on the interval $[0,Z_{(1)}]$ and affine on the intervals $[Z_{(i-1)},Z_{(i)}]$ for $i=2,\ldots,n$, with $\phi(r) = -\infty$ for $r > Z_{(n)}$.  Observe that if we fix $\phi \in \Phi$, and $\bar{\phi} \in \bar{\Phi}$ be such that $\bar{\phi}(0) = \phi(0)$ and $\bar{\phi}(Z_i) = \phi(Z_i)$ for all $i=1,\ldots,n$. Then by concavity of $\phi$, we have $\phi(r) \geq \bar{\phi}(r)$ for all $r \in [0, \infty)$.  Hence
\begin{align}
\label{Eq:phibar}
  L(\phi, \mathbb{Q}_n) &= \frac{1}{n}\sum_{i=1}^n \phi(Z_i) - p\lambda_p(\hat{K}) \int_0^\infty r^{p-1} e^{\phi(r)} \, dr +1 \nonumber \\
  &\leq
    \frac{1}{n}\sum_{i=1}^n \bar{\phi}(Z_i) - p\lambda_p(\hat{K}) \int_0^\infty  r^{p-1} e^{\bar{\phi}(r)} \, dr +1 = L(\bar{\phi},\mathbb{Q}_n).
\end{align}

The volume $\lambda_p(\hat{K})$, in the case where $\hat{K} = \hat{\Sigma}^{1/2} K_0$ where the volume of $K_0$ is known and $\hat{\Sigma} \in \mathbb{S}^{p \times p}$, takes the simple form $\lambda_p(K_0) \mathrm{det}(\hat{\Sigma})^{1/2}$. More generally, $\lambda_p(\hat{K})$ can be computed efficiently if, for any $x \in \mathbb{R}^p$, the query of whether or not $x \in \hat{K}$ can be evaluated efficiently. When $\hat{K}$ is the convex hull of $M$ points in $\mathbb{R}^p$ for example, we may evaluate a query by solving the linear programme~\eqref{Eqn:LinearProgram} and then checking whether the solution $u_{M+1}$ is less than or equal to 1. If we let $q$ denote the number of queries made by an algorithm, then \citet{kannan1997random} give a Markov Chain Monte Carlo algorithm whose query complexity is bounded by $O(q^5)$ up to polylogarithmic factors. In fact, the computation of the volume of a convex body is a deep and beautiful problem that had been studied intensely by the theoretical computer scientists since the seminal paper of \citet{dyer1991random}, who first gave a polynomial time algorithm for the problem. It is one of few instances in computer science where all deterministic algorithms are provably intractable but efficient \emph{randomised} algorithms exist.  We refer readers to \citet{simonovits2003compute} for an accessible tutorial.

We now assume for simplicity of exposition that $Z_1,\ldots,Z_n$ are distinct.  The more general case can be treated similarly by assigning appropriate weights to duplicated points.  Any $\phi \in \bar{\Phi}$ can be identified with $\mathbf{\phi} = (\phi_1,\ldots,\phi_n)^\top \in \mathbb{R}^n$ given by $\phi_i := \phi(Z_i)$ for $i \in [n]$.  For $i\in [n-1]$, let $\delta_i := Z_{(i+1)} - Z_{(i)}$.  Define $v_1 = (v_{1,j})_{j=1}^n \in \mathbb{R}^n$ to have two non-zero entries, namely $v_{1,1} := -1$, $v_{1,2} := 1$.  Further, for $i=2,\ldots,n-1$, let $v_i = (v_{i,j})_{j=1}^n \in \mathbb{R}^n$ have three non-zero entries, namely
\[
v_{i,i-1} := \frac{1}{\delta_{i-1}}, \quad v_{i,i} := -\frac{1}{\delta_i} - \frac{1}{\delta_{i-1}} \quad \text{and} \quad v_{i,i+1} := \frac{1}{\delta_i}.
\]
Finally, let $\bar{\mathbf{\Phi}}_n := \bigl\{\phi \in \mathbb{R}^n: v_i^{\top} \phi \leq 0 \text{ for } i=1,\ldots, n-1\bigr\}$.  By~\eqref{Eq:phibar}, we see that it suffices to compute $\phi^* = (\phi_1^*,\ldots,\phi_n^*)^\top \in \argmax_{\phi \in \bar{\mathbf{\Phi}}_n} F(\phi)$, where
\begin{equation}
\label{Eq:Optimization}
F(\phi) := \frac{1}{n} \sum_{i=1}^n \phi_i - \frac{1}{p}Z_{(1)}^p - p\lambda_p(\hat{K})\sum_{i=1}^{n-1} \int_{Z_{(i)}}^{Z_{(i+1)}}
   r^{p-1} \exp \biggl( \frac{Z_{(i+1)} - r}{\delta_i} \phi_i +
    \frac{r - Z_{(i)}}{\delta_i} \phi_{i+1} \biggr) \,dr + 1. 
\end{equation}
This is a finite-dimensional convex optimisation problem with linear inequality constraints.  We propose an active set algorithm for the optimisation of~\eqref{Eq:Optimization}, a variant of the algorithm used in \cite{dumbgen2007active} to compute the ordinary univariate log-concave MLE. For $\phi \in \bar{\Phi}_n$, we define $A(\phi) := \bigl\{i \in [n-1]:v_i^\top\phi = 0 \bigr\}$ to be the set of `active' constraints.  Note that this is the complement in $\{1,\ldots,n\}$ of the set of `knots' of $\phi$.
Given a set $A \subseteq [n-1]$, we define $V(A) := \bigl \{ \phi \in \mathbb{R}^n : v_i^\top\phi = 0, \, \forall \, i \in A\bigr\}$, and 
  \begin{equation}
    \label{Eq:OptimizationFixedA}
    V^*(A) := \argmax_{\phi \in V(A)} F(\phi).
  \end{equation}
Here, the maximiser is unique because $F(\cdot )$ is strictly concave on $\mathbb{R}^n$ with $F(\phi) \rightarrow -\infty$ as $\|\phi\| \rightarrow \infty$. It is convenient to define, for $i\in [n-1]$, vectors $b_i = (b_{i,j})_{j=1}^n \in \mathbb{R}^n$ by
\[
b_{i,j} := - \sum_{k=i}^{j-1} \delta_k,
\]
where, as usual, we interpret an empty sum as $0$, and also define $b_n := \mathbf{1}_n \in \mathbb{R}^n$, the all-one vector.  It follows from this definition that $b_i^{\top} v_i = -1$ for $i\in [n-1]$ and $ b_i^\top v_j = 0$ for all $i \in [n]$ and $j \in [n-1]$ with $i \neq j$.  Finally, given $\phi \in \bar{\mathbf{\Phi}}_n$ and $\phi' \in \mathbb{R}^n$, we define 
\[
t(\phi, \phi') := \max\biggl\{\frac{v_i^\top \phi'}{ v_i^\top(\phi' - \phi)}:i \in [n-1] \setminus A(\phi)\, , \, v_i^\top\phi' > 0\biggr\}.
\] 
We are now in a position to present the full algorithm; see Algorithm~\ref{Alg:ActiveSet}.  It is guaranteed to terminate in finitely many steps with the exact solution.   

\begin{algorithm}
\caption{Computing the $\hat{K}$-homothetic log-concave MLE}
\label{Alg:ActiveSet}
\textbf{Input}: $\hat{K} \in \mathcal{K}$, $\hat{\mu} \in \mathbb{R}^p$, $X_1,\ldots, X_n \in \mathbb{R}^p$. 
\begin{algorithmic}[1]
\State $Z_i \leftarrow \| X_i-\hat{\mu}\|_{\hat{K}}$ for all $i\in [n]$.
\State $A \leftarrow [n-1]$.
\State $\phi \leftarrow V^*(A)$.
\While{ $\max_{i=1,\ldots,n} b_i^\top \nabla F(\phi) \geq 0$ }
 \State $i^* \leftarrow \argmax_{i=1,\ldots,n} b_i^\top \nabla F(\phi) $
\State $\phi' \leftarrow V^*(A \setminus \{i^*\})$
 \While{ $\phi' \notin \bar{\Phi}_n$}
    \State $\phi \leftarrow t(\phi, \phi') \phi + \{1 - t(\phi, \phi')\} \phi'$ 
    \State $A \leftarrow A(\phi)$
    \State $\phi' \leftarrow V^*(A)$
 \EndWhile
 \State $\phi \leftarrow \phi'$
 \State $A \leftarrow A(\phi)$
\EndWhile   
\end{algorithmic}
\textbf{Output}: $\phi \in \bar{\Phi}_n$.
\end{algorithm}


We complete this section by providing further detail on how to solve the optimisation problem in~\eqref{Eq:OptimizationFixedA}. Given the active set $A \subseteq [n-1]$, let us define $I := [n] \setminus A$. We index the elements of $I$ by $i_1 < \ldots < i_T$ where $T := |I|$.  Given $v \in \mathbb{R}^{(n-1) \times n}$, we also write $v_A$ for the matrix in $\mathbb{R}^{|A| \times n}$ obtained by extracting the rows of $v$ with indices in $A$.  Observe that the set $\{ \phi = (\phi_1,\ldots,\phi_n)^\top \in \mathbb{R}^n \,:\, v_A \phi = 0 \}$ is the subspace of $\mathbb{R}^n$ where for $j < i_1$, we have $\phi_j = \phi_{i_1}$, and for $j \in \{i_t+1,\ldots,i_{t+1}-1\}$, we have 
\[
\phi_j = \frac{Z_{(i_{t+1})} - Z_{(j)}}{Z_{(i_{t+1})} - Z_{(i_t)}} \phi_{i_t} +
         \frac{Z_{(j)} - Z_{(i_t)}}{Z_{(i_{t+1})} - Z_{(i_t)}} \phi_{i_{t+1}}.
\]
It follows  we can solve the optimisation problem~\eqref{Eq:OptimizationFixedA} by solving instead an unconstrained optimisation over $T$ variables, i.e.~by computing
\begin{align*}
\max_{\phi_I \in \mathbb{R}^T}  \frac{1}{n} \biggl( i_1 &\phi_{i_1} + 
          \sum_{t=1}^{T-1} \sum_{j=i_t + 1}^{i_{t+1}}   
         \frac{Z_{(i_{t+1})} - Z_{(j)}}{Z_{(i_{t+1})} - Z_{(i_t)}} \phi_{i_t} +
         \frac{Z_{(j)} - Z_{(i_t)}}{Z_{(i_{t+1})} - Z_{(i_t)}} \phi_{i_{t+1}} \biggr) - \lambda_p(\hat{K})e^{\phi_{i_1}} Z_{(i_1)}^p \\
     &  - p\lambda_p(\hat{K})
       \sum_{t=1}^{T-1} \int_{Z_{(i_t)}}^{Z_{(i_{t+1})}} r^{p-1} \exp
       \biggl( \frac{Z_{(i_{t+1})} -r}{Z_{(i_{t+1})} -Z_{(i_t)}} \phi_i + \frac{r -Z_{(i_t)}}{Z_{(i_{t+1})} -Z_{(i_t)}} \phi_{i_{t+1}}\biggr) \, dr.
\end{align*}
We solve this latter problem via Newton's method.

\section{Empirical performance}
\label{Sec:Simulations}

We perform three sets of simulation studies. In the first set, reported in Figure~\ref{Fig:KKnown}, we choose $p = 100$ suppose that $K = B_p(0,1)$ and $\mu = 0$ are known.  We generate $X_1, \ldots, X_n \stackrel{\mathrm{iid}}{\sim} f_0$ where we take $f_0(\cdot) \propto e^{- \|\cdot \|^2_K/2}$, $f_0(\cdot) \propto \mathbbm{1}_{\{\|\cdot\|_K \leq p\}}$ and $f_0(\cdot) \propto e^{-\| \cdot \|_K}$ in settings~(a),~(b) and~(c) respectively.  We then compute the homothetic log-concave MLE $\hat{f}_n$ and report the average squared Hellinger errors $d^2_{\mathrm{H}}(\hat{f}_n, f_0)$ over 50 repetitions and with $n \in \{2000,4000,6000,8000\}$ in the curve labelled ``HLC'' Figure~\ref{Fig:KKnown}.  For comparison, we also present the corresponding results with $p=100,000$ in the curves labelled ``HLC(p=100k)''.  The simulation results are in line with Theorem~\ref{Thm:KKnownWorstCase}, which gives a bound on $d_X^2(\hat{f}_n,f_0)$ that is independent of~$p$.

\begin{figure}[hbt]
  \begin{subfigure}{0.3\textwidth}
    \centering
    \includegraphics[scale=.28]{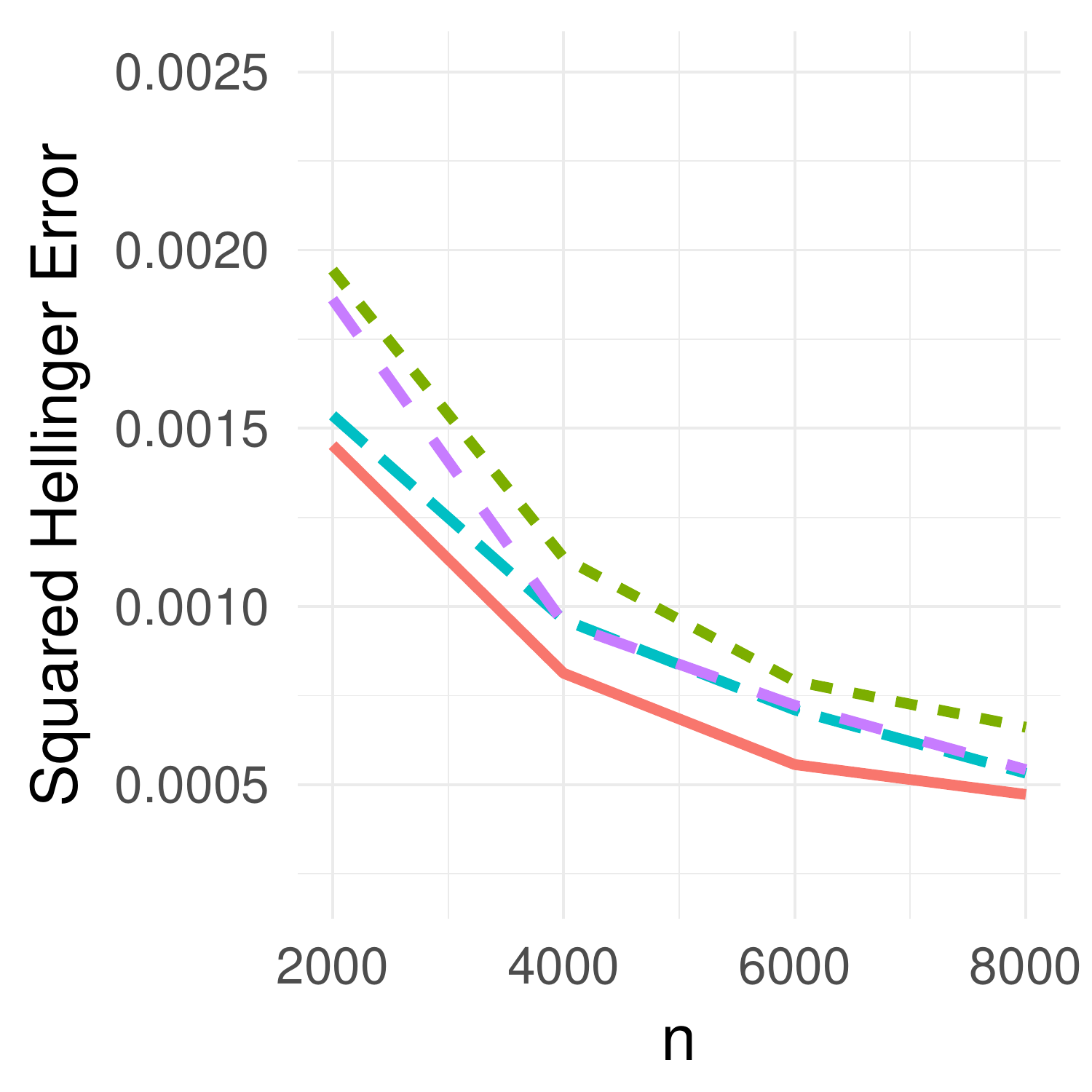}
    \caption{}
    \label{Fig:KKnown1}
  \end{subfigure}
  \begin{subfigure}{0.3\textwidth}
    \centering
    \includegraphics[scale=0.28]{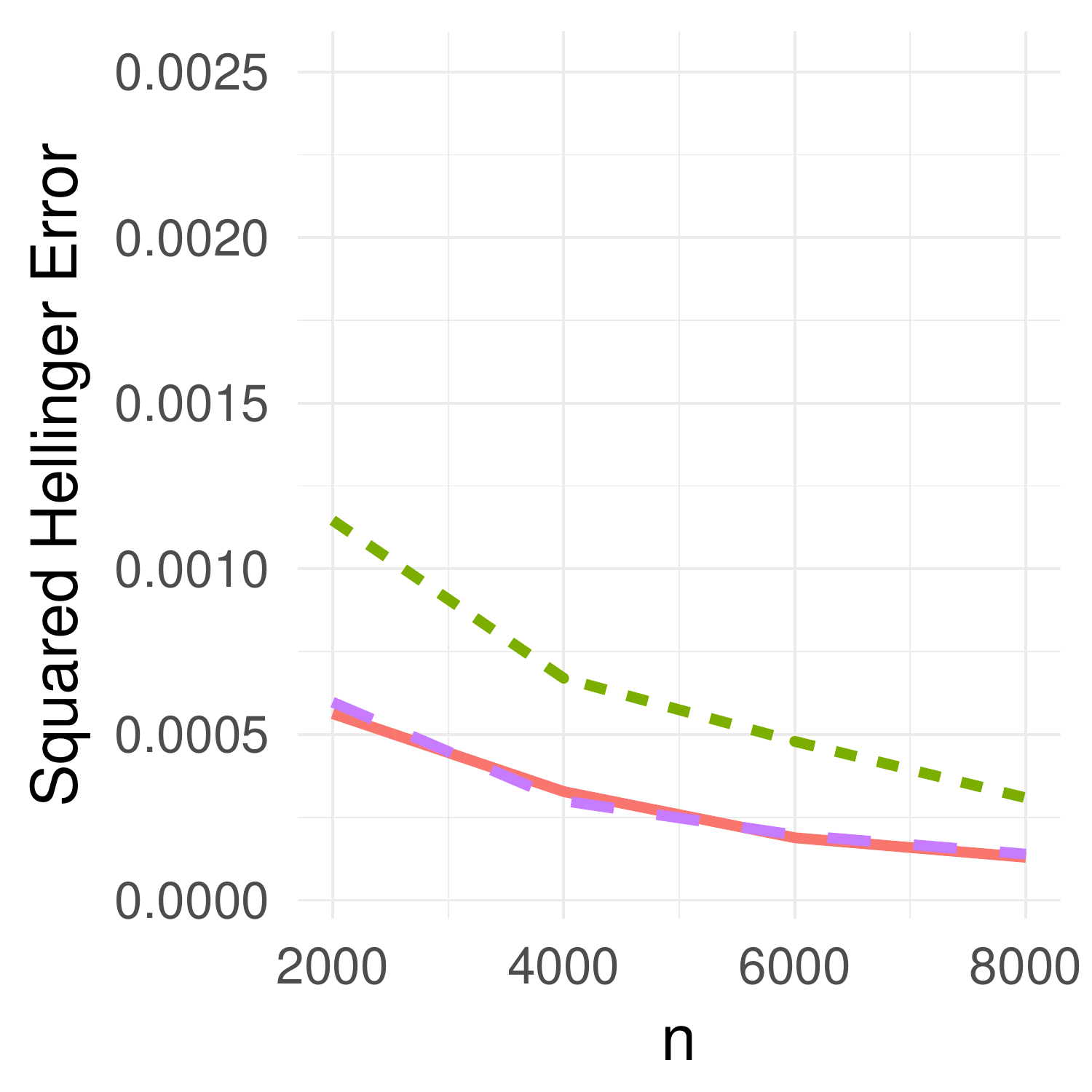}
    \caption{}
    \label{Fig:KKnown2}
  \end{subfigure}
    \begin{subfigure}{0.35\textwidth}
    \centering
    \includegraphics[scale=0.28]{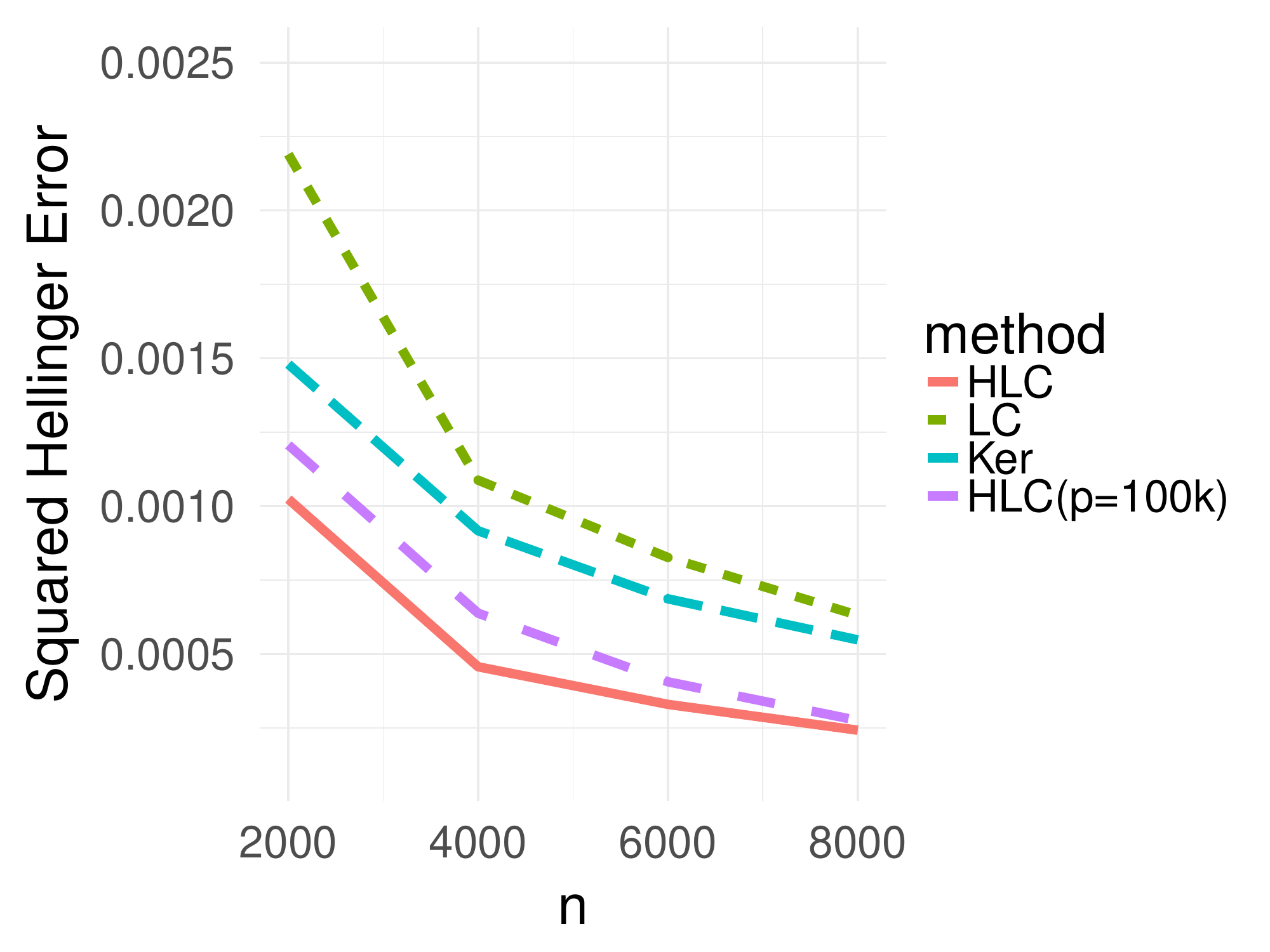}
    \caption{}
    \label{Fig:KKnown3}
  \end{subfigure}
  \caption{\label{Fig:KKnown}Average squared Hellinger errors of different methods when $K = B_p(0,1)$ is known.  (a) $f_0(\cdot) \propto e^{- \|\cdot \|^2_K/2}$; (b) $f_0(\cdot) \propto \mathbbm{1}_{\{\|\cdot\|_K \leq p\}}$; (c) $f_0(\cdot) \propto e^{-\| \cdot \|_K}$.  For the purple line, $p=100,000$, while in other cases, $p=100$.}
\end{figure}

We also compare the $K$-homothetic log-concave MLE against two alternative methods. In the first of these methods, we write $Z_i = \| X_i \|_K$ for $i\in [n]$, apply the ordinary univariate log-concave MLE to $Z_1,\ldots,Z_n$ to obtain a density $\hat{h}^{\mathrm{LC}}_n$ and then estimate $f_0$ by $\hat{f}^{\mathrm{LC}}_n$, where
\begin{equation}
  \label{Eq:fntilde}
  \hat{f}^{\mathrm{LC}}_n(x) := \frac{\hat{h}^{\mathrm{LC}}_n(\|x \|_K)}{p \lambda_p(K)\|x\|_K^{p-1}}.
  \end{equation}
      We compute the squared Hellinger errors $d^2_{\mathrm{H}}(\hat{f}^{\mathrm{LC}}_n, f_0)$ and report them in the curve labelled $\mathrm{LC}$ in Figure~\ref{Fig:KKnown}.  In fact, besides the improved empirical performance of $\hat{f}_n$ observed in Figure~\ref{Fig:KKnown}, we argue that $\hat{f}_n$ has several advantages over $\hat{f}^{\mathrm{LC}}_n$ in this context, and list these in roughly decreasing order of importance:
\begin{enumerate}
\item The estimator $\hat{f}^{\mathrm{LC}}_n$ is inconsistent at $x=0$.  Indeed $\hat{h}^{\mathrm{LC}}_n(x) = 0$ whenever $\|x\|_K < \min_i Z_i$, and the division by $\|x\|_K^{p-1}$ in~\eqref{Eq:fntilde} means that the estimator behaves poorly for small $\|x\|_K$; see~Figure~\ref{Fig:SSLC}.  By contrast, $\hat{f}_n$ is uniformly consistent over compact sets contained in the interior of the support of $f_0$ (Proposition~\ref{Prop:Continuity});
\item As mentioned in Section~\ref{Sec:KknownAdaptation}, the estimator $\hat{f}_n$ attains faster rates of convergence when the true density has a simple structure;
\item The estimator $\hat{f}_n$ takes values in the relevant class $\mathcal{F}_p^{K}$, whereas $\hat{f}^{\mathrm{LC}}_n$ does not;
\item The estimator $\hat{f}_n$ exists in slightly greater generality than $\hat{f}^{\mathrm{LC}}_n$ (cf.~the remark following Proposition~\ref{Prop:ProjectionExistence}(iv)).
\end{enumerate}
\begin{figure}[ht!]
  \centering
  \begin{subfigure}[b]{0.4\linewidth}
    \centering
    \includegraphics[scale=0.31, trim=0 0 0 0]{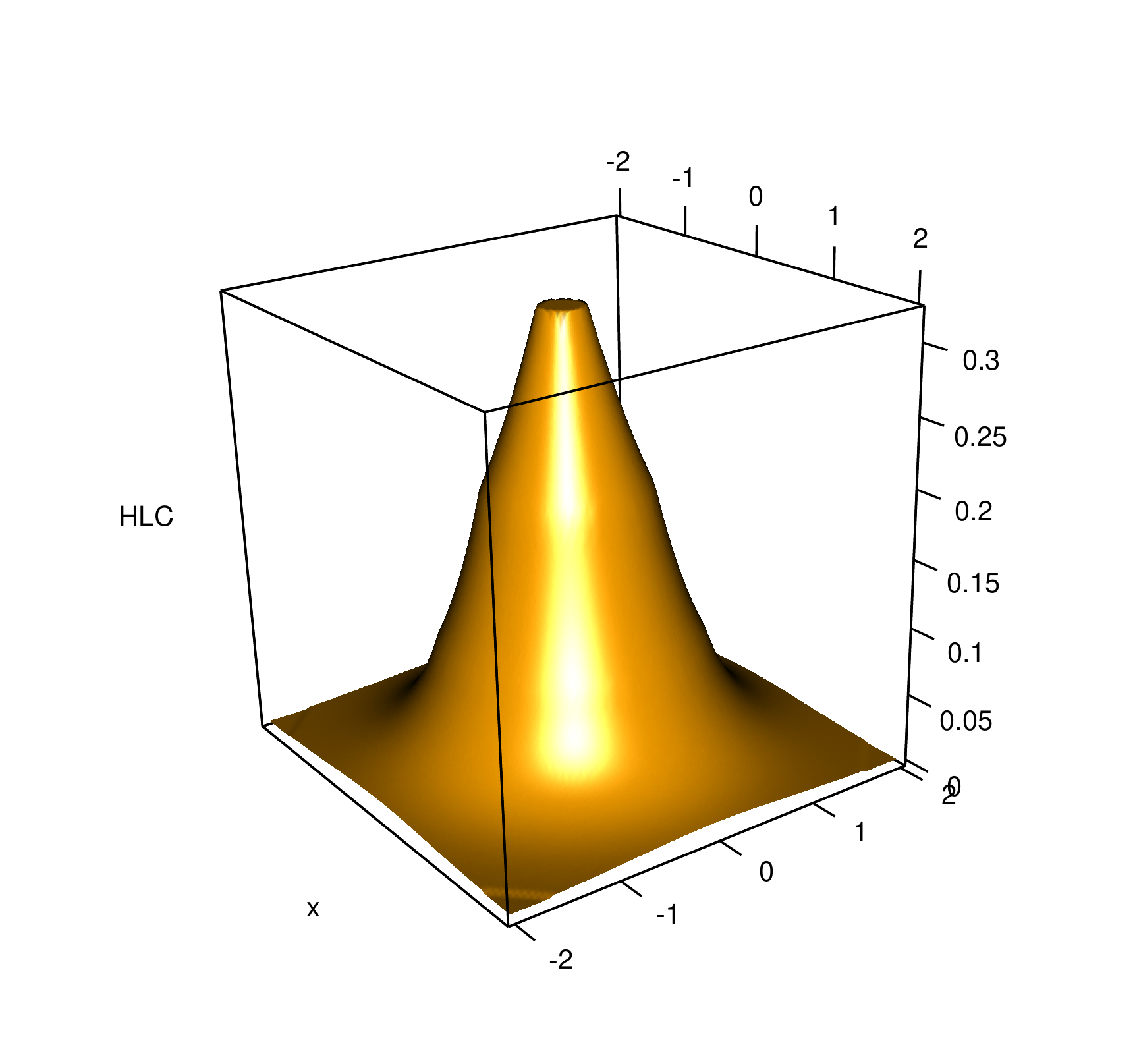}
  \end{subfigure}
  \begin{subfigure}[b]{0.4\linewidth}
    \centering
    \includegraphics[scale=0.34, trim=0 0 0 0]{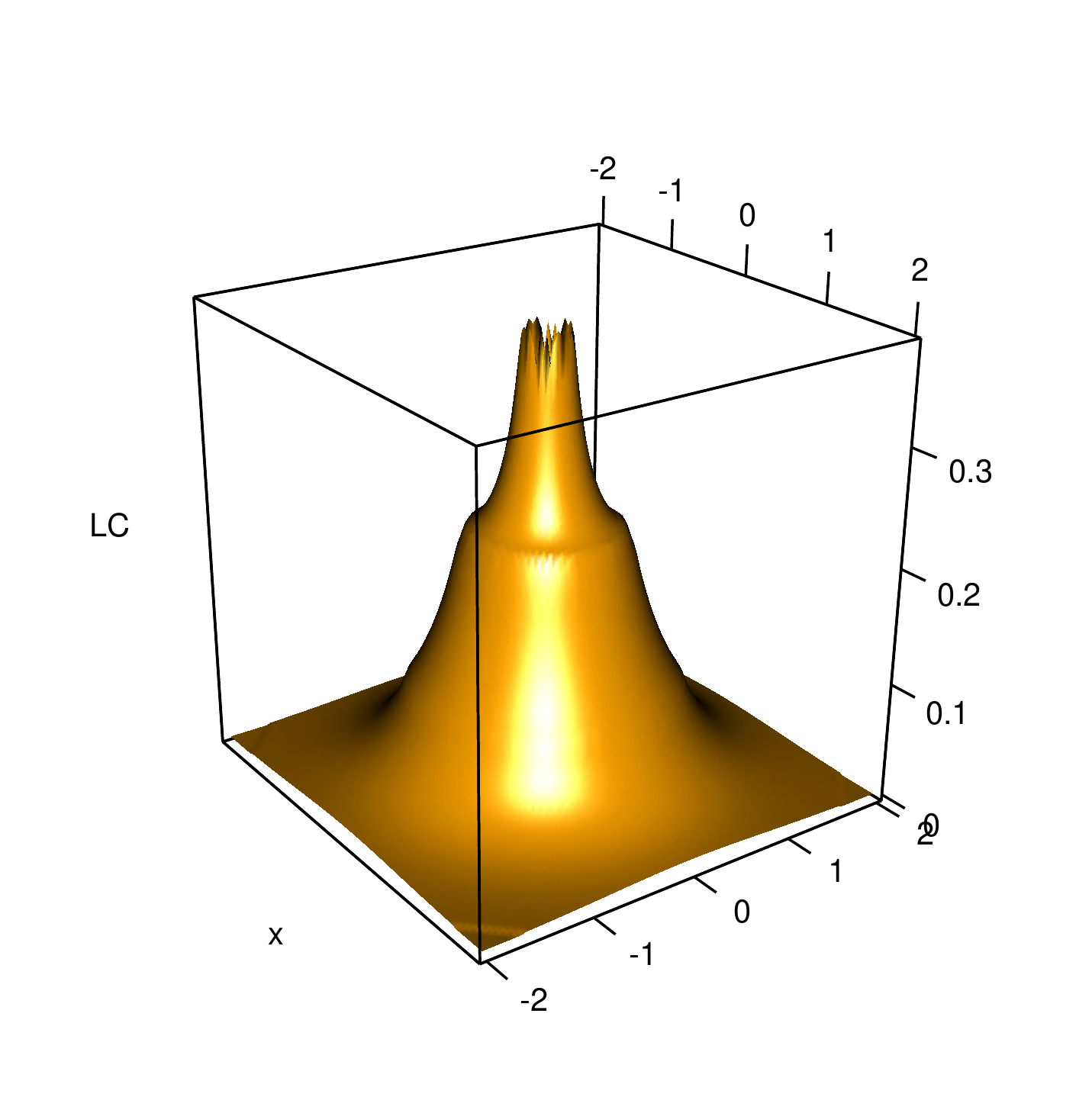}
  \end{subfigure}
  \begin{subfigure}[b]{0.4\linewidth}
    \centering
    \includegraphics[scale=0.26]{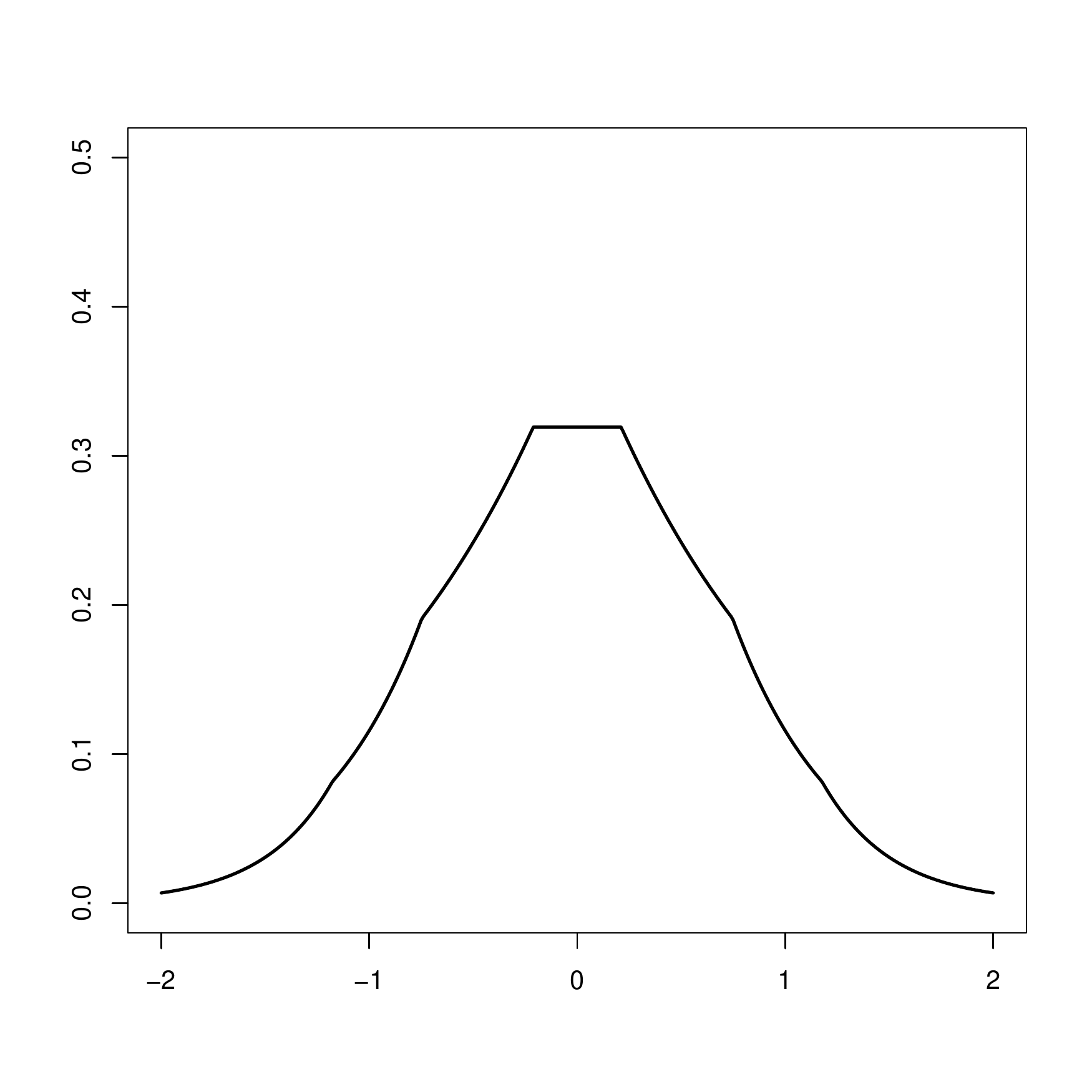}
  \end{subfigure}
  \begin{subfigure}[b]{0.4\linewidth}
    \centering
    \includegraphics[scale=0.26]{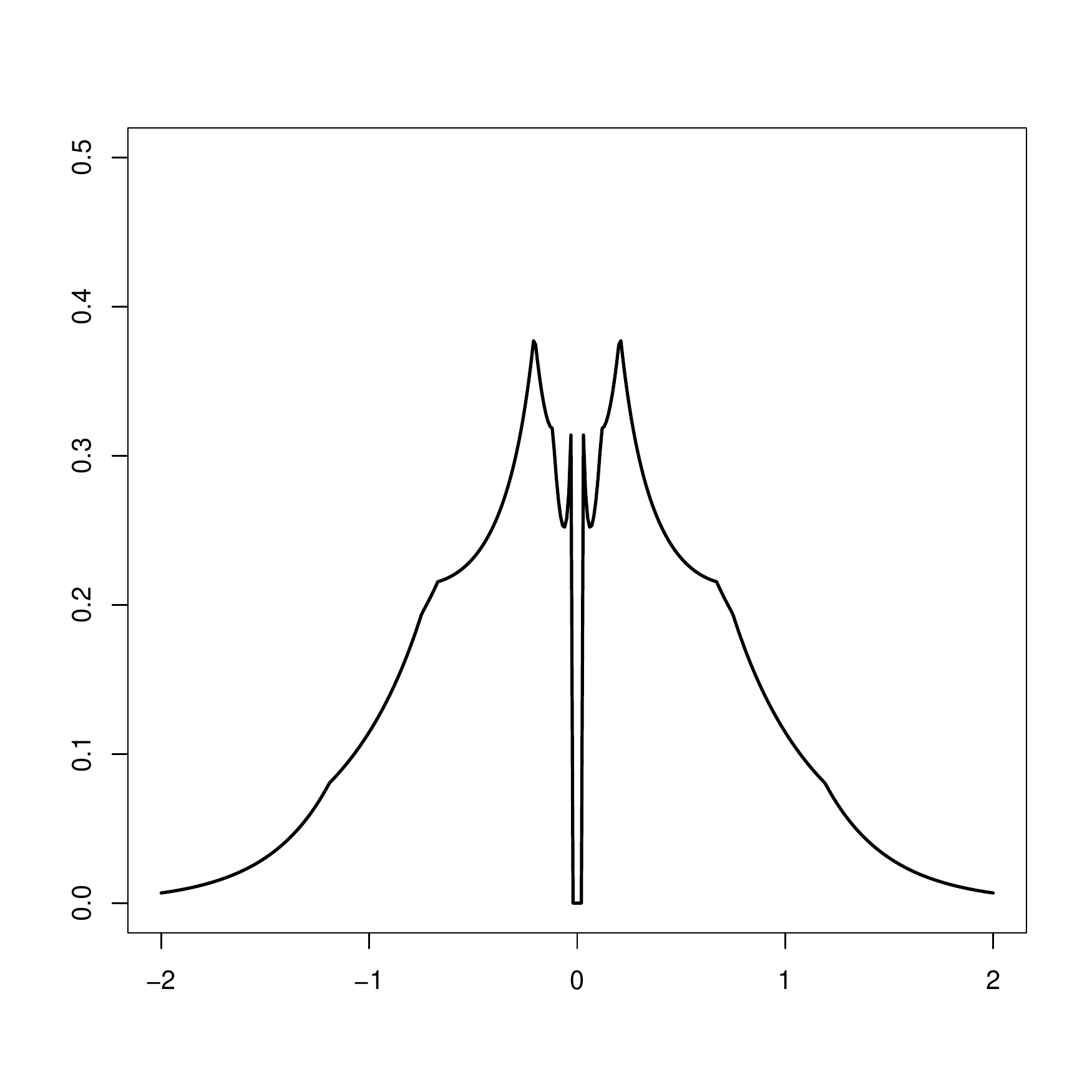}
  \end{subfigure}
\caption{\label{Fig:SSLC}A comparison of the $K$-homothetic log-concave MLE $\hat{f}_n$ with $K = B_p(0,1)$ (left) and the ordinary log-concave MLE $\hat{f}^{\mathrm{LC}}_n$ (right) based on a sample of size $n=1000$ from a standard bivariate normal distribution.  The top plots give the bivariate density estimates, while the bottom ones present the corresponding estimates $\hat{f}_n(|\cdot|)$ and $\hat{f}^{\mathrm{LC}}_n(|\cdot|)$.}
\end{figure}
In the second competing method, we apply a kernel density estimator (with the default settings of the \texttt{density} function in \texttt{R}) to $Z_1, \ldots, Z_n$ to obtain a density $\hat{h}^{\mathrm{ker}}_n$, and then estimate $f_0$ by $\hat{f}^{\mathrm{ker}}_n$, where $\hat{f}^{\mathrm{ker}}_n(x) := \hat{h}^{\mathrm{ker}}_n(\|x \|_K)/\bigl\{p \lambda_p(K)\|x\|_K^{p-1}\bigr\}$. We compute the squared Hellinger errors $d^2_{\mathrm{H}}(\hat{f}^{\mathrm{ker}}_n, f_0)$ and report them in the curve labelled `$\mathrm{ker}$' in Figure~\ref{Fig:KKnown}.  The squared Hellinger errors for the kernel density estimator $\hat{f}^{\mathrm{ker}}$ do not appear in Figure~\ref{Fig:KKnown2} because the errors are greater than 0.007 and therefore much larger than those of $\hat{f}_n$ and~$\hat{f}^{\mathrm{LC}}_n$. 

\begin{figure}[hbt]
  \begin{subfigure}{0.49\textwidth}
    \centering
    \includegraphics[scale=0.35]{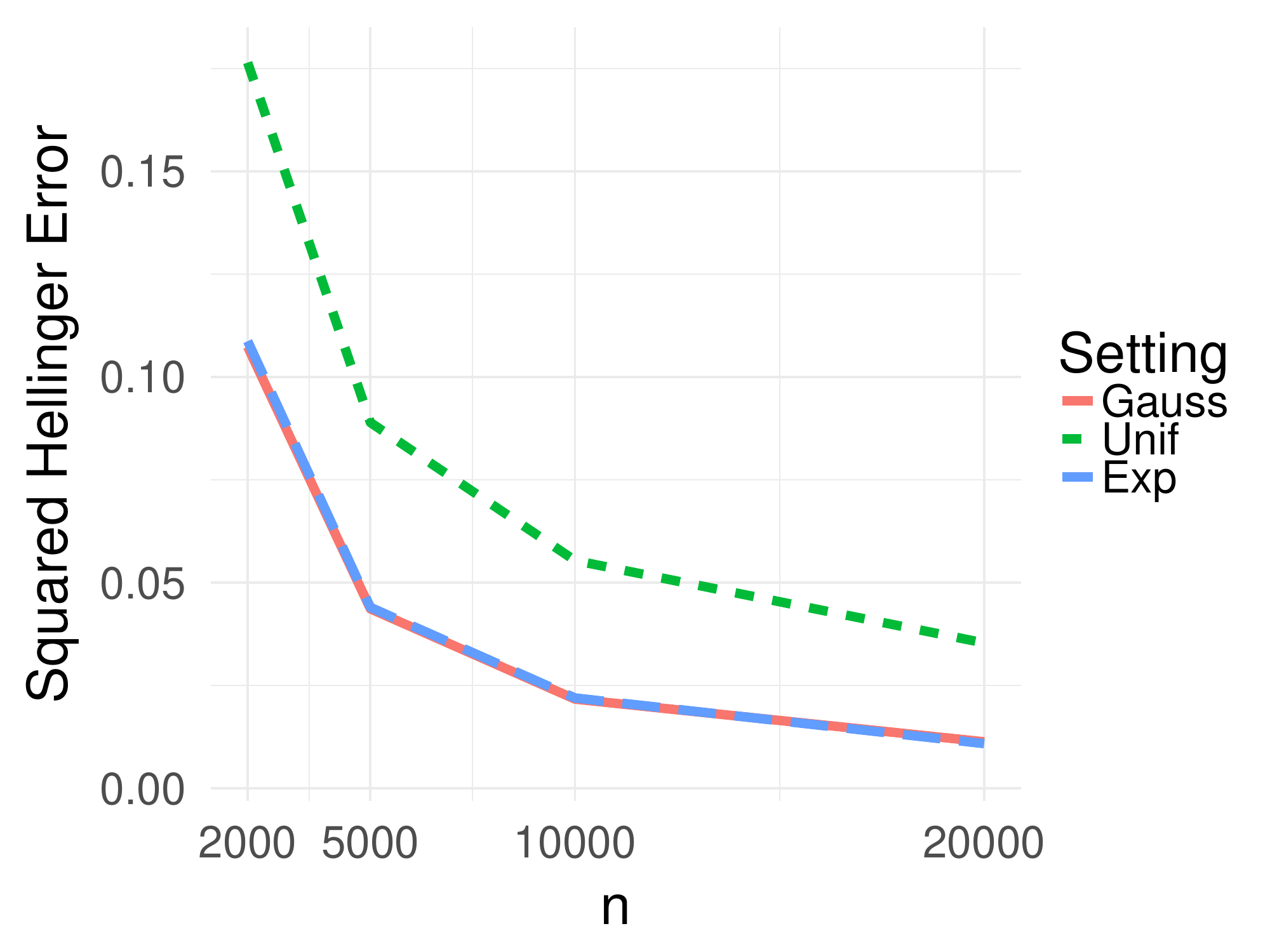}
    \caption{}
    \label{Fig:KAffineN1}
  \end{subfigure}
  \begin{subfigure}{0.49\textwidth}
    \centering
    \includegraphics[scale=0.35]{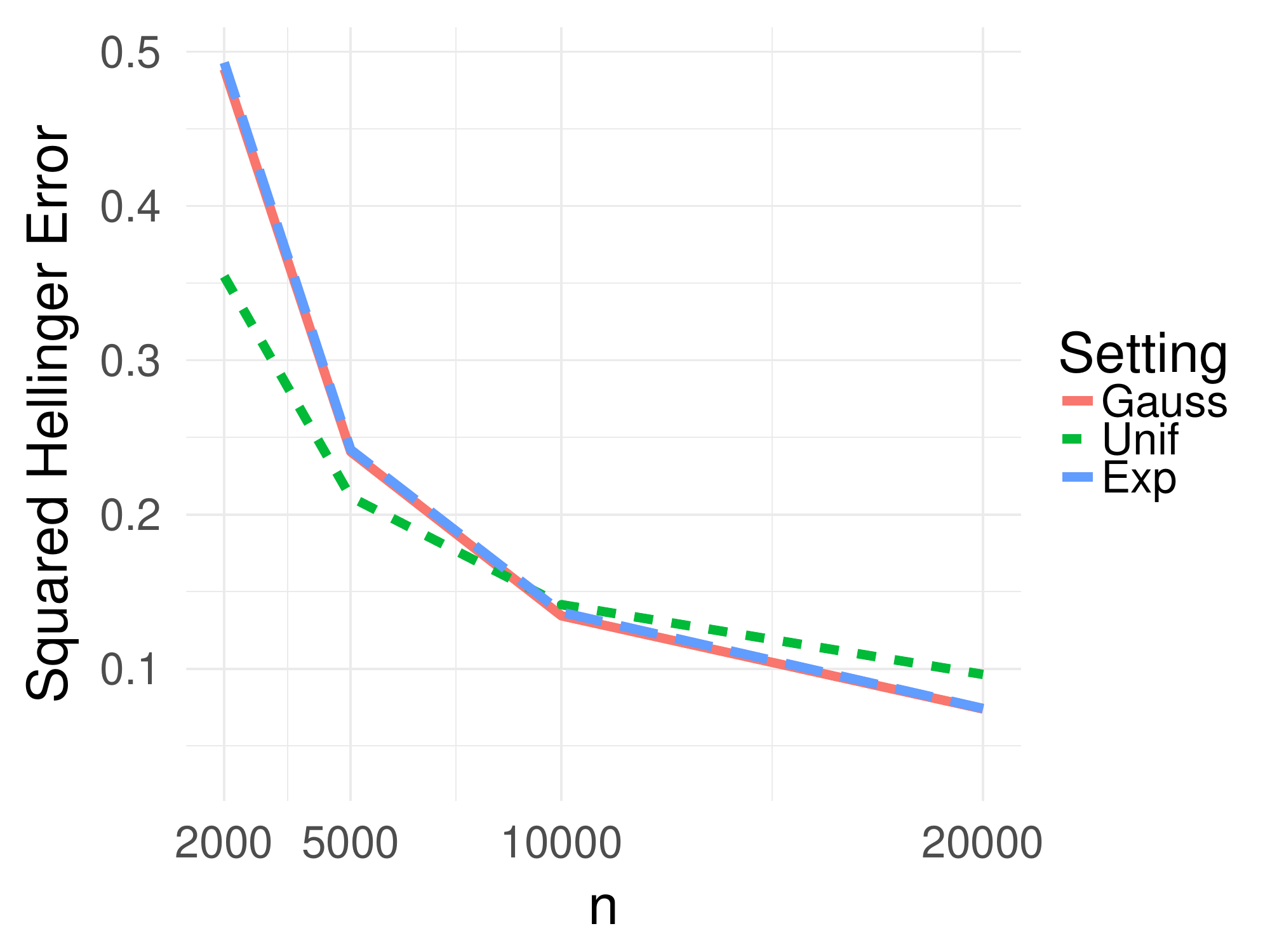}
    \caption{}
    \label{Fig:KAffineN2}
  \end{subfigure}
  \begin{subfigure}{0.49\textwidth}
    \centering
    \includegraphics[scale=0.35]{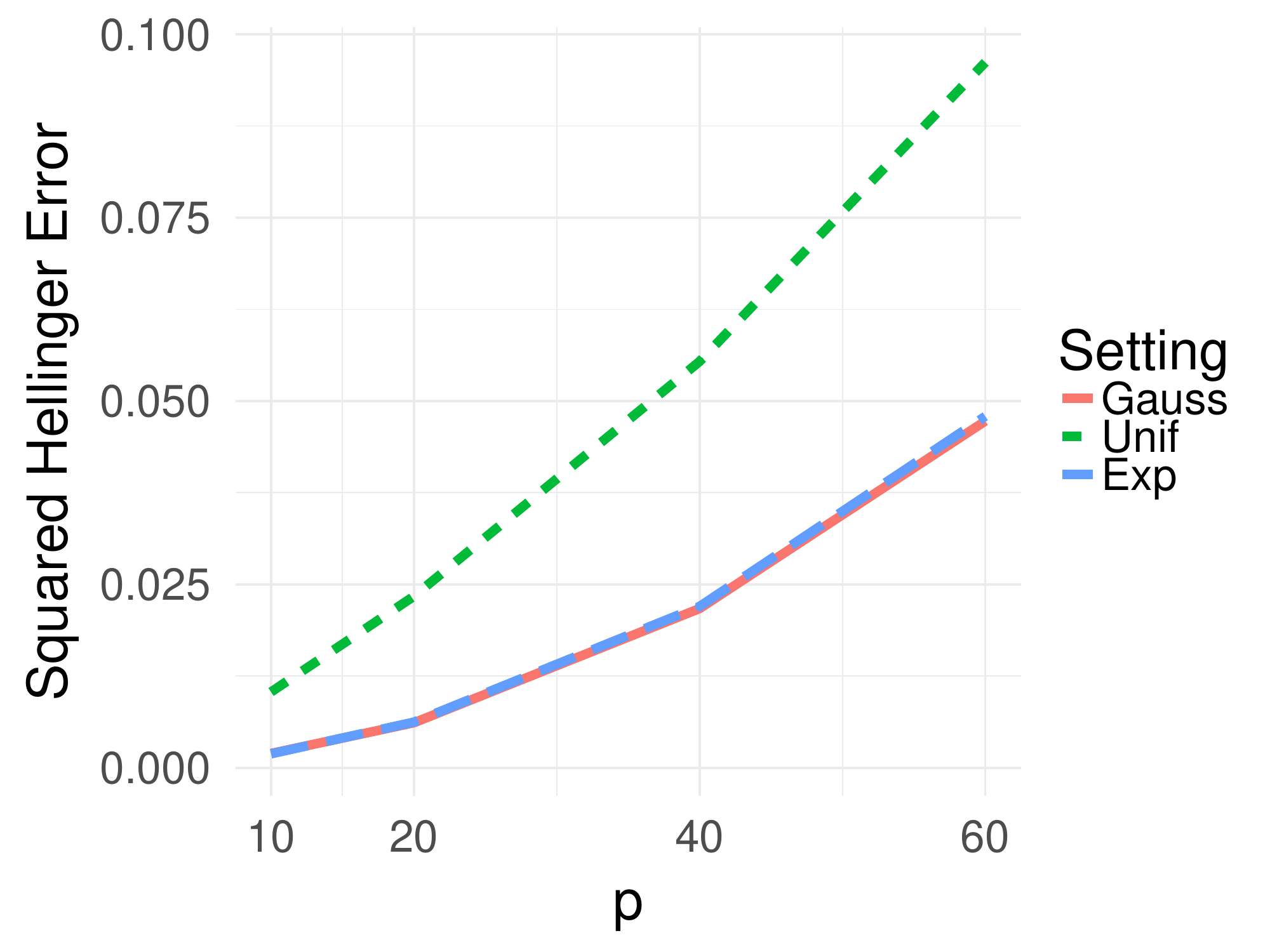}
    \caption{}
    \label{Fig:KAffineP1}
  \end{subfigure}
  \begin{subfigure}{0.49\textwidth}
    \centering
    \includegraphics[scale=0.35]{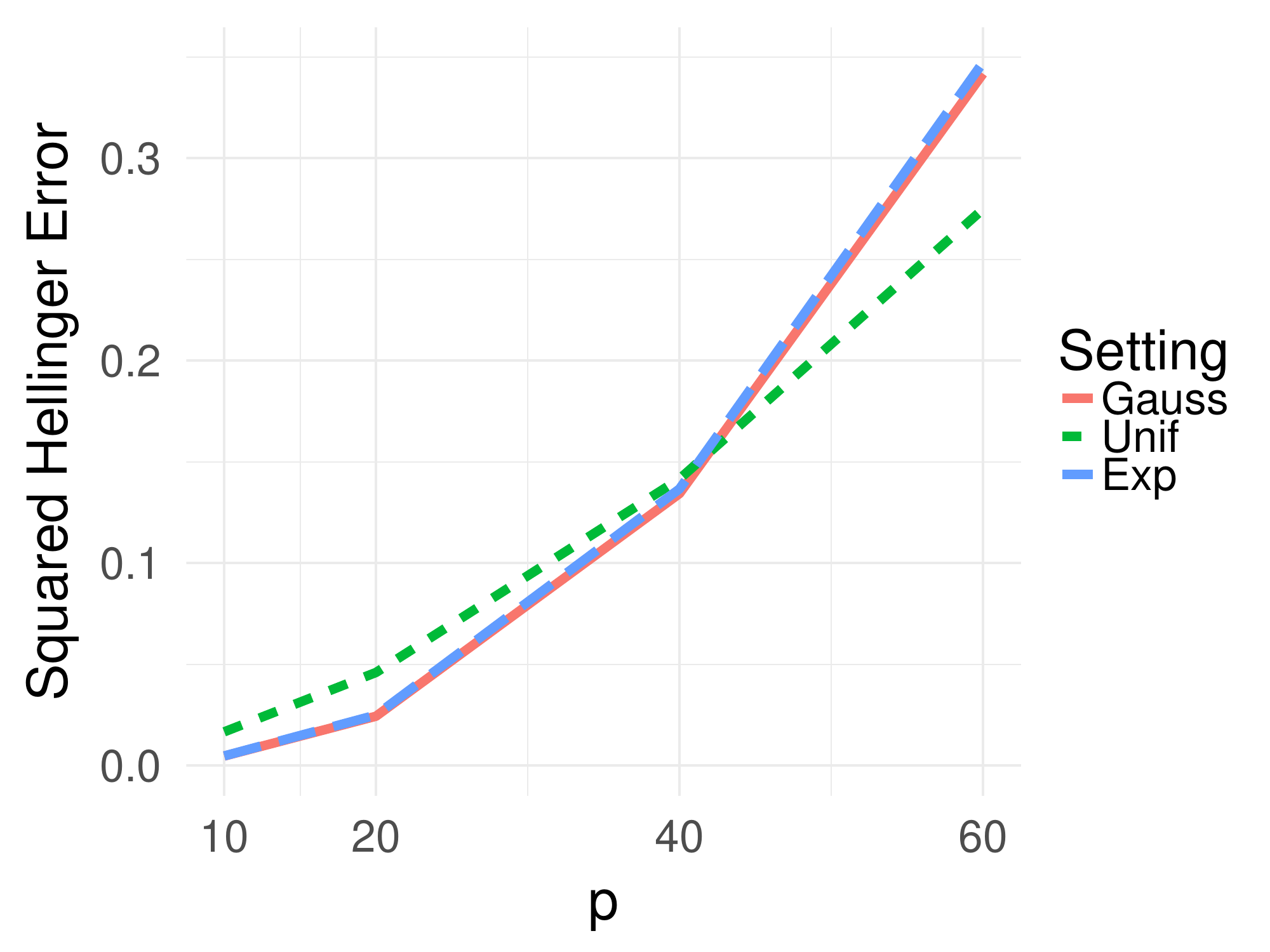}
    \caption{}
    \label{Fig:KAffineP2}
  \end{subfigure}
  \caption{\label{Fig:KAffineN}Average squared Hellinger errors $d_{\mathrm{H}}^2(\hat{f}_n,f_0)$ with $K$ known up to a positive definite transformation of $K_0 = B_p(0,1)$ in (a) and (c), and $K = [-1,1]^p$ in (b) and (d).  For $\mathrm{Gauss}$, we took $f_0(\cdot) \propto e^{- \|\cdot \|^2_K/2}$; for $\mathrm{Unif}$, we took $f_0(\cdot) \propto \mathbbm{1}_{\{\|\cdot\|_K \leq p\}}$; for $\mathrm{Exp}$, we took $f_0(\cdot) \propto e^{-\| \cdot \|_K}$.  In (a) and (b), we fixed $p=40$, while in (c) and (d), we fixed $n =10000$.}
\end{figure}

In the second set of simulations, reported in Figure~\ref{Fig:KAffineN}, we consider the semiparametric setting where $K = \Sigma_0^{1/2} K_0$ for some known $K_0 \in \mathcal{K}$ and unknown $\Sigma_0 \in \mathbb{S}^{p \times p}$. We estimate~$\Sigma_0$ up to a scaling factor by the empirical covariance matrix $\hat{\Sigma}$, take $\hat{K}$ to be $\hat{\Sigma}^{1/2} K_0$, and estimate the centering vector $\mu$ (taken to be 0) by the empirical mean vector $\hat{\mu}$. We then construct $\tilde{Z}_i := \| X_i - \hat{\mu} \|_{\hat{K}}$ for $i\in[n]$, compute $\hat{\phi}_n := \argmax_{\phi \in \Phi} n^{-1}\sum_{i=1}^n \phi(\tilde{Z}_i) - p\lambda_p(\hat{K}) \int_0^\infty r^{p-1} e^{\phi(r)} \, dr$, and construct the density estimate $\hat{f}_n(\cdot) = e^{\hat{\phi}_n(\cdot)}$.  In all cases, we generate $\Sigma_0$ as $UDU^{\top}$, where $U$ is generated according to Haar measure on the set of orthogonal $p \times p$ real matrices, and where $D \in \mathbb{R}^{p \times p}$ is a diagonal matrix whose $j$th diagonal entry is $1.2^j$.  In Figures~\ref{Fig:KAffineN1} and~\ref{Fig:KAffineP1}, we take $K_0 = B_p(0,1)$, while in Figures~\ref{Fig:KAffineN2} and~\ref{Fig:KAffineP2}, we take $K_0 = [-1,1]^p$.  In Figures~\ref{Fig:KAffineN1} and~\ref{Fig:KAffineN2}, we fix $p=40$ and report the squared Hellinger errors $d_{\mathrm{H}}^2(\hat{f}_n, f_0)$ for $n \in \{2000,5000,10000,20000\}$, while in Figures~\ref{Fig:KAffineP1} and~\ref{Fig:KAffineP2}, we fix $n=10000$ and report the corresponding squared errors with $p \in \{10,20,40,60\}$.  In the settings of Figures~\ref{Fig:KAffineN1} and~\ref{Fig:KAffineP1}, we see the advantage conferred by the smoothness of $\phi_0$, in line with our theoretical guarantees from Corollary~\ref{Cor:Ellipse}.  On the other hand, when $K_0 = [-1,1]^p$ in Figures~\ref{Fig:KAffineN2} and~\ref{Fig:KAffineP2}, $r_0$ is much larger than in the $K_0 = B_p(0,1)$ case (it is equal to $p^{1/2}$ instead of $1$), and this makes the problem significantly harder, which is again in agreement with Corollary~\ref{Cor:Ellipse}.


Finally, in the third set of simulations, given in Figure~\ref{Fig:KUnknown}, we take $\mu = 0$ to be known and estimate $K$ nonparametrically using Algorithm~\ref{alg:estimateK}, with $M = \lceil n^{\frac{p-1}{p+1}} \rceil$. As with the previous set of simulations, once we obtain $\hat{K}$, we construct $\tilde{Z}_i := \| X_i \|_{\hat{K}}$ for $i \in [n]$, compute $\hat{\phi}_n := \argmax_{\phi \in \Phi} n^{-1}\sum_{i=1}^n \phi(\tilde{Z}_i) - p\lambda_p(\hat{K}) \int_0^\infty r^{p-1} e^{\phi(r)} \, dr$, and set $\hat{f}_n = e^{\hat{\phi}_n}$.  The choices of $K$ were the same as those for $K_0$ in the corresponding panels of Figure~\ref{Fig:KAffineN}.  In Figures~\ref{Fig:KUnknownN1} and~\ref{Fig:KUnknownN2}, we take $p=6$ and $n \in \{6000,12000,24000,36000\}$, while in Figures~\ref{Fig:KUnknownP1} and~\ref{Fig:KUnknownP2}, we fix $n=24000$ and take $p \in \{2,4,6,8\}$.  We observe similar phenomena to those seen in the case where $K$ is known up to a positive definite transformation.

\begin{figure}
  \begin{subfigure}{0.49\textwidth}
    \centering
    \includegraphics[scale=0.35]{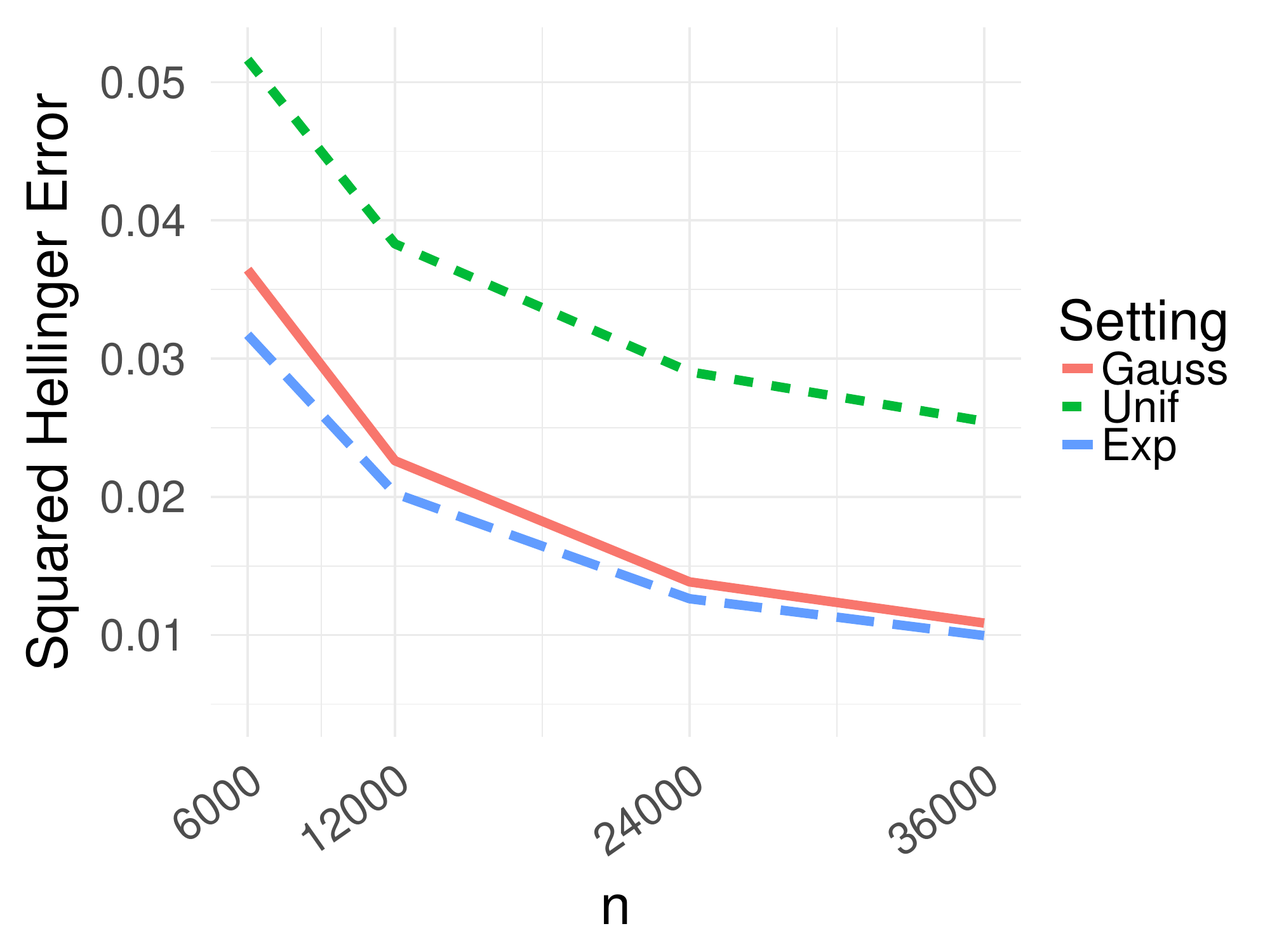}
    \caption{}
    \label{Fig:KUnknownN1}
  \end{subfigure}
  \begin{subfigure}{0.49\textwidth}
    \centering
    \includegraphics[scale=0.35]{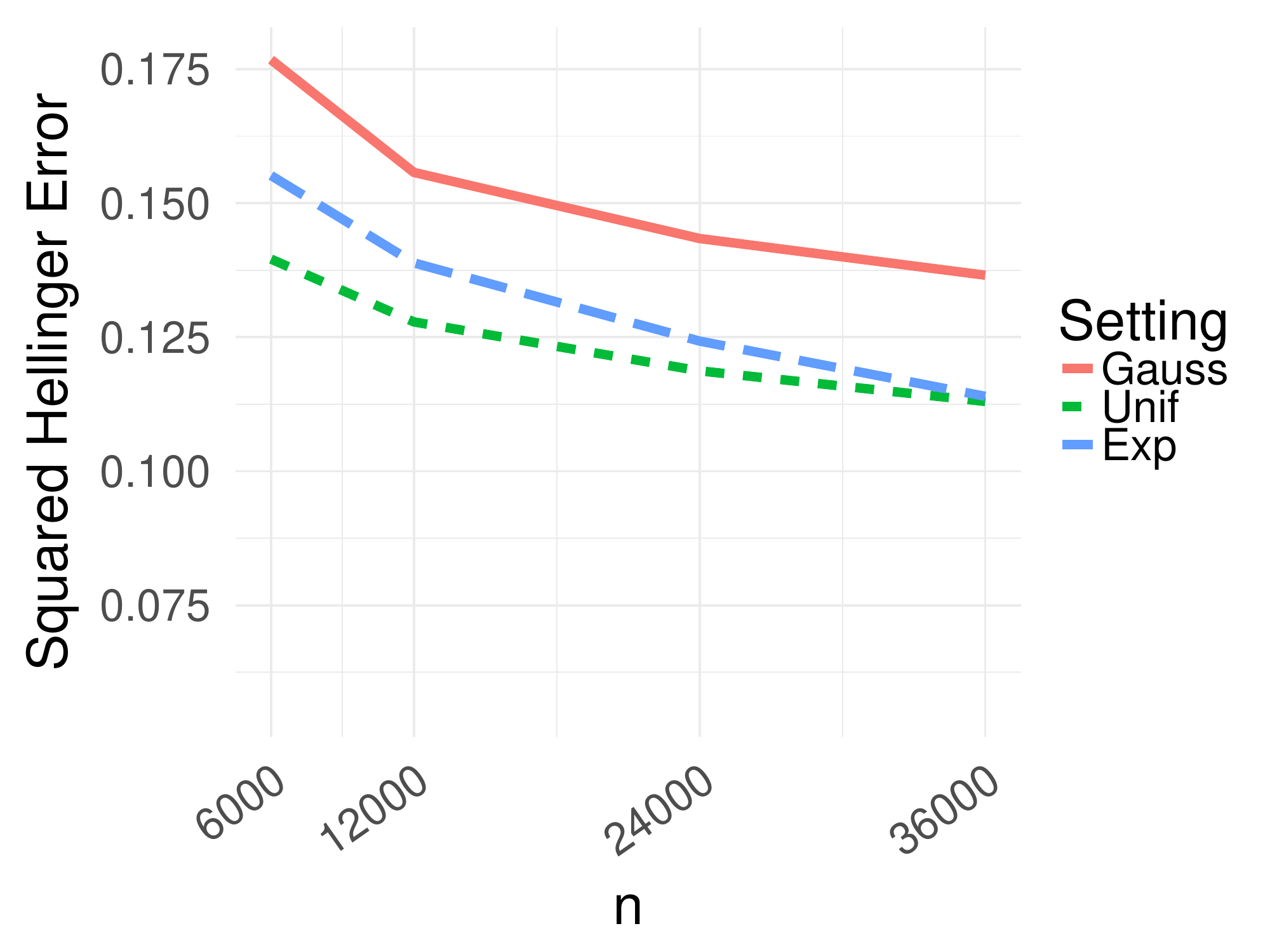}
    \caption{}
    \label{Fig:KUnknownN2}
  \end{subfigure}
  \begin{subfigure}{0.49\textwidth}
    \centering
    \includegraphics[scale=0.35]{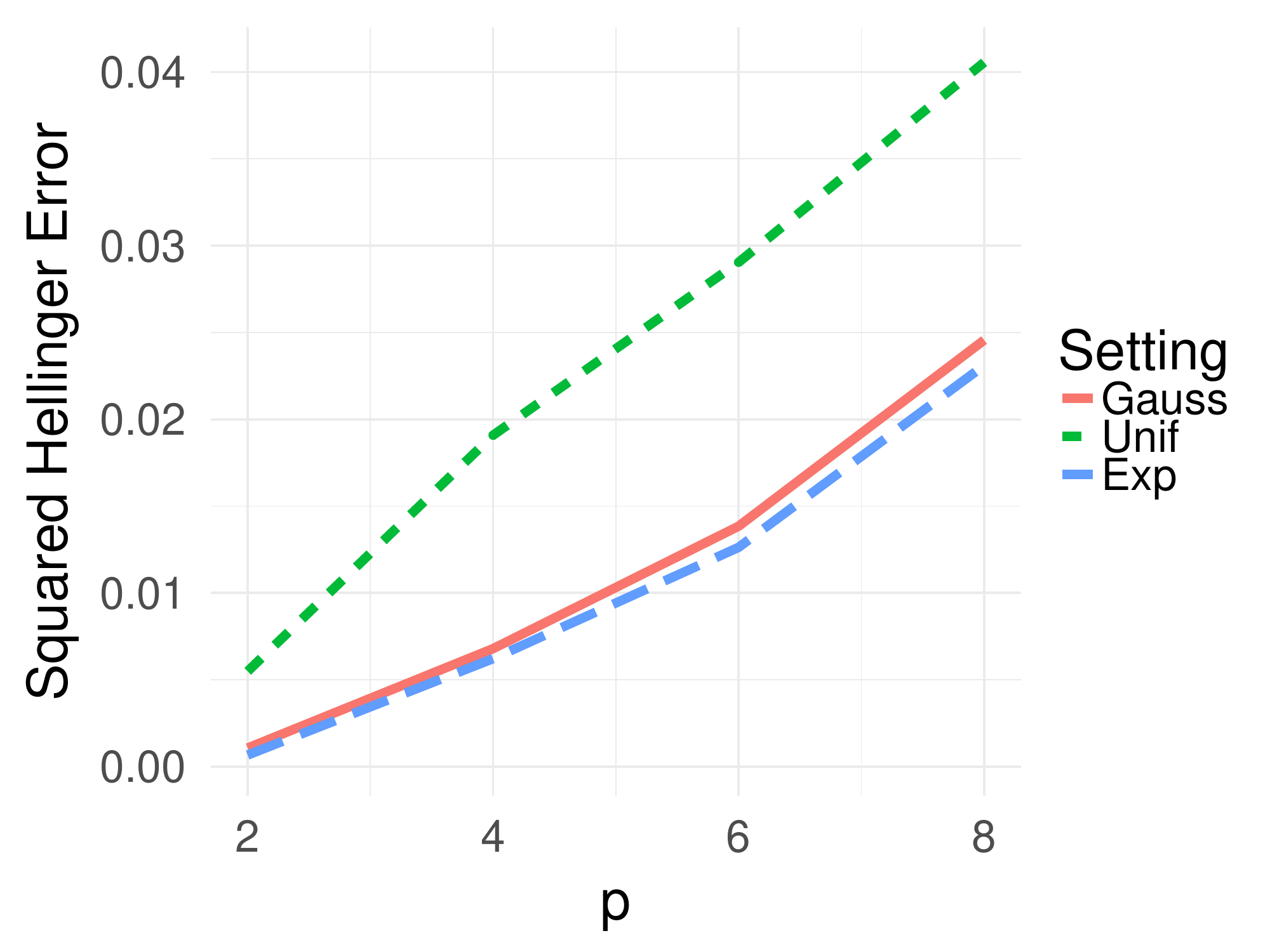}
    \caption{}
    \label{Fig:KUnknownP1}
  \end{subfigure}
   \begin{subfigure}{0.49\textwidth}
    \centering
    \includegraphics[scale=0.35]{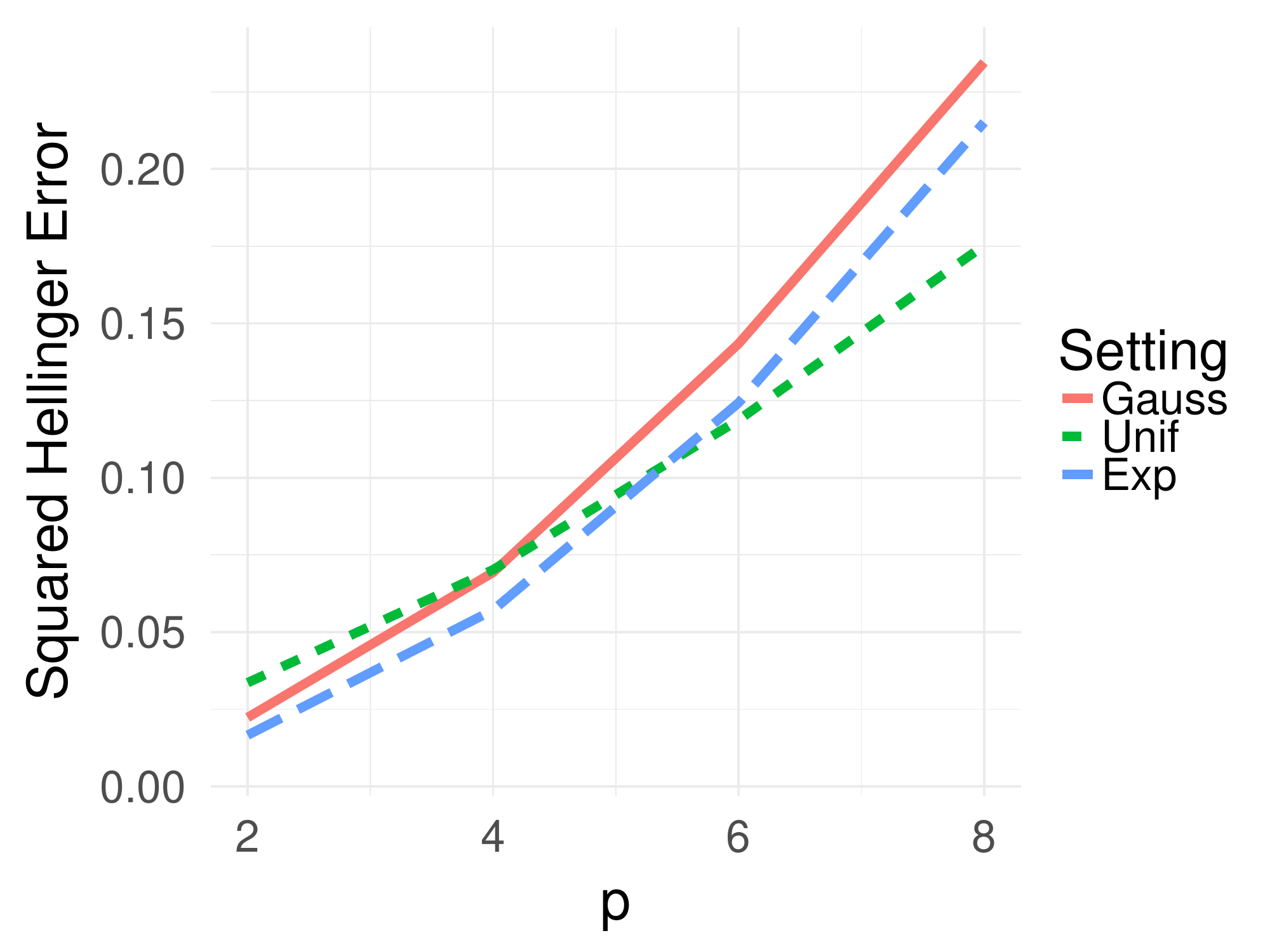}
    \caption{}
    \label{Fig:KUnknownP2}
  \end{subfigure} 
  \caption{\label{Fig:KUnknown}Average squared Hellinger errors $d_{\mathrm{H}}^2(\hat{f}_n,f_0)$ with $K$ unknown and estimated using Algorithm~\ref{alg:estimateK}.  For $\mathrm{Gauss}$, we took $f_0(\cdot) \propto e^{- \|\cdot \|^2_K/2}$; for $\mathrm{Unif}$, we took $f_0(\cdot) \propto \mathbbm{1}_{\{\|\cdot\|_K \leq p\}}$; for $\mathrm{Exp}$, we took $f_0(\cdot) \propto e^{-\| \cdot \|_K}$.  In (a) and (b), we fixed $p=6$, while in (c) and (d), we fix $n = 24000$.}
\end{figure}

\clearpage

\setcounter{section}{0}
\setcounter{equation}{0}
\setcounter{theorem}{0}
\def\theequation{S\arabic{section}.\arabic{equation}}
\def\thesection{S\arabic{section}}
\def\thetheorem{S\arabic{theorem}}

\begin{center}
\Large{Supplementary material to `High-dimensional nonparametric density estimation via symmetry and shape constraints'} \\ \vspace{0.2in}
\large{Min Xu and Richard J. Samworth} 
\end{center}

\section{Proofs from Section~\ref{Sec:Basic}}

\begin{proof}[Proof of Proposition~\ref{Prop:BasicK}]
  (i) This follows from the assumption that $0 \in \mathrm{int}(K)$.

  \medskip
  
  (ii) This follows from, e.g., \citet[][Corollary~9.7.1]{rockafellar1997convex}.

  \medskip
  
  (iii) Let $x \in \mathbb{R}^p$ and suppose that $\|x \|_K < 1$. Then there exists $\epsilon > 0$ such that $x \in (1 - \epsilon)K$ by the second claim.  Since $K$ is convex, we have that $x + \epsilon K \subseteq K$.  Moreover, since $K$ contains an open neighbourhood of 0, we see that $x$ is an interior point of $K$.  Conversely, if $x$ is an interior point of $K$, then there exists $\epsilon > 0$ such that $B_p(x,\epsilon) \subseteq K$.  Hence there exists $x' \in K$ with $\|x'\|_K > \|x\|_K$, so the conclusion follows from~(ii).

  \medskip
  
  (iv) See \citet[][Proposition~14.24]{RoydenFitzpatrick2010}.
\end{proof}
\begin{proof}[Proof of Proposition~\ref{Prop:ExistencePhi}]
  Any density of the form $f(\cdot) = e^{\phi(\|\cdot-\mu \|_K)}$ for some $K \in \mathcal{K}$, $\mu \in \mathbb{R}^p$ and $\phi \in \Phi$ is upper semi-continuous, and is log-concave by Proposition~\ref{Prop:BasicK}(iv).  Moreover, writing $\phi^{-1}(s) := \sup\{r \in [0,\infty):\phi(r) \geq s\}$ for $s \in (-\infty,\log \|f\|_\infty)$, we have
  \[
    \{x:f(x) \geq t\} = \{x:\|x-\mu\|_K \leq \phi^{-1}(\log t)\} = \phi^{-1}(\log t)K + \mu
  \]
  for all $t \in (0,\|f\|_\infty)$ by Proposition~\ref{Prop:BasicK}(ii) and Proposition~\ref{Prop:BasicK}(iv).  Hence $f$ is homothetic, as required.

  Conversely, suppose that $f$ is an upper semi-continuous, homothetic and log-concave density on $\mathbb{R}^p$, so there exist a decreasing function $r:(0,\|f\|_\infty) \rightarrow [0,\infty)$, a set $A \subseteq \mathcal{B}(\mathbb{R}^p)$ with $0 \in \mathrm{int}(A)$ and $\mu \in \mathbb{R}^p$ such that $\{x:f(x) \geq t\} = r(t)A + \mu$ for every $t \in (0,\|f\|_\infty)$.  Then in particular, $A \in \mathcal{K}$.  Since $0 < \lambda_p(\{ x: f(x) > 0 \}) = \lim_{n \rightarrow \infty} \lambda_p( \{ x : f(x) \geq 1/n \})$, there exists $t^* \in (0, \|f \|_{\infty})$ such that $r(t^*) > 0$. Thus, $\lambda_p(r(t^*)A) = r(t^*)^p \lambda_p(A) > 0$ and $\lambda_p(r(t^*)A) \leq (1/t^*) \int_{r(t^*)A + \mu} f(x) \, dx \leq 1/t^* < \infty$. By replacing $A$ with $r(t^*)A$ and $r(\cdot)$ with $r(\cdot)/r(t^*)$, we may therefore assume without the loss of generality that $r(t^*) = 1$. 

  We now claim that $r$ is left continuous. To see this, let $t \in (0, \|f\|_{\infty})$ and let $(t_n \in (0, \|f \|_{\infty}))_{n \in \mathbb{N}}$ be a sequence such that $t_n \nearrow t$. Then, $0 \leq r(t) \leq \lim_{t_n \nearrow t} r(t_n) \leq r(t_{n_0})$ for any $n_0 \in \mathbb{N}$. Since $A \in \mathcal{K}$, we have
    \begin{align}
     r(t)A + \mu = \bigcap_{n=1}^\infty \{ x \,:\, f(x) \geq t_n \} 
      = \bigcap_{n=1}^\infty \bigl(r(t_n)A + \mu\bigr) 
      \supseteq 
      \Bigl( \lim_{t_n \nearrow t} r(t_n) \Bigr)A + \mu. \nonumber 
    \end{align}
    Since $\lambda_p(A) < \infty$ and $A \in \mathcal{K}$, we have $\lim_{t_n \nearrow t} r(t_n) \leq r(t)$, so $r(t) = \lim_{t_n \nearrow t} r(t_n)$. We have thus shown that $r$ is left continuous and may define $r^{-1} \,:\, [0, \infty) \rightarrow [0, \| f\|_{\infty}]$ by $r^{-1}(u) := \sup\{t \in [0,\|f\|_\infty):r(t) \geq u\}$ for $u \in [0, \|r\|_\infty)$ and, if $\|r \|_{\infty} < \infty$, we define $r^{-1}( \| r \|_\infty ) := \sup \{ t \in [0, \|f \|_\infty): r(t) \geq \| r\|_\infty\}$ and $r^{-1}(u) = 0$ for any $u \in (\|r\|_{\infty}, \infty)$.  Notice that for any $u \in [0,\infty)$ and any $t \in (0, \|f \|_{\infty})$, we have $r(t) \geq u$ if and only if $r^{-1}(u) \geq t$.

    We now set $K :=  A$ and $\phi(s) := \log r^{-1}(s)$ for $s \in [0, \infty)$ (with the convention that $\log 0 := -\infty$).  Then the function $x \mapsto e^{\phi(\|x - \mu \|_K)}$ is well-defined on $\mathbb{R}^p$ and moreover for any $u \in [0,\infty)$ and any $t \in (0, \|f \|_{\infty})$, we have
    \[
      \{x:e^{\phi(\|x-\mu\|_K)} \geq t\} = \{x:r^{-1}(\|x-\mu\|_K) \geq t\} = \{x:\|x-\mu\|_K \leq r(t)\} = r(t)A + \mu.
    \]
We therefore conclude that $f(\cdot) = e^{\phi(\|\cdot - \mu\|_K)}$, and hence $\phi \in \Phi$, as desired.

Now suppose that $f(\cdot) = e^{\phi( \|\cdot - \mu \|_K)} = e^{\tilde{\phi}( \|\cdot - \tilde{\mu} \|_{\tilde{K}})}$ for some $\tilde{K} \in \mathcal{K}$, $\tilde{\mu} \in \mathbb{R}^p$ and $\tilde{\phi} \in \Phi$.  Suppose further that $f$ is not a uniform density and that $\|f\|_\infty = e^M$ for some $M \in \mathbb{R}$.  Then, by the log-concavity of $f$, there exist $c < c' < M$ such that $f^{-1}(\{e^c\}) \neq \emptyset$ and $f^{-1}(\{e^{c'}\}) \neq \emptyset$.  Let $a := \sup \{ r \geq 0 \,:\, \phi(r) \geq c\}$, $\tilde{a} := \sup \{ r \geq 0 \,:\, \tilde{\phi}(r) \geq c\}$, and note that $a, \tilde{a} > 0$.  If $x \in \mathbb{R}^p$ satisfies $\phi(\| x - \mu \|_K) \geq c$, then $\| x - \mu \|_K \leq a$ and thus $x \in a K + \mu$.  If on the other hand $x \in a K + \mu$, then $\phi( \| x - \mu \|_K) \geq \phi(a) \geq c$ since $\phi$ is upper semi-continuous. Thus,
    \begin{align}
      \{ x: f(x) \geq e^c \} = \{ x: \phi( \| x - \mu\|_K) \geq c \} = a K + \mu.
      \label{Eqn:LevelSetEqual}
    \end{align}
    By the same reasoning, $\{ x : f(x) \geq e^c \} = \tilde{a} \tilde{K} + \tilde{\mu}$ and we therefore have that $\tilde{K} = (a/\tilde{a}) K + (\mu - \tilde{\mu})/\tilde{a}$.  But, writing $a' := \sup \{ r \geq 0 \,:\, \phi(r) \geq c'\}$ and $\tilde{a}' := \sup \{ r \geq 0 \,:\, \tilde{\phi}(r) \geq c'\}$, we also have $\tilde{K} = (a'/\tilde{a}') K + (\mu - \tilde{\mu})/\tilde{a}'$, and moreover, $\tilde{a}' \neq \tilde{a}$.  We deduce that $\tilde{\mu} = \mu$, and $a\tilde{a}' = a'\tilde{a}$), that $\tilde{K} = (a/\tilde{a})K$ and that $\tilde{\phi}(r) = \phi( (a/\tilde{a}) r)$ for all $r \in [0, \infty)$, as required.

If $f$ is a uniform density, then there must exist $r_0 > 0$ and $s_0 \in \mathbb{R}$ such that $\phi(r) = s_0$ for $r \in [0,r_0]$ and $\phi(r) = -\infty$ for $r > r_0$.  Similarly, there exist $\tilde{r}_0 > 0$ and $\tilde{s}_0 \in \mathbb{R}$ such that $\tilde{\phi}(r) = \tilde{s}_0$ for $r \in [0,\tilde{r}_0]$ and $\tilde{\phi}(r) = -\infty$ for $r > \tilde{r}_0$.  It follows that $\|x - \tilde{\mu}\|_{\tilde{K}} \leq \tilde{r}_0$ if and only if $\|x - \mu\|_K \leq r_0$, so $\tilde{r}_0\tilde{K} + \tilde{\mu} = r_0K + \mu$.  We conclude that $\tilde{K} = (r_0/\tilde{r}_0)K + (\mu - \tilde{\mu})/\tilde{r}_0$ and $\tilde{\phi}(r) = \phi( (r_0/\tilde{r}_0) r)$ for all $r \in [0, \infty)$, as required.
\end{proof}

\begin{proof} [Proof of Proposition~\ref{Prop:NormDirectionIndep}]
  For $n \in \mathbb{N}$, define $K_n := K\setminus (1 - 1/n)K$ and let $Z_n$ be a random vector, independent of $R$, distributed uniformly on $K_n$. We claim that $Z_n \stackrel{d}{\rightarrow} Z/\| Z \|_K$ as $n \rightarrow \infty$. To see this, let $Z'_n$ be a random vector distributed uniformly on $(1 - 1/n)K$ and let $W_n$ be a Bernoulli$(q_{n,p})$ random variable, where $q_{n,p} := 1 - (1-1/n)^p$, independent of $(Z_n, Z_n')$. Since $Z$ is uniformly distributed on $K$, we have that $Z \stackrel{d} = W_n Z_n + (1 - W_n) Z'_n$ and thus $\frac{Z}{\| Z \|_K} \stackrel{d} = W_n \frac{Z_n}{\| Z_n \|_K} + (1 - W_n) \frac{Z'_n}{\|Z'_n \|_K}$.
  
  We observe that $Z'_n/\|Z'_n \|_K \stackrel{d}= Z/\| Z \|_K$ since $Z'_n \stackrel{d} = (1 - 1/n) Z$.  Now, writing $\psi_{Z/\|Z\|_K}$ and $\psi_{Z_n/\|Z_n \|_K}$ for the characteristic functions of $Z/\|Z\|_K$ and $Z_n/\|Z_n\|_K$ respectively, we have that $\psi_{Z/\|Z\|_K}(t) = q_{n,p} \psi_{Z_n/\|Z_n\|_K}(t) + (1 - q_{n,p}) \psi_{Z/\|Z\|_K}(t)$ for all $t \in \mathbb{R}^p$. We deduce that $Z_n/ \|Z_n\|_K \stackrel{d}= Z/\|Z\|_K$. Since
  \[
    \biggl\| Z_n - \frac{Z_n}{\|Z_n\|_K} \biggr\|_K = 1 - \|Z_n\|_K \leq \frac{1}{n} \rightarrow 0,
  \]
  it follows that $Z_n \stackrel{d} \rightarrow Z/\| Z\|_K$, as claimed.

  Define $X_n := Z_n R$, so that $X_n$ has density 
  \begin{align*}
    f_n(x) = \frac{1}{ q_{n,p}\lambda_p(K) } \int_0^\infty \frac{h(r)}{r^p} \mathbbm{1}_{\{ x/r \in K_n\}} \, dr = \frac{1}{ q_{n,p} \lambda_p(K) } \int_{\|x\|_K}^{\|x\|_K/(1-1/n)} \frac{h(r)}{r^p }\, dr 
  \end{align*}
for any $x \in \mathbb{R}^p$.  We deduce that whenever $x \in \mathbb{R}^p$ is non-zero and such that $\phi$ is continuous at $\|x\|_K$,
  \begin{align*}
    \lim_{n \rightarrow \infty} f_n(x) = \lim_{n \rightarrow \infty} \frac{1}{pq_{n,p}} \int_{\|x\|_K}^{\|x\|_K/(1-1/n)} r^{-1}e^{\phi(r)}\, dr = e^{\phi(\| x \|_K)} = f(x).
  \end{align*}
Thus, since $\phi$ is continuous Lebesgue almost everywhere, by Scheff\'e's lemma, $X_n$ converges in distribution to a random variable $X$ with density~$f$.  We conclude that $(Z/\|Z\|_K) R \stackrel{d}= X$, as desired. 
\end{proof}
\begin{remark}
  Alternatively, we may prove the first claim in Proposition~\ref{Prop:NormDirectionIndep} by defining, for any $t \in (0, 1)$, an operator $A_t \,:\, K \backslash\{0\} \rightarrow K \backslash t K$ of the form $A_t(x) = \bigl\{ 1 - t^p + \bigl(t/\| x \|_K \bigr)^p \bigr\}^{1/p} x $ and then showing that $A_t(Z)$ is uniformly distributed on $K \backslash t K$ for any $t \in (0,1)$. One can then show that if $Z_n \sim \mathrm{Unif}(K_n)$ then $Z_n \stackrel{d} \rightarrow Z / \| Z \|_K$ since $\lim_{t \nearrow 1} A_t(Z) = Z/\|Z\|_K$. 
\end{remark}
\begin{remark}
  In fact, from this proof, we see that Proposition~\ref{Prop:NormDirectionIndep} holds more generally whenever $K$ is compact and star-shaped at the interior point $0$, and $\phi$ is continuous Lebesgue almost everywhere.
\end{remark}


\begin{proof}[Proof of Proposition~\ref{Prop:ProjectionExistence}]
  (i) Fix $\phi \in \Phi_{a}$. Observe that if $\lim_{r \rightarrow \infty} \phi(r) = c > -\infty$, then $L(\phi, Q) \leq \phi(a) - e^c \int_{a}^\infty r^{p-1} \, dr = -\infty$.  Otherwise $\lim_{r \rightarrow \infty} \phi(r) = -\infty$, and then there exist $\alpha > 0, \beta \in \mathbb{R}$ such that $\phi(r) \leq -\alpha r+ \beta$. Hence,
  \[
    L( \phi, Q) \leq \int_{a}^\infty \phi \, dQ + 1 \leq - \alpha \int_{a}^\infty r \, dQ(r) + \beta + 1 = - \infty.
  \]

(ii) Now suppose that $Q(\{a\}) = 1$ and let $e^{\phi_n(r)} := n \mathbbm{1}_{\{ r \in [a, a + n^{-1}]\}}$. Then, 
\begin{align*}
  L(\phi_n, Q) &= \log n - np\lambda_p(K) \int_{a}^{a + n^{-1}} r^{p-1}\, dr + 1 \\
  &\geq \log n - p\lambda_p(K)(a + n^{-1})^{p-1} + 1 \rightarrow \infty
\end{align*}
as $n \rightarrow \infty$.

(iii) Finally, suppose that $Q \in \mathcal{Q}_{a}$.  For $\phi(r) := -r$, we have
\begin{align*}
  L(\phi, Q) &= - \int_{a}^\infty r\, dQ(r) - p\lambda_p(K)\int_{a}^\infty r^{p-1} e^{-r} \, dr + 1 \\
  &\geq - \int_{a}^\infty r\,dQ(r)  - p\lambda_p(K)\Gamma(p) + 1 > - \infty,
  \end{align*}
so $\sup_{\phi \in \Phi_{a}} L(\phi, Q) > -\infty$.

For $\delta, \epsilon > 0$, let $\mathcal{Q}_a(\delta,\epsilon) := \bigl\{Q \in \mathcal{Q}_a:Q\bigl((a+\delta,\infty)\bigr) > \epsilon\bigr\}$.  Then, since $Q(\{a\}) < 1$, we have $Q \in \mathcal{Q}_a(\delta,\epsilon)$ for some $\delta,\epsilon > 0$.  We also write $M := \phi(a)$ and $M' := \phi(a+\delta)$. Then by the concavity of $\phi$,
\begin{equation}
\label{Eq:L}
L(\phi, Q) \leq M (1-\epsilon) + M'\epsilon - \int_{a}^{a+\delta} r^{p-1} \exp \biggl(M - \frac{r - a}{\delta}(M - M') \biggr) \, dr + 1.
\end{equation}
If $M > 0$ and $(M - M')\epsilon \leq 2M$, then 
\begin{align*}
L(\phi, Q) &\leq M - e^M p\lambda_p(K)\int_{a}^{a+\delta} r^{p-1} \exp \biggl( - \frac{2M(r - a)}{\epsilon\delta} \biggr) \, dr + 1 \\
           &\leq M - e^M \biggl(\frac{\epsilon \delta}{2M} \biggr)^p p\lambda_p(K)\int_0^{2M/\epsilon} \biggl( \frac{2M a}{\epsilon\delta} + s \biggr)^{p-1}e^{-s}  \, ds + 1  \\
   &\leq M - e^M \biggl( \frac{\epsilon\delta}{2M} \biggr)^p p\lambda_p(K) \int_0^{2M/\epsilon} s^{p-1} e^{-s}  \, ds + 1. 
\end{align*}

On the other hand, if $M > 0$ and $(M - M')\epsilon > 2M$, then from~\eqref{Eq:L} we see that $L(\phi, Q) \leq -M+1$.  We deduce that there exists $M^*> 0$, depending only on $\delta$, $\epsilon$ and $p$, such that 
\[
  \sup_{\phi \in \Phi_{a}} L(\phi, Q) = \sup_{\phi \in \Phi_{a}:\phi(a) \leq M^*} L(\phi, Q) < \infty.
\]
The existence of $\phi^*$ then follows from the proof of Theorem 2.2 in \cite{dumbgen2011approximation}.

(iv) By the change of variable formula \citep[e.g.][Theorem~16.13]{billingsley1995probability}, we have $\int_{\mathbb{R}^p} \phi( \| x \|) \, dP(x) = \int_{[0,\infty)} \phi(r) \, dQ(r)$ for all $\phi \in \Phi$. The result then follows from~(iii), specialised to the case $a = 0$.
\end{proof}

\begin{proof}[Proof of Proposition~\ref{Prop:ProjectionBasicProperties}]

  (i) For any $\phi \in \Phi_{a}$, we may define $\phi_\alpha \in \Phi_{\alpha a}$ by $\phi_\alpha(r) := \phi(r/\alpha) - p \log \alpha$.  The map $\phi \mapsto \phi_\alpha$ is a bijection from $\Phi_{a}$ to $\Phi_{\alpha a}$.  Let $\phi^* = \phi_{a}^*(Q)$. Then, for any $\phi \in \Phi_{a}$, we have 
  \begin{align*}
    \int_{[\alpha a,\infty)} \phi^{*}_\alpha\, dQ_\alpha - p \lambda_p(K) \int_{\alpha a}^\infty r^{p-1} e^{\phi^{*}_\alpha (r)} \, dr
    &= \int_{[a,\infty)} \phi^* \, dQ - p \log \alpha - p \lambda_p(K) \int_{a}^\infty \! s^{p-1}e^{\phi^{*}(s)} \, ds \\
    &\geq \int_{[a,\infty)} \phi \, dQ - p \log \alpha - p \lambda_p(K) \int_{a}^\infty s^{p-1} e^{\phi(s)}  \, ds \\
    &=  \int_{[\alpha a,\infty)} \phi_\alpha\, dQ_\alpha - p \lambda_p(K) \int_{\alpha a}^\infty r^{p-1}e^{\phi_\alpha(r)} \, dr.
    \end{align*}
    This establishes that $\phi^{*}_\alpha = \phi_{\alpha a}^*(Q_\alpha)$ and thus proves scale equivariance.

    (ii) For any $t > 0$, we have
\begin{equation}
\label{Eq:DomConv}
0 \geq \frac{1}{t}\bigl\{L(\phi^*+t\Delta,Q) -L(\phi^*,Q)\bigr\} = \int_{[a,\infty)} \Delta \, dQ - \frac{1}{t}\int_{a}^\infty (e^{t\Delta(r)} - 1)h^*(r) \, dr.
\end{equation}
Choose $t_0 > 0$ small enough that $\phi^* + t_0 \Delta \in \Phi_{a}$.  Since $p \lambda_p(K) \int_{a}^\infty r^{p-1} e^{\phi^*(r)} \, dr < \infty$, we must have $\phi^*(r) \rightarrow -\infty$ as $r \rightarrow \infty$, and hence, by reducing $t_0 > 0$ if necessary, we may assume that $p \lambda_p(K) \int_{a}^\infty \Delta(r) r^{p-1} e^{\phi^*(r)+t_0\Delta(r)} \, dr < \infty$.  Now, for $t \in (0,t_0]$,
\[
\frac{1}{t}(e^{t\Delta(r)} - 1) \leq \frac{1}{t_0}(e^{t_0\Delta(r)} - 1)\mathbbm{1}_{\{\Delta(r) \geq 0\}} + \Delta(r)\mathbbm{1}_{\{\Delta(r) < 0\}}.
\]
Hence, if $\int_{a}^\infty \Delta(r) h^*(r) \, dr > -\infty$, then we may apply the dominated convergence theorem to~\eqref{Eq:DomConv} to take the limit as $t \searrow 0$ and reach the desired conclusion.  On the other hand, if $\int_{a}^\infty \Delta(r) h^*(r) \, dr = -\infty$, then for every $t \in (0,t_0]$,
\begin{align*}
\frac{1}{t} \int_{a}^\infty &(e^{t\Delta(r)} - 1)h^*(r) \, dr \\
&\leq \int_{a}^\infty \biggl(\frac{1}{t_0}(e^{t_0\Delta(r)} - 1)\mathbbm{1}_{\{\Delta(r) \geq 0\}} + \Delta(r)\mathbbm{1}_{\{\Delta(r) < 0\}}\biggr)h^*(r) \, dr = -\infty.
\end{align*}
The result follows.

(iii) Letting $\Delta(r) = -r$, this is a consequence of~(ii). 

(iv) Letting $\Delta(r) =\log \bigl(h_0(r)/h^*(r)\bigr)$, this also follows from an application of~(ii).
\end{proof}

\begin{proof}[Proof of Proposition~\ref{Prop:Continuity}]
  The proof is very similar to (in fact, somewhat more straightforward than) the proof of \citet[][Theorem~4.5]{dumbgen2011approximation}, so we focus on the main differences.  We first observe that if $X_n \sim P_n$ and $X \sim P$ are defined on the same probability space, then 
\[
\mathbb{E}\bigl|\|X_n\|_K - \|X\|_K\bigr| \leq \mathbb{E}\|X_n-X\|_K \leq \mathbb{E}\|X_n-X\| \sup_{x \neq 0} \frac{\|x\|_K}{\|x\|} \leq d_\mathrm{W}(P_n,P)\sup_{x \neq 0} \frac{\|x\|_K}{\|x\|}.
\]
Taking $\epsilon > 0$ such that $B_p(0,\epsilon) \subseteq K$, we have that $\sup_{x \neq 0} \|x\|_K/\|x\| \leq 1/\epsilon < \infty$.  Hence, writing $Q_n$ and $Q$ for the distributions of $\|X_n\|$ and $\|X\|$ respectively, we deduce that $d_\mathrm{W}(Q_n,Q) \leq d_\mathrm{W}(P_n,P)/\epsilon \rightarrow 0$.  It follows that $\int_0^\infty r \, dQ_n(r) \rightarrow \int_0^\infty r \, dQ(r) < \infty$, and $\limsup_{n \rightarrow \infty} Q_n(\{0\}) \leq Q(\{0\}) < 1$, so $Q_n \in \mathcal{Q}$ for $n \geq n_0$, say.  For such $n$, we write $\phi_n^* := \phi^*(Q_n)$ and $\phi^* := \phi^*(Q)$.  Let $n_0 \leq n_1 < n_2 < \ldots$ be an arbitrary, strictly increasing sequence of positive integers.  By extracting a further subsequence if necessary, we may assume that $L(\phi_{n_k}^*,Q_{n_k}) \rightarrow \alpha \in [-\infty,\infty]$.  First note that, by considering the function $\phi(r) = -r$,
\begin{align*}
  \alpha &\geq \lim_{k \rightarrow \infty} \int_0^\infty -r \, dQ_{n_k}(r) - p\lambda_p(K)(p-1)! + 1 \\
  &= \int_0^\infty -r \, dQ(r) - p\lambda_p(K)(p-1)! + 1 > -\infty.
\end{align*}
Our next claim is that $\limsup_{k \rightarrow \infty} \sup_{r \in [0,\infty)} \phi_{n_k}^*(r) < \infty$.  To see this, recall the definition of the classes $\mathcal{Q}(\delta,\epsilon) \equiv \mathcal{Q}_0(\delta,\epsilon)$ from the proof of Proposition~\ref{Prop:ProjectionExistence}, and let $\delta_0,\epsilon_0 > 0$ be such that $Q \in \mathcal{Q}(\delta_0,\epsilon_0)$.  Since $\liminf_{k \rightarrow \infty} Q_{n_k}\bigl((\delta_0,\infty)\bigr) \geq Q\bigl((\delta_0,\infty)\bigr) > \epsilon_0$, we see from the proof of Proposition~\ref{Prop:ProjectionExistence} that our claim follows.  This means that $\alpha < \infty$. 

Let $r_0 := \sup\bigl\{r \in [0,\infty):Q\bigl([0,r)\bigr) < 1\bigr\}$.  Our next claim is that $\liminf_{k \rightarrow \infty} \phi_{n_k}^*(r) > -\infty$ for all $r \in [0,r_0)$.  To see this, note by our first claim that we may assume without loss of generality that there exists $M^* \geq \max(\alpha,0)$ such that $\sup_{k \in \mathbb{N}} \sup_{r \in [0,\infty)} \phi_{n_k}^*(r) \leq M^*$.  Then, for any $r \in [0,r_0)$, 
\[
L(\phi_{n_k}^*,Q_{n_k}) \leq \phi_{n_k}^*(0)\bigl\{1 - Q_{n_k}\bigl((r,\infty)\bigr)\bigr\} + \phi_{n_k}^*(r)Q_{n_k}\bigl((r,\infty)\bigr).
\]
Since $Q\bigl((r,\infty)\bigr) > 0$, we deduce that
\begin{align*}
\liminf_{k \rightarrow \infty} \phi_{n_k}^*(r) \geq \liminf_{k \rightarrow \infty} \frac{L(\phi_{n_k}^*,Q_{n_k}) - \phi_{n_k}^*(0)\bigl\{1 - Q_{n_k}\bigl((r,\infty)\bigr)\bigr\}}{Q_{n_k}\bigl((r,\infty)\bigr)} \geq -\frac{M^* - \alpha}{Q\bigl((r,\infty)\bigr)} > -\infty,
\end{align*}
as required.  

These two claims allow us to extract a further subsequence $(\phi_{n_{k(\ell)}}^*)$ that converges in an appropriate sense to a limit $\phi^* \in \Phi$ (in particular, this convergence occurs Lebesgue almost everywhere).  It turns out that $\phi^* = \phi^*(Q)$, that $L(\phi_{n_{k(\ell)}}^*,Q) \rightarrow L(\phi^*,Q)$, and, writing $f_\ell^* := f^*(P_{n_{k(l)}})$ , we have $\int_{\mathbb{R}^p} |f_\ell^* - f^*| \rightarrow 0$.  The desired total variation convergence~\eqref{Eq:TVConv} follows.  See the proof of Theorem~4.5 of \citet{dumbgen2011approximation} for details.

For the final claim, note that our previous argument allows us to conclude that $f^*(P_n)$ converges to $f^*(P)$ Lebesgue almost everywhere.  The conclusion therefore follows from \citet[][Theorem~10.8]{rockafellar1997convex}.
\end{proof}

\section{Proofs from Section~\ref{Sec:Kknown}}

\begin{proof}[Proof of Theorem~\ref{Thm:KKnownWorstCase}]
Let $h_0$ denote the density of $\|X_1\|_K$, so that $h_0(r) = p\lambda_p(K)r^{p-1} e^{\phi(r)}$ for some $\phi \in \Phi$ by Proposition~\ref{Prop:NormDirectionIndep}.  Let $Z_i := \|X_i\|_K$ for $i \in [n]$, and write $\mathbb{Q}_n$ for the empirical distribution of $Z_1,\ldots,Z_n$.  Now let $\hat{h}_n := h^*(\mathbb{Q}_n)$, so that $\hat{h}_n(r) = p\lambda_p(K)r^{p-1}e^{\hat{\phi}_n(r)}$, where $\hat{\phi}_n := \phi^*(\mathbb{Q}_n) = \varphi^*(\mathbb{P}_n)$, by Proposition~\ref{Prop:ProjectionExistence}(iv).  Then $\hat{h}_n(Z_i)/h_0(Z_i) = \hat{f}_n(X_i)/f_0(X_i)$ for $i \in [n]$, so $d^2_X(\hat{f}_n,f_0) = d^2_X(\hat{h}_n,h_0)$.  By scale equivariance of $\phi^*$ (Proposition~\ref{Prop:ProjectionBasicProperties}(i)), together with the scale invariance of the loss function, we may assume without loss of generality that $\sigma_{h_0} = 1$.  By Lemma~\ref{Lem:MLEMeanVarPreservation}, there exist universal constants $C_\mu, C_\sigma, C > 0$ such that $\mathbb{P}\bigl(\hat{h}_n \notin \mathcal{H}_0(h_0, C_\mu, C_{\sigma})\bigr) \leq C/n$.  Moreover, by Lemma~\ref{Lem:NonlocalBracketingEntropy}, there exists a universal constant $K > 0$ such that for every $\delta > 0$,
\[
  \int_0^\delta H^{1/2}_{[]}\bigl(\epsilon, \mathcal{H}_0(h_0, C_\mu, C_\sigma), d_{\mathrm{H}}\bigr) \, d\epsilon \leq K \delta^{3/4}.
\]
Define $\Psi(\delta) := \max(K\delta^{3/4},\delta)$, so that $\delta \mapsto \Psi(\delta)/\delta^2$ is decreasing.  By choosing $\delta_* := K_0n^{-2/5}$ for a suitably large universal constant $K_0 > 0$, we may apply \citet[][Theorem 10]{kim2016adaptationsupp} (a minor restatement of \citet[][Corollary~7.5]{vandegeer2000empirical}), to deduce that there exists a universal constant $K_* > 0$ such that for $n \geq 8$,
\begin{align*}
\mathbb{E} d_X^2&( \hat{h}_n, h_0) \leq \int_0^{16 \log n}  \mathbb{P}\big( \bigl\{d_X^2(\hat{h}_n, h_0) \geq t\bigr\} \cap
                             \bigl\{\hat{h}_n \in \mathcal{H}_0(h_0, C_\mu, C_\sigma)\bigr\} \big) \, dt \\
                             &\hspace{2cm} +16 \log n \, \mathbb{P}\bigl( \hat{h}_n \notin \mathcal{H}(h_0, C_\mu, C_\sigma)\bigr) +
                             \int_{16 \log n}^\infty \mathbb{P}\big( d_X^2(\hat{h}_n, h_0) \geq t \big) \, dt
                             \\
&\leq \delta_*^2 + K_* \int_{\delta^2_*}^{16 \log n}  \exp\Bigl( - \frac{n t}{K_*} \Bigr) \, dt + \frac{16C \log n}{n} +
                             \int_{16 \log n}^\infty \mathbb{P}\bigg( \max_{i=1,\ldots,n} \log \frac{\hat{h}_n(Z_i)}{h_0(Z_i)}  \geq t \bigg) \, dt \\
&\lesssim n^{-4/5},
\end{align*}
where, to obtain the final inequality, we have applied Lemmas~\ref{Lem:ExtremeEventControl} and \ref{Lem:ExtremeEventControl2}.
\end{proof}

\begin{proof}[Proof of Theorem~\ref{Thm:KknownAdaptation}]
Fix $h_* \in \mathcal{H}^{(k)}$ where $h_*(r) = p\lambda_p(K)r^{p-1}e^{\phi_*(r)}$ and $\phi_* \in \Phi^{(k)}$, let $I_1,\ldots, I_k$ be the $k$ intervals on which $\phi_*$ is affine, with $I_j = [a_{j-1},a_j]$, and let $r_0 := \sup\bigl\{r \in [0,\infty):\phi_*(r) > -\infty\bigr\}$.  We work throughout on the probability 1 event that $\{Z_1,\ldots,Z_n\} \cap \{a_0,a_1,\ldots,a_k\} = \emptyset$.  Define $\mathcal{Z}_j := \{ i \,:\, Z_i \in I_j \}$ and $n_j := | \mathcal{Z}_j |$.  Let $J := \{ j \in [k] \,:\, n_j \geq 8\}$ be the set of indices of intervals with at least eight data points, and let $J^c := [k] \setminus J$.  Define $\tilde{\Phi}$ to be the set of upper semi-continuous functions $\phi:[0,\infty) \rightarrow [-\infty,\infty)$ such that $\phi\big|_{I_j}$ is decreasing and concave for each $j \in [k]$, and such that $\phi(r) = -\infty$ for $r > r_0$.  Note that a function $\phi \in \tilde{\Phi}$ need not be globally decreasing and, in fact, need not be continuous on $[0,r_0]$.  
Given any $\phi \in \tilde{\Phi}$ and $j \in J$, let $\phi^{(j)}(r) := \phi(r) + \log (n/n_j)$ for $r \in I_j$,  
and let $\Phi_{I_j} := \{ \phi|_{I_j}:\phi \in \Phi\}$.  Now, for $j \in J$, define 
\[
\tilde{\phi}_n^{(j)} := \argmax_{\phi_j \in \Phi_{I_j}} \biggl\{\frac{1}{n_j} \sum_{i \in \mathcal{Z}_j} \phi_j(Z_i) - p\lambda_p(K)\int_{I_j} r^{p-1}e^{\phi_j(r)}\, dr\biggr\},
\]
and let $\tilde{\phi}_n(r) := \tilde{\phi}_n^{(j)}(r) - \log (n/n_j)$ whenever $r \in I_j$ for some $j \in J$, and $\tilde{\phi}_n(r) := -\infty$ otherwise.
Then, for any $\phi \in \tilde{\Phi}$,
  \begin{align}
\label{Eq:LongDisplay}
    \frac{1}{n} \sum_{i=1}^n \tilde{\phi}_n(Z_i) &- p\lambda_p(K)\int_0^\infty r^{p-1} e^{\tilde{\phi}_n(r)} \, dr \nonumber \\
    &= \sum_{j \in J} \frac{n_j}{n}\biggl(  \frac{1}{n_j} \sum_{i \in \mathcal{Z}_j} \tilde{\phi}_n(Z_i) -
      \frac{n}{n_j} p\lambda_p(K)\int_{I_j} r^{p-1} e^{\tilde{\phi}_n(r)} \, dr \biggr) \nonumber \\
  &= \sum_{j \in J} \frac{n_j}{n}
    \biggl(  \frac{1}{n_j} \sum_{i \in \mathcal{Z}_j} \tilde{\phi}_n^{(j)}(Z_i) -
    p\lambda_p(K)\int_{I_j} r^{p-1} e^{\tilde{\phi}_n^{(j)}(r)} \, dr \biggr) - \sum_{j \in J} \frac{n_j}{n} \log \frac{n}{n_j} \nonumber \\
  &\geq \sum_{j \in J} \frac{n_j}{n}
    \biggl(  \frac{1}{n_j} \sum_{i \in \mathcal{Z}_j} \phi^{(j)}(Z_i) -
    p\lambda_p(K)\int_{I_j} r^{p-1} e^{\phi^{(j)}(r)} \, dr \biggr) - \sum_{j \in J} \frac{n_j}{n} \log \frac{n}{n_j} \nonumber \\
  &=  \frac{1}{n} \sum_{i=1}^n \phi(Z_i) - p\lambda_p(K)\int_0^\infty r^{p-1} e^{\phi(r)} \, dr. 
  \end{align}
Arguing similarly to the second paragraph of Section~\ref{Sec:Projection}, it follows that the function $\tilde{h}_n$ defined by $\tilde{h}_n(r) := p\lambda_p(K)r^{p-1}e^{\tilde{\phi}_n(r)}$ is a density.  Moreover, for $j \in J$, the function $\tilde{h}_n^{(j)} := \frac{n}{n_j}\tilde{h}_n|_{I_j}$ is a density.  Writing $p_j := \int_{I_j} h_0$, and $h_0^{(j)} := \frac{1}{p_j}h_0|_{I_j}$, we deduce from~\eqref{Eq:LongDisplay} that 
  \begin{align}
\label{Eq:dx2}
    \mathbb{E}_{h_0} &d_X^2(\hat{h}_n,h_0)
    \leq \mathbb{E}_{h_0} \frac{1}{n} \sum_{j \in J} \sum_{i \in \mathcal{Z}_j} \log \frac{\tilde{h}_n(Z_i)}{h_0(Z_i)} + \frac{7k}{n} \mathbb{E}_{h_0} \max_{i \in [n]} \biggl(\log \frac{\hat{h}_n(Z_i)}{h_0(Z_i)}\biggr) \nonumber \\
       &= \mathbb{E}_{h_0} \sum_{j \in J} \frac{n_j}{n}
      \biggl( \frac{1}{n_j} \sum_{i \in \mathcal{Z}_j} \log \frac{\tilde{h}_n^{(j)}(Z_i) }{h_0^{(j)}(Z_i)} \biggr) + \mathbb{E}_{h_0} \sum_{j \in J} \frac{n_j}{n} \log \frac{n_j}{n p_j} + \frac{7k}{n} \mathbb{E}_{h_0} \max_{i \in [n]} \biggl(\log \frac{\hat{h}_n(Z_i)}{h_0(Z_i)}\biggr).
  \end{align}
Now, since $n_j \sim \mathrm{Bin}(n,p_j)$ and $\log x \leq x-1$ for $x > 0$, we have
  \begin{align}
\label{Eq:SecondTerm}
    \mathbb{E}_{h_0} \sum_{j \in J}  \frac{n_j}{n} \log \frac{n_j}{n p_j} \leq \mathbb{E}_{h_0} \sum_{j \in J} \frac{n_j}{n} \biggl( \frac{n_j}{np_j} - 1 \biggr) &\leq \sum_{j=1}^k \frac{n^2 p_j^2 + np_j(1-p_j)}{n^2 p_j} - \sum_{j=1}^k \frac{np_j - 7}{n} \nonumber \\
    &= \frac{1}{n}\sum_{j =1}^k \bigl\{(1-p_j) + 7\bigr\} \leq \frac{8k}{n}.
  \end{align}
  For the third term in~\eqref{Eq:dx2}, we have by Lemmas~\ref{Lem:ExtremeEventControl} and \ref{Lem:ExtremeEventControl2}, that for $n \geq 8$ (cf.~\eqref{Eq:Max}),
  \begin{equation}
    \label{Eq:Max2}
    \mathbb{E}_{h_0} \max_{i \in [n]} \biggl(\log \frac{\hat{h}_n(Z_i)}{h_0(Z_i)}\biggr) \lesssim \log(en).
  \end{equation}
Finally, to bound the first term in~\eqref{Eq:dx2}, for $j\in[k]$, let us first define $q_j := \int_{I_j} h_*(r) \, dr$, $h_*^{(j)} := h_*/q_j$, $\nu^2_{*,j} := d_{\mathrm{H}}^2(h_0^{(j)}, h_*^{(j)})$, $\nu^2_* := \sum_{j=1}^k p_j \nu^2_{*,j}$, $J_0 := \{j \in J:n_j\nu_{*,j}^2 \geq 1\}$ and temporarily assume that $k \leq e^{-1/4}n$.  Note that $h_*^{(j)} \in \mathcal{H}_{a_{j-1}}^{(1)}$, and $h_0^{(j)} \ll h_*^{(j)}$ for $j \in J$.  It follows by Proposition~\ref{Prop:AdaptiveRateAffine}, applied conditionally on $n_1,\ldots,n_k$, a simple extension of Jensen's inequality using the fact that $x \mapsto \log^{5/4} x$ is concave on $[e^{1/4},\infty)$ \citep[e.g.][Lemma~2]{han2017isotonic} and the fact that $k \mapsto k \log^{5/4}(en/k)$ is increasing for $k \in [1,e^{-1/4}n]$,
  \begin{align}
    \label{Eqn:TripleJensenPrecursor}
    \mathbb{E}_{h_0} \sum_{j \in J} \frac{n_j}{n}
    \biggl( \frac{1}{n_j} \sum_{i \in \mathcal{Z}_j} &\log \frac{\tilde{h}_n^{(j)}(Z_i) }{h_0^{(j)}(Z_i)} \biggr) 
    \lesssim \mathbb{E}_{h_0} \sum_{j \in J} \frac{n_j}{n} \biggl(\frac{\nu_{*,j}^{2/5}}{n_j^{4/5}}\log \frac{en_j}{\nu_{*,j}} + \frac{1}{n_j} \log^{5/4} (en_j) \biggr) \nonumber \\
    &\lesssim \frac{1}{n} \mathbb{E}_{h_0} \sum_{j \in J} n_j^{1/5} \nu_{*,j}^{2/5} \bigl\{\log (n_j \nu_{*,j}^2) + \log(e/\nu_{*,j}^3) \bigr\} + \frac{k}{n} \log^{5/4} \Bigl(\frac{en}{k}\Bigr) .
  \end{align} 
To bound the first term of~\eqref{Eqn:TripleJensenPrecursor}, observe that by two applications of Jensen's inequality,
  \begin{align}
    \label{Eqn:Jensen1}
    \frac{1}{n} \mathbb{E}_{h_0} \sum_{j \in J} n_j^{1/5} \nu_{*,j}^{2/5} \log(n_j \nu_{*,j}^2)
    &\leq \frac{1}{n} \mathbb{E}_{h_0} \biggl\{|J_0|^{4/5}\biggl(
      \sum_{j\in J_0} n_j \nu_{*,j}^2 \biggr)^{1/5} \log \biggl(  \frac{\sum_{j \in J_0} n_j \nu_{*,j}^2 }{|J_0|} \biggr)\biggr\} \nonumber \\
&\leq \frac{1}{n} \mathbb{E}_{h_0} \biggl\{|J_0|^{4/5}\biggl(
      \sum_{j=1}^k n_j \nu_{*,j}^2 \biggr)^{1/5} \log \biggl(  \frac{\sum_{j =1}^k n_j \nu_{*,j}^2 }{|J_0|} \biggr)\biggr\} \nonumber \\
&\leq \frac{k^{4/5}}{n} \log \Bigl(\frac{en}{k}\Bigr)\mathbb{E}_{h_0} \biggl\{\biggl(
      \sum_{j=1}^k n_j \nu_{*,j}^2 \biggr)^{1/5} \biggr\} \nonumber \\
    &\leq \frac{k^{4/5}}{n^{4/5}} \nu_*^{2/5} \log \Bigl(\frac{en}{k}\Bigr). 
  \end{align}
But
\begin{align}
\label{Eq:nustar}
\nu_*^2 = \sum_{j=1}^k p_j \nu_{*,j}^2 \leq 2 \wedge \sum_{j=1}^k p_j d_{\mathrm{KL}}^2(h_0^{(j)}, h_*^{(j)}) &\leq 2 \wedge \biggl\{\sum_{j=1}^k p_j d_{\mathrm{KL}}^2(h_0^{(j)}, h_*^{(j)}) + \sum_{j=1}^k p_j \log \frac{p_j}{q_j}\biggr\} \nonumber \\
&= 2 \wedge d_{\mathrm{KL}}^2(h_0,h_*).
\end{align}
Moreover, by three further applications of Jensen's inequality, and using the fact that $x \mapsto \log^5 x$ is concave for $x \geq e^5$, we have
  \begin{align}
\label{Eq:TripleJensen}
    \frac{1}{n} \mathbb{E}_{h_0} \sum_{j \in J} &n_j^{1/5} \nu_{*,j}^{2/5} \log(e/\nu_{*,j}^3) \leq \frac{3}{n} \mathbb{E}_{h_0} \sum_{j =1}^k n_j^{1/5} \nu_{*,j}^{2/5} \log \frac{2^{1/2}e^5}{\nu_{*,j}} \nonumber \\
&\leq \frac{3k^{4/5}}{n} \mathbb{E}_{h_0} \biggl\{\biggl(\sum_{j=1}^k n_j \nu_{*,j}^2 \log^5 \frac{2^{1/2}e^5}{\nu_{*,j}}\biggr)^{1/5}\biggr\} \leq \frac{3k^{4/5}}{n^{4/5}}\biggl(\sum_{j=1}^k p_j \nu_{*,j}^2 \log^5 \frac{2^{1/2}e^5}{\nu_{*,j}}\biggr)^{1/5} \nonumber \\
                                                                                              &\leq \frac{3k^{4/5}}{n^{4/5}}\nu_*^{2/5}\log\biggl(2^{1/2}e^5\sum_{j=1}^k \frac{p_j\nu_{*,j}}{\nu_*^2}\biggr) \leq \frac{3k^{4/5}}{n^{4/5}}\nu_*^{2/5}\log \Bigl(\frac{2e^5}{\nu_*^2}\Bigr) \nonumber \\
&\leq \frac{3k^{4/5}}{n^{4/5}}\{2 \wedge d_{\mathrm{KL}}^2(h_0,h_*)\}^{1/5}\log \Bigl(\frac{2e^5}{2 \wedge d_{\mathrm{KL}}^2(h_0,h_*)}\Bigr),
  \end{align}
where the last step follows from the fact that $x \mapsto x^{1/5} \log \frac{2e^5}{x}$ is increasing for $x \leq 2$.  Combining~\eqref{Eq:dx2},~\eqref{Eq:SecondTerm},\eqref{Eq:Max2},~\eqref{Eqn:TripleJensenPrecursor},~\eqref{Eqn:Jensen1},~\eqref{Eq:nustar} and~\eqref{Eq:TripleJensen}, the result follows in the case $k \leq e^{-1/4}n$.

Now suppose $k > e^{-1/4} n$. Then, by Theorem~\ref{Thm:KKnownWorstCase},
\begin{align*}
  \mathbb{E}_{h_0} d^2_X(\hat{h}_n, h_0) \lesssim n^{-4/5} \lesssim \frac{k}{n} \log^{5/4} \frac{en}{k},
\end{align*}
which completes the proof.
\end{proof}

\begin{proof}[Proof of Proposition~\ref{Prop:AdaptiveRateAffine}]
By the scale equivariance described in Proposition~\ref{Prop:ProjectionBasicProperties}(i), we may assume without loss of generality that $\sigma_{h_0} = 1$.  Define $\nu := \inf \bigl\{ d_{\mathrm{H}}(h_0, h) \,:\, h \in \mathcal{H}_{a}^{(1)}, h_0 \ll h \bigr\} \in [0,2^{1/2}]$. By Lemma~\ref{Lem:LocalBracketing}, if $\delta \in (0, 2^{-9}-\nu)$, then for every $\epsilon > 0$, it holds that 
\begin{align*}
  H_{[]}\bigl(\epsilon, \mathcal{H}(h_0, C_\mu, C_\sigma) \cap \mathcal{H}(h_0, \delta), d_{\mathrm{H}}\bigr) 
  \leq  H_{[]}(\epsilon, \mathcal{H}(h_0, \delta), d_{\mathrm{H}}) \lesssim \frac{(\delta + \nu)^{1/2}}{\epsilon^{1/2}} \log^{5/4} \frac{1}{\delta}.
\end{align*}
On the other hand, if $\delta \geq 2^{-9} - \nu$, then by Lemma~\ref{Lem:NonlocalBracketingEntropy}, for every $\epsilon > 0$, we have that 
\begin{align*}
    H_{[]}\bigl(\epsilon, \mathcal{H}(h_0, C_\mu, C_\sigma) \cap \mathcal{H}(h_0, \delta), d_{\mathrm{H}}\bigr) \leq  H_{[]}\bigl(\epsilon, \mathcal{H}(h_0, C_\mu, C_\sigma), d_{\mathrm{H}}\bigr) \lesssim \frac{1}{\epsilon^{1/2}} \lesssim \frac{(\delta + \nu)^{1/2}}{\epsilon^{1/2}}.
\end{align*}
It follows that 
\begin{align*}
  \int_0^\delta H_{[]}^{1/2}\bigl(\epsilon, \mathcal{H}(h_0, \delta) \cap
    \mathcal{H}(h_0, C_\mu, C_\sigma), d_{\mathrm{H}}\bigr) \, d\epsilon &\lesssim (\delta+\nu)^{1/4}
    \bigl\{ \log^{5/8} (1/\delta) \vee 1 \bigr\}
    \int_0^{\delta} \epsilon^{-1/4} \, d\epsilon \\
  &\lesssim \delta^{3/4} (\delta+\nu)^{1/4} \bigl\{ \log^{5/8} (1/\delta) \vee 1 \bigr\}.
\end{align*}
Define $\Psi(\delta) :=  C \delta^{3/4} (\delta+\nu)^{1/4}\{ \log^{5/8} (1/\delta) \vee 1\}$, where the universal constant $C > 0$ is chosen such that 
\[
\Psi(\delta) \geq \max\biggl\{\int_0^\delta H_{[]}^{1/2}\bigl(\epsilon, \mathcal{H}(h_0, \delta) \cap
    \mathcal{H}(h_0, C_\mu, C_\sigma), d_{\mathrm{H}}\bigr) \, d\epsilon \, , \, \delta\biggr\},
\]
Set $\delta_n := K\bigl\{\frac{\nu^{2/5}}{n^{4/5}} \log \frac{en}{\nu} + \frac{1}{n} \log^{5/4}(en)\bigr\}^{1/2}$ for a universal constant $K > 0$ to be chosen later.  Then, because $\Psi(\delta)/\delta^2$ is non-increasing, we have
\begin{align*}
  \inf_{\delta \geq \delta_n} \frac{n^{1/2}\delta^2}{\Psi(\delta)} = & \frac{n^{1/2}\delta_n^2}{\Psi(\delta_n)} = \frac{n^{1/2}\delta_n^{5/4}}{C(\delta_n+\nu)^{1/4}\{ \log^{5/8} (1/\delta_n) \vee 1\}}.
\end{align*}
By choosing the universal constant $K > 0$ sufficiently large, we can ensure that this ratio is larger than the universal constant required to apply Theorem~10 in the online supplement of \citet{kim2016adaptationsupp} (a minor restatement of \citet[][Corollary~7.5]{vandegeer2000empirical}).  We deduce from this result that there exists a universal constant $C > 0$ such that for $\delta \geq \delta_n$,
\begin{equation}
\label{Eq:vdg}
\mathbb{P}\Bigl(\bigl\{d_X^2(\hat{h}_n,h_0) \geq \delta^2\bigr\} \cap \bigl\{\hat{h}_n \in \mathcal{H}(h_0,C_\mu,C_\sigma)\bigr\}\Bigr) \leq C\exp\Bigl(-\frac{n\delta^2}{C}\Bigr).
\end{equation}
Moreover, by Lemmas~\ref{Lem:ExtremeEventControl} and \ref{Lem:ExtremeEventControl2}, for $n \geq 8$, 
\begin{align}
\label{Eq:Max}
  \int_{16 \log n}^\infty \mathbb{P}\bigg( \max_{i\in[n]} \log &\frac{\hat{h}_n(Z_i)}{h_0(Z_i)}  \geq t \bigg) \, dt
  = \log (en) \int_{16}^\infty  \mathbb{P}\bigg( \max_{i\in[n]} \log \frac{\hat{h}_n(Z_i)}{h_0(Z_i)}  \geq s\log n \bigg) \, ds \nonumber \\
  &\lesssim \log (en) \int_{16}^\infty \Bigl(\frac{6}{n^{s/64}}\Bigr)^n + 2n^{-s n^{1/2}/128} + n^{-(s/2-1)} \, ds \lesssim \frac{\log (en)}{n}.
\end{align}
It follows from~\eqref{Eq:vdg},~\eqref{Eq:Max} and Lemma~\ref{Lem:MLEMeanVarPreservation} that for $n \geq 8$,
\begin{align*}
   \frac{1}{n}\sum_{i=1}^n \mathbb{E} \log &\frac{\hat{h}_n(Z_i)}{h_0(Z_i)} 
  \leq  \int_0^{16 \log n}  \mathbb{P}\Big( \bigl\{d_X^2(\hat{h}_n, h_0) \geq t\bigr\} \cap
         \bigl\{\hat{h}_n \in \mathcal{H}(h_0, C_\mu, C_\sigma)\bigr\} \Big) \, dt  \\
  & \hspace{1cm}+ 16 \log n \, \mathbb{P}\bigl( \hat{h}_n \notin \mathcal{H}(h_0, C_\mu, C_\sigma)\bigr) +
    \int_{16 \log n}^\infty \mathbb{P}\big( d_X^2(\hat{h}_n, h_0) \geq t \big) \, dt
  \\
  &\leq  \delta_n^2 + C \int_{\delta_n^2}^{\infty} \exp\Bigl(-\frac{nt}{C}\Bigr) \, dt + \frac{C \log n}{n} +
         \int_{16 \log n}^\infty \mathbb{P}\bigg( \max_{i\in[n]} \log \frac{\hat{h}_n(Z_i)}{h_0(Z_i)}  \geq t \bigg) \, dt \\
&\lesssim \frac{\nu^{2/5}}{n^{4/5}}\log\frac{en}{\nu} + \frac{\log^{5/4}(en)}{n},
\end{align*}
as required.
\end{proof}

\section{Proofs from Section~\ref{Sec:KEstimated}}

\begin{proof}[Proof of Proposition~\ref{Prop:KEstimatedGeneralRisk}]
  First, we note that since $\hat{K}, \hat{\mu}$ depend only on $X_{n+1},\ldots, X_{2n}$, they are independent of $X_1, \ldots, X_n$.  Moreover, since $f_0 \in \mathcal{F}^{K, \mu}_p$, we may, by Proposition~\ref{Prop:ExistencePhi}, rescale $K$ if necessary to assume without loss of generality that
  \begin{align}
    \mathbb{E}_{f_0}\bigl( \| X_1 - \mu \|^2_K \bigr) = p. \label{Eqn:MomentAssumption2}
  \end{align}
  Once we prove the proposition with this assumption, the more general conclusion follows immediately from the fact that both $\| \hat{\mu} - \mu \|_K / \mathbb{E}^{1/2}_{f_0}( \| X_1 - \mu \|_K^2)$ and $\inf_{\alpha > 0} d_{\mathrm{scale}}(\alpha \hat{K}, K)$ remain unchanged if we rescale $K$.  Under assumption~\eqref{Eqn:MomentAssumption2}, the event $\mathcal{E}_{c_1,c_2}$ defined in~\eqref{Eqn:Ec1c2} then takes the form
  \begin{align}
   \mathcal{E}_{c_1,c_2} = \bigl\{ \| \hat{\mu} - \mu \|_K p \log(ep) \leq c_1 \bigr\} \cup \Bigl\{ \inf_{\alpha > 0} d_{\mathrm{scale}}( \alpha \hat{K}, K) < c_2 \Bigr\},
  \end{align}
and we choose universal constants $c_1, c_2 > 0$ small enough that the conclusions of Lemmas~\ref{Lem:MeanVarianceControl}, \ref{Lem:EmpiricalProcess}, \ref{Lem:WorstTVBound}, \ref{Lem:dHWorstBound2}, \ref{Lem:KLWorstBound}, \ref{Lem:dHBestCase1}, \ref{Lem:dHBestCase2}, and \ref{Lem:KLBestCase} hold.

Note that, by Proposition~\ref{Prop:ProjectionBasicProperties}(i), $\hat{h}_n$ is scale equivariant. Thus, $\hat{f}_n$ is by construction also scale equivariant and consequently, we may rescale $\hat{K}$ if necessary to assume that, on event $\mathcal{E}_{c_1,c_2}$, we have that $d_{\mathrm{scale}}(\hat{K}, K) < c_2$.  Now let $a_n := n^{-4/5} + d_{\textrm{KL}}^2(\tilde{h}_n, \tilde{h}_0) + d_{\textrm{H}}^2(\check{f}_n, f_0) + d_{\textrm{H}}^2\bigl( \tilde{h}_0, h_0 \frac{\lambda_p(\hat{K})}{\lambda_p(K)} \bigr)$.  By Lemmas~\ref{Lem:MeanVarianceControl},~\ref{Lem:EmpiricalProcess},~\ref{Lem:WorstTVBound},~\ref{Lem:dHWorstBound2}, and~\ref{Lem:KLWorstBound}, together with the fact that $d_{\mathrm{H}}^2(f,g) \leq d_{\mathrm{TV}}(f,g)$ for all densities $f,g$, there exists universal constants $C_\mu > 0, C_\sigma > 1, C > 0$ such that on the event $\mathcal{E}_{c_1,c_2}$,
\begin{align}
  \label{Eq:ConditionalHellinger}
  \mathbb{E}_{f_0}&\bigl\{d_{\mathrm{H}}^2 (\hat{f}_n, f_0) \bigm| \hat{\mu},\hat{K}\bigr\} \nonumber \\
    &\quad \leq a_n + \int_{a_n}^{\infty} \mathbb{P}_{f_0}\bigl( \{d_{\textrm{H}}^2 (\hat{f}_n, f_0) > s\} \cap \{ \hat{h}_n \in \mathcal{H}(h_0, C_\mu, C_\sigma)\} \bigm| \hat{\mu}, \hat{K} \bigr) \, ds  \nonumber \\
    &\hspace{7cm} + \mathbb{P}_{f_0}\bigl\{ \hat{h}_n \notin \mathcal{H}(h_0, C_\mu, C_\sigma) \,| \,\hat{\mu}, \hat{K} \bigr\} \nonumber \\
    &\quad \lesssim  a_n + \int_{0}^{\infty} e^{-n s/C} \, ds + 1/n \nonumber \\
    &\quad \lesssim n^{-4/5} +  p^{1/2} \| \hat{\mu} - \mu\|_K+ p(\tau^* - \tau_{*}),
  \end{align}
  where $\tau^* := \sup_{x \in \mathbb{R}^p \setminus \{0\} } \| x \|_{\hat{K}}/\| x \|_K$ and $\tau_* := \inf_{x \in \mathbb{R}^p \setminus \{0\} } \| x \|_{\hat{K}}/\|x \|_K$.  We now claim that $(\tau^* - \tau_*) \leq 2 d_{\mathrm{scale}}(\hat{K}, K)$. To see this, fix any $\epsilon > d_{\mathrm{scale}}(\hat{K}, K)$ and $x \in \mathbb{R}^p \setminus \{0\}$. Then,
  \begin{align*}
    \frac{ \|x \|_{\hat{K}}}{\|x \|_K} &= (1 + \epsilon) \Bigl \| \frac{1}{1+\epsilon} \frac{x}{\|x\|_K} \Bigr\|_{\hat{K}} \leq 1 + \epsilon, \\
                                     \frac{ \|x \|_{\hat{K}}}{\|x \|_K} &= \frac{1}{1 + \epsilon}  \Bigl \| (1+\epsilon) \frac{x}{\|x\|_K} \Bigr\|_{\hat{K}} \geq \frac{1}{1 + \epsilon}.
  \end{align*}
  Since $\epsilon > d_{\mathrm{scale}}(\hat{K}, K)$ and $x \in \mathbb{R}^p \setminus \{0\}$ were chosen arbitrarily, we have that $\tau^* - 1 \leq d_{\mathrm{scale}}(\hat{K}, K)$ and that $1 - \tau_* \leq d_{\mathrm{scale}}(\hat{K}, K)$ as desired.  The first part of the proposition then follows.

The second claim follows from the same argument except that we apply Lemmas~\ref{Lem:dHBestCase1}, \ref{Lem:dHBestCase2}, and~\ref{Lem:KLBestCase} in the final inequality in~\eqref{Eq:ConditionalHellinger}.

Finally, for the third claim of the proposition, we define
  \[
    \tilde{a}_n := \frac{1}{n}\log^{5/4}(en) + d_{\textrm{KL}}^2(\tilde{h}_n, \tilde{h}_0) + d_{\textrm{H}}^2(\tilde{h}_0, h_0) + d_{\textrm{H}}^2(\check{f}_n, f_0) + d_{\textrm{H}}^2 \biggl( \tilde{h}_0, h_0 \frac{\lambda_p(\hat{K})}{\lambda_p(K)} \biggr).
  \]
 We may then apply the second claim of Lemma~\ref{Lem:EmpiricalProcess} and Lemmas~\ref{Lem:MeanVarianceControl}, \ref{Lem:dHBestCase1}, \ref{Lem:dHBestCase2}, and~\ref{Lem:KLBestCase} in the final inequality in~\eqref{Eq:ConditionalHellinger} to obtain the desired conclusion.
\end{proof} 

\begin{proof}[Proof of Proposition~\ref{Prop:dscaleAffineBound}]
  Since $\hat{K}$ is location invariant, $\hat{\mu}$ is location equivariant, and $\| \hat{\mu} - \mu \|_K$ is also location invariant, we assume without loss of generality that $\mu = 0$. Moreover, $\hat{K}$ is scale equivariant in the sense that if $\tilde{\Sigma} \in \mathbb{S}^{p \times p}$ and we define $\tilde{X}_i := \tilde{\Sigma}^{1/2} X_i$ for $i \in [n]$, let $\hat{\Sigma}' := n^{-1}\sum_{i=1}^n \tilde{X}_i \tilde{X}_i^\top$ and $\hat{K}' := \hat{\Sigma}^{\prime 1/2} K_0$, then $\hat{K}' = \tilde{\Sigma}^{1/2} \hat{K}$. Since $\inf_{\alpha > 0} d_{\mathrm{scale}}(\alpha \hat{K}, K)$ is also scale invariant in the sense that  $\inf_{\alpha > 0} d_{\mathrm{scale}}(\alpha \hat{K}, K) =  \inf_{\alpha > 0} d_{\mathrm{scale}}(\alpha \tilde{\Sigma}^{1/2} \hat{K}, \tilde{\Sigma}^{1/2} K)$ for any $\tilde{\Sigma} \in \mathbb{S}^{p \times p}$, we assume without loss of generality that $\Sigma = I_p$. Thus, there exists $\alpha_0 > 0$ such that $K = \alpha_0 K_0$.
  
For each $j \in [p]$, the random variable $X_{1j}$ has a univariate log-concave density with mean $0$ and variance $1$. By e.g. \citet[][Proposition~S2(iii)]{feng2018multivariate}, we have $\mathbb{E}(|X_{1j}|^k) \leq 2e k!$ for all integers $k \geq 2$. Then, by Bernstein's inequality, there exists a universal constant $C_1 > 0$ such that
  \[
    \mathbb{P}\biggl\{ \max_{j \in [p]}\biggl| \frac{1}{n} \sum_{i=1}^n X_{ij} \biggr| > C_1 \frac{\log^{1/2}(en)}{n^{1/2}}\biggr\} \leq \frac{p}{n^2}.
  \]
  Let $\mathcal{E}_1:= \{ \| \hat{\mu} \| \leq C_1 (p/n)^{1/2} \log^{1/2} (en) \}$, so that $\mathbb{P}(\mathcal{E}_1^c) \leq p/n^2 \leq n^{-1}$.
  
By \citet[][Theorem 4.1]{adamczak2010quantitative}, there exists a universal constant $C_2 > 0$ such that, with probability at least $1 - 1/n$, 
 \begin{align}
    \biggl\| \frac{1}{n}  \sum_{i=1}^n X_i X_i^\top  - I_p \biggr\|_{\mathrm{op}} \leq C_2 \sqrt{\frac{p}{n}} \log^3(en).  \label{eqn:op_norm_small}
 \end{align}
 Let $\mathcal{E}_2$ denote the event that~\eqref{eqn:op_norm_small} holds. We work on the event $\mathcal{E}_1 \cap \mathcal{E}_2$ for the rest of this proof, so that $\mathbb{P}(\mathcal{E}_1^c \cup \mathcal{E}_2^c) \leq 2/n$.

 Now
  \begin{align}
    \| \hat{\Sigma}^{1/2} - I_p \|_{\mathrm{op}} &\leq 
    \| \hat{\Sigma} - I_p \|_{\mathrm{op}} 
    \leq \biggl\| \frac{1}{n} \sum_{i=1}^n X_i X_i^\top - I_p \biggr\|_{\mathrm{op}} + \biggl\| \frac{1}{n} \sum_{i=1}^n X_i \biggr\|_2^2    \nonumber \\
    &\leq C \sqrt{\frac{p}{n}} \log^3 (en) \label{eqn:op_norm_small2}
  \end{align}
  for some universal constant $C > 0$. 
Fix an arbitrary $x \in \mathbb{R}^p$ with $\| x \|_{\alpha_0 \hat{K}} = 1$. Then $\| \hat{\Sigma}^{-1/2} x \|_K = 1$ since $\alpha_0 \hat{K} =  \alpha_0 \hat{\Sigma}^{1/2} K_0 = \hat{\Sigma}^{1/2} K$. Thus,
\begin{align*}
  \| x \|_K &\leq  \|\hat{\Sigma}^{-1/2} x \|_K + \|x - \hat{\Sigma}^{-1/2} x \|_K \leq 1 + \frac{1}{r_1}\|x - \hat{\Sigma}^{-1/2} x \|_2 \\          &\leq 1 + \frac{1}{r_1}\|\hat{\Sigma}^{1/2}-I_p\|_{\mathrm{op}}\|\hat{\Sigma}^{-1/2} x \|_2 \leq 1 + r_0 \|\hat{\Sigma}^{1/2}-I_p\|_{\mathrm{op}}.
\end{align*}
Hence $\alpha_0 \hat{K} \subseteq ( 1 + r_0 \| \hat{\Sigma}^{1/2} - I_p \|_{\mathrm{op}} ) K $. Likewise,
\[
  \| x \|_K \geq \|\hat{\Sigma}^{-1/2} x \|_K - \| x - \hat{\Sigma}^{-1/2} x \|_K  \geq 1 - r_0 \| \hat{\Sigma}^{1/2} - I_p \|_{\mathrm{op}} \geq (1 + 2 r_0 \|\hat{\Sigma}^{1/2} - I_p \|_{\mathrm{op}})^{-1},
\]
where the final inequality follows from~\eqref{eqn:op_norm_small2} and the assumption that $C r_0 \sqrt{\frac{p}{n}} \log^3 (en) \leq 1/2$. Thus, we also have that $K \subseteq (1 + 2 r_0 \| \hat{\Sigma}^{1/2} - I_p \|_{\mathrm{op}}) \alpha_0 \hat{K}$. Therefore, by~\eqref{eqn:op_norm_small2} again,
\[
  d_{\mathrm{scale}}(\alpha_0 \hat{K}, K) \leq 2 r_0 \| \hat{\Sigma}^{1/2} - I_p \|_{\mathrm{op}} \lesssim r_0 \sqrt{\frac{p}{n}} \log^3 (en),
\]
Moreover, since $\Sigma = I_p$ and $B_p(0,r_1) \subseteq K \subseteq B_p(0, r_2)$, we find that
\begin{align*}
 p^{1/2} \frac{ \| \hat{\mu}\|_K}{\mathbb{E}_{f_0}^{1/2}( \|X_1\|^2_K )} \leq r_0 \| \hat{\mu}\|_2 \lesssim r_0 \frac{p^{1/2}}{n^{1/2}} \log^{1/2} (en),
\end{align*}
as desired.
\end{proof}

\begin{proof}[Proof of Proposition~\ref{Prop:dscaleOverall}]
Algorithm~\ref{alg:estimateK} is scale equivariant in the sense that for any $\alpha > 0$, if we let $X'_i := \alpha X_i$ for $i \in [n+M]$ and let $\hat{K}, \hat{K}'$ be the resulting outputs of Algorithm~\ref{alg:estimateK} on inputs $\{X_i\}_{i=1}^{n+M}$ and $\{X'_i\}_{i=1}^{n+M}$ respectively, then $\hat{K}' = \alpha \hat{K}$. Since the left-hand side of~\eqref{Eqn:dscaleOverall} is invariant to scaling of $\hat{K}$, we assume without loss of generality that $\mathbb{E}_{f_0}(\| X_1 \|_2) = 1$. We also assume that $\mathbb{E}_{f_0}( \| X_1 \|_K ) = 1$, which can be done without loss of generality by Proposition~\ref{Prop:ExistencePhi} and the fact that the left-hand side of~\eqref{Eqn:dscaleOverall} is invariant to the scaling of $K$.
  
Define $\tilde{K} := \mathrm{conv} \bigl\{ \frac{\theta_1}{\| \theta_1 \|_K}, \ldots, \frac{\theta_m}{\| \theta_m \|_K} \bigr\}$, and define the events
\begin{align*}
  \mathcal{E}^* &:= \biggl\{ d_{\mathrm{scale}}(\hat{K},\tilde{K}) \leq  8 r_0 \Bigl( \frac{M \log^5 (en)}{n} \Bigr)^{1/2} + 4 r_0^2 \Bigl( \frac{\log^{p+1} (en)}{M} \Bigr)^{1/(p-1)}\biggr\} \\
                  \tilde{\mathcal{E}}^* &:= \biggl\{ d_{\mathrm{scale}}(\tilde{K}, K) \leq 64 r^2_0 \Bigl( \frac{\log M}{M} \Bigr)^{1/(p-1)} \biggr\}.
\end{align*}
Under the assumption that $r_0^2 n^{-\frac{1}{p+1}} \log^2 (en) \leq 1/64$, we have that $M/\log M > r_0^{2(p-1)} 64^{p-1}$. Thus, on the event $\mathcal{E}^* \cap \tilde{\mathcal{E}}^*$, we have that $d_{\mathrm{scale}}(\tilde{K}, K) < 1$ and that, by Lemma~\ref{lem:dscale_properties}, 
\begin{align*}
  d_{\mathrm{scale}}(\hat{K},K) &\leq 2d_{\mathrm{scale}}(\hat{K},\tilde{K}) + 2 d_{\mathrm{scale}}(\tilde{K},K) \\
                              & \lesssim r_0 \biggl( \frac{M \log^5 (en)}{n} \biggr)^{1/2} + r_0^2 \biggl( \frac{\log^{p+1} (en)}{M} \biggr)^{1/(p-1)} + r_0^2 \biggl( \frac{\log M}{M} \biggr)^{1/(p-1)} \\
  &\lesssim r_0^2 n^{-1/(p+1)} \log^3 (en).
\end{align*}
By Lemmas~\ref{Lem:Brunel} and~\ref{Lem:tmDeviation}, it holds that $\mathbb{P}(\mathcal{E}^* \cap \tilde{\mathcal{E}}^*) \geq 1 - C_{p,r_0} n^{- p/(p+1)}$ for some $C_{p,r_0} > 0$, depending only on $p$ and $r_0$, as desired.
\end{proof}

\section{Auxiliary lemmas}
\label{Sec:AuxLemma}

\subsection{Auxiliary lemmas for Section~\ref{Sec:Basic}}

Our first result, amongst other things, reveals the density of the random variable $\|X - \mu \|_K$ when $X$ has a density belonging to $\mathcal{F}^{\mathcal{K}}_p$. 
\begin{lemma}
  \label{Lem:ChangeOfVar}
  Let $\mu \in \mathbb{R}^p$, $K \in \mathcal{K}$, and let $g \,:\, [0, \infty) \rightarrow \mathbb{R}$ be integrable. Let $B \in \mathcal{B}([0,\infty))$ and let $A := \{ x \in \mathbb{R}^p \,:\, \|x - \mu \|_K \in B\}$. Then
  \[
    \int_A g(\|x -\mu \|_K) \,dx = p \lambda_p(K) \int_B r^{p-1}g(r) \, dr.
  \]
\end{lemma}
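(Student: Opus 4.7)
The plan is to show that the pushforward of Lebesgue measure on $\mathbb{R}^p$ under the map $x \mapsto \|x - \mu\|_K$ is the measure on $[0,\infty)$ with density $r \mapsto p\lambda_p(K)\,r^{p-1}$ with respect to Lebesgue measure, from which the formula follows by the abstract change-of-variables identity.

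First I would use translation invariance of Lebesgue measure to reduce to the case $\mu = 0$, since both sides depend only on $\|\cdot - \mu\|_K$, and the map $x \mapsto x - \mu$ preserves $\lambda_p$. Then I would define a measure $\nu$ on $\bigl(\mathcal{B}([0,\infty)),[0,\infty)\bigr)$ by
\[
\nu(C) := \lambda_p\bigl(\{x \in \mathbb{R}^p : \|x\|_K \in C\}\bigr),
\]
which is well-defined since $\|\cdot\|_K$ is continuous (hence Borel) by Proposition~\ref{Prop:BasicK}(iv), and is $\sigma$-finite because $\nu([0,s]) < \infty$ for every $s \geq 0$ (see below).

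Next I would identify $\nu$ explicitly. By Proposition~\ref{Prop:BasicK}(ii) together with positive homogeneity from Proposition~\ref{Prop:BasicK}(iv), we have $\{x : \|x\|_K \leq s\} = sK$ for every $s \geq 0$. By the linear change of variables $y = sx$ (or equivalently, by the scaling property of Lebesgue measure under dilations), $\lambda_p(sK) = s^p \lambda_p(K)$. Hence
\[
\nu\bigl([0,s]\bigr) = s^p \lambda_p(K) = \int_0^s p\lambda_p(K) r^{p-1} \, dr
\]
for every $s \geq 0$. Since both $\nu$ and the measure $\tilde\nu(C) := p\lambda_p(K)\int_C r^{p-1} \, dr$ are $\sigma$-finite and agree on the $\pi$-system $\bigl\{[0,s] : s \geq 0\bigr\}$, which generates $\mathcal{B}([0,\infty))$, the $\pi$--$\lambda$ (or Carath\'eodory uniqueness) theorem gives $\nu = \tilde\nu$.

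Finally I would invoke the abstract change-of-variables formula for pushforward measures \citep[e.g.][Theorem~16.13]{billingsley1995probability}: for any Borel measurable $h : [0,\infty) \rightarrow \mathbb{R}$ that is $\nu$-integrable,
\[
\int_{\mathbb{R}^p} h(\|x\|_K) \, dx = \int_{[0,\infty)} h(r) \, d\nu(r) = p\lambda_p(K)\int_{[0,\infty)} h(r)\, r^{p-1} \, dr.
\]
Applying this with $h(r) := g(r)\mathbbm{1}_B(r)$ (which is $\nu$-integrable by the hypothesis that $g$ is integrable, with the understanding that integrability means $\int_B |g(r)|\,r^{p-1} \, dr < \infty$), and noting that $\mathbbm{1}_A(x) = \mathbbm{1}_B(\|x\|_K)$, yields the claimed identity. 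I do not foresee any genuine obstacle; the only point that deserves care is checking measurability and the $\sigma$-finiteness needed to apply uniqueness of measures, and making sure that the integrability hypothesis on $g$ is interpreted consistently (i.e.~with respect to the measure $r^{p-1}\,dr$ on $B$) so that both sides of the identity are simultaneously well-defined.
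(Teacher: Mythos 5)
Your proof is correct and takes essentially the same approach as the paper: both compute the pushforward of $\lambda_p$ under $x \mapsto \|x\|_K$ by observing $\lambda_p(sK) = s^p\lambda_p(K)$, identify it with $p\lambda_p(K)r^{p-1}\,dr$, and then transfer the integral. The only cosmetic differences are that you invoke the $\pi$--$\lambda$ theorem and the abstract change-of-variables identity directly, whereas the paper builds the $\sigma$-finite uniqueness argument by hand on each $[0,n)$ and then runs the monotone-convergence/simple-function argument explicitly; your handling of $\sigma$-finiteness and your explicit remark on the correct interpretation of ``integrable'' are, if anything, slightly more careful than the paper's.
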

\begin{proof}
 By transforming $x-\mu$ to $x$ if necessary, we may assume without the loss of generality that $\mu = 0$.  Define a measures $\mu_1^K,\lambda_1^K$ on $\bigl([0, \infty), \mathcal{B}([0,\infty))\bigr)$ by $\mu_1^K(B') := \lambda_p( \{ x \in \mathbb{R}^p \,:\, \|x \|_K \in B'\})$ and $\lambda_1^K(B') := p\lambda_p(K)\int_{B'} r^{p-1} \, dr$ for $B' \in \mathcal{B}([0,\infty))$. We claim that $\mu_1^K = \lambda_1^K$.  
 To prove the claim, first consider the case where $B' = [0,c)$ for some $c > 0$.  Then
 \begin{equation}
   \label{Eq:c}
   \mu_1^K\bigl([0,c)\bigr) = c^p \lambda_p(K) = \lambda_p(K) \int_0^c pr^{p-1}  \,dr = \lambda_1^K\bigl([0,c)\bigr).
 \end{equation}
 For $n \in \mathbb{N}$, define $\mathcal{B}_n' := \bigl\{ B'' \in \mathcal{B}([0,n)) \,:\, \mu_1^K(B'') = \lambda_1^K(B'') \bigr\}$. Now $\mu_1^K,\lambda^K_1$ are finite measures on $[0, n)$, and $\mathcal{B}'_n$ is a $\sigma$-algebra that contains $\{[0,c):c \in (0,n]\}$ by~\eqref{Eq:c}.  Hence $\mathcal{B}_n' = \mathcal{B}([0,n))$.  For a general $B' \in \mathcal{B}([0,\infty))$, we have that
  \[
    \mu_1^K(B') = \lim_{n \rightarrow \infty} \mu_1^K\bigl(B' \cap [0, n)\bigr) = \lim_{n \rightarrow \infty} \lambda^K_1 \bigl(B' \cap [0,n)\bigr) = \lambda_1^K( B'),
    \]
and we deduce that $\mu_1^K = \lambda_1^K$, as claimed.
    
  Now suppose that $g$ is a non-negative measurable function, fix $B \in \mathcal{B}([0,\infty))$ and $A = \{x \in \mathbb{R}^p \,:\, \|x \|_K \in B\}$. Let $s_1 \leq s_2 \leq \ldots$ be a sequence of non-negative, simple functions on $[0,\infty)$ such that $s_k \nearrow g$, so that $\int_A s_k( \|x \|_K) \, dx = p\lambda_p(K)\int_B r^{p-1}s_k(r) \, dr$ by our claim.  By two applications of the monotone convergence theorem, we conclude that $\int_A g(\|x \|_K) \, dx = p \lambda_p(K) \int_B r^{p-1}g(r) \, dr$.  The case where $g$ is integrable can be handled by applying this result to the positive and negative parts of $g$.
\end{proof}

\subsection{Auxiliary lemmas for Section~\ref{Sec:Kknown}}

The aim of the next three results is to elucidate the way in which the first two moments of the empirical distribution $\mathbb{Q}_n$ of a set of $n$ data points in $[0,\infty)$ change under the projection $h_a^*$.  These results enable us to show that if the data are drawn independently from a common distribution on $[0,\infty)$, then with high probability, the first two moments of $\hat{h}_n := h_a^*(\mathbb{Q}_n)$ are close to their population analogues.

Our first lemma concerns bounds on $\mu_{\hat{h}_n}$, and is expressed in terms of the function $\rho \equiv \rho_{a,p}:[0,\infty) \rightarrow (0,\infty)$ defined by
\begin{equation}
  \label{Eq:RhoDefinition}
\rho(s) := \frac{(a + s)^{p-1} p s}{(a + s)^p - a^p}.
\end{equation}
Basic properties of the function $\rho$ are given in Lemma~\ref{Lem:RhoProperties}.
\begin{lemma}
  \label{Lem:MLEMeanPreservation}
Fix $a \geq 0$, and suppose that $Z_1,\ldots,Z_n$ are real numbers in the interval $[a,\infty)$ that are not all equal to $a$.  Let $\mathbb{Q}_n$ be the empirical distribution corresponding to $Z_1,\ldots,Z_n$.  Let $\hat{h}_n := h_{a}^*(\mathbb{Q}_n)$, so that $\hat{h}_n(r) = p\lambda_p(K)r^{p-1}e^{\hat{\phi}_n(r)}$ for $r \geq a$, for some $\hat{\phi}_n \in \Phi$. Then, writing $\bar{Z} := n^{-1}\sum_{i=1}^n Z_i$, as well as $r_0 := \sup \{ r \in [a, \infty) : \hat{\phi}_n(r) = \hat{\phi}_n(a) \}$ and $s_0 := r_0  - a$, we have 
\[
\bar{Z}  - \min \biggl\{ \frac{\bar{Z} - a}{\rho(s_0)}, \frac{s_0}{\rho(s_0)} \biggr\}  \leq \mu_{\hat{h}_n} \leq \bar{Z}.
\]
\end{lemma}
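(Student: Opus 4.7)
My plan begins with the upper bound, which is immediate from Proposition~\ref{Prop:ProjectionBasicProperties}(iii) applied with $Q = \mathbb{Q}_n$; the real work is the lower bound. I would write $\phi^* := \hat{\phi}_n(a)$ for the plateau value and $s := -\hat{\phi}_n'(r_0^+)$, noting that $s > 0$ because $\hat{\phi}_n$ is the piecewise linear MLE and $r_0$ is the supremum of the plateau. Applying Proposition~\ref{Prop:ProjectionBasicProperties}(ii) to both admissible perturbations $\pm(r - r_0)_+$ (the minus case for any $t > 0$, the plus case for $t \leq s$, where the binding concavity check at $r_0$ reads $0 \geq -s + t$) yields the two-sided identity
\[
\int_{r_0}^\infty (r - r_0) \hat{h}_n(r) \, dr = \frac{1}{n}\sum_{i : Z_i > r_0}(Z_i - r_0).
\]
Introducing $p_0 := \int_a^{r_0} \hat{h}_n$, $\tilde{m}_0 := p(r_0^{p+1} - a^{p+1})/\{(p+1)(r_0^p - a^p)\}$ (the conditional mean of $\hat{h}_n$ on $[a, r_0]$, computable because $\hat{h}_n \propto r^{p-1}$ there), $A_1 := p_0(r_0 - \tilde{m}_0)$, $n_0 := \#\{i : Z_i \leq r_0\}$, $\bar{Z}^{(0)} := n_0^{-1}\sum_{Z_i \leq r_0} Z_i$, and $B_2 := (n_0/n)(r_0 - \bar{Z}^{(0)})$, the identity above rearranges to $\bar{Z} - \mu_{\hat{h}_n} = A_1 - B_2$.

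The second key ingredient is the plateau-mass bound $p_0 \leq n_0/n$, which I cannot obtain from linear perturbations (the natural candidate $-\mathbbm{1}_{[a,r_0]}$ is not concave when added to $\hat{\phi}_n$). Instead, I would consider the \emph{nonlinear} family $\phi_t(r) := \hat{\phi}_n(r \vee (r_0 + t))$, which extends the plateau to $[a, r_0 + t]$ at height $\hat{\phi}_n(r_0 + t) = \phi^* - st + o(t)$. Each $\phi_t \in \Phi_a$, so optimality of $\hat{\phi}_n$ gives $L(\phi_t, \mathbb{Q}_n) \leq L(\hat{\phi}_n, \mathbb{Q}_n)$; a first-order Taylor expansion (linearising $e^u - 1$ around $u = 0$) yields $L(\phi_t, \mathbb{Q}_n) - L(\hat{\phi}_n, \mathbb{Q}_n) = st(p_0 - n_0/n) + o(t)$, from which $p_0 \leq n_0/n$ follows because $s > 0$. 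Combining with the identity and $\bar{Z}^{(0)} \leq r_0$ gives the clean bound
\[
A_1 - B_2 = p_0(r_0 - \tilde{m}_0) - (n_0/n)(r_0 - \bar{Z}^{(0)}) \leq p_0(\bar{Z}^{(0)} - \tilde{m}_0).
\]

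The crux, and the main obstacle, is a sharp deterministic inequality: $\rho(s_0)(r_0 - \tilde{m}_0) \leq s_0$. After substituting $\alpha := a/r_0 \in [0,1]$, this reduces to the polynomial inequality $(p+1)(1 - \alpha^p)^2 \geq p[1 - (p+1)\alpha^p + p\alpha^{p+1}]$; the key observation is that the difference factorises as
\[
(1 - \alpha^p)^2 + p\alpha^p\bigl[(p-1) - p\alpha + \alpha^p\bigr],
\]
with the bracketed term nonnegative by the Bernoulli inequality $\alpha^p \geq 1 + p(\alpha - 1)$. This immediately gives $A_1 \leq p_0 s_0/\rho(s_0) \leq s_0/\rho(s_0)$, hence $\bar{Z} - \mu_{\hat{h}_n} \leq s_0/\rho(s_0)$; and because $\bar{Z}^{(0)} \leq r_0$, the same inequality implies $\rho(s_0)(\bar{Z}^{(0)} - \tilde{m}_0) \leq \bar{Z}^{(0)} - a$, from which a chain of elementary bounds
\[
\rho(s_0)(\bar{Z} - \mu_{\hat{h}_n}) \leq \rho(s_0)p_0(\bar{Z}^{(0)} - \tilde{m}_0) \leq p_0(\bar{Z}^{(0)} - a) \leq (n_0/n)(\bar{Z}^{(0)} - a) \leq \bar{Z} - a
\]
yields the other bound $\bar{Z} - \mu_{\hat{h}_n} \leq (\bar{Z} - a)/\rho(s_0)$.
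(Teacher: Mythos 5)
Your proof is correct apart from one degenerate case, and it takes a genuinely different route from the paper's in two respects.  First, the key deterministic inequality $\rho(s_0)(r_0 - \tilde{m}_0) \leq s_0$ is precisely Lemma~\ref{Lem:EvilInequality}, which you have rediscovered.  You prove it by substituting $\alpha := a/r_0$ and writing the difference as the sum of nonnegative terms $(1-\alpha^p)^2 + p\alpha^p\{(p-1) - p\alpha + \alpha^p\}$, with the bracket handled by Bernoulli's inequality (I checked the expansion: $(p+1)(1-\alpha^p)^2 - p\{1-(p+1)\alpha^p+p\alpha^{p+1}\} = 1 + (p+1)(p-2)\alpha^p + (p+1)\alpha^{2p} - p^2\alpha^{p+1}$, which indeed equals your decomposition).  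The paper instead proves Lemma~\ref{Lem:EvilInequality} by showing that $t_p(x) := \{1-(1-x)^{(p+1)/p}\}/x$ is concave on $(0,1)$ for $p \geq 2$ and checking endpoints; yours is more elementary.  Second, for the $(\bar{Z}-a)/\rho(s_0)$ bound you route through the plateau-mass inequality $p_0 \leq n_0/n$, established via the nonlinear perturbation $\phi_t = \hat{\phi}_n \wedge \hat{\phi}_n(r_0+t)$ — a neat device, and in fact a knot-characterisation of the homothetic MLE in the spirit of D\"umbgen--Rufibach, which linear perturbations of the form covered by Proposition~\ref{Prop:ProjectionBasicProperties}(ii) do not obviously yield.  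The paper avoids $p_0 \leq n_0/n$ altogether: starting from $\bar{Z} - \mu_{\hat{h}_n} \leq s_0/\rho(s_0) - n^{-1}\sum(r_0-Z_i)_+$, it sets $\tilde{Z}_i := \min(Z_i, r_0)$ and rearranges, using only $\rho(s_0) \geq 1$, $n^{-1}\sum(\tilde Z_i - a) \leq s_0$ and $n^{-1}\sum(\tilde Z_i - a) \leq \bar Z - a$.  The paper's route is shorter; yours buys the extra structural fact $p_0 \leq n_0/n$ at the cost of a Taylor expansion.

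One gap you should close: when $r_0 = Z_{(n)}$, i.e.\ $\hat{\phi}_n$ is constant on $[a,r_0]$ and $-\infty$ beyond (so $\hat h_n$ has no strictly decreasing part), the slope $s = -\hat{\phi}_n'(r_0^+)$ is not a finite positive number, your $\phi_t \equiv -\infty$ gives $L(\phi_t,\mathbb{Q}_n) = -\infty$, and the first-order expansion $L(\phi_t,\mathbb{Q}_n)-L(\hat{\phi}_n,\mathbb{Q}_n) = st(p_0-n_0/n)+o(t)$ is meaningless, so the derivation of $p_0 \leq n_0/n$ breaks down.  Fortunately in that case $p_0 = 1$ and $n_0 = n$, so the inequality holds trivially; a single sentence disposing of this case is all that is needed.
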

\begin{proof}[Proof of Lemma~\ref{Lem:MLEMeanPreservation}]
  By~\eqref{Eq:phibar}, we have that $r_0 = Z_i$ for some $i$ because $\mathbb{Q}_n$ is an empirical distribution. Moreover, the right derivative of $\hat{\phi}_n$ at $r_0$ is strictly negative.  Hence, by Proposition~\ref{Prop:ProjectionBasicProperties}(ii), applied to the functions $\Delta(r) := \pm(r-r_0)_+$, we have
\begin{align}
\label{Eq:PosPart}
\int_{a}^\infty (r-r_0)_+ \hat{h}_n(r) \, dr + r_0 &= \frac{1}{n} \sum_{i=1}^n (Z_i - r_0)_+ + r_0 \nonumber \\
                       &= \frac{1}{n} \sum_{i=1}^n (Z_i - r_0) + \frac{1}{n} \sum_{i=1}^n (Z_i - r_0)_- + r_0 \nonumber \\
                       &= \bar{Z} + \frac{1}{n} \sum_{i=1}^n (r_0 - Z_i)_+.    
\end{align}
Now, since $\hat{\phi}_n(r) = \hat{\phi}_n(a)$ for all $r \in [a,r_0]$, we have 
\[
1 \geq p\lambda_p(K)e^{\hat{\phi}_n(a)} \int_{a}^{r_0} r^{p-1} \, dr = \lambda_p(K)e^{\hat{\phi}_n(a)}(r_0^p - a^p).
\]
We deduce that
\begin{align}
\label{Eq:NegPart}
\int_{a}^\infty (r-r_0)_- \hat{h}_n(r) \, dr &= p\lambda_p(K)e^{\hat{\phi}_n(a)} \int_{a}^{r_0} (r_0 - r) r^{p-1} \, dr \nonumber \\
  &\leq \frac{p}{r_0^p - a^p}\biggl( \frac{r_0^{p+1}}{p} - \frac{r_0 a^p}{p} - \frac{r_0^{p+1}}{p+1} + \frac{a^{p+1}}{p+1} \biggr) \nonumber \\
  &=  r_0 - \frac{p}{p+1} \frac{r_0^{p+1} - a^{p+1}}{r_0^p - a^p} \leq \frac{s_0}{\rho(s_0)},
\end{align}
where we used Lemma~\ref{Lem:EvilInequality} to obtain the final bound.  From~\eqref{Eq:PosPart} and~\eqref{Eq:NegPart}, we find that 
\begin{equation}
\label{Eq:muhatlower}
\mu_{\hat{h}_n} = \int_{a}^\infty (r-r_0)_+ \hat{h}_n(r) \, dr - \int_{a}^\infty (r-r_0)_- \hat{h}_n(r) \, dr + r_0 \geq \bar{Z} + \frac{1}{n} \sum_{i=1}^n (r_0 - Z_i)_+ - \frac{s_0}{\rho(s_0)}.
\end{equation}
In particular, $\mu_{\hat{h}_n} \geq \bar{Z} - \frac{s_0}{\rho(s_0)}$. 

Now, for $i=1,\ldots,n$, let $\tilde{Z}_i := \min(Z_i,s_0+a)$.  Then $n^{-1} \sum_{i=1}^n (r_0 - Z_i)_+ = s_0 - n^{-1} \sum_{i=1}^n (\tilde{Z}_i - a) \geq 0$ and $n^{-1} \sum_{i=1}^n \tilde{Z}_i \leq \bar{Z}$.  Hence
\begin{align}
\label{Eq:muhatlower2}
  \frac{s_0}{\rho(s_0)} - \frac{1}{n} \sum_{i=1}^n &(r_0 - Z_i)_+ \nonumber \\
&=  \frac{s_0 - n^{-1} \sum_{i=1}^n (\tilde{Z}_i - a)}{\rho(s_0)}
    + \frac{n^{-1} \sum_{i=1}^n (\tilde{Z}_i - a)}{\rho(s_0)} - \biggl(s_0 - \frac{1}{n} \sum_{i=1}^n (\tilde{Z}_i - a)\biggr) \nonumber \\
  &\leq  \biggl(s_0 - \frac{1}{n} \sum_{i=1}^n (\tilde{Z}_i - a)\biggr) \biggl( \frac{1}{\rho(s_0)} - 1 \biggr)
    + \frac{\bar{Z} - a}{\rho(s_0)} \leq \frac{\bar{Z} - a}{\rho(s_0)},
\end{align}
where the final inequality follows from Lemma~\ref{Lem:RhoProperties}(iii).  The second lower bound for $\mu_{\hat{h}_n}$ follows from~\eqref{Eq:muhatlower} and~\eqref{Eq:muhatlower2}.  The upper bound on $\mu_{\hat{h}_n}$ follows from Proposition~\ref{Prop:ProjectionBasicProperties}(iii).
\end{proof}
We now study bounds for $\sigma_{\hat{h}_n}$, and their consequences for $\sup_{r \geq a} \log \hat{h}_n(r)$.
\begin{lemma}
  \label{Lem:MLEVarPreservation}
  Let $a \geq 0$ and let $Q \in \mathcal{Q}_a$.  Suppose that there exists $A > 0$ such that $\sup \bigl\{ \frac{Q(D)}{\lambda_1(D)} \,:\, D \subseteq \mathbb{R} \, \textrm{compact, convex},\, Q(D) \geq 1/2 \bigr\} \leq A$. Let $h^* := h_{a}^*(Q)$, and $\ell_{h^*} := \int_{a}^\infty \log h^* \, dQ$.  Then there exists a universal constant $C_{\sigma} > 0$ such that
  \[
 \frac{1}{C_\sigma} \min(1, e^{\ell_{h^*} - \log A})  \leq A \sigma_{h^*} \leq C_\sigma e^{-(\ell_{h^*} - \log A)}.
  \]
Moreover,
  \begin{equation}
\label{Eq:Sup}
    \sup_{r \geq a} \log h^*(r) - \log A \leq \max\bigl\{4\log 2, -(\ell_{h^*}-\log A) + 12 \log 2 - 2\bigr\}.
  \end{equation}
\end{lemma}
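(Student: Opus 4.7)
The plan is to deduce both inequalities on $A\sigma_{h^*}$ from sup-norm bounds on $\|h^*\|_\infty$, using the classical fact that for any log-concave density $h$ on a half-line there exist universal constants $c_- \leq c_+$ with $c_- \leq \sigma_h\|h\|_\infty \leq c_+$. This reduction is legitimate because $\log h^*(r) = \log(p\lambda_p(K)) + (p-1)\log r + \phi^*(r)$ is concave on $(a,\infty)$ as a sum of concaves (the first summand requires $a>0$, with a straightforward limiting argument otherwise).

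The upper bound on $A\sigma_{h^*}$ follows easily. Since $\log h^* \leq \log\|h^*\|_\infty$ pointwise, integrating against $Q$ yields $\|h^*\|_\infty \geq e^{\ell_{h^*}}$. Combined with $\sigma_{h^*} \leq c_+/\|h^*\|_\infty$ this gives $\sigma_{h^*} \leq c_+ e^{-\ell_{h^*}}$, which becomes the right-hand inequality once multiplied by $A$.

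The main technical step is the sup-norm bound~\eqref{Eq:Sup}. Let $M := \|h^*\|_\infty$; if $M \leq 16A$ the bound is immediate through the first term of the maximum, so I assume $M > 16A$ and aim for $\log M + \ell_{h^*} \leq 2\log A + 12\log 2 - 2$. The level sets $D_c := \{r \in [a,\infty):h^*(r) \geq c\}$ are intervals by log-concavity of $h^*$, and $\int h^* = 1$ gives $\lambda_1(D_c) \leq 1/c$. The density bound on $Q$ therefore forces $Q(D_c) < 1/2$ for every $c > 2A$, since otherwise $D_c$ would be a compact convex set of $Q$-mass at least $1/2$ but length strictly less than $1/(2A)$. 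I then plan to exploit the variational characterisation in Proposition~\ref{Prop:ProjectionBasicProperties}(iv), which gives $\ell_{h^*} \geq \int\log h_0\,dQ$ for every $h_0 \in \mathcal{H}_a$, applied with a carefully chosen comparison density of the form $h_0(r) \propto r^{p-1}e^{-\gamma r}$ with $\gamma$ tuned so that $\|h_0\|_\infty$ is of order $A$ and $h_0$ places appreciable mass on the bulk of $Q$. The first-moment bound $\mu_{h^*} \leq \mu_Q$ from Proposition~\ref{Prop:ProjectionBasicProperties}(iii), together with the sharper one-sided control in Lemma~\ref{Lem:MLEMeanPreservation}, will pin down the scale parameters entering this comparison; careful bookkeeping then yields the stated inequality with the explicit constants $4\log 2$ and $12\log 2 - 2$.

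Once~\eqref{Eq:Sup} is in hand, the lower bound on $A\sigma_{h^*}$ is routine: $\sigma_{h^*} \geq c_-/\|h^*\|_\infty \geq c_-/\max(16A, e^{12\log 2 - 2}A^2 e^{-\ell_{h^*}})$, so multiplying by $A$ produces a minimum of two terms that matches $\min(1, e^{\ell_{h^*}-\log A})$ up to a universal constant $C_\sigma$. I expect the main obstacle to be the sup-norm step: a direct dyadic decomposition of $\ell_{h^*}$ in terms of the level sets $D_{M2^{-k}}$ only delivers \emph{lower} bounds on $M$ in terms of $\ell_{h^*}$ (because $\ell_{h^*}$ is upper-bounded by $\log M$, not lower-bounded), so the crucial ingredient is the coupling of $h^*$ to an explicit reference density in $\mathcal{H}_a$ of the correct shape, using the $r^{p-1}$ weighting and the projection's optimality conditions.
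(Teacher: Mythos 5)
Your reduction to sup-norm bounds (using the universal two-sided relation between $\sigma_h$ and $\|h\|_\infty$ for log-concave densities, i.e.\ Lemma~\ref{Lem:VarianceSupRelation}) and the resulting upper bound on $A\sigma_{h^*}$ both match the paper. The gap is in the proposed proof of~\eqref{Eq:Sup}, and it is directional. Writing $M := \sup_{r\ge a}\log h^*(r)$, the target is an \emph{upper} bound on $M + \ell_{h^*}$. Your central tool --- the inequality $\ell_{h^*} \ge \int\log h_0\,dQ$ from Proposition~\ref{Prop:ProjectionBasicProperties}(iv), applied with a comparison density $h_0 \propto r^{p-1}e^{-\gamma r}$ --- produces only a \emph{lower} bound on $\ell_{h^*}$, which can never feed into an upper bound on $M + \ell_{h^*}$, no matter how $\gamma$ is tuned or how Proposition~\ref{Prop:ProjectionBasicProperties}(iii) and Lemma~\ref{Lem:MLEMeanPreservation} are deployed. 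Your observation that $Q(D_c)<1/2$ for all $c>2A$ is correct but also insufficient on its own: plugging a fixed threshold $M-t=\log(2A)$ into $\ell_{h^*}\le M - t\{1-Q(D_{M-t})\}$ gives only $\ell_{h^*}\le (M+\log(2A))/2$ and hence $M+\ell_{h^*}\le (3M+\log(2A))/2$, which is unbounded in $M$.

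The paper instead takes a scale-dependent cut $t := e^M/(2^4 A)$; the case $t<1$ yields the first branch of the maximum. It bounds $\ell_{h^*} \le MQ(D_{M-t}) + (M-t)Q(D_{M-t}^c) = M - t\{1-Q(D_{M-t})\}$ and then shows $Q(D_{M-t})\le 1/2$ in all cases. If $Q(D_{M-t})<1/2$, this is immediate; if $Q(D_{M-t})\ge 1/2$, the density bound on $Q$ is combined with the level set volume estimate for log-concave densities \citep[][Lemma~4.1]{dumbgen2011approximation}, namely $\lambda_1(D_{M-t})\le 4te^{-M}$ for $t\ge 1$, to obtain $Q(D_{M-t})\le A\lambda_1(D_{M-t})\le 4tAe^{-M}=1/4$. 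Either way $\ell_{h^*}\le M - t/2$, hence $M+\ell_{h^*}\le 2M - e^M/(2^5 A)$, and the right-hand side, viewed as a function of $M$, attains its maximum $2\log A + 12\log 2 - 2$ at $M=\log(2^6 A)$. The exponential dependence of $t$ on $M$ is what produces a bound uniform over $M$, and the level set volume estimate (not your threshold observation, and not a comparison density) is the input that controls $Q(D_{M-t})$ in the nontrivial regime; both of these are missing from your outline.
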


\begin{proof}
[Proof of Lemma~\ref{Lem:MLEVarPreservation}]
We have
\begin{align*}
     \sup_{r \geq a} \log h^*(r) \geq \int_{a}^\infty \log h^* \, dQ \geq \ell_{h^*}.
\end{align*}
Since $h^*$ is upper semi-continuous, there exists $r_0 \geq a$ such that $\log h^*(r_0) \geq \ell_{h^*}$. By Lemma~\ref{Lem:VarianceSupRelation}(ii), we then have that $\sigma_{h^*} \leq C' e^{- \ell_{h^*}}$ for some universal constant $C' > 0$. 

To provide the lower bound for $A \sigma_{h^*}$, we first prove~\eqref{Eq:Sup}. To that end, let $M := \sup_{r \geq a} \log h^*(r)$ and let $t := e^M/(2^4 A)$. If $t < 1$, then the claim immediately follows, so let us thus assume $t \geq 1$. Define $D_{M - t} := \{ r \in [a, \infty) \,:\, \log h^*(r) \geq M - t \}$ and suppose first that $Q(D_{M - t}) < 1/2$. Then, 
\begin{align*}
  \ell_{h^*} &\leq MQ(D_{M-t}) + (M-t)Q(D_{M-t}^c) \\
  &\leq M - t + t Q(D_{M-t}) 
    < M - \frac{t}{2} \\
  &\leq M - \log(2^5 A) - e^{ M - \log(2^5 A)} + \log(2^5 A) \\
  &\leq (\log 2 - 2) - (M - \log(2^5 A) - \log 2) + \log(2^5 A),
\end{align*}
where the final inequality follows because the univariate function $g(x) = x - e^x$ is concave and thus $g(x) \leq g(x_0) + g'(x_0)(x - x_0)$ for any $x, x_0 \in \mathbb{R}$; we take $x$ and $x_0$ to be $M - \log(2^5 A)$ and $\log 2$ respectively. Thus,
\begin{align}
  M - \log A \leq - (\ell_{h^*}  - \log A) + 12 \log 2 - 2. \label{eqn:MUpperBound}
\end{align}
If on the other hand, $Q(D_{M-t}) \geq 1/2$, then we may use \citet[][Lemma~4.1]{dumbgen2011approximation} and the assumption that $t \geq 1$ to obtain
\begin{align*}
  \ell_{h^*}
  &\leq M - t + t Q(D_{M-t})
    \leq M - t + t A \lambda_1(D_{M-t}) \\
  & \leq M - t + 4t^2 e^{-M} A \leq M - \frac{t}{2} \\
  & \leq (\log 2 - 2) - (M - \log(2^5 A) - \log 2) + \log(2^5 A),
\end{align*}
and thus~\eqref{eqn:MUpperBound} also follows. Therefore~\eqref{Eq:Sup} holds in all cases. By Lemma~\ref{Lem:VarianceSupRelation}(i), there exists a universal constant $C'' > 0$ such that $A \sigma_{h^*} \geq C'' \min(1, e^{\ell_{h^*} - \log A})$, as desired. 
\end{proof}

\begin{lemma}
  \label{Lem:QLebesgueRatioBound}
  Let $h$ be a density on $\mathbb{R}$ and suppose $\| h \|_{\mathrm{esssup}} < \infty$. Let $Z_1,\ldots,Z_n \stackrel{\mathrm{iid}}\sim h$ with empirical distribution $\mathbb{Q}_n$. We have that 
  \[
  \mathbb{P}\biggl( \sup \biggl\{ \frac{\mathbb{Q}_n(C)}{\lambda_1(C)} \,:\, C\, \textrm{compact, convex},\, \mathbb{Q}_n(C) \geq 1/2 \biggr\} > 2 \| h \|_{\mathrm{esssup}} \biggr) \leq 2 e^{-n/128}.
  \]
\end{lemma}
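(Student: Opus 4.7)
The plan is to reduce the statement to a standard uniform deviation bound for the empirical process indexed by intervals. First, since compact convex subsets of $\mathbb{R}$ are precisely compact intervals, we may restrict the supremum to the class $\mathcal{I}$ of compact intervals in $\mathbb{R}$. Writing $M := \|h\|_{\mathrm{esssup}}$ and $P$ for the distribution with density $h$, we have $P(I) = \int_I h \leq M\lambda_1(I)$ for every $I \in \mathcal{I}$. Hence if the supremum in the statement exceeds $2M$, there is an interval $I \in \mathcal{I}$ with $\mathbb{Q}_n(I) \geq 1/2$ and $\mathbb{Q}_n(I) > 2M\lambda_1(I) \geq 2P(I)$, which forces $\mathbb{Q}_n(I) - P(I) > \mathbb{Q}_n(I)/2 \geq 1/4$. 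Thus the event to be controlled is contained in $\bigl\{\sup_{I \in \mathcal{I}} \bigl(\mathbb{Q}_n(I) - P(I)\bigr) > 1/4\bigr\}$.

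Next, I would reduce this interval-indexed supremum to a supremum over half-lines using cumulative distribution functions. Writing $F$ for the cdf of $P$ and $F_n$ for the empirical cdf, for any interval $I = [a,b]$,
\[
\mathbb{Q}_n(I) - P(I) = \bigl(F_n(b) - F(b)\bigr) - \bigl(F_n(a^-) - F(a^-)\bigr) \leq \sup_{x \in \mathbb{R}}\bigl(F_n(x) - F(x)\bigr) + \sup_{x \in \mathbb{R}}\bigl(F(x) - F_n(x)\bigr).
\]
Consequently, $\bigl\{\sup_{I \in \mathcal{I}}(\mathbb{Q}_n(I)-P(I)) > 1/4\bigr\}$ is contained in the union of the events $\bigl\{\sup_x (F_n(x)-F(x)) > 1/8\bigr\}$ and $\bigl\{\sup_x (F(x)-F_n(x)) > 1/8\bigr\}$.

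The final step is to apply the one-sided Dvoretzky--Kiefer--Wolfowitz--Massart inequality, which gives $\mathbb{P}\bigl(\sup_x (F_n(x)-F(x)) > \epsilon\bigr) \leq e^{-2n\epsilon^2}$ and similarly for the other direction, for every $\epsilon > 0$. Taking $\epsilon = 1/8$ yields a bound of $2 e^{-n/32}$, which is stronger than the claimed $2 e^{-n/128}$. No step here is really an obstacle; the only mild subtlety is passing from compact convex sets to intervals and the clean reduction of the interval empirical process to the CDF empirical process. If one prefers to avoid DKW one could instead use a bounded-differences argument (McDiarmid on $\sup_I(\mathbb{Q}_n(I)-P(I))$, which has bounded differences $1/n$) combined with a VC bound on the expected supremum for the class of intervals (VC dimension $2$), which would yield the weaker constant $1/128$ directly and may be the route taken in the paper.
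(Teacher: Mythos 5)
Your proof is correct and follows essentially the same route as the paper: reduce compact convex sets in $\mathbb{R}$ to intervals, observe that $P(I)\leq\|h\|_{\mathrm{esssup}}\lambda_1(I)$, reduce the interval empirical process to the CDF empirical process, and apply Dvoretzky--Kiefer--Wolfowitz. The one small difference is bookkeeping at the interval endpoints: you use $F_n(a^-)$ directly so that $\mathbb{Q}_n([a,b])-P([a,b])$ is bounded by the sum of the two one-sided CDF deviations with no loss, which lets you take $\epsilon=1/8$ and obtain $2e^{-n/32}$; the paper instead passes from $[a,b]$ to $(a,b]$ at a cost of $1/n$, which forces $\epsilon=1/16$ (and the auxiliary restriction $n\geq 8$) and gives the weaker but stated constant $2e^{-n/128}$. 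Both versions prove the lemma, and your bookkeeping is slightly cleaner.
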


\begin{proof}
We may assume that $n \geq 8$ since the bound is trivially true otherwise, and we also assume that $Z_1,\ldots,Z_n$ are distinct (an event of probability 1). Let $H$ and $\mathbb{H}$ denote the distribution function of $h$ and $\mathbb{Q}_n$ respectively. Define the event $E := \{ \| H - \mathbb{H} \|_\infty \leq 1/16 \}$ and observe that $\mathbb{P}(E^c) \leq 2e^{-n/128}$ by the Dvoretzky--Kiefer--Wolfowitz inequality. On the event $E$, for any $a, b \in \mathbb{R}$ with $a < b$ and $\mathbb{Q}_n([a, b]) \geq 1/2$, we have that
  \begin{align*}
    \frac{1}{2} &\leq \mathbb{Q}_n([a,b]) \leq \frac{1}{n} + \mathbb{Q}_n((a, b]) \\
                &= \frac{1}{n} + \mathbb{H}(b) - \mathbb{H}(a) - (H(b) - H(a)) + \int_a^b h(t) dt \\
                &\leq \frac{1}{n} + 2 \| \mathbb{H} - H \|_\infty + \| h \|_{\textrm{esssup}} | b - a| 
                \leq \frac{1}{4} + \| h \|_{\textrm{esssup}} | b - a |.
  \end{align*}
  Thus, we have that $ \| h \|_{\textrm{esssup}} | b - a | \geq 1/4$ and hence
  \begin{align*}
    \mathbb{Q}_n([a,b]) \leq \frac{1}{4}  + \| h \|_{\textrm{esssup}} | b - a | \leq 2 \| h \|_{\textrm{esssup}} | b - a |,
  \end{align*}
  as desired.
\end{proof}
We are now in a position to argue that, with high probability, $\hat{h}_n$ belongs to a subclass of $\mathcal{H}_a$ with restricted first two moments.  These moment restrictions are important for enabling us to obtain the bracketing entropy bounds that drive the rates of convergence of the $K$-homothetic log-concave MLE.  For $C_\mu,C_\sigma > 0$, $a_0 \geq 0$ and $h_0 \in \mathcal{H}_{a_0}$, let
\begin{align}
\mathcal{H}_{a_0}(h_0,C_\mu,C_\sigma) := \biggl\{h \in \mathcal{H}_{a_0} : |\mu_h - \mu_{h_0}| \leq C_\mu\sigma_{h_0} \, , \, \frac{1}{C_\sigma} \leq \frac{\sigma_h}{\sigma_{h_0}} \leq C_{\sigma}\biggr\}. \label{Eqn:MomentClass}
\end{align}
\begin{lemma}
  \label{Lem:MLEMeanVarPreservation}
Let $a_0 \geq 0$, fix a density $h_0 \in \mathcal{H}_{a_0}$ with $\sigma_{h_0}=1$, and suppose that $Z_1,\ldots,Z_n \stackrel{\mathrm{iid}}{\sim} h_0$, with empirical distribution $\mathbb{Q}_n$.  Writing $\hat{h}_n := h_{a_0}^*(\mathbb{Q}_n)$, there exist universal constants $C_\mu, C_\sigma, C > 0$ such that
  \begin{align*}
   \mathbb{P}\bigl(\hat{h}_n \notin \mathcal{H}_{a_0}(h_0,C_\mu,C_\sigma)\bigr) \leq \frac{C}{n}.
  \end{align*}
\end{lemma}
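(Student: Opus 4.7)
My plan is to combine the moment-preservation Lemmas~\ref{Lem:MLEMeanPreservation} and~\ref{Lem:MLEVarPreservation} with the $Q$-to-Lebesgue ratio control in Lemma~\ref{Lem:QLebesgueRatioBound} and Chebyshev-type concentration for log-concave random variables. The key preliminary observation is that, because $p \geq 1$ and $\phi_0 \in \Phi_{a_0}$, the density $h_0(r) = p\lambda_p(K) r^{p-1} e^{\phi_0(r)}$ is log-concave on $[a_0, \infty)$ as a product of log-concave factors. Combined with $\sigma_{h_0} = 1$, this yields the universal estimates $\|h_0\|_{\mathrm{esssup}} \lesssim 1$, $|\mathbb{E}\log h_0(Z_1)| \lesssim 1$, and sub-exponential tails for $Z_1$, so Chebyshev's inequality produces an event $\mathcal{E}_1$ of probability at least $1 - C/n$ on which $|\bar Z - \mu_{h_0}| \leq 1$ and $\bigl|n^{-1}\sum_{i=1}^n \log h_0(Z_i) - \mathbb{E}\log h_0(Z_1)\bigr| \leq 1$.

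For the variance conclusion, I apply Lemma~\ref{Lem:QLebesgueRatioBound} to obtain, with probability at least $1 - 2e^{-n/128}$, an event $\mathcal{E}_2$ on which the $Q$-to-Lebesgue ratio constant $A$ in the hypothesis of Lemma~\ref{Lem:MLEVarPreservation} can be taken to satisfy $A \leq 2\|h_0\|_{\mathrm{esssup}} \lesssim 1$. Optimality of the MLE gives $\ell_{\hat h_n} := \int \log \hat h_n\, d\mathbb{Q}_n \geq n^{-1}\sum_{i=1}^n \log h_0(Z_i)$, which is bounded below by a universal constant on $\mathcal{E}_1$. The right-hand inequality of Lemma~\ref{Lem:MLEVarPreservation} then forces $\sigma_{\hat h_n} \leq C_\sigma$; combining $\ell_{\hat h_n} \leq \sup_{r \geq a_0} \log \hat h_n(r)$ with the sup-norm bound~\eqref{Eq:Sup} in turn bounds $\ell_{\hat h_n} - \log A$ from above, and the left-hand inequality of Lemma~\ref{Lem:MLEVarPreservation} delivers $\sigma_{\hat h_n} \geq 1/C_\sigma$.

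For the mean bound, Lemma~\ref{Lem:MLEMeanPreservation} immediately gives $\mu_{\hat h_n} \leq \bar Z \leq \mu_{h_0} + 1$ on $\mathcal{E}_1$, so the only remaining task is the matching lower bound, which the same lemma reduces to showing
\[
\min\!\Bigl\{\frac{\bar Z - a_0}{\rho(s_0)},\, \frac{s_0}{\rho(s_0)}\Bigr\} \lesssim 1,
\]
where $r_0 = a_0 + s_0$ is the first knot of $\hat \phi_n$. Because $\hat \phi_n$ is constant and $r \mapsto r^{p-1}$ is increasing on $[a_0, r_0]$ (as $p \geq 1$), the density $\hat h_n$ is increasing on $[a_0, r_0]$ and the mode of $\hat h_n$ therefore lies in $[r_0, \infty)$; the bound $\sigma_{\hat h_n} \lesssim 1$ from the previous step combined with the log-concave mode--mean inequality yields $r_0 \leq \mu_{\hat h_n} + O(\sigma_{\hat h_n}) \leq \bar Z + O(1)$.

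The main obstacle is to convert this bound on $r_0$ into the displayed inequality. I would use the explicit identity $s_0/\rho(s_0) = r_0 - m_0$, where $m_0 := \mathbb{E}_{\hat h_n}[R \mid R \leq r_0]$ is the $\hat h_n$-conditional mean on $[a_0, r_0]$ (which matches the closed-form mean $p(r_0^{p+1} - a_0^{p+1})/\bigl((p+1)(r_0^p - a_0^p)\bigr)$ of the density $\propto r^{p-1}$ on this interval). Because $\hat h_n$ is log-concave with $\sigma_{\hat h_n}\lesssim 1$ and $r_0 \leq \mu_{\hat h_n} + O(\sigma_{\hat h_n})$, a standard truncated-log-concave estimate (the direct analogue of the truncated-Gaussian Mills-ratio bound) yields $r_0 - \mathbb{E}_{\hat h_n}[R \mid R \leq r_0] \lesssim \sigma_{\hat h_n}$, hence $s_0/\rho(s_0) \lesssim 1$. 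A union bound across $\mathcal{E}_1$, $\mathcal{E}_2$, and the residual sub-events then yields the claimed failure probability $C/n$.
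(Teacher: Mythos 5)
Your variance argument matches the paper's in structure: the event from Lemma~\ref{Lem:QLebesgueRatioBound} controls the ratio constant $A$, the entropy-concentration event (which the paper gets via \citet{bobkov2011concentration} but for which Chebyshev with a second-moment bound on $\log h_0(Z_1)$ would also suffice) together with the MLE's optimality bounds $\ell_{\hat h_n}$ from below, and Lemma~\ref{Lem:MLEVarPreservation} then gives the two-sided bound on $\sigma_{\hat h_n}$. Your extra step bounding $\ell_{\hat h_n}-\log A$ from above via~\eqref{Eq:Sup} is harmless but unnecessary; the right-hand inequality of Lemma~\ref{Lem:MLEVarPreservation} already yields the upper bound on $\sigma_{\hat h_n}$ once $\ell_{\hat h_n}$ is bounded below. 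The mean upper bound $\mu_{\hat h_n} \leq \bar Z$ is also correct. The gap is in the mean lower bound.

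Your claimed identity $s_0/\rho(s_0)=r_0-m_0$ is false: at $a_0=0$ one has $s_0/\rho(s_0)=r_0/p$ while $r_0-m_0=r_0/(p+1)$, and Lemma~\ref{Lem:EvilInequality} shows only $r_0-m_0 \leq s_0/\rho(s_0)$, which is the \emph{wrong} direction here --- bounding $r_0-m_0\lesssim 1$ does not control $s_0/\rho(s_0)$. Even granting the reverse inequality $s_0/\rho(s_0)\leq 2(r_0-m_0)$ (which is plausible but is a new claim requiring proof), the ``standard truncated-log-concave Mills-ratio'' step $r_0-m_0\lesssim\sigma_{\hat h_n}$ is not established: $m_0$ is the mean of $\hat h_n$ conditioned to $[a_0,r_0]$, and controlling $r_0-m_0$ by $\sigma_{\hat h_n}$ requires either a lower bound on $\pi_0:=\mathbb{P}_{\hat h_n}(R\leq r_0)$ or a conditional-variance inequality for left-truncations of log-concave densities, neither of which follows from the facts you assemble. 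Most importantly, your argument never invokes Lemma~\ref{Lem:UnitVarianceMeanBound}, which is the engine of the paper's proof: it gives $\mu_{h_0}-a_0\leq 2^{12}\rho(s^*)$ where $s^*$ is the threshold at which $\rho(s)/s$ drops below $2^{-7}$, and the paper's case split on whether $\rho(s_0)/s_0\geq 2^{-7}$ uses the monotonicity of $\rho$ and $\rho(\cdot)/\cdot$ to let $\rho(s_0)\geq\rho(s^*)$ absorb $\bar Z-a_0$ when $s_0$ is large. Your mode--mean observation $r_0\lesssim\bar Z+O(1)$ is a genuinely nice idea that the paper does not use, but on its own it only yields $s_0\lesssim(\mu_{h_0}-a_0)+O(1)$, which can be of order $p$; converting that to $s_0/\rho(s_0)\lesssim 1$ still needs the $\rho(s^*)$-comparison you omitted.
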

\begin{proof}[Proof of Lemma~\ref{Lem:MLEMeanVarPreservation}]
  We may assume that $n \geq 500$.  Let $E := \{ | \bar{Z} - \mu_{h_0} | \leq 1\}$, so that $\mathbb{P}(E^c) \leq 1/n$, by Chebychev's inequality.  On the event $E$, we have  $\mu_{\hat{h}_n} \leq \bar{Z} \leq \mu_{h_0} + 1$ by Lemma~\ref{Lem:MLEMeanPreservation}.  Recall the definition of $s_0$ from Lemma~\ref{Lem:MLEMeanPreservation}.  If $s_0 \leq 1$, then by Lemma~\ref{Lem:MLEMeanPreservation}, on the event $E$,
  \[
    \mu_{\hat{h}_n} \geq \bar{Z} - \frac{s_0}{\rho(s_0)} \geq \bar{Z} - 1 \geq \mu_{h_0} - 2,
  \]
  where the middle inequality uses the fact that $\rho(s_0) \geq 1$ (Lemma~\ref{Lem:RhoProperties}(iii)). If $s_0 > 1$, then by Lemma~\ref{Lem:MLEMeanPreservation}
, on the event $E$,
\begin{align*}
  \mu_{\hat{h}_n} \geq \bar{Z} - \frac{\bar{Z} - a_0}{\rho(s_0)} &\geq \mu_{h_0} - 1 - \frac{\mu_{h_0} + 1 - a_0}{\rho(s_0)} \\
                               & \geq \mu_{h_0} - 2 - \frac{\mu_{h_0} - a_0}{\rho(s_0)}.
\end{align*}
Hence, if $\rho(s_0)/s_0 < 2^{-7}$, then by Lemma~\ref{Lem:UnitVarianceMeanBound}, we have $\mu_{\hat{h}_n} - \mu_{h_0} \geq -2 - 2^{12} \geq -2^{13}$.  On the other hand, if $\rho(s_0)/s_0 \geq 2^{-7}$, then by Lemma~\ref{Lem:MLEMeanPreservation},
\[
\mu_{\hat{h}_n} - \mu_{h_0} \geq \bar{Z} - \mu_{h_0} - \frac{s_0}{\rho(s_0)} \geq -1 - 2^7 \geq -2^8.
\]
It follows that there exists a universal constant $C_\mu > 0$ such that
\[
\mathbb{P}\bigl(|\mu_{\hat{h}_n} - \mu_{h_0}| > C_\mu\bigr) \leq \mathbb{P}(E^c) \leq 1/n.
\]
To bound $\sigma_{\hat{h}_n}$, define the event $E' := \bigl\{ \int_{a_0}^\infty \log \hat{h}_n \,d \mathbb{Q}_n \geq -3 \bigr \}$. By Lemma~\ref{Lem:DifferentialEntropyBound} and \citet[][Theorem~1.1]{bobkov2011concentration}, for $n \geq 500$,
  \begin{align*}
  \mathbb{P}(E')  
    &\geq \mathbb{P}\biggl( \frac{1}{n} \sum_{i=1}^n \log h_0(Z_i) \geq -3 \biggr) \\
    & \geq \mathbb{P}\biggl(\biggl|\frac{1}{n} \sum_{i=1}^n \log h_0(Z_i) - \int_{a_0}^\infty h_0 \log h_0\biggr| \leq 1\biggr) 
    \geq 1 - 2e^{-n^{1/2}/16} \geq 1- \frac{250}{n}.
  \end{align*}
Define event $E'' := \bigl\{ \sup \{ \frac{\mathbb{Q}_n(C)}{\lambda_1(C)} \,:\, C \textrm{ compact, convex},\, \mathbb{Q}_n(C) \geq 1/2 \} \leq 2 \| h_0 \|_\infty \bigr\}$. By Lemma~\ref{Lem:QLebesgueRatioBound}, we have that $\mathbb{P}(E'') \geq 1 - 2 e^{-n/128}$. On the event $E' \cap E''$, by Lemma~\ref{Lem:MLEVarPreservation} and~\ref{Lem:VarianceSupRelation}, there exists a universal constant $C_{\sigma} \geq 1$ such that $C_{\sigma}^{-1} \leq \sigma_{\hat{h}_n} \leq C_\sigma$. The desired result follows from a union bound.
\end{proof}
The mean $\mu_h$ of any $h \in \mathcal{H}_a$ is constrained because $h(r) = p\lambda_p(K)r^{p-1}e^{\phi(r)}$ for $r \geq a$ and some decreasing function $\phi$. The next lemma formalises this notion.
\begin{lemma}
  \label{Lem:UnitVarianceMeanBound}
  Let $a \geq 0$ and let $h \in \mathcal{H}_{a}$.  Then, writing $s^* := \sup \{ s \in (0,\infty) : \frac{\rho_{a/\sigma_h,p}(s)}{s} \geq 2^{-7} \}$, we have
\[
a \leq \mu_h \leq a + 2^{12}\sigma_h\rho_{a/\sigma_h,p}(s^*).
\]
\end{lemma}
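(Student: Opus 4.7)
The lower bound $a \leq \mu_h$ is immediate since $h$ is supported on $[a,\infty)$.  For the upper bound, the plan is first to reduce to the case $\sigma_h = 1$ via scale equivariance.  Concretely, under the map $h \mapsto h_\alpha(r) := \alpha^{-1} h(r/\alpha)$ with $\alpha = 1/\sigma_h$, one obtains $h_\alpha \in \mathcal{H}_{a/\sigma_h}$ with $\sigma_{h_\alpha} = 1$, and both $(\mu_h - a)/\sigma_h$ and $\rho_{a/\sigma_h,p}(s^*)$ are preserved.  Writing $a$ in place of $a/\sigma_h$ and $\rho = \rho_{a,p}$, the goal becomes $\mu_h - a \leq 2^{12} \rho(s^*)$, where $s^* = \sup\{s > 0 : \rho(s)/s \geq 2^{-7}\}$.

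The central idea is to find a point $r^{**} \in [a, \infty)$ where $h(r^{**})$ can be sandwiched between two useful quantities.  To produce a lower bound on $h(r^{**})$: if $\mu_h - a < 2$, one is immediately done using $\rho(s^*) \geq 1$ (from Lemma~\ref{Lem:RhoProperties}); otherwise $\mu_h - 2 \geq a$, and Chebyshev's inequality gives $\mathbb{P}\bigl(R \in [\mu_h - 2, \mu_h + 2]\bigr) \geq 3/4$ for $R \sim h$, so a pigeonhole argument on this interval of length 4 produces some $r^{**} \in [\mu_h - 2, \mu_h + 2] \subseteq [a,\infty)$ at which the upper semi-continuous density $h$ attains the value $h(r^{**}) \geq 3/16 > 2^{-7}$.

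For the matching upper bound on $h(r^{**})$, the plan is to exploit that $\phi \in \Phi_a$ is decreasing, so $\phi(r) \geq \phi(r^{**})$ for $r \in [a, r^{**}]$.  The unit integrability of $h$ then yields $\lambda_p(K) e^{\phi(r^{**})}\bigl((r^{**})^p - a^p\bigr) \leq 1$, which rearranges cleanly via the definition~\eqref{Eq:RhoDefinition} of $\rho$ into
\[
h(r^{**}) = p\lambda_p(K)(r^{**})^{p-1} e^{\phi(r^{**})} \leq \frac{p(r^{**})^{p-1}}{(r^{**})^p - a^p} = \frac{\rho(r^{**} - a)}{r^{**} - a}.
\]

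Combining the two bounds gives $\rho(r^{**} - a)/(r^{**} - a) \geq 2^{-7}$, so by definition of $s^*$ one has $r^{**} - a \leq s^*$.  Since $r^{**} \geq \mu_h - 2$, this yields $\mu_h - a \leq s^* + 2$; combined with $s^* \leq 2^7 \rho(s^*)$ (from continuity of $\rho$ and the definition of $s^*$) and $\rho(s^*) \geq 1$, we conclude $\mu_h - a \leq 2^7 \rho(s^*) + 2 \leq 2^{12} \rho(s^*)$ with comfortable margin.  The only mild obstacle will be confirming the structural properties of $\rho$ (continuity, the bound $\rho \geq 1$, and the characterisation of $s^*$ via $\rho(s^*)/s^* \geq 2^{-7}$) used to close the argument, but these are straightforward consequences of the explicit formula~\eqref{Eq:RhoDefinition} and will form part of Lemma~\ref{Lem:RhoProperties}.
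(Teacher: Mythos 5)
Your proof is correct and takes a genuinely different --- and cleaner --- route than the paper's. The paper argues by contradiction through the auxiliary Lemma~\ref{Lem:MeanConstrainedSupBound}, which shows $\sup_{r \geq a} h(r) \leq \sup_{s} \min\bigl\{32\rho(s)/(\mu_h - a),\, \rho(s)/s\bigr\}$; the first branch of that minimum is established at the mode of $h$ via a Fubini argument and quantitative log-concave decay estimates, and is the technical core of the paper's approach (combined with $h(\mu_h) \geq 2^{-7}$ from Lemma~\ref{Lem:VarianceSupRelation}). You bypass that auxiliary lemma entirely: after normalising to $\sigma_h=1$, Chebyshev places at least $3/4$ of the mass on $[\mu_h-2,\,\mu_h+2]$, upper semi-continuity on a compact interval supplies $r^{**}$ there with $h(r^{**}) \geq 3/16 > 2^{-7}$, and the elementary inequality $h(r^{**}) \leq \rho(r^{**}-a)/(r^{**}-a)$ --- which is precisely the \emph{second}, easy branch of the paper's minimum, obtained from $\int_a^{r^{**}} h \leq 1$ and the monotonicity of $\phi$ --- forces $r^{**}-a \leq s^*$, hence $\mu_h - a \leq s^* + 2 \leq (2^7+2)\rho(s^*) \leq 2^{12}\rho(s^*)$. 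By evaluating $h$ at a point near $\mu_h$ rather than at its mode, you never need to relate the mode's location to $\mu_h$, which is exactly what the paper's harder branch is for, and you even obtain a considerably smaller constant. One small repair: make the case split $\mu_h - a \leq 2$ versus $\mu_h - a > 2$ (or, more generously, split at $\mu_h - a \leq 2^{12}$, disposed of immediately by $\rho(s^*) \geq 1$), so that in the second branch $r^{**} \geq \mu_h - 2 > a$ guarantees $r^{**}-a > 0$ and the quantity $\rho(r^{**}-a)/(r^{**}-a)$ is well-defined.
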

\begin{remark}
Even though we cannot obtain an analytic expression for $\rho(s^*)$, we can apply the bounds developed in Lemma~\ref{Lem:RhoProperties} to control $\mu_h$.  For example, since $\rho(s) \leq p$ for any $a \geq 0$ by Lemma~\ref{Lem:RhoProperties}(iii), we have that $\mu_h - a \lesssim \sigma_h p$. When $a = 0$, this bound is sharp up to the universal constant, because taking $e^{\phi(r)} = pb^{-p}\mathbbm{1}_{\{r \in [0,b]\}}$, where $b = (p+1)(p+2)^{1/2}/p^{1/2}$, yields $\mu_h = p^{1/2}(p+2)^{1/2}$ and $\sigma_h=1$.
\end{remark}
\begin{proof}
We initially assume that $\sigma_h = 1$ and write $\rho(\cdot) = \rho_{a,p}(\cdot)$.  If $h \in \mathcal{H}_{a}$, then $h(r) = 0$ for $r \in (-\infty, a)$ and thus, $\mu_h \geq a$.  For the upper bound on $\mu_h$, we first observe that $h(\mu_h) \geq 2^{-7}$ by Lemma~\ref{Lem:VarianceSupRelation}.  Since $\rho(s)/s$ is decreasing by Lemma~\ref{Lem:RhoProperties}(ii) and $1 \leq \rho(s) \leq p$ by Lemma~\ref{Lem:RhoProperties}(iii), we have that $s^* \in (0,\infty)$.  Suppose for a contradiction that $\mu_h > a + 2^{12}\rho(s^*)$.  Then, since $\rho$ is increasing  by Lemma~\ref{Lem:RhoProperties}(i), we have $\rho(s)/(\mu_h - a) < 2^{-12}$ for all $s \leq s^*$. Moreover, by definition of $s^*$, we have $\rho(s)/s < 2^{-7}$ for all $s > s^*$. Hence $\sup_{s \in (0,\infty)} \min \bigl(32 \frac{\rho(s)}{\mu_h - a}, \frac{\rho(s)}{s} \bigr) < 2^{-7}$.  But then Lemma~\ref{Lem:MeanConstrainedSupBound} establishes a contradiction, so $\mu_h \leq a + 2^{12} \rho(s^*)$, as desired.  For general $\sigma_h \in (0,\infty)$, we apply the above argument to $g(\cdot) := \sigma_hh(\sigma_h\cdot)$, which satisfies $\sigma_g = 1$ and $g \in \mathcal{H}_{a/\sigma_h}$.
\end{proof}

The next lemma is used in the proof of Lemma~\ref{Lem:UnitVarianceMeanBound}.
\begin{lemma}
  \label{Lem:MeanConstrainedSupBound}
For any $a \geq 0$, $h \in \mathcal{H}_{a}$ and any $s \in (0, \infty)$, 
For any $a \geq 0$, $h \in \mathcal{H}_a$,
   \[
   \sup_{s \geq a} h(a + s) \leq \sup_{s \in [0, \infty)} \min\biggl\{ \frac{32\rho(s)}{\mu_h - a}, \frac{\rho(s)}{s} \biggr\}.
   \]
\end{lemma}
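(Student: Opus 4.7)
Write $M := \sup_{r \in [a,\infty)} h(r)$. Since $h \in \mathcal{H}_a$ is upper semi-continuous on $[a,\infty)$, integrable, and log-concave (so $h(r) \to 0$ as $r \to \infty$), the supremum is attained at some point $r^* \in [a,\infty)$; set $s^* := r^* - a$. I will show that the single choice $s = s^*$ makes the right-hand side of the target inequality at least $M$, by verifying the two bounds
\[
M s^* \leq \rho(s^*) \qquad \text{and} \qquad M(\mu_h - a) \leq 32\,\rho(s^*).
\]

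The first bound uses only the factorisation $h(r) = p\lambda_p(K)\,r^{p-1}e^{\phi(r)}$ with $\phi \in \Phi$ decreasing. Since $e^{\phi(r)} \geq e^{\phi(r^*)}$ for $r \in [a, r^*]$, we get $h(r) \geq M(r/r^*)^{p-1}$ on that interval; integrating and using $\int h = 1$ gives
\[
1 \geq \int_a^{r^*}h(r)\,dr \geq \frac{M}{(r^*)^{p-1}}\int_a^{r^*} r^{p-1}\,dr = \frac{M\bigl((r^*)^p - a^p\bigr)}{p(r^*)^{p-1}} = \frac{Ms^*}{\rho(s^*)},
\]
which rearranges to the first bound.

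For the second bound, I split
\[
\mu_h - a = \int_a^{r^*}(r-a)h(r)\,dr + s^*\!\int_{r^*}^\infty h(r)\,dr + \int_{r^*}^\infty(r - r^*)h(r)\,dr.
\]
Each of the first two summands is bounded by $s^*\cdot\int h \leq s^*$, and multiplying by $M$ and applying the first bound controls each by $\rho(s^*)$. The remaining task is to show $\int_{r^*}^\infty(r - r^*)h(r)\,dr \leq C_0/M$ for an explicit universal constant $C_0$. Here I use that $\log h(r) = \log(p\lambda_p(K)) + (p-1)\log r + \phi(r)$ is a sum of concave functions, so $h$ is log-concave on $[a,\infty)$ and is therefore unimodal with mode $r^*$. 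Writing $\psi(u) := -\log(h(r^*+u)/M)$, we have $\psi \geq 0$, convex, increasing, with $\psi(0) = 0$; setting $t_* := \sup\{u \geq 0 : \psi(u) \leq 1\}$ the estimate $(M/e)t_* \leq \int_{r^*}^{r^*+t_*}h \leq 1$ gives $t_* \leq e/M$, and convexity of $\psi$ with $\psi(0)=0,\,\psi(t_*)=1$ forces $\psi(u) \geq u/t_*$ for $u \geq t_*$. Combining the two regimes yields
\[
\int_{r^*}^\infty(r-r^*)h(r)\,dr \leq M\!\int_0^{t_*}\!u\,du + M\!\int_{t_*}^\infty\!u\,e^{-u/t_*}du \leq M t_*^{\,2}\bigl(\tfrac12 + \tfrac{2}{e}\bigr) \leq \frac{C_0}{M},
\]
with $C_0 \leq 10$. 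Summing the three pieces and using $\rho(s^*) \geq 1$ from Lemma~\ref{Lem:RhoProperties}(iii) gives $M(\mu_h - a) \leq 2\rho(s^*) + C_0 \leq 32\,\rho(s^*)$.

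\textbf{Main obstacle.} The non-trivial step is the log-concavity tail estimate $\int_{r^*}^\infty(r-r^*)h \lesssim 1/M$; the factorisation of $h$ plus $\int h = 1$ alone is not enough, and one must exploit convexity of $-\log h$ together with a level-set argument to extract an explicit universal constant small enough to fit within the factor $32$. The remaining arithmetic is direct manipulation of the definition of $\rho$.
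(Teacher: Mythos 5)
Your proof is correct, and it takes a genuinely different route from the paper's on the difficult half of the argument. Your ``Claim 1'' bound $Ms^*\le\rho(s^*)$ (via comparison with a rectangular density on $[a,r^*]$) is the same as the paper's. Where you diverge is on the mean bound. The paper first splits into cases $s_0\ge\mu_{\tilde h}/32$ and $s_0<\mu_{\tilde h}/32$; in the latter case it derives a pointwise exponential tail estimate $\log\tilde h(s)\le M - \tfrac12|s-s_0|e^M$ from a Fubini/level-set argument, then runs a contradiction sub-argument to rule out $s_0>4e^{-M}$, and finally integrates the tail to control $\mu_{\tilde h}$. Your decomposition $\mu_h-a = \int_a^{r^*}(r-a)h + s^*\!\int_{r^*}^\infty h + \int_{r^*}^\infty(r-r^*)h$ absorbs the first two pieces directly into Claim 1 (each is $\le s^*\le\rho(s^*)/M$) and isolates the remaining log-concave tail moment, which you bound by $\lesssim 1/M$ with a cleaner first-principles level-set argument ($\psi(t_*)=1$ defines a length scale $t_*\le e/M$, and convexity gives exponential decay beyond it). This bypasses the case split and the proof-by-contradiction entirely, yielding $M(\mu_h-a)\le 2\rho(s^*)+C_0\le 12\rho(s^*)$ rather than the paper's constant $13$; both are comfortably within $32$. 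One minor point worth making explicit is the $s^*=0$ edge case: there the first two summands vanish and you get $M(\mu_h-a)\le C_0$, while the bound $M\le\rho(s^*)/s^*$ must be read as $M\le\infty$ via the convention $\rho(0)/0=\infty$ — you handle this implicitly but it deserves a sentence since Lemma~\ref{Lem:RhoProperties}(iii) is stated only for $s>0$. Overall your argument is streamlined and correct.
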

\begin{proof}
  Let us fix $h \in \mathcal{H}_{a}$ and define $\tilde{h}(s) := h(a + s)$. Observe that $\tilde{h}$ is a density of the form $\tilde{h}(s) = p\lambda_p(K)(a + s)^{p-1}e^{\phi(s)}$ for some $\phi \in \Phi_0$.  We will show that $\tilde{h}(s') \leq \rho(s')/s'$ for all $s' \in (0,\infty)$ and if $s_0 \in [0,\infty)$ is such that $\tilde{h}(s_0) = \sup_{s \in [0,\infty)} \tilde{h}(s)$, then $\tilde{h}(s_0) \leq 32\rho(s_0)/\mu_{\tilde{h}}$ (such an $s_0$ exists by the upper semi-continuity of $\phi$). The lemma then follows since $\mu_{\tilde{h}} = \mu_h - a$. 

  To this end, fix $s' \in (0, \infty)$ and define $\tilde{h}_{s'}(s) := p\lambda_p(K)\alpha(a+s)^{p-1}\mathbbm{1}_{\{s \in [0, s']\}}$ where $\alpha^{-1} := p\lambda_p(K)\int_0^{s'} (a + s)^{p-1} \, ds = \lambda_p(K)\{(a + s')^p - a^p\}$.  Then
\begin{align*}
1 = \int_0^{s'} \tilde{h}_{s'} \geq \int_0^{s'} \tilde{h} \geq p\lambda_p(K)e^{\phi(s')} \int_0^{s'} (a + s)^{p-1} \, ds = \frac{e^{\phi(s')}}{\alpha}.
  \end{align*}
Hence $\tilde{h}(s') \leq \tilde{h}_{s'}(s') = p\lambda_p(K)\alpha (a + s')^{p-1} = \rho(s')/s'$, as desired. 

To prove the second claim, fix $s_0 \in [0, \infty)$ such that $\tilde{h}(s_0) = \sup_{s \in [0, \infty)} \tilde{h}(s)$. Observe that if $s_0 \geq \mu_{\tilde{h}}/32$, then $\rho(s_0)/s_0 \leq 32\rho(s_0)/\mu_{\tilde{h}}$ and lemma follows.  We may therefore assume that $s_0 \in [0, \mu_{\tilde{h}}/32)$, define $M := \log \tilde{h}(s_0)$ and fix $m \in (-\infty, M-2]$.  For $t \in [m,M]$, let $D_t := \{s \in [0,\infty) \,:\, \log \tilde{h}(s) \geq t\}$.  Since $\tilde{h}$ is itself a log-concave density, we have that, any $t \in [m,M]$ and $s \in D_m$, 
\[
\log \tilde{h} \biggl(\frac{t-m}{M-m}s_0 + \frac{M-t}{M-m}s\biggr) \geq \frac{(t-m)M}{M-m} + \frac{(M-t)m}{M-m} = t.
\]
Hence
\[
\lambda_1(D_t) \geq \lambda_1 \biggl(\frac{t-m}{M-m}s_0 + \frac{M-t}{M-m}D_m\biggr) = \frac{M-t}{M-m}\lambda_1(D_m).
\]
Using Fubini's theorem as in \citet[][Lemma~4.1]{dumbgen2011approximation}, we can now compute
\begin{align*}
1 &\geq \int_{D_m} \tilde{h}(s) - e^m \, ds \geq \int_{D_m} \int_m^M e^t \mathbbm{1}_{\{\log \tilde{h}(s) \geq t\}} \, dt \, ds \\
&= \int_m^M e^t \lambda_1(D_t) \, dt \geq \frac{\lambda_1(D_m)}{M-m}\int_m^M (M-t)e^t \, dt = \frac{\lambda_1(D_m)e^M}{M-m}\int_0^{M-m} xe^{-x} \, dx \\
&\geq \frac{\lambda_1(D_m)e^M}{2(M-m)}.
\end{align*}                
Since $D_m$ is an interval containing $s_0$, we conclude that $\log \tilde{h}(s) \leq m$ whenever $|s-s_0| > 2(M-m)e^{-M}$.  Thus, for $|s-s_0| > 4e^{-M}$, we have
\[
\log \tilde{h}(s) \leq \inf\bigl\{m \in (-\infty,M-2] :2(M-m)e^{-M} < |s-s_0|\bigr\} = M - \frac{|s-s_0|e^M}{2}.
\]
Assume for the sake of contradiction that $s_0 - 4 e^{-M} > 0$.  Then
\begin{align}
  \mu_{\tilde{h}} = \int_0^\infty s \tilde{h}(s) \, ds
  &\leq \int_0^{s_0 - 4e^{-M}} s \exp\biggl\{M - \frac{(s_0-s)e^M}{2}\biggr\} \, ds + \int_{s_0 - 4e^{-M}}^{s_0 + 4e^{-M}} s \, ds  \sup_{s \in [0,\infty)} \tilde{h}(s)  \nonumber \\  
  &\hspace{3cm} + \int_{s_0 + 4e^{-M}}^\infty s \exp\biggl\{M - \frac{(s-s_0)e^M}{2}\biggr\} \, ds \nonumber \\ 
  &\leq 2\int_2^{s_0e^M/2} \biggl(s_0 - \frac{2t}{e^M}\biggr) e^{-t} \, dt + 8s_0 + 2\int_2^\infty \biggl(s_0 + \frac{2t}{e^M}\biggr) e^{-t} \, dt \nonumber \\  
  &\leq 12s_0. \nonumber
\end{align}
This is a contradiction since we assumed that $\mu_{\tilde{h}} \geq 32 s_0$. Thus $s_0 - 4e^{-M} \leq 0$. We deduce that 
\begin{align*}
  \mu_{\tilde{h}} &\leq \int_0^{s_0 + 4e^{-M}} s\,ds \sup_{s \in [0,\infty)} \tilde{h}(s) + s_0 + 2e^{-M} \\
                  &\leq \frac{e^M}{2}(s_0 + 4e^{-M})^2 + s_0 + 2e^{-M} \leq 7 s_0 + 10 e^{-M} \leq 7 s_0 + 10 e^{-M}\rho(s_0).
\end{align*}
Since $s_0 \leq \mu_{\tilde{h}}/32$, we obtain $e^M \leq 13\rho(s_0)/\mu_{\tilde{h}}$ as desired. 
\end{proof}
The next lemma is a very slight generalisation of \cite[][Theorem~4]{kim2016global} and can be proved in the same manner, with  minor modifications to handle the general mean and variance perturbation.  The proof is omitted for brevity.
\begin{lemma} \citet[][Theorem 4]{kim2016global}
  \label{Lem:NonlocalBracketingEntropy}
Fix $C_\mu > 0, C_\sigma \geq 1$ and $h_0 \in \mathcal{H}_a$.  There exists $C > 0$, depending only on $C_\mu, C_\sigma$, such that for every $\epsilon > 0$,
  \[
    H_{[]}\bigl( \epsilon, \mathcal{H}_{a_0}(h_0, C_\mu, C_{\sigma}), d_{\mathrm{H}}\bigr) \leq \frac{C}{\epsilon^{1/2}}.
  \]
\end{lemma}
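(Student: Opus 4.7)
The plan is to reduce this bound to Theorem~4 of \cite{kim2016global}, which establishes the same $\epsilon^{-1/2}$ bracketing entropy rate for the class of log-concave densities on $\mathbb{R}$ with bounded first two moments. The key observation is that every $h \in \mathcal{H}_{a_0}$ is itself log-concave: writing $h(r) = p\lambda_p(K)r^{p-1}e^{\phi(r)}$ for $r \geq a_0$ and $h(r) = 0$ for $r < a_0$, we have $\log h(r) = \log\bigl(p\lambda_p(K)\bigr) + (p-1)\log r + \phi(r)$ on $[a_0,\infty)$, which is a sum of concave functions of $r$ (for $p \geq 1$, $r \mapsto (p-1)\log r$ is concave on $(0,\infty)$), and extending by $-\infty$ to the left preserves concavity of $\log h$ as an extended-real-valued function on $\mathbb{R}$.

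Under the constraints defining $\mathcal{H}_{a_0}(h_0, C_\mu, C_\sigma)$ in~\eqref{Eqn:MomentClass}, every member has mean in the bounded interval $[\mu_{h_0} - C_\mu\sigma_{h_0}, \mu_{h_0} + C_\mu\sigma_{h_0}]$ and standard deviation in $[\sigma_{h_0}/C_\sigma, C_\sigma\sigma_{h_0}]$. Hence $\mathcal{H}_{a_0}(h_0, C_\mu, C_\sigma)$ is contained in the class of univariate log-concave densities on $\mathbb{R}$ with mean and variance in those bounded ranges. Applying Theorem~4 of \cite{kim2016global} therefore yields the bracketing entropy bound $H_{[]}\bigl(\epsilon, \mathcal{H}_{a_0}(h_0, C_\mu, C_\sigma), d_{\mathrm{H}}\bigr) \leq C\epsilon^{-1/2}$ with $C$ depending only on $C_\mu, C_\sigma$.

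The modification needed (and the reason the lemma is stated as a slight generalisation rather than a direct invocation) is that Kim and Samworth state their result for log-concave densities normalised to have, say, mean $0$ and variance $1$, with the constant absorbing the normalisation. A translation-and-scaling argument (mapping $h \mapsto \sigma_{h_0} h(\sigma_{h_0}\cdot + \mu_{h_0})$) transfers the bound to arbitrary $h_0$, with the Hellinger distance invariant under such transformations and the constant depending only on $C_\mu$ and $C_\sigma$. The main obstacle is simply to trace through the truncation and local-bracketing construction in the original proof to confirm it is uniform over all log-concave densities with the stated moment constraints rather than specifically centred/scaled ones; since the construction uses only moment bounds to control the tails and $L^\infty$ norm, this extension is routine and does not affect the $\epsilon^{-1/2}$ rate.
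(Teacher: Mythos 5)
Your proposal is correct and takes essentially the same approach the paper sketches: the paper itself gives no proof, stating only that the lemma is a very slight generalisation of \citet[Theorem~4]{kim2016global} provable in the same manner with minor modifications to handle the general mean and variance perturbation. Your argument usefully makes explicit the two ingredients the paper leaves implicit—namely that each $h \in \mathcal{H}_{a_0}$ is itself a univariate log-concave density (since $\log h = \log(p\lambda_p(K)) + (p-1)\log r + \phi(r)$ is a sum of concave functions), and that the affine invariance of $d_{\mathrm{H}}$ lets one normalise $h_0$ so the moment box becomes $|\mu_h| \leq C_\mu$, $\sigma_h \in [C_\sigma^{-1}, C_\sigma]$, after which one inspects Kim--Samworth's construction (which controls tails and the sup-norm only through the moment bounds) to see the constant depends only on $C_\mu$ and $C_\sigma$.
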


\begin{lemma}
  \label{Lem:ExtremeEventControl}
Let $a \geq 0$ and let $Z_1,\ldots, Z_n \stackrel{\mathrm{iid}}{\sim} h_0 \in \mathcal{H}_{a}$ with $\sigma_{h_0}=1$ and empirical distribution $\mathbb{Q}_n$.  Let $\hat{h}_n := h^*(\mathbb{Q}_n)$.  Then for $n \geq 8$ and $t \geq 8$,
\[
\mathbb{P}\biggl(\sup_{r \geq a} \log \hat{h}_n(r) > t\log n\biggr) \leq \biggl( \frac{6}{n^{t/32}} \biggr)^n + 2n^{-t n^{1/2}/64}.
\]
\end{lemma}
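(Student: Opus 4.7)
The plan is to combine the deterministic upper bound in Lemma~\ref{Lem:MLEVarPreservation} with two separate concentration arguments, one for the empirical density parameter
\[
A_n := \sup\bigl\{\mathbb{Q}_n(D)/\lambda_1(D) : D \subseteq \mathbb{R} \text{ compact, convex},\ \mathbb{Q}_n(D) \geq 1/2\bigr\}
\]
and one for the average $\ell_{\hat h_n} := \int_a^\infty \log \hat h_n\, d\mathbb{Q}_n$. Applying Lemma~\ref{Lem:MLEVarPreservation} with $Q = \mathbb{Q}_n$ and $A = A_n$ gives
\[
\sup_{r \geq a} \log \hat h_n(r) \;\leq\; \log A_n + \max\bigl\{4\log 2,\ \log A_n - \ell_{\hat h_n} + 12\log 2 - 2\bigr\},
\]
so that $\{\sup_{r \geq a} \log \hat h_n(r) > t\log n\}$ is contained in $\mathcal{A} \cup \mathcal{B}$, where $\mathcal{A} := \{A_n > n^{t/16}\}$ and $\mathcal{B} := \{A_n \leq n^{t/16},\ \ell_{\hat h_n} < 2\log A_n - t\log n + C_1\}$ for an absolute constant $C_1$. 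I will bound $\mathbb{P}(\mathcal{A})$ by the first term on the right-hand side and $\mathbb{P}(\mathcal{B})$ by the second.

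To handle $\mathcal{B}$, I first transfer from $\ell_{\hat h_n}$ to the empirical log-likelihood of $h_0$ via the MLE defining inequality. Since $\hat h_n,h_0 \in \mathcal{H}_a$, the constraint $p\lambda_p(K)\int_a^\infty r^{p-1}e^{\phi}\,dr = 1$ reduces $L(\phi, \mathbb{Q}_n)$ to $\int \phi\, d\mathbb{Q}_n$, and optimality of $\hat\phi_n$ then yields $\ell_{\hat h_n} \geq n^{-1}\sum_{i=1}^n \log h_0(Z_i)$. Combined with $A_n \leq n^{t/16}$, this forces $n^{-1}\sum_{i=1}^n \log h_0(Z_i) < -(7t/8)\log n + C_2$ on $\mathcal{B}$ for an absolute constant $C_2$. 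Because $h_0$ is log-concave on its support (the product of the concave factor $(p-1)\log r$ on $(0,\infty)$ and $\phi$) with $\sigma_{h_0}=1$, Lemma~\ref{Lem:DifferentialEntropyBound} bounds $|\int h_0 \log h_0|$ by a universal constant, so $\mathcal{B}$ entails a deviation of $n^{-1}\sum_{i=1}^n \log h_0(Z_i)$ from its expectation of order at least $(t/2)\log n$. Invoking \citet[][Theorem~1.1]{bobkov2011concentration} — the same inequality used for the event $E'$ in the proof of Lemma~\ref{Lem:MLEMeanVarPreservation} — with its constant $1/16$ then delivers $\mathbb{P}(\mathcal{B}) \leq 2 e^{-n^{1/2}t\log n/64} = 2 n^{-t n^{1/2}/64}$ for $n, t \geq 8$.

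For $\mathcal{A}$, I will bound $\mathbb{P}(A_n > n^{t/16})$ by a direct combinatorial argument. The event $\{A_n > u\}$ implies that at least $n/2$ of the $Z_i$ lie in some interval of length at most $1/u$. Since $h_0$ is log-concave on its support with unit variance, $\|h_0\|_{\mathrm{esssup}}$ is bounded by a universal constant $C_0$, and its sub-exponential tails give $\max_{i\in[n]}|Z_i| = O(\log n)$ with probability $1 - O(n^{-(t-1)})$. Discretising the resulting range into half-open intervals of length $n^{-t/16}/2$, every interval of length $n^{-t/16}$ is contained in the union of three consecutive such pieces, each of $h_0$-mass at most $3C_0 n^{-t/16}/2$; a Chernoff bound for $\mathrm{Bin}(n,q) \geq n/2$ with $q \leq 3C_0 n^{-t/16}/2$ gives probability at most $(6C_0 n^{-t/16})^{n/2}$ per triple. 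Union-bounding over the $O(n^{t/16}\log n)$ relevant triples and absorbing the polynomial prefactors into the base constant yields $\mathbb{P}(\mathcal{A}) \leq (6/n^{t/32})^n$. The main obstacle is the constant bookkeeping in this last step: the union-bound factor, the Chernoff exponent, the Bobkov constant in the bound on $\mathcal{B}$, and the constants from Lemma~\ref{Lem:MLEVarPreservation} must all be balanced so that the final bound collapses precisely to the stated form $(6/n^{t/32})^n + 2 n^{-t n^{1/2}/64}$ uniformly for $n, t \geq 8$.
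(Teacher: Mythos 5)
Your overall strategy --- apply Lemma~\ref{Lem:MLEVarPreservation}, split the failure event into ``density ratio $A_n$ is too large'' and ``average log-likelihood $\ell_{\hat h_n}$ is too small'', and control the second event via the MLE inequality, Lemma~\ref{Lem:DifferentialEntropyBound}, and Bobkov--Madiman concentration --- is the same as the paper's, and the $\mathcal{B}$ argument is essentially a paraphrase of the paper's treatment of its event $E_2$. But the way you bound $\mathbb{P}(\mathcal{A})$ is genuinely different, and this is where the gap is.

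The paper's bound on the density-ratio event is obtained by \emph{anchoring at a data point}: if some interval $D$ with $\mathbb{Q}_n(D)\geq 1/2$ has short length, take the leftmost observation $Z_i$ lying in it; then at least $\lceil n/2\rceil-1$ of the remaining $Z_j$ lie in $[Z_i,Z_i+L]$, and, after conditioning on $Z_i$, the probability that a fixed $(\lceil n/2\rceil-1)$-subset lands in this short interval is bounded using only $\|h_0\|_\infty\leq 1$. The union bound runs over the $n$ choices of anchor and the $\binom{n-1}{\lceil n/2\rceil-1}$ choices of subset, giving a prefactor that is $O(n\cdot 2^n)$ and crucially \emph{independent of $t$}. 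Your approach --- truncate the range to $\max_i|Z_i|=O(\log n)$, lay down a fixed grid of mesh $\asymp n^{-t/16}$, and union-bound a Chernoff estimate over the $O(n^{t/16}\log n)$ grid triples --- introduces two quantities that grow with $t$ and cannot be absorbed into $(6/n^{t/32})^n+2n^{-tn^{1/2}/64}$ uniformly in $n,t\geq 8$. First, the union-bound prefactor $n^{t/16}\log n$ blows up as $t\to\infty$ for fixed $n$; combined with your threshold $A_n>n^{t/16}$, the exponent you obtain, $n^{1+t/16}\,2^{n}\,n^{-tn/32}$, eventually exceeds $6^n n^{-tn/32}$ (already for $n=8$, $t\gtrsim 58$). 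Second, the truncation failure probability $O(n^{-(t-1)})$ eventually exceeds $2n^{-tn^{1/2}/64}$ once $n\gtrsim 64^2$. The anchoring trick removes both problems at once because it makes no reference to the grid or the range of the data.

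Two further consequences of this. Your threshold $n^{t/16}$ for $A_n$ is too low: the paper uses $n^{t/4}/2^4$, which splits the budget $t\log n$ roughly in half between the two branches of~\eqref{Eq:Sup} in Lemma~\ref{Lem:MLEVarPreservation} and makes the combinatorial count close at $(6/n^{t/32})^n$. Your choice makes the $\mathcal{B}$ branch easier but the $\mathcal{A}$ branch structurally too demanding even with the anchoring argument. Relatedly, your intermediate claim that $\mathcal{B}$ forces a deviation ``of order at least $(t/2)\log n$'' does not hold at the boundary $n=t=8$ once the constant $C_1=12\log 2-2$ and the entropy constant from Lemma~\ref{Lem:DifferentialEntropyBound} are inserted; with the paper's threshold a deviation of $(t/4)\log n$ is what survives, and that is exactly what yields $2n^{-tn^{1/2}/64}$ after Bobkov--Madiman with constant $1/16$. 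So the arithmetic is self-consistent only with the paper's threshold choice. In short, the proof as sketched would not close; the missing ingredient is the data-point anchoring that lets the paper control $\mathbb{P}(E_1^c)$ without any grid or truncation.
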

\begin{proof}
  Let $\mathbb{Q}_n$ denote the empirical distribution of $Z_1,\ldots,Z_n$ and define the event
  \[
    E_1 := \biggl\{ \sup \biggl\{ \frac{\mathbb{Q}_n(D)}{\lambda_1(D)} \,:\, D \subseteq \mathbb{R} \textrm{ compact, convex, } \mathbb{Q}_n(D) \geq \frac{1}{2} \biggr\} \leq \frac{n^{t/4}}{2^4}
  \biggr\}.
\]
Observe that $E_1$ occurs only if, for every closed interval $D$ of length at most $2^5 n^{-t/4}$, it holds that $\mathbb{Q}_n(D) < 1/2$. Since $\| h_0 \|_{\infty} \leq 1$ by \citet[][Proposition~S2(iii)]{feng2018multivariate},  we have that for $n,t \geq 8$,

\begin{align*}
  \mathbb{P}(E_1^c) &\leq \mathbb{P}\Biggl(\bigcup_{i=1}^n \, \bigcup_{\substack{ S \subseteq \{1,\ldots,n\} \setminus \{i\} \\ |S| = \lceil n /2 \rceil - 1} } \, \bigcap_{j \in S} \{Z_j \in [Z_i,Z_i+2^5n^{-t/4}]\} \Biggr)  \\
                    &\leq n \binom{n-1}{ \lceil n/2 \rceil -1 } \mathbb{P} \biggl( \bigcap_{j=2}^{ \lceil n/2\rceil } \{ Z_j \in [Z_1, Z_1 + 2^5 n^{-t/4}] \} \biggr) \\
                    &\leq n \binom{n-1}{ \lceil n/2 \rceil - 1 } (2^5 n^{-t/4})^{\lceil n/2 \rceil - 1} \leq 2^{5n/2} n^{- \frac{t}{4} \lceil \frac{n}{2} \rceil + \frac{t}{4} + \lceil \frac{n}{2} \rceil} 
  \leq \biggl( \frac{6}{n^{t/32}} \biggr)^n
\end{align*}

Now let $E_{2} := \bigl\{\int_{a}^\infty  \log h_0 \, d\mathbb{Q}_n > -(t/2) \log n + 4 \log 2 - 2 \bigr\}$, so by Lemma~\ref{Lem:DifferentialEntropyBound} and \citet[][Theorem~1.1]{bobkov2011concentration},
  \begin{align*}
\mathbb{P}(E_2^c) &\leq \mathbb{P}\biggl( \int_a^\infty \log h_0 \, d\mathbb{Q}_n - \int_a^\infty h_0 \log h_0  \leq \frac{1}{2} + \frac{1}{2}\log(2\pi) - \frac{t}{2} \log n + 4 \log2 - 2\biggr) \\
 &\leq \mathbb{P}\biggl(\biggl|\int_a^\infty \log h_0 \, d\mathbb{Q}_n - \int_a^\infty h_0 \log h_0\biggr| \geq \frac{t}{4} \log n\biggr) \leq 2n^{-t n^{1/2}/64}.
\end{align*}
Applying Lemma~\ref{Lem:MLEVarPreservation} with $h^* = \hat{h}_n$, $A = n^{t/4}/2^4$ and observing that $\ell_{h^*} \geq \int_a^\infty \log h_0 \, d\mathbb{Q}_n \geq -(t/2) \log n + 4 \log 2 - 2$ on $E_1 \cap E_2$, we find that on this event, for $n,t \geq 8$,
\[
\sup_{r \geq a} \log \hat{h}_n(r) \leq (4 \log 2 + \log A) \vee (- \ell_{h^*} + 2 \log A + 12 \log 2 -2) \leq t \log n,
\]
as desired.
\end{proof}

\begin{lemma} 
  \label{Lem:ExtremeEventControl2}
Let $a \geq 0$, let $h_0 \in \mathcal{H}_{a}$ with $\sigma_{h_0}=1$, and suppose that $Z_1,\ldots,Z_n \stackrel{\mathrm{iid}}{\sim} h_0$.  Writing $Z_{(1)} := \min_i Z_i$ and $Z_{(n)} := \max_i Z_i$, there exists a universal constant $C > 0$ such that for $t \geq 4$,
  \[
\mathbb{P}\biggl(\sup_{r \in [Z_{(1)}, Z_{(n)}]} \log \frac{1}{h_0(r)} \geq t \log n\biggr) \leq Cn^{-(t-1)}.
  \]
\end{lemma}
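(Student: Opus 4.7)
The plan is to reduce to a single-observation tail bound via a union bound, and then exploit log-concavity of $h_0$.  First I would observe that since $\log h_0(r) = \log(p\lambda_p(K)) + (p-1)\log r + \phi(r)$ and $p\geq 1$, the function $\log h_0$ is concave on $[a,\infty)$; extending $h_0$ by $0$ on $(-\infty,a)$, it remains log-concave on $\mathbb{R}$ because its support is an interval.  Hence for every $\epsilon > 0$, the closed super-level set $\{r\in[a,\infty): h_0(r)\geq \epsilon\}$ is an interval $[r_1,r_2]$ (possibly empty, possibly with $r_1=a$).  Upper semi-continuity of $h_0$ then shows that the event $\bigl\{\sup_{r\in[Z_{(1)},Z_{(n)}]} 1/h_0(r) \geq n^t\bigr\}$ is contained in $\{Z_{(1)} \leq r_1\}\cup \{Z_{(n)}\geq r_2\}$; since $h_0$ has no atoms, a union bound reduces matters to proving $\mathbb{P}(h_0(Z_1) < n^{-t}) \lesssim n^{-t}$.

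Next I would establish the sharper tail estimate $\mathbb{P}(h_0(Z_1) < \epsilon) \leq 2\epsilon/M_0$ whenever $\epsilon \leq M_0/e^2$, where $M_0 := \|h_0\|_\infty$.  The D\"umbgen--Samworth--style layer-cake argument already executed in the proof of Lemma~\ref{Lem:MeanConstrainedSupBound}, applied with $D_t := \{\log h_0 \geq t\}$ and $M := \log M_0$, gives $\lambda_1(D_t) \geq \frac{M-t}{M-\log\epsilon}\lambda_1(D_{\log\epsilon})$ for $t \in [\log\epsilon, M]$, whence the Fubini computation there yields $r_2 - r_1 = \lambda_1(D_{\log\epsilon}) \leq 2\log(M_0/\epsilon)/M_0$.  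Let $r^* \in [r_1,r_2]$ be a maximiser of $h_0$.  Concavity of $\log h_0$ implies that the right derivative of $-\log h_0$ at $r_2$ is at least $\log(M_0/\epsilon)/(r_2 - r^*)$, so the tangent bound from above gives
\[
  \int_{r_2}^\infty h_0(r)\, dr \leq \epsilon \int_{r_2}^\infty \exp\Bigl(-\frac{\log(M_0/\epsilon)}{r_2-r^*}(r-r_2)\Bigr)\, dr = \frac{\epsilon(r_2-r^*)}{\log(M_0/\epsilon)};
\]
a symmetric estimate on $[a,r_1]$, using the left derivative of $\log h_0$ at $r_1$, yields $\int_a^{r_1} h_0 \leq \epsilon(r^*-r_1)/\log(M_0/\epsilon)$.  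Summing and invoking the length bound delivers $\mathbb{P}(h_0(Z_1) < \epsilon) \leq \epsilon(r_2-r_1)/\log(M_0/\epsilon) \leq 2\epsilon/M_0$.

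Finally, Lemma~\ref{Lem:VarianceSupRelation} together with $\sigma_{h_0}=1$ supplies a universal lower bound $M_0 \geq 1/C_\sigma$, so $\mathbb{P}(h_0(Z_1) < n^{-t}) \leq 2C_\sigma n^{-t}$ once $n^{-t} \leq 1/(C_\sigma e^2)$; for $t \geq 4$ this condition holds for all $n$ above a universal threshold, and the stated bound is trivial for the remaining finitely many $n$ after enlarging $C$.  Combining with the union bound of the first paragraph then gives the conclusion with $C$ universal.

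The principal obstacle is obtaining $\mathbb{P}(h_0(Z_1)<\epsilon) \lesssim \epsilon$ \emph{without} an extra $\log(1/\epsilon)$ factor.  A direct layer-cake bound $\int_0^\epsilon \lambda_1(\{h_0 > s\})\, ds$ picks up exactly such a factor, which would inflate the final bound by $\log n$ and prevent matching the claimed $n^{-(t-1)}$ rate; the cure is to pair the length bound $r_2-r_1 \leq 2\log(M_0/\epsilon)/M_0$ with tangent-line bounds at the two endpoints of the super-level interval so that the two $\log(M_0/\epsilon)$ factors cancel.
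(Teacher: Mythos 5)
The paper's ``proof'' of this lemma is simply a citation to the proof of \citet[][Lemma~2]{kim2016adaptationsupp}, so you are supplying the details that the paper omits. Your argument is correct and self-contained.

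Your three-step structure works: (i) by unimodality of the log-concave $h_0$, $\inf_{r\in[Z_{(1)},Z_{(n)}]}h_0(r)=\min_i h_0(Z_i)$, so the event is $\bigcup_i\{h_0(Z_i)<\epsilon\}$ with $\epsilon=n^{-t}$ and a union bound reduces to $n\,\mathbb{P}(h_0(Z_1)<\epsilon)$; (ii) your tail estimate $\mathbb{P}(h_0(Z_1)<\epsilon)\leq 2\epsilon/M_0$ for $\epsilon\leq M_0/e^2$ is established cleanly, with the crucial cancellation of $\log(M_0/\epsilon)$ between the super-level-interval length bound (from the D\"umbgen--Samworth Fubini argument, which is indeed already embedded in the proof of Lemma~\ref{Lem:MeanConstrainedSupBound}) and the tangent-line integrals at $r_1,r_2$; (iii) the lower bound $M_0\gtrsim 1$ under $\sigma_{h_0}=1$ follows from Lemma~\ref{Lem:VarianceSupRelation} — though note it is part~(i) of that lemma that yields a lower bound on $M_0$ via $h_0(\mu_{h_0})\geq c$, not a ``$1/C_\sigma$'' constant; part~(ii) only gives an upper bound on $M_0$. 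Two small simplifications would tighten the write-up: the edge cases $r^*=r_1$ or $r^*=r_2$ (where one would divide by zero) can only occur at an endpoint of the support where the relevant tail integral is already zero, so should be noted; and the final paragraph's ``enlarge $C$ for small $n$'' is unnecessary, since for $\epsilon>M_0/e^2$ one has the trivial bound $\mathbb{P}(h_0(Z_1)<\epsilon)\leq 1\leq e^2\epsilon/M_0\lesssim\epsilon$, which makes $\mathbb{P}(h_0(Z_1)<\epsilon)\lesssim\epsilon$ valid for all $\epsilon>0$ and hence the final display valid for all $n$ uniformly in $t\geq 4$.
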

\begin{proof}
This result follows from the proof of \citet[][Lemma~2]{kim2016adaptationsupp}.
\end{proof}

\subsubsection{Local bracketing entropy results}

For $h_0 \in \mathcal{H}_a$ and $\delta > 0$, let 
\[
  \mathcal{H}(h_0, \delta) := \bigl\{ h \in \mathcal{H}_a : h \ll h_0, d_{\mathrm{H}}(h,h_0) \leq \delta \bigr\}.
\]
Recall from the introduction that $\mathcal{F}_p$ denotes the class of all upper semi-continuous, log-concave densities on $\mathbb{R}^p$. For $f_0 \in \mathcal{F}_1$, we define
\begin{align}
  \tilde{\mathcal{F}}(f_0, \delta) := \bigl\{ f \in \mathcal{F}_1 \,:\, \log (f/f_0) \textrm{ concave}, f \ll f_0, d_{\mathrm{H}}(f,f_0) \leq \delta\bigr\}, \label{eqn:local_density_ball}
\end{align}
where we adopt the convention that $0/0 := 0$.  
\begin{lemma}
  \label{Lem:LocalBracketing}
  Fix $a \geq 0$ and let $h_0 \in \mathcal{H}_{a}$. Assume that $\nu := \inf \{ d_{\mathrm{H}}(h_0, h) \,:\, h \in \mathcal{H}_{a}^{(1)}, h_0 \ll h \} < 2^{-9}$. Then there exists a universal constant $C > 0$ such that, for all $\delta \in (0, 2^{-9} - \nu)$ and all $\epsilon > 0$,
  \[
    H_{[]}\bigl(\epsilon, \mathcal{H}(h_0, \delta), d_{\mathrm{H}}\bigr) \leq
    C \biggl( \frac{\delta + \nu}{\epsilon} \biggr)^{1/2} \log^{5/4} \frac{1}{\delta}.
  \]
\end{lemma}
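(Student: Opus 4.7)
The plan is to reduce the local bracketing entropy bound for $\mathcal{H}(h_0, \delta)$ to the general local bracketing bound for concave log-ratios in $\mathcal{F}_1$ (Lemma~S.\ref{Lem:GeneralLocalBracketing}) by approximating $h_0$ with an element $\tilde{h}$ of the ``simpler'' class $\mathcal{H}_{a}^{(1)}$ and using $\tilde{h}$ as the anchor. The key structural observation is that the prefactor $p\lambda_p(K)r^{p-1}$ shared by every element of $\mathcal{H}_a$ cancels in the ratio $h/\tilde{h}$, so only the concave generators $\phi$ and $\tilde{\phi}$ remain; since $\tilde{\phi}$ is affine on its domain, $\log(h/\tilde{h}) = \phi - \tilde{\phi}$ is concave, which is precisely the hypothesis under which the general lemma applies.

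Concretely, I would first choose $\tilde{h} \in \mathcal{H}_{a}^{(1)}$ with $h_0 \ll \tilde{h}$ and $d_{\mathrm{H}}(h_0, \tilde{h}) \leq \nu + \delta$ (possible by definition of $\nu$); then for any $h \in \mathcal{H}(h_0,\delta)$, the triangle inequality gives $d_{\mathrm{H}}(h,\tilde{h}) \leq 2\delta + \nu$, and $h \ll h_0 \ll \tilde{h}$. Writing $h(r) = p\lambda_p(K) r^{p-1} e^{\phi(r)}$ with $\phi \in \Phi_a$ and $\tilde{h}(r) = p\lambda_p(K) r^{p-1}e^{\tilde{\phi}(r)}$ with $\tilde{\phi} \in \Phi_a^{(1)}$, the shared factor cancels on the support of $\tilde{h}$ and we obtain $\log (h/\tilde{h}) = \phi - \tilde{\phi}$. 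Since $\phi$ is concave and $\tilde{\phi}$ is affine on the (interval) support of $\tilde{h}$, this difference is concave there; extended by $-\infty$ off the support of $\tilde{h}$ (using the convention $0/0 := 0$), it is concave on all of $\mathbb{R}$.

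Next I would verify that $\tilde{h}$, viewed as a density on $\mathbb{R}$ by zero-extension, belongs to $\mathcal{F}_1$: log-concavity of $r \mapsto (p-1)\log r + \tilde{\phi}(r)$ on $[a,\infty)$ follows from concavity of $\log r$ on $(0,\infty)$ and concavity of $\tilde{\phi}$, while upper semi-continuity is inherited from $\tilde{\phi} \in \Phi_a$. Consequently, the embedding $\mathcal{H}(h_0,\delta) \hookrightarrow \tilde{\mathcal{F}}(\tilde{h}, 2\delta + \nu)$ holds, where $\tilde{\mathcal{F}}$ is the class defined in~\eqref{eqn:local_density_ball} around $\tilde{h}$.

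Finally, applying Lemma~S.\ref{Lem:GeneralLocalBracketing} to $\tilde{\mathcal{F}}(\tilde{h}, 2\delta + \nu)$ (whose Hellinger radius $2\delta + \nu < 2^{-8}$ by the assumption $\delta < 2^{-9} - \nu$, ensuring we are within the small-radius regime of that lemma) yields
\[
H_{[]}\bigl(\epsilon, \mathcal{H}(h_0,\delta), d_{\mathrm{H}}\bigr) \;\lesssim\; \biggl(\frac{2\delta+\nu}{\epsilon}\biggr)^{\!1/2} \log^{5/4} \frac{1}{2\delta + \nu} \;\lesssim\; \biggl(\frac{\delta+\nu}{\epsilon}\biggr)^{\!1/2}\log^{5/4}\frac{1}{\delta},
\]
where in the last step we use $2\delta+\nu \leq 2(\delta+\nu)$ and $\log(1/(2\delta+\nu)) \leq \log(1/\delta)$ (since $\delta \leq 2\delta + \nu$). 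The main obstacle is identifying and cleanly justifying the concavity of $\log(h/\tilde h)$; this is genuinely simple only because of the shared multiplicative factor $r^{p-1}$ in $\mathcal{H}_a$, and is precisely the reason the adaptive local bracketing calculation of \citet{kim2016adaptation} does not apply directly here but only after passage through this ratio trick.
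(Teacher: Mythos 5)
Your proposal is correct and is essentially the paper's own argument: pick a near‑optimal $h_1 \in \mathcal{H}_a^{(1)}$ with $h_0 \ll h_1$, observe that the shared factor $p\lambda_p(K)r^{p-1}$ cancels so that $\log(h/h_1) = \phi - \phi_1$ is concave (because $\phi_1$ is affine on its effective domain), embed $\mathcal{H}(h_0,\delta)$ in the class $\tilde{\mathcal{F}}(h_1, \cdot)$, and invoke Lemma~\ref{Lem:GeneralLocalBracketing}. Your write-up is slightly more explicit than the paper's at a few points (choosing $\tilde h$ within $\nu+\delta$ rather than fixing arbitrary $h_1$ and optimising at the end, verifying that $\tilde h$ lies in $\mathcal{F}_1$, and checking that the enlarged radius $2\delta + \nu$ stays below the $2^{-8}$ threshold required by Lemma~\ref{Lem:GeneralLocalBracketing}), but these are the same mechanism, not a different route.
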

\begin{proof}
Fix $h_1 \in \mathcal{H}_a^{(1)}$ with $h_0 \ll h_1$ and let $\nu_1 := d_{\mathrm{H}}(h_0, h_1)$.  Then, by the triangle inequality,
  \[
    \mathcal{H}(h_0, \delta) \subseteq \mathcal{H}(h_1, \delta + \nu_1).
  \]
Since $h_1(r) = p\lambda_p(K)r^{p-1} e^{\phi_1(r)}$ where $\phi_1 \in \Phi^{(1)}$, we have that $ \log (h/h_1)$ is concave for any $h \in  \mathcal{H}(h_1, \delta + \nu_1)$. We therefore have
  \[
    \mathcal{H}(h_1, \delta + \nu_1) \subseteq \tilde{\mathcal{F}}(h_1, \delta + \nu_1)
  \]
  where the right-hand side is defined in~\eqref{eqn:local_density_ball}. It then follows from this and Lemma~\ref{Lem:GeneralLocalBracketing} that
  \[
  H_{[]}\bigl(\epsilon, \mathcal{H}(h_0, \delta), d_{\mathrm{H}}\bigr) \lesssim \biggl( \frac{\delta + \nu_1}{\epsilon} \biggr)^{1/2} \log^{5/4} \frac{1}{\delta}.
  \]
  Since the choice of $h_1 \in \mathcal{H}_a^{(1)}$ with $h_0 \ll h_1$ was arbitrary, the bound continues to hold when $\nu_1$ is replaced with $\nu$.
\end{proof}

Recall that $\mathcal{F}_1$ denotes the set of all upper semi-continuous log-concave densities on $\mathbb{R}$.
\begin{lemma}
  \label{Lem:HellingerMomentBound}
  Let $f, g \in \mathcal{F}_1$. Then there exist universal constants $C'_\mu > 0, C'_\sigma \geq 1$ such that if $d_{\mathrm{H}}(f,g) \leq 2^{-7}$, then 
  \[
    \frac{1}{C'_\sigma} \leq \frac{\sigma_g}{\sigma_f} \leq C'_\sigma,
    \qquad
    |\mu_g - \mu_f| \leq C'_\mu. 
  \]
\end{lemma}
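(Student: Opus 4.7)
The plan is to exploit two standard facts about univariate log-concave densities: (a) for any $h \in \mathcal{F}_1$, the product $\|h\|_\infty\, \sigma_h$ lies in a universal interval $[c_1,c_2]$ (e.g.~\citet[][Proposition~S2(iii)]{feng2018multivariate}); and (b) the total variation distance dominates and is dominated by the Hellinger distance, in particular $d_{\mathrm{TV}}(f,g) \leq \sqrt{2}\,d_{\mathrm{H}}(f,g)$. By the affine equivariance of Hellinger distance, of $\sigma_f$, and of $\mu_g - \mu_f$ (all under the common rescaling $x \mapsto (x-\mu_f)/\sigma_f$ applied to both densities), I reduce to the case $\mu_f = 0$, $\sigma_f = 1$, in which case $\|f\|_\infty \in [c_1,c_2]$.

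To bound $\sigma_g$ from above, let $A := [-\sqrt{12},\sqrt{12}]$; then $\int_A f \geq 11/12$ by Chebyshev, and since $d_{\mathrm{TV}}(f,g) \leq \sqrt{2}/128$ by fact (b) we obtain $\int_A g \geq 11/12 - \sqrt{2}/128 \geq 3/4$. Combined with $\int_A g \leq 2\sqrt{12}\,\|g\|_\infty$ and fact (a) applied to $g$, this gives $\sigma_g \leq C'_\sigma$ for a universal constant. To bound $\sigma_g$ from below, I argue by contradiction: if $\sigma_g \leq 1/M$ then $g$ concentrates on an interval $A_g$ of length $2\sqrt{12}/M$ with $\int_{A_g} g \geq 11/12$, and a Cauchy--Schwarz bulk/tail decomposition of $\int\sqrt{fg}$ yields
\[
\int \sqrt{fg} \leq \sqrt{\|f\|_\infty\, \lambda_1(A_g)} + \sqrt{\int_{A_g^c} f \cdot \int_{A_g^c} g} \leq \sqrt{2\sqrt{12}\,c_2/M} + \sqrt{1/12},
\]
so $d_{\mathrm{H}}^2(f,g) = 2 - 2\int\sqrt{fg}$ exceeds $2^{-14}$ once $M$ passes a universal threshold.

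With $\sigma_g \asymp 1$ in hand, the mean bound follows the same pattern. Write $A_f := [-\sqrt{12},\sqrt{12}]$ and $A_g := [\mu_g - \sqrt{12}\sigma_g,\, \mu_g + \sqrt{12}\sigma_g]$; each captures at least $11/12$ of its respective mass. If $A_f \cap A_g = \emptyset$, then decomposing $\int\sqrt{fg} = \int_{A_f}\sqrt{fg} + \int_{A_f^c}\sqrt{fg}$ and applying Cauchy--Schwarz with $\int_{A_f} g \leq 1/12$ and $\int_{A_f^c} f \leq 1/12$ gives $\int\sqrt{fg} \leq 2/\sqrt{12}$, whence $d_{\mathrm{H}}^2(f,g) \geq 2 - 2/\sqrt{3}$, contradicting $d_{\mathrm{H}}(f,g) \leq 2^{-7}$. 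Hence $A_f$ and $A_g$ intersect, forcing $|\mu_g - \mu_f| \leq \sqrt{12}(1 + C'_\sigma)\sigma_f$, which is the required bound.

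The only delicate point is matching numerical constants: the universal $c_1, c_2$ from fact (a) together with the threshold $2^{-7}$ must be small enough for each of the three contradictions above to bite. Because $2^{-7}$ is a fairly comfortable margin, this should go through with room to spare, but I would check the arithmetic explicitly. Everything else is a routine Chebyshev/Cauchy--Schwarz computation.
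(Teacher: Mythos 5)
Your proof is correct and takes a genuinely different route from the paper's. The paper's argument for the variance bound exploits the Lov\'asz--Vempala pointwise lower bound $f(x) \geq 2^{-8}$ on $[-1/9,1/9]$, then observes that if $g$ were uniformly tiny there the Hellinger distance would be large, and invokes Lemma~\ref{Lem:VarianceSupRelation} to convert the resulting lower bound on $\|g\|_\infty$ into an upper bound on $\sigma_g$; the reverse variance bound is obtained by swapping $f$ and $g$, and the mean bound uses the exponential tail $g(x) \leq C_\sigma' e^{-\mu_g/C_\sigma'+1}$ for $x\leq 0$ to force a Hellinger contradiction when $\mu_g$ is large. Your approach replaces the pointwise lower bound machinery with Chebyshev concentration intervals and two Cauchy--Schwarz decompositions of $\int\sqrt{fg}$: the disjointness argument for the mean bound is cleaner than the paper's exponential-tail calculation, and the direct contradiction for the variance lower bound avoids the symmetrization step. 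The paper's route gets slightly sharper numerical constants from the $2^{-8}$ pointwise bound, while yours is more modular (fact (a), Chebyshev, and Cauchy--Schwarz are the only inputs) and arguably easier to audit. Both hinge on the same core fact that $\|h\|_\infty\,\sigma_h$ is of order one for log-concave $h$. One small remark: with the paper's normalization $d_{\mathrm{H}}^2(f,g) = \int(\sqrt f - \sqrt g)^2$ (so that $d_{\mathrm{H}} \leq \sqrt 2$), the sharp relation is $d_{\mathrm{TV}} \leq d_{\mathrm{H}}$; your stated $d_{\mathrm{TV}} \leq \sqrt 2\, d_{\mathrm{H}}$ is weaker but still sufficient, so the arithmetic goes through with the margin you anticipated.
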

\begin{proof}
Since the Hellinger distance is affine invariant, we may assume without loss of generality that $\mu_f = 0$ and $\sigma_f = 1$.  By \citet[][Theorem 5.14(a) and~(d)]{lovasz2007geometry}, $f(x) \geq 2^{-8}$ for $x \in [-1/9, 1/9]$. We claim that $g(x) \geq 2^{-12}$ for some $x \in [-1/9, 1/9]$. To see this, suppose for a contradiction that $g(x) < 2^{-10}$ for all $x \in [-1/9, 1/9]$. Then
  \begin{align*}
    \int_{-\infty}^\infty (f^{1/2} - g^{1/2})^2 \geq \int_{-1/9}^{1/9} ( 2^{-4} - 2^{-5} )^2 \, dx \geq (2/9) 2^{-10} > 2^{-14},
  \end{align*}
a contradiction.  By Lemma~\ref{Lem:VarianceSupRelation}, it follows that $\sigma_g \leq C_\sigma'$ for some universal constant $C'_{\sigma} > 0$.  The lower bound on $\sigma_g$ follows by symmetry.

Now assume without loss of generality that $\mu_g \geq 0$.  By the first part and \citet[][Proposition~S2(iii)]{feng2018multivariate}, $g(x) \leq C_\sigma'e^{- \mu_g/C_\sigma' + 1}$ for all $x \leq 0$.  It follows that if $\mu_g \geq C_\sigma'(1 + 10\log 2 + \log C_\sigma')$, then 
\[
\int_{-\infty}^\infty (f^{1/2} - g^{1/2})^2 \geq \int_{-1/9}^0 ( 2^{-4} - 2^{-5} )^2 \, dx \geq (1/9) 2^{-10} > 2^{-14},
\]
a contradiction.  The result follows.
\end{proof}
We now prove a general result on the local bracketing entropy of log-concave densities.

\begin{lemma}
  \label{Lem:GeneralLocalBracketing}
  Let $\delta \in (0,2^{-8}]$ and $f_0 \in \mathcal{F}_1$. Then there exists a universal constant $C > 0$ such that, for every $\epsilon > 0$,
  \[
    H_{[]}\bigl(\epsilon, \tilde{\mathcal{F}}(f_0, \delta), d_{\mathrm{H}}\bigr) \leq C\frac{\delta^{1/2}}{\epsilon^{1/2}} \log^{5/4} \frac{1}{\delta}.
  \]
\end{lemma}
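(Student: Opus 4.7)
The plan is to extend the bracketing entropy argument of \citet[][Theorem~4]{kim2016adaptation}, which established an analogous local bracketing bound in the special case where $f_0$ is log-affine (so that $\log(f/f_0)$ is automatically concave for every log-concave $f$). The essential novelty is to work directly from the assumption that $g := \log(f/f_0)$ is concave, rather than deriving it from log-affinity of $f_0$.

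By affine invariance of $d_{\mathrm{H}}$, I would first reduce to the case $\mu_{f_0} = 0$, $\sigma_{f_0} = 1$. Lemma~\ref{Lem:HellingerMomentBound} then ensures that every $f \in \tilde{\mathcal{F}}(f_0, \delta)$ satisfies $|\mu_f| \leq C'_\mu$ and $\sigma_f \in [1/C'_\sigma, C'_\sigma]$, which via standard sub-exponential tail bounds for univariate log-concave densities yields a threshold $T \asymp \log(1/\delta)$ outside of which the contribution to $d_{\mathrm{H}}^2(f,f_0)$ is uniformly $O(\delta^2)$. The tails can then be bracketed with $O(1)$ additional brackets.

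On $[-T,T]$ I would partition into $N \asymp \log(1/\delta)$ sub-intervals $I_1,\ldots,I_N$ on which $f_0$ varies by at most a factor of $e$, and approximate the concave restriction $g|_{I_j}$ by piecewise linear concave functions, using the classical bracketing bound for uniformly bounded concave functions on an interval (see, e.g., \citet{gao2007entropy}). The upper range of $g$ on each sub-interval is controlled by $\lesssim \log(1/\delta)$, using $\|f\|_\infty \lesssim 1$ (which follows from the bounded variance of $f$ via \citet[][Proposition~S2(iii)]{feng2018multivariate}) together with the lower bound $f_0 \gtrsim \delta^2$ inside the bulk region. For the lower range, the Hellinger constraint provides the required control: if $g \leq -M$ on an interval $I$, then $\int_I f_0 (1-e^{-M/2})^2 \leq \int_I (f^{1/2}-f_0^{1/2})^2 \leq \delta^2$, so choosing $M \asymp \log(1/\delta)$ confines the dips of $g$ to regions of $f_0$-measure $O(\delta^2)$, which can be absorbed into a single enlarged bracket.

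The main obstacle will be the local-to-global assembly: the per-interval Hellinger budgets $\epsilon_1,\ldots,\epsilon_N$ must be chosen so that $\sum_j \epsilon_j^2 \lesssim \epsilon^2$, and the local bracket counts of order $(\delta_j/\epsilon_j)^{1/2}$ (with $\sum_j \delta_j^2 \lesssim \delta^2$) must be summed via Cauchy--Schwarz to recover the global bound $C(\delta/\epsilon)^{1/2} \log^{5/4}(1/\delta)$. The exponent $5/4$ arises from the interplay of the $\log(1/\delta)$ partition size, an extra $\log^{1/2}(1/\delta)$ factor from the range of $g$ on each sub-interval, and the $\log(1/\delta)$-scaling of the tail threshold $T$. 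The bookkeeping parallels \citet[][Theorem~4]{kim2016adaptation} once the range of $g$ has been controlled purely from the Hellinger constraint together with the log-concavity of $f$, which is the key conceptual addition here.
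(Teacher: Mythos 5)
Your overall architecture matches the paper's: affine reduction, moment control via Lemma~\ref{Lem:HellingerMomentBound}, thresholding the tails at a scale $\asymp\log(1/\delta)$, partitioning the bulk into $\asymp\log(1/\delta)$ sub-intervals, bracketing each, and recombining with the per-interval tolerance $\tilde\epsilon=\epsilon/J^{1/2}$. But two of your specific choices would break.

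First, the upper-envelope bound. You claim ``the upper range of $g$ on each sub-interval is controlled by $\lesssim\log(1/\delta)$'' via $\|f\|_\infty\lesssim1$ and $f_0\gtrsim\delta^2$ in the bulk. That bound is correct but far too weak: the per-interval entropy from Proposition~\ref{Prop:SegmentBracket2} carries a multiplicative factor $e^{B_2/4}$ where $B_2$ is the upper bound on $\psi=(\phi-\phi_0)/2$, and with $B_2\asymp\log(1/\delta)$ that factor is a negative power of $\delta$ and destroys the bound. The paper's Lemma~\ref{Lem:EnvelopeUpperBound} is indispensable precisely because it gives the much sharper $\psi(r)\lesssim(|r|+1)\delta$, which is $O(1)$ on the whole bulk region $[-T,T]$, so that $e^{B_2}\lesssim1$. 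You are missing this lemma entirely, and it is the main place where both the concavity of $\psi$ \emph{and} the Hellinger constraint are used on the positive side (your proposal only invokes the Hellinger constraint to control the negative excursions of $g$).

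Second, the partition scheme. Partitioning $[-T,T]$ by the range of $\log f_0$ gives $\mathrm{Range}_j(\phi_0)\lesssim1$ per interval, but it loses the crucial property of the paper's mass-doubling partition: $\int_{s_{j-1}}^{s_j}f_0=2\int_{-\infty}^{s_{j-1}}f_0$. This property is what makes the per-interval quantity $\{(s_j-s_{j-1})\sup_{I_j}f_0\}^{1/4}\cdot\{\delta/(\int_{-\infty}^{s_{j-1}}f_0)^{1/2}\}^{1/2}$ collapse to $\delta^{1/2}\{\mathrm{Range}_j(\phi_0)+\log2\}^{1/4}$ (see claim~(4) and display~\eqref{Eq:KeyDisplay} in the paper's proof). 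With your partition the ratio $p_j/P_{j-1}:=\int_{s_{j-1}}^{s_j}f_0/\int_{-\infty}^{s_{j-1}}f_0$ is governed by the ratio of consecutive interval lengths $\frac{s_j-s_{j-1}}{s_{j-1}-s_{j-2}}$, which is not uniformly bounded over log-concave $f_0$ (the slope of $\phi_0$ can decay arbitrarily fast on the bulk). When it blows up, the per-interval entropy bound no longer has the form $\delta^{1/2}/\tilde\epsilon^{1/2}$ times a benign factor, and neither your Cauchy--Schwarz assembly nor the paper's Jensen assembly recovers the claimed $\log^{5/4}(1/\delta)$. Relatedly, the claimed local count ``$(\delta_j/\epsilon_j)^{1/2}$ with $\sum_j\delta_j^2\lesssim\delta^2$'' is not what the segment-wise bracketing produces: the brackets must cover \emph{every} $f\in\tilde{\mathcal{F}}(f_0,\delta)$ on each interval, so there is no partitioning of the Hellinger budget across intervals that can be exploited that way; the correct per-interval quantity is dictated by the envelopes and the local geometry of $f_0$, not by any fixed $\delta_j$.

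In short, you would need to import Lemma~\ref{Lem:EnvelopeUpperBound} (or prove something equivalent), and replace the log-range partition by the mass-doubling partition (or otherwise enforce $p_j/P_{j-1}\lesssim1$), before the assembly step can deliver the stated exponent.
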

\begin{proof} 
  In this proof, we let $C>0$ be a generic universal constant whose value may vary from instance to instance. For a set $S \subseteq \mathbb{R}$, we define $\tilde{\mathcal{F}}(f_0, \delta, S) := \{ f|_S \,:\, f \in \tilde{\mathcal{F}}(f_0, \delta)\}$ and abuse notation slightly to define 
  $H_{[]}(\epsilon, \tilde{\mathcal{F}}(f_0, \delta), d_{\mathrm{H}}, S) := H_{[]}(\epsilon, \tilde{\mathcal{F}}(f_0, \delta, S), d_{\mathrm{H}}, S)$. We note that, for any $\epsilon_1, \epsilon_2 > 0$ and disjoint Borel measurable sets $S_1, S_2 \subseteq \mathbb{R}$,
  \[
    H_{[]}\bigl( (\epsilon_1^2 + \epsilon_2^2)^{1/2}, \tilde{\mathcal{F}}(f_0, \delta), d_{\mathrm{H}}, S_1 \cup S_2\bigr) \leq H_{[]}\bigl(\epsilon_1, \tilde{\mathcal{F}}(f_0, \delta), d_{\mathrm{H}}, S_1\bigr) +
    H_{[]}\bigl(\epsilon_2, \tilde{\mathcal{F}}(f_0, \delta), d_{\mathrm{H}}, S_2\bigr).
  \]
  Since $d_{\mathrm{H}}$ is location and scale equivariant, we assume without loss of generality that $\mu_{f_0}=0$ and $\sigma_{f_0} = 1$. Define $a_L := \inf \{ r \in \mathbb{R} \,:\, f_0(r) \geq \delta^2\}$ and $a_R := \sup \{ r \in \mathbb{R}\,:\, f_0(r) \geq \delta^2\}$; it holds by the fact that $\delta \leq 2^{-9}$ and Lemma~\ref{Lem:LogConcaveCentralMass} that $a_L \leq -1/9$ and $a_R \geq 1/9$. By Lemma~\ref{Lem:HellingerMomentBound} and \citet[][Proposition~S2(iii)]{feng2018multivariate}, there exist $\alpha > 0$ and $\beta \in \mathbb{R}$ such that for any $f \in \tilde{\mathcal{F}}(f_0, \delta)$ and $x \in \mathbb{R}$, we have $f(x) \leq e^{-\alpha |x| + \beta}$.  Let $b_L := - \alpha^{-1} \log (e^\beta/\delta^4)$ and $b_R := \alpha^{-1} \log (e^\beta/\delta^4)$.  Then, for any $f \in \tilde{\mathcal{F}}(f_0, \delta)$, $f(r) < \delta^4$ for $r \in (-\infty,b_L) \cup (b_R,\infty)$, and $[a_L, a_R] \subseteq [b_L, b_R]$.

First, we will bracket the region $[b_L, a_L) \cup (a_R, b_R]$.  To this end, fix $\epsilon > 0$, and let $K_L := \min \{ k \in \mathbb{N}\,:\, e^{- \alpha ( k - a_L) + \beta} \leq \delta^4 \}$ and $K_R := \min \{ k \in \mathbb{N}\,:\, e^{ - \alpha ( k + a_R)+\beta} \leq \delta^4\}$, so that $\max(K_L,K_R) \lesssim \log(1/\delta)$.  By these definitions, $a_L - K_L \leq b_L$ and $a_R + K_R \geq b_R$. We segment $[b_L, a_L)$ into subintervals $S_k$ for $k=1,\ldots,K_L$, where
\begin{align*}
  S_k &:= \bigl[a_L - k, a_L - (k-1)\bigr),  \quad k=1,\ldots,K_L-1, \\
  S_{K_L} &:= \bigl[b_L, a_L - (K_L - 1)\bigr).
\end{align*}
Define $\bar{\epsilon} := \epsilon/(4 K_L^{1/2})$.  For any $r \in [b_L,a_L)$, we have that $f_0(r) \leq \delta^2$ because $r < a_L$ and, moreover, $e^{-\alpha|r|+\beta} \geq \delta^4$ because $r \geq b_L$. Now, by Lemma~\ref{Lem:EnvelopeUpperBound}, $f(r) \leq f_0(r) e^{ C \delta \log \frac{1}{\delta}} \lesssim f_0(r)$ for any $f \in \tilde{\mathcal{F}}(f_0, \delta)$ and $r \in [b_L,a_L)$.  Hence, by Lemma~\ref{Prop:SegmentBracket1}, 
  \begin{align}
\label{Eq:bLaL}
    H_{[]}\biggl(\frac{\epsilon}{4}, \tilde{\mathcal{F}}(f_0, \delta), d_{\mathrm{H}}, [b_L, a_L]\biggr)
    &\leq  \sum_{k=1}^{K_L}
    H_{[]}(\bar{\epsilon}, \tilde{\mathcal{F}}(f_0, \delta), d_{\mathrm{H}}, S_k) \nonumber \\
    & \lesssim \sum_{k=1}^{K_L} \frac{\delta^{1/2}}{\bar{\epsilon}^{1/2}} \lesssim \frac{\delta^{1/2}}{\epsilon^{1/2}} \log^{5/4} \frac{1}{\delta}.
  \end{align}
By symmetry, we obtain the same bound for $H_{[]}\bigl(\epsilon/4, \tilde{\mathcal{F}}(f_0, \delta), d_{\mathrm{H}}, [a_R, b_R]\bigr)$.
  
Now we bracket the region $(-\infty, b_L) \cup (b_R, \infty)$.  For $k \in \mathbb{N}$, define $S_k := [b_L - k, b_L - (k-1))$ and set $\epsilon_k := C \epsilon e^{-\alpha(k - b_L)/4}$ where $C > 0$ is a constant chosen such that $\sum_{k=1}^\infty \epsilon^2_k \leq \epsilon^2/16$.  Then,  
by Lemma~\ref{Prop:SegmentBracket1} again, 
  \begin{align}
\label{Eq:bL}
H_{[]}\bigl(\epsilon/4, \tilde{\mathcal{F}}(f_0, \delta), d_{\mathrm{H}}, (-\infty, b_L)\bigr) &\leq \sum_{k=1}^\infty H_{[]}(\epsilon_k, \tilde{\mathcal{F}}(f_0, \delta), d_{\mathrm{H}}, S_k) 
    \lesssim \sum_{k=1}^\infty \frac{e^{-\alpha(k - b_L)/4}}{\epsilon_k^{1/2}} \nonumber \\
    &\lesssim \frac{1}{\epsilon^{1/2}}\sum_{k=1}^\infty e^{-\alpha(k - b_L)/8} \lesssim \frac{\delta^{1/2}}{\epsilon^{1/2}}.
  \end{align}
The same bound holds for $H_{[]}\bigl( \epsilon/4, \tilde{\mathcal{F}}(f_0, \delta), d_{\mathrm{H}}, [b_R, \infty)\bigr)$.
    
Next, we bracket the region $[a_L, -1/16]$.  To this end, we write $s_0 := a_L$ and partition $[s_0,-1/16]$ into segments $[s_0, s_1)$, $[s_1, s_2),\ldots,[s_{J-2},s_{J-1})$, $[s_{J-1},s_J]$ (where $s_J := -1/16$) as follows:
  \begin{enumerate}
  \item Choose $s_1 > s_0$ such that $\int_{s_0}^{s_1} f_0(t) \, dt = 4 \delta^2$.
  \item For each $j \geq 2$, if there exists $t_0 < -1/16$ such that $\int_{s_{j-1}}^{t_0} f_0(t) \, dt \geq 2\int_{-\infty}^{s_{j-1}} f_0(t) \, dt$, then choose $s_j$ such that $\int_{s_{j-1}}^{s_j} f_0(t) \, dt = 2\int_{-\infty}^{s_{j-1}} f_0(t) \, dt$. Otherwise, set $J := j$ and choose $s_J = -1/16$.
  \end{enumerate}
  Define $\phi_0 := \log f_0$ and write $\mathrm{Range}_j(\phi_0) := \sup_{t \in [s_{j-1},s_j]} \phi_0(t) - \inf_{t \in [s_{j-1},s_j]} \phi_0(t)$.  We make the following six claims:
  \begin{enumerate}
  \item[(1)] $s_1 < -1/16$;
  \item[(2)] $(s_1 - s_0)\sup_{t \in [s_0, s_1]} f_0(t) \lesssim \delta^2 \log \frac{1}{\delta}$;
  \item[(3)] $\int_{s_J}^{\infty} f_0(t) \, dt > 2^{-11}$;
  \item[(4)] $(s_j - s_{j-1})\sup_{t \in [s_{j-1}, s_j]} f_0(t) \lesssim \bigl\{\mathrm{Range}_j(\phi_0) +\log 2\bigr\}\int_{-\infty}^{s_{j-1}} f_0(t) \, dt$ for $j=2,\ldots,J-1$;
  \item[(5)] $\int_{-\infty}^{s_J} f_0(t) \, dt \geq 2^{-13}$;
  \item[(6)] $J \lesssim \log(1/\delta)$.
  \end{enumerate}
To verify claim~(1), observe by \citet[][Theorem~5.14(a) and (d)]{lovasz2007geometry} that $f_0(t) \geq 2^{-8}$ for all $t \in [-1/9, 1/9]$.  Hence $\int_{s_0}^{-1/16} f_0(t)\, dt \geq (1/9 - 1/16) 2^{-8} > 4\delta^2$, so $s_1 < -1/16$.

For claim~(2), note that $-2 \log (1/\delta) \leq \phi_0(t) \leq 0$ for $t \in  [s_0, s_1]$ by \citet[][Proposition~S2(iii)]{feng2018multivariate}.  Thus by the second part of Lemma~\ref{lem:integral_approximation} and the definition of $s_1$, we have
\[
  (s_1 - s_0) \sup_{t \in [s_0, s_1]} f_0(t) \lesssim \log (1/\delta) \int_{s_0}^{s_1} f_0(t) \, dt \lesssim \delta^2 \log (1/\delta).
\]
  For claim (3), we have $\int_{s_J}^{\infty} f_0(t) \, dt \geq \int_{-1/16}^{1/9} f_0(t) \, dt \geq 2^{-8} (1/9 + 1/16) > 2^{-11}$.

  For claim (4), observe that $2 \int_{-\infty}^{s_{j-1}} f_0(t) \, dt = \int_{s_{j-1}}^{s_j} f_0(t) \, dt$ for $j=2,\ldots,J$. Hence
  \begin{align*}
    \frac{(s_j - s_{j-1}) \sup_{t \in [s_{j-1}, s_j]} f_0(t) }{\int_{-\infty}^{s_{j-1}} f_0(t) \, dt } \leq
    \frac{2(s_j - s_{j-1}) \sup_{t \in [s_{j-1}, s_j]} f_0(t) }{\int_{s_{j-1}}^{s_j} f_0(t) \, dt } 
    \lesssim \mathrm{Range}_j(\phi_0) + \log 2,
  \end{align*}
  where the final bound follows from Lemma~\ref{lem:integral_approximation}.

  For claim (5), we have 
\begin{align*}
\int_{-\infty}^{s_J} f_0 \geq \int_{-1/9}^{-1/16} f_0 \geq 2^{-8}(1/9-1/16) \geq 2^{-13}.
\end{align*}

 Finally, for claim~(6), we have 
\[
1 \geq \int_{-\infty}^{s_{J-1}} f_0 = \int_{-\infty}^{s_{J-2}} f_0 + \int_{s_{J-2}}^{s_{J-1}} f_0 = 3  \int_{-\infty}^{s_{J-2}} f_0 = 3^{J-2}\int_{-\infty}^{s_1} f_0 \geq 3^{J-2} \cdot 4\delta^2, 
\]
so $J \lesssim \log(1/\delta)$.

  Now set $\tilde{\epsilon} := \epsilon/(2J^{1/2})$.  Then, by Lemma~\ref{Prop:SegmentBracket1}, claim~(2), and Lemma~\ref{Lem:EnvelopeUpperBound},
  \begin{align}
\label{Eq:s0}
    H_{[]}\bigl(\tilde{\epsilon}, \tilde{\mathcal{F}}(f_0, \delta), d_{\mathrm{H}}, [s_0, s_1]\bigr)
    &\lesssim \frac{(s_1 - s_0)^{1/4}}{\tilde{\epsilon}^{1/2}}\sup_{e^\phi \in \tilde{\mathcal{F}}(f_0, \delta) }\sup_{t \in [s_0, s_1]} e^{\phi(t)/4} \nonumber \\
    &\lesssim \frac{\delta^{1/2}}{\tilde{\epsilon}^{1/2}} \log^{1/4}(1/\delta)\sup_{e^\phi \in \tilde{\mathcal{F}}(f_0, \delta) }\sup_{t \in [s_0, s_1]} e^{(\phi(t) - \phi_0(t))/4} \nonumber \\
    &\lesssim  \frac{\delta^{1/2}}{\epsilon^{1/2}} \log^{1/2} \frac{1}{\delta}.
  \end{align}

Now let $j \in \{2,\ldots,J\}$, and observe by claim~(1) that $s_1,\ldots,s_J$ are strictly increasing.  Let $\check{\mathcal{F}}(f_0,\delta) := \bigl\{e^{\phi - \phi_0}:e^\phi \in \tilde{\mathcal{F}}(f_0,\delta)\bigr\}$.  Let $\{(\check{\psi}_{\ell}^L,\check{\psi}_{\ell}^U):\ell\in [N]\}$ be an $\tilde{\epsilon}$-Hellinger bracketing set for $\check{\mathcal{F}}(f_0,\delta)$ with $\log N = H_{[]}\bigl(\tilde{\epsilon},\check{\mathcal{F}}(f_0,\delta),d_{\mathrm{H}},[s_{j-1},s_j]\bigr)$; we define $\{(\tilde{\psi}_\ell^L,\tilde{\psi}_\ell^U):\ell=1,\ldots,N\}$ by $\tilde{\psi}_\ell^L := f_0 \check{\psi}_\ell^L$ and $\tilde{\psi}_\ell^U := f_0 \check{\psi}_\ell^U$.  Then
\[
\int_{s_{j-1}}^{s_j} \bigl\{(\tilde{\psi}_\ell^U)^{1/2} - (\tilde{\psi}_\ell^L)^{1/2}\bigr\}^2 \leq \sup_{t \in [s_{j-1},s_j]} f_0(t) \int_{s_{j-1}}^{s_j} \bigl\{(\check{\psi}_\ell^U)^{1/2} - (\check{\psi}_\ell^L)^{1/2}\bigr\}^2 \leq \tilde{\epsilon}^2 \sup_{t \in [s_{j-1},s_j]} f_0(t).
\]
Moreover, if $\check{\psi}_\ell^L \leq e^{\phi-\phi_0} \leq \check{\psi}_\ell^U$, then $\tilde{\psi}_\ell^L \leq e^{\phi} \leq \tilde{\psi}_\ell^U$.  We deduce that $\bigl\{(\tilde{\psi}_\ell^L,\tilde{\psi}_\ell^U):\ell \in [N]\bigr\}$ form an $\tilde{\epsilon} \sup_{t \in [s_{j-1},s_j]} f_0(t)^{1/2}$-Hellinger bracketing set for $\tilde{\mathcal{F}}(f_0,\delta,[s_{j-1},s_j])$.  

Now, on $[s_{j-1}, s_j]$, the conditions of Lemma~\ref{Lem:EnvelopeLowerBound} are fulfilled with $r=s_j$ because $\int_{-\infty}^{s_{j-1}} f_0(t)\, dt \geq \int_{s_0}^{s_1} f_0(t)\, dt = 4\delta^2$ and $\int_{s_j}^\infty f_0(t)\, dt > 2^{-11} \geq 4\delta^2$ by claim~(3). Thus, we may combine Lemma~\ref{Prop:SegmentBracket2} with Lemmas~\ref{Lem:EnvelopeLowerBound} and~\ref{Lem:EnvelopeUpperBound} with claim~(4) to obtain
  \begin{align}
\label{Eq:KeyDisplay}
 H_{[]}\bigl(\tilde{\epsilon}, \tilde{\mathcal{F}}&(f_0, \delta), d_{\mathrm{H}}, [s_{j-1}, s_j]\bigr) \nonumber \\
&\leq H_{[]}\biggl(\frac{\tilde{\epsilon}}{\sup_{t \in [s_{j-1},s_j]} e^{\phi_0(t)/2}}, \check{\mathcal{F}}(f_0, \delta), d_{\mathrm{H}}, [s_{j-1}, s_j]\biggr) \nonumber \\
&\lesssim \frac{1}{\tilde{\epsilon}^{1/2}}\sup_{t \in [s_{j-1},s_j]} e^{\phi_0(t)/4}\biggl[|s_{j-1}|\delta + \frac{\delta}{\bigl\{\int_{-\infty}^{s_{j-1}} f_0(t) \, dt \bigr\}^{1/2}}\biggr]^{1/2}(s_j - s_{j-1})^{1/4} \nonumber \\
    & \lesssim \frac{\delta^{1/2}}{\tilde{\epsilon}^{1/2}}\bigl\{\mathrm{Range}_j(\phi_0) + \log 2\bigr\}^{1/4}\biggl[|s_{j-1}|^{1/2} \biggl\{\int_{-\infty}^{s_{j-1}} f_0(t) \, dt \biggr\}^{1/4} + 1\biggr] \nonumber \\
    &\lesssim \frac{\delta^{1/2}}{\tilde{\epsilon}^{1/2}}\bigl\{\mathrm{Range}_j(\phi_0) + \log 2\bigr\}^{1/4},
  \end{align}
where the final bound follows because $s_{j-1}^2 \int_{-\infty}^{s_{j-1}} f_0(t) \, dt  \leq 1$ by Markov's inequality.  

By symmetry, we obtain the same bracketing entropy bound over $[1/16, a_R]$.  For the region $[-1/16, 1/16]$, since $\int_{-\infty}^{-1/16} f_0(t) \,dt \geq 2^{-13}$ and $\int_{1/16}^\infty f_0(t) \,dt \geq 2^{-13}$ by claim~(5), we may argue as in~\eqref{Eq:KeyDisplay} to obtain
\begin{equation}
\label{Eq:1/16}
 H_{[]}\biggl( \frac{\epsilon}{2}, \tilde{\mathcal{F}}(f_0, \delta), d_{\mathrm{H}}, [-1/16, 1/16] \biggr) \lesssim \frac{\delta^{1/2} }{\epsilon^{1/2}}.
\end{equation}
Now, since $\phi_0$ is unimodal and $0 \geq \phi_0 \geq -2 \log (1/\delta)$ on $[a_L, a_R]$, it holds that
\[
  \sum_{j=2}^J \{\mathrm{Range}_j(\phi_0) + \log 2\} \lesssim \log(1/\delta).
\]
Thus, by Jensen's inequality,
\begin{equation}
\label{Eq:Jensen}
\sum_{j=2}^J \bigl\{\mathrm{Range}_j(\phi_0)+\log 2\bigr\}^{1/4} \lesssim J \biggl(\frac{\log(1/\delta)}{J}\biggr)^{1/4} \lesssim \log(1/\delta). 
\end{equation}
We conclude from~\eqref{Eq:Jensen},~\eqref{Eq:1/16},~\eqref{Eq:KeyDisplay},~\eqref{Eq:s0},~\eqref{Eq:bL},~\eqref{Eq:bLaL} and claim~(6) that
  \begin{align*}
    & H_{[]} \bigl( \epsilon, \tilde{\mathcal{F}}(f_0, \delta), d_{\mathrm{H}} \bigr) \\
    &\leq H_{[]}\biggl( \frac{\epsilon}{2}, \tilde{\mathcal{F}}(f_0, \delta), d_{\mathrm{H}}, [-1/16, 1/16] \biggr) + 2\sum_{j=1}^J H_{[]}\biggl( \frac{\epsilon}{2J^{1/2}}, \tilde{\mathcal{F}}(f_0, \delta), d_{\mathrm{H}}, [s_{j-1}, s_J] \biggr) \\
&\hspace{2cm}+ 2 H_{[]}\biggl( \frac{\epsilon}{4}, \tilde{\mathcal{F}}(f_0, \delta), d_{\mathrm{H}}, [b_L,a_L] \biggr) + 2H_{[]}\biggl( \frac{\epsilon}{4}, \tilde{\mathcal{F}}(f_0, \delta), d_{\mathrm{H}}, (-\infty, b_L) \biggr)
\\
  & \lesssim \frac{\delta^{1/2}}{\epsilon^{1/2}} \log^{5/4}\frac{1}{\delta},
  \end{align*}
as required.
\end{proof}

\begin{lemma}
  \label{Lem:EnvelopeLowerBound}
  Let $e^{\phi} \in \tilde{\mathcal{F}}(e^{\phi_0}, \delta)$ for some concave $\phi_0:\mathbb{R} \rightarrow [-\infty,\infty)$ and $\delta \in (0,\infty)$ and let $r \in \mathbb{R}$. If $\int_{-\infty}^r e^{\phi_0(t)} \, dt \wedge \int_r^{\infty} e^{\phi_0(t)} \, dt \geq 4\delta^2$, then

  \[ 
    \frac{\phi(r) - \phi_0(r)}{2} \geq
    - \frac{2\delta}{\bigl\{ \int_{-\infty}^r e^{\phi_0(t)} \, dt \wedge \int_r^{\infty} e^{\phi_0(t)} \, dt \bigr\}^{1/2}}.
  \]
\end{lemma}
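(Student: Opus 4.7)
The plan is to set $g := (\phi - \phi_0)/2$, observe that $g$ is concave by the defining property of $\tilde{\mathcal{F}}(e^{\phi_0},\delta)$, and then exploit concavity to trade a pointwise lower bound on $g(r)$ against the global Hellinger budget. First, we may reduce to the non-trivial case $g(r)\le 0$, since otherwise the asserted bound is immediate. Rewriting the Hellinger distance, and using $e^\phi\ll e^{\phi_0}$, we have
\[
\delta^2 \;\ge\; d_{\mathrm H}^2(e^\phi,e^{\phi_0}) \;=\; \int e^{\phi_0(t)}\bigl(e^{g(t)}-1\bigr)^2\,dt.
\]

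The key observation is that, by concavity of $g$, the one-sided derivatives satisfy $g_-'(r)\ge g_+'(r)$, so at least one of $g_-'(r)\ge 0$ or $g_+'(r)\le 0$ must hold. In the first case, the supporting-line inequality $g(t)\le g(r)+g_-'(r)(t-r)$ valid for $t\le r$ yields $g(t)\le g(r)\le 0$ on $(-\infty,r]$; in the second case, the analogous inequality $g(t)\le g(r)+g_+'(r)(t-r)$ for $t\ge r$ yields $g(t)\le g(r)\le 0$ on $[r,\infty)$. Since the map $u\mapsto(1-e^u)^2$ is non-increasing on $(-\infty,0]$, on the corresponding ray we have $(e^{g(t)}-1)^2\ge(1-e^{g(r)})^2$, and so
\[
\delta^2 \;\ge\; \bigl(1-e^{g(r)}\bigr)^2\, A, \qquad \text{where } A:=\int_{-\infty}^r e^{\phi_0}\wedge\int_r^\infty e^{\phi_0}.
\]

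Rearranging gives $e^{g(r)}\ge 1-\delta/A^{1/2}$; by hypothesis $A\ge 4\delta^2$, so $\delta/A^{1/2}\le 1/2$, and the elementary bound $\log(1-x)\ge -2x$ valid on $[0,1/2]$ yields $g(r)\ge -2\delta/A^{1/2}$, which is exactly the claimed inequality. The only subtle point is the dichotomy based on one-sided derivatives; once that is in hand, everything else reduces to monotonicity of $u\mapsto(e^u-1)^2$ on $(-\infty,0]$ and a first-order $\log$ estimate. If $g$ fails to be finite-valued near $r$ (i.e.\ $g(r)=-\infty$), the same argument applies after noting that the ray produced above contributes $\int e^{\phi_0}\ge A\ge 4\delta^2$ to the Hellinger integral while $(e^{g(t)}-1)^2\to 1$ there, contradicting the bound $\delta^2$ unless $4\delta^2\le\delta^2$, i.e.\ only trivially.
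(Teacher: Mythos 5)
Your proof is correct and follows essentially the same argument as the paper: both reduce to $g(r)\le 0$, use concavity to conclude that $g(t)\le g(r)$ on at least one of the rays $(-\infty,r]$ or $[r,\infty)$, lower-bound the squared Hellinger distance over that ray by $(1-e^{g(r)})^2$ times the corresponding tail integral, and finish with $\log(1-x)\ge -2x$ for $x\le 1/2$ (using the hypothesis $\int\ge 4\delta^2$). Your phrasing via one-sided derivatives and your explicit treatment of $g(r)=-\infty$ are just more pedantic renderings of the same concavity dichotomy that the paper invokes in one line.
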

\begin{proof}
As a shorthand, let us write $\psi := (\phi - \phi_0)/2$. Assume without loss of generality that $\psi(r) < 0$ (because otherwise the result is immediate).  Since $\psi$ is concave, either $\psi(t) \leq \psi(r)$ for all $t \in (-\infty, r)$ or for all $t \in (r, \infty)$.  In the former case,
  \[
    \delta^2 \geq \int_{-\infty}^\infty e^{\phi_0(t)} (1 - e^{\psi(t)})^2 \, dt \geq (1 - e^{\psi(r)})^2 \int_{-\infty}^r e^{\phi_0(t)} \, dt.
  \]
Hence
\[
   \psi(r) \geq \log \biggl(1 -   \frac{\delta}{\bigl\{\int_{-\infty}^r e^{\phi_0(t)} \, dt \bigr\}^{1/2}} \biggr) \geq - \frac{2\delta}{\bigl\{\int_{-\infty}^r e^{\phi_0(t)} \, dt\bigr\}^{1/2}}.
\]
On the other hand, if $\psi(t) < \psi(r)$ for $t \in (r, \infty)$, we can apply an almost identical argument to see that
  \[
    \psi(r) \geq -\frac{2\delta}{\bigl\{ \int_r^{\infty} e^{\phi_0(t)} \, dt \bigr\}^{1/2}},
  \]
as required.
\end{proof}

\begin{lemma}
  \label{Lem:EnvelopeUpperBound}
  Let $f_0 = e^{\phi_0} \in \mathcal{F}_1$ with $\mu_{f_0}=0$ and $\sigma_{f_0}=1$, and let $e^{\phi} \in \tilde{\mathcal{F}}(f_0, \delta)$ for some $\delta \in (0,2^{-8}]$.  Then there exists a universal constant $C > 0$ such that for any $r \in \mathbb{R}$,
  \[
    \frac{\phi(r) - \phi_0(r)}{2} \leq C (|r|+1) \delta. 
  \]
\end{lemma}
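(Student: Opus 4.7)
My plan is to reduce the claim to bounding $M := \psi(r)$ when $M > 0$ (the case $\psi(r) \leq 0$ being immediate), where $\psi := (\phi - \phi_0)/2$ is concave by hypothesis and satisfies $\int f_0(e^\psi-1)^2 = d_{\mathrm{H}}^2(e^\phi, e^{\phi_0}) \leq \delta^2$. The key will be to exhibit two ``anchor'' points $t^-, t^+$ in a fixed central region where $\psi$ is controlled at $O(\delta)$, then transport this control to $r$ via concavity. By the Lov\'asz--Vempala lower bound used in the proof of Lemma~\ref{Lem:HellingerMomentBound}, $f_0 \geq 2^{-8}$ on $[-1/9, 1/9]$, so the subintervals $[1/18, 1/9]$ and $[-1/9, -1/18]$ each carry $f_0$-mass at least $c_0 := 2^{-8}/18$. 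Applying Chebyshev's inequality to the Hellinger bound with $\epsilon := \delta/\sqrt{c_0/2}$ gives $f_0(\{|e^\psi - 1| > \epsilon\}) \leq c_0/2$, so in each subinterval I may pick $t^+ \in [1/18, 1/9]$ and $t^- \in [-1/9, -1/18]$ with $|e^{\psi(t^\pm)} - 1| \leq \epsilon$. Since $\delta \leq 2^{-8}$, this yields $|\psi(t^\pm)| \leq C_1 \delta$ for a universal $C_1 > 0$; in particular $t^+ - t^- \geq 1/9$, and the chord slope $\sigma_* := (\psi(t^+) - \psi(t^-))/(t^+ - t^-)$ satisfies $|\sigma_*| \leq 18 C_1 \delta$.

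For $r \notin (t^-, t^+)$, say $r \geq t^+$, I will apply concavity of $\psi$, which forces $\psi'_+(t^+) \leq \sigma_*$, to obtain
\[
\psi(r) \leq \psi(t^+) + \sigma_*(r - t^+) \leq C_1 \delta(1 + 18 r) \lesssim (1 + |r|)\delta;
\]
the case $r \leq t^-$ is handled symmetrically. For $r \in (t^-, t^+)$, I will argue by contradiction: suppose $M \geq 4 C_1 \delta$. Then $M - \psi(t^\pm) \geq 3M/4$, so solving for the points where the chords joining $(r, M)$ to $(t^\pm, \psi(t^\pm))$ attain height $M/2$ gives $\tilde{t}^+ - r \geq (2/5)(t^+ - r)$ and $r - \tilde{t}^- \geq (2/5)(r - t^-)$. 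Concavity then guarantees $\psi \geq M/2$ on $[\tilde{t}^-, \tilde{t}^+]$, an interval of length at least $(2/5)(t^+ - t^-) \geq 2/45$ contained in $[-1/9, 1/9]$. Combined with $f_0 \geq 2^{-8}$ on this interval and the inequality $e^{M/2} - 1 \geq M/2$, the Hellinger bound forces
\[
\delta^2 \geq \int_{\tilde{t}^-}^{\tilde{t}^+} f_0 (e^\psi - 1)^2 \geq 2^{-8} \cdot \frac{2}{45} \cdot \Bigl(\frac{M}{2}\Bigr)^2,
\]
so $M \lesssim \delta \lesssim (1 + |r|)\delta$, contradicting $M$ being large.

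The hardest step will be the interpolation in the middle region: concavity allows $\psi$ to be arbitrarily large at an interior point between the anchors, so one must verify that $\{\psi \geq M/2\}$ has length bounded below by a universal constant. This hinges on picking the anchors $t^\pm$ inside subintervals separated from $0$, which guarantees $t^+ - t^- \geq 1/9$; merely placing them somewhere in $[-1/9, 1/9]$ would allow them to coincide and the argument would collapse.
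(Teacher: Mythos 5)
Your proof is correct and takes a genuinely different route from the paper's. The paper fixes $r_0 := r+1$, works on the whole interval $[-r_0,r_0]$, and slices it into three pieces according to whether $\psi$ is within a factor $1/(2^{16}r_0)$ of $\psi(r)$ (``close to the max''), below $-\psi(r)/(2^{16}r_0)$ (``far from the max''), or in between; concavity bounds the width of the transition piece, and since by Lemma~\ref{Lem:LogConcaveCentralMass} the three pieces together carry $f_0$-mass at least $2^{-12}$, at least one of the close/far pieces carries mass $\geq 2^{-14}$, which feeds into the Hellinger bound. Your proof instead isolates two anchor points $t^\pm$ in the sub-windows $[\pm 1/18, \pm 1/9]$ where $|\psi| \lesssim \delta$ (via Markov applied to $\int f_0(e^\psi-1)^2 \le \delta^2$ and the Lov\'asz--Vempala floor $f_0 \ge 2^{-8}$), then transports this control to arbitrary $r$ via concavity: outside $(t^-,t^+)$ by the three-point secant inequality, and inside by showing that $\psi(r)\ge 4C_1\delta$ would force $\psi \ge \psi(r)/2$ on an interval of length $\ge 2/45$ inside $[-1/9,1/9]$, which is incompatible with the Hellinger bound. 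The numerics check out ($\epsilon = 96\delta \le 3/8 < 1/2$ under $\delta \le 2^{-8}$, so $|\psi(t^\pm)| \le 2\epsilon = 192\delta$; the derived upper bound $\psi(r) \le 152\delta$ on the inner case genuinely contradicts $\psi(r)\ge 768\delta$). Your argument is more local and arguably more transparent --- it makes the ``anchor-and-transport'' mechanism explicit --- whereas the paper's is more uniform across $r$ and avoids the case split, at the cost of working with level sets whose geometry is less visual; both hinge on the same two ingredients, the central-mass lower bound for normalised log-concave densities and concavity of $\psi$.
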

\begin{proof}
Again, we write $\psi := (\phi - \phi_0)/2$.  Since we seek an upper bound for $\psi$, we may assume without loss of generality that $\psi$ is upper semi-continuous, and by symmetry, it suffices to prove the bound at a fixed $r \geq 0$.  Further, we assume without loss of generality that $\psi(r) > 0$ (because otherwise the result is immediate).  

Let $r_0 := r + 1$.  Define $S := \bigl\{ t \in [-r_0, r_0] \,:\,  \psi(t) \geq \frac{\psi(r)}{2^{16}r_0} \bigr\}$, $s_L := \inf S$, and $s_R := \sup S$.  We note that $r \in S$ since $2^{16}r_0 \geq 2$. Then, since $e^x - 1 \geq x$ for any $x \geq 0$, we have
  \begin{align}
    \delta^2 &\geq  \int_{s_L}^{s_R} e^{\phi_0(t)} (e^{\psi(t)} - 1)^2 \, dt \geq  \int_{s_L}^{s_R} e^{\phi_0(t)} \psi(t)^2  \, dt \geq  \left(\frac{\psi(r)}{2^{16}r_0}\right)^2
               \int_{s_L}^{s_R} e^{\phi_0(t)} \, dt.
                \label{eqn:upper_bound1}
  \end{align}
Now, define $S' := \bigl\{ t \in [-r_0, r_0] \,:\, \psi(t) \geq - \frac{\psi(r)}{2^{16}r_0} \bigr\}$, $s'_L := \inf S'$, and $s'_R := \sup S'$. Then 
\begin{align}
  \label{eqn:upper_bound2}
    \delta^2 \geq \int_{[-r_0, r_0]\setminus [s'_L, s'_R]} e^{\phi_0(t)} ( 1 - e^{\psi(t)})^2 \, dt \geq \bigl\{ 1 - e^{-\frac{\psi(r)}{2^{16} r_0}} \bigr\}^2 \int_{[-r_0, r_0]\setminus [s'_L, s'_R]} e^{\phi_0(t)} \, dt.
  \end{align}
 As a shorthand, let us define
  \begin{align*}
    T_1 &:= \int_{s_L}^{s_R} e^{\phi_0(t)} \, dt, \quad
    T_2 := \int_{s'_L}^{s_L} e^{\phi_0(t)} \, dt +  \int_{s_R}^{s'_R} e^{\phi_0(t)} \, dt, \\
    T_3 &:= \int_{-r_0}^{s'_L} e^{\phi_0(t)} \, dt + \int_{s'_R}^{r_0} e^{\phi_0(t)} \, dt.
  \end{align*}
Inequalities~\eqref{eqn:upper_bound1} and \eqref{eqn:upper_bound2} yield
  \begin{align}
    \psi(r) &\leq  2^{16}r_0 \frac{ \delta}{ T_1^{1/2} } \label{Eqn:PsiUpperBound1} \\
    \psi(r) &\leq - 2^{16}r_0 \log \biggl( 1 - \frac{\delta}{T_3^{1/2}} \biggr) \quad \text{ if $T_3 > 0$}. \label{Eqn:PsiUpperBound2}
  \end{align}
 Since $T_1 + T_2 + T_3 = \int_{-r_0}^{r_0} e^{\phi_0(t)} \, dt$, we have that $T_1 + T_2 + T_3 \geq 2^{-12}$ by Lemma~\ref{Lem:LogConcaveCentralMass}.

We claim that $T_2 \leq 2^{-13}$.  To see this, note that by concavity of $\psi$ (cf.~the proof of Theorem~1 of \citet{CSS2010}),
  \begin{align*}
    s'_R - s'_L \leq (s_R - s_L) \frac{ 1 + \frac{1}{2^{16}r_0}}
      {1 - \frac{1}{2^{16}r_0}} \leq (s_R - s_L) \biggl( 1 + \frac{4}{2^{16}r_0} \biggr).
  \end{align*}
By \citet[][Proposition~S2(iii)]{feng2018multivariate}, $\sup_{t \in \mathbb{R}} e^{\phi_0(t)} \leq 1$, hence,
  \begin{align*}
    T_2 = \int_{[s'_L, s_L] \cup [s_R, s'_R]} e^{\phi_0(t)} \, dt \leq  s'_R - s_R + s_L - s'_L \leq (s_R - s_L) \frac{4}{2^{16} r_0 } \leq 2^{-13}.
  \end{align*}
It follows that either $T_1 \geq 2^{-14}$ or $T_3 \geq 2^{-14}$. If $T_1 \geq 2^{-14}$, then the result follows from~\eqref{Eqn:PsiUpperBound1}.  On the other hand, if $T_3 \geq 2^{-14}$, then we obtain the desired result from the fact that $\delta \leq 2^{-8}$, that $-\log(1 - x) \leq 2x$ for all $x \in [0, 1/2]$, and~\eqref{Eqn:PsiUpperBound2}.
\end{proof}

\begin{lemma}
  \label{Lem:LogConcaveCentralMass}
  Let $f \in \mathcal{F}_1$ be such that $\mu_f = 0$ and $\sigma_f=1$. We have that $a_L := \inf \{ r \,:\, f(r) \geq 2^{-8}\} \leq -1/9$, $a_R := \sup \{ r \,:\, f(r) \geq 2^{-8} \} \geq 1/9$, and
  \begin{equation}
\label{Eq:Int}
    \int_{a_L}^{a_R} f(r) \, dr \geq \int_{-1/9}^{1/9} f(r) \,dr \geq  2^{-12}.
  \end{equation}
\end{lemma}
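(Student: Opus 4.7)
The plan is to reduce everything to a single quantitative fact about univariate log-concave densities with mean $0$ and variance $1$: the density is bounded below by a universal constant on a universal central interval. Specifically, I would invoke \citet[][Theorem~5.14(a) and~(d)]{lovasz2007geometry}, which gives $f(r) \geq 2^{-8}$ for every $r \in [-1/9, 1/9]$ under our normalisation $\mu_f=0$, $\sigma_f=1$. This is the same fact already used (without proof) in the treatment of the central region in the proof of Lemma~\ref{Lem:GeneralLocalBracketing}, so there is nothing extra to establish.

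Once this pointwise lower bound is in hand, the three conclusions are immediate. First, the inclusion $[-1/9, 1/9] \subseteq \{r : f(r) \geq 2^{-8}\}$ gives $a_L = \inf\{r : f(r) \geq 2^{-8}\} \leq -1/9$ and $a_R = \sup\{r : f(r) \geq 2^{-8}\} \geq 1/9$. Second, since $[-1/9, 1/9] \subseteq [a_L, a_R]$ and $f \geq 0$, we obtain the first inequality in~\eqref{Eq:Int}. Third, integrating the pointwise lower bound over $[-1/9, 1/9]$ yields
\[
\int_{-1/9}^{1/9} f(r) \, dr \geq \frac{2}{9} \cdot 2^{-8} \geq 2^{-11} \geq 2^{-12},
\]
where the middle inequality uses $2/9 \geq 2^{-3}$.

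There is no real obstacle here; the only thing to be careful about is citing the correct part of Lov{\'a}sz--Vempala (part (a) gives the isotropic normalisation implication, and part (d) provides the explicit pointwise lower bound on the density near the mean). The rest is arithmetic and the elementary observation that a set on which $f \geq c$ contains a given interval iff $f \geq c$ pointwise on that interval.
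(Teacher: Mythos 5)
Your proof is correct and follows the same route as the paper: cite \citet[Theorem~5.14(a) and~(d)]{lovasz2007geometry} for the pointwise bound $f \geq 2^{-8}$ on $[-1/9,1/9]$, then read off the containment and integrate. The only (harmless) difference is that you note the sharper intermediate bound $2^{-11}$ before relaxing to $2^{-12}$, whereas the paper writes $2^{-8}(2/9) \geq 2^{-12}$ directly.
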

\begin{proof}
  By \citet[][Theorem~5.14(a) and 5.14(d)]{lovasz2007geometry}, $2^{-7} \leq f(0) \leq 2^4$ and $f(r) \geq 2^{-8}$ for all $r \in [-1/9, 1/9]$. We immediately obtain that $a_L \leq -1/9$ and $a_R \geq 1/9$. Moreover,
  \[
    \int_{a_L}^{a_R} f(r) \,dr \geq \int_{-1/9}^{1/9} f(r) \, dr \geq 2^{-8} (2/9) \geq 2^{-12},
    \]
as required.
\end{proof}

\begin{lemma}
  \label{lem:integral_approximation}
Let $\phi_0:\mathbb{R} \rightarrow [-\infty,\infty)$ be a concave function, let $a, b \in \mathbb{R}$ where $a < b$, and let
\[
q(s) := \left\{ \begin{array}{ll} (1 - e^{-s})/s & \mbox{if $s \neq 0$} \\
1 & \mbox{if $s = 0$.} \end{array} \right.
\]
Then for any $t^* \in [a, b]$, it holds that
\begin{align*}
    \int_a^b e^{\phi_0(t)} \, dt \geq (t^*-a) e^{\phi_0(t^*)} q\bigl(\phi_0(t^*) - \phi_0(a)\bigr) + (b - t^*) e^{\phi_0(t^*)}q\bigl(\phi_0(t^*) - \phi_0(b)\bigr).
  \end{align*}
Moreover, if $\phi_0(t^*) \leq \max\{\phi_0(a), \phi_0(b)\} + \tau$ for some $\tau \geq \log 2$, then
  \[
  \int_a^b e^{\phi_0(t)} \, dt \geq (b-a) e^{\phi_0(t^*)} \frac{1}{2 \tau}.
  \]
\end{lemma}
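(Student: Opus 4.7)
The first inequality is a direct consequence of the chord lower bound for concave functions. On the subinterval $[a, t^*]$, concavity of $\phi_0$ gives
\[
\phi_0(t) \geq \frac{t^* - t}{t^* - a}\phi_0(a) + \frac{t - a}{t^* - a}\phi_0(t^*), \qquad t \in [a, t^*],
\]
so exponentiating and substituting $u = (t - a)/(t^* - a)$ yields
\[
\int_a^{t^*} e^{\phi_0(t)}\, dt \geq (t^* - a) e^{\phi_0(a)} \int_0^1 e^{us}\, du = (t^* - a)\,\frac{e^{\phi_0(t^*)} - e^{\phi_0(a)}}{s},
\]
where $s := \phi_0(t^*) - \phi_0(a)$. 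Multiplying the last fraction by $e^{-s}/e^{-s}$ rewrites it as $e^{\phi_0(t^*)}(1 - e^{-s})/s = e^{\phi_0(t^*)} q(s)$, with the $s = 0$ case handled by the continuous extension $q(0) = 1$. Applying the same argument verbatim on $[t^*, b]$ and adding gives the first claim. Degenerate cases where $\phi_0(a)$ or $\phi_0(b)$ equals $-\infty$ are absorbed by the convention $q(+\infty) = 0$: the chord inequality remains valid and the corresponding term contributes nothing, consistently with the right-hand side.

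For the moreover, the plan is to reduce to a uniform lower bound on $q$. Two elementary facts suffice: first, writing $q(s) = \int_0^1 e^{-us}\, du$ exhibits $q$ as non-increasing on $\mathbb{R}$; second, direct evaluation gives $q(\log 2) = (1 - 1/2)/\log 2 = 1/(2\log 2)$. Combining these, whenever $\tau \geq \log 2$ and $s \leq \tau$,
\[
q(s) \geq q(\tau) = \frac{1 - e^{-\tau}}{\tau} \geq \frac{1}{2\tau}.
\]
Under the stated hypothesis (handled together with a symmetry reduction so that both endpoint differences $\phi_0(t^*) - \phi_0(a)$ and $\phi_0(t^*) - \phi_0(b)$ can be controlled by $\tau$), both $q$-factors appearing in the first inequality are bounded below by $1/(2\tau)$. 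Summing the two chord contributions,
\[
\int_a^b e^{\phi_0(t)}\, dt \geq \frac{e^{\phi_0(t^*)}}{2\tau}\bigl\{(t^* - a) + (b - t^*)\bigr\} = \frac{(b - a) e^{\phi_0(t^*)}}{2\tau}.
\]

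The mechanics are essentially all routine: the only analytic checks are the monotonicity of $q$ and the calibration $q(\log 2) = 1/(2\log 2)$ (which pins down exactly why the threshold $\tau \geq \log 2$ appears), plus tracking of the endpoint cases where $\phi_0$ takes the value $-\infty$. The main interpretive care in the second part lies in extracting two-sided control of $\phi_0(t^*) - \phi_0(\cdot)$ at the endpoints from the one-sided $\max$-hypothesis, after which summation of the two chord contributions delivers the bound.
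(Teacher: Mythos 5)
Your first part is exactly the paper's argument: chord lower bound on $[a,t^*]$ and on $[t^*,b]$, integrate the exponentiated chord, identify the $q$-factor. Your $q$-calculus for the second part is also correct: $q$ is non-increasing (your representation $q(s)=\int_0^1 e^{-us}\,du$ makes this transparent), and $q(\tau)=(1-e^{-\tau})/\tau\geq 1/(2\tau)$ once $\tau\geq\log 2$, with equality at $\tau=\log 2$.

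The gap is precisely the step you flag but do not carry out. The hypothesis $\phi_0(t^*)\leq\max\{\phi_0(a),\phi_0(b)\}+\tau$ controls only $\min\{\phi_0(t^*)-\phi_0(a),\,\phi_0(t^*)-\phi_0(b)\}$; no ``symmetry reduction'' converts that into two-sided control, and concavity does not help, since the other endpoint gap can be arbitrarily large. In fact the statement as written is false: take $a=0$, $b=1$, $t^*=1/2$, and $\phi_0$ piecewise linear with $\phi_0(0)=0$, $\phi_0(1/2)=\log 2$, $\phi_0(1)=-N$. Then $\tau=\log 2$ satisfies the hypothesis for every $N>0$, yet $\int_0^1 e^{\phi_0}\to 1/(2\log 2)$ as $N\to\infty$, which is half the claimed lower bound $1/\log 2$. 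The intended hypothesis is $\phi_0(t^*)\leq\min\{\phi_0(a),\phi_0(b)\}+\tau$, which is what the paper's only application of this lemma (claim (4) in the proof of Lemma~\ref{Lem:GeneralLocalBracketing}, with $\tau=\mathrm{Range}_j(\phi_0)+\log 2$) actually supplies, since $\mathrm{Range}_j(\phi_0)$ bounds both endpoint gaps simultaneously. With the $\min$-hypothesis your argument goes through directly — both $q$-factors are $\geq q(\tau)\geq 1/(2\tau)$, and you add — with no reduction needed. The paper's own proof sketch makes the same unstated two-sided assumption, so you and the paper are equally terse at this point; the difference is that your write-up asserts the hole can be filled under the $\max$-hypothesis, and it cannot.
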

\begin{proof}
  Let us first suppose that $t^* > a$. We have $\phi_0(t) \geq s \phi_0(a) + (1 - s) \phi_0(t^*)$ for $t \in [a,t^*]$, where $s := (t^* - t)/(t^* - a)$.  Hence
  \begin{align}
    \int_a^{t^*} e^{\phi_0(t)} \, dt \geq (t^* - a) \int_0^1 e^{ s \phi_0(a) + (1 - s) \phi_0(t^*)} \, ds = (t^*- a)e^{\phi_0(t^*)}q\bigl(\phi_0(t^*) - \phi_0(a)\bigr).
    \label{Eqn:qBound1}
  \end{align}
  We can bound $\int_{t^*}^b e^{\phi_0(t)} \,dt$ when $t^* < b$ by a similar argument to yield the first conclusion. 

For the second part, observe that $q$ is strictly decreasing, so from~\eqref{Eqn:qBound1},
\[
\int_a^{t^*} e^{\phi_0(t)} \, dt \geq (t^*-a)e^{\phi_0(t^*)}q(\tau) \geq (t^*-a)e^{\phi_0(t^*)}\frac{1}{2\tau},
\]
for $\tau \geq \log 2$. We may bound $\int_{t^*}^b e^{\phi_0(t)} \,dt$ by a similar argument to obtain the desired conclusion.
\end{proof}

The following two lemmas are from \citet{kim2016adaptation}, though the first is only a minor restatement of \citet[][Theorem~4.1]{doss2016global}. For $a < b$ and $-\infty \leq B_1 < B_2 < \infty$, we define $\tilde{\mathcal{F}}([a,b], B_1, B_2)$ to be the set of log-concave functions $f:[a,b] \rightarrow [e^{B_1},e^{B_2}]$. 
\begin{lemma}
  \label{Prop:SegmentBracket1}
There exists a universal constant $C > 0$ such that 
\[
H_{[]}( \epsilon, \tilde{\mathcal{F}}([a,b],-\infty, B), d_{\mathrm{H}}, [a,b]) \leq C \frac{ e^{B/4} (b-a)^{1/4}}{\epsilon^{1/2}}
\]
for every $a < b$ and $B, \epsilon > 0$
\end{lemma}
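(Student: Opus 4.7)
The plan is to reduce to the canonical case of log-concave functions on $[0,1]$ bounded above by $1$ via two Hellinger-preserving rescalings, and then cite the known bracketing entropy bound for that canonical class.

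First I would exploit that the Hellinger metric rescales predictably under both affine reparametrisation of the domain and multiplicative rescaling of the function. Given $f \in \tilde{\mathcal{F}}([a,b], -\infty, B)$, set $g(s) := e^{-B} f\bigl(a + (b-a)s\bigr)$ for $s \in [0,1]$. The map $f \mapsto g$ is a bijection between $\tilde{\mathcal{F}}([a,b], -\infty, B)$ and $\tilde{\mathcal{F}}([0,1], -\infty, 0)$, and a direct change-of-variables computation shows that
\begin{equation*}
\int_a^b \bigl(f_1^{1/2}(t) - f_2^{1/2}(t)\bigr)^2\,dt = e^B(b-a) \int_0^1 \bigl(g_1^{1/2}(s) - g_2^{1/2}(s)\bigr)^2\,ds.
\end{equation*}
Consequently, any bracketing set for $\tilde{\mathcal{F}}([0,1], -\infty, 0)$ at Hellinger resolution $\epsilon/\{e^{B/2}(b-a)^{1/2}\}$ pulls back to a bracketing set for $\tilde{\mathcal{F}}([a,b], -\infty, B)$ at Hellinger resolution $\epsilon$, and we obtain the identity
\begin{equation*}
H_{[]}\bigl(\epsilon, \tilde{\mathcal{F}}([a,b], -\infty, B), d_{\mathrm{H}}, [a,b]\bigr) = H_{[]}\Bigl(\tfrac{\epsilon}{e^{B/2}(b-a)^{1/2}}, \tilde{\mathcal{F}}([0,1], -\infty, 0), d_{\mathrm{H}}, [0,1]\Bigr).
\end{equation*}

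Next I would invoke the standard bracketing entropy bound for the class of log-concave functions on $[0,1]$ taking values in $[0,1]$; namely, there exists a universal $C' > 0$ with
\begin{equation*}
H_{[]}\bigl(\eta, \tilde{\mathcal{F}}([0,1], -\infty, 0), d_{\mathrm{H}}, [0,1]\bigr) \leq \frac{C'}{\eta^{1/2}}
\end{equation*}
for all $\eta > 0$. This is essentially the content of \citet[Theorem~4.1]{doss2016global} (a minor reformulation, since that result is stated for log-concave densities bounded by a constant, and any element of $\tilde{\mathcal{F}}([0,1], -\infty, 0)$ can be normalised to a density at the cost of an absolute constant in both the numerator and the resolution). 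Substituting $\eta = \epsilon/\{e^{B/2}(b-a)^{1/2}\}$ into this bound yields
\begin{equation*}
H_{[]}\bigl(\epsilon, \tilde{\mathcal{F}}([a,b], -\infty, B), d_{\mathrm{H}}, [a,b]\bigr) \leq C'\, \frac{e^{B/4}(b-a)^{1/4}}{\epsilon^{1/2}},
\end{equation*}
which is the claimed inequality.

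The only genuine work is the canonical bracketing entropy estimate on $[0,1]$, which I am importing wholesale from \citet{doss2016global}; the two scaling reductions are routine bookkeeping. If one wanted a self-contained argument rather than a citation, the main obstacle would be constructing an explicit $\epsilon$-bracketing on $[0,1]$ for bounded log-concave functions, which is typically done by discretising the concave log-transform on a well-chosen grid of knots and bounding the number of grids needed via a metric entropy estimate for bounded monotone functions—but since the paper only needs the bound in the form stated, the citation suffices.
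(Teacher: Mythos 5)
Your scaling reduction is exactly the bookkeeping needed, and the citation is the same one the paper itself relies on: the paper states that this lemma ``is from \citet{kim2016adaptation}, though the first is only a minor restatement of \citet[][Theorem~4.1]{doss2016global},'' and supplies no further proof, so your approach and the paper's coincide. The change-of-variables identity and the substitution $\eta = \epsilon/\{e^{B/2}(b-a)^{1/2}\}$ are both correct, including the degenerate case $\eta \gtrsim 1$ where a single bracket suffices.

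One parenthetical aside in your write-up does not hold as stated: normalising $g \in \tilde{\mathcal{F}}([0,1],-\infty,0)$ to a density by dividing by $\int_0^1 g$ does not preserve the sup-norm bound (take $g(s) = e^{-Ns}$; its normalisation has supremum close to $N$), so the claim that this reduction ``costs an absolute constant in both the numerator and the resolution'' is false. The correct resolution is that the canonical bracketing entropy estimate you want is available directly for bounded log-concave \emph{functions} on a compact interval, with no normalisation to densities required---that is what makes Lemma~\ref{Prop:SegmentBracket1} ``a minor restatement'' of \citet[Theorem~4.1]{doss2016global} rather than a corollary requiring a further reduction. Since you do not actually use the density normalisation for anything, this is a misstatement in the exposition rather than a gap in the argument.
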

\begin{lemma}
  \label{Prop:SegmentBracket2}
There exists a universal constant $C > 0$ such that 
\[
H_{[]}( \epsilon, \tilde{\mathcal{F}}([a,b],B_1,B_2), d_{\mathrm{H}}, [a,b]) \leq C (B_2 - B_1)^{1/2} \frac{ e^{B_2/4} (b-a)^{1/4}}{\epsilon^{1/2}}
\]
for every $a < b$, $\epsilon > 0$ and $-\infty \leq B_1 \leq B_2 < \infty$. 
\end{lemma}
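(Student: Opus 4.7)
The strategy is to reduce the Hellinger bracketing problem for $f = e^\phi$ to an $L^2$-bracketing problem for the concave exponent $\phi$, which has bounded range $[B_1, B_2]$, and then to invoke a known bracketing entropy bound for uniformly bounded concave functions.

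\textbf{Step 1 (Hellinger-to-$L^2$ conversion).} Given concave $\phi^L \leq \phi \leq \phi^U$ on $[a,b]$ with $\phi^U \leq B_2$, the elementary inequality $0 \leq e^{u} - e^{v} \leq e^{u}(u-v)$ for $u \geq v$, applied to $u = \phi^U/2$ and $v = \phi^L/2$, yields
\[
d_{\mathrm{H}}^2(e^{\phi^U}, e^{\phi^L}) = \int_a^b \bigl(e^{\phi^U/2} - e^{\phi^L/2}\bigr)^2 \leq \frac{1}{4}\int_a^b e^{\phi^U}(\phi^U - \phi^L)^2 \leq \frac{e^{B_2}}{4}\|\phi^U - \phi^L\|_{L^2([a,b])}^2.
\]
Hence an $\eta$-bracket $[\phi^L, \phi^U]$ of $\phi$ in $L^2([a,b])$ with $\eta := 2\epsilon e^{-B_2/2}$ produces an $\epsilon$-Hellinger bracket $[e^{\phi^L}, e^{\phi^U}]$ for $f = e^\phi$ (after enforcing the uniform upper bound $B_2$ by replacing $\phi^U$ with $\phi^U \wedge B_2$, which remains concave and leaves the bracket valid).

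\textbf{Step 2 ($L^2$-bracketing of concave functions with bounded range).} Let $\mathcal{C}([a,b],B_1,B_2)$ denote the set of concave functions $\phi:[a,b]\to[B_1,B_2]$. The affine reparameterisation $\phi(t) = B_1 + (B_2-B_1)\tilde\phi\bigl((t-a)/(b-a)\bigr)$ identifies $\mathcal{C}([a,b],B_1,B_2)$ with the canonical class $\mathcal{C}([0,1],0,1)$, and the change of variables gives $\|\phi^U-\phi^L\|_{L^2([a,b])} = (B_2-B_1)(b-a)^{1/2}\|\tilde\phi^U-\tilde\phi^L\|_{L^2([0,1])}$. Combining this scaling with the classical bracketing entropy bound $H_{[]}(\eta, \mathcal{C}([0,1],0,1), L^2) \lesssim \eta^{-1/2}$ (see, e.g., \citealp{guntuboyina2015global}; this also follows by constructing piecewise-linear interpolants on a dyadic grid, as concave functions on $[0,1]$ valued in $[0,1]$ admit uniform approximation by piecewise-linear interpolants at $n$ equispaced knots with error $\lesssim 1/n^2$), one obtains
\[
H_{[]}\bigl(\delta, \mathcal{C}([a,b],B_1,B_2), L^2([a,b])\bigr) \lesssim \biggl(\frac{(B_2-B_1)(b-a)^{1/2}}{\delta}\biggr)^{1/2}.
\]

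\textbf{Step 3 (Combination).} Substituting $\delta = 2\epsilon e^{-B_2/2}$ into the bound of Step 2 and applying Step 1 yields
\[
H_{[]}\bigl(\epsilon, \tilde{\mathcal{F}}([a,b],B_1,B_2), d_{\mathrm{H}}, [a,b]\bigr) \lesssim \frac{(B_2-B_1)^{1/2}(b-a)^{1/4}}{(\epsilon e^{-B_2/2})^{1/2}} = \frac{(B_2-B_1)^{1/2}\, e^{B_2/4}\, (b-a)^{1/4}}{\epsilon^{1/2}},
\]
which is the claimed bound.

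The main technical point is Step~2: the $\eta^{-1/2}$ rate (rather than the $\eta^{-1}$ rate available for general monotone functions) is precisely what encodes the extra regularity coming from concavity, and it is this $\eta^{-1/2}$ rate that, after the affine reparameterisation, produces the $(B_2-B_1)^{1/2}$ dependence in the final bound. A self-contained derivation of this $L^2$ entropy bound can be obtained by splitting a concave $\tilde\phi \in \mathcal{C}([0,1],0,1)$ at its mode into an increasing concave piece and a decreasing concave piece, and bracketing each by piecewise-linear chord interpolants on an adaptive partition whose mesh decreases geometrically near the endpoints, so as to control the local oscillation of $\tilde\phi$.
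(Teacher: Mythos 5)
Your overall reduction is correct and is essentially the one underlying the paper's cited source (Kim, Guntuboyina and Samworth, building on Doss and Wellner): pass from Hellinger brackets of $e^\phi$ to $L^2$ brackets of the concave exponent $\phi$ using $\phi^U\le B_2$ (producing the $e^{B_2/4}$ factor), then rescale domain and range affinely to reduce to concave functions $[0,1]\to[0,1]$ (producing the $(B_2-B_1)^{1/2}(b-a)^{1/4}$ factor). The truncation $\phi^U\wedge B_2$ is the right fix, and the bookkeeping in Steps 1--3 is sound.

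The flaw is in your parenthetical justification of the entropy bound for $\mathcal{C}([0,1],0,1)$. It is \emph{not} true that a concave function $\phi:[0,1]\to[0,1]$ admits uniform approximation by its piecewise-linear interpolant at $n$ equispaced knots with error $\lesssim n^{-2}$; that rate requires a bounded second derivative. For $\phi(x)=1-\sqrt{1-x}$, the chord on the last cell $[1-1/n,1]$ deviates by $\asymp n^{-1/2}$ in sup norm, and even in $L^2$ the total interpolation error is only $O(n^{-1})$. Plugging $n\asymp\eta^{-1}$ into the naive ``discretise an $n$-dimensional PL class'' argument would then give $\eta^{-1}\log(1/\eta)$, far worse than $\eta^{-1/2}$. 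The bound $H_{[]}(\eta,\mathcal{C}([0,1],0,1),L^2)\lesssim\eta^{-1/2}$ is indeed correct (Dryanov 2009; Guntuboyina and Sen 2013, 2015), but it genuinely requires a multiscale/adaptive construction near the endpoints of the kind you only gesture at in the final sentence, not the equispaced-grid argument you lean on in the body of Step~2. You should replace that parenthetical with a direct citation to a bracketing (not merely covering) entropy result, or carry out the adaptive chord/tangent envelope construction in detail; as written, the key step is asserted via a derivation that does not hold.
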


\subsection{Auxiliary lemmas for Subsection~\ref{Subsec:GeneralApproach}}

\label{Subsec:AuxGeneralApproach}

We first describe the common setting for all the lemmas in this subsection and define the notation used throughout. We fix $K,\hat{K} \in \mathcal{K}$, $\mu, \hat{\mu} \in \mathbb{R}^p$ 
Let $f_0 \in \mathcal{F}^{K, \mu}_p$ be of the form $f_0(x) = e^{\phi_0(\| x - \mu \|_K)}$ for some $\phi_0 \in \Phi$ and let $X_1, \ldots, X_n \stackrel{\mathrm{iid}} \sim f_0$. We assume in this subsection that
\begin{align}
\mathbb{E}_{f_0}( \| X_1 - \mu \|^2_K) = p. \label{Eqn:MomentAssumption}
\end{align}
This assumption can be made without loss of generality as shown in the proof of Proposition~\ref{Prop:KEstimatedGeneralRisk}.  Let $h_0$ be a density on $[0,\infty)$ be of the form $h_0(r) = p \lambda_p(K) r^{p-1} e^{\phi_0(r)}$. Define $\check{f}_n \,:\, \mathbb{R}^p \rightarrow [0,\infty)$ by $\check{f}_n(x) := e^{ \phi_0 ( \| x - \hat{\mu} \|_{\hat{K}}) }$. We note that $\check{f}_n$ is not necessarily a density.  Let
\begin{align}
  \tau^* :=  1 \vee \sup_{x \in \mathbb{R}^{p} \setminus \{0\}} \frac{\| x \|_{\hat{K}}}{\| x \|_K},\quad \tau_* := 1 \wedge \inf_{x \in \mathbb{R}^{p} \setminus \{0\}} \frac{\| x \|_{\hat{K}}}{\| x \|_K}.
\end{align}
We also define a deformation $\tilde{\phi}_0 \in \Phi$ of $\phi_0$ by
\begin{align}
  \tilde{\phi}_0(r) := \left\{ \begin{array}{ll} \phi_0(0) & \textrm{if } r \leq \| \hat{\mu} - \mu \|_{\hat{K}} \\
                                 \phi_0\bigl( \frac{r - \| \hat{\mu} - \mu \|_{\hat{K}} }{\tau^*} \bigr) & \textrm{otherwise,}
                               \end{array} \right. \label{eqn:phi_tilde_definition}
\end{align}
and write $\tilde{h}_0(r) := \gamma^{-1}p \lambda_{p}(\hat{K}) r^{p-1} e^{\tilde{\phi}_0(r)}$ where $\gamma := p \lambda_{p}(\hat{K})\int_0^\infty r^{p-1} e^{\tilde{\phi}_0(r)} \, dr$ so that $\tilde{h}_0$ is a density. 

For $i \in [n]$, let $Z_i := \|X_i - \mu\|_K$ and $\tilde{Z}_i := \| X_i - \hat{\mu} \|_{\hat{K}}$. Then $Z_i$ has density $h_0$ and, by Lemma~\ref{Lem:ContourLevelExistence} below, $\tilde{Z}_1$ has a density which we denote by $\tilde{h}_n$. We let $Q_0$ and $\tilde{Q}_n$ denote the probability distributions induced by $h_0$ and $\tilde{h}_n$ respectively and let $\tilde{\mathbb{Q}}_n$ denote the empirical distribution corresponding to $\tilde{Z}_1, \ldots, \tilde{Z}_n$, so that $\hat{h}_n := h^*(\tilde{\mathbb{Q}}_n)$.  We write $\hat{h}_n(r) = p\lambda_p(\hat{K})r^{p-1}e^{\hat{\phi}_n(r)}$ for some $\hat{\phi}_n \in \Phi$, and set $\hat{f}_n := e^{\hat{\phi}_n}$.   Similarly to~\eqref{Eqn:MomentClass} we write $\mathcal{H}(h_0, C_\mu, C_\sigma) := \bigl\{ h \in \mathcal{H} \,:\, | \mu_h - \mu_{h_0} | \leq C_\mu\sigma_{h_0}, \, C_\sigma^{-1} \leq \sigma_h/\sigma_{h_0} \leq C_\sigma \bigr\}$.
\begin{lemma}
  \label{Lem:MeanVarianceControl}
  There exist universal constants $c_1, c_2, C, C_\mu > 0$ and $C_\sigma > 1$ such that if $\| \hat{\mu} - \mu \|_K p \log(ep) \leq c_1$ and $p(\tau^* - \tau_*) \leq c_2$, then
  \[
\mathbb{P}_{f_0}\bigl(\hat{h}_n \notin \mathcal{H}(h_0, C_\mu, C_\sigma) \bigr) \leq \frac{C}{n}.
  \]
\end{lemma}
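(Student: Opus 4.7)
The plan is to parallel the proof of Lemma~\ref{Lem:MLEMeanVarPreservation}, replacing appeals to the true density $h_0$ with appeals to the perturbed density $\tilde h_n$ of $\tilde Z_1$, and then transferring all bounds back to $h_0$ via the deterministic comparison
\[
\tau_* Z_i - \tau^* \|\hat\mu - \mu\|_K \;\le\; \tilde Z_i \;\le\; \tau^* Z_i + \tau^* \|\hat\mu - \mu\|_K,
\]
which is immediate from the subadditivity and positive homogeneity of $\|\cdot\|_{\hat K}$ (Proposition~\ref{Prop:BasicK}). By the scale equivariance of the projection (Proposition~\ref{Prop:ProjectionBasicProperties}(i)), we may for the purposes of this proof rescale so that $\sigma_{h_0} = 1$; the moment assumption~\eqref{Eqn:MomentAssumption} together with Lemma~\ref{Lem:UnitVarianceMeanBound} (with $a=0$) then gives $\mu_{h_0} \lesssim p$, and the hypotheses $p(\tau^* - \tau_*) \le c_2$ and $p \log(ep)\|\hat\mu - \mu\|_K \le c_1$ are calibrated precisely so that the deterministic gap $|\bar{\tilde{Z}} - \bar Z|$ is bounded by a universal constant on the event $\bar Z \lesssim p$.

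For the mean bound, Chebyshev's inequality yields $|\bar Z - \mu_{h_0}| \le 1$ with probability at least $1 - 1/n$, and combining this with the display above gives $|\bar{\tilde{Z}} - \mu_{h_0}| \lesssim 1$ on the same event. Lemma~\ref{Lem:MLEMeanPreservation} then directly furnishes the upper half $\mu_{\hat h_n} \le \bar{\tilde{Z}} \le \mu_{h_0} + C$. For the lower half one reproduces verbatim the case analysis on the flat-region length $s_0$ from the proof of Lemma~\ref{Lem:MLEMeanVarPreservation}: the split according to whether $\rho(s_0)/s_0$ is above or below $2^{-7}$ goes through unchanged because Lemma~\ref{Lem:UnitVarianceMeanBound} applies to every element of $\mathcal{H}$, not merely to $h_0$.

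For the variance bound, the strategy is to invoke Lemma~\ref{Lem:MLEVarPreservation} with $h^* = \hat h_n$ and a constant $A$. Two ingredients are required. First, a high-probability upper bound of the form $\sup\bigl\{ \tilde{\mathbb{Q}}_n(C)/\lambda_1(C) : C\text{ compact convex},\, \tilde{\mathbb{Q}}_n(C) \ge 1/2\bigr\} \lesssim 1$, which follows from Lemma~\ref{Lem:QLebesgueRatioBound} applied to $\tilde h_n$ provided we establish $\|\tilde h_n\|_{\mathrm{esssup}} \lesssim 1$. Second, a lower bound of the form $\int \log \hat h_n \, d\tilde{\mathbb{Q}}_n \gtrsim -C$, obtained from the MLE inequality $\int \log \hat h_n \, d\tilde{\mathbb{Q}}_n \ge \int \log \tilde h_0 \, d\tilde{\mathbb{Q}}_n$ combined with the differential-entropy bound of Lemma~\ref{Lem:DifferentialEntropyBound} for $\tilde h_n$ and the log-concave concentration of~\citet[Theorem~1.1]{bobkov2011concentration}. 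Feeding these into Lemma~\ref{Lem:MLEVarPreservation} produces universal constants $C_\sigma \ge 1$ such that $\sigma_{\hat h_n} \in [1/C_\sigma, C_\sigma]$ with probability at least $1-C/n$, and a union bound with the mean bound above completes the proof.

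The hard part will be controlling $\|\tilde h_n\|_{\mathrm{esssup}}$, since the density of $\|X_1 - \hat\mu\|_{\hat K}$ under $X_1 \sim f_0$ admits no tractable closed form: the Minkowski ball $\hat K$ centered at $\hat\mu$ is generically misaligned with the super-level sets $K + \mu$ of $f_0$, so a direct Jacobian calculation is not available. The plan is to dominate $\tilde h_n$ pointwise by the deformed density $\tilde h_0$ from~\eqref{eqn:phi_tilde_definition}: by construction $\tilde\phi_0$ is precisely the (decreasing, concave) envelope of $\phi_0$ after the change of variables induced by $\|\cdot - \hat\mu\|_{\hat K}$, and the smallness of $c_1, c_2$ simultaneously forces the normalising constant $\gamma$ to lie in a bounded range and $\tilde h_n \lesssim \tilde h_0$ pointwise; since $\|\tilde h_0\|_{\mathrm{esssup}} \lesssim \|h_0\|_{\mathrm{esssup}} \lesssim 1$ by~\citet[Proposition~S2(iii)]{feng2018multivariate} under $\sigma_{h_0} = 1$, the required bound follows.
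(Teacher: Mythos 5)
Your outline of the mean bound is sound; the deterministic sandwich $|\tilde{Z}_i - Z_i| \le (\tau^*-\tau_*)Z_i + (\tau^*+\tau_*)\|\hat\mu-\mu\|_K$ combined with Chebyshev for $\bar Z$ does reach the same conclusion as the paper, which instead applies Chebyshev to $\bar{\tilde Z}$ after first controlling $\sigma^2_{\tilde h_n}$. Your recognition that $\|\tilde h_n\|_{\mathrm{esssup}} \lesssim 1$ via the envelope $\tilde h_n \le \gamma\tilde h_0$ (Lemma~\ref{Lem:PerturbedDensityUpperBound}) and the log-concavity of $\tilde h_0$ is also exactly right, and it gives the Lebesgue-ratio control via Lemma~\ref{Lem:QLebesgueRatioBound} as you say.

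The gap is in the variance step where you propose to control $\int_0^\infty \log \tilde h_n \, d\tilde{\mathbb{Q}}_n$ via \citet[][Theorem~1.1]{bobkov2011concentration}. That theorem is a concentration inequality for the information content $-\log f(X)$ of a random vector $X$ whose density $f$ is \emph{log-concave}; applying it to $\tilde Z_1$ would require $\tilde h_n$, the density of $\|X_1-\hat\mu\|_{\hat K}$, to be log-concave. It is not in general: $\hat K$ centred at $\hat\mu$ is misaligned with the super-level sets $rK+\mu$ of $f_0$, and the change of variables destroys log-concavity (contrast $h_0$, which is log-concave precisely because $\|\cdot-\mu\|_K$ is adapted to $f_0$; this is why the proof of Lemma~\ref{Lem:MLEMeanPreservation}--\ref{Lem:MLEMeanVarPreservation} could legitimately invoke Bobkov--Madiman for $Z_1$). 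You would also need $\sigma_{\tilde h_n}$ normalised, but that is fine by claim~(c) in the paper's proof; the real obstruction is log-concavity. The paper avoids this by deriving the second-moment bound
\[
\int_0^\infty \tilde h_n(r)\log^2\!\bigl(\sigma_{h_0}\tilde h_n(r)\bigr)\,dr \le C''
\]
through a Fubini computation that only uses $\tilde h_n \le \gamma\tilde h_0$ with $\gamma\le 2$ and the sub-exponential envelope for the log-concave $\tilde h_0$; this gives a variance bound for $\log\tilde h_n(\tilde Z_1)$, and Chebyshev then yields the required concentration of $\int\log\tilde h_n\,d\tilde{\mathbb{Q}}_n$ around $\int\tilde h_n\log\tilde h_n$ on an event of probability $1-1/n$. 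In other words, the same domination $\tilde h_n\le\gamma\tilde h_0$ you already invoke for the $\mathrm{esssup}$ bound is also what rescues the log-likelihood concentration, and you should replace Bobkov--Madiman with this Chebyshev argument. One further small imprecision: the pointwise bound $\tilde h_n\le\gamma\tilde h_0$ holds unconditionally by construction of $\tilde\phi_0$; it is only the bound $\gamma\le 2$, not the domination itself, that uses the smallness of $c_1,c_2$.
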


\begin{proof}
  The proof is similar to that of Lemma~\ref{Lem:MLEMeanVarPreservation}. 

  As a preliminary step, we first claim that there exist universal constants $c_1, c_2,C_\sigma' > 0$ such that if $\| \hat{\mu} - \mu \|_K p \log(ep) \leq c_1$ and $p(\tau^* - \tau_*) \leq c_2$, then the following statements hold simultaneously:
  \begin{align}
    &\textrm{(a) $\gamma \leq 2$},\,\, \textrm{(b) $\mathbb{E}\bigl\{ (\tilde{Z}_1 - Z_1)^2 \bigr\} \leq \sigma_{h_0}^2/16$},\\
    &\textrm{(c) $\frac{1}{2} \leq \frac{\sigma^2_{\tilde{h}_n}}{\sigma^2_{h_0}} \leq 2$},\, 
    \textrm{and (d) $\frac{1}{C_\sigma'} \leq \frac{\sigma_{\tilde{h}_0}}{\sigma_{h_0}} \leq C_\sigma'$}.
    \label{eqn:var_bound_claims}
  \end{align}
  Provided we choose universal constants $c_1,c_2 > 0$ sufficiently small, claim (a) follows from Lemma~\ref{Lem:GammaBound}, while claim (d) follows from the second claim of Lemma~\ref{Lem:dHWorstBound2} and Lemma~\ref{Lem:HellingerMomentBound}. For claim (b), observe that since $\mu_{h_0} \lesssim p \sigma_{h_0}$ (which holds by Lemma~\ref{Lem:UnitVarianceMeanBound}), we have that $\sigma_{h_0}^2 \gtrsim (\sigma_{h_0}^2 + \mu_{h_0}^2)(1 + p^2)^{-1} \gtrsim p^{-1}$. Thus, by Lemma~\ref{Lem:KPerturbation}, we may reduce the values of $c_1, c_2$ if necessary to obtain
  \begin{align}
    \mathbb{E} \bigl\{(\tilde{Z}_1 - Z_1)^2 \bigr\}
    &\leq 2 (\tau^* - \tau_*)^2 \mathbb{E} Z^2_1 +
    2 (\tau^* + \tau_*)^2 \| \hat{\mu} - \mu \|_K^2 \nonumber \\
    &\leq 2 c_2 p^{-1} +
      2(2 + c_2/p)^2 c_1 p^{-1}  \leq \sigma_{h_0}^2/16.
      \label{eqn:ZDiffVarianceBound}
  \end{align}
For claim (c), observe that $\textrm{Var}( \tilde{Z}_1 - Z_1) \leq \mathbb{E} \bigl\{(\tilde{Z}_1 - Z_1)^2 \bigr\} \leq \sigma_{h_0}^2/16$. We therefore have by Cauchy--Schwarz and claim (b) that,
  \begin{align}
    \sigma^2_{\tilde{h}_n} = \textrm{Var}(\tilde{Z}_1) &= \textrm{Var}\bigl(Z_1 + (\tilde{Z}_1 - Z_1) \bigr) \nonumber \\
    &\leq  \sigma_{h_0}^2 + 2 \sigma_{h_0} \textrm{Var}( \tilde{Z}_1 - Z_1)^{1/2} + \textrm{Var}( \tilde{Z}_1 - Z_1) \leq 2 \sigma_{h_0}^2,
  \end{align}
  and
  \begin{align*}
    \sigma^2_{\tilde{h}_n} &\geq \sigma_{h_0}^2 - 2 \sigma_{h_0} \textrm{Var}(\tilde{Z}_1 - Z_1)^{1/2} \geq \frac{\sigma_{h_0}^2}{2}.
  \end{align*}                  
  This establishes claim~(c).
  
  Define $E_1 := \bigl\{ | n^{-1} \sum_{i=1}^n \tilde{Z}_i - \mathbb{E}\tilde{Z}_1| \leq \sqrt{2} \sigma_{h_0} \bigr\}$; from claim~(c) of~\eqref{eqn:var_bound_claims} and Chebychev's inequality, it holds that $\mathbb{P}(E^c_1) \leq 1/n$. By Lemma~\ref{Lem:MLEMeanPreservation} (with $a = 0$) and claim (b) of~\eqref{eqn:var_bound_claims}, there exists a universal constant $C_\mu > 0$ such that on $E_1$, 
\begin{align}
  \mu_{\hat{h}_n} & \leq \frac{1}{n} \sum_{i=1}^n \tilde{Z}_i \leq \mathbb{E} \tilde{Z} + \sqrt{2}\sigma_{h_0} \leq \mathbb{E} Z_1 + \mathbb{E}| \tilde{Z}_1 - Z_1 | + \sqrt{2} \sigma_{h_0} \nonumber \\
  &\leq \mu_{h_0} + \mathbb{E}\bigl\{(\tilde{Z}_1 - Z_1)^2\bigr\}^{1/2} + \sqrt{2} \sigma_{h_0} \leq \mu_{h_0} + C_\mu\sigma_{h_0}.
\end{align}
Similarly, by Lemma~\ref{Lem:MLEMeanPreservation} again and Lemma~\ref{Lem:UnitVarianceMeanBound},
\begin{align}
  \mu_{\hat{h}_n} &\geq \biggl(1 - \frac{1}{p} \biggr) 
    \frac{1}{n} \sum_{i=1}^n \tilde{Z}_i \geq \biggl(1 - \frac{1}{p} \biggr)\bigl\{\mathbb{E} Z_1 - \mathbb{E}| \tilde{Z}_1 - Z_1 | - \sqrt{2} \sigma_{h_0} \bigr\} \geq \mu_{h_0} - C_\mu \sigma_{h_0}.
  \end{align}
We therefore conclude that 
  \begin{align}
    \mathbb{P}\bigl( |\mu_{\hat{h}_n} - \mu_{h_0}| > C_\mu\sigma_{h_0} \bigr) \leq \mathbb{P}(E_1^c) \leq \frac{1}{n}.
    \label{eqn:C_mu}
  \end{align}

  We now consider $\sigma_{\hat{h}_n}^2$. By Lemma~\ref{Lem:PerturbedDensityUpperBound},~\ref{Lem:VarianceSupRelation}, and claims~(a) and (d) of~\eqref{eqn:var_bound_claims}, there exists a universal constant $C' > 0$ such that $\| \tilde{h}_n \|_{\mathrm{esssup}} \leq \gamma \| \tilde{h}_0 \|_\infty \leq C' \sigma_{h_0}^{-1}$.  Define the event
  \[
    E_2 := \biggl\{ \sup \biggl\{ \frac{\tilde{\mathbb{Q}}_n(D)}{\lambda_1(D)} \,:\, D \subseteq \mathbb{R} \textrm{ compact, convex},\, \tilde{\mathbb{Q}}_n(D) \geq 1/2 \biggr\} \leq \frac{2 C'}{\sigma_{h_0}} \biggr\}.
    \]
Then $\mathbb{P}(E_2^c) \leq 2 e^{-n/128} \leq 2^{10}/n$ by Lemma~\ref{Lem:QLebesgueRatioBound}.

  We now obtain a lower bound for $\int_0^\infty \log \hat{h}_n \, d \tilde{\mathbb{Q}}_n$. Note that by \citet[][Theorem~8.6.5]{cover2006elements} and by claim (c) of~\eqref{eqn:var_bound_claims}, we have that
  \begin{align}
    \int_0^\infty \tilde{h}_n(r) \log \tilde{h}_n(r) \,dr \geq -\log(4 \pi e)/2 - \log \sigma_{h_0}.
    \label{Eqn:htildeEntropyBound}
  \end{align}
Now write $g(r) := \sigma_{h_0} \tilde{h}_n(\sigma_{h_0} r)$ for $r \geq 0$ as a shorthand and observe that
  \begin{align}
    \int_0^\infty \tilde{h}_n(r) \log^2 (\sigma_{h_0} \tilde{h}_n(r)) \, dr &= \int_0^\infty g(s) \log^2 g(s) \, ds  =  \int_0^\infty \int_{-\infty}^{\log g(s)} e^t (t^2 + 2t) \, dt \, ds \nonumber \\
    &\leq \int_0^\infty \int_{-\infty}^\infty \mathbbm{1}_{\{ t \leq \log g(s) \}} e^t (t^2 + 2 |t|) \, dt \, ds \nonumber \\
    &= \int_{-\infty}^\infty e^t (t^2 + 2|t|) \lambda_1 \bigl\{ s > 0 \,:\, \log g(s) \geq t \bigr \} \, dt \nonumber \\
    &\leq \int_{-\infty}^\infty e^t (t^2 + 2|t|) \lambda_1 \bigl\{ s > 0 \,:\, \log \bigl(\sigma_{h_0}\tilde{h}_0(\sigma_{h_0}s)\bigr) \geq t - \log \gamma \bigr\} \, dt. \label{eqn:fubini_intermediate1}
  \end{align}
By claim (d) of~\eqref{eqn:var_bound_claims} and \citet[][Proposition~S2(iii)]{feng2018multivariate}, there exist universal constants $C'_{\sigma} > 1, \mu' \in [0,\infty)$ such that $\sigma_{h_0}\tilde{h}_0(\sigma_{h_0}s) \leq C_{\sigma}^{\prime} e^{- C_{\sigma}^{\prime -1} | s - \mu' | + 1}$ for all $s \in [0,\infty)$. Thus, by claim (a) of~\eqref{eqn:var_bound_claims}, for any $t \in \mathbb{R}$,
  \begin{align}
    \lambda_1 \bigl\{ s > 0 \,:\, \log \bigl(\sigma_{h_0}\tilde{h}_0(\sigma_{h_0}s)\bigr) &\geq t - \log \gamma \bigr\} \nonumber \\
    &\leq \lambda_1 \bigl\{ s > 0 \,:\, - C_{\sigma}^{\prime -1} | s - \mu'| + 1 + \log C_{\sigma}' \geq t - \log 2 \} \nonumber \\
    &\leq 2 C_{\sigma}'(\log C_{\sigma}' + 1 + \log 2 - t) \vee 0.
      \label{eqn:fubini_intermediate2}
  \end{align}
  Combining~\eqref{eqn:fubini_intermediate1} and~\eqref{eqn:fubini_intermediate2}, we deduce that there exists a universal constant $C'' > 0$ such that
  \begin{equation}
    \label{Eq:C''}
    \int_0^\infty \tilde{h}_n(r) \log^2( \sigma_{h_0} \tilde{h}_n(r)) \, dr \leq C''.
  \end{equation}
Therefore, we have by~\eqref{Eq:C''} that 
  \begin{align}
    \int_0^\infty \tilde{h}_n(r) \log^2 \tilde{h}_n(r) \, dr &- \biggl(\int_0^\infty \tilde{h}_n(r) \log \tilde{h}_n(r) \, dr \biggr)^2 \nonumber \\
    &= \int_0^\infty \tilde{h}_n(r) \log^2( \sigma_{h_0} \tilde{h}_n(r)) \,dr  -  2 (\log \sigma_{h_0}) \int_0^\infty \tilde{h}_n(r) \log \tilde{h}_n(r) \, dr  \nonumber \\
    &\hspace{3cm}   - \log^2 \sigma_{h_0} - \biggl(\int_0^\infty \tilde{h}_n(r) \log \tilde{h}_n(r) \, dr \biggr)^2 \nonumber \\
    &\leq C'' - \biggl( \log \sigma_{h_0} + \int_0^\infty \tilde{h}_n(r) \log \tilde{h}_n(r) \,dr \biggr)^2   
     \leq C''. \label{Eqn:htildeLogSquareBound}
  \end{align}
Defining the event
\begin{align*}
  E_3 := \biggl\{\biggl|\int_0^\infty \log \tilde{h}_n\, d \tilde{\mathbb{Q}}_n - \int_0^\infty \tilde{h}_n \log \tilde{h}_n\biggr| \leq \sqrt{C''} \biggr\},
\end{align*}
we have by~\eqref{Eqn:htildeLogSquareBound} and Chebychev's inequality that $\mathbb{P}(E_3^c ) \leq 1/n$. Let $\mathcal{R}_0 := \{r \in [0,\infty):\tilde{h}_n(r) \leq \gamma \tilde{h}_0(r)\}$ and let $E_4$ denote the event that $\tilde{\mathbb{Q}}_n(\{0\}) < 1$ and $\tilde{\mathbb{Q}}_n(\mathcal{R}_0) = 1$. It holds then by Lemma~\ref{Lem:PerturbedDensityUpperBound} that $\mathbb{P}(E_4^c) = 0$. 

Moreover, on $E_3 \cap E_4$, 
  \begin{align}
    \int_0^\infty \log \hat{h}_n \, d \tilde{\mathbb{Q}}_n
    &\geq \int_0^\infty \log \tilde{h}_0 \, d \tilde{\mathbb{Q}}_n
      = \int_0^\infty \log \frac{\tilde{h}_0}{\tilde{h}_n} \, d \tilde{\mathbb{Q}}_n
      + \int_0^\infty \log \tilde{h}_n \, d \tilde{\mathbb{Q}}_n \nonumber \\
    &\geq - \log 2 + \int_0^\infty \tilde{h}_n \log \tilde{h}_n \nonumber \\
    &\qquad - 
      \biggl| \int_0^\infty \log \tilde{h}_n\, d \tilde{\mathbb{Q}}_n - \int_0^\infty \tilde{h}_n \log \tilde{h}_n \biggr|
      \nonumber \\
    &\geq
      C_3 - \log \sigma_{h_0},
      \label{eqn:var_final_bound1}
  \end{align}
for some universal constant $C_3 > 0$.  We conclude that on the event $E_2 \cap E_3 \cap E_4$, by Lemma~\ref{Lem:MLEVarPreservation}, there exists a universal constant $C_{\sigma} \geq 1$ such that
  \[
    C_\sigma^{-1} \leq \frac{\sigma_{\hat{h}_n}}{\sigma_{h_0}} \leq C_\sigma.
  \]
A union bound yields the desired result.
\end{proof}

  For a Borel measurable function $g : [0, \infty) \rightarrow \mathbb{R}$, define 
\[
  \rho_1^2(g) := 2 \int_0^\infty (e^{|g|} - 1 - |g|) \, d \tilde{Q}_n.
\]
Even though $\rho_1$ is not a norm, we can define the $\epsilon$-generalised bracketing entropy of a class $\mathcal{G}$ of Borel measurable, real-valued functions by treating it as a norm, and continue to denote this by $H_{[]}(\epsilon, \mathcal{G}, \rho_1)$.
\begin{lemma}
  \label{Lem:EmpiricalProcess}
 Set
  \[
    a_n := n^{-4/5}+ d_{\mathrm{KL}}^2(\tilde{h}_n, \tilde{h}_0) + d_{\mathrm{H}}^2(\check{f}_n,f_0) + d_{\mathrm{H}}^2 \biggl( \tilde{h}_0, h_0  \frac{\lambda_p(\hat{K})}{\lambda_p(K)}\biggr).
  \]
  There exist universal constants $c_1, c_2, C > 0$ such that if $\| \hat{\mu} - \mu \|_K p \log(ep) \leq c_1$ and $p(\tau^* - \tau_*) \leq c_2$, then, for any $t \geq a_n^{1/2}$,
  \begin{align}
    \mathbb{P}_{f_0}\bigl( \{d_{\mathrm{H}}^2(\hat{f}_n, f_0) \geq Ct^2\} \cap \{ \hat{h}_n \in \mathcal{H}(h_0, C_\mu, C_\sigma) \} \bigr) \lesssim e^{-nt^2/C},
      \label{Eqn:dHBound}
  \end{align}
 where $C_\mu > 0$ and $C_{\sigma} > 1$ are taken from Lemma~\ref{Lem:MeanVarianceControl}.  Morever, if additionally $f_0(x) = e^{-a\|x - \mu \|_K + b}$ for some $a > 0$ and $b \in \mathbb{R}$, and writing
\[
  \tilde{a}_n := \frac{1}{n} \log^{5/4} (en) + d_{\mathrm{KL}}^2(\tilde{h}_n, \tilde{h}_0) + d_{\mathrm{H}}^2(\tilde{h}_0,h_0) + d_{\mathrm{H}}^2(\check{f}_n,f_0) + d_{\mathrm{H}}^2 \biggl(\tilde{h}_0,h_0  \frac{\lambda_p(\hat{K})}{\lambda_p(K)}\biggr),
\]
we have that~\eqref{Eqn:dHBound} holds for any $t \geq \tilde{a}_n^{1/2}$.
\end{lemma}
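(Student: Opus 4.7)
The plan is to translate the problem into a one-dimensional misspecified-MLE problem for $\hat{h}_n$ based on the data $\tilde{Z}_1,\ldots,\tilde{Z}_n \stackrel{\mathrm{iid}}{\sim} \tilde{h}_n$, compare to the pseudo-target $\tilde{h}_0 \in \mathcal{H}_{\hat{K}}$, and then convert back to $d_\mathrm{H}^2(\hat{f}_n,f_0)$ by a chain of triangle inequalities whose remainders are exactly the extra terms appearing in $a_n$. More precisely, I will first observe that by Lemma~\ref{Lem:ChangeOfVar} applied to $\hat{f}_n - \check{f}_n = e^{\hat\phi_n(\|\cdot-\hat\mu\|_{\hat K})} - e^{\phi_0(\|\cdot-\hat\mu\|_{\hat K})}$,
\[
  d_\mathrm{H}^2(\hat{f}_n,\check{f}_n) = d_\mathrm{H}^2\Bigl(\hat{h}_n,\, h_0 \tfrac{\lambda_p(\hat K)}{\lambda_p(K)}\Bigr),
\]
so by two further applications of the triangle inequality
\[
  d_\mathrm{H}^2(\hat{f}_n,f_0) \lesssim d_\mathrm{H}^2(\hat{h}_n,\tilde{h}_0) + d_\mathrm{H}^2\Bigl(\tilde{h}_0,h_0\tfrac{\lambda_p(\hat K)}{\lambda_p(K)}\Bigr) + d_\mathrm{H}^2(\check{f}_n,f_0).
\]
The last two summands already appear in $a_n$, so it suffices to show an exponential tail bound for $d_\mathrm{H}^2(\hat{h}_n,\tilde{h}_0)$.

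To do this, I will use the basic inequality $\int_0^\infty \log(\hat h_n/\tilde h_0)\,d\tilde{\mathbb{Q}}_n \geq 0$ (valid because $\tilde{h}_0 \in \mathcal{H}_{\hat K}$ and $\hat{h}_n$ is the MLE), decompose
\[
  0 \le \int \log\frac{\hat h_n}{\tilde h_0}\,d(\tilde{\mathbb{Q}}_n - \tilde{Q}_n) - d_\mathrm{KL}^2(\tilde h_n,\hat h_n) + d_\mathrm{KL}^2(\tilde h_n,\tilde h_0),
\]
and then combine $d_\mathrm{H}^2(\hat h_n,\tilde h_0) \lesssim d_\mathrm{H}^2(\hat h_n,\tilde h_n) + d_\mathrm{H}^2(\tilde h_n,\tilde h_0) \le d_\mathrm{KL}^2(\tilde h_n,\hat h_n)+d_\mathrm{KL}^2(\tilde h_n,\tilde h_0)$ to arrive at
\[
  d_\mathrm{H}^2(\hat h_n,\tilde h_0) \lesssim \int \log(\hat h_n/\tilde h_0)\,d(\tilde{\mathbb{Q}}_n-\tilde Q_n) + d_\mathrm{KL}^2(\tilde h_n,\tilde h_0),
\]
with the second summand already in $a_n$.

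The empirical-process term will be controlled by a standard peeling/chaining argument over the class $\mathcal{G} = \{\log(h/\tilde h_0):h \in \mathcal{H}(h_0,C_\mu,C_\sigma)\}$, on the event $\{\hat h_n \in \mathcal{H}(h_0,C_\mu,C_\sigma)\}$, using the Bernstein-type norm $\rho_1$ because the data are drawn from $\tilde h_n$ rather than $\tilde h_0$. The Hellinger bracketing bound of Lemma~\ref{Lem:NonlocalBracketingEntropy} transfers to a $\rho_1$-bracketing bound at the price of comparing $\tilde h_n$ and $\tilde h_0$ (an $O(d_\mathrm{KL}(\tilde h_n,\tilde h_0))$ correction, again absorbed into $a_n$), since $\|h\|_\infty$, $\|\tilde h_0\|_\infty$ and hence the $\rho_1$-to-Hellinger ratio are uniformly bounded on the moment class by Lemmas~\ref{Lem:MLEVarPreservation} and~\ref{Lem:PerturbedDensityUpperBound}. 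Then the standard van de Geer chaining theorem (Theorem~10 of \citet{kim2016adaptationsupp}, as used in the proof of Theorem~\ref{Thm:KKnownWorstCase}) with $\Psi(\delta)\lesssim \delta^{3/4}$ produces the tail bound $\mathbb{P}(\{d_\mathrm{H}^2(\hat h_n,\tilde h_0) \geq Ct^2\}\cap\{\hat h_n \in \mathcal{H}(\cdots)\}) \lesssim e^{-nt^2/C}$ for $t \geq a_n^{1/2}$.

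For the second claim, $f_0(x)= e^{-a\|x-\mu\|_K+b}$ forces $h_0 \in \mathcal{H}^{(1)}$, so I replace Lemma~\ref{Lem:NonlocalBracketingEntropy} by the local bracketing bound of Lemma~\ref{Lem:LocalBracketing} around $h_0$ (with radius $d_\mathrm{H}(\tilde h_0,h_0)+\delta$), which yields $\Psi(\delta)\lesssim (\delta+d_\mathrm{H}(\tilde h_0,h_0))^{1/4}\delta^{3/4}\log^{5/8}(1/\delta)$; the same peeling argument then gives the rate $n^{-1}\log^{5/4}(en)$, with the extra $d_\mathrm{H}^2(\tilde h_0,h_0)$ explaining the additional summand in $\tilde a_n$. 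The main obstacles I anticipate are (i) justifying the $\rho_1$ $\leftrightarrow$ Hellinger comparison \emph{uniformly} over the moment class for the misspecified data distribution $\tilde h_n$, which is where the smallness assumptions on $\|\hat\mu-\mu\|_K$ and $\tau^*-\tau_*$ enter through Lemmas~\ref{Lem:MeanVarianceControl} and~\ref{Lem:PerturbedDensityUpperBound}; and (ii) book-keeping the deformation $\tilde\phi_0$ carefully so that the log-affine adaptation rate in the second claim is not lost through the bias terms.
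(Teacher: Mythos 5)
Your overall architecture matches the paper's proof: the same triangle-inequality reduction $d_{\mathrm{H}}^2(\hat f_n,f_0) \lesssim d_{\mathrm{H}}^2(\hat h_n,\tilde h_0) + d_{\mathrm{H}}^2\bigl(\tilde h_0, h_0\,\lambda_p(\hat K)/\lambda_p(K)\bigr) + d_{\mathrm{H}}^2(\check f_n,f_0)$ via Lemma~\ref{Lem:ChangeOfVar}, and the same strategy of combining a maximum-likelihood basic inequality with bracketing-entropy control of the empirical process under the $\rho_1$-norm. However, your empirical-process step has a genuine gap.

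You propose to work with the class $\mathcal{G} = \{\log(h/\tilde h_0) : h \in \mathcal{H}(h_0,C_\mu,C_\sigma)\}$ and to transfer Hellinger bracketing to $\rho_1$ bracketing "since $\|h\|_\infty$, $\|\tilde h_0\|_\infty$ and hence the $\rho_1$-to-Hellinger ratio are uniformly bounded on the moment class." This is not correct. For a bracket pair $(h_L,h_U)$, one must control
\[
\int_0^\infty \bigl(e^{g_U - g_L}-1\bigr)^2\,d\tilde Q_n \;=\; \int_0^\infty \Bigl\{(h_U/h_L)^{1/2}-1\Bigr\}^2 \tilde h_n \;=\; \int_0^\infty \bigl(h_U^{1/2}-h_L^{1/2}\bigr)^2\,\frac{\tilde h_n}{h_L},
\]
and the ratio $\tilde h_n / h_L$ is not uniformly bounded -- indeed it blows up near the boundary of $\mathrm{supp}(h_L)$, and uniform $\|\cdot\|_\infty$ bounds on the moment class give no control on this. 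The paper avoids this by using van de Geer's averaging trick: the empirical process class is $\bigl\{\tfrac12\log\tfrac{h + \tilde h_0}{2\tilde h_0} : h \in \mathcal{H}(\tilde h_0,\delta)\bigr\}$, so that the same computation yields
\[
\int_0^\infty \tfrac12\bigl\{(h_U+\tilde h_0)^{1/2}-(h_L+\tilde h_0)^{1/2}\bigr\}^2\,\frac{2\tilde h_n}{h_L + \tilde h_0} \;\leq\; \gamma\, d_{\mathrm{H}}^2(h_U,h_L),
\]
with the denominator now bounded below by $\tilde h_0$ so that $\tilde h_n/(h_L+\tilde h_0)\leq \tilde h_n/\tilde h_0 \leq \gamma \leq 2$ by Lemma~\ref{Lem:PerturbedDensityUpperBound}. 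Correspondingly, the basic inequality must be taken in the averaged form $\int_0^\infty \log\tfrac{\hat h_n+\tilde h_0}{2\tilde h_0}\,d\tilde{\mathbb{Q}}_n \geq 0$ (a consequence of $\int\log\hat h_n\,d\tilde{\mathbb{Q}}_n \geq \int\log\tilde h_0\,d\tilde{\mathbb{Q}}_n$ and concavity of $\log$), and one needs the additional step $d_{\mathrm{H}}^2(\hat h_n,\tilde h_0)\leq 16\,d_{\mathrm{H}}^2\bigl(\tilde h_0,\tfrac{\hat h_n+\tilde h_0}{2}\bigr)$ (van de Geer's Lemma 4.2) to return from the averaged density to a bound on $d_{\mathrm{H}}^2(\hat h_n,\tilde h_0)$. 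Your decomposition $0 \leq \int\log(\hat h_n/\tilde h_0)\,d(\tilde{\mathbb{Q}}_n - \tilde Q_n) - d_{\mathrm{KL}}^2(\tilde h_n,\hat h_n) + d_{\mathrm{KL}}^2(\tilde h_n,\tilde h_0)$ is arithmetically correct, but it feeds the wrong (unaveraged) process into the bracketing argument; the peeling theorem then cannot be applied.
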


\begin{proof}
Assume $\| \hat{\mu} - \mu \|_K p \log(ep) \leq c_1$ and $p(\tau^* - \tau_*) \leq c_2$ for universal constants $c_1, c_2 > 0$ chosen such that (a) $\tilde{h}_0 \in \mathcal{H}(h_0, C_\mu, C_\sigma)$ where $C_\mu > 0$ and $C_{\sigma} > 1$ are taken from Lemma~\ref{Lem:MeanVarianceControl} and (b) $\gamma \leq 2$. The existence of such a choice of $c_1, c_2$ is guaranteed by Lemma~\ref{Lem:HellingerMomentBound}, Lemma~\ref{Lem:dHWorstBound2}, and Lemma~\ref{Lem:GammaBound}.

We make two observations before proceeding with the main proof. First, for $\delta > 0$, define $\mathcal{H}(\tilde{h}_0, \delta) := \bigl\{ h \in \mathcal{H}(h_0, C_\mu, C_\sigma) : d_{\mathrm{H}}(h,\tilde{h}_0) \leq \delta\bigr\}$, and observe that $\tilde{h}_0 \in \mathcal{H}(\tilde{h}_0, \delta)$ for every $\delta > 0$.  Second, we may assume that $\tilde{\mathbb{Q}}_n(\{0\}) = 0$  (since this is a probability 1 event), and thus, $\tilde{\mathbb{Q}}_n \in \mathcal{Q}_0$ and $\hat{h}_n := h^*(\tilde{\mathbb{Q}}_n) = \argmax_{h \in \mathcal{H}} \int_0^\infty \log h\, d \tilde{\mathbb{Q}}_n$ (recall the definition of $\mathcal{H} := \mathcal{H}_0$ in~\eqref{Eqn:HclassDefn}). Therefore, since $\tilde{h}_0 \in \mathcal{H}$, we have by Proposition~\ref{Prop:ProjectionExistence} and Lemma~\ref{Lem:PerturbedDensityUpperBound} that with probability~1,
\[
  \infty > \int_0^\infty \log \hat{h}_n \, d \tilde{\mathbb{Q}}_n \geq \int_0^\infty \log \tilde{h}_0 \, d \tilde{\mathbb{Q}}_n \geq -\log \gamma + \int_0^\infty \log \tilde{h}_n \, d \tilde{\mathbb{Q}}_n > -\infty.
\]
Hence, again with probability 1,
  \begin{align}
    \int_0^\infty \log \frac{\hat{h}_n + \tilde{h}_0}{2 \tilde{h}_0} \, d \tilde{\mathbb{Q}}_n \geq 0.
    \label{Eqn:BasicIneq}
  \end{align}
Now we proceed to the proof of the Lemma. We have
   \begin{align}
    d_{\mathrm{H}}^2(\hat{f}_n, f_0) \leq 2 d_{\mathrm{H}}^2(\hat{f}_n, \check{f}_n) + 2 d_{\mathrm{H}}^2(\check{f}_n,f_0). \label{eqn:first_decomp}
  \end{align}
To bound the first term, we have by Lemma~\ref{Lem:ChangeOfVar} that
\begin{align}
  \label{Eq:NextDecomp}
    d_{\mathrm{H}}^2(\hat{f}_n, \check{f}_n)
    &= \int_{\mathbb{R}^p} \bigl\{e^{\hat{\phi}_n( \| x - \hat{\mu} \|_{\hat{K}} )/2} -  e^{\phi_0( \| x - \hat{\mu} \|_{\hat{K}} )/2} \bigr\}^2 \, dx \nonumber \\
    &= p\lambda_p(\hat{K})\int_0^\infty r^{p-1} \bigl\{e^{\hat{\phi}_n(r)/2 } - e^{\phi_0(r)/2}\bigr\}^2 \, dr \nonumber \\
    &= d_{\mathrm{H}}^2\biggl( \hat{h}_n, h_0 \frac{\lambda_p(\hat{K})}{\lambda_p(K)} \biggr) \nonumber \\
    &\leq 2 d_{\mathrm{H}}^2(\hat{h}_n, \tilde{h}_0) + 2 d_{\mathrm{H}}^2 \biggl(\tilde{h}_0 , h_0  \frac{\lambda_p(\hat{K})}{\lambda_p(K)}\biggr).
  \end{align}
 By \citet[][Lemma~4.2]{vandegeer2000empirical}, the fact that KL divergence is no smaller than the squared Hellinger distance, and~\eqref{Eqn:BasicIneq},
\begin{align}
  \label{Eqn:BasicInequality}
    d_{\mathrm{H}}^2(\hat{h}_n, \tilde{h}_0) &\leq 16 d_{\mathrm{H}}^2\biggl( \tilde{h}_0, \frac{\hat{h}_n + \tilde{h}_0}{2}\biggr) \nonumber \\
                                             &\leq 32 d_{\mathrm{KL}}^2\biggl( \tilde{h}_n, \frac{\hat{h}_n + \tilde{h}_0}{2}\biggr) + 32 d_{\mathrm{KL}}^2(\tilde{h}_n, \tilde{h}_0) \nonumber \\
                                             &= -32 \int_0^\infty \log \frac{\hat{h}_n +  \tilde{h}_0 }{2 \tilde{h}_0 } \, d \tilde{Q}_n + 64d^2_{\mathrm{KL}}( \tilde{h}_n, \tilde{h}_0 ) \nonumber \\
                                             &\leq 32 \int_0^\infty \log \frac{\hat{h}_n +  \tilde{h}_0 }{2 \tilde{h}_0 } \, d (\tilde{\mathbb{Q}}_n - \tilde{Q}_n) + 64d^2_{\mathrm{KL}}( \tilde{h}_n, \tilde{h}_0 ).
\end{align}
Since $\mathcal{H}(\tilde{h}_0, \delta)$ is nonempty for any $\delta > 0$, the bracketing entropy $H_{[]}\bigl(\epsilon, \mathcal{H}(\tilde{h}_0, \delta), d_{\mathrm{H}}\bigr)$ is well-defined and non-negative for any $\epsilon > 0$.  We may therefore define $\Psi \,:\, [0,\infty) \rightarrow [0,\infty)$ by $\Psi(\delta) := \delta \vee \int_0^{\delta} H_{[]}^{1/2}\bigl(\epsilon/2^{1/2}, \mathcal{H}(\tilde{h}_0, \delta), d_{\mathrm{H}}\bigr) \, d\epsilon$ and let $\delta_n := \inf \bigl\{ \delta \in [0, \infty) : \frac{\sqrt{n} \delta^2}{\Psi(\delta)} \geq 2^{9} C_0 \bigr\}$ for a universal constant $C_0 > 0$ specified in \citet[][Theorem 5.11]{vandegeer2000empirical}. By Lemma~\ref{Lem:NonlocalBracketingEntropy}, it holds that $\delta_n^2 \lesssim n^{-4/5}$. Moreover, if $f_0(\cdot) = e^{-a\|\cdot - \mu\|_K + b}$ for some $a > 0$ and $b \in \mathbb{R}$, then, by Lemma~\ref{Lem:LocalBracketing}, we have that $\delta_n^2 \lesssim n^{-1} \log^{5/4} (en) + d_{\mathrm{H}}^2(\tilde{h}_0, h_0)$.

  For $\delta > 0$, define $\mathcal{G}(\delta) := \bigl\{ \frac{1}{2}\log \frac{h + \tilde{h}_0 }{2 \tilde{h}_0} : h \in \mathcal{H}(\tilde{h}_0, \delta) \bigr\}$. Let $(h_U, h_L)$ be an element from the $\epsilon/\sqrt{2}$-Hellinger bracketing set of $\mathcal{H}(\tilde{h}_0, \delta)$. Define $g_U := \frac{1}{2} \log \frac{h_U + \tilde{h}_0}{2\tilde{h}_0}$ and $g_L := \frac{1}{2} \log \frac{h_L + \tilde{h}_0}{2\tilde{h}_0}$. We have by \citet[][Lemmas~7.1 and~4.2]{vandegeer2000empirical}, Lemma~\ref{Lem:PerturbedDensityUpperBound} and the fact that $\gamma \leq 2$ that
\begin{align}
  \label{Eq:PreviousDisplay}
    \int_0^\infty \rho^2_1(g_U - g_L) \, d \tilde{Q}_n
    &\leq \int_0^\infty (e^{g_U - g_L} - 1)^2 \, d\tilde{Q}_n \nonumber \\
    &= \int_0^\infty \biggl\{ \biggl(\frac{h_U + \tilde{h}_0}{h_L + \tilde{h}_0}\biggr)^{1/2}   - 1 \biggr\}^2 \tilde{h}_n \nonumber \\
    &= \int_0^\infty \frac{1}{2} \bigl\{ (h_U + \tilde{h}_0)^{1/2}  - (h_L + \tilde{h}_0)^{1/2} \bigr\}^2 \frac{2\tilde{h}_n}{h_L + \tilde{h}_0} \nonumber \\
    &\leq \gamma d_{\mathrm{H}}^2(h_U, h_L) \leq \gamma \epsilon^2/2 \leq \epsilon^2.
  \end{align}
  Moreover, if $h \in \mathcal{H}(\tilde{h}_0, \delta)$ and $g = \frac{1}{2} \log \frac{h + \tilde{h}_0}{2\tilde{h}_0}$, then a virtually identical calculation to~\eqref{Eq:PreviousDisplay} shows that $\sup_{g \in \mathcal{G}(\delta)} \rho_1^2(g) \leq \delta^2$ for every $\delta > 0$. We therefore conclude that
  \begin{align}
    H_{[]}\bigl(\epsilon, \mathcal{G}(\delta), \rho_1\bigr) \leq H_{[]}\bigl(\epsilon/2^{1/2}, \mathcal{H}(\tilde{h}_0, \delta), d_{\mathrm{H}}\bigr). \label{Eqn:GBracketing}
  \end{align}
Fix any $t > \bigl\{\delta^2_n + 2^7 d^2_{\mathrm{KL}}(\tilde{h}_n, \tilde{h}_0)\bigr\}^{1/2}$, where we note that this lower bound is finite by Lemma~\ref{Lem:PerturbedDensityUpperBound}. For each $s \in \mathbb{N} \cup \{0\}$, define the events $\mathcal{A}_s := \bigl\{ 2^{s} t < d_{\mathrm{H}} \bigl(\hat{h}_n, \tilde{h}_0\bigr) \leq 2^{s+1} t \bigr\}$. Writing $\hat{g}_n := \frac{1}{2} \log \frac{\hat{h}_n + \tilde{h}_0}{2 \tilde{h}_0}$, we note that on $\mathcal{A}_s \cap \{\hat{h}_n \in \mathcal{H}(h_0,C_\mu,C_\sigma)\}$, by~\eqref{Eqn:BasicInequality}, 
  \begin{equation}
    \int_0^\infty \hat{g}_n \, d(\tilde{\mathbb{Q}}_n - \tilde{Q}_n) \geq 2^{2s - 6} t^2 - d_{\mathrm{KL}}^2 (\tilde{h}_n, \tilde{h}_0) \geq 2^{2s - 7} t^2 \quad \textrm{and } \quad \rho_1^2(\hat{g}_n) \leq 2^{2s+2} t^2. \label{Eqn:OnePeel}
  \end{equation}
Moreover, since $\delta \mapsto \Psi(\delta)/\delta^2$ is decreasing, we have
  \[
    \frac{\sqrt{n} 2^{2s - 7} t^2 }{\Psi( 2^{s+1} t )} \geq \frac{\sqrt{n} 2^{-9} \delta_n^2}{\Psi(\delta_n)} \geq C_0,
  \]
  and thus, by~\eqref{Eqn:GBracketing}, 
  \begin{align}
    \sqrt{n} 2^{2s - 7} t^2 \geq C_0 \Psi(2^{s + 1} t) \geq C_0
    \Bigl( 2^{s+1}t \vee \int_0^{2^{s+1} t} H_{[]}^{1/2}(\epsilon, \mathcal{G}(\delta), \rho_1) \, d\epsilon \Bigr).
    \label{Eqn:EntropyCheck}
  \end{align}
Therefore, by~\eqref{Eqn:OnePeel},~\eqref{Eqn:EntropyCheck}, and \citet[][Theorem 5.11]{vandegeer2000empirical}, there exists a universal constant $C > 0$ such that
  \begin{align}
    \mathbb{P}_{f_0}\bigl(\{d_{\mathrm{H}}^2(\hat{h}_n, \tilde{h}_0) > t^2\}  \,\cap\, \{ \hat{h}_n &\in \mathcal{H}(h_0, C_\mu, C_\sigma) \} \bigr)  \nonumber \\
    &\leq \sum_{s=0}^\infty \mathbb{P}_{f_0}\bigl(\mathcal{A}_s  \cap \{ \hat{h}_n \in \mathcal{H}(h_0, C_\mu, C_\sigma) \} \bigr) \nonumber \\
    &\leq \sum_{s=0}^\infty \mathbb{P}_{f_0} \biggl\{ \sup_{g \in \mathcal{G}(2^{s+1} t)} \int_0^\infty g \, d(\tilde{\mathbb{Q}}_n - \tilde{Q}_n) \geq 2^{2s-7} t^2\biggr\}  \nonumber \\
    &\lesssim \sum_{s=0}^\infty \exp \biggl( - \frac{ n 2^{2s}t^2 }{C } \biggr) \lesssim e^{-nt^2/C}. \label{Eq:FinalVdgBound}
  \end{align}
The lemma follows from~\eqref{eqn:first_decomp},~\eqref{Eq:NextDecomp}, ~\eqref{Eq:FinalVdgBound}, and the fact that $\delta_n^2 \lesssim n^{-4/5}$ in general and $\delta_n^2 \lesssim n^{-1} \log^{5/4} (en) + d^2_{\mathrm{H}}(\tilde{h}_0, h_0)$ when $f_0$ has the form $f_0(\cdot) = e^{-a\|\cdot - \mu \|_K + b}$ for some $a > 0$ and $b \in \mathbb{R}$.
\end{proof}

\begin{lemma}
  \label{Lem:KPerturbation}
  Let $x_0, \mu, \hat{\mu} \in \mathbb{R}^p$ and let $K, \hat{K} \in \mathcal{K}$. Writing $\xi := \hat{\mu} - \mu$, we have 
  \begin{align*}
    \bigl| &\| x_0 - \hat{\mu} \|_{\hat{K}} - \| x_0 - \mu \|_K \bigr| \\
&\leq \min\biggl\{\bigl( \tau^*  - \tau_* \bigr) \| x_0 - \mu \|_K + (\tau^* + \tau_*) \|\xi\|_K \, , \, \biggl( \frac{1}{\tau_*}  - \frac{1}{\tau^*}\biggr) \| x_0 - \hat{\mu} \|_{\hat{K}} + \biggl(\frac{1}{\tau_*} + \frac{1}{\tau^*}\biggr) \|\xi\|_{\hat{K}}\biggr\}.
  \end{align*} 
\end{lemma}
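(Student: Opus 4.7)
The proof will rest on the subadditivity and positive homogeneity of the Minkowski functional (Proposition~\ref{Prop:BasicK}(iv)), combined with the defining estimates $\tau_*\|y\|_K \leq \|y\|_{\hat{K}} \leq \tau^*\|y\|_K$ for every $y \in \mathbb{R}^p$, which follow immediately from the definitions of $\tau^*$ and $\tau_*$. I will also exploit that $\tau_* \in (0,1]$ and $\tau^* \in [1,\infty)$, so that $|\tau^*-1|$ and $|1-\tau_*|$ are both at most $\tau^*-\tau_*$.

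For the first bound in the minimum, the plan is to sandwich $\|x_0-\hat{\mu}\|_{\hat{K}} - \|x_0-\mu\|_K$ between matching upper and lower estimates. The upper estimate uses $\|y\|_{\hat{K}} \leq \tau^*\|y\|_K$ at $y=x_0-\hat{\mu}$ followed by the triangle inequality for $\|\cdot\|_K$:
\[
\|x_0-\hat{\mu}\|_{\hat{K}} \leq \tau^*\|x_0-\hat{\mu}\|_K \leq \tau^*\|x_0-\mu\|_K + \tau^*\|\mu-\hat{\mu}\|_K,
\]
so that $\|x_0-\hat{\mu}\|_{\hat{K}} - \|x_0-\mu\|_K \leq (\tau^*-1)\|x_0-\mu\|_K + \tau^*\|\mu-\hat{\mu}\|_K$. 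The lower estimate combines $\|y\|_{\hat{K}} \geq \tau_*\|y\|_K$ with the reverse triangle inequality $\|x_0-\hat{\mu}\|_K \geq \|x_0-\mu\|_K - \|\xi\|_K$, which itself follows from $\|x_0-\mu\|_K = \|(x_0-\hat{\mu})+\xi\|_K \leq \|x_0-\hat{\mu}\|_K + \|\xi\|_K$; this yields $\|x_0-\hat{\mu}\|_{\hat{K}} - \|x_0-\mu\|_K \geq (\tau_*-1)\|x_0-\mu\|_K - \tau_*\|\xi\|_K$. Combining these two estimates, taking absolute values, and using $\max(\tau^*-1,1-\tau_*) \leq \tau^*-\tau_*$ together with $\max(\tau^*, \tau_*) \leq \tau^*+\tau_*$ delivers the first claimed inequality.

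The second bound in the minimum is the mirror image of the first, obtained by interchanging the roles of the pairs $(K,\mu)$ and $(\hat{K},\hat{\mu})$. Under this swap the left-hand side $|\|x_0-\hat{\mu}\|_{\hat{K}} - \|x_0-\mu\|_K|$ is unchanged; the quantity playing the role of $\tau^*$ becomes $\sup_{y\neq 0}\|y\|_K/\|y\|_{\hat{K}} = 1/\tau_*$, and that of $\tau_*$ becomes $1/\tau^*$, while $\|x_0-\mu\|_K$ and $\|\xi\|_K$ are replaced by $\|x_0-\hat{\mu}\|_{\hat{K}}$ and $\|\xi\|_{\hat{K}}$ respectively. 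Applying the first bound in this swapped setting immediately produces the second. No step presents a genuine difficulty; the only minor bookkeeping point is that $\|\cdot\|_K$ need not be symmetric, so that strictly speaking $\|\mu-\hat{\mu}\|_K$ and $\|\hat{\mu}-\mu\|_K = \|\xi\|_K$ could differ, but the crude factor $\tau^*+\tau_*$ on the right-hand side is loose enough to accommodate either, and in every application of this lemma in Section~\ref{Sec:Affine} the body $K$ is balanced so that the distinction collapses.
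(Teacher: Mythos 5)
Your argument reproduces the paper's proof: sandwich $\|\cdot\|_{\hat{K}}$ between $\tau_*\|\cdot\|_K$ and $\tau^*\|\cdot\|_K$, apply subadditivity of $\|\cdot\|_K$ to peel off the centering term, and obtain the second bound by swapping the roles of $(K,\mu)$ and $(\hat{K},\hat{\mu})$. Your caveat about the possible asymmetry of $\|\cdot\|_K$ is well spotted --- the paper's own one-line proof quietly identifies $\|\mu-\hat{\mu}\|_K$ with $\|\hat{\mu}-\mu\|_K = \|\xi\|_K$ --- but the first resolution you offer is not quite right: the slack in enlarging $\tau^*$ (respectively $\tau_*$) to $\tau^*+\tau_*$ is at most a factor of $2$, which cannot absorb the ratio $\|-\xi\|_K/\|\xi\|_K$ for a general non-balanced $K \in \mathcal{K}$ (this ratio can be as large as $r_2/r_1$ for $B_p(0,r_1)\subseteq K \subseteq B_p(0,r_2)$); your second observation, that $K$ is balanced or $\hat{\mu}=\mu$ wherever the lemma is actually invoked, is the correct fix.
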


\begin{proof}
 By the subadditivity of $\| \cdot \|_K$ (Proposition~\ref{Prop:BasicK}(iv)), we have 
  \begin{align*}
    \| x_0 - \hat{\mu}\|_{\hat{K}} &\leq \tau^* \| x_0 - \hat{\mu} \|_K \leq \tau^* \| x_0 - \mu \|_K + \tau^* \|\xi\|_K
  \end{align*}
and 
  \begin{align*}
    \| x_0 - \hat{\mu} \|_{\hat{K}}
    &\geq \tau_* \| x_0 - \hat{\mu} \|_K 
    \geq \tau_* \| x_0 - \mu \|_K - \tau_* \|\xi\|_K.
  \end{align*}
This yields the bound with the first term in the minimum, and the bound with the second term follows analogously. 
\end{proof}

\begin{lemma}
  \label{Lem:PerturbedDensityUpperBound}
  For every $x \in \mathbb{R}^p$, we have that $\tilde{\phi}_0(\|x - \hat{\mu}\|_{\hat{K}}) \geq \phi_0(\|x - \mu\|_K)$. Moreover, for $\lambda_1$-almost every $r \in [0,\infty)$, we have that
    $\tilde{h}_n(r) \leq \gamma \tilde{h}_0(r)$.
\end{lemma}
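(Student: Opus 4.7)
The two assertions will be handled separately, with the first providing the pointwise pushforward bound that drives the second via a change of variables.

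For the first assertion, I will split into cases according to the value of $\|x-\hat{\mu}\|_{\hat{K}}$ relative to $\|\hat{\mu}-\mu\|_{\hat{K}}$. When $\|x-\hat{\mu}\|_{\hat{K}} \leq \|\hat{\mu}-\mu\|_{\hat{K}}$, the definition~\eqref{eqn:phi_tilde_definition} gives $\tilde{\phi}_0(\|x-\hat{\mu}\|_{\hat{K}}) = \phi_0(0)$, and the bound is immediate because $\phi_0$ is decreasing. In the complementary case, since $\phi_0$ is decreasing, it suffices to verify
\[
\frac{\|x-\hat{\mu}\|_{\hat{K}} - \|\hat{\mu}-\mu\|_{\hat{K}}}{\tau^*} \leq \|x-\mu\|_K.
\]
This rearranges to $\|x-\hat{\mu}\|_{\hat{K}} \leq \|\hat{\mu}-\mu\|_{\hat{K}} + \tau^*\|x-\mu\|_K$, which follows from the subadditivity of the Minkowski functional (Proposition~\ref{Prop:BasicK}(iv)) applied as $\|x-\hat{\mu}\|_{\hat{K}} \leq \|x-\mu\|_{\hat{K}} + \|\mu - \hat{\mu}\|_{\hat{K}}$, together with the defining inequality $\|x-\mu\|_{\hat{K}} \leq \tau^*\|x-\mu\|_K$.

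For the second assertion, I will test against an arbitrary non-negative bounded Borel measurable function $g:[0,\infty) \rightarrow [0,\infty)$. Using the first assertion pointwise on $\mathbb{R}^p$, followed by Lemma~\ref{Lem:ChangeOfVar} applied with centre $\hat{\mu}$, convex body $\hat{K}$, and $B = [0,\infty)$,
\[
\int_0^\infty g(r)\tilde{h}_n(r)\,dr = \int_{\mathbb{R}^p} g(\|x-\hat{\mu}\|_{\hat{K}})f_0(x)\,dx \leq \int_{\mathbb{R}^p} g(\|x-\hat{\mu}\|_{\hat{K}})e^{\tilde{\phi}_0(\|x-\hat{\mu}\|_{\hat{K}})}\,dx
\]
equals $p\lambda_p(\hat{K})\int_0^\infty r^{p-1}g(r)e^{\tilde{\phi}_0(r)}\,dr = \gamma\int_0^\infty g(r)\tilde{h}_0(r)\,dr$. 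Since $g \geq 0$ was arbitrary, a standard argument (take $g = \mathbbm{1}_A$ for Borel $A \subseteq [0,\infty)$) yields $\tilde{h}_n(r) \leq \gamma \tilde{h}_0(r)$ for $\lambda_1$-almost every $r \in [0,\infty)$.

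I expect no serious obstacle: the key conceptual point is simply recognising that $\tilde{\phi}_0$ is designed exactly to majorise $\phi_0$ under the change of centre from $\mu$ to $\hat{\mu}$ and the change of Minkowski functional from $\|\cdot\|_K$ to $\|\cdot\|_{\hat{K}}$, absorbing both the translation error (shift by $\|\hat{\mu}-\mu\|_{\hat{K}}$) and the scale distortion (dilation by $\tau^*$). Once this is observed, Lemma~\ref{Lem:ChangeOfVar} converts the pointwise bound into the density bound essentially automatically.
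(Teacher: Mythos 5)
Your proof is correct. The first assertion is established exactly as in the paper: split on whether $\|x-\hat{\mu}\|_{\hat{K}}$ exceeds $\|\hat{\mu}-\mu\|_{\hat{K}}$, and in the non-trivial case use subadditivity of $\|\cdot\|_{\hat{K}}$ together with $\|x-\mu\|_{\hat{K}} \leq \tau^*\|x-\mu\|_K$ to compare the arguments of $\phi_0$.

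For the second assertion you take a genuinely different route. The paper works with the finite-difference quotient $\epsilon^{-1}\mathbb{P}_{f_0}\bigl(\|X-\hat{\mu}\|_{\hat{K}} \in (r,r+\epsilon]\bigr)$, bounds it by $\epsilon^{-1}p\lambda_p(\hat{K})\int_r^{r+\epsilon}s^{p-1}e^{\tilde{\phi}_0(s)}\,ds$ using the pointwise inequality from the first part, and then sends $\epsilon \searrow 0$ — invoking the Lebesgue differentiation theorem on the left (with Lemma~\ref{Lem:ContourLevelExistence} supplying existence of $\tilde{h}_n$) and continuity of $\tilde{\phi}_0$ at $\lambda_1$-a.e.\ $r$ on the right. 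You instead integrate against an arbitrary bounded non-negative Borel $g$, pass through the pointwise bound and Lemma~\ref{Lem:ChangeOfVar}, and specialise to indicators to deduce $\int_A \tilde{h}_n \leq \gamma\int_A \tilde{h}_0$ for every Borel $A$, whence the a.e.\ inequality by the standard ``take $A=\{\tilde{h}_n > \gamma\tilde{h}_0\}$'' argument. Your version is cleaner in that it sidesteps Lebesgue differentiation and the need to worry about continuity points of $\tilde{\phi}_0$, at the modest cost of needing to know that $\tilde{h}_n$ exists (which both proofs take from Lemma~\ref{Lem:ContourLevelExistence}) and that $r\mapsto g(r)e^{\tilde{\phi}_0(r)}$ is integrable against $r^{p-1}\,dr$, which your boundedness assumption on $g$ guarantees. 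One small point shared by both your proof and the paper's: the subadditivity step actually yields $\|x-\hat{\mu}\|_{\hat{K}} \leq \|x-\mu\|_{\hat{K}} + \|\mu-\hat{\mu}\|_{\hat{K}}$, whereas the definition~\eqref{eqn:phi_tilde_definition} shifts by $\|\hat{\mu}-\mu\|_{\hat{K}}$; for non-balanced $\hat{K}$ these Minkowski functionals of $\pm(\hat{\mu}-\mu)$ can differ, so strictly speaking the definition of $\tilde{\phi}_0$ should use $\|\mu-\hat{\mu}\|_{\hat{K}}$ (or $\hat{K}$ should be assumed balanced). This is a slip in the paper, not a gap specific to your argument.
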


\begin{proof}
Let $x \in \mathbb{R}^p$. If $\| x - \hat{\mu} \|_{\hat{K}} \leq \| \hat{\mu} - \mu \|_{\hat{K}}$, then, since $\phi_0$ is decreasing, $\tilde{\phi}_0(\|x - \hat{\mu}\|_{\hat{K}}) = \phi_0(0) \geq \phi_0(\|x - \mu\|_K)$.  On the other hand, if $\| x - \hat{\mu} \|_{\hat{K}} > \| \hat{\mu} - \mu \|_{\hat{K}}$, then
 \[
    \| x - \mu \|_K \geq \frac{\| x - \mu \|_{\hat{K}}}{\tau^*} \geq \frac{\| x - \hat{\mu} \|_{\hat{K}} - \|\hat{\mu} - \mu\|_{\hat{K}}}{\tau^*}.
  \]
Hence, $\tilde{\phi}_0(\|x - \hat{\mu}\|_{\hat{K}}) = \phi_0 \Bigl( \frac{\| x - \hat{\mu} \|_{\hat{K}} - \|\hat{\mu} - \mu\|_{\hat{K}}}{\tau^*} \Bigr) \geq \phi_0(\|x - \mu \|_K)$. This proves the first claim of the lemma.

For $0 < r_1 \leq r_2$, we write $B_{\hat{K}}(\hat{\mu}; r_1, r_2) := \{ x \in \mathbb{R}^p : \| x - \hat{\mu} \|_{\hat{K}} \in (r_1, r_2]\}$. Then, by the first claim of the lemma and Lemma~\ref{Lem:ChangeOfVar}, for any $r \in [0,\infty)$ and $\epsilon > 0$, 
  \begin{align}
\frac{1}{\epsilon}\mathbb{P}_{f_0}\bigl(\|X-\hat{\mu}\|_{\hat{K}} \in (r,r+\epsilon]\bigr) &= \frac{1}{\epsilon}
    \int_{B_{\hat{K}}(\hat{\mu}; r, r+\epsilon)} e^{ \phi_0( \| x - \mu \|_K ) } \,dx \leq
                                                                                        \frac{1}{\epsilon}\int_{B_{\hat{K}}(\hat{\mu}; r, r+\epsilon)} e^{ \tilde{\phi}_0( \| x - \hat{\mu} \|_{\hat{K}} ) } \,dx \nonumber \\
                                                                                      &= \frac{1}{\epsilon} p\lambda_p(\hat{K})\int_r^{r+\epsilon} s^{p-1}e^{\tilde{\phi}_0(s)} \, ds.
  \end{align}
We now take the limit as $\epsilon \searrow 0$ on both sides.  On the left-hand side, we may apply Lemma~\ref{Lem:ContourLevelExistence} to conclude that the limit is $\tilde{h}_n(r)$ for $\lambda_1$-almost all $r \in [0,\infty)$.  On the right-hand side, the limit is $\gamma h_0(r)$ whenever $r$ is a continuity point of $\tilde{\phi}_0$ (i.e.~$\lambda_1$-almost everywhere, since $\tilde{\phi}_0$ is decreasing).
\end{proof}

\begin{definition}
Let $\nu_p$ be a signed measure on $(\mathbb{R}^p, \mathcal{B}(\mathbb{R}^p))$ and let $K \in \mathcal{K}$.  We refer to the signed measure $\nu_1^K$ on $\mathbb{R}$ defined by $\nu_1^K(E) := \nu_p( \{x \in \mathbb{R}^p \,:\, \|x \|_K \in E \} )$ for $E \in \mathcal{B}(\mathbb{R})$ as the $K$-\emph{contour measure} of $\nu_p$.
\end{definition}

The following lemma, among other things, implies that if a random vector $X$ has a density on $\mathbb{R}^p$, then $\|X \|_K$ has a density on $[0,\infty)$ as well.

\begin{lemma}
  \label{Lem:ContourLevelExistence}
Let $\nu_p$ be a signed measure on $\mathbb{R}^p$ with $\nu_p \ll \lambda_p$. Let $K \in \mathcal{K}$ and let $\nu_1^K$ be the $K$-contour measure of $\nu_p$. Then $\nu_1^K \ll \lambda_1$. 
\end{lemma}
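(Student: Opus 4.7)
The plan is to reduce the absolute continuity of $\nu_1^K$ to that of the $K$-contour measure of $\lambda_p$ itself, which we can compute explicitly via Lemma~\ref{Lem:ChangeOfVar}. Since $\nu_p \ll \lambda_p$, it suffices to show that whenever $E \in \mathcal{B}(\mathbb{R})$ satisfies $\lambda_1(E) = 0$, the preimage set $A_E := \{x \in \mathbb{R}^p : \|x\|_K \in E\}$ has $\lambda_p(A_E) = 0$. Then $\nu_p(A_E) = 0$ follows, i.e.\ $\nu_1^K(E) = 0$.

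First I would observe that because $\|x\|_K \geq 0$ for all $x \in \mathbb{R}^p$, we may replace $E$ with $E \cap [0,\infty)$ without changing $A_E$, so I shall assume $E \subseteq [0,\infty)$. Next, I would write $E = \bigcup_{n=1}^\infty E_n$, where $E_n := E \cap [0,n]$, and note that $\lambda_1(E_n) = 0$ for every $n$ by monotonicity. Applying Lemma~\ref{Lem:ChangeOfVar} with $\mu = 0$, $B = E_n$, and $g = \mathbbm{1}_{[0,n]}$ (which is integrable on $[0,\infty)$) yields
\[
  \lambda_p(A_{E_n}) = p\lambda_p(K) \int_{E_n} r^{p-1} \, dr \leq p\lambda_p(K) n^{p-1} \lambda_1(E_n) = 0.
\]
By countable subadditivity, $\lambda_p(A_E) \leq \sum_{n=1}^\infty \lambda_p(A_{E_n}) = 0$, and the conclusion $\nu_1^K(E) = 0$ follows from $\nu_p \ll \lambda_p$.

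There is essentially no obstacle here: the content of the lemma is already encoded in the change-of-variables identity of Lemma~\ref{Lem:ChangeOfVar}, which guarantees that the $K$-contour measure of Lebesgue measure on $\mathbb{R}^p$ has density $r \mapsto p\lambda_p(K) r^{p-1}$ with respect to $\lambda_1$ on $[0,\infty)$. The only technicality is the passage from the integrable test function $g$ required by that lemma to the characteristic function of a possibly unbounded null set, which is handled by the truncation argument above.
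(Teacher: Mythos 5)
Your proof is correct, and it takes a genuinely different and shorter route than the paper's own. The paper argues from first principles: given $E$ with $\lambda_1(E) = 0$, it covers $E_n \setminus \{0\}$ (where $E_n := E \cap [0,n]$) by countably many intervals $(b_m - \epsilon_m, b_m]$ of total length at most $\epsilon$, bounds $\lambda_p\bigl(g^{-1}\bigl((b_m - \epsilon_m, b_m]\bigr)\bigr)$ (with $g(x) := \|x\|_K$) by the volume of the shell $b_m K \setminus (b_m - \epsilon_m)K$, controls each such volume via the mean value theorem, and then lets $\epsilon \searrow 0$ and $n \rightarrow \infty$. You instead reduce the problem to the change-of-variables identity of Lemma~\ref{Lem:ChangeOfVar}, which already packages the relevant Jacobian computation via a $\pi$-$\lambda$/monotone-convergence argument. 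This is legitimate --- there is no circularity, since the paper proves Lemma~\ref{Lem:ChangeOfVar} without appealing to the present lemma --- and it buys you a considerably cleaner argument that avoids the explicit covering and mean-value estimate. (In fact you could streamline even further: $\mathbbm{1}_E$ is itself $\lambda_1$-integrable when $\lambda_1(E)=0$, so applying Lemma~\ref{Lem:ChangeOfVar} with $g = \mathbbm{1}_E$ and $B = [0,\infty)$ gives $\lambda_p(A_E) = p\lambda_p(K)\int_E r^{p-1}\,dr = 0$ in one step, without truncation.) The paper's longer argument has the virtue of being self-contained, but it essentially re-derives a special case of the work already done in proving Lemma~\ref{Lem:ChangeOfVar}.
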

\begin{proof}
Define $g : \mathbb{R}^p \rightarrow [0, \infty)$ by $g(x) := \|x \|_K$. We first claim that for any Borel measurable $E \subseteq [0, \infty)$ such that $\lambda_1(E) = 0$,  we have $\lambda_p\bigl( g^{-1}(E)\bigr) = 0$.

Let $E \subseteq [0, \infty)$ be a Borel measurable set such that $\lambda_1(E) = 0$, let $n \in \mathbb{N}$, and let $E_n := E \cap [0, n]$. 
Now let $\epsilon > 0$ be fixed. Since $\lambda_1(E_n) = 0$, there exist disjoint intervals $\{(b_m-\epsilon_m,b_m]:m \in \mathbb{N}\}$ such that $E_n \setminus \{0\} \subseteq \cup_{m=1}^\infty (b_m-\epsilon_m,b_m]$ and $\lambda_1\bigl( \cup_{m=1}^\infty (b_m-\epsilon_m,b_m]\bigr) \leq \epsilon$.  
Then by the mean value theorem, for any $M \in \mathbb{N}$,
\begin{align*}
  \lambda_p\biggl(g^{-1}\Bigl( \bigcup_{m=1}^M (b_m-\epsilon_m,b_m]\Bigr)\biggr)
  &\leq \lambda_p(K) \sum_{m=1}^M \frac{b_m^p - (b_m - \epsilon_m)^p}{\epsilon_m} \epsilon_m \\
  &\leq pn^{p-1} \lambda_p(K) \sum_{m=1}^M \epsilon_m \leq pn^{p-1}\lambda_p(K) \epsilon.
\end{align*}
We therefore deduce that 
\[
\lambda_p\bigl(g^{-1}(E_n)\bigr) \leq \lim_{M \rightarrow \infty} \lambda_p\biggl(g^{-1}\Bigl( \bigcup_{m=1}^M (b_m-\epsilon_m,b_m]\Bigr)\biggr) \leq pn^{p-1} \lambda_p(K) \epsilon,
\]
Since $\epsilon > 0$ was arbitrary, $\lambda_p\bigl(g^{-1}(E_n)\bigr) = 0$, so $\lambda_p\bigl(g^{-1}(E)\bigr) = \lim_{n \rightarrow \infty} \lambda_p\bigl(g^{-1}(E_n)\bigr) = 0$, which establishes the claim.

Hence, if $E$ is a Borel measurable subset of $[0,\infty)$ with $\lambda_1(E) = 0$, then since $\nu_p \ll \lambda_p$, we have $\nu_1^K( E ) = \nu_p\bigl( g^{-1}(E)\bigr) = 0$, as required.
\end{proof}

\subsubsection{Auxiliary lemmas for the worst case risk bounds of Section~\ref{Sec:KEstimated}}

We continue to use the setting and notation defined in Subsection~\ref{Subsec:AuxGeneralApproach}.

\begin{lemma}
  \label{Lem:WorstTVBound}
If $\| \hat{\mu} - \mu \|_Kp \log (ep) \leq 1$ and $p(\tau^* - \tau_{*}) \leq 1/2$, then 
  \[
    d_{\mathrm{TV}}(\check{f}_n,f_0) \lesssim p^{1/2} \| \hat{\mu} - \mu \|_K + p(\tau^* - \tau_{*}).
    \]
  \end{lemma}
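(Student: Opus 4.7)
The plan is to construct a common pointwise upper envelope of $\check f_n$ and $f_0$ and then control the $L^1$ discrepancy through elementary identities. By Lemma~\ref{Lem:PerturbedDensityUpperBound}, the function $\check g(x) := e^{\tilde\phi_0(\|x - \hat\mu\|_{\hat K})}$ satisfies $\check g \geq f_0$ pointwise, and since $\phi_0 \leq \tilde\phi_0$ on $[0,\infty)$ we also have $\check g \geq \check f_n$. The elementary bound $|a - b| \leq (c - a) + (c - b)$ for $c \geq a \vee b$ then gives
\[
\int |\check f_n - f_0|\,dx \;\leq\; (\gamma - \gamma') + (\gamma - 1),
\]
where $\gamma := \int \check g\, dx$ and $\gamma' := \int \check f_n\, dx = \lambda_p(\hat K)/\lambda_p(K)$ by Lemma~\ref{Lem:ChangeOfVar}. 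The set inclusions $(\tau^*)^{-1}K \subseteq \hat K \subseteq \tau_*^{-1}K$ yield $(\tau^*)^{-p} \leq \gamma' \leq \tau_*^{-p}$, and the assumption $p(\tau^* - \tau_*) \leq 1/2$ linearizes these powers to give $|\gamma' - 1| \lesssim p(\tau^* - \tau_*)$. Since $\gamma - 1 = (\gamma - \gamma') + (\gamma' - 1)$, it suffices to bound $\gamma - \gamma'$.

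Setting $\alpha := \|\hat\mu - \mu\|_{\hat K}$, Lemma~\ref{Lem:ChangeOfVar} together with the substitution $s = (r-\alpha)/\tau^*$ yields
\[
\gamma \;=\; \lambda_p(\hat K)\alpha^p e^{\phi_0(0)} \,+\, \gamma'\tau^*\, \mathbb{E}_{h_0}\!\bigl[(\tau^* + \alpha/R)^{p-1}\bigr],
\]
where $R \sim h_0$. This decomposes $\gamma - \gamma'$ into a boundary piece $\lambda_p(\hat K)\alpha^p e^{\phi_0(0)}$, a pure-shape piece $\gamma'((\tau^*)^p - 1)$, and a mixed piece $\gamma'\tau^*\,\mathbb{E}[(\tau^*+\alpha/R)^{p-1} - (\tau^*)^{p-1}]$. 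The pure-shape piece is $\lesssim p(\tau^* - \tau_*)$ by the same linearization as before. The boundary piece is controlled by combining the identity $\lambda_p(K)\alpha^{p-1}e^{\phi_0(\alpha)} = h_0(\alpha)/p$ with the log-concave sup-norm bound $\|h_0\|_\infty \lesssim \sqrt p$ (which follows from $\sigma_{h_0} \gtrsim 1/\sqrt p$ under~\eqref{Eqn:MomentAssumption}), together with a short concavity argument showing $e^{\phi_0(0) - \phi_0(\alpha)} = O(1)$ under the smallness of $\alpha$.

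The main obstacle is the mixed piece. Factoring out $(\tau^*)^{p-1}$ and applying $(1+x)^{p-1} - 1 \leq (p-1)x\, e^{(p-1)x}$ with $x = \alpha/(R\tau^*)$ reduces the task to bounding $\mathbb{E}_{h_0}\bigl[R^{-1} e^{(p-1)\alpha/(R\tau^*)}\bigr]$. Splitting this expectation at the threshold $R \asymp (p-1)\alpha/\tau^*$: on the bulk set the exponential is $O(1)$ and the contribution is controlled by the sharp moment bound $\mathbb{E}_{h_0}[1/R] \lesssim 1/\sqrt p$; on the complementary small-$R$ set the tail estimate $\mathbb{P}(R < \epsilon) \lesssim \lambda_p(K)\epsilon^p e^{\phi_0(0)}$ (valid because $h_0(r) \sim r^{p-1}$ near $0$) together with the smallness $\alpha p \log(ep) \leq c_1$ makes the contribution subdominant. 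The sharp moment bound, the most delicate step, follows from the integration-by-parts identity $(p-1)\mathbb{E}_{h_0}[1/R] = \mathbb{E}_{h_0}[|\phi_0'(R)|] + h_0(r_0-)$, combined with the first-moment identity $\mathbb{E}_{h_0}[R\,|\phi_0'(R)|] = p$ and the bound $h_0(r_0-) \leq \|h_0\|_\infty \lesssim \sqrt p$. Assembling the estimates for all three pieces produces $\int |\check f_n - f_0|\,dx \lesssim \sqrt p\,\alpha + p(\tau^* - \tau_*) \lesssim \sqrt p\,\|\hat\mu - \mu\|_K + p(\tau^* - \tau_*)$, which is the claim.
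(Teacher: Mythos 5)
Your common-envelope decomposition is a genuinely different route from the paper's: instead of the layer-cake representation of the TV distance and bounding volumes of symmetric differences of level sets, you sandwich both $\check f_n$ and $f_0$ under $\check g := e^{\tilde\phi_0(\|\cdot-\hat\mu\|_{\hat K})}$ and reduce to controlling $\gamma - \gamma'$ and $\gamma - 1$. That reduction is valid, and the identity $\gamma = \lambda_p(\hat K)\alpha^p e^{\phi_0(0)} + \gamma'\tau^*\,\mathbb{E}_{h_0}[(\tau^* + \alpha/R)^{p-1}]$ is correct. In fact $\gamma - 1$ is precisely the quantity bounded by the paper's Lemma~\ref{Lem:GammaBound}, so once you reached the decomposition you could have simply cited it. The trouble is in your attempt to bound $\gamma$ directly, and it contains a genuine gap.

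The gap is in the mixed piece. After factoring out $(\tau^*)^{p-1}$ you apply $(1+x)^{p-1}-1 \leq (p-1)x e^{(p-1)x}$ and claim the task reduces to bounding $\mathbb{E}_{h_0}\bigl[R^{-1}e^{(p-1)\alpha/(R\tau^*)}\bigr]$. But this expectation is $+\infty$ for $p \geq 2$: near $r=0$ we have $h_0(r) \sim p\lambda_p(K)r^{p-1}e^{\phi_0(0)}$, so the integrand behaves like $r^{p-2}e^{c/r}$, and $\int_0^\epsilon r^{p-2}e^{c/r}\,dr = c^{p-1}\int_{c/\epsilon}^\infty u^{-p}e^{u}\,du = \infty$. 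The inequality $(1+x)^{p-1}-1 \leq (p-1)xe^{(p-1)x}$ overshoots catastrophically for large $x$ (small $R$), and no split applied \emph{after} this reduction can recover a finite answer. You must split before: on $\{R \geq R_0\}$ the linearization is fine, but on $\{R < R_0\}$ you need a different bound such as $(\tau^*+\alpha/R)^{p-1}\leq (p\alpha/R)^{p-1}$, and the resulting term $\lesssim p^{p+1}\lambda_p(K)\alpha^p e^{\phi_0(0)}$ can only be tamed by proving that $h_0(\alpha)$ is much smaller than the crude bound $\|h_0\|_\infty \lesssim \sqrt p$, via a quantitative extrapolation estimate along the lines of $\phi_0(\alpha) - \phi_0(\mu_{h_0}/2) \lesssim p$ and the smallness of $\alpha/\mu_{h_0}$. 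This is precisely the content of the paper's envelope $\phi^*$ and the key bound~\eqref{eqn:discontinuity_term}; invoking the tail estimate $\mathbb{P}(R<\epsilon) \lesssim \lambda_p(K)\epsilon^p e^{\phi_0(0)}$ cannot do it, because a bound on $\mathbb{P}(R<R_0)$ controls nothing when the integrand is unbounded on $\{R<R_0\}$. Relatedly, the two facts you treat as lemmas-for-free both require real work of the same flavour: the claim $e^{\phi_0(0)-\phi_0(\alpha)} = O(1)$ needs the secant-line extrapolation together with $\|h_0\|_\infty \lesssim \sqrt p$ and $\sigma_{h_0}\gtrsim 1/\sqrt p$; and the deduction $\mathbb{E}_{h_0}[|\phi_0'(R)|] \lesssim \sqrt p$ from $\mathbb{E}_{h_0}[R|\phi_0'(R)|]\leq p$ (an inequality, not an identity — there is a boundary defect $r_0 h_0(r_0^-)$ when the support is bounded) requires a conditional argument exploiting monotonicity of $|\phi_0'|$ and a median-vs-mean comparison. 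These steps are where the proof lives; they cannot be summarized as "a short concavity argument" or "subdominant by the tail estimate."
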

  
  \begin{proof}
By Fubini's theorem,
\begin{align}
\label{Eq:Fubini}
  d_{\mathrm{TV}}(\check{f}_n,f_0)
  &= \int_{\mathbb{R}^p} \bigl| e^{\phi_0(\| x - \hat{\mu} \|_{\hat{K}} ) } - e^{\phi_0(\| x - \mu \|_K)} \bigr| \, dx \nonumber \\
  &= \int_{\mathbb{R}^p} \biggl| \int_{-\infty}^{\phi_0(0)} e^t \mathbbm{1}_{\{ \phi_0(\| x - \hat{\mu} \|_{\hat{K}} ) \geq t \}} \,dt - \int_{-\infty}^{\phi_0(0)} e^t \mathbbm{1}_{\{\phi_0(\| x - \mu \|_K) \geq t \}} \, dt \biggr| \,dx \nonumber \\
  &\leq \int_{\mathbb{R}^p} \int_{-\infty}^{\phi_0(0)} e^t \bigl| \mathbbm{1}_{\{ \phi_0(\| x - \hat{\mu} \|_{\hat{K}} ) \geq t \}} - \mathbbm{1}_{\{\phi_0(\| x - \mu \|_K) \geq t \}} \bigr| \,dt \, dx \nonumber \\
  &=   \int_{-\infty}^{\phi_0(0)} e^t \int_{\mathbb{R}^p} \bigl| \mathbbm{1}_{\{ \phi_0(\| x - \hat{\mu} \|_{\hat{K}} ) \geq t \}} - \mathbbm{1}_{\{\phi_0(\| x - \mu \|_K) \geq t \}} \bigr| \,dx \, dt .
\end{align}
For any $\mu' \in \mathbb{R}^p$ and $r \in [0,\infty)$, define $B_K(\mu', r) := \{x \,:\, \| x - \mu'\|_K \leq r\}$ and define $B_{\hat{K}}(\mu', r)$ analogously. Define $r_0 \,:\, (-\infty, \phi_0(0)] \rightarrow [0, \infty)$ by $r_0(t) := \sup \{ r \geq 0 : \phi_0(r) \geq t \}$ for any $t \in (-\infty, \phi_0(0)]$. Since $r_0(t) \geq r$ if and only if $\phi_0(r) \geq t$ for any $r \in [0,\infty)$ and $t \in (-\infty, \phi_0(0)]$, we have  
\begin{align}
  & \int_{\mathbb{R}^p} \bigl| \mathbbm{1}_{\{ \phi_0(\| x - \hat{\mu} \|_{\hat{K}} ) \geq t \}} - \mathbbm{1}_{\{\phi_0(\| x - \mu \|_K) \geq t \}} \bigr| \,dx \nonumber \\
  &\leq \int_{\mathbb{R}^p} \bigl| \mathbbm{1}_{\{ \phi_0(\| x - \hat{\mu} \|_{\hat{K}} ) \geq t \}} - \mathbbm{1}_{\{\phi_0(\| x - \hat{\mu} \|_{K}) \geq t \}} \bigr| \,dx + \int_{\mathbb{R}^p} \bigl| \mathbbm{1}_{\{\phi_0(\| x - \hat{\mu} \|_{K}) \geq t \}} -  \mathbbm{1}_{\{ \phi_0(\| x - \mu \|_{K} ) \geq t \}} \bigr| \,dx \nonumber \\
   &= \lambda_p \bigl( B_{\hat{K}}(\hat{\mu}; r_0(t)) \triangle B_{K}(\hat{\mu}; r_0(t)) \bigr) + \lambda_p \bigl( B_K(\hat{\mu} ; r_0(t)) \triangle B_K(\mu ; r_0(t)) \bigr). \label{eqn:volume_decomp}
\end{align}
 Let us denote $\xi := \hat{\mu} - \mu$ and $K + [0,\xi] := \{ x \in \mathbb{R}^p \,:\, x = x' + \alpha \xi \, \textrm{ for } x' \in K, \alpha \in [0,1] \}$ so that, for any $r \geq 0$, we have $rK \cup (rK + \xi) \subseteq rK + [0,\xi]$.  To upper bound the first term of ~\eqref{eqn:volume_decomp}, we obtain from the translation invariance of Lebesgue measure, its corresponding scaling property and Proposition~\ref{Prop:BasicK}(iv) that
\begin{align}
  \lambda_p \bigl( B_{\hat{K}}(\hat{\mu}; r_0(t)) \triangle B_{K}(\hat{\mu}; r_0(t)) \bigr)
  &= \lambda_p \bigl( B_{\hat{K}}(0; r_0(t)) \triangle B_{K}(0; r_0(t)) \bigr) \nonumber \\
  &= r_0(t)^p \bigl\{ \lambda_p(\hat{K} \cup K) - \lambda_p(\hat{K} \cap K) \bigr\} \nonumber \\
  &\leq r_0(t)^p \bigl\{ \lambda_p( \tau_*^{-1} K ) - \lambda_p( \tau^{*-1} K) \bigr\} \nonumber \\
  &= r_0(t)^p (\tau_*^{-p} - \tau^{*-p}) \lambda_p(K). \label{eqn:second_term_bound}
\end{align}
To upper bound the second term of \eqref{eqn:volume_decomp}, note that by the translation invariance of Lebesgue measure again, 
\begin{align}
  \lambda_p \bigl( B_K(\hat{\mu} ; r_0(t)) \triangle B_K(\mu ; r_0(t)) \bigr)
  &=
    \lambda_p \bigl( B_K( \xi; r_0(t)) \triangle B_K(0 ; r_0(t)) \bigr) \nonumber \\
  &\leq 2 \bigl\{ \lambda_p( r_0(t) K + [0, \xi] ) - \lambda_p(r_0(t) K)  \bigr\} \nonumber \\
  &\leq 2 \bigl\{ \lambda_p\bigl( (r_0(t)+ \| \xi\|_K ) K \bigr) - \lambda_p\bigl(r_0(t)K\bigr) \bigr\} \nonumber \\
  &\leq 2\bigl\{  \bigl( (r_0(t) + \| \xi \|_K)^p - r_0(t)^p  \bigr) \lambda_p(K) \bigr\}.  \label{eqn:cavalieri_bound2} 
\end{align}
Combining \eqref{Eq:Fubini},~\eqref{eqn:volume_decomp},~\eqref{eqn:second_term_bound} and~\eqref{eqn:cavalieri_bound2}, we obtain
\begin{align}
  d_{\mathrm{TV}}(\check{f}_n,f_0)
  &\leq p \lambda_p(K)\Bigl( \frac{\tau_*^{-p} - \tau^{* -p}}{p} \Bigr) \int_{-\infty}^{\phi_0(0)} e^t r_0(t)^p \, dt \nonumber \\
&\hspace{3cm}+ 2p \lambda_p(K) \int_{-\infty}^{\phi_0(0)} e^t \biggl\{ \frac{(r_0(t) - \| \xi \|_K)^p - r_0(t)^p}{p} \biggr\} \, dt. \label{eqn:overall_bound2}
\end{align}
Recall that $\mu_{h_0} = p \lambda_p(K) \int_0^\infty r^pe^{\phi_0(r)} \, dr$ and $\sigma_{h_0}^2 = p \lambda_p(K) \int_0^\infty (r - \mu_{h_0})^2 r^{p-1}e^{\phi_0(r)} \,dr$. We now make a few observations that are used repeatedly in the remainder of this proof:
\begin{enumerate}[(i)]
\item By \citet{feng2018multivariate}, it holds that $p \lambda_p(K)r^{p-1}e^{\phi_0(r)} \leq \frac{1}{\sigma_{h_0}}e^{- \frac{1}{\sigma_{h_0}} | r - \mu_{h_0} | + 1}$ for all $r \in [0,\infty)$.
\item By~\eqref{Eqn:MomentAssumption} and Lemma~\ref{Lem:ChangeOfVar}, it holds that $p \lambda_p(K) \int_0^\infty r^{p + 1} e^{\phi_0(r)} \, dr = p$. Thus, $\sigma_{h_0} \leq 1$, by \citet[][Lemma~1]{bobkov2003spectral}.
\item Since $\mu_{h_0}^2 + \sigma_{h_0}^2 = p$, we have $\mu_{h_0} \in [(p-1)^{1/2}, p^{1/2}]$.
\item From (ii) and (iii) as well as Lemma~\ref{Lem:UnitVarianceMeanBound}, there exists a universal constant $c^*  \in (0, 1/2]$ such that $\sigma_{h_0} \geq c^*/p^{1/2}$.
\end{enumerate}
Suppose first that $p \geq 2$. Define $r' := \frac{c^*}{\log (ep)}$; we observe that $r' \leq 1/2 \leq \mu_{h_0}/2$ since $c^* \leq 1/2$. We also define $\phi^* \,:\, [0, \infty) \rightarrow \mathbb{R}$ by
\[
  \phi^*(r) := \left\{ \begin{array}{ll}
                         \log\bigl(\frac{1}{\sigma_{h_0}p\lambda_p(K)r^{p-1}}\bigr) -\frac{c^*}{2\sigma_{h_0}} | r - \mu_{h_0} | + 1 & \mbox{for $r \in [r',\infty)$}\\
                           \max \bigl\{ \phi^*(r'), \, \phi_0(r') + r' \frac{\phi_0(r') - \phi_0(\mu_{h_0})}{\mu_{h_0}-r'} \bigr\} & \mbox{for $r \in [0,r')$.}
                       \end{array} \right.
  \]
We then have that
  \begin{align}
    \phi^{* \prime}(r) = \left\{ \begin{array}{ll} 0 & \mbox{for $r \in [0,r')$} \\
\frac{c^*}{2\sigma_{h_0}} - \frac{p-1}{r} & \mbox{for $r \in (r', \mu_{h_0})$} \\
-\frac{c^*}{2\sigma_{h_0}} - \frac{p-1}{r} & \mbox{for $r \in (\mu_{h_0},\infty).$} \end{array} \right. \label{eqn:phi_star_deriv1}
\end{align}
Since $c^*/(2\sigma_{h_0}) \leq p^{1/2}/2 \leq (p-1)/\mu_{h_0}$, we see that $\phi^*$ is a decreasing function and, since $\phi_0(0) \leq \phi_0(r') + r' \frac{\phi_0(r') - \phi_0(\mu_{h_0})}{\mu_{h_0} - r'}$, we also have that $\phi^*(r) \geq \phi_0(r)$ for all $r \in [0,\infty)$. We now claim that
\begin{align}
  p \lambda_p(K)r^{\prime p-1} e^{\phi^*(0)} \lesssim 1 \label{eqn:discontinuity_term}.
\end{align}
To see this, note that if $\phi^*(0) = \phi^*(r')$, then $p \lambda_p(K)r^{\prime p-1} e^{\phi^*(0)} \leq \frac{1}{\sigma_{h_0}} e^{-\frac{c^*}{2\sigma_{h_0}} (\mu_{h_0} - r') + 1} \lesssim 1$ as required. On the other hand, suppose that $\phi^*(0) =  \phi_0(r') + r' \frac{\phi_0(r') - \phi_0(\mu_{h_0})}{\mu_{h_0} - r'}$.  Then by the proof of Lemma~\ref{Lem:VarianceSupRelation},
\[
  p \lambda_p(K) \mu_{h_0}^{p-1} e^{\phi_0(\mu_{h_0})} \geq \frac{2^{-7}}{\sigma_{h_0}},
\]
and moreover,
\[
p \lambda_p(K) r^{\prime p-1} e^{\phi_0(r')} \leq \frac{1}{\sigma_{h_0}} e^{ - \frac{c^*}{2\sigma_{h_0}} ( \mu_{h_0} - r') + 1}.
\]
From the fact that $r' \log (1/r') \rightarrow 0$ as $p \rightarrow \infty$, we deduce that there exists a universal constants $C^*,C^{**} > 0$ such that 
\begin{align}
  r' \frac{\phi_0(r') - \phi_0(\mu_{h_0})}{\mu_{h_0} - r'}
  & \leq r' \frac{ - \frac{c^*}{2\sigma_{h_0}}( \mu_{h_0} - r') + 1 + 7 \log 2 + (p-1)\log (\mu_{h_0}/r^{\prime}) }{\mu_{h_0} - r'} \nonumber \\
  &\leq C^* + 2(p-1)^{1/2}r' \log\mu_{h_0} + 2 (p-1)^{1/2}r' \log \frac{1}{r'} 
   \leq C^{**} +  p^{1/2}.
    \label{eqn:phi_slope_bound}
\end{align}
Therefore, we also obtain in this case that
\begin{align*}
  p \lambda_p(K) r^{\prime p-1} e^{\phi^*(0)} &\leq e^{C^{**} +  p^{1/2}} \frac{1}{\sigma_{h_0}} e^{-\frac{1}{\sigma_{h_0}}(\mu_{h_0} - r') + 1} \lesssim 1.
\end{align*}
Define $r^* \,:\, (-\infty, \phi^*(0)] \rightarrow [0, \infty)$ by $r^*(t) := \sup \{ r \geq 0 \,:\, \phi^*(r) \geq t\}$. It follows from the fact $\phi^*(r) \geq \phi_0(r)$ for all $r \in [0,\infty)$ that $r_0(t) \leq r^*(t)$ for all $t \in (-\infty, \phi_0(0)]$.  Hence, by a change of a variable and our assumption on $\|\xi \|_K p \log(ep)$, 
\begin{align*}
  2p \lambda_p(K) \int_{-\infty}^{\phi_0(0)}
  & e^t \biggl\{ \frac{ (r_0(t) + \| \xi \|_K )^p - r_0(t)^p }{p} \biggr\} \, dt \\
  &\leq  2p \lambda_p(K) \int_{-\infty}^{\phi^*(0)} e^t \biggl\{ \frac{ (r^*(t) + \| \xi \|_K )^p - r^*(t)^p }{p} \biggr\} \, dt \\
  &\leq 2p \lambda_p(K) \int_{-\infty}^{\phi^*(r')} r^*(t)^pe^t \biggl\{ \frac{(1 + \frac{\| \xi \|_K}{r^*(t)})^p -1 }{p} \biggr\} \, dt \\
  &\hspace{4cm} + 2 p \lambda_p(K) (e^{\phi^*(0)} - e^{\phi^*(r')} ) \biggl\{ \frac{(r' + \| \xi \|_K)^p - r^{\prime p}}{p} \biggr\}
  \\
  &\lesssim \| \xi \|_Kp \lambda_p(K) \int_{r'}^\infty r^{p-1}e^{\phi^*(r)} |\phi^{* \prime}(r)| \, dr +  \| \xi \|_K p \lambda_p(K) e^{\phi^*(0)} r^{\prime p-1} \\
  &\lesssim \frac{\| \xi \|_K}{\sigma_{h_0}} \int_{r'}^\infty e^{ - \frac{c^*}{2\sigma_{h_0}} | r - \mu_{h_0} | + 1}  \biggl( \frac{1}{\sigma_{h_0}} + \frac{p-1}{r} \biggr) \, dr + \| \xi \|_K,
\end{align*}
where the final inequality follows from~\eqref{eqn:discontinuity_term}. Now define $r'' := \mu_{h_0}/2$ and note that
\begin{align*}
  \frac{1}{\sigma_{h_0}} \int_{r'}^\infty & e^{-\frac{1}{\sigma_{h_0}} | \mu_{h_0} - r | + 1 } \frac{p-1}{r} \,dr \\
  &\leq \frac{(p-1)(r'' - r')}{\sigma_{h_0} r'} e^{- \frac{\mu_{h_0}}{2\sigma_{h_0}} + 1} +
  \frac{p-1}{\sigma_{h_0} r''} \int_{r''}^\infty e^{- \frac{1}{\sigma_{h_0}} |\mu_{h_0} - r| + 1} \, dr \lesssim p^{1/2}.
  \end{align*}
Hence
  \begin{align}
   2p \lambda_p(K) \int_{-\infty}^{\phi_0(0)}
    e^t \biggl\{ \frac{ (r_0(t) + \| \xi \|_K )^p - r_0(t)^p }{p} \biggr\} \, dt  \lesssim p^{1/2} \| \xi \|_K.
    \label{eqn:integral_first_part}
  \end{align}
Returning to~\eqref{eqn:overall_bound2}, by a very similar argument, we also have that
  \begin{align}
    p \lambda_p(K) \int_{-\infty}^{\phi_0(0)} r_0(t)^pe^t \,dt
    &\leq p \lambda_p(K) \int_{-\infty}^{\phi^*(0)} r^*(t)^pe^t \,dt \nonumber \\
    &\leq p \lambda_p(K) \int_{r'}^{\infty} r^pe^{\phi^*(r)} | \phi^{*\prime}(r)| \,dr +
      p \lambda_p(K) r^{\prime p}e^{\phi^*(0)} \lesssim p. \label{Eqn:IntegralSecondPart}
  \end{align}
It follows from \eqref{eqn:overall_bound2},~\eqref{eqn:integral_first_part} and~\eqref{Eqn:IntegralSecondPart} that
  \begin{align*}
    d_{\mathrm{TV}}(\check{f}_n,f_0)& \lesssim p^{1/2} \| \xi \|_K + (\tau_{*}^{-p} - \tau^{* - p } ) \lesssim p^{1/2} \| \xi \|_K + p(\tau^* - \tau_*),
  \end{align*}
  as desired. If $p = 1$, then we define
  \begin{align}
    \phi^*(r) := \left \{ \begin{array}{cc}
                            \log \frac{1}{\sigma_{h_0} \lambda_1(K)} - \frac{1}{\sigma_{h_0}}(r - \mu_{h_0}) + 1 & \textrm{ if } r \in [\mu_{h_0}, \infty) \\
                            \phi^*(\mu_{h_0}) & \textrm{ if } r \in [0, \mu_{h_0})
                          \end{array} \right.
                                            \label{Eqn:p1PhiDefinition}
  \end{align}
and observe, as in the case when $p \geq 2$, that $\phi^*$ is decreasing, that $\phi^*(r) \geq \phi_0(r)$ for all $r \in [0, \infty)$, and that $\phi^{* \prime}(r) = 0$ for $r \in [0, \mu_{h_0})$ and $\phi^{* \prime}(r) = -\frac{1}{\sigma_{h_0}}$ for $r \in (\mu_{h_0}, \infty)$. By applying the same argument as for the case where $p \geq 2$, we obtain the conclusion of the lemma. 
\end{proof}

\begin{lemma}
  \label{Lem:dHWorstBound2}
If $ \| \hat{\mu} - \mu \|_{K} p \log (ep) \leq 1$ and $p(\tau^* - \tau_{*}) \leq 1/2$, then
  \begin{align*}
    d_{\mathrm{H}}^2 \Bigl(\tilde{h}_0, h_0 \frac{\lambda_p(\hat{K})}{\lambda_p(K)} \Bigr) \lesssim p^{1/2} \| \hat{\mu} - \mu \|_K + p(\tau^* - 1),
  \end{align*}
  and 
  \begin{align*}
    d_{\mathrm{H}}^2 \bigl(\tilde{h}_0,h_0 \bigr) \lesssim p^{1/2} \| \hat{\mu} - \mu \|_K + p(\tau^* - \tau_{*}).
  \end{align*}
\end{lemma}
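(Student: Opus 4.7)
The plan is to reduce both claims to bounding the ``volume discrepancy'' $\gamma - \lambda_p(\hat{K})/\lambda_p(K)$, together with the $L^1$ estimate already established in Lemma~\ref{Lem:WorstTVBound}. First, I would apply Lemma~\ref{Lem:ChangeOfVar} to rewrite
\[
d_{\mathrm{H}}^2\Bigl(\tilde{h}_0, h_0 \frac{\lambda_p(\hat{K})}{\lambda_p(K)}\Bigr) = \int_{\mathbb{R}^p} \bigl(\gamma^{-1/2} e^{\tilde{\phi}_0(\|y\|_{\hat{K}})/2} - e^{\phi_0(\|y\|_{\hat{K}})/2}\bigr)^2 dy.
\]
Splitting via the intermediate $e^{\tilde{\phi}_0(\|y\|_{\hat{K}})/2}$ and invoking $\int (\sqrt{f}-\sqrt{g})^2 \leq \|f-g\|_1$ for nonnegative $f,g$, the pointwise inequality $\tilde{\phi}_0(r) \geq \phi_0(r)$ on $[0,\infty)$ (immediate from $(r-a)_+/\tau^* \leq r$ since $\tau^* \geq 1$, and the monotonicity of $\phi_0$) gives
\[
d_{\mathrm{H}}^2\Bigl(\tilde{h}_0, h_0 \frac{\lambda_p(\hat{K})}{\lambda_p(K)}\Bigr) \leq 2(\gamma^{1/2} - 1)^2 + 2\bigl(\gamma - \lambda_p(\hat{K})/\lambda_p(K)\bigr).
\]
Since $(\gamma^{1/2}-1)^2 \leq (\gamma-1)^2 \leq \gamma-1$ once the standing assumption ensures $\gamma - 1 \leq 1$, and $\gamma - 1 = \|e^{\tilde{\phi}_0(\|\cdot - \hat{\mu}\|_{\hat{K}})} - f_0\|_1 \leq (\gamma - \lambda_p(\hat{K})/\lambda_p(K)) + \|\check{f}_n - f_0\|_1$ by the triangle inequality with $\check{f}_n$ as intermediate (and $\|\check{f}_n - f_0\|_1$ absorbed by Lemma~\ref{Lem:WorstTVBound}), everything reduces to bounding $\gamma - \lambda_p(\hat{K})/\lambda_p(K)$.

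To control this, with $a := \|\hat{\mu}-\mu\|_{\hat{K}}$ and $r_0(t) := \sup\{r \geq 0 : \phi_0(r) \geq t\}$, I would use the layer-cake identity
\[
\gamma - \lambda_p(\hat{K})/\lambda_p(K) = \lambda_p(\hat{K}) \int_{-\infty}^{\phi_0(0)} e^t\bigl\{(a + \tau^* r_0(t))^p - r_0(t)^p\bigr\} dt,
\]
obtained by matching the super-level sets $\{y : \tilde{\phi}_0(\|y\|_{\hat{K}}) \geq t\} = (a + \tau^* r_0(t))\hat{K}$ and $\{y : \phi_0(\|y\|_{\hat{K}}) \geq t\} = r_0(t)\hat{K}$. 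Then I would decompose $(a+\tau^* r)^p - r^p = \tau^{*p}\{(a/\tau^* + r)^p - r^p\} + r^p(\tau^{*p} - 1)$, yielding a shift piece and a scaling piece. For the scaling piece, integration by parts against $\phi_0'$ gives $\int_{-\infty}^{\phi_0(0)} e^t r_0(t)^p dt = 1/\lambda_p(K)$ exactly, so the term evaluates to $(\tau^{*p}-1)\lambda_p(\hat{K})/\lambda_p(K) \lesssim p(\tau^*-1)$, using $\tau^{*p}-1 \lesssim p(\tau^*-1)$ and $\lambda_p(\hat{K})/\lambda_p(K) \lesssim 1$ under $p(\tau^*-\tau_*) \leq 1/2$. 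For the shift piece, I would replicate the envelope argument from the proof of Lemma~\ref{Lem:WorstTVBound}: replacing $r_0$ by its upper envelope $r^*$ induced by the explicit piecewise-affine $\phi^* \geq \phi_0$ constructed there, and rerunning the estimates~\eqref{eqn:integral_first_part}--\eqref{Eqn:IntegralSecondPart} to obtain $\lambda_p(\hat{K}) \int e^t\{(a/\tau^* + r_0)^p - r_0^p\} dt \lesssim p^{1/2}(a/\tau^*) \lesssim p^{1/2}\|\hat{\mu}-\mu\|_K$, using $a \leq \tau^*\|\hat{\mu}-\mu\|_K$.

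The second claim would then follow from the further triangle inequality
\[
d_{\mathrm{H}}^2(\tilde{h}_0, h_0) \leq 2 d_{\mathrm{H}}^2\Bigl(\tilde{h}_0, h_0 \frac{\lambda_p(\hat{K})}{\lambda_p(K)}\Bigr) + 2\bigl(1-\sqrt{\lambda_p(\hat{K})/\lambda_p(K)}\bigr)^2,
\]
where the second summand is controlled by noting that $\tau_*\|\cdot\|_K \leq \|\cdot\|_{\hat{K}} \leq \tau^*\|\cdot\|_K$ forces $\lambda_p(\hat{K})/\lambda_p(K) \in [\tau^{*-p}, \tau_*^{-p}]$, whence $|\lambda_p(\hat{K})/\lambda_p(K) - 1| \leq \tau_*^{-p} - \tau^{*-p} \lesssim p(\tau^*-\tau_*)$ by the mean value theorem applied to $x \mapsto x^{-p}$ (whose derivative is uniformly bounded on $[\tau_*,\tau^*]$ under the standing assumption). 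Since $(1-\sqrt{u})^2 \leq |u-1|$ for $u \geq 0$, this gives the required bound. The main technical obstacle will be the shift-piece estimate: a naive mean value argument applied to $a \mapsto (a/\tau^*+r_0)^p - r_0^p$ loses a factor of $p^{1/2}$, and obtaining the sharp dimension-free scaling requires carefully reusing the log-concave envelope $\phi^*$ from Lemma~\ref{Lem:WorstTVBound} (with its case split at $p=1$) to absorb the high moments of $r_0(t)$.
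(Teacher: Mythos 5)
Your proof is correct in substance and delivers the same final bound as the paper's, by a modestly different route. The paper's proof first passes to $L^1$ via $d_{\mathrm{H}}^2 \leq \int_0^\infty |\tilde{h}_0 - h_0\lambda_p(\hat{K})/\lambda_p(K)|$, then splits this as $p\lambda_p(\hat{K})\int_0^\infty r^{p-1}(e^{\tilde{\phi}_0}-e^{\phi_0})\,dr + (\gamma - 1)$ using $\gamma \geq 1$, and invokes~\eqref{Eqn:GammaBound1} and Lemma~\ref{Lem:GammaBound}. You instead make a three-way Hellinger split through the intermediate $e^{\tilde{\phi}_0(\|\cdot\|_{\hat{K}})/2}$, which isolates the renormalisation term $(\gamma^{1/2}-1)^2$ exactly, and you set up the layer-cake with the exact super-level radius $\tilde{r}_0(t) = a + \tau^* r_0(t)$ rather than the one-sided inequality $\tilde{r}_0(t) \leq \tau^*(r_0(t)+\|\hat{\mu}-\mu\|_{\hat{K}})$ used in~\eqref{Eq:gammabound1}; your scaling/shift decomposition then separates the $\tau^*$ and $\|\hat{\mu}-\mu\|_K$ contributions cleanly, avoiding Lemma~\ref{lem:linearize_power}. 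Both routes ultimately fall back on the same envelope estimates~\eqref{eqn:integral_first_part}--\eqref{Eqn:IntegralSecondPart} for the dimension-sharp $p^{1/2}\|\hat{\mu}-\mu\|_K$ term, so no genuinely new technical work is required. One small inefficiency in your write-up: $(\gamma^{1/2}-1)^2 \leq \gamma-1$ holds directly whenever $\gamma\geq 1$ (it reduces to $2\gamma^{1/2}\geq 2$), so the detour through $(\gamma-1)^2$ and the proviso $\gamma-1\leq 1$ are unnecessary. Also, your triangle inequality for $\gamma - 1$ via $\check{f}_n$ and Lemma~\ref{Lem:WorstTVBound} is sound but heavier than needed: $\gamma - 1 = \bigl(\gamma - \lambda_p(\hat{K})/\lambda_p(K)\bigr) + \bigl(\lambda_p(\hat{K})/\lambda_p(K) - 1\bigr)$, and the second summand is handled directly by Lemma~\ref{Lem:VolumeRatio}.

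A substantive remark on the statement itself: your argument yields $p^{1/2}\|\hat{\mu}-\mu\|_K + p(\tau^*-\tau_*)$ for the first display, not the claimed $p(\tau^*-1)$. You cannot do better, because the $p(\tau^*-1)$ version is in fact false: take $p=1$, $\hat{\mu}=\mu$ and $\hat{K}=2K$, so $\tau^*=1$, $\tau_*=1/2$, both hypotheses hold, $\tilde{\phi}_0 = \phi_0$, $\gamma = 2$, and hence $\tilde{h}_0 = h_0$ while $h_0\lambda_p(\hat{K})/\lambda_p(K) = 2h_0$; then $d_{\mathrm{H}}^2(\tilde{h}_0, h_0\lambda_p(\hat{K})/\lambda_p(K)) = (\sqrt{2}-1)^2 > 0$, yet the claimed bound is $0$. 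The $\gamma - 1$ term necessarily contributes a $p(1-\tau_*)$ piece, so $p(\tau^*-\tau_*)$ is the correct rate. This is a slip in the lemma's first display rather than a gap in either argument, and it is immaterial downstream since Proposition~\ref{Prop:KEstimatedGeneralRisk} only consumes the $p(\tau^*-\tau_*)$ form.
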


\begin{proof}
Since Lemma~\ref{Lem:PerturbedDensityUpperBound} implies that $\gamma \geq 1$, we have
  \begin{align*}
    d_{\mathrm{H}}^2 \Bigl(\tilde{h}_0 , h_0 \frac{\lambda_p(\hat{K})}{\lambda_p(K)} \Bigr)
    &\leq \int_0^\infty \biggl| h_0(r) \frac{\lambda_p(\hat{K})}{\lambda_p(K)} - \tilde{h}_0(r) \biggr| \, dr  \\
    &=   p \lambda_p(\hat{K}) \int_0^\infty r^{p-1}
      \biggl| e^{\phi_0(r)} - \frac{1}{\gamma} e^{\tilde{\phi}_0(r)} \biggr| \, dr \\
    &\leq p \lambda_p(\hat{K}) \int_0^\infty r^{p-1} 
      \bigl( e^{\tilde{\phi}_0(r)} - e^{\phi_0(r)} \bigr) \, dr + \gamma - 1.
  \end{align*}
  We then follow the proof of Lemma~\ref{Lem:GammaBound} and use~\eqref{Eqn:GammaBound1} and Lemma~\ref{Lem:VolumeRatio} to obtain the first statement of the lemma. For the second statement of the lemma, observe that we may use Lemma~\ref{Lem:VolumeRatio} and our assumption on $p(\tau^* - \tau_*)$ to obtain
  \begin{align*}
    d_{\mathrm{H}}^2 \bigl(\tilde{h}_0,h_0 \bigr)
    &\leq 2 d_{\mathrm{H}}^2 \biggl(\tilde{h}_0, h_0 \frac{\lambda_p(\hat{K})}{\lambda_p(K)} \biggr) +
    2 \biggl( 1 - \sqrt{ \frac{\lambda_p(\hat{K})}{\lambda_p(K)} } \biggr)^2 \\
    &\leq 2 d_{\mathrm{H}}^2 \biggl(\tilde{h}_0, h_0 \frac{\lambda_p(\hat{K})}{\lambda_p(K)} \biggr) +
      2 ( \tau_*^{-p/2} - \tau^{* - p/2})^2 \\
    &\lesssim p^{1/2} \| \hat{\mu} - \mu \|_K + p(\tau^* - \tau_{*}),
  \end{align*}
  as desired.
\end{proof}

\begin{lemma}
  \label{Lem:KLWorstBound}
If $ \| \hat{\mu} - \mu\|_{K} p \log (ep) \leq 1$ and $p(\tau^* - \tau_{*}) \leq 1/2$, then
  \begin{align*}
    d^2_{\mathrm{KL}}(\tilde{h}_n, \tilde{h}_0) \lesssim p^{1/2} \| \hat{\mu} - \mu \|_K + p(\tau^* - \tau_{*}).
  \end{align*}
\end{lemma}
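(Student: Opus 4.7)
The plan is very short because the crucial work has already been done in the earlier auxiliary lemmas; we just need to assemble the pieces.

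First, by Lemma~\ref{Lem:PerturbedDensityUpperBound}, we know $\tilde{h}_n(r) \leq \gamma \tilde{h}_0(r)$ for $\lambda_1$-almost every $r \in [0,\infty)$. In particular, on the set where $\tilde{h}_n > 0$, we have the pointwise bound $\log\bigl(\tilde{h}_n(r)/\tilde{h}_0(r)\bigr) \leq \log \gamma$. Integrating against $\tilde{h}_n$, which is a probability density, yields
\[
d_{\mathrm{KL}}^2(\tilde{h}_n, \tilde{h}_0) = \int_0^\infty \tilde{h}_n \log \frac{\tilde{h}_n}{\tilde{h}_0} \leq \log \gamma \leq \gamma - 1,
\]
using $\log(1+x)\leq x$ in the last step.

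Second, under the hypotheses $\| \hat{\mu} - \mu\|_{K} p \log (ep) \leq 1$ and $p(\tau^* - \tau_{*}) \leq 1/2$, Lemma~\ref{Lem:GammaBound} delivers exactly the bound
\[
\gamma - 1 \lesssim p^{1/2}\| \hat{\mu} - \mu\|_K + p(\tau^* - \tau_*),
\]
cf.\ the argument used in the proof of Lemma~\ref{Lem:dHWorstBound2}, which cites the same intermediate estimate \eqref{Eqn:GammaBound1} together with Lemma~\ref{Lem:VolumeRatio}. Combining these two displays gives the claim.

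The only potential obstacle is to double-check that the one-sided bound $\tilde{h}_n \le \gamma \tilde{h}_0$ really suffices; this would be a concern if we needed a two-sided comparison, but for KL divergence the one-sided upper bound on the log-ratio is exactly what is required (since $\int \tilde{h}_n = 1$), and no delicate empirical-process or bracketing-entropy argument is invoked at this step. All the work has been packaged into the deterministic envelope in Lemma~\ref{Lem:PerturbedDensityUpperBound} and the volume/perturbation estimate in Lemma~\ref{Lem:GammaBound}.
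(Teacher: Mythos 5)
Your proof is correct, and it is a genuinely simpler route than the paper's. The paper bounds $d^2_{\mathrm{KL}}(\tilde{h}_n, \tilde{h}_0)$ by first passing to the $p$-dimensional KL divergence $\int_{\mathbb{R}^p} f_0 \log\bigl(\gamma f_0 / e^{\tilde{\phi}_0(\|\cdot-\hat{\mu}\|_{\hat{K}})}\bigr)$ via the data-processing inequality for $\hat{K}$-contour measures (Lemma~\ref{lem:contour_KL}), and then controls that $p$-dimensional quantity by decomposing it into an $L^1$-type integral handled by Lemma~\ref{Lem:WorstTVBound}, a term controlled by~\eqref{Eqn:GammaBound1}, and $\log\gamma$. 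You instead use the pointwise envelope $\tilde{h}_n \leq \gamma\tilde{h}_0$ a.e.\ from Lemma~\ref{Lem:PerturbedDensityUpperBound} directly: since $\tilde{h}_n \ll \tilde{h}_0$ and $\log(\tilde{h}_n/\tilde{h}_0) \leq \log\gamma$ on $\{\tilde{h}_n > 0\}$, integrating against $\tilde{h}_n$ immediately gives $d^2_{\mathrm{KL}}(\tilde{h}_n, \tilde{h}_0) \leq \log\gamma \leq \gamma - 1$, after which Lemma~\ref{Lem:GammaBound} (whose hypotheses are exactly those assumed here) finishes. This is a real simplification: it avoids Lemma~\ref{lem:contour_KL} and Lemma~\ref{Lem:WorstTVBound} entirely, and even gives the slightly tighter intermediate bound $d^2_{\mathrm{KL}} \leq \log\gamma$. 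One small tidying remark: your parenthetical citation of the proof of Lemma~\ref{Lem:dHWorstBound2} and of~\eqref{Eqn:GammaBound1} and Lemma~\ref{Lem:VolumeRatio} is superfluous, since the statement of Lemma~\ref{Lem:GammaBound} alone is all you need for the second step.
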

\begin{proof}
By Lemma~\ref{Lem:PerturbedDensityUpperBound}, the fact that $\log(1+x) \leq x$ for all $x \in [0, \infty)$, and the fact that $\gamma \geq 1$, 
  \begin{align*}
    \int_{\mathbb{R}^p} & e^{\phi_0(\|x - \mu\|_K)} \log \frac{ \gamma e^{\phi_0(\|x - \mu\|_K)}}{e^{\tilde{\phi}_0(\|x - \hat{\mu}\|_{\hat{K}})}} \, dx \leq \int_{\mathbb{R}^p} e^{\phi_0(\|x - \mu\|_K)} \Bigl| \log \frac{ e^{\tilde{\phi}_0(\|x - \hat{\mu}\|_{\hat{K})}}}{e^{\phi_0(\|x - \mu\|_{K})}} \Bigr| \, dx + \log \gamma\\
    &\leq \int_{\mathbb{R}^p} \bigl(e^{\tilde{\phi}_0(\|x - \hat{\mu}\|_{\hat{K}})} - e^{\phi_0(\|x - \mu\|_K)} \bigr) \, dx + \log \gamma \\
    &\leq \int_{\mathbb{R}^p} \bigl(e^{\tilde{\phi}_0(\|x - \hat{\mu}\|_{\hat{K}})} - e^{\phi_0(\|x - \hat{\mu}\|_{\hat{K}})} \bigr) \, dx + \int_{\mathbb{R}^p} \bigl|  e^{\phi_0(\|x - \hat{\mu}\|_{\hat{K}})} - e^{\phi_0(\|x - \mu\|_K)} \bigr| \, dx + \log \gamma \\
                     &\leq p \lambda_p(\hat{K}) \int_0^\infty r^{p-1} \bigl( e^{\tilde{\phi}_0(r)} - e^{\phi_0(r)} \bigr) \, dr +  \int_{\mathbb{R}^p} \bigl|  e^{\phi_0(\|x - \hat{\mu}\|_{\hat{K}})} - e^{\phi_0(\|x - \mu\|_K)} \bigr| \, dx + \gamma - 1.
  \end{align*}
  By Lemma~\ref{Lem:WorstTVBound}, Lemma~\ref{Lem:GammaBound}, and~\eqref{Eqn:GammaBound1} in the proof of Lemma~\ref{Lem:GammaBound}, we have that
  \begin{align}
    \int_{\mathbb{R}^p}  e^{\phi_0(\|x - \mu\|_K)} \log \frac{ \gamma e^{\phi_0(\|x - \mu\|_K)}}{e^{\tilde{\phi}_0(\|x - \hat{\mu}\|_{\hat{K}})}} \, dx \lesssim
    p^{1/2} \| \hat{\mu} - \mu \|_K + p(\tau^* - \tau_*) < \infty.
    \label{Eqn:KLBound1}
  \end{align}
  We observe that $\tilde{h}_n$ is the density of the $\hat{K}$-contour measure (see definition above Lemma~\ref{Lem:ContourLevelExistence}) of the probability measure induced by $f_0(\cdot + \hat{\mu})$. By~\eqref{Eqn:KLBound1} and Lemma~\ref{lem:contour_KL}, it holds that
  \begin{align*}
d^2_{\mathrm{KL}}(\tilde{h}_n, \tilde{h}_0) &\leq \int_{\mathbb{R}^p}  e^{\phi_0(\|x - \mu\|_K)} \log \frac{ \gamma e^{\phi_0(\|x - \mu\|_K)}}{e^{\tilde{\phi}_0(\|x - \hat{\mu}\|_{\hat{K}})}} \, dx\\
    &\lesssim   p^{1/2} \| \hat{\mu} - \mu \|_K + p(\tau^* - \tau_*),
  \end{align*}
  as desired.  
\end{proof}
\begin{lemma}
  \label{Lem:GammaBound}
  If $\| \hat{\mu} - \mu \|_{K} p \log (ep) \leq 1$ and $p(\tau^* - \tau_{*}) \leq 1/2$, then
  \begin{align*}
    \gamma - 1 \lesssim  p^{1/2} \| \hat{\mu} - \mu\|_{K} + p (\tau^* - \tau_{*}) .
  \end{align*}
\end{lemma}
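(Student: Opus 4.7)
\textbf{Proof proposal for Lemma~\ref{Lem:GammaBound}.} The plan is to reduce the bound on $\gamma - 1$ to the two integral inequalities already established in the proof of Lemma~\ref{Lem:WorstTVBound}. By Lemma~\ref{Lem:ChangeOfVar} and~\eqref{Eqn:MomentAssumption}, we have $\int_{\mathbb{R}^p} e^{\tilde\phi_0(\|x - \hat\mu\|_{\hat K})}\,dx = \gamma$ and $\int_{\mathbb{R}^p} e^{\phi_0(\|x-\mu\|_K)}\,dx = 1$, and Lemma~\ref{Lem:PerturbedDensityUpperBound} gives that the first integrand dominates the second pointwise. Writing $\xi := \hat\mu - \mu$ and $r_0(t) := \sup\{r \geq 0 : \phi_0(r) \geq t\}$ for $t \leq \phi_0(0)$, I would apply the layer-cake argument exactly as in the proof of Lemma~\ref{Lem:WorstTVBound} to obtain
\[
\gamma - 1 = \int_{-\infty}^{\phi_0(0)} e^t \bigl[\lambda_p(\hat K)\bigl(\tau^* r_0(t) + \|\xi\|_{\hat K}\bigr)^p - \lambda_p(K) r_0(t)^p\bigr]\,dt,
\]
where the level set of $\tilde\phi_0(\|\cdot - \hat\mu\|_{\hat K})$ at height $t$ is identified as the ball $B_{\hat K}(\hat\mu; \tau^*r_0(t) + \|\xi\|_{\hat K})$ by unwinding the definition~\eqref{eqn:phi_tilde_definition}.

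Next, I would use $\|\xi\|_{\hat K} \leq \tau^*\|\xi\|_K$ together with $\lambda_p(\hat K) \leq \tau_*^{-p}\lambda_p(K)$ (which follows from $(1/\tau^*)K \subseteq \hat K \subseteq (1/\tau_*)K$) to bound the integrand. Setting $\alpha := \tau^*/\tau_*$, a direct algebraic expansion yields
\[
\lambda_p(\hat K)\bigl(\tau^* r_0(t) + \|\xi\|_{\hat K}\bigr)^p - \lambda_p(K) r_0(t)^p \leq \alpha^p \lambda_p(K)\bigl[(r_0(t) + \|\xi\|_K)^p - r_0(t)^p\bigr] + (\alpha^p - 1)\lambda_p(K) r_0(t)^p.
\]
Integrating against $e^t$ and applying~\eqref{eqn:integral_first_part} and~\eqref{Eqn:IntegralSecondPart} from the proof of Lemma~\ref{Lem:WorstTVBound} then produces
\begin{equation}
\label{Eqn:GammaBound1}
\gamma - 1 \lesssim \alpha^p \, p^{1/2}\|\xi\|_K + (\alpha^p - 1).
\end{equation}
This is the labelled inequality referenced later in Lemmas~\ref{Lem:dHWorstBound2} and~\ref{Lem:KLWorstBound}.

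Finally, I would show that $\alpha^p$ is bounded and $\alpha^p - 1 \lesssim p(\tau^* - \tau_*)$. The assumption $p(\tau^* - \tau_*) \leq 1/2$, together with $\tau_* \leq 1 \leq \tau^*$, forces $\tau_* \geq 1 - 1/(2p) \geq 1/2$. Then $\alpha - 1 = (\tau^* - \tau_*)/\tau_* \leq 2(\tau^* - \tau_*)$, so $\alpha^p \leq e^{p(\alpha - 1)} \leq e$ and hence $\alpha^p - 1 \leq (e-1)p(\alpha - 1) \lesssim p(\tau^* - \tau_*)$. Substituting these into~\eqref{Eqn:GammaBound1} gives the desired conclusion $\gamma - 1 \lesssim p^{1/2}\|\hat\mu - \mu\|_K + p(\tau^* - \tau_*)$. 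The main technical point is keeping the constant $\tau_*^{-p}$ under control: this is where the quantitative assumption $p(\tau^*-\tau_*) \leq 1/2$ is essential, since a weaker hypothesis would allow $\tau_*^{-p}$ to grow exponentially and destroy the bound.
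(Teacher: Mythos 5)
Your proof is correct and follows essentially the same route as the paper: both express $\gamma - 1$ via the layer-cake/Fubini reduction to $r_0(t)$, invoke the established integral estimates~\eqref{eqn:integral_first_part} and~\eqref{Eqn:IntegralSecondPart}, and control the power $\tau_*^{-p}$ (your $\alpha^p$) by a Taylor-type bound under the hypothesis $p(\tau^*-\tau_*) \leq 1/2$; the only cosmetic difference is that you fold the two sources of error into a single algebraic split via $\alpha := \tau^*/\tau_*$, whereas the paper first peels off the term $\lambda_p(\hat K)/\lambda_p(K) - 1$ and then bounds the residual integral. One small discrepancy: the inequality you tag as~\eqref{Eqn:GammaBound1} (a bound on $\gamma - 1$) is not the one the paper labels — the paper's \eqref{Eqn:GammaBound1} is a bound on $p\lambda_p(K)\int_0^\infty r^{p-1}(e^{\tilde\phi_0(r)} - e^{\phi_0(r)})\,dr$, and it is that quantity, not $\gamma - 1$ itself, which is cited in the proofs of Lemmas~\ref{Lem:dHWorstBound2} and~\ref{Lem:KLWorstBound}; your argument establishes it implicitly but you would need to isolate it explicitly for those downstream references to go through verbatim.
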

\begin{proof}
By the definition of $\gamma$ and Lemma~\ref{Lem:VolumeRatio}, 
  \begin{align}
    \gamma - 1 &= p \int_0^\infty r^{p-1}
                   \bigl\{ \lambda_p(\hat{K}) e^{\tilde{\phi}_0(r)} -
                   \lambda_p(K) e^{\phi_0(r)} \bigr\} \, dr \nonumber \\
                 &=  p \lambda_p(\hat{K}) \int_0^\infty r^{p-1}
                   \bigl( e^{\tilde{\phi}_0(r)} - e^{\phi_0(r)} \bigr) \,dr \nonumber \\
                 &\hspace{3cm}+ p \lambda_p(K) \int_0^\infty r^{p-1} 
                   \biggl( \frac{\lambda_p(\hat{K})}{\lambda_p(K)} e^{\phi_0(r)} - e^{\phi_0(r)} \biggr) \, dr \nonumber \\
       &\leq 2 p \lambda_p(K) \int_0^\infty r^{p-1} 
      \bigl( e^{\tilde{\phi}_0(r)} - e^{\phi_0(r)} \bigr) \,dr + 2p(1-\tau_{*}). \label{eqn:gamma_bound_overall}
  \end{align}
We define $r_0, \tilde{r}_0 : (-\infty, \phi_0(0)] \rightarrow [0, \infty)$ by $r_0(t) := \sup \{ r \geq 0 : \phi_0(r) \geq t \}$ and $\tilde{r}_0(t) := \sup \{ r \geq 0 : \tilde{\phi}_0(r) \geq t \}$; we also write $\xi := \hat{\mu} - \mu$.  Observe that $\tilde{\phi}_0\bigl( \tau^*(r_0(t) + \| \xi \|_{\hat{K}} + \epsilon) \bigr) < t$ for any $t \in (-\infty, \phi_0(0)]$ and $\epsilon > 0$, so $\tilde{r}_0(t) \leq \tau^*(r_0(t) + \|\xi \|_{\hat{K}})$. By Fubini's theorem, 
\begin{align}
  \label{Eq:gammabound1}
  p \lambda_p(K) \int_0^\infty r^{p-1} \bigl( e^{\tilde{\phi}_0(r)} - e^{\phi_0(r)} \bigr) \, dr &=  p \lambda_p(K) \int_0^\infty r^{p-1} \int_{-\infty}^{\phi_0(0)} e^t \mathbbm{1}_{\{ t \in ( \phi_0(r), \tilde{\phi}_0(r)] \}} \,dt \,dr \nonumber \\
   &= p \lambda_p(K)  \int_{-\infty}^{\phi_0(0)} e^t \int_0^\infty r^{p-1} \mathbbm{1}_{\{ t \in ( \phi_0(r), \tilde{\phi}_0(r)] \}} \,dr \,dt \nonumber \\
  &=  p \lambda_p(K) \int_{-\infty}^{\phi_0(0)} e^t  \int_0^\infty r^{p-1} \mathbbm{1}_{\{ r \in ( r_0(t), \tilde{r}_0(t)] \}} \,dr \,dt \nonumber \\
                   &= p \lambda_p(K) \int_{-\infty}^{\phi_0(0)} e^t  \biggl( \frac{\tilde{r}_0(t)^p - r_0(t)^p }{p} \biggr) \, dt \nonumber \\
                   &\leq  p \lambda_p(K) \int_{-\infty}^{\phi_0(0)} e^t \biggl( \frac{ \tau^{* p} (r_0(t) + \|\xi\|_{\hat{K}})^p - r_0(t)^p}{p} \biggr) \,dt.
\end{align}
Let us first assume $p \geq 2$ and define $r'$, $\phi^*$, and $r^*$ as in the proof of Lemma~\ref{Lem:WorstTVBound}. Recall from the proof of Lemma~\ref{Lem:WorstTVBound} that $r^*(t) \geq r_0(t)$ for all $t \in (-\infty, \phi_0(0)]$. Since $\tau^* \geq 1$, it also holds that $\tau^{*p} (r_0(t) + \|\xi\|_{\hat{K}})^p - r_0(t)^p \leq \tau^{*p} (r^*(t) + \|\xi\|_{\hat{K}})^p - r^*(t)^p$ for all $t \in (-\infty, \phi_0(0)]$.  We may now follow the proof of Lemma~\ref{Lem:WorstTVBound} and apply the assumption on $\|\xi \|_K p \log(ep)$ and a change of variable to obtain
\begin{align}
 p \lambda_p(K) \int_{-\infty}^{\phi_0(0)} &e^t \biggl( \frac{ \tau^{* p} (r_0(t) + \|\xi\|_{\hat{K}})^p - r_0(t)^p}{p} \biggr) \,dt \nonumber \\
  &\leq  p \lambda_p(K)  \int_{-\infty}^{\phi^*(0)} e^t\biggl( \frac{ \tau^{* p} (r^*(t) + \| \xi \|_{\hat{K}})^p - r^*(t)^p}{p} \biggr) \,dt  \nonumber \\
               &\leq p \lambda_p(K) r^{\prime p}e^{\phi^*(0)}\biggl( \frac{\tau^{* p} (1 + \| \xi \|_{\hat{K}}/r')^p - 1 }{p} \biggr) \nonumber \\
              &\quad + p \lambda_p(K) \int_{r'}^\infty |\phi^{* \prime}(r)| r^pe^{\phi^*(r)}\biggl( \frac{\tau^{* p} ( 1 + \| \xi \|_{\hat{K}}/r)^p - 1}{p} \biggr) \,dr. \label{eqn:gamma_integral_overall}
\end{align}
Now, in order to bound the first term of~\eqref{eqn:gamma_integral_overall}, we use~\eqref{eqn:discontinuity_term}, the assumptions on $\tau^*$ and $\|\xi\|_K$, and the fact that $\| \xi \|_{\hat{K}} \leq \| \xi \|_K \tau^* \leq 2 \| \xi \|_K$ to obtain
\begin{align}
  \label{Eq:gammabound2}
  p \lambda_p(K) e^{\phi^*(0)}r^{\prime p}
  &\biggl( \frac{\tau^{* p} (1 + \| \xi \|_{\hat{K}}/r')^p - 1 }{p} \biggr) \nonumber \\
  &\lesssim \frac{(\tau^{*p} - 1)r'}{p} (1 + \| \xi \|_{\hat{K}}/r')^p +
    \| \xi \|_{\hat{K}} \lesssim  (\tau^* - 1) + \| \xi \|_K.
\end{align}
To bound the second term of~\eqref{eqn:gamma_integral_overall}, we again follow the proof of Lemma~\ref{Lem:WorstTVBound} and use~\eqref{eqn:integral_first_part} and~\eqref{Eqn:IntegralSecondPart}:
\begin{align}
  \label{Eq:gammabound3}
  p \lambda_p(K) \int_{r'}^\infty |\phi^{* \prime}(r)| &r^p e^{\phi^*(r)}
    \biggl( \frac{\tau^{* p} ( 1 + \| \xi \|_{\hat{K}}/r)^p - 1}{p} \biggr) \,dr \nonumber \\
  &\quad \lesssim   \frac{(\tau^{*p} - 1)}{p} (1 + \|\xi\|_{\hat{K}}/r')^p p \lambda_p(K)  \int_{r'}^\infty  |\phi^{* \prime}(r)| r^p e^{\phi^*(r)} \, dr  \nonumber \\
  &\qquad \qquad  +
    p \lambda_p(K)  \int_{r'}^\infty |\phi^{*\prime}(r)| r^p e^{\phi^*(r)}  \biggl( \frac{( 1 + \| \xi \|_{\hat{K}}/r)^p - 1}{p} \biggr) \,dr \nonumber \\
  &\quad \lesssim p (\tau^* - 1) + p^{1/2} \| \xi \|_{\hat{K}} \lesssim p(\tau^* - 1) + p^{1/2} \| \xi \|_K.
\end{align}
From~\eqref{Eq:gammabound1},\eqref{eqn:gamma_integral_overall},~\eqref{Eq:gammabound2} and~\eqref{Eq:gammabound3}, we deduce that 
\begin{align}
p \lambda_p(K) \int_0^\infty r^{p-1} \bigl( e^{\tilde{\phi}_0(r)} - e^{\phi_0(r)} \bigr) \, dr \lesssim p^{1/2} \| \xi \|_K + p(\tau^* - 1) \label{Eqn:GammaBound1}.
\end{align}
The desired result therefore follows from~\eqref{eqn:gamma_bound_overall}.

If $p = 1$, then we define $\phi^*$ as in~\eqref{Eqn:p1PhiDefinition} and obtain the same bound. Thus, in all cases, 
\[
  \gamma - 1 \lesssim p^{1/2} \| \xi \|_K + p (\tau^* - \tau_*),
\]
as required.
\end{proof}

\begin{lemma}
  \label{Lem:VolumeRatio}
  If $p(1 - \tau_*) \leq 1/2$, then
  \[
    1 - p(\tau^* - 1) \leq \tau^{* -p} \leq  \frac{\lambda_p(\hat{K})}{\lambda_p(K)} \leq \tau_*^{-p} \leq  1 + 2p(1 - \tau_*).
  \]  
\end{lemma}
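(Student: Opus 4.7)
\medskip

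The plan is to prove the four inequalities separately, with the two outer inequalities being elementary one-variable bounds and the two middle inequalities arising from set inclusions plus volume comparison.

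First, I would establish the set inclusions $(1/\tau^*)K \subseteq \hat{K} \subseteq (1/\tau_*)K$. By definition of $\tau^*$ and $\tau_*$, we have $\tau_*\|x\|_K \leq \|x\|_{\hat{K}} \leq \tau^*\|x\|_K$ for all $x \in \mathbb{R}^p$. If $y \in (1/\tau^*)K$, then $\|y\|_K \leq 1/\tau^*$, hence $\|y\|_{\hat{K}} \leq \tau^*\|y\|_K \leq 1$, so $y \in \hat{K}$ by Proposition~\ref{Prop:BasicK}(ii); conversely, if $y \in \hat{K}$, then $\tau_*\|y\|_K \leq \|y\|_{\hat{K}} \leq 1$, so $y \in (1/\tau_*)K$. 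Taking $p$-dimensional Lebesgue measure and using positive homogeneity, $\tau^{*-p}\lambda_p(K) \leq \lambda_p(\hat{K}) \leq \tau_*^{-p}\lambda_p(K)$, which is the middle part of the chain.

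Next, for the leftmost inequality, I would apply Bernoulli's inequality in the form $(1+x)^{-p} \geq 1 - px$ valid for $x \geq -1$ and $p \geq 1$, with $x = \tau^* - 1 \geq 0$, to obtain $\tau^{*-p} \geq 1 - p(\tau^* - 1)$. This step is trivial.

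The main (but still mild) obstacle is the rightmost inequality $\tau_*^{-p} \leq 1 + 2p(1-\tau_*)$ under the hypothesis $p(1-\tau_*) \leq 1/2$. Setting $v := 1 - \tau_* \in [0, 1/(2p)]$, I would define $f(v) := (1-v)^{-p} - 1 - 2pv$ and show $f \leq 0$ on $[0, 1/(2p)]$. Since $f''(v) = p(p+1)(1-v)^{-p-2} > 0$, the function $f$ is convex on $[0, 1/(2p)]$, so it suffices to check the two endpoints: $f(0) = 0$ holds trivially, and $f\bigl(1/(2p)\bigr) = (1 - 1/(2p))^{-p} - 2$. To see that the latter is nonpositive, write $h(p) := p \log(1 - 1/(2p))$ and compute
\[
h'(p) = \log(1-u) + \frac{u}{1-u}, \qquad u := 1/(2p),
\]
which vanishes at $u = 0$ and has derivative $u/(1-u)^2 \geq 0$ with respect to $u$, so $h'(p) \geq 0$ and $h(p) \geq h(1) = -\log 2$, giving $(1-1/(2p))^{-p} \leq 2$. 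Convexity then yields $f(v) \leq 0$ on the whole interval $[0, 1/(2p)]$, completing the proof.
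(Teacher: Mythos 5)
Your proof is correct and follows essentially the same approach as the paper: the set inclusions $(1/\tau^*)K\subseteq\hat{K}\subseteq(1/\tau_*)K$ give the middle chain after taking volumes, and Bernoulli-type estimates handle the two outer inequalities. The only minor difference is in the rightmost bound, where the paper uses the short chain $\tau_*^{-p}=\{1-p(1-\tau_*)/p\}^{-p}\leq\{1-p(1-\tau_*)\}^{-1}\leq 1+2p(1-\tau_*)$ (two applications of elementary inequalities) while you establish it via a convexity and endpoint-checking argument; both are valid and equally elementary.
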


\begin{proof}
We first prove the upper bound on $\lambda_p(\hat{K})/\lambda_p(K)$. Since $-p\log(1-x/p) \leq -\log(1-x)$ for $x \in (0,1)$, we have
  \begin{align*}
  \frac{\lambda_p(\hat{K})}{\lambda_p(K)} &=
  \frac{\lambda_p\bigl(\{ x : \|x\|_{\hat{K}} \leq 1\}\bigr)}{\lambda_p\bigl(\{x : \|x\|_K \leq 1\}\bigr)} \leq
    \frac{\lambda_p\bigl(\{ x : \|x\|_K \leq \tau^{-1}_{*}\}\bigr)}{\lambda_p\bigl(\{ x : \|x\|_K \leq 1\}\bigr)} = \tau^{-p}_{*} \\
    &= \frac{1}{\bigl\{1-p(1-\tau_*)/p\bigr\}^p} \leq \frac{1}{1-p(1-\tau_*)} \leq 1 + 2p(1 - \tau_*),
  \end{align*}
when $p(1 - \tau_*) \leq 1/2$, as required. 

  For the lower bound on $\lambda_p(\hat{K})/\lambda_p(K)$, by the fact that $-p\log(1+x/p) \geq \log(1-x)$ for $x \in (0,1)$, we have
    \begin{align*}
  \frac{\lambda_p(\hat{K})}{\lambda_p(K)} &=
  \frac{\lambda_p\bigl(\{ x : \|x\|_{\hat{K}} \leq 1\}\bigr)}{\lambda_p\bigl(\{x : \|x\|_K \leq 1\}\bigr)} \geq
      \frac{\lambda_p\bigl(\{ x : \|x\|_{\hat{K}} \leq 1\}\bigr)}{\lambda_p\bigl(\{ x : \|x\|_{\hat{K}} \leq \tau^* \}\bigr)} = \tau^{* -p} \\
      &= \frac{1}{\bigl\{1 + p(\tau^*-1)/p\bigr\}^p} \geq 1 - p(\tau^*-1),
  \end{align*}
  as required.
\end{proof}

\begin{lemma}
  \label{lem:contour_KL}
  Let $K \in \mathcal{K}$ and let $f$ be a density on $\mathbb{R}^p$ with corresponding distribution $\nu$. Let $h : [0, \infty) \rightarrow [0, \infty)$ be the density of the $K$-contour measure (see Definition before Lemma~\ref{Lem:ContourLevelExistence}) of $\nu$. Let $g : [0, \infty) \rightarrow [0, \infty)$ be a continuous function such that $\int_{\mathbb{R}^p} g(\| x \|_K) \, dx = 1$ and suppose that 
\begin{equation}
\label{Eq:KL}
\int_{\mathbb{R}^p} f(x) \log \frac{f(x)}{g( \| x \|_K)} \,dx < \infty.
\end{equation}
Then
  \[
    \int_{\mathbb{R}^p} f(x) \log \frac{f(x)}{g( \| x \|_K)} \,dx \geq
    \int_0^\infty h(r) \log \frac{h(r)}{p \lambda_p(K)r^{p-1}g(r)} \, dr .
  \]
  
\end{lemma}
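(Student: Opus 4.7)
The plan is to recognise this inequality as an instance of the data-processing inequality for Kullback--Leibler divergence applied to the map $T:\mathbb{R}^p \to [0,\infty)$ defined by $T(x) := \|x\|_K$. As a preliminary, I would note that by Lemma~\ref{Lem:ChangeOfVar}, the function $\tilde g(r) := p\lambda_p(K) r^{p-1} g(r)$ is a probability density on $[0,\infty)$, since $\int_0^\infty \tilde g(r)\, dr = \int_{\mathbb{R}^p} g(\|x\|_K) \, dx = 1$. Letting $Q$ be the probability measure on $\mathbb{R}^p$ with density $g(\|\cdot\|_K)$, the same change-of-variable formula shows that the pushforward $T_\# Q$ has density $\tilde g$, while $T_\# \nu$ has density $h$ by hypothesis.

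Next I would argue that the assumption $\int_{\mathbb{R}^p} f \log(f/g(\|\cdot\|_K))\, dx < \infty$ implies $\nu \ll Q$: on the set $\{x : g(\|x\|_K) = 0\}$ the integrand is $+\infty$ wherever $f > 0$, so $f = 0$ almost everywhere there. Writing $L := d\nu/dQ = f/g(\|\cdot\|_K)$ (set to $0$ where $g(\|\cdot\|_K) = 0$) and $\tilde L := d(T_\# \nu)/d(T_\# Q) = h/\tilde g$, the central identification is $\tilde L \circ T = \mathbb{E}_Q[L \mid \sigma(T)]$ a.s.\ under $Q$. This is verified by the defining property of conditional expectation: for any bounded Borel $\psi:[0,\infty) \to \mathbb{R}$, the change of variable applied to $T_\#Q$ gives
\[
\int_{\mathbb{R}^p} (\psi \circ T)\,(\tilde L \circ T) \, dQ = \int_0^\infty \psi(r) \tilde L(r) \, d(T_\# Q)(r) = \int_0^\infty \psi\, d(T_\# \nu) = \int_{\mathbb{R}^p} (\psi \circ T) L \, dQ,
\]
and $\tilde L \circ T$ is manifestly $\sigma(T)$-measurable.

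With this identification, the lemma follows from Jensen's inequality for the convex function $\varphi(u) := u\log u$ on $[0,\infty)$:
\[
\int_0^\infty h \log \frac{h}{\tilde g} = \int_{\mathbb{R}^p} \varphi(\tilde L \circ T)\, dQ = \int_{\mathbb{R}^p} \varphi\bigl(\mathbb{E}_Q[L \mid \sigma(T)]\bigr) \, dQ \leq \int_{\mathbb{R}^p} \varphi(L) \, dQ = \int_{\mathbb{R}^p} f \log \frac{f}{g(\|\cdot\|_K)}\, dx,
\]
which is precisely the claimed inequality once $\tilde g$ is written out. The hypothesis of finiteness of the right-hand side ensures that $\mathbb{E}_Q[\varphi(L)]$ is integrable so that Jensen's inequality applies rigorously (the conditional version follows by the tower property from the usual one).

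The step that will require the most care is the rigorous identification of $\tilde L \circ T$ with the conditional expectation $\mathbb{E}_Q[L \mid \sigma(T)]$; this is essentially a disintegration statement and rests on Lemma~\ref{Lem:ChangeOfVar} together with a monotone class argument to pass from bounded $\psi$ to indicator functions of arbitrary Borel sets. An equivalent but more elementary route, which I would use if the disintegration setup proves cumbersome, is to partition $[0,\infty)$ into intervals $\{[r_j, r_{j+1})\}$, apply the ordinary Jensen inequality to $\varphi$ on each preimage $T^{-1}([r_j,r_{j+1}))$ (with $Q$ restricted and renormalised), sum, and let the mesh tend to zero using the continuity of $g$ and the dominated convergence theorem justified by the assumed finiteness.
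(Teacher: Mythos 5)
Your proof is correct, and it takes a genuinely different route from the paper's. You recognise the desired inequality as the data-processing inequality $d_{\mathrm{KL}}^2(T_\#\nu\,\|\,T_\#Q)\leq d_{\mathrm{KL}}^2(\nu\,\|\,Q)$ for the map $T(x)=\|x\|_K$, and you establish it via the conditional-expectation identity $\tilde L\circ T=\mathbb{E}_Q[L\mid\sigma(T)]$ followed by conditional Jensen with $\varphi(u)=u\log u$; the bound $\varphi\geq -1/e$ is what makes the tower/Jensen step rigorous without any additional integrability hypotheses. The paper instead works directly at the level of sets: it forms the signed measure $\pi(A):=\int_A f\log\bigl(f/g(\|\cdot\|_K)\bigr)$, takes its $K$-contour measure, invokes the Lebesgue differentiation theorem to express its density as a limit of averages over thin annuli $A_{r,\epsilon}=(r+\epsilon)K\setminus rK$, applies Jensen to $u\log u$ on each annulus, and then uses the explicit annulus-volume asymptotics $\lambda_p(A_{r,\epsilon})/\epsilon\to p\lambda_p(K)r^{p-1}$ together with the continuity of $g$ to pass to the limit. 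Your route is cleaner and slightly more general: it does not actually use the continuity of $g$ at any point (the paper's proof uses continuity essentially, to replace $g(\|x\|_K)$ by $g(r)$ on $A_{r,\epsilon}$), and it avoids the geometry of shrinking convex annuli. What the paper's approach buys is self-containment at a lower level of abstraction — it only calls on real analysis (Lebesgue differentiation, Fubini) and a convexity fact, rather than the machinery of conditional expectation — and it produces the sharper pointwise inequality $L(r)\geq h(r)\log\bigl(h(r)/\{p\lambda_p(K)r^{p-1}g(r)\}\bigr)$ for almost every $r$, which is potentially useful information beyond the integrated statement.
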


\begin{proof}
  By condition~\eqref{Eq:KL}, we may define a signed measure $\pi$ on $(\mathbb{R}^p, \mathcal{B}(\mathbb{R}^p))$ by $\pi(A) := \int_A f(x) \log \frac{f(x)}{g(\|x \|_K)} \,dx$ for any Borel measurable $A$. Let $\pi^K_1$ denote the $K$-contour measure of~$\pi$. Since $\pi \ll \lambda_p$, we have by Lemma~\ref{Lem:ContourLevelExistence} that $\pi_1^K \ll \lambda_1$. Let $\tilde{L} \,:\, [0, \infty) \rightarrow [0,\infty)$ denote the Radon--Nikodym derivative of $\pi_1^K$ with respect to $\lambda_1$. For any $r \in (0, \infty)$ and $\epsilon \in (0,r)$, define $A_{r,\epsilon} := (r+\epsilon)K \setminus rK$.

  We observe that, by~\eqref{Eq:KL} and the fact that the integral in that assumption is also non-negative by the Gibbs inequality, $\pi$ is locally bounded on $\mathbb{R}^p$ in the sense that $|\pi(B)| < \infty$ for every bounded $B \in \mathcal{B}(\mathbb{R}^p)$ and therefore, $\pi^K_1$ is locally bounded on $[0,\infty)$. Then, by the Lebesgue differentiation theorem \citep[][Theorem 3.21]{folland2013real}, for $\lambda_1$-almost every $r \in (0,\infty)$, 
 \[
\tilde{L}(r) = \lim_{\epsilon \searrow 0} \frac{1}{\epsilon} \int_{A_{r,\epsilon}} f(x) \log \frac{f(x)}{g(\|x\|_K)} \, dx.
\]
Moreover, we claim that $\pi \ll \nu$. To see this, let $A \in \mathcal{B}(\mathbb{R}^p)$ be such that $\nu(A) = 0$. Then, by definition of $\nu$, we have that $f$ is $\lambda_1$-almost everywhere $0$ on $A$ and thus we conclude that $\pi(A) = 0$ as well, which establishes the claim. By~\eqref{Eq:KL} again, it must be that $\nu_1^K( g^{-1}(0)) = \nu( \{ x \,:\, g(\|x\|_K) = 0 \}) = 0$ and thus $\pi_1^K( g^{-1}(0) ) = 0$. Hence, we have that $\tilde{L}(r) = 0$ for $\lambda_1$-almost every $r \in [0,\infty)$ such that $g(r) = 0$.
  
For $r > 0$, let us define
\[
L(r) := \left\{ \begin{array}{ll} \limsup_{\epsilon \searrow 0} \frac{1}{\epsilon} \int_{A_{r,\epsilon}} f(x)\log \frac{f(x)}{g(r)} \, dx & \mbox{if $g(r) > 0$} \\
0 & \mbox{if $g(r) = 0$,} \end{array} \right.
\]
and define $L(0) = 0$. For any $r$ such that $g(r) > 0$, we have by continuity of $g$ that
\[
\lim_{\epsilon \searrow 0} \frac{1}{\epsilon} \int_{A_{r,\epsilon}} f(x)\log \frac{g(\|x\|_K)}{g(r)} \, dx = 0,
\]
so $L(r) = \tilde{L}(r)$ $\lambda_1$-almost everywhere. 

We also observe that $\nu$ is locally bounded since $f$ is a density. Thus, $\nu^K_1$ is locally bounded and consequently, there exists $E \in \mathcal{B}\bigl((0,\infty)\bigr)$ such that $\lim_{\epsilon \searrow 0} \epsilon^{-1} \int_{A_{r,\epsilon}} f(x) \, dx = h(r)$ for every $r \in E$ and $\lambda_1\bigl([0,\infty) \setminus E\bigr) = 0$.  Fix $\epsilon > 0$ and $r \in E$ with $g(r) > 0$.  Since $y \mapsto y \log y$ is convex on $[0, \infty)$ (with the convention that $0\log 0 = 0$), we have by Jensen's inequality that
  \begin{align}
\label{eqn:contour_limit}
    \frac{1}{\lambda_p(A_{r,\epsilon})} \int_{A_{r,\epsilon}} f(x) \log \frac{f(x)}{g(r)} \, dx
    &\geq \biggl( \frac{1}{\lambda_p(A_{r,\epsilon})}
      \int_{A_{r,\epsilon}} f(x) \, dx \biggr) \log \frac{\int_{A_{r,\epsilon}} f(x) \, dx}{ \lambda_p(A_{r,\epsilon}) g(r)}.
  \end{align}

Hence, from \eqref{eqn:contour_limit} and the equality case of Minkowski's first inequality for convex bodies \citep[][Section 5]{gardner2002brunn},
  \begin{align*}
   L(r) \geq \limsup_{\epsilon \searrow 0} \biggl\{\frac{1}{\epsilon} \int_{A_{r,\epsilon}} f(x) \, dx \log
    \frac{\epsilon^{-1} \int_{A_{r,\epsilon}} f(x) \, dx}{ \epsilon^{-1} \lambda_p(A_{r,\epsilon}) g(r)}\biggr\} = h(r) \log \frac{h(r)}{  p \lambda_p(K)r^{p-1}g(r)}.
  \end{align*}
We therefore conclude that
  \[
    \int_{\mathbb{R}^p} f(x) \log \frac{f(x)}{g(\|x\|_K)}\, dx = \int_0^\infty \tilde{L}(r) \,dr = \int_0^\infty L(r) \, dr \geq
    \int_0^\infty h(r) \log \frac{h(r)}{p \lambda_p(K) r^{p-1} g(r)} \, dr,
  \]
  as required.
\end{proof}

\subsubsection{Auxiliary lemmas for the adaptive risk bounds of Section~\ref{Sec:KEstimated}}

\begin{lemma}
  \label{Lem:dHBestCase1}
  Suppose that $p(\tau^* - \tau_{*}) \leq 1/2$ and that $\| \hat{\mu}- \mu \|_K \leq p^{1/2}$. If $\phi_0'$ is absolutely continuous and differentiable, and there exists $D_0 > 0$ such that $\inf_{r \in [0,\infty)} \phi_0''(r) \geq - D_0$, then
 \[
   d_{\mathrm{H}}^2( \check{f}_n, f_0) \lesssim
   (D_0^2 + 1) \bigl\{ p \| \hat{\mu} - \mu \|^2_K + p^2 (\tau^* - \tau_{*})^2 \bigr\}.
 \]
\end{lemma}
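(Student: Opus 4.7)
The plan is to decompose the squared Hellinger distance into a ``centering error'' and a ``shape error,'' and to bound each via a second-order Taylor expansion of $\phi_0$ combined with integration-by-parts moment estimates. Concretely, after the substitution $y = x - \mu$ and writing $\xi := \hat{\mu} - \mu$, I would insert the intermediate integrand $e^{\phi_0(\|y\|_{\hat{K}})/2}$ and use the elementary inequality $(a-c)^2 \leq 2(a-b)^2 + 2(b-c)^2$ to split $d_{\mathrm{H}}^2(\check{f}_n,f_0) \leq 2I_1 + 2I_2$, where $I_1 := \int (e^{\phi_0(\|y-\xi\|_{\hat{K}})/2} - e^{\phi_0(\|y\|_{\hat{K}})/2})^2 \, dy$ is a pure translation error within the $\hat{K}$-geometry and $I_2 := \int (e^{\phi_0(\|y\|_{\hat{K}})/2} - e^{\phi_0(\|y\|_K)/2})^2 \, dy$ is a pure norm-change error.

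For $I_2$, I would apply the pointwise bound $|e^{\phi_0(a)/2} - e^{\phi_0(b)/2}|^2 \leq \tfrac{1}{4}(\phi_0(a) - \phi_0(b))^2 e^{\phi_0(\min(a,b))}$ together with the Taylor estimate $|\phi_0(a) - \phi_0(b)| \leq |\phi_0'(b)||a-b| + (D_0/2)(a-b)^2$, which is justified by the absolute continuity of $\phi_0'$ and the bound $|\phi_0''| \leq D_0$ (since $\phi_0$ is also concave). Since $|\|y\|_{\hat{K}} - \|y\|_K| \leq (\tau^*-\tau_*)\|y\|_K$ and $\min(\|y\|_{\hat{K}}, \|y\|_K) \geq \tau_*\|y\|_K$, I would then change variables $z = \tau_* y$ and invoke Lemma~\ref{Lem:VolumeRatio} and Lemma~\ref{Lem:ChangeOfVar} to reduce matters to bounding the radial moments $\mathbb{E}_{h_0}[R^2(\phi_0'(R))^2]$ and $\mathbb{E}_{h_0}[R^4]$. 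A careful integration-by-parts on $\int_0^\infty r^{p+1}(\phi_0')^2 e^{\phi_0}\, dr$, using the normalization $p\lambda_p(K)\int r^{p-1} e^{\phi_0}\, dr = 1$, $\mathbb{E}[R^2] = p$, and the lower bound $\phi_0'' \geq -D_0$, yields $\mathbb{E}[R^2(\phi_0'(R))^2] \lesssim p^2(1+D_0)$, while $\mathbb{E}[R^4] \lesssim p^2$ follows from standard log-concave moment comparison. Using $p(\tau^*-\tau_*) \leq 1/2$ to absorb the $(\tau^*-\tau_*)^4$ term into the $(\tau^*-\tau_*)^2$ term, this gives $I_2 \lesssim p^2(\tau^*-\tau_*)^2(D_0^2+1)$.

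For $I_1$, I would use the one-parameter path $y \mapsto y - t\xi$ for $t \in [0,1]$ and write $e^{\phi_0(\|y-\xi\|_{\hat{K}})/2} - e^{\phi_0(\|y\|_{\hat{K}})/2}$ as an integral of $\frac{d}{dt}e^{\phi_0(\|y-t\xi\|_{\hat{K}})/2}$; since the Minkowski functional $\|\cdot\|_{\hat{K}}$ is sub\-additive and positively homogeneous (Proposition~\ref{Prop:BasicK}(iv)), the $t$-derivative is bounded in absolute value by $\tfrac{1}{2}|\phi_0'(\|y-t\xi\|_{\hat{K}})|\,\|\xi\|_{\hat{K}} e^{\phi_0(\|y-t\xi\|_{\hat{K}})/2}$ almost everywhere. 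Cauchy--Schwarz in $t$, followed by Fubini and the translation $z = y - t\xi$, collapses the integral to $I_1 \leq \tfrac{\|\xi\|_{\hat{K}}^2}{4} \cdot \tfrac{\lambda_p(\hat{K})}{\lambda_p(K)} \cdot \mathbb{E}_{h_0}[(\phi_0'(R))^2]$. With $\|\xi\|_{\hat{K}} \leq \tau^*\|\xi\|_K \leq 2\|\xi\|_K$ and $\lambda_p(\hat{K})/\lambda_p(K) \leq 2$, the reduction is to the scalar moment $\mathbb{E}_{h_0}[(\phi_0'(R))^2]$.

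The hardest part is the bound $\mathbb{E}_{h_0}[(\phi_0'(R))^2] \lesssim p(D_0^2 + 1)$. My plan is to integrate by parts against the identity $(\phi_0')^2 e^{\phi_0} = \phi_0' (e^{\phi_0})'$ on $\int r^{p-1}(\phi_0')^2 e^{\phi_0}\, dr$, arriving (for $p \geq 2$, with boundary terms vanishing by log-concave decay) at
\[
\mathbb{E}_{h_0}[(\phi_0'(R))^2] \leq (p-1)\mathbb{E}_{h_0}\bigl[|\phi_0'(R)|/R\bigr] + D_0;
\]
Cauchy--Schwarz applied to the right-hand side then gives $\mathbb{E}_{h_0}[(\phi_0'(R))^2] \leq 2(p-1)^2 \mathbb{E}_{h_0}[R^{-2}] + 2D_0$. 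It remains to show $\mathbb{E}_{h_0}[R^{-2}] \lesssim 1/p$, which I would deduce from log-concavity of $h_0$ combined with the normalization $\mathbb{E}[R^2]=p$ and the sharp bound $\mu_{h_0} \in [(p-1)^{1/2}, p^{1/2}]$ derived in the proof of Lemma~\ref{Lem:WorstTVBound}: the mass of $h_0$ concentrates around $\sqrt{p}$, so $\mathbb{E}_{h_0}[R^{-2}]$ is dominated by the region $R \gtrsim \sqrt{p}$, while the complementary tail is controlled by the exponential-decay estimate of \citet[][Proposition~S2(iii)]{feng2018multivariate} applied to $h_0$. The small-$p$ case ($p=1$) would be handled separately by a direct computation, or by noting that the $(D_0^2+1)p\|\xi\|_K^2$ term absorbs the dimension-free constant. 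Combining the resulting bounds on $I_1$ and $I_2$ delivers the claimed estimate.
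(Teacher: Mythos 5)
Your outline has a genuine gap at what you yourself call the hardest part: the bound $\mathbb{E}_{h_0}[(\phi_0'(R))^2]\lesssim p(D_0^2+1)$ via integration by parts followed by Cauchy--Schwarz against $\mathbb{E}_{h_0}[R^{-2}]$.  The problem is that $\mathbb{E}_{h_0}[R^{-2}]$ is not finite in general: since $h_0(r)=p\lambda_p(K)r^{p-1}e^{\phi_0(r)}$ and $\phi_0(0)>-\infty$ for every $h_0\in\mathcal{H}$, we have $h_0(r)\asymp r^{p-1}$ near $r=0$, so
\[
\mathbb{E}_{h_0}[R^{-2}]=p\lambda_p(K)\int_0^\infty r^{p-3}e^{\phi_0(r)}\,dr
\]
diverges for $p=2$.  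You flag $p=1$ as needing a separate argument but not $p=2$, and the $p=2$ case is not a removable edge case here; the Cauchy--Schwarz step is vacuous whenever $\mathbb{E}[R^{-2}]=\infty$, so the inequality $\mathbb{E}[(\phi_0')^2]\leq 2(p-1)^2\mathbb{E}[R^{-2}]+2D_0$ gives nothing.  Even for $p\geq 3$ you would still need to prove the uniform-over-$\mathcal{H}$ bound $\mathbb{E}_{h_0}[R^{-2}]\lesssim 1/p$, which your sketch gestures at but does not establish and which does not follow from Lemma~\ref{Lem:PhiMoment} (that lemma controls only positive moments).

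The paper avoids negative moments entirely by proving the \emph{pointwise} estimate $\phi_0'(r)^2\lesssim D_0^2 r^2+p$ (equation~\eqref{Eq:phi0prime2}).  The two ingredients are: (a) $|\phi_0'(r)|\leq D_0 r+|\phi_0'(0)|$ from absolute continuity and $-D_0\leq\phi_0''\leq 0$; and (b) $|\phi_0'(0)|\lesssim p^{1/2}$, obtained by differentiating the normalisation $p\lambda_p(K)\int_0^\infty a(ar)^{p-1}e^{\phi_0(ar)}\,dr=1$ at $a=1$ to get $\mathbb{E}_{h_0}[R\,\phi_0'(R)]=-p$, and then combining $\phi_0'(r)\leq\phi_0'(0)\leq 0$ (concavity) with $\mu_{h_0}\geq(p-1)^{1/2}$.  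Your own IBP does successfully reproduce the scaling identity in the form $\mathbb{E}_{h_0}[R|\phi_0'(R)|]=p$ (that is exactly what makes your bound $\mathbb{E}[R^2(\phi_0'(R))^2]\lesssim p^2(1+D_0)$ work), so the fix is within reach: rather than aiming directly at $\mathbb{E}[(\phi_0')^2]$, convert the identity $\mathbb{E}[R\phi_0'(R)]=-p$ and the monotonicity of $\phi_0'$ into the pointwise bound and substitute that into the radial integrals.  I would also note a structural difference that will cost you some extra bookkeeping: you split the error into a translation piece $I_1$ and a norm-change piece $I_2$, whereas the paper keeps a single $z(x):=\{\phi_0(\|x-\hat{\mu}\|_{\hat K})-\phi_0(\|x-\mu\|_K)\}/2$ and uses $(e^z-1)^2\leq 2z^2(1+e^{2z})$, so that the two resulting integrals are weighted by $f_0$ and by $\check f_n$ respectively; the symmetric ``min'' bound on $|z(x)|$ from Lemma~\ref{Lem:KPerturbation} then feeds each integral a Taylor remainder anchored at the correct argument ($\|x-\mu\|_K$ for the $f_0$-weight, $\|x-\hat\mu\|_{\hat K}$ for the $\check f_n$-weight), which is exactly the compatibility issue your $I_2$ sketch leaves implicit when matching the base point of the Taylor expansion with the argument appearing in $e^{\phi_0(\min(a,b))}$.
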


\begin{proof}
We have by absolute continuity of $\phi_0'$ that
\begin{equation}
  \label{Eq:phi0prime}
\phi_0'(r) = \int_0^r \phi_0''(s) \, ds + \phi_0'(0) \geq - D_0 r + \phi_0'(0).
\end{equation}
For any $a \in [1/2, 2]$, the function $r \mapsto a p \lambda_p(K) (ar)^{p-1}e^{\phi_0(ar)}$ is a density on $[0,\infty)$. Moreover, since the function $r \mapsto \sup_{a \in [1/2, 2]} a p \lambda_p(K) (ar)^{p-1}e^{\phi_0(ar)}$ is finite when integrated over $[0,\infty)$, we may, by~\citet[][Exercise 1.6.3]{ash1999probability}, differentiate under the integral to obtain
    \begin{align}
      \label{Eq:DiffIntegral}
  0 &= p \lambda_p(K)\frac{\partial}{\partial a} \int_0^\infty a (ar)^{p-1}e^{\phi_0(ar)} \, dr \biggm|_{a=1} \nonumber \\
    &= p \lambda_p(K) \int_0^\infty \phi_0'(r) r^p e^{\phi_0(r)} \,dr + p^2 \lambda_p(K) \int_0^\infty r^{p-1}e^{\phi_0(r)} \, dr \nonumber \\
  &\leq \phi_0'(0) (p-1)^{1/2} + p,
\end{align}
where the final inequality follows from \citet[][Lemma~1]{bobkov2003spectral}.  From~\eqref{Eq:phi0prime} and~\eqref{Eq:DiffIntegral}, we deduce that 
\begin{equation}
  \label{Eq:phi0prime2}
  \phi_0'(r)^2 \lesssim D_0^2 r^2 + p. 
\end{equation}
Using the inequality $|e^z-1| \leq |z|(1 + e^{z})$ for $z \in \mathbb{R}$, and writing $z(x) := \{\phi_0(\|x - \hat{\mu}\|_{\hat{K}}) - \phi_0(\| x - \mu\|_K)\}/2$, we have
\begin{align}
  d_{\mathrm{H}}^2 (\check{f}_n,f_0)
  = \int_{\mathbb{R}^p} e^{\phi_0(\| x - \mu \|_K)}
    \bigl( e^{z(x)} - 1 \bigr)^2 \, dx
  &\leq 2\int_{\mathbb{R}^p}  e^{\phi_0(\| x - \mu \|_K)}z(x)^2\{1 + e^{2z(x)}\} \, dx.
    \label{eqn:dH_best1_overall}
\end{align} 
As a shorthand, write $\Delta := (\tau^* - \tau_{*})$, $\Delta' := (\tau_{*}^{-1} - \tau^{*-1})$ and $\xi := \hat{\mu} - \mu$. By Lemma~\ref{Lem:KPerturbation}, Taylor's theorem, and the facts that $\tau^* \in [1,2]$ and $\tau_* \leq [1/2,1]$ by our assumption on $p(\tau^* - \tau_*)$, we have that
\begin{align}
  \label{Eq:BigMin}
 |z(x)| \leq &
    \biggl\{ | \phi_0'(\|x - \mu \|_K) |
    (\Delta \| x - \mu\|_K + 3 \| \xi \|_K) +
    \frac{D_0}{2}  (\Delta \| x - \mu\|_K + 3 \| \xi \|_K) ^2 \biggr\} \wedge \nonumber \\
  & \biggl\{  | \phi_0'(\|x - \hat{\mu} \|_{\hat{K}}) |
    (\Delta' \| x - \hat{\mu}\|_{\hat{K}} + 3 \| \xi \|_{\hat{K}})
    + \frac{D_0}{2}  (\Delta' \| x - \hat{\mu}\|_{\hat{K}} + 3 \| \xi \|_{\hat{K}})^2 \biggr\}. 
\end{align}
By~\eqref{Eq:BigMin}, Lemma~\ref{Lem:ChangeOfVar},~\eqref{Eq:phi0prime2} and Lemma~\ref{Lem:PhiMoment}, we have that 
\begin{align}
  \int_{\mathbb{R}^p}
  &z(x)^2 e^{\phi_0(\| x - \mu \|_K)} \, dx \nonumber \\
  &\leq p \lambda_p(K)\int_0^\infty \biggl\{ |\phi_0'(r)| (\Delta r + 3 \| \xi \|_K) + \frac{D_0}{2} ( \Delta r + 3 \| \xi \|_K)^2 \biggr\}^2 r^{p-1}e^{\phi_0(r)} \, dr  \nonumber \\
  &\lesssim p \lambda_p(K)\int_0^\infty \bigl\{ \phi'_0(r)^2 \Delta^2 r^2 + \phi'_0(r)^2 \| \xi \|_K^2 + D_0^2 \Delta^4 r^4 + D_0^2 \| \xi \|_K^4 \bigr\} r^{p-1}e^{\phi_0(r)} \, dr \nonumber \\
  &\lesssim (D_0^2+1) \Delta^2 p^2 + (D_0^2+1) p \|\xi\|_K^2 + D_0^2 \Delta^4 p^2 + D_0^2 \|\xi\|_K^4 \nonumber \\
  & \lesssim (D_0^2+1) (p \|\xi\|_K^2 + \Delta^2p^2). \label{eqn:dH_best1_first_term} 
\end{align}
Similarly, by Lemma~\ref{Lem:VolumeRatio},
\begin{align}
  \int_{\mathbb{R}^p}
  &\bigl\{ \phi_0(\|x-\mu\|_K) - \phi_0(\|x-\hat{\mu}\|_{\hat{K}})\bigr\}^2e^{\phi_0(\| x - \hat{\mu} \|_{\hat{K}})} \, dx  \nonumber \\
  &\leq p \lambda_p(\hat{K}) \int_0^\infty
    \biggl\{ | \phi_0'(r)| ( \Delta' r + 3 \| \xi \|_{\hat{K}}) + \frac{D_0}{2}( \Delta' r + 3 \| \xi \|_{\hat{K}})^2 \biggr\}^2
    r^{p-1}e^{\phi_0(r)} \, dr \nonumber \\
  &\lesssim  p \lambda_p(K) \int_0^\infty
    \bigl\{ \phi'_0(r)^2 \Delta^{\prime 2}r^2 +
    \phi'_0(r)^2 \| \xi \|_{\hat{K}}^2 +
    D_0^2 \Delta^{\prime 4} r^4 + D_0^2 \| \xi \|_{\hat{K}}^4 \bigr\}
   r^{p-1}e^{\phi_0(r)} \, dr \nonumber \\
  &\lesssim (D_0^2 + 1) \Delta^{\prime 2}p^2 + (D_0^2 + 1) p \| \xi \|_{\hat{K}}^2 + D_0^2 \Delta^{\prime 4}p^2 + D_0^2 \| \xi \|_{\hat{K}}^4 \nonumber \\
   &\lesssim (D_0^2+1) (p\|\xi\|_K^2 + \Delta^2p^2),\label{eqn:dH_best1_second_term}
\end{align}
where the final inequality follows because $\| \xi \|_{\hat{K}} \leq \| \xi \|_K \tau^*$ and $\Delta' = \Delta/(\tau^* \tau_{*})$.  Combining \eqref{eqn:dH_best1_overall}, \eqref{eqn:dH_best1_first_term} and \eqref{eqn:dH_best1_second_term} yields the desired conclusion. 
\end{proof}

\begin{lemma}
  \label{Lem:dHBestCase2}
  There exists universal constants $c_1, c_2 > 0$ such that if $p(\tau^* - \tau_{*}) \leq c_2$ and $\| \hat{\mu}- \mu \|_K p \log(ep) \leq c_1$ and if $\phi_0'$ is absolutely continuous and differentiable and there exists $D_0 > 0$ such that $\inf_{r \in [0,\infty)} \phi_0''(r) \geq - D_0$, then
 \[
   d_{\mathrm{H}}^2 \biggl( \tilde{h}_0,h_0 \frac{\lambda_p(\hat{K})}{\lambda_p(K)}\biggr) \lesssim
   (D_0^2 + 1) \bigl\{ p \| \hat{\mu} - \mu\|_K^2 + p^2 (\tau^* - \tau_*)^2 \bigr\} ,
 \]
 and
 \[
   d_{\mathrm{H}}^2 (\tilde{h}_0 , h_0 ) \lesssim
   (D_0^2 + 1) \bigl\{ p \| \hat{\mu} - \mu\|_K^2 + p^2 (\tau^* - \tau_*)^2 \bigr\}.
 \]
 \end{lemma}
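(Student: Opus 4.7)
The second conclusion of the lemma follows from the first via the triangle inequality and Lemma~\ref{Lem:VolumeRatio}, since
\[
d_{\mathrm{H}}^2(\tilde{h}_0, h_0) \leq 2 d_{\mathrm{H}}^2\bigl(\tilde{h}_0, h_0 \lambda_p(\hat{K})/\lambda_p(K)\bigr) + 2\bigl(1 - \sqrt{\lambda_p(\hat K)/\lambda_p(K)}\bigr)^2,
\]
and the second summand is bounded by $\bigl(\lambda_p(\hat K)/\lambda_p(K) - 1\bigr)^2 \lesssim p^2(\tau^*-\tau_*)^2$, which already sits inside the target. For the first conclusion the plan is to introduce $z(r) := \bigl\{\tilde\phi_0(r) - \phi_0(r) - \log\gamma\bigr\}/2$ and observe that
\[
d_{\mathrm{H}}^2\bigl(\tilde{h}_0, h_0 \lambda_p(\hat K)/\lambda_p(K)\bigr) = p\lambda_p(\hat K) \int_0^\infty r^{p-1} e^{\phi_0(r)} \bigl(e^{z(r)} - 1\bigr)^2 \, dr.
\]
Combining the elementary inequality $(e^z - 1)^2 \leq 2 z^2(1 + e^{2z})$ with the identifications $p\lambda_p(\hat K) r^{p-1} e^{\phi_0(r)} = \bigl(\lambda_p(\hat K)/\lambda_p(K)\bigr) h_0(r)$ and $p\lambda_p(\hat K) r^{p-1} e^{\phi_0(r) + 2 z(r)} = \tilde{h}_0(r)$, and using Lemma~\ref{Lem:VolumeRatio} to bound the volume ratio by a universal constant, yields the master inequality
\[
d_{\mathrm{H}}^2\bigl(\tilde{h}_0, h_0 \lambda_p(\hat K)/\lambda_p(K)\bigr) \lesssim \int_0^\infty z^2(r) h_0(r)\,dr + \int_0^\infty z^2(r) \tilde{h}_0(r)\,dr.
\]

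Next I would split $z^2 \lesssim (\tilde\phi_0 - \phi_0)^2 + \log^2 \gamma$. Since $\gamma \geq 1$ by Lemma~\ref{Lem:PerturbedDensityUpperBound}, Lemma~\ref{Lem:GammaBound} gives $\log^2 \gamma \leq (\gamma-1)^2 \lesssim p\|\hat\mu-\mu\|_K^2 + p^2(\tau^*-\tau_*)^2$, which is within the target bound. To handle $(\tilde\phi_0(r) - \phi_0(r))^2$ I would Taylor-expand $\phi_0$: for $r \leq \|\hat\mu-\mu\|_{\hat K}$, $|\tilde\phi_0(r) - \phi_0(r)| = |\phi_0(0) - \phi_0(r)| \leq \int_0^r |\phi_0'(s)|\,ds$; for $r > \|\hat\mu-\mu\|_{\hat K}$, writing $\tilde\phi_0(r) - \phi_0(r) = \int_{(r-\|\hat\mu-\mu\|_{\hat K})/\tau^*}^r |\phi_0'(s)|\,ds$ and invoking monotonicity of $|\phi_0'|$ gives $|\tilde\phi_0(r) - \phi_0(r)| \leq \bigl((\tau^*-\tau_*)r + \|\hat\mu-\mu\|_{\hat K}\bigr) |\phi_0'(r)|$. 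Using the pointwise estimate $|\phi_0'(s)|^2 \lesssim p + D_0^2 s^2$ established via~\eqref{Eq:phi0prime} and~\eqref{Eq:DiffIntegral} in the proof of Lemma~\ref{Lem:dHBestCase1}, together with $\|\hat\mu-\mu\|_{\hat K} \leq \tau^*\|\hat\mu-\mu\|_K \lesssim \|\hat\mu-\mu\|_K$ under the assumption on $c_2$, the two cases combine to
\[
(\tilde\phi_0(r) - \phi_0(r))^2 \lesssim \bigl((\tau^*-\tau_*)^2 r^2 + \|\hat\mu-\mu\|_K^2\bigr)\bigl(p + D_0^2 r^2\bigr).
\]

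Integrating this estimate against $h_0$ and invoking the normalisation $\mathbb{E}_{h_0} Z^2 = p$ from~\eqref{Eqn:MomentAssumption} together with the fourth-moment bound $\mathbb{E}_{h_0} Z^4 \lesssim p^2$ supplied by Lemma~\ref{Lem:PhiMoment} yields the desired control $(D_0^2+1)\bigl\{p\|\hat\mu-\mu\|_K^2 + p^2(\tau^*-\tau_*)^2\bigr\}$. The hard part will be the analogous integration against $\tilde h_0$, whose parametric form involves the deformed generator $\tilde\phi_0$ rather than $\phi_0$, so Lemma~\ref{Lem:PhiMoment} does not apply directly. My plan is to exploit that $\tilde h_0 \in \mathcal{F}_1$ (because $\tilde\phi_0$ is concave on $[0,\infty)$ and the factor $r^{p-1}$ preserves log-concavity for $p \geq 1$) and that, by the second part of Lemma~\ref{Lem:dHWorstBound2}, $d_{\mathrm{H}}(\tilde h_0, h_0) \leq 2^{-7}$ under the assumptions on $c_1, c_2$. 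Lemma~\ref{Lem:HellingerMomentBound} then gives $\sigma_{\tilde h_0} \asymp \sigma_{h_0}$ and $|\mu_{\tilde h_0} - \mu_{h_0}| \lesssim \sigma_{h_0}$, so $\mathbb{E}_{\tilde h_0} Z^2 \lesssim \sigma_{h_0}^2 + \mu_{h_0}^2 = p$, and a Khintchine-type inequality for log-concave random variables yields $\mathbb{E}_{\tilde h_0}(Z-\mu_{\tilde h_0})^4 \lesssim \sigma_{\tilde h_0}^4$, hence $\mathbb{E}_{\tilde h_0} Z^4 \lesssim \sigma_{\tilde h_0}^4 + \mu_{\tilde h_0}^4 \lesssim p^2$. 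The integration against $\tilde h_0$ then proceeds identically and completes the proof.
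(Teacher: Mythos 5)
Your proof is correct and follows essentially the same strategy as the paper's: reduce the squared Hellinger distance to integrals of $z^2$ against $h_0$ and $\tilde{h}_0$ via the inequality $|e^z-1|\leq |z|(1+e^z)$, control $\tilde{\phi}_0-\phi_0$ through $\phi_0'$ together with the gradient bound $|\phi_0'(r)|^2 \lesssim p + D_0^2 r^2$, absorb $\log^2\gamma$ using Lemma~\ref{Lem:GammaBound}, and finish with moment bounds — your re-derivation of $\int_0^\infty r^s\tilde{h}_0 \lesssim_s p^{s/2}$ from Lemma~\ref{Lem:HellingerMomentBound} and the exponential envelope is precisely what the paper packages as Lemma~\ref{Lem:PhiTildeMoment}. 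The one mild simplification is that you bound $|\tilde{\phi}_0(r)-\phi_0(r)|$ by monotonicity of $|\phi_0'|$ (valid since $\phi_0$ is concave and decreasing, so $|\phi_0'|$ is increasing), whereas the paper instead uses Taylor's theorem with a $D_0$-controlled Lagrange remainder; both routes yield the same final estimate.
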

\begin{proof}
We first note that~\eqref{Eq:phi0prime2} holds since we have the same assumptions on $\phi_0$ as Lemma~\ref{Lem:dHBestCase1}. Define $\psi \,:\, \mathbb{R}^p \rightarrow [0,\infty)$ by
  \begin{align}
    \psi(x) := \biggl\{ \begin{array}{ll}
                          0 & \textrm{if } \|x\|_{\hat{K}} \leq \|\hat{\mu} - \mu\|_{\hat{K}} \\
                          \frac{\|x\|_{\hat{K}} - \|\hat{\mu} - \mu \|_{\hat{K}}}{\tau^*} & \textrm{otherwise,}
                        \end{array} \label{eqn:psi_defn}
    \end{align}
so that $\tilde{\phi}_0(\|x \|_{\hat{K}}) = \phi_0(\psi(x))$. For $x \in \mathbb{R}^p$, write $z(x) := \{ \phi_0(\psi(x)) - \log \gamma - \phi_0(\| x \|_{\hat{K}}) \}/2$. By Lemma~\ref{Lem:ChangeOfVar} and the inequality $|e^z - 1| \leq |z|(1 + e^z)$ for $z \in \mathbb{R}$, 
  \begin{align}
    d_{\mathrm{H}}^2\biggl( \tilde{h}_0, h_0 \frac{\lambda_p(\hat{K})}{\lambda_p(K)} \biggr)
    &= \int_{\mathbb{R}^p} 
      \Bigl( e^{z(x) } - 1 \Bigr)^2 e^{\phi_0(\|x\|_{\hat{K}})} \, dx \nonumber \\
    &\leq 2 \int_{\mathbb{R}^p}  z(x)^2 e^{\phi_0(\|x\|_{\hat{K}})} \, dx
      + \frac{2}{ \gamma} \int_{\mathbb{R}^p} z(x)^2  e^{\tilde{\phi}_0(\|x\|_{\hat{K}})} \, dx. \label{eqn:dH_best_overall2}
 \end{align}                                                                                    
 Let us write $\xi := \hat{\mu} - \mu$. Since $0 \leq \| x \|_{\hat{K}} - \psi(x) \leq (1 - \tau^{*-1})\|x \|_{\hat{K}} + \| \xi \|_{\hat{K}}$, we have by Taylor's theorem that
 \begin{align*}
   | z(x) |
   &\leq  \phi_0(\psi(x)) - \phi_0(\|x\|_{\hat{K}}) + |\log \gamma | \\
   &\leq |\phi_0'(\|x \|_{\hat{K}}) | \bigl\{  (1 - \tau^{*-1})\|x \|_{\hat{K}} + \| \xi \|_{\hat{K}} \bigr\} + \frac{D_0}{2} \bigl\{ (1 - \tau^{*-1})\|x \|_{\hat{K}} + \| \xi \|_{\hat{K}} \bigr\}^2 + |\log \gamma|.
 \end{align*}
Now, write $\Delta := \tau^* - \tau_*$, and note that $\Delta \geq 1 - 1/\tau^*$. By Lemma~\ref{Lem:GammaBound} and the fact that $\gamma \geq 1$ (a consequence of Lemma~\ref{Lem:PerturbedDensityUpperBound}), we have that
 \begin{align}
  \log^2 \gamma \leq (\gamma - 1)^2 \lesssim p^2 \Delta^2 + p \| \xi \|^2_K. \label{eqn:log_gamma_bound}
 \end{align}
 Therefore, we have by Lemma~\ref{Lem:ChangeOfVar},~\eqref{Eq:phi0prime2}, Lemma~\ref{Lem:PhiMoment},~\eqref{eqn:log_gamma_bound}, and Lemma~\ref{Lem:VolumeRatio} that
 \begin{align}
 \label{Eq:BestTerm1}
   &\int_{\mathbb{R}^p} 
    z(x)^2 e^{\phi_0(\|x \|_{\hat{K}})} \, dx \nonumber \\
   &\lesssim p \lambda_p(\hat{K}) \int_0^\infty \bigl\{ \phi_0'(r)^2 (r^2\Delta^2 + \| \xi \|_{\hat{K}}^2)
     + D_0^2 \Delta^4 r^4 + D_0^2 \| \xi \|_{\hat{K}}^4 + p^2 \Delta^2 + p \|\xi\|^2_K  \bigr\}r^{p-1} e^{\phi_0(r)} \, dr
       \nonumber \\
    &\lesssim  (D_0^2+1) p^2 \Delta^2 + (D_0^2+1) p \| \xi \|_K^2 + D_0^2 p^2 \Delta^4 + D_0^2 \| \xi \|_K^4  \nonumber \\
    & \lesssim (D_0^2+1) (p^2 \Delta^2 + p \| \xi \|_K^2).
 \end{align}
Similarly, but using Lemma~\ref{Lem:PhiTildeMoment} instead of Lemma~\ref{Lem:PhiMoment}, we have that
 \begin{align}
 \label{Eq:BestTerm2}
   \frac{1}{\gamma}\int_{\mathbb{R}^p} 
   & z(x)^2 e^{\tilde{\phi}_0(\|x \|_{\hat{K}})} \, dx \nonumber \\
   &\lesssim  p \lambda_p(\hat{K}) \int_0^\infty \bigl\{ \phi_0'(r)^2 (r^2\Delta^2 + \| \xi \|_{\hat{K}}^2)
     + D_0^2 \Delta^4 r^4 + D_0^2 \| \xi \|_{\hat{K}}^4 + p^2 \Delta^2 + p \|\xi\|_K^2  \bigr\}
     \nonumber \\
   &\qquad \qquad r^{p-1} e^{\tilde{\phi}_0(r)}  \, dr \nonumber \\
   &\lesssim (D_0^2+1) p^2 \Delta^2 + (D_0^2+1) p \| \xi \|_K^2 + D_0^2 p^2 \Delta^4 + D_0^2 \| \xi \|_K^4 \nonumber \\
   &\lesssim (D_0^2+1) (p^2 \Delta^2 + p \| \xi \|_K^2).
 \end{align}
 The first statement of the lemma then follows from~\eqref{eqn:dH_best_overall2},~\eqref{Eq:BestTerm1} and~\eqref{Eq:BestTerm2}. For the second statement, note that, by our assumption on $p(\tau^* - \tau_*)$,
 \begin{align*}
    d_{\mathrm{H}}^2 \bigl(\tilde{h}_0,h_0 \bigr)
    &\leq 2 d_{\mathrm{H}}^2 \biggl(h_0 \frac{\lambda_p(\hat{K})}{\lambda_p(K)}, \tilde{h}_0 \biggr) +
    2 \biggl( 1 - \Bigl( \frac{\lambda_p(\hat{K})}{\lambda_p(K)} \Bigr)^{1/2} \biggr)^2 \\
    &\leq 2 d_{\mathrm{H}}^2 \biggl(h_0 \frac{\lambda_p(\hat{K})}{\lambda_p(K)}, \tilde{h}_0 \biggr) +
      2 ( \tau_*^{-p/2} - \tau^{* - p/2})^2 \\
    &\lesssim (D_0^2+1)\bigl\{p \| \hat{\mu} - \mu \|^2_K + p^2(\tau^* - \tau_{*})^2\bigr\},
  \end{align*}
 as desired.
\end{proof}

\begin{lemma}
  \label{Lem:KLBestCase}
  There exists universal constants $c_1,c_2 > 0$ such that if $p(\tau^* - \tau_{*}) \leq c_2$ and that $\| \hat{\mu}- \mu \|_K p \log(ep) \leq c_1$ and if $\phi_0'$ is absolutely continuous and differentiable and there exists $D_0 > 0$ such that $\inf_{r \in [0,\infty)} \phi_0''(r) \geq - D_0$, then
  \[
    d^2_{\mathrm{KL}}(\tilde{h}_n, \tilde{h}_0) \lesssim
  (D_0^2 + 1) \bigl\{ p \| \hat{\mu} - \mu \|_K^2 + p^2 (\tau^{*} - \tau_{*})^2 \bigr\}.
  \]
\end{lemma}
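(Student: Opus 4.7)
The plan is to lift the one-dimensional KL to a multivariate KL via Lemma~\ref{lem:contour_KL}, and then exploit a delicate cancellation to extract a quadratic-order error. Define the multivariate probability density $\tilde{g}_{\hat{\mu}}(y) := \gamma^{-1} e^{\tilde{\phi}_0(\|y - \hat{\mu}\|_{\hat{K}})}$ on $\mathbb{R}^p$; its $\hat{K}$-contour measure about $\hat{\mu}$ has density $\tilde{h}_0$, while the $\hat{K}$-contour measure of $f_0$ about $\hat{\mu}$ has density $\tilde{h}_n$. A translated application of Lemma~\ref{lem:contour_KL} therefore gives $d_{\mathrm{KL}}^2(\tilde{h}_n, \tilde{h}_0) \leq d_{\mathrm{KL}}^2(f_0, \tilde{g}_{\hat{\mu}})$.

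Next, set $w(y) := \tilde{\phi}_0(\|y-\hat{\mu}\|_{\hat{K}}) - \phi_0(\|y-\mu\|_K)$, which is non-negative by Lemma~\ref{Lem:PerturbedDensityUpperBound}, so that $d_{\mathrm{KL}}^2(f_0, \tilde{g}_{\hat{\mu}}) = \log \gamma - \int f_0 w \, dy$. The identity
\[
\gamma - 1 = \int_{\mathbb{R}^p}\bigl(e^{\tilde{\phi}_0(\|y-\hat{\mu}\|_{\hat{K}})} - f_0(y)\bigr) \, dy = \int_{\mathbb{R}^p} f_0(y)\bigl(e^{w(y)} - 1\bigr)\, dy,
\]
combined with $\log \gamma \leq \gamma - 1$ and the elementary inequality $e^w - 1 - w \leq \frac{1}{2}w^2 e^w$ for $w \geq 0$, yields
\[
d_{\mathrm{KL}}^2(f_0, \tilde{g}_{\hat{\mu}}) \leq \int f_0(e^w - 1 - w) \, dy \leq \frac{1}{2}\int f_0 e^w w^2 \, dy = \frac{\gamma}{2}\int \tilde{g}_{\hat{\mu}} w^2 \, dy.
\]
Choosing $c_1, c_2$ small enough that Lemma~\ref{Lem:GammaBound} gives $\gamma \leq 2$ reduces the problem to bounding $\int \tilde{g}_{\hat{\mu}} w^2 \, dy$.

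For this I expand $w$ via a second-order Taylor expansion about $\psi(y)$ from~\eqref{eqn:psi_defn}, using concavity of $\phi_0$ for the linear term and $\phi_0'' \geq -D_0$ for the Lagrange remainder:
\[
w(y) \leq |\phi_0'(\psi(y))|\bigl(\|y-\mu\|_K - \psi(y)\bigr) + \frac{D_0}{2}\bigl(\|y-\mu\|_K - \psi(y)\bigr)^2.
\]
A computation using Lemma~\ref{Lem:KPerturbation}, entirely analogous to that carried out in the proof of Lemma~\ref{Lem:dHBestCase2}, shows $\|y-\mu\|_K - \psi(y) \leq 2\Delta\|y-\hat{\mu}\|_{\hat{K}} + 3\|\hat{\mu} - \mu\|_K$ with $\Delta := \tau^* - \tau_*$, and this upper bound depends on $y$ only through $\|y-\hat{\mu}\|_{\hat{K}}$. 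Applying Lemma~\ref{Lem:ChangeOfVar} to convert to a radial integral against $\tilde{h}_0$, and then invoking the quadratic envelope $\phi_0'(s)^2 \lesssim D_0^2 s^2 + p$ (as derived in the proof of Lemma~\ref{Lem:dHBestCase1}) together with the moment bounds of Lemma~\ref{Lem:PhiTildeMoment} for $\tilde{h}_0$, yields $\int \tilde{g}_{\hat{\mu}} w^2 \, dy \lesssim (D_0^2+1)\{p\|\hat{\mu}-\mu\|_K^2 + p^2\Delta^2\}$, as required.

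The principal obstacle is the appearance of $\phi_0'(\psi(r))^2$ rather than $\phi_0'(r)^2$ inside the radial integral: the change of variables $s = \psi(r)$, $r = \tau^* s + \|\hat{\mu}-\mu\|_{\hat{K}}$ introduces a weight $(\tau^* s + \|\hat{\mu}-\mu\|_{\hat{K}})^{p+1}$ in place of the usual $s^{p-1}$. Under the standing hypotheses $p(\tau^*-\tau_*) \leq c_2$ and $p\log(ep)\|\hat{\mu}-\mu\|_K \leq c_1$, the factor $\tau^{*p+1}$ is bounded by a universal constant, and splitting the integral according to whether $r \leq \|\hat{\mu}-\mu\|_{\hat{K}}$ or $r > \|\hat{\mu}-\mu\|_{\hat{K}}$ allows one to absorb this weight, reducing the estimate to precisely the pattern of moment computations executed in the proof of Lemma~\ref{Lem:dHBestCase2}.
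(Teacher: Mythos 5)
Your proof is correct, and after the common opening move (Lemma~\ref{lem:contour_KL}) it takes a genuinely different and arguably cleaner route than the paper's.  The paper lifts to the multivariate KL exactly as you do, but then proceeds via the elementary inequality $(1+m)\log(1+m) \leq m + m^2$ applied to $m := f_0/\tilde{g}_{\hat\mu} - 1$ (after which the linear term vanishes), followed by $(e^z - 1)^2 \leq 2z^2(e^{2z}+1)$ applied to $z := \phi_0(\|x-\mu\|_K) + \log\gamma - \tilde{\phi}_0(\|x-\hat\mu\|_{\hat K})$, and a two-stage split of $z$ into the pieces $\phi_0(\|x-\mu\|_K) - \phi_0(\|x-\hat\mu\|_{\hat K})$ and $\phi_0(\|x-\hat\mu\|_{\hat K}) - \tilde\phi_0(\|x-\hat\mu\|_{\hat K}) + \log\gamma$, so as to reuse the displays~\eqref{eqn:dH_best1_second_term} and~\eqref{Eq:BestTerm2} verbatim.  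You instead exploit the exact identity $d_{\mathrm{KL}}^2(f_0,\tilde g_{\hat\mu}) = \log\gamma - \int f_0 w$ together with the observation that $\gamma - 1 = \int f_0(e^w - 1)$; combined with $\log\gamma \leq \gamma-1$ and $e^w - 1 - w \leq \tfrac12 w^2 e^w$, this gives $\tfrac{\gamma}{2}\int \tilde g_{\hat\mu}\, w^2$ directly, with no need to track the $\log\gamma$ shift or invoke Lemma~\ref{Lem:GammaBound} for $(\gamma-1)^2$ separately.  You then Taylor-expand $\phi_0$ about $\psi(y)$ in a single step rather than splitting into two pieces.  What this buys is a more self-contained, shorter argument; what the paper's route buys is modularity, since it literally recycles computations already performed for Lemma~\ref{Lem:dHBestCase2}.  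Both arrive at the same quadratic functional of $w$ integrated against $\tilde g_{\hat\mu}$, and the remaining moment computations are indeed identical in spirit.

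One small simplification to your final paragraph: the change-of-variables $s = \psi(r)$ and the careful bookkeeping of the weight $(\tau^*s + \|\hat\mu - \mu\|_{\hat K})^{p+1}$ are unnecessary.  Since $\psi(r) \leq r$ for all $r \geq 0$ (because $\tau^* \geq 1$ and $\|\hat\mu - \mu\|_{\hat K} \geq 0$), the quadratic envelope~\eqref{Eq:phi0prime2} immediately yields $\phi_0'(\psi(r))^2 \lesssim D_0^2 \psi(r)^2 + p \leq D_0^2 r^2 + p$, after which Lemma~\ref{Lem:PhiTildeMoment} applies directly in the original variable $r$ with no change of variables at all.  This is in fact exactly how~\eqref{Eq:BestTerm2} is organised.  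Finally, to be fully rigorous you should note (as the paper does in~\eqref{Eqn:FiniteKL}) that the finiteness of $d_{\mathrm{KL}}^2(f_0,\tilde g_{\hat\mu})$ required to apply Lemma~\ref{lem:contour_KL} is guaranteed because $w \geq 0$ implies $\int f_0 w \geq 0$ and hence $d_{\mathrm{KL}}^2(f_0,\tilde g_{\hat\mu}) \leq \log\gamma < \infty$; your cancellation identity makes this immediate.
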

\begin{proof}
  First, we observe that, by Lemma~\ref{Lem:PerturbedDensityUpperBound} and~\ref{Lem:GammaBound},
  \begin{align}
    \int_{\mathbb{R}^p} e^{\phi_0(\|x - \mu\|_K)} \log
    \frac{e^{\phi_0(\|x - \mu\|_K)}}{\gamma^{-1} e^{\tilde{\phi}_0(\|x - \hat{\mu}\|_{\hat{K}})}} \,dx
    \leq \log \gamma < \infty. \label{Eqn:FiniteKL}
  \end{align}
We also observe that $\tilde{h}_n$ is the density of the $\hat{K}$-contour measure of the probability distribution induced by $f_0(\cdot + \hat{\mu})$.
  
As a shorthand, for $x \in \mathbb{R}^p$, let
\[
m(x) := \frac{e^{\phi_0(\|x - \mu \|_{K})} - \gamma^{-1} e^{\tilde{\phi}_0(\|x - \hat{\mu}\|_{\hat{K}})}}{\gamma^{-1} e^{\tilde{\phi}_0(\|x - \hat{\mu} \|_{\hat{K}})} }.
\]
Then, by Lemma~\ref{lem:contour_KL} (which is applicable by~\eqref{Eqn:FiniteKL}) and the fact that $(1+z) \log(1+z) \leq z + z^2$ for any $z \in (-1, \infty)$, we have
  \begin{align*} 
    d^2_{\textrm{KL}}(\tilde{h}_n, \tilde{h}_0)
    &= \int_0^\infty \tilde{h}_n(r) \log \frac{\tilde{h}_n(r)}{\tilde{h}_0(r)} \,dr \\
    &\leq \int_{\mathbb{R}^p} e^{\phi_0(\|x - \mu\|_K)} \log \frac{e^{\phi_0(\|x - \mu\|_K)}}{\gamma^{-1} e^{\tilde{\phi}_0(\|x - \hat{\mu} \|_{\hat{K}})}} \, dx \\
    &=  \frac{1}{\gamma}\int_{\mathbb{R}^p} e^{\tilde{\phi}_0(\|x - \hat{\mu} \|_{\hat{K}})} (1 + m(x)) \log (1 + m(x))\, dx \\
    &\leq \frac{1}{\gamma}\int_{\mathbb{R}^p}  e^{\tilde{\phi}_0(\|x - \hat{\mu}\|_{\hat{K}})} \biggl( \frac{e^{\phi_0(\|x - \mu\|_K)}}{\gamma^{-1} e^{\tilde{\phi}_0(\|x - \hat{\mu}\|_{\hat{K}})}}  - 1 \biggr)^2 \, dx.
  \end{align*}
Define $z(x) := \phi_0(\|x - \mu\|_K) + \log \gamma - \tilde{\phi}_0(\|x - \hat{\mu}\|_{\hat{K}})$ for $x \in \mathbb{R}^p$. By the fact that $(e^z - 1) \leq |z|(1 + e^{z})$ for all $z \in \mathbb{R}$, by Lemma~\ref{Lem:PerturbedDensityUpperBound}, and by Lemma~\ref{Lem:GammaBound},
  \begin{align*}
    \frac{1}{\gamma}\int_{\mathbb{R}^p}  & e^{\tilde{\phi}_0(\|x - \hat{\mu}\|_{\hat{K}})} \bigl(e^{z(x)}  - 1 \bigr)^2 \, dx \leq \frac{2}{\gamma} \int_{\mathbb{R}^p} e^{\tilde{\phi}_0(\|x - \hat{\mu}\|_{\hat{K}})} z(x)^2 \bigl(e^{2z(x)} + 1\bigr) \, dx \\
    &\leq \frac{2(\gamma^2+1)}{\gamma} \int_{\mathbb{R}^p} e^{\tilde{\phi}_0(\|x - \hat{\mu}\|_{\hat{K}})}z(x)^2 \, dx \\
    &\lesssim  \frac{1}{\gamma} \int_{\mathbb{R}^p} e^{\tilde{\phi}_0(\|x - \hat{\mu}\|_{\hat{K}})} 
      \bigl\{ \phi_0(\|x - \mu\|_K) -  \phi_0(\|x - \hat{\mu}\|_{\hat{K}}) \bigr\}^2 \, dx \\
      & \qquad \qquad + \frac{1}{\gamma}\int_{\mathbb{R}^p}  e^{\tilde{\phi}_0(\|x - \hat{\mu}\|_{\hat{K}})} \bigl\{ \phi_0(\|x - \hat{\mu}\|_{\hat{K}}) - \tilde{\phi}_0(\|x - \hat{\mu}\|_{\hat{K}}) + \log \gamma \bigr\}^2 \, dx \\
      &\lesssim (D_0^2 + 1)\bigl\{p^2 (\tau^* - \tau_*)^2 + p \| \hat{\mu} - \mu \|_K \bigr\},
  \end{align*}
  where we used~\eqref{eqn:dH_best1_second_term} and~\eqref{Eq:BestTerm2} to obtain the final inequality. 
 The lemma therefore follows from our assumption on $p(\tau^* - \tau_*)$.
\end{proof}
\begin{lemma}
  \label{Lem:PhiMoment}
  For any $s \in \mathbb{N}$, 
  \[
    \int_0^\infty  r^s h_0(r) \, dr \lesssim_s p^{s/2}.
  \]
\end{lemma}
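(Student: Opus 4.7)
The plan is to exploit the fact that $h_0$ is a log-concave density on $[0,\infty)$ together with the normalization imposed on this subsection, namely assumption~\eqref{Eqn:MomentAssumption}, which in terms of $h_0$ reads $\int_0^\infty r^2 h_0(r)\,dr = p$. Since $\log h_0(r) = \text{const}+(p-1)\log r + \phi_0(r)$ is concave on $(0,\infty)$ as a sum of concave functions (using $p \geq 1$ and $\phi_0 \in \Phi$), and since $h_0$ is supported on $[0,\infty)$, the random variable $Z$ with density $h_0$ is log-concave. The identity $\mu_{h_0}^2 + \sigma_{h_0}^2 = p$ forces both $\mu_{h_0} \leq p^{1/2}$ and $\sigma_{h_0} \leq p^{1/2}$, which are the two ingredients that ultimately deliver the $p^{s/2}$ rate.

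First, I would split off the mean via the elementary inequality
\[
 r^s \leq 2^{s-1}\bigl(|r-\mu_{h_0}|^s + \mu_{h_0}^s\bigr),
\]
so that
\[
\int_0^\infty r^s h_0(r)\,dr \leq 2^{s-1}\bigl\{\mathbb{E}|Z - \mu_{h_0}|^s + \mu_{h_0}^s\bigr\}.
\]
The second term is controlled directly by $\mu_{h_0}^s \leq p^{s/2}$, so everything reduces to bounding the centered absolute $s$-th moment in terms of $\sigma_{h_0}$.

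For this, I would invoke the exponential envelope for univariate log-concave densities that is used repeatedly in the paper (see observation~(i) in the proof of Lemma~\ref{Lem:WorstTVBound}, via \citet[Proposition~S2(iii)]{feng2018multivariate}), namely
\[
h_0(r) \leq \sigma_{h_0}^{-1}\exp\bigl(-\sigma_{h_0}^{-1}|r-\mu_{h_0}| + 1\bigr) \qquad \text{for all } r \in [0,\infty).
\]
Substituting $u = (r-\mu_{h_0})/\sigma_{h_0}$ yields
\[
\mathbb{E}|Z-\mu_{h_0}|^s \leq e\,\sigma_{h_0}^s \int_{-\mu_{h_0}/\sigma_{h_0}}^\infty |u|^s e^{-|u|}\,du \leq 2e\,\Gamma(s+1)\,\sigma_{h_0}^s \lesssim_s \sigma_{h_0}^s.
\]
Combining with $\sigma_{h_0} \leq p^{1/2}$ gives $\mathbb{E}|Z-\mu_{h_0}|^s \lesssim_s p^{s/2}$, and substituting back into the display above completes the proof.

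There is no real obstacle here: the argument is essentially a one-line application of exponential concentration for log-concave distributions combined with the second-moment normalization. The only care needed is to make sure we use the correct normalization of $\mathcal{H}$ in force in this subsection, and to keep the $s$-dependent constants absorbed into the $\lesssim_s$ notation rather than trying to track them explicitly.
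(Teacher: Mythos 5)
Your proof is correct and follows essentially the same route as the paper: the exponential envelope from \citet[Proposition~S2(iii)]{feng2018multivariate}, the split $r^s \lesssim_s |r-\mu_{h_0}|^s + \mu_{h_0}^s$, and the normalization $\mathbb{E}(Z^2) = \mu_{h_0}^2 + \sigma_{h_0}^2 = p$. The one cosmetic difference is that the paper cites \citet[Lemma~1]{bobkov2003spectral} for the sharper bound $\sigma_{h_0} \leq 1$, whereas you extract the cruder $\sigma_{h_0} \leq p^{1/2}$ directly from the second-moment identity — which is still sufficient, since it yields $\mathbb{E}|Z-\mu_{h_0}|^s \lesssim_s \sigma_{h_0}^s \leq p^{s/2}$, matching the target rate.
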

\begin{proof}
By \citet[][Proposition~S2(iii)]{feng2018multivariate} we have $h_0(r) \leq \frac{1}{\sigma_{h_0}} e^{- \frac{1}{\sigma_{h_0}} |r-\mu_{h_0}| + 1}$. Moreover, by \citet[][Lemma~1]{bobkov2003spectral}, $\mu_{h_0} \in [(p-1)^{1/2}, p^{1/2}]$ and $\sigma_{h_0} \leq 1$.  Hence
  \begin{align*}
    \int_0^\infty r^s h_0(r) \, dr 
    &\leq \int_0^\infty r^s \frac{1}{\sigma_{h_0}}e^{- \frac{1}{\sigma_{h_0}} |r-\mu_{h_0}| + 1} \, dr \\
    &\lesssim_s \int_0^\infty \bigl(|r - \mu_{h_0}|^s + \mu_{h_0}^s \bigr)
      \frac{1}{\sigma_{h_0}} e^{- \frac{1}{\sigma_{h_0}} | r-\mu_{h_0}| + 1} \, dr \lesssim_s p^{s/2},
  \end{align*}
 as desired. 
\end{proof}
\begin{lemma}
  \label{Lem:PhiTildeMoment}
 There exists a universal constant $c_1, c_2 > 0$ such that if $\| \hat{\mu} - \mu \|_K p \log (ep) \leq c_1$ and $p(\tau^* - \tau_*) \leq c_2$, then
  \[
    \int_0^\infty r^s \tilde{h}_0(r) \, dr \lesssim_s p^{s/2}.
  \]
\end{lemma}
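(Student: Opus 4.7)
The plan is to use that $\tilde{h}_0$, suitably extended, is itself a log-concave density on $\mathbb{R}$, and then transfer moment estimates from $h_0$ to $\tilde{h}_0$ via Hellinger closeness, thereby avoiding a direct estimation of the singular-looking integral $p\lambda_p(\hat{K})\int_0^\infty r^{s+p-1}e^{\tilde{\phi}_0(r)}\,dr$ near the origin (which is the source of the difficulty in a purely change-of-variables attack).

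First I would verify that $\tilde{h}_0 \in \mathcal{F}_1$. By construction $\tilde{\phi}_0 \in \Phi$ is concave, decreasing and upper semi-continuous, so $\log \tilde{h}_0(r) = (p-1)\log r + \tilde{\phi}_0(r) + \log\bigl(p\lambda_p(\hat{K})/\gamma\bigr)$ is concave and upper semi-continuous on $(0,\infty)$; extending by $0$ on $(-\infty,0)$ yields a log-concave, upper semi-continuous density on $\mathbb{R}$ (the concavity inequality with one argument in $(-\infty,0)$ is trivial). The same remarks show $h_0 \in \mathcal{F}_1$.

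Next I would reduce to Hellinger closeness. Choosing $c_1,c_2$ sufficiently small and using $p^{1/2}\log(ep)\geq 1$, the second conclusion of Lemma~\ref{Lem:dHWorstBound2} gives
\[
d_{\mathrm{H}}^2(\tilde{h}_0,h_0) \lesssim p^{1/2}\|\hat{\mu}-\mu\|_K + p(\tau^*-\tau_*) \lesssim c_1+c_2 \leq 2^{-14}.
\]
Lemma~\ref{Lem:HellingerMomentBound} then produces universal constants $C_\mu',C_\sigma'>0$ with $\sigma_{\tilde{h}_0}\leq C_\sigma'\sigma_{h_0}$ and $|\mu_{\tilde{h}_0}-\mu_{h_0}|\leq C_\mu'\sigma_{h_0}$. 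Observations~(ii) and~(iii) in the proof of Lemma~\ref{Lem:WorstTVBound}, which rely on the normalisation~\eqref{Eqn:MomentAssumption}, give $\sigma_{h_0}\leq 1$ and $\mu_{h_0}\leq p^{1/2}$, so $\sigma_{\tilde{h}_0}\lesssim 1$ and $\mu_{\tilde{h}_0}\lesssim p^{1/2}$.

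Finally, applying \citet[][Proposition~S2(iii)]{feng2018multivariate} to $\tilde{h}_0$ gives the tail bound $\tilde{h}_0(r)\leq \sigma_{\tilde{h}_0}^{-1}e^{-|r-\mu_{\tilde{h}_0}|/\sigma_{\tilde{h}_0}+1}$, whence the same computation as in the proof of Lemma~\ref{Lem:PhiMoment} yields
\[
\int_0^\infty r^s \tilde{h}_0(r)\,dr \lesssim_s \mu_{\tilde{h}_0}^s + \sigma_{\tilde{h}_0}^s \lesssim_s p^{s/2},
\]
as required. There is no genuine obstacle here; the only point that needs care is picking $c_1,c_2$ small enough that both Lemmas~\ref{Lem:dHWorstBound2} and~\ref{Lem:HellingerMomentBound} apply at once, which is immediate from the stated bound.
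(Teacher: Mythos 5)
Your argument is correct and follows essentially the same route as the paper: reduce to Hellinger closeness via Lemma~\ref{Lem:dHWorstBound2}, transfer the moment bounds from $h_0$ to $\tilde{h}_0$ via Lemma~\ref{Lem:HellingerMomentBound}, invoke $\sigma_{h_0}\leq 1$ and $\mu_{h_0}\lesssim p^{1/2}$ (which the paper attributes directly to \citet[Lemma~1]{bobkov2003spectral}, rather than to the observations in the proof of Lemma~\ref{Lem:WorstTVBound}), and finish by integrating the exponential envelope from \citet[Proposition~S2(iii)]{feng2018multivariate}. The preliminary verification that $\tilde{h}_0\in\mathcal{F}_1$ is a sensible addition that the paper leaves implicit.
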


\begin{proof}
Assume that $\| \hat{\mu} - \mu \|_K p \log (ep) \leq c_1$ and $p(\tau^* - \tau_*) \leq c_2$ where $c_1, c_2 > 0$ are universal constants chosen such that $ d_{\mathrm{H}}^2(\tilde{h}_0,h_0) \leq 2^{-16}$; the existence of such a choice of $c_1,c_2$ is guaranteed by Lemma~\ref{Lem:dHWorstBound2}. Consequently, by Lemma~\ref{Lem:HellingerMomentBound}, there exists universal constants $C_{\mu}' > 0$ and $C_{\sigma}' > 1$ such that $C_{\sigma}^{\prime -1} \leq \sigma_{\tilde{h}_0}/\sigma_{h_0} \leq C_{\sigma}'$ and that $| \mu_{\tilde{h}_0} - \mu_{h_0} | \leq C_{\mu}'$. Moreover, by \citet[][Lemma~1]{bobkov2003spectral}, $\mu_{h_0} \in [(p-1)^{1/2}, p^{1/2}]$ and $\sigma_{h_0} \leq 1$.

Therefore, by \citet[][Proposition~S2(iii)]{feng2018multivariate}, we have that
\begin{align*}
  \int_0^\infty r^s \tilde{h}_0(r) \, dr
  &\leq \int_0^\infty r^s \frac{1}{\sigma_{\tilde{h}_0}} e^{ - \frac{1}{\sigma_{\tilde{h}_0}} | r - \mu_{\tilde{h}_0}| + 1} \, dr\\
  &\leq  C_{\sigma}'\int_0^\infty r^s \frac{1}{\sigma_{h_0}}
    e^{ - \frac{1}{C_{\sigma}' \sigma_{h_0}} | r - \mu_{\tilde{h}_0}| +1} \, dr \\
  &\lesssim_s  C_{\sigma}' \int_0^\infty \bigl(| r - \mu_{\tilde{h}_0}|^s + \mu_{\tilde{h}_0}^s \bigr)
     \frac{1}{\sigma_{h_0}} e^{ - \frac{1}{C_{\sigma}' \sigma_{h_0}} | r - \mu_{\tilde{h}_0}| +1} \, dr \lesssim_s p^{s/2},
    \end{align*}
    as required.
    \end{proof}

\subsection{Auxiliary lemmas for Section~\ref{Sec:KNonparametric}}
\label{Subsec:KNonparametricAux}

We first describe the common setting for all the lemmas in this subsection as well as define some notation and quantities used throughout.  We fix $n \geq 8$, $p \geq 2$ and $K \in \mathcal{K}$; we assume, for some fixed $0 < r_1 \leq r_2 < \infty$, that $B_p(0,r_1) \subseteq K \subseteq  B_p(0,r_2)$ and write $r_0 := r_2/r_1$. We suppose that $X_1, \ldots, X_{n+M} \stackrel{\mathrm{iid}}{\sim} f_0 \in \mathcal{F}^K_p$. Recall from Algorithm~\ref{alg:estimateK} then that $\theta_m = X_{n+m}/\|X_{n+m}\|$ for $m \in [M]$.  Define
\begin{align}
  M &:= \bigl\lceil n^{\frac{p-1}{p+1}} \bigr \rceil, \; \epsilon_1 := \Bigl( \frac{1}{M \log^2 n} \Bigr)^{1/(p-1)}, \; \epsilon_2 := \Bigl( \frac{\log^{p+1} n}{M} \Bigr)^{1/(p-1)}, \nonumber \\
  k &:= \lfloor \log n \rfloor, \; \tilde{n} := \Bigl \lceil \frac{n}{M \log^4 n} \Bigr \rceil, \;
 C_p := \frac{\kappa_{p-1}}{p \kappa_p} 2^{p-1},\;
c_p := \frac{\kappa_{p-1}}{p \kappa_p} \Bigl( \frac{3}{4}\Bigr)^{(p-1)/2}. \label{Eqn:KNonparametricDefinitions}
\end{align}
For $m \in [M]$, we also define
\[
  \mathcal{X}_m^k := \Bigl\{ x \in \mathbb{R}^p \,:\, x^\top \theta_m \geq \textrm{ $k$-th max} \{ x^\top \theta_{m'} \,:\, m' \in [M] \} \Bigr\}.
\]
Note that with this definition, for any $m \in [M]$, the random quantity $\mathcal{I}^k_m$ as defined in Algorithm~\ref{alg:estimateK} satisfies $\mathcal{I}^k_m = | \{ i \in [n] \,:\, X_i \in \mathcal{X}_m^k \} |$.  For $\epsilon \in (0, 1]$, define the spherical cone with centre $\theta \in \mathbb{S}^{p-1}$ as $S(\theta, \epsilon) := \{ x \in \mathbb{R}^p \,:\, x^\top \theta \geq \bigl(1 - \frac{1}{2} \epsilon^2\bigr)\| x \|_2 \}$. We then define the events
\begin{align}
  \mathcal{E} := \bigcap_{m \in [M]}\bigl\{S(\theta_m, \epsilon_1) \subseteq \mathcal{X}_m^k \subseteq S(\theta_m, \epsilon_2) \bigr \}, \qquad  
  \mathcal{E}_1 := \Bigl\{ \min_{m \in [M]} | \mathcal{I}^k_m | \geq \tilde{n} \Bigr\}. \label{Eqn:EventDefinitions}
\end{align}
For $\epsilon > 0$, we say that a finite set $\mathcal{N}_{\epsilon} \subseteq \mathbb{S}^{p-1}$ is an $\epsilon$-net of $\mathbb{S}^{p-1}$ if for every $x \in \mathbb{S}^{p-1}$, there exists $y \in \mathcal{N}_{\epsilon}$ such that $x \in \mathbb{S}^{p-1} \cap S(y, \epsilon)$. 

The key results of this subsection are Lemmas~\ref{Lem:tmDeviation} and~\ref{Lem:Brunel}, both of which are used in the proof of Proposition~\ref{Prop:dscaleOverall}. 
\begin{lemma}
  \label{Lem:tmDeviation}
Suppose that $\mathbb{E}_{f_0}(\| X_1 \|_K) = \mathbb{E}_{f_0}(\|X_1 \|)= 1$ and that $r_0 M^{-1/(p-1)} \log^3 n \leq 1/2$. For $m \in [M]$, let $t_m$ be defined as in Algorithm~\ref{alg:estimateK}.  Then there exists $C_{1,p,r_0} > 0$, depending only on $p$ and $r_0$, such that, with probability at least $1 - C_{1,p,r_0}/n$, 
  \[
    \max_{m \in [M]} \biggl| t_m - \frac{1}{\| \theta_m \|_K} \biggr| \leq 2 r_0^2 \biggl( \frac{\log^{p+1} n}{M} \biggr)^{1/(p-1)} + 8 r_0 \biggl( \frac{M \log^5 n}{n} \biggr)^{1/2}.
  \]
\end{lemma}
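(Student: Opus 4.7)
The plan rests on Proposition~\ref{Prop:NormDirectionIndep}: if $X \sim f_0 \in \mathcal{F}_p^K$, then the radius $\|X\|_K$ is independent of the direction $\theta_X := X/\|X\|_2$, and $\|X\|_2 = \|X\|_K/\|\theta_X\|_K$. Hence, if every $X_i$ with $i \in \mathcal{I}^k_m$ has its direction lying in a narrow cone around $\theta_m$, then $\|X_i\|_2 \approx \|X_i\|_K/\|\theta_m\|_K$, and applying a law-of-large-numbers-type bound to the iid mean-one radii $\|X_i\|_K$ yields $t_m \approx 1/\|\theta_m\|_K$. The proof therefore consists of two ingredients: a deterministic-given-directions containment result certifying that $\mathcal{X}_m^k$ is sandwiched between two cones around $\theta_m$ (the event $\mathcal{E}$) and contains enough observations (the event $\mathcal{E}_1$), together with a concentration bound for the conditional radial average.

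To establish $\mathbb{P}(\mathcal{E} \cap \mathcal{E}_1) \geq 1 - C_{p,r_0}/n$, I will exploit that $\theta_{n+1},\ldots,\theta_{n+M}$ are iid from a distribution on $\mathbb{S}^{p-1}$ whose density with respect to surface measure equals $1/(p\lambda_p(K)\|\cdot\|_K^p)$, which is uniformly bounded above and below by constants depending only on $p$ and $r_0$ thanks to $B_p(0,r_1) \subseteq K \subseteq B_p(0,r_2)$; consequently $\mathbb{P}(\theta_1 \in S(\theta,\epsilon)) \asymp_{p,r_0} \epsilon^{p-1}$ uniformly in $\theta \in \mathbb{S}^{p-1}$. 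A covering-number argument on $\mathbb{S}^{p-1}$ combined with Chernoff bounds then shows, with the desired probability, that (i) every spherical cap of angular width $2\epsilon_1$ contains fewer than $k$ of the $\theta_m$'s, and (ii) every cap of width $\epsilon_2 - 2\epsilon_1$ contains at least $k$ of them. Inspection of the definition of $\mathcal{X}_m^k$ in terms of inner products shows that (i) yields $S(\theta_m,\epsilon_1) \subseteq \mathcal{X}_m^k$ and that (ii) yields $\mathcal{X}_m^k \subseteq S(\theta_m,\epsilon_2)$, so $\mathcal{E}$ holds. On $\mathcal{E}$, we have $|\mathcal{I}^k_m| \geq |\{i \in [n] : X_i \in S(\theta_m,\epsilon_1)\}|$, whose mean is of order $n\epsilon_1^{p-1} = n/(M\log^2 n)$ up to $r_0$-dependent factors, so a further Chernoff plus union bound over $m \in [M]$ delivers $\mathcal{E}_1$.

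For the second ingredient, I condition on all directions, which determines $\mathcal{I}^k_m$ while leaving $\{\|X_i\|_K\}_{i \in \mathcal{I}^k_m}$ iid from the log-concave density $h_0(r) = p\lambda_p(K)r^{p-1}e^{\phi_0(r)}$ on $[0,\infty)$, which has mean~$1$, variance bounded by a universal constant (by \citet[Lemma~1]{bobkov2003spectral}), and subexponential tails (by \citet[Proposition~S2(iii)]{feng2018multivariate}). Writing $S_m := |\mathcal{I}^k_m|^{-1}\sum_{i \in \mathcal{I}^k_m}\|X_i\|_K$, Bernstein's inequality on $\mathcal{E}_1$ combined with a union bound over $m \in [M]$ yields $\max_m |S_m - 1| \lesssim \sqrt{\log(eMn)/\tilde{n}} \lesssim \sqrt{M\log^5 n/n}$ with probability at least $1 - 1/n$. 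For the cone-width error, on $\mathcal{E}$ we have $\|\theta_{X_i} - \theta_m\|_2 \leq \epsilon_2$ for $i \in \mathcal{I}^k_m$, so by the subadditivity of $\|\cdot\|_K$ in Proposition~\ref{Prop:BasicK}(iv) together with $\|\theta_m\|_K, \|\theta_{X_i}\|_K \geq 1/r_2$, we obtain $|1/\|\theta_{X_i}\|_K - 1/\|\theta_m\|_K| \leq \epsilon_2 r_0^2$; hence $|t_m - S_m/\|\theta_m\|_K| \leq \epsilon_2 r_0^2 S_m$. Combining this with $1/\|\theta_m\|_K \leq r_2 \leq r_0$ (a consequence of the normalizations $\mathbb{E}\|X_1\|_K = \mathbb{E}\|X_1\|_2 = 1$, which force $r_1 \leq 1 \leq r_2$) and taking the maximum over $m \in [M]$ produces the claimed bound. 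The main obstacle will be the uniform covering argument in step~(1), where one must balance the size of the net, the $r_0^p$-ratio in the direction density, and the binomial tails so that all $M$ caps simultaneously behave as expected; this step is of the same flavour as, and can be executed in parallel with, the argument underlying Lemma~\ref{Lem:Brunel}.
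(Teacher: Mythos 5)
Your proposal is correct and follows essentially the same route as the paper: control of the sandwiching events $\mathcal{E}$, $\mathcal{E}_1$ via cap covering and Chernoff/Binomial bounds (Lemmas~\ref{Lem:BoundingCaps} and~\ref{Lem:EnoughPoints}), a conditional Bernstein bound for the radial averages using the independence of $\|X_i\|_K$ from all directions (Proposition~\ref{Prop:NormDirectionIndep}), and a cone-width error bound derived from the subadditivity of $\|\cdot\|_K$ (the content of Lemma~\ref{Lem:KNorm2NormRatio}). The only differences are cosmetic -- you state an additive rather than multiplicative form of the ratio bound and condition jointly on all directions rather than on each realisation of $\mathcal{I}_m^k$ -- which lead to the same order of magnitude and probability guarantee.
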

\begin{proof}
  For $m \in [M]$, let $s_m := |\mathcal{I}_m^k|^{-1} \sum_{i \in \mathcal{I}_m^k} \| X_i \|_K$.  We have that $r_0 \epsilon_2 \leq 1/2$ under our assumptions on $r_0 M^{-1/(p-1)}\log^3 n$. Thus, on the event $\mathcal{E}$, we have by Lemma~\ref{Lem:KNorm2NormRatio} that
  \begin{align*}
    t_m &= \frac{1}{|\mathcal{I}_m^k|} \sum_{i \in \mathcal{I}_m^k} \| X_i \| \leq s_m \sup_{x \in \mathcal{X}_m^k} \frac{\| x \|}{\| x \|_K} \leq \frac{s_m}{(1-r_0\epsilon_2)\| \theta_m \|_K}
         \leq \frac{s_m}{\| \theta_m \|_K}( 1 + 2 r_0 \epsilon_2),
  \end{align*}
and that
  \[
     t_m \geq \frac{s_m}{\| \theta_m \|_K} \bigl( 1 - 2 r_0\epsilon_2 \bigr),
   \]
and therefore
   \begin{align}
     \biggl| t_m - \frac{s_m}{\| \theta_m \|_K} \biggr| \leq 2 r_0 \epsilon_2 \frac{s_m}{\| \theta_m \|_K}.
     \label{eqn:tm_inequality1}
   \end{align}
Define the event $\mathcal{E}_2 := \bigl\{ \max_{m \in [M]} | s_m - \mathbb{E}s_m| \leq 4 \bigl( \frac{M \log^5 n}{n} \bigr)^{1/2} \bigr\}$. To bound $\mathbb{P}(\mathcal{E}_2^c)$, choose $N_0 \subseteq [n]$ with $n_0 := |N_0| \geq \tilde{n}$. Let $\mathcal{E}_m(N_0)$ be the event that $\mathcal{I}_m^k = N_0$.  Then, by Proposition~\ref{Prop:NormDirectionIndep} and \citet[][Theorem 5]{karlin1961moment}, 
 for any $s \geq 2$,
  \begin{align}
    \mathbb{E}(\| X_1 \|_K^s) \leq s!. 
  \end{align}
  By Proposition~\ref{Prop:NormDirectionIndep} again, Bernstein's inequality \citep[][Corollary 2.11]{boucheron2013concentration}, and the fact that $n_0^{-1/2}\log^{1/2} n \leq 1/2$ under our assumption on $n$ and $r_0 M^{- 1/(p-1)} \log^3n$, we therefore have that for each $m \in [M]$,
  \begin{align*}
    &\mathbb{P}\biggl( \biggl\{ |s_m - \mathbb{E} s_m| > 4\biggl( \frac{\log n}{n_0} \biggr)^{1/2} \biggr\} \,\cap\, \mathcal{E}_m(N_0) \biggr) \\
    &\qquad =  \mathbb{P}\biggl( \biggl\{ \biggl| \frac{1}{n_0} \sum_{i \in N_0} \|X_i\|_K - \mathbb{E} \|X_1\|_K \biggr| > 4\biggl( \frac{\log n}{n_0} \biggr)^{1/2} \biggr\} \,\cap\, \mathcal{E}_m(N_0) \biggr) \\
    &\qquad = \mathbb{P}\biggl( \biggl| \frac{1}{n_0} \sum_{i \in N_0} \|X_i\|_K - \mathbb{E} \|X_1\|_K \biggr| > 4\biggl( \frac{\log n}{n_0} \biggr)^{1/2} \biggr) \mathbb{P}( \mathcal{E}_m(N_0))  \leq \frac{2}{n^2} \mathbb{P}(\mathcal{E}_m(N_0)).
  \end{align*}  
  Hence, for each $m \in [M]$,
  \begin{align*}
   & \mathbb{P}\biggl( |s_m - \mathbb{E} s_m| > 4 \biggl( \frac{M\log^5 n}{n} \biggr)^{1/2} \, \cap \, \mathcal{E}_1   \biggr) \\
    &\quad \leq \sum_{ \substack{N_0 \subseteq \{1,\ldots,n\} : \\ |N_0| \geq \tilde{n}}} \mathbb{P}\biggl( \biggl\{ |s_m - \mathbb{E} s_m| > 4\biggl( \frac{M \log^5 n}{n} \biggr)^{1/2} \biggr\} \,\cap \, \mathcal{E}_m(N_0) \biggr) \\
    &\quad \leq \frac{2}{n^2} \sum_{ \substack{N_0 \subseteq \{1,\ldots,n\} : \\ |N_0| \geq \tilde{n}}} \mathbb{P}\bigl(\mathcal{E}_m(N_0) \bigr) = \frac{2}{n^2}\mathbb{P}(|\mathcal{I}_m^k|\geq \tilde{n}) \leq \frac{2}{n^2}.
  \end{align*}
  Thus, using a union bound over $m \in [M]$, Lemma~\ref{Lem:EnoughPoints} (which we may apply since $\tilde{n} > 1$ under our assumption on $r_0 M^{-1/(p-1)} \log^3 n$), and the fact that $M \leq n$,
    \begin{align}
      \mathbb{P}( \mathcal{E}_2^c ) \leq \sum_{m=1}^M \mathbb{P}\biggl( |s_m - \mathbb{E} s_m| > 4 \biggl( \frac{M\log^5 n}{n} \biggr)^{1/2} \, \cap \, \mathcal{E}_1  \biggr) + \mathbb{P}(\mathcal{E}_1^c)  
      \leq \frac{2}{n} + \frac{C_{3, p, r_0}}{n}. \label{Eqn:E2ProbBound}
    \end{align}
    Now, under the assumption that $\mathbb{E}_{f_0}(\|X_1 \|_K) =1$, we have $\mathbb{E} s_m = 1$.  Moreover,
    \[
      1 = \mathbb{E}_{f_0}(\|X_1\|) \geq r_1\mathbb{E}_{f_0}(\|X_1\|_K) = r_1,
    \]
so $r_2 \leq r_0$.   Thus, on the event $\mathcal{E} \cap \mathcal{E}_2$, for each $m \in [M]$,
  \begin{align*}
    \biggl| t_m - \frac{1}{\| \theta_m \|_K} \biggr|
    &\leq \biggl| t_m - \frac{s_m}{\|\theta_m\|_K} \biggr| + \frac{1}{\|\theta_m\|_K} \bigl| s_m - \mathbb{E} s_m \bigr| \\
    &\leq 2 r_0 \epsilon_2 \frac{s_m}{\|\theta_m \|_K} + \frac{4}{r_1} \biggl( \frac{M \log^5 n}{n} \biggr)^{1/2} \\
    &\leq 2 r_0 \epsilon_2 \frac{1}{\|\theta_m \|_K} + 2 r_0 \epsilon_2 \frac{1}{\|\theta_m\|_K} | s_m - \mathbb{E} s_m | + 4r_2\biggl( \frac{M \log^5 n}{n} \biggr)^{1/2}\\
    &\leq 2 r_0^2 \biggl( \frac{\log^{p+1} n}{M} \biggr)^{1/(p-1)} + 8 r_0 \biggl( \frac{M \log^5 n}{n} \biggr)^{1/2}.
  \end{align*}
Finally, by Lemma~\ref{Lem:BoundingCaps} and~\eqref{Eqn:E2ProbBound}, there exists $C_{1,p,r_0} > 0$, depending only on $p$ and $r_0$, such that $\mathbb{P}(\mathcal{E} \cap \mathcal{E}_2) \geq 1 - C_{1,p,r_0}/n$ as required.
\end{proof}

\begin{lemma}
  \label{Lem:BoundingCaps}
  If $\epsilon_2 \leq 1$, then there exists $C_{2,p,r_0} > 0$, depending only on $p$ and $r_0$, such that
  \[
    \mathbb{P}(\mathcal{E}^c) \leq \frac{C_{2, p, r_0}}{n}.
  \]
\end{lemma}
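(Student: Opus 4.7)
The plan is to reduce $\mathcal{E}^c$ to two extremal properties of the random point cloud $\{\theta_m\}_{m \in [M]}$ on $\mathbb{S}^{p-1}$ and to control these via a net argument. By Proposition~\ref{Prop:NormDirectionIndep}, each $\theta_m$ is distributed as $Z_m/\|Z_m\|$ with $Z_m \sim \mathrm{Unif}(K)$; using $B_p(0,r_1) \subseteq K \subseteq B_p(0,r_2)$, the density of $\theta_m$ relative to the uniform surface measure on $\mathbb{S}^{p-1}$ is pinched between constants depending only on $r_0$ (through $\|\theta\|_K^{-p} \in [r_1^p,r_2^p]$). Combined with standard estimates for the $(p-1)$-dimensional volume of a cap of half-angle $\arccos(1-\epsilon^2/2)$, this yields $c_{p,r_0}\epsilon^{p-1} \leq \mathbb{P}(\theta_1 \in S(y,\epsilon)) \leq C_{p,r_0}\epsilon^{p-1}$ for all $y \in \mathbb{S}^{p-1}$ and $\epsilon \in (0,1]$, with constants that reduce to $c_p,C_p$ from~\eqref{Eqn:KNonparametricDefinitions} up to an $r_0$-dependent factor.

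I would then introduce $N(\theta,\epsilon) := |\{m \in [M] : \theta_m \in S(\theta,\epsilon)\}|$ and show via a short combinatorial argument that $\mathcal{E}$ is implied by the deterministic pair $\sup_{\theta} N(\theta,\epsilon_1) \leq k$ and $\inf_{\theta} N(\theta,\epsilon_2) \geq k$. For the second inclusion in $\mathcal{E}$: if $x \in \mathcal{X}_m^k$ and $\theta := x/\|x\|$, then $\theta_m$ is among the $k$ closest $\theta_{m'}$ to $\theta$, and the lower bound on $N(\theta,\epsilon_2)$ forces even the $k$-th closest into $S(\theta,\epsilon_2)$, so $x \in S(\theta_m,\epsilon_2)$. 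For the first inclusion: if $x \in S(\theta_m,\epsilon_1)$ were not in $\mathcal{X}_m^k$, then at least $k$ indices $m'$ would satisfy $x^\top \theta_{m'} > x^\top \theta_m \geq (1-\epsilon_1^2/2)\|x\|$, placing at least $k+1$ of the $\theta_{m'}$'s into $S(\theta,\epsilon_1)$ and contradicting the upper bound on $N(\theta,\epsilon_1)$.

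The main probabilistic step is to control the two suprema via an $\epsilon_0$-net. Fix an $\epsilon_0$-net $\mathcal{N}_{\epsilon_0} \subseteq \mathbb{S}^{p-1}$ of cardinality at most $(C/\epsilon_0)^{p-1}$, with $\epsilon_0$ a small constant multiple of $\epsilon_1$. For each fixed $y \in \mathcal{N}_{\epsilon_0}$, $N(y,\epsilon)$ is a binomial sum and Chernoff-type bounds give $\mathbb{P}(N(y,2\epsilon_1) > k) \leq \bigl(e\,\mathbb{E} N(y,2\epsilon_1)/k\bigr)^k$ with $\mathbb{E} N \lesssim_{p,r_0} 1/\log^2 n$ and $k = \lfloor \log n \rfloor$, which is quasi-polynomially small; symmetrically $\mathbb{P}(N(y,\epsilon_2/2) < k) \leq \exp\bigl(-c\,\mathbb{E} N(y,\epsilon_2/2)\bigr)$ with $\mathbb{E} N \gtrsim_{p,r_0} \log^{p+1} n \gg k$, also quasi-polynomially small. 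A union bound over $\mathcal{N}_{\epsilon_0}$, whose cardinality is polynomial in $n$, shows that both tail events fail uniformly over the net with probability at least $1 - C_{2,p,r_0}/n$. Passing to arbitrary $\theta \in \mathbb{S}^{p-1}$ is a triangle inequality on the sphere: taking $y$ to be the nearest net point, the $\epsilon_0$-net property and cosine-addition give $S(\theta,\epsilon_1) \subseteq S(y,2\epsilon_1)$ and $S(y,\epsilon_2/2) \subseteq S(\theta,\epsilon_2)$, hence $N(\theta,\epsilon_1) \leq N(y,2\epsilon_1)$ and $N(\theta,\epsilon_2) \geq N(y,\epsilon_2/2)$.

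The main obstacle will be the bookkeeping: tracking the $p$- and $r_0$-dependent factors consistently with the paper's $C_p$ and $c_p$ through the cap-area and density-ratio estimates, and verifying that the standing hypothesis $\epsilon_2 \leq 1$ places us in the regime where $\mathbb{E}[N(y,\epsilon_2/2)]/2 \geq k$ so that the Chernoff bounds take a clean form; ranges of $n$ where these asymptotic inequalities fail can be absorbed by inflating the final constant $C_{2,p,r_0}$. The combinatorial tie-breaking when $N(\theta,\epsilon)$ equals $k$ exactly is a minor nuisance handled by slightly inflating cone parameters at the net step.
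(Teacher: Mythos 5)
Your plan is correct and leads to the stated bound, but the probabilistic implementation differs from the paper's in both halves. Your reduction of $\mathcal{E}$ to the two uniform bounds $\sup_{\theta} N(\theta,\epsilon_1) \leq k$ and $\inf_{\theta} N(\theta,\epsilon_2) \geq k$, followed by a net discretisation with monotonicity in the cone parameter, is clean and works; and the parameter arithmetic you carry out (expected cap counts of order $1/\log^2 n$ at scale $2\epsilon_1$ and of order $\log^{p+1} n$ at scale $\epsilon_2/2$, against $k = \lfloor \log n\rfloor$) is exactly the right separation. The paper achieves the same two inclusions differently. For the upper inclusion $S(\theta_m,\epsilon_1) \subseteq \mathcal{X}_m^k$, the paper does not introduce a net at all: it defines $A_m := \{m' : \theta_{m'} \in S(\theta_m, 2\epsilon_1)\}$ and bounds $\mathbb{P}(\max_m |A_m| > k)$ by a direct union bound over the centre $m$ and a $k$-subset of companions, using $\mathbb{P}(\theta_{m'} \in S(\theta_m,2\epsilon_1) \mid \theta_m) \leq C_p r_0^p (2\epsilon_1)^{p-1}$; this avoids the net/Chernoff machinery but pays a binomial factor $\binom{M-1}{k-1}$. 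For the lower inclusion $\mathcal{X}_m^k \subseteq S(\theta_m,\epsilon_2)$, the paper uses a weaker occupancy event $\mathcal{E}_{\mathrm{net},1}$ (each $\tilde\epsilon_2$-net point's cap contains at least one $\theta_{m'}$, with $\tilde\epsilon_2 = \epsilon_2/(4\log n)$), and then upgrades to $N(\theta,\epsilon_2/2) \geq k$ for arbitrary $\theta$ by a purely geometric pigeonhole: packing $k$ disjoint $\tilde\epsilon_2$-sub-caps inside $S(\theta, \epsilon_2/2)$, none containing the excluded centre. This single-occupancy-plus-packing route sidesteps a binomial lower-tail estimate entirely. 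Your Chernoff approach is simpler to state and slightly more symmetric between the two tails, at the cost of needing the explicit large-deviation bounds; the paper's version uses the fine-net occupancy trick to get the same effect almost for free. Both are valid, and the end constants $C_{2,p,r_0}$ are comparable.

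One presentational note: your reduction already makes transparent why the coarser scale argument in the paper is applied at $\theta = x/\|x\|$ rather than at $\theta_m$ when establishing $\mathcal{X}_m^k \subseteq S(\theta_m,\epsilon_2)$; the key observation that if $\|\theta - \theta_m\| > \epsilon_2$ then the $k$ companions in $S(\theta,\epsilon_2/2)$ are automatically distinct from $m$ (since $\theta_m \notin S(\theta,\epsilon_2/2)$) is worth spelling out, as it is precisely what makes the contrapositive close. Your flagged concern about tie-breaking at $N(\theta,\epsilon)=k$ is not a real issue with the thresholds as you have set them: $N(\theta,\epsilon_1)\leq k$ and $N(\theta,\epsilon_2)\geq k$ are the right non-strict inequalities and the two contrapositives close without any slack.
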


\begin{proof}
Let $\tilde{\epsilon}_2 := \epsilon_2/(4\log n)$. By Lemma~\ref{lem:cap_covering}, there exists an $\tilde{\epsilon}_2$-net $\mathcal{N}_{\tilde{\epsilon}_2}$ of $\mathbb{S}^{p-1}$ such that $| \mathcal{N}_{\tilde{\epsilon}_2} | \leq C'_p \tilde{\epsilon}_2^{-(p-1)}$ for some $C'_p > 0$ depending only on $p$. Let $\mathcal{E}_{\mathrm{net},1}$ be the event that for every $y \in \mathcal{N}_{\tilde{\epsilon}_2}$, there exists some $m \in [M]$ such that $\theta_m \in \mathbb{S}^{p-1} \cap S(y, \tilde{\epsilon}_2)$. By Lemma~\ref{Lem:CapProbabilityBound} and a union bound, we have that
  \begin{align}
    \mathbb{P}( \mathcal{E}^c_{\mathrm{net},1} )
    &\leq (1 - c_p r_0^{-p} \tilde{\epsilon}_2^{p-1})^M |\mathcal{N}_{\tilde{\epsilon}_2} | \nonumber \\
    &\leq \exp( - c_p r_0^{-p} \tilde{\epsilon}_2^{p-1} M ) C'_p \tilde{\epsilon}_2^{-(p-1)} \nonumber \\
    &\leq \exp( - c_p r_0^{-p} 4^{-(p-1)}  \log^2 n ) C'_p 4^{p-1}  \frac{n}{\log^2 n} \leq \frac{C_{2,p,r_0}'}{n}, \label{eqn:cap_prob1}
  \end{align}
say. Now for any $y \in \mathbb{S}^{p-1}$, there exist $y_1, \ldots, y_k \in \mathbb{S}^{p-1}$ such that $S(y_j, \tilde{\epsilon}_2) \subseteq S(y, \epsilon_2/2)$ for $j \in [k]$, that $y \notin S(y_j, \tilde{\epsilon}_2)$ for any $j$ and that $S(y_j, \tilde{\epsilon}_2) \cap S(y_{j'}, \tilde{\epsilon}_2) = \emptyset$ for $j \neq j'$. Thus, on the event $\mathcal{E}_{\mathrm{net},1}$, for any $\theta_m$, there exist $m_1, \ldots, m_{k} \in [M]$ not equal to $m$ such that $\theta_{m_1}, \ldots, \theta_{m_k} \in \mathbb{S}^{p-1} \cap S(\theta_m, \epsilon_2/2)$. Thus, on the event $\mathcal{E}_{\mathrm{net},1}$, we have $\mathcal{X}_m^k \subseteq S(\theta_m,\epsilon_2)$ for every $m$.

Now let $\tilde{\epsilon}_1 := 2 \epsilon_1$ and note that $2\epsilon_1 \leq \epsilon_2 \leq 1$. For any $m \in [M]$, let $A_m := \{m' \in [M] : \theta_{m'} \in S(\theta_m, \tilde{\epsilon}_1) \}$. Let $\mathcal{E}_{\mathrm{net},2}$ be the event that $\max_{m \in [M]} |A_m| \leq k$.  Then by Lemma~\ref{Lem:CapProbabilityBound} again,
  \begin{align}
    \mathbb{P}(\mathcal{E}_{\mathrm{net},2}^c)
    &\leq \bigl(C_p r_0^{p} \tilde{\epsilon}_1^{p-1}\bigr)^{k} \binom{M-1}{k-1} M \nonumber \\
    &\leq  \bigl(C_p r_0^{p} \tilde{\epsilon}_1^{p-1}\bigr)^{k-1} M^{k} \nonumber \\
    &\leq \biggl( C_p r_0^{p}  2^{p-1} \frac{1}{\log^2 n} \biggr)^{k-1} n \leq \frac{C''_{2,p, r_0}}{n},  \label{eqn:cap_prob2}
  \end{align}
say.  
On the event $\mathcal{E}_{\mathrm{net},2}$, we have $S(\theta_m, \epsilon_1) \subseteq \mathcal{X}_m^k$ for every $m \in [M]$.  Setting $C_{2,p,r_0} := C'_{2,p,r_0} + C''_{2,p,r_0}$ and applying a union bound, we obtain the conclusion of the lemma as desired.   
\end{proof}
\begin{lemma}
  \label{Lem:EnoughPoints}
  If $\epsilon_2 \leq 1$ and $\tilde{n} > 1$, then there exists $C_{3, p, r_0} > 0$, depending only on $p$ and $r_0$, such that
  \begin{align}
    \mathbb{P}(\mathcal{E}^c_1) \leq \frac{C_{3, p, r_0}}{n}.
  \end{align}
\end{lemma}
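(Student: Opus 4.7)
The plan is to use the independence of $(X_1,\ldots,X_n)$ and $(X_{n+1},\ldots,X_{n+M})$ together with a net-and-union-bound argument. On the event $\mathcal{E}$ of Lemma~\ref{Lem:BoundingCaps}, we have $S(\theta_m,\epsilon_1) \subseteq \mathcal{X}_m^k$ for every $m \in [M]$, so it suffices to control, uniformly in $m$, the Binomial count
\[
  N_m := \bigl|\{i \in [n]:X_i \in S(\theta_m,\epsilon_1)\}\bigr|,
\]
and show that $\min_{m \in [M]} N_m \geq \tilde n$ with probability at least $1 - C_{p,r_0}/n$. Combined with Lemma~\ref{Lem:BoundingCaps}, the desired bound follows from the union bound $\mathbb{P}(\mathcal{E}_1^c) \leq \mathbb{P}(\mathcal{E}^c) + \mathbb{P}\bigl(\mathcal{E} \cap \{\min_m N_m < \tilde n\}\bigr)$.

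To bound the second term, I would pass to a deterministic net: by Lemma~\ref{lem:cap_covering}, fix an $(\epsilon_1/4)$-net $\mathcal{N}$ of $\mathbb{S}^{p-1}$ with $|\mathcal{N}| \leq C_p' \epsilon_1^{-(p-1)} = C_p' M \log^2 n$. For $y \in \mathcal{N}$ set $\tilde N(y) := |\{i \in [n]:X_i \in S(y,\epsilon_1/2)\}|$. A short geometric computation shows that whenever $\theta \in S(y,\epsilon_1/4) \cap \mathbb{S}^{p-1}$ one has $S(y,\epsilon_1/2) \subseteq S(\theta,\epsilon_1)$, so for each $m \in [M]$ there exists $y \in \mathcal{N}$ with $N_m \geq \tilde N(y)$. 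Hence it is enough to prove $\min_{y \in \mathcal{N}} \tilde N(y) \geq \tilde n$ with the right probability. Crucially, $\mathcal{N}$ is deterministic, so each $\tilde N(y)$ is $\mathrm{Bin}(n,p_y)$ with
\[
  p_y := \mathbb{P}_{f_0}\bigl(X_1 \in S(y,\epsilon_1/2)\bigr) \geq c_p r_0^{-p} 2^{-(p-1)} \epsilon_1^{p-1} = \frac{c_p}{r_0^p 2^{p-1} M \log^2 n},
\]
by Lemma~\ref{Lem:CapProbabilityBound}, and in particular $\mathbb{E}\tilde N(y) \gtrsim_{p,r_0} \tilde n \log^2 n$.

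The tail bound is then a standard Chernoff inequality for Binomial random variables: for $n$ exceeding a threshold depending only on $(p,r_0)$, we have $\mathbb{E}\tilde N(y) \geq 2\tilde n$ for every $y \in \mathcal{N}$, and therefore
\[
  \mathbb{P}\bigl(\tilde N(y) < \tilde n\bigr) \leq \mathbb{P}\bigl(\tilde N(y) \leq \tfrac12 \mathbb{E}\tilde N(y)\bigr) \leq \exp\Bigl(- \tfrac{1}{8}\mathbb{E}\tilde N(y)\Bigr) \leq \exp\Bigl(- c_{p,r_0} \tfrac{n}{M \log^2 n}\Bigr).
\]
A union bound over $\mathcal{N}$ costs a factor $|\mathcal{N}| \leq C_p' M \log^2 n \leq C_p' n \log^2 n$, and one checks that
\[
 \frac{n}{M\log^2 n} = \frac{n^{2/(p+1)}}{\log^2 n} \geq (p+1) r_0^p \log^2 n
\]
for $n$ above a threshold depending only on $(p,r_0)$, using the hypothesis $\tilde n > 1$ (which already forces $n^{2/(p+1)} \gtrsim \log^4 n$). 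For all smaller $n$ the claimed inequality is trivial by enlarging $C_{3,p,r_0}$.

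The only mild obstacle is the bookkeeping that ties $\epsilon_1 = (M \log^2 n)^{-1/(p-1)}$, $\tilde n = \lceil n/(M\log^4 n)\rceil$ and $M = \lceil n^{(p-1)/(p+1)}\rceil$ together so that the $\log^2 n$ margin between $\mathbb{E}\tilde N(y)$ and $\tilde n$ exactly offsets the $\log^2 n$ net cardinality inflation when we union-bound; this is the reason for the precise logarithmic exponents chosen in~\eqref{Eqn:KNonparametricDefinitions}. No deeper inequality is required beyond Lemmas~\ref{lem:cap_covering},~\ref{Lem:CapProbabilityBound} and~\ref{Lem:BoundingCaps}.
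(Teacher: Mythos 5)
Your proof is correct, but it follows a genuinely different route from the paper's. The paper conditions on the random directions: since $\mathcal{X}_m^k$ and the event $\mathcal{E}$ depend only on $X_{n+1},\ldots,X_{n+M}$, conditionally on these the count $|\mathcal{I}_m^k|$ dominates (on $\mathcal{E}$) a $\mathrm{Bin}(n,q_m)$ variable with $q_m \geq c_p r_0^{-p}\epsilon_1^{p-1}$ by Lemma~\ref{Lem:CapProbabilityBound}, and the paper controls the lower tail with the elementary bound $\binom{n}{\tilde{n}-1}(1-q_m)^{n-\tilde{n}+1}$ before union-bounding over the $M$ random cones. You instead fix a deterministic $(\epsilon_1/4)$-net $\mathcal{N}$ of $\mathbb{S}^{p-1}$ of cardinality $\lesssim_p M\log^2 n$ (Lemma~\ref{lem:cap_covering}), show that each random cone $S(\theta_m,\epsilon_1)$ contains a fixed cone $S(y,\epsilon_1/2)$ with $y\in\mathcal{N}$ (this geometric step is fine: for unit vectors $S(\theta,\epsilon)\cap\mathbb{S}^{p-1}$ is the Euclidean $\epsilon$-ball intersected with the sphere, so $\|x-\theta_m\|\le\|x-y\|+\|y-\theta_m\|\le 3\epsilon_1/4<\epsilon_1$, and the inclusion extends conically), and then apply a genuine Chernoff bound to the deterministic Binomial counts $\tilde N(y)$. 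Your union bound is over $\lesssim_p M\log^2 n$ net points rather than $M$ cones, but the Chernoff tail $e^{-\mathbb{E}\tilde N(y)/8}$ is tighter than the paper's combinatorial estimate, and the polylogarithmic margins baked into the definitions of $\epsilon_1$, $\tilde n$ and $M$ absorb the extra $\log^2 n$, so both routes close. One step taken silently by both you and the paper that deserves a sentence in a polished write-up: Lemma~\ref{Lem:CapProbabilityBound} is stated for $X$ uniform on $K$, while the $X_i$ have density $f_0\in\mathcal{F}_p^K$; because $S(y,\cdot)$ is a cone, Proposition~\ref{Prop:NormDirectionIndep} yields $\mathbb{P}_{f_0}(X_1\in S(y,\epsilon))=\mathbb{P}(Z\in S(y,\epsilon))$ for $Z$ uniform on $K$, so the lemma transfers.
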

\begin{proof} 
Since $\tilde{n} > 1$, we have that $\frac{n}{M} \geq \log^4 n$. Thus, we have by Lemma~\ref{Lem:CapProbabilityBound} that for $n \geq 8$ (so that $\tilde{n} < n/2$),
  \begin{align}
    \mathbb{P}( \mathcal{E}_1^c \cap \mathcal{E} )
    &\leq M\bigl(1 - c_p r_0^{-p} \epsilon_1^{p-1}\bigr)^{n - (\tilde{n}-1)}
      \binom{n}{\tilde{n}-1} \nonumber \\
    &\leq \exp\biggl( - c_p r_0^{-p} \epsilon_1^{p-1} \frac{n}{2} + \frac{n}{M \log^3 n} + \log M \biggr) \nonumber \\
    &\leq \exp \biggl( - c_p r_0^{-p} \frac{n}{2 M \log^2 n} + \frac{2n}{M \log^3 n}\biggr) \nonumber \\
    &\leq \exp \biggl\{  - \biggl(\frac{c_p r_0^{-p}}{2 \log^2 n} - \frac{2}{\log^3n} \biggr) \frac{n}{M}   \biggr\} \leq \frac{C_{3,p,r_0}'}{n}, \label{eqn:cap_prob3} 
  \end{align}
say.  Hence, by Lemma~\ref{Lem:BoundingCaps},
 \begin{align*}
    \mathbb{P}(\mathcal{E}_1^c) \leq \mathbb{P}(\mathcal{E}_1^c \cap \mathcal{E}) + \mathbb{P}(\mathcal{E}^c) \leq \frac{C_{3,p,r_0}'}{n} + \frac{C_{2,p,r_0}}{n},
  \end{align*}
  as required.
\end{proof}

\begin{lemma}
  \label{Lem:KNorm2NormRatio}
  Let $\theta \in \mathbb{S}^{p-1}$ and let $\epsilon \in (0, 1]$. Then, for all $x \in S(\theta, \epsilon)$, we have
  \[
    \bigl( 1 - r_0\epsilon \bigr) \|x\|_2 \leq \frac{\|x \|_K}{\|\theta\|_K} \leq \bigl( 1 + r_0\epsilon\bigr)\|x\|_2.
  \]
\end{lemma}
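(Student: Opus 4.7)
The plan is to reduce the claim to a subadditivity estimate for the Minkowski functional combined with the sandwich $B_p(0,r_1) \subseteq K \subseteq B_p(0,r_2)$. I would first normalise by writing $\hat{x} := x/\|x\|_2$, so that by positive homogeneity (Proposition~\ref{Prop:BasicK}(iv)) we have $\|x\|_K = \|x\|_2\, \|\hat{x}\|_K$, and the claim reduces to showing
\[
(1 - r_0 \epsilon)\|\theta\|_K \leq \|\hat{x}\|_K \leq (1 + r_0 \epsilon)\|\theta\|_K.
\]

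Next I would translate the cone membership $x \in S(\theta,\epsilon)$ into a Euclidean bound on $\hat{x} - \theta$: since $\hat{x}^\top \theta \geq 1 - \epsilon^2/2$ and $\|\hat{x}\|_2 = \|\theta\|_2 = 1$, the identity $\|\hat{x}-\theta\|_2^2 = 2 - 2\hat{x}^\top\theta$ yields $\|\hat{x}-\theta\|_2 \leq \epsilon$. The inclusion $B_p(0,r_1)\subseteq K$ then gives $\|y\|_K \leq \|y\|_2/r_1$ for all $y\in\mathbb{R}^p$, so $\|\hat{x}-\theta\|_K \leq \epsilon/r_1$; the inclusion $K\subseteq B_p(0,r_2)$ gives $\|y\|_K \geq \|y\|_2/r_2$, so in particular $\|\theta\|_K \geq 1/r_2$. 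Dividing one by the other yields the key inequality $\|\hat{x}-\theta\|_K \leq (r_2/r_1)\epsilon\,\|\theta\|_K = r_0\epsilon\,\|\theta\|_K$.

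Finally, I would invoke the subadditivity half of Proposition~\ref{Prop:BasicK}(iv) in both directions: $\|\hat{x}\|_K \leq \|\theta\|_K + \|\hat{x}-\theta\|_K \leq (1+r_0\epsilon)\|\theta\|_K$, and similarly $\|\theta\|_K \leq \|\hat{x}\|_K + \|\hat{x}-\theta\|_K$, which rearranges to $\|\hat{x}\|_K \geq (1 - r_0\epsilon)\|\theta\|_K$. Multiplying through by $\|x\|_2$ gives exactly the stated bounds. There is no real obstacle here — the only mild subtlety is remembering that $\|\cdot\|_K$ need not be symmetric, so the triangle-style inequality must be applied via the genuine subadditivity $\|a+b\|_K \leq \|a\|_K + \|b\|_K$ rather than a norm identity, but this is already furnished by Proposition~\ref{Prop:BasicK}(iv).
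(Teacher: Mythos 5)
Your proof is correct and follows essentially the same route as the paper's: both normalise to $\hat{x}=x/\|x\|_2$, use $\|\hat{x}-\theta\|_2\leq\epsilon$, subadditivity of $\|\cdot\|_K$, and the two inclusions $B_p(0,r_1)\subseteq K\subseteq B_p(0,r_2)$ (via $\|\hat{x}-\theta\|_K\leq\epsilon/r_1$ and $\|\theta\|_K\geq 1/r_2$), with the only cosmetic difference being whether the factor $1/\|\theta\|_K\leq r_2$ is folded in before or after the subadditivity step. One small slip you should fix for the lower bound: subadditivity gives $\|\theta\|_K\leq\|\hat{x}\|_K+\|\theta-\hat{x}\|_K$, not $\|\hat{x}\|_K+\|\hat{x}-\theta\|_K$; since $\|\theta-\hat{x}\|_K$ is bounded by the same $\epsilon/r_1$ (the Euclidean bound is symmetric even though $\|\cdot\|_K$ need not be), the conclusion is unaffected, but the term should be written with the correct orientation.
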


\begin{proof}
 The claim is immediately true if $x = 0$ or if $x = \theta$. Thus, let $x \in S(\theta, \epsilon)$ be such that $x \neq 0$ and $x\neq \theta$. Let $\tilde{x} := x/\|x\|_2$, so that $\| \tilde{x} - \theta\|_2 \leq \epsilon$. Then
  \begin{align*}
    \| \theta \|_K - \frac{\epsilon}{r_1} &\leq  \| \theta \|_K - \epsilon  \biggl( \sup_{z \in \mathbb{R}^p\backslash \{0\}} \frac{ \|z \|_K}{\|z\|_2} \biggr) \leq   \|\theta \|_K - \| \tilde{x} - \theta \|_K  \\
    &\leq \|\tilde{x}\|_K 
    \leq \| \theta \|_K + \| \tilde{x} - \theta \|_K
               \leq \| \theta \|_K + \epsilon \biggl( \sup_{z \in \mathbb{R}^p \backslash\{0\}} \frac{ \|z \|_K}{\|z\|_2} \biggr)
               \leq \| \theta \|_K + \frac{\epsilon}{r_1}.
  \end{align*}
The claim then follows from the fact that $1/\| \theta \|_K \leq \| \theta \|_2/\|\theta\|_K \leq r_2$.
\end{proof}
\begin{lemma}
\label{Lem:CapProbabilityBound}
  Let $X$ be a random vector uniformly distributed on $K$. Let $\theta \in \mathbb{S}^{p-1}$ be fixed and let $\epsilon \in (0,1]$. Then,
  \[
    c_p r_0^{-p}  \epsilon^{p-1}
    \leq \mathbb{P}\bigl( X \in S(\theta, \epsilon) \bigr) \leq
    C_p r_0^{p} \epsilon^{p-1},
  \]
  where $C_p, c_p$ are as defined in~\eqref{Eqn:KNonparametricDefinitions}. 
\end{lemma}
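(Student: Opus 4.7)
The plan is to reduce the problem, via the enclosure $B_p(0,r_1) \subseteq K \subseteq B_p(0,r_2)$, to estimating the Lebesgue measure of the intersection of a Euclidean ball with the cone $S(\theta,\epsilon)$. Since $X$ is uniform on $K$, we have $\mathbb{P}(X \in S(\theta,\epsilon)) = \lambda_p(K \cap S(\theta,\epsilon))/\lambda_p(K)$, and the enclosure immediately yields
\[
\frac{\lambda_p(B_p(0,r_1) \cap S(\theta,\epsilon))}{\kappa_p r_2^p} \leq \mathbb{P}(X \in S(\theta,\epsilon)) \leq \frac{\lambda_p(B_p(0,r_2) \cap S(\theta,\epsilon))}{\kappa_p r_1^p}.
\]
Because $S(\theta,\epsilon)$ is a cone with apex at the origin, polar coordinates give $\lambda_p(B_p(0,r) \cap S(\theta,\epsilon)) = (r^p/p)\,\sigma_{p-1}(\mathbb{S}^{p-1} \cap S(\theta,\epsilon))$ for every $r > 0$, so both sides reduce to bounding the surface measure of the spherical cap
\[
C(\theta,\epsilon) := \mathbb{S}^{p-1} \cap S(\theta,\epsilon) = \{y \in \mathbb{S}^{p-1} : y^{\top}\theta \geq 1 - \epsilon^2/2\}.
\]

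To estimate $\sigma_{p-1}(C(\theta,\epsilon))$, I will parameterise the cap by its orthogonal projection onto $\theta^\perp$: write $y = z + \sqrt{1 - \|z\|^2}\,\theta$ for $z \in \theta^\perp$, whose Jacobian relative to the $(p-1)$-dimensional Lebesgue measure on $\theta^\perp$ equals $(1 - \|z\|^2)^{-1/2}$. A direct calculation shows that $y \in C(\theta,\epsilon)$ if and only if $z$ lies in the disk $D := \{z \in \theta^\perp : \|z\|^2 \leq \epsilon^2(1 - \epsilon^2/4)\}$, which has $(p-1)$-dimensional volume $\kappa_{p-1}\bigl(\epsilon\sqrt{1 - \epsilon^2/4}\bigr)^{p-1}$. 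Hence
\[
\sigma_{p-1}(C(\theta,\epsilon)) = \int_D \frac{dz}{\sqrt{1 - \|z\|^2}}.
\]

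For $\epsilon \in (0,1]$ we have $\|z\|^2 \leq \epsilon^2(1 - \epsilon^2/4) \leq 3/4$ on $D$, so the integrand lies in $[1, 2]$. This produces both bounds: from below,
\[
\sigma_{p-1}(C(\theta,\epsilon)) \geq \lambda_{p-1}(D) = \kappa_{p-1}\bigl(\epsilon\sqrt{1-\epsilon^2/4}\bigr)^{p-1} \geq \kappa_{p-1}(3/4)^{(p-1)/2}\epsilon^{p-1},
\]
and from above, $\sigma_{p-1}(C(\theta,\epsilon)) \leq 2\lambda_{p-1}(D) \leq 2\kappa_{p-1}\epsilon^{p-1} \leq 2^{p-1}\kappa_{p-1}\epsilon^{p-1}$ since $p \geq 2$. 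Substituting these into the two displayed inequalities above and collecting the factors of $r_1, r_2, \kappa_p, \kappa_{p-1}, p$ yields exactly $c_p r_0^{-p} \epsilon^{p-1}$ and $C_p r_0^p \epsilon^{p-1}$ as defined in~\eqref{Eqn:KNonparametricDefinitions}.

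There is no real obstacle here: the argument is mostly bookkeeping. The one place requiring a small amount of care is the bound on the Jacobian $(1 - \|z\|^2)^{-1/2}$, where one must use $\epsilon \leq 1$ to keep $\|z\|$ bounded away from $1$ on $D$; otherwise the integral would blow up and the upper bound would fail. The geometric identification of the projection of $C(\theta,\epsilon)$ as the disk $D$ is routine.
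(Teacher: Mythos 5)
Your proof is correct and yields exactly the constants $c_p$ and $C_p$ from~\eqref{Eqn:KNonparametricDefinitions}, via a route that differs a little from the paper's. The paper bounds the solid sector $B_p(0,r)\cap S(\theta,\epsilon)$ by comparison with two disk slices of the cone, namely $S_{\mathrm{b}}(\theta,\epsilon)$ (the disk at height $1-\epsilon^2/2$) for the lower bound and $S_{\mathrm{t}}(\theta,\epsilon)$ (the disk at height $1$) for the upper bound, and computes the $(p-1)$-dimensional volume of each directly. You instead invoke the exact radial decomposition $\lambda_p(B_p(0,r)\cap S(\theta,\epsilon)) = (r^p/p)\,\sigma_{p-1}(C(\theta,\epsilon))$ and then estimate the cap's surface measure through its orthogonal projection onto $\theta^\perp$, writing $\sigma_{p-1}(C) = \int_D (1-\|z\|^2)^{-1/2}\,dz$ with the integrand pinned between $1$ and $2$ on $D$ once $\epsilon \leq 1$. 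Your disk $D$ is precisely the paper's $S_{\mathrm{b}}$ translated into $\theta^\perp$, so the lower bounds coincide; for the upper bound you replace the comparison with the larger disk $S_{\mathrm{t}}$ by the explicit Jacobian factor $2$, and both give $2^{p-1}\kappa_{p-1}\epsilon^{p-1}$. What your approach buys is that both directions fall out of a single exact identity and one parameterisation, with the two-sided error concentrated in the elementary Jacobian bound; the paper's version is more terse, presenting the sector-to-pyramid comparisons as equalities (they are really one-sided) and leaving the geometric justification implicit.
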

\begin{proof}
  Fix $\epsilon \in (0, 1]$ and let $S_{\mathrm{b}}(\theta, \epsilon) := \{ x \in B_p(0,1) \,:\, x^\top \theta = (1 - \epsilon^2/2) \}$ so that $S_{\mathrm{b}}(\theta, \epsilon)$ is the base of the spherical cap $B_p(0,1) \cap S(\theta, \epsilon)$. Define also $S_{\mathrm{t}}(\theta, \epsilon) := S(\theta, \epsilon) \cap \{ x \in \mathbb{R}^p \,:\, x^\top \theta = 1\}$. We observe then that
  \begin{align*}
    \lambda_{p-1}\bigl(S_{\mathrm{b}}(\theta, \epsilon)\bigr) &= \kappa_{p-1} \biggl\{\epsilon \Bigl( 1 - \frac{\epsilon^2}{4}\Bigr)^{1/2}\biggr\}^{p-1} 
    \geq \kappa_{p-1}  \Bigl(\frac{3}{4}\Bigr)^{(p-1)/2}\epsilon^{p-1} = c_pp\kappa_p\epsilon^{p-1}, \\
    \lambda_{p-1}\bigl(S_{\mathrm{t}}(\theta, \epsilon)\bigr) &=  \kappa_{p-1} \biggl\{ \frac{\epsilon (1 - \epsilon^2/4)^{1/2}}{1 - \epsilon^2/2} \biggr\}^{p-1} \leq \kappa_{p-1} 2^{p-1} \epsilon^{p-1} = C_pp\kappa_p\epsilon^{p-1}.
  \end{align*}
We therefore have that
  \begin{align*}
    \mathbb{P}\bigl(X \in S(\theta, \epsilon)\bigr)
    &= \frac{\lambda_{p}\bigl( K \cap S(\theta, \epsilon)\bigr)}{\lambda_{p}(K)} 
    \leq \frac{\lambda_{p}\bigl( B_p(0, r_2) \cap S(\theta, \epsilon) \bigr) }{r_1^p \kappa_p}\\
    &= \frac{\lambda_{p-1}\bigl( S_{\mathrm{t}}(\theta, \epsilon)  \bigr) r_0^p  }{ p\kappa_p} \leq C_p r_0^p \epsilon^{p-1}. 
  \end{align*}
For the lower bound, we have
  \begin{align*}
    \mathbb{P}\bigl(X \in S(\theta, \epsilon) \bigr)
    &\geq \frac{\lambda_{p}\bigl( B_p(0, r_1) \cap S(\theta, \epsilon) \bigr) }{r_2^p \kappa_p}
      = \frac{\lambda_{p-1}\bigl( S_{\mathrm{b}}(\theta, \epsilon)  \bigr) r_0^{-p}  }{ p\kappa_p} \geq c_p r_0^{-p} \epsilon^{p-1},
  \end{align*}
  as desired.
\end{proof}
The following lemma is well-known and follows from the fact that the surface area of a spherical cap $\mathbb{S}^{p-1} \cap S(\theta, \epsilon)$ scales as $\epsilon^{p-1}$ up to a multiplicative constant depending only on~$p$ \citep{li2011concise}. We omit the proof for brevity.
\begin{lemma}
  \label{lem:cap_covering}
There exists $C'_p > 0$, depending only on $p$, such that, for every $\epsilon \in (0,1]$, there exists an $\epsilon$-net $\mathcal{N}_{\epsilon}$ of $\mathbb{S}^{p-1}$ of cardinality $|\mathcal{N}_{\epsilon}| \leq C'_p \epsilon^{-(p-1)}$.
\end{lemma}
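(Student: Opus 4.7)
The plan is to realize the $\epsilon$-net condition as a standard Euclidean covering of the sphere and then produce a maximal $\epsilon$-separated subset $\mathcal{N}_\epsilon$; the cardinality bound will then come from a direct volume comparison carried out inside a thin annulus around $\mathbb{S}^{p-1}$.

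First I would observe that for $x, y \in \mathbb{S}^{p-1}$, the identity $\|x - y\|_2^2 = 2 - 2 x^\top y$ shows that $x \in S(y, \epsilon)$, i.e.\ $x^\top y \geq 1 - \epsilon^2/2$, is exactly the condition $\|x - y\|_2 \leq \epsilon$. So the notion of $\epsilon$-net of $\mathbb{S}^{p-1}$ defined just before the lemma coincides with an ordinary Euclidean $\epsilon$-cover of $\mathbb{S}^{p-1}$ by centres lying in $\mathbb{S}^{p-1}$.

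Next I would take $\mathcal{N}_\epsilon \subseteq \mathbb{S}^{p-1}$ to be a maximal $\epsilon$-separated subset (so $\|y - y'\|_2 > \epsilon$ for distinct $y, y' \in \mathcal{N}_\epsilon$). Existence follows from the usual greedy/Zorn construction; the cardinality bound proved below makes it finite. By maximality, every $x \in \mathbb{S}^{p-1}$ satisfies $\|x - y\|_2 \leq \epsilon$ for some $y \in \mathcal{N}_\epsilon$, so $\mathcal{N}_\epsilon$ is automatically an $\epsilon$-net. To bound $|\mathcal{N}_\epsilon|$ with the correct exponent $p-1$, I would work inside the thin annulus $A_\epsilon := \{x \in \mathbb{R}^p : 1 - \epsilon/2 \leq \|x\|_2 \leq 1 + \epsilon/2\}$. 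Since $\|y\|_2 = 1$, the open ball $B_p(y, \epsilon/2)$ lies inside $A_\epsilon$ for every $y \in \mathcal{N}_\epsilon$, and $\epsilon$-separation makes these balls pairwise disjoint. A short mean-value-theorem estimate gives $\lambda_p(A_\epsilon) = \kappa_p\{(1+\epsilon/2)^p - (1-\epsilon/2)^p\} \leq p(3/2)^{p-1}\kappa_p \epsilon$ whenever $\epsilon \leq 1$, while each ball has volume $\kappa_p(\epsilon/2)^p$, and rearranging yields $|\mathcal{N}_\epsilon| \leq 2p\cdot 3^{p-1}\epsilon^{-(p-1)}$, so $C'_p := 2p\cdot 3^{p-1}$ suffices.

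The only subtle point is the choice of containing region: bounding the disjoint balls $B_p(y,\epsilon/2)$ by the volume of $B_p(0, 1+\epsilon/2)$ would give only $|\mathcal{N}_\epsilon| \lesssim_p \epsilon^{-p}$, whereas restricting to the width-$\epsilon$ annulus $A_\epsilon$ supplies the missing factor of $\epsilon$ and produces the sharper exponent $p-1$ that reflects the true $(p-1)$-dimensional nature of $\mathbb{S}^{p-1}$. All other steps are routine, so I expect no genuine obstacle beyond this choice.
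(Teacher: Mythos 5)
Your proof is correct, and it fills in an argument that the paper deliberately omits --- the authors simply state that the lemma ``follows from the fact that the surface area of a spherical cap $\mathbb{S}^{p-1} \cap S(\theta, \epsilon)$ scales as $\epsilon^{p-1}$'' and defer to a citation. Your reduction to a Euclidean cover (via $\|x-y\|_2^2 = 2 - 2x^\top y$ for $x,y \in \mathbb{S}^{p-1}$, which correctly identifies $S(y,\epsilon)\cap \mathbb{S}^{p-1}$ with $\{x \in \mathbb{S}^{p-1}: \|x-y\|_2 \leq \epsilon\}$), followed by a maximal $\epsilon$-separated set and a volume-packing bound inside the annulus $A_\epsilon$, is a clean and fully self-contained alternative. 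The key difference is that the paper's cited route works directly with the $(p-1)$-dimensional surface (Hausdorff) measure of spherical caps, so the exponent $p-1$ appears immediately, whereas you recover it by working in the $p$-dimensional Lebesgue volume of a width-$\epsilon$ shell around the sphere, which supplies the missing factor of $\epsilon$ --- you correctly flag that packing into the full ball $B_p(0, 1+\epsilon/2)$ would only give the weaker exponent $p$. Your approach has the advantage of requiring nothing beyond elementary volume computations (the mean value estimate $(1+\epsilon/2)^p - (1-\epsilon/2)^p \leq p(3/2)^{p-1}\epsilon$ for $\epsilon \leq 1$ is correct), so it would be a reasonable replacement for the external citation if one wanted the paper to be self-contained. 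The stated constant $C'_p = 2p \cdot 3^{p-1}$ checks out.
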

For $K \in \mathcal{K}$, let $h_K$ be the support function of $K$, i.e., $h_K(u) := \sup \{ x^\top u \,:\, x \in K\}$ for $u \in \mathbb{S}^{p-1}$. For any $u \in \mathbb{S}^{p-1}$ and $\epsilon \in (0,1)$, define $C_K(u,\epsilon) := \{ x \in K \,:\, u^\top x \geq h_K(u) - \epsilon\}$. For $A, B \subseteq \mathbb{R}^p$, we define the Hausdorff distance $d_{\mathrm{Haus}}(A, B) := \inf \{ \epsilon > 0 \,:\, A \subseteq B + \epsilon B_p(0,1), B \subseteq A + \epsilon B_p(0,1) \}$. 
\begin{theorem} \citep[][Theorem 1]{brunel2018uniform}
  \label{Thm:BrunelTheorem}
  Let $K \in \mathcal{K}$ with $K \subseteq B_p(0,1)$ and let $\nu$ be probability distribution supported on $K$. Suppose there exist $\alpha \geq 1$, $L > 0$ and $\epsilon_0 > 0$ such that $\nu\bigl(C_K(u,\epsilon)\bigr) \geq L\epsilon^\alpha$ for all $\epsilon \in (0,\epsilon_0]$.  Let $Y_1, \ldots, Y_M$ be independent random vectors with distribution $\nu$ and let $\tilde{K} := \mathrm{conv} \bigl\{ Y_1, \ldots, Y_M\}$. Let $\tau_1 := \max(1, p/(\alpha L))$ and $a_M := ( \tau_1M^{-1} \log M)^{1/\alpha}$. If $4 a_M \leq \epsilon_0$, then
  \[
    \mathbb{P}\bigl( d_{\mathrm{Haus}}(\tilde{K}, K) \geq 4 a_M\bigr) \leq 12^p M^{ - L \tau_1}.
  \]
\end{theorem}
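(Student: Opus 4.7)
Since Theorem~\ref{Thm:BrunelTheorem} is imported verbatim from \citet{brunel2018uniform}, I would defer to their argument; nonetheless, here is a sketch of the standard proof. The plan is to reduce the Hausdorff distance to a uniform bound on the difference of support functions over the unit sphere. Because every $Y_i$ lies in $K$ and $K$ is convex, we have $\tilde{K} \subseteq K$, so
\[
d_{\mathrm{Haus}}(\tilde{K}, K) = \sup_{u \in \mathbb{S}^{p-1}} \bigl(h_K(u) - h_{\tilde{K}}(u)\bigr),
\]
and it suffices to bound this supremum by $4a_M$ with the claimed probability.

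The first step handles a single direction. Since $h_{\tilde{K}}(u) = \max_{i \in [M]} u^\top Y_i$, the event $\{h_K(u) - h_{\tilde{K}}(u) > \epsilon\}$ is precisely the event that none of $Y_1, \ldots, Y_M$ lies in the cap $C_K(u, \epsilon)$. By the cap-probability hypothesis and independence, this event has probability at most $(1 - L\epsilon^\alpha)^M \leq \exp(-ML\epsilon^\alpha)$ for $\epsilon \in (0, \epsilon_0]$.

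The second step upgrades to a uniform bound via a $\delta$-net argument on the sphere. The hypothesis $K \subseteq B_p(0,1)$ implies that $h_K$ and $h_{\tilde{K}}$ are both $1$-Lipschitz on $\mathbb{S}^{p-1}$, so if $\mathcal{N}_\delta \subseteq \mathbb{S}^{p-1}$ is a $\delta$-net with $|\mathcal{N}_\delta| \leq (3/\delta)^p$ (a standard volumetric bound), then
\[
\sup_{u \in \mathbb{S}^{p-1}} \bigl(h_K(u) - h_{\tilde{K}}(u)\bigr) \leq \sup_{u' \in \mathcal{N}_\delta} \bigl(h_K(u') - h_{\tilde{K}}(u')\bigr) + 2\delta.
\]
Choosing $\delta = a_M$ and applying the single-direction estimate at scale $\epsilon = 2a_M$ (which lies in $(0,\epsilon_0]$ by the hypothesis $4a_M \leq \epsilon_0$), a union bound would give
\[
\mathbb{P}\bigl(d_{\mathrm{Haus}}(\tilde{K},K) > 4a_M\bigr) \leq (3/a_M)^p \exp\bigl(-ML(2a_M)^\alpha\bigr).
\]

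Finally, with $a_M^\alpha = \tau_1 M^{-1}\log M$ we have $ML(2a_M)^\alpha = 2^\alpha L\tau_1 \log M$, while the prefactor $(3/a_M)^p$ grows only polynomially in $M$; since $\tau_1 \geq p/(\alpha L)$, the exponential dominates. Balancing constants carefully, essentially absorbing $(3/a_M)^p$ and $2^\alpha$ into the base of the final bound, yields $12^p M^{-L\tau_1}$. The main obstacle is therefore arithmetic rather than conceptual: one has to track constants through the covering and union-bound step precisely enough to land on the clean $12^p$ prefactor, and to verify that the hypothesis $4a_M \leq \epsilon_0$ is exactly what allows the single-direction cap-probability bound to be invoked at every point of the net.
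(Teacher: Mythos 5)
The paper imports this theorem verbatim from \citet{brunel2018uniform} and does not reprove it, so there is no internal proof to compare against. Your sketch is a faithful reconstruction of the standard argument: since $\tilde{K}\subseteq K$, the Hausdorff distance reduces to the one-sided support-function gap; the event of a large gap at a fixed direction $u$ is exactly the event that the cap $C_K(u,\epsilon)$ contains no sample point, bounded via independence and the cap-probability hypothesis; and the uniform control follows from a net on $\mathbb{S}^{p-1}$, using that $K\subseteq B_p(0,1)$ makes both support functions $1$-Lipschitz. The arithmetic also closes: with $\delta=a_M$ and $\epsilon=2a_M$, the exponential term is $M^{-2^\alpha L\tau_1}$, the net cardinality contributes at most $3^p M^{p/\alpha}$, and since $\tau_1\geq p/(\alpha L)$ and $\alpha\geq 1$ the exponent $p/\alpha - 2^\alpha L\tau_1$ is at most $-L\tau_1$, so a $3^p$ prefactor already suffices and $12^p$ provides slack. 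The only caveat is that this is a sketch of the proof in \citet{brunel2018uniform} rather than something verifiable against the present paper; to assess whether it matches their exact route (choice of net scale, whether they push through the logarithm of $|\mathcal{N}_\delta|$ differently, etc.) one would need to consult that paper directly.
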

\begin{lemma}
  \label{Lem:Brunel}
Let $Z_1, \ldots, Z_M$ be independent random vectors distributed uniformly on $K$.  Let $Y_m := Z_m/\|Z_m\|_K$ for $m \in [M]$, and let $\tilde{K} := \mathrm{conv}\{ Y_1, \ldots, Y_M\}$. If $M/\log M \geq r_0^{2(p-1)} 64^{p-1}$, then
  \[
    \mathbb{P}\biggl( d_{\mathrm{scale}}(\tilde{K},K) > 64 r^2_0 \biggl( \frac{\log M}{M} \biggr)^{1/(p-1)} \biggr) \leq 12^p M^{-p/(p-1)}.
  \]
\end{lemma}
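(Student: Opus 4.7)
The strategy is to apply Brunel's Theorem~\ref{Thm:BrunelTheorem} to the random boundary points $Y_1,\ldots,Y_M$ and convert the ensuing Hausdorff bound into a bound on $d_{\mathrm{scale}}$.  By the scale invariance of $d_{\mathrm{scale}}$, I first rescale $K$ by $1/r_2$ so that $K\subseteq B_p(0,1)$ and $B_p(0,1/r_0)\subseteq K$; this in particular makes $\|\cdot\|_K$ an $r_0$-Lipschitz function with respect to $\|\cdot\|_2$.  A radial change of variables as in the proof of Proposition~\ref{Prop:NormDirectionIndep} shows that the common distribution $\nu$ of the $Y_m$ on $\partial K$ is the cone measure $\nu(A) = \lambda_p(\mathrm{cone}(A))/\lambda_p(K)$, where $\mathrm{cone}(A):=\{ty:t\in[0,1],\,y\in A\}$.

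The principal step is to establish a cap lower bound of the form $\nu(C_K(u,\epsilon)) \geq c_p \epsilon^{p-1}/r_0^{p-1}$ for every $u\in\mathbb{S}^{p-1}$ and all sufficiently small $\epsilon>0$, with $c_p>0$ depending only on $p$.  Letting $y^*\in\partial K$ satisfy $u^\top y^* = h_K(u)$ and $\theta^*:=y^*/\|y^*\|_2$, I would use the radial decomposition
\[
\lambda_p(\mathrm{cone}(A)) \;=\; \int_{A_\theta}\frac{d\sigma(\theta)}{p\,\|\theta\|_K^p}, \qquad A_\theta:=\{\theta\in\mathbb{S}^{p-1}:\theta/\|\theta\|_K\in A\},
\]
with $A=\partial K\cap C_K(u,\epsilon)$, and show that $A_\theta$ contains the spherical cap about $\theta^*$ of angular radius of order $\epsilon/\bigl(\|y^*\|_2\bigl(1+h_K(u) r_0\bigr)\bigr)$.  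This is verified by combining $u^\top\theta \geq u^\top\theta^* - \|\theta-\theta^*\|_2$ with the Lipschitz bound $\|\theta\|_K \leq \|\theta^*\|_K + r_0\|\theta-\theta^*\|_2$.  Minimising the radius over the admissible range $\|y^*\|_2,h_K(u)\in[1/r_0,1]$ gives a cap of radius $\gtrsim \epsilon/r_0$, whence $\sigma(A_\theta) \gtrsim_p (\epsilon/r_0)^{p-1}$; combining this with $\|\theta\|_K\leq r_0$ on $\mathbb{S}^{p-1}$ and $\lambda_p(K)\leq\kappa_p$ completes the cap estimate.  Obtaining the correct $r_0$-dependence in $L$ is the main technical hurdle.

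Taking $\alpha=p-1$ and $L=c_p/r_0^{p-1}$ in Theorem~\ref{Thm:BrunelTheorem} gives $\tau_1 = p/\bigl((p-1)L\bigr)$ and $L\tau_1 = p/(p-1)$, so with probability at least $1-12^p M^{-p/(p-1)}$,
\[
d_{\mathrm{Haus}}(\tilde K,K) \;\leq\; 4 a_M \;\leq\; 8 r_0\Bigl(\frac{\log M}{M}\Bigr)^{1/(p-1)}.
\]
The hypothesis $M/\log M \geq r_0^{2(p-1)}\cdot 64^{p-1}$ ensures both Brunel's smallness condition $4a_M\leq\epsilon_0$ and that $4a_M\leq 1/(2r_0)$.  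Finally, the Hausdorff bound implies that for every $u\in\mathbb{S}^{p-1}$ there is some $\tilde y\in\tilde K$ with $u^\top\tilde y \geq h_K(u) - 4a_M \geq 1/r_0 - 4a_M$, so the support function satisfies $h_{\tilde K}(u)\geq 1/r_0 - 4a_M$ for every $u$ and consequently $B_p(0,1/r_0 - 4a_M)\subseteq\tilde K$.  A standard convex-combination argument (if $B_p(0,r)\subseteq\tilde K\subseteq K$ and $d_{\mathrm{Haus}}(\tilde K,K)\leq\eta$, then $d_{\mathrm{scale}}(\tilde K,K)\leq\eta/r$) yields
\[
d_{\mathrm{scale}}(\tilde K,K) \;\leq\; \frac{4a_M}{1/r_0 - 4a_M} \;\leq\; 8 r_0\, a_M \;\leq\; 64 r_0^2\Bigl(\frac{\log M}{M}\Bigr)^{1/(p-1)}.
\]
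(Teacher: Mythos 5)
Your high-level strategy --- applying Brunel's deviation bound (Theorem~\ref{Thm:BrunelTheorem}) to the cone measure and then converting the resulting Hausdorff bound into a $d_{\mathrm{scale}}$ bound --- coincides with the paper's, and your conversion step (via the support-function inequality $h_{\tilde K}(u) \geq 1/r_0 - 4a_M$, hence $B_p(0, 1/r_0 - 4a_M) \subseteq \tilde K$, combined with the convexity identity $\tilde K + c\tilde K = (1+c)\tilde K$) is correct and a touch cleaner than the paper's recursive unrolling via $K \subseteq \tilde K + \epsilon K$.

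There is, however, a genuine gap in the cap estimate, and it is exactly where you flag "the main technical hurdle." You compute $\nu(C_K(u,\epsilon))$ in polar coordinates and then combine two \emph{separately optimised} worst cases: (a) the minimal angular radius of the cap $A_\theta$ over all admissible $\|y^*\|_2, h_K(u)$, giving $\gtrsim \epsilon/r_0$, and (b) the uniform bound $\|\theta\|_K \leq r_0$ on $\mathbb{S}^{p-1}$. Multiplying yields $\nu(C_K(u,\epsilon)) \gtrsim_p \sigma(A_\theta)/r_0^p \gtrsim_p \epsilon^{p-1}/r_0^{2p-1}$, i.e.\ $L \propto r_0^{-(2p-1)}$, not the required $L \gtrsim_p r_0^{-(p-1)}$. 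This deficit of $r_0^p$ propagates: $\tau_1 = p/(\alpha L) \propto r_0^{2p-1}$, $a_M \propto r_0^{(2p-1)/(p-1)}$, and the final $d_{\mathrm{scale}}$ bound becomes $\lesssim r_0^{1 + (2p-1)/(p-1)}(\log M / M)^{1/(p-1)}$ instead of $64r_0^2(\log M/M)^{1/(p-1)}$; at $p=2$ this is $r_0^4$ versus $r_0^2$. The two worst cases are actually mutually exclusive: writing $s := \|y^*\|_2 \in [1/r_0, 1]$ and $h := h_K(u) \in [1/r_0, s]$, when $s$ (hence $h$) is near $1/r_0$ --- the bad regime for (b) since $\|\theta^*\|_K = 1/s \approx r_0$ --- one has $1 + hr_0 \leq 2hr_0$ so the cap radius is $\gtrsim \epsilon/(2shr_0)$, much larger than $\epsilon/r_0$; conversely the cap radius is only $\sim \epsilon/r_0$ when $s \approx h \approx 1$, in which case $\|\theta\|_K \approx 1/s \approx 1 \ll r_0$. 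Replacing the global bound $\|\theta\|_K \leq r_0$ by the local bound $\|\theta\|_K \lesssim 1/s$ on the cap and using $s/h^{p-1} \geq s^{2-p} \geq 1$ for $p\geq 2$ would recover $L \gtrsim_p r_0^{-(p-1)}$, but the argument you wrote does not. The paper sidesteps this delicacy by a purely volumetric comparison: it bounds $\lambda_p(\mathrm{conv}(\{0\}\cup C_K(u,\epsilon)))$ below by the volume of the bi-cone $D^0 \cup D^{x^*}$ over the base $D$, then shrinks $D$ to the cross-section $D^*$ of a conical pyramid via the $(p-1)$-dimensional Hausdorff-measure contraction argument of Corollary~\ref{Cor:ConvexContraction}, giving $\nu(C_K(u,\epsilon)) \geq \kappa_{p-1}h_K(u)^{2-p}r_1^{p-1}\epsilon^{p-1}/(p\kappa_p) \geq r_1^{p-1}\epsilon^{p-1}/(2p)$ in one step. (A smaller issue: you wrote $4a_M \leq 8r_0(\log M/M)^{1/(p-1)}$; with $L = 1/(2pr_0^{p-1})$ one only gets $a_M \leq 8r_0(\log M/M)^{1/(p-1)}$ (at $p=2$, $4a_M = 32r_0(\cdot)^{1/(p-1)}$), but your final chain only requires the latter, so the constant $64r_0^2$ survives once the $L$-estimate is repaired.)
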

\begin{proof}
Since $d_{\mathrm{scale}}$ is scale invariant, we assume without loss of generality that $r_2 = 1$ so that $r_0 = 1/r_1$. Let $\nu$ denote the distribution of $Y_1$.  We claim that $\nu$ satisfies the hypothesis of Theorem~\ref{Thm:BrunelTheorem} with $L = r_1^{p-1}/(2p)$, $\epsilon_0 = r_1$, and $\alpha = p-1$.

  To see this, let $\epsilon \in (0, r_1]$ and $u \in \mathbb{S}^{p-1}$ be arbitrary and let $x^* \in \partial K$ be such that $h_K(u) = u^\top x^*$. We write $C^0_K(u,\epsilon) = \mathrm{conv}\bigl\{ \{0\} \cup C_K(u,\epsilon) \bigr\}$. Let $D := \{ x \in K \,:\, x^\top u = h_K(u) - \epsilon \}$ be the base of $C_K(u, \epsilon)$ and define $D^0 = \mathrm{conv}\bigl\{ \{0\} \cup D\bigr\}$ and $D^{x^*} := \mathrm{conv}\bigl\{ \{x^*\} \cup D\bigr\}$. Let $C^*$ be the conical pyramid connecting $B_p(0, r_1) \cap \{ x \in \mathbb{R}^p \,:\, x^\top u = 0 \}$ and $x^*$ and let $D^* = C^* \cap D$. Then, by an application of Corollary~\ref{Cor:ConvexContraction}, the fact that $h_K(u) \leq \sup \{ \|x \|_2 \,:\, x \in K \} \leq 1$,  the fact that $\epsilon \leq r_1 \leq h_K(u)$, and the fact that $\kappa_{p-1}/\kappa_p \geq 1/2$, 
  \begin{align*}
    \nu(C_K(u, \epsilon))
    &= \frac{\lambda_{p}\bigl( C^0_K(u, \epsilon) \bigr)}{\lambda_{p}(K)}
      \geq \frac{\lambda_{p}(D^0 \cup D^{x^*})}{\kappa_p}
      = \frac{\lambda_{p-1}(D) h_K(u)}{p \kappa_p} \\
      &\geq \frac{\lambda_{p-1}(D^*) h_K(u)}{p \kappa_p} 
    \geq  \frac{\kappa_{p-1} h_K(u)}{p \kappa_p} \biggl(r_1\frac{\epsilon}{h_K(u)} \biggr)^{p-1}
      \geq   \frac{1}{2p} r_1^{p-1} \epsilon^{p-1}.
  \end{align*}
  Let $\tau_1$ and $a_M$ be defined as in Theorem~\ref{Thm:BrunelTheorem}. Since $p/(\alpha L) =  \frac{2p^2}{p-1} r_1^{-(p-1)} \geq 1,$ we have that $\tau_1 = \max(1, p/(\alpha L)) = p/(\alpha L)$. Moreover, $p/(\alpha L) \leq 8^{p-1}/r_1^{p-1}$ and so $a_M \leq \frac{8}{r_1} \bigl( \frac{\log M}{M} \bigr)^{1/(p-1)}$. By the assumption on $M/\log M$, we have that $4 a_M \leq \epsilon_0$. Thus, writing
  \[
    \mathcal{E}_{\mathrm{Haus}} := \biggl\{d_{\mathrm{Haus}}(\tilde{K}, K) < \frac{32}{r_1} \Bigl(\frac{\log M}{M}\Bigr)^{1/(p-1)}\biggr\},
  \]
we have by Theorem~\ref{Thm:BrunelTheorem} that
  \begin{align}
    \mathbb{P}(\mathcal{E}_{\mathrm{Haus}}^c) \leq 12^p M^{-p/(p-1)}.
    \label{eqn:brunel}
  \end{align}
We now work on the event $\mathcal{E}_{\mathrm{Haus}}$, which by our assumption on $M/\log M$ implies that $d_{\mathrm{Haus}}(\tilde{K}, K) < r_1/2$.  We can therefore fix $\epsilon \in (r_1^{-1} d_{\mathrm{Haus}}(\tilde{K}, K), 1/2]$.  Since $r_2 = 1$, we have $K \subseteq \tilde{K} + \epsilon r_1 B_p(0,1) \subseteq \tilde{K} + \epsilon K$. Applying this recursively, we obtain $K \subseteq \bigl( \sum_{r=0}^R \epsilon^r  \bigr)\tilde{K} + \epsilon^{R+1} K$ for any $R \in \mathbb{N}$. Because $\sum_{r=0}^R \epsilon^k \leq 1 + 2 \epsilon$, it holds that for any $x \in K$, there exists $\{ y_R, z_R\}_{R \in \mathbb{N}}$ such that $y_R \in (1 + 2\epsilon) \tilde{K}$, $z_R \in \epsilon^{R+1} K$ and $x = y_R + z_R$. Thus, $\| x - y_R \| \leq \epsilon^{R+1}$ for all $R \in \mathbb{N}$ and so $x$ is a limit point of $(1 + 2\epsilon) \tilde{K}$. Since $\tilde{K}$ is closed, we conclude that $x \in (1 + 2\epsilon) \tilde{K}$ and hence that $K \subseteq (1 + 2 \epsilon) \tilde{K}$. On the other hand, we have that $\tilde{K} \subseteq K$ by definition and so $d_{\mathrm{scale}}(\tilde{K}, K) \leq 2 \epsilon$. Since $\epsilon \in (r_1^{-1} d_{\mathrm{Haus}}(\tilde{K}, K),  1/2]$ was chosen arbitrarily, we have that on the event $\mathcal{E}_{\mathrm{Haus}}$,
  \[
    d_{\mathrm{scale}}(\tilde{K}, K) \leq \frac{2}{r_1} d_{\mathrm{Haus}}(\tilde{K}, K) < 64 r_0^2 \Bigl( \frac{\log M}{M} \Bigr)^{1/p-1},
  \]
  as desired.
\end{proof}

Recall that for $m \in (0,p]$, the $m$-dimensional Hausdorff outer measure of $E \subseteq \mathbb{R}^p$ is defined as
\begin{align}
  \lambda_{m,p}(E) := \lim_{\delta \searrow 0} \frac{\pi^{m/2}}{2^m \Gamma(m/2 + 1)} \inf \biggl\{
    \sum_{i=1}^\infty \textrm{diam}(E_i)^m \,:\,
  E \subseteq \bigcup_{i=1}^\infty E_i, \, \textrm{diam}(E_i) < \delta,\, E_i \subseteq \mathbb{R}^p \biggr\}.
  \label{eqn:hausdorff_measure}
\end{align}
Note that by this definition, if $E \subseteq \mathbb{R}^p$ is Lebesgue measurable, then $\lambda_{p,p}(E) = \lambda_p(E)$ \citep[e.g.,][Section 4.3]{mattila1999geometry}.  We extend the definition of $\lambda_{p-1}$ to Borel subsets of $\mathbb{R}^p$ by writing $\lambda_{p-1}(A) := \lambda_{p-1, p}(A)$ for a Borel measurable $A \subseteq \mathbb{R}^p$. The next lemma shows that the Hausdorff measure contracts under a $1$-Lipschitz map.  

\begin{lemma}
  \label{Lem:Contraction}
  Let $\phi \,:\, \mathbb{R}^p \rightarrow \mathbb{R}^p$ be a $1$-Lipschitz mapping in the sense that for any $x, y \in \mathbb{R}^p$, $\| \phi(x) - \phi(y) \|_2 \leq \| x - y \|_2$. For $m \leq p$, let $\lambda_{m,p}$ be the $m$-dimensional Hausdorff outer measure as defined in~\eqref{eqn:hausdorff_measure}. Then, for any $A \subseteq \mathbb{R}^p$,
  \[
    \lambda_{m,p}(A) \geq \lambda_{m,p}(\phi(A)).
  \]
\end{lemma}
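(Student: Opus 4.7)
The plan is to prove this via a direct covering argument from the definition in~\eqref{eqn:hausdorff_measure}. Fix $\delta > 0$ and let $\{E_i\}_{i=1}^\infty$ be any cover of $A \subseteq \mathbb{R}^p$ with $\mathrm{diam}(E_i) < \delta$ for all $i$. First I would observe that $\{\phi(E_i)\}_{i=1}^\infty$ is then a cover of $\phi(A)$, and, because $\phi$ is $1$-Lipschitz, $\mathrm{diam}(\phi(E_i)) \leq \mathrm{diam}(E_i) < \delta$ for every $i$. Hence $\{\phi(E_i)\}$ is admissible at scale $\delta$ in the infimum defining $\lambda_{m,p}(\phi(A))$, and the sum $\sum_i \mathrm{diam}(\phi(E_i))^m$ is bounded above by $\sum_i \mathrm{diam}(E_i)^m$.

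Next I would take the infimum over all admissible covers $\{E_i\}$ of $A$ at scale $\delta$ on the right-hand side. This yields
\[
\inf\Biggl\{\sum_{i=1}^\infty \mathrm{diam}(F_i)^m : \phi(A) \subseteq \bigcup_{i} F_i,\, \mathrm{diam}(F_i) < \delta\Biggr\} \leq \inf\Biggl\{\sum_{i=1}^\infty \mathrm{diam}(E_i)^m : A \subseteq \bigcup_{i} E_i,\, \mathrm{diam}(E_i) < \delta\Biggr\},
\]
since every admissible cover of $A$ produces an admissible cover of $\phi(A)$ with no greater total $m$-th power of diameters. Multiplying both sides by the constant $\pi^{m/2}/\bigl\{2^m\Gamma(m/2+1)\bigr\}$ and letting $\delta \searrow 0$ on both sides preserves the inequality, so we obtain $\lambda_{m,p}(\phi(A)) \leq \lambda_{m,p}(A)$ as desired.

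There is no real obstacle here; the only small point to be careful about is that the covers $E_i$ in the definition are not required to be Borel measurable (the Hausdorff outer measure is defined using arbitrary subsets), so there is no measurability issue in taking images under $\phi$. The entire argument is a one-line consequence of the 1-Lipschitz property applied coverwise.
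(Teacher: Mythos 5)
Your proposal is correct and follows essentially the same approach as the paper, which simply notes that $\mathrm{diam}(\phi(E)) \leq \mathrm{diam}(E)$ for any $E$ and declares that the claim follows immediately. You have merely spelled out the covering argument that the paper leaves implicit.
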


\begin{proof}
For any $E \subseteq \mathbb{R}^p$, we have that $\textrm{diam}(E) \geq \textrm{diam}(\phi(E))$. The claim follows immediately.
\end{proof}

\begin{corollary}
  \label{Cor:ConvexContraction}
  Let $A, B \subseteq \mathbb{R}^p$ be compact convex sets with non-empty interiors. If $B \subseteq A$, then, $\lambda_{p-1}(\partial A) \geq \lambda_{p-1}(\partial B)$.
\end{corollary}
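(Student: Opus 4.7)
The plan is to exhibit a $1$-Lipschitz map from $\mathbb{R}^p$ to itself whose image contains $\partial B$ when restricted to $\partial A$, and then apply Lemma~\ref{Lem:Contraction} with $m = p-1$. The natural candidate is the metric (nearest-point) projection $\pi_B \,:\, \mathbb{R}^p \rightarrow B$ defined by $\pi_B(x) := \argmin_{y \in B} \|x - y\|_2$, which is well-defined and single-valued because $B$ is closed and convex.

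The first step is to recall that $\pi_B$ is $1$-Lipschitz on $\mathbb{R}^p$; this is a standard fact that follows from the characterisation $y = \pi_B(x)$ iff $\langle x - y, z - y\rangle \leq 0$ for all $z \in B$, together with two applications of Cauchy--Schwarz. Lemma~\ref{Lem:Contraction} then yields $\lambda_{p-1}(\partial A) \geq \lambda_{p-1}\bigl(\pi_B(\partial A)\bigr)$, so it remains only to show that $\pi_B(\partial A) \supseteq \partial B$.

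The second step is the covering claim $\partial B \subseteq \pi_B(\partial A)$. Fix $y \in \partial B$. Since $B$ is convex with non-empty interior and $y \in \partial B$, the supporting hyperplane theorem supplies a unit outer normal $u \in \mathbb{S}^{p-1}$ to $B$ at $y$, meaning $\langle z - y, u\rangle \leq 0$ for all $z \in B$. The characterisation above then gives $\pi_B(y + tu) = y$ for every $t \geq 0$. Because $B \subseteq A$ we have $y \in A$, and because $A$ is compact the ray $\{y + tu \,:\, t \geq 0\}$ must leave $A$: setting $t^* := \sup\{t \geq 0 \,:\, y + tu \in A\} \in [0,\infty)$, the point $x := y + t^* u$ lies in $\partial A$ and satisfies $\pi_B(x) = y$. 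Thus $y \in \pi_B(\partial A)$, as required.

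Combining these two steps, $\lambda_{p-1}(\partial A) \geq \lambda_{p-1}(\pi_B(\partial A)) \geq \lambda_{p-1}(\partial B)$, which is the assertion of the corollary. The only part that requires genuine care is the surjectivity onto $\partial B$; everything else is bookkeeping with the contraction lemma and basic convex-analytic facts. In particular, the main (very mild) subtlety is verifying that $t^*$ is finite, which follows from compactness of $A$, and that the supporting hyperplane step is legitimate, which uses the assumption that $B$ has non-empty interior.
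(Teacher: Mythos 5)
Your proof is correct and follows essentially the same route as the paper: project onto $B$ via the metric projection $\pi_B$, observe it is $1$-Lipschitz, use a supporting hyperplane at $y \in \partial B$ to produce a point $x \in \partial A$ with $\pi_B(x)=y$, and finish with Lemma~\ref{Lem:Contraction}. You streamline slightly by proving only the inclusion $\partial B \subseteq \pi_B(\partial A)$, whereas the paper also verifies the (not actually needed) reverse inclusion $\pi_B(\partial A) \subseteq \partial B$.
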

\begin{proof}
  For any $x \in \mathbb{R}^p$, let $\phi(x)$ denote the Euclidean projection onto $B$. To see that $\phi$ is $1$-Lipschitz, let $x, y \in \mathbb{R}^p$ and let $x' := \phi(x)$ and $y' := \phi(y)$. If $x'=y'$ then certainly $\|x'-y'\|_2 \leq \|x-y\|_2$.  On the other hand, if $x' \neq y'$, then, by the optimality conditions of $x', y'$, 
  \[
  0 \leq (x - x')^\top (x' - y') + (y - y')^\top (y' - x') = (x - y)^\top (x' - y') - \| x' - y'\|_2^2.
  \]
  We may then apply the Cauchy--Schwarz inequality to obtain that $\| x' - y'\|_2 \leq \|x - y\|_2$.

If $x \in \partial A$, then $\phi(x) \in \partial B$ because otherwise we can find a point in $B$ on the line segment between $x$ and $\phi(x)$ that is closer to $x$ than is $\phi(x)$. Thus, $\phi(\partial A) \subseteq \partial B$.  For any $x' \in \partial B$, by the separating hyperplane theorem \citep[e.g.][Theorem~11.6]{rockafellar1997convex}, there exists $\alpha \in \mathbb{R}^p$ and $b \in \mathbb{R}$ such that $\alpha^\top x' + b = 0$ and $\alpha^\top z + b \leq 0$ for all $z \in B$. Since $B \subseteq A$, there exists $x \in \partial A$ such that $x = x' + c \alpha$ for some $c \in [0, \infty)$. Moreover, for any $z \in B$,
  \[
(x-x')^\top (z-x') = c\alpha^\top(z'-x') \leq 0,
  \]
so $\phi(x) = x'$. Thus, $\phi(\partial A) = \partial B$ and the result follows from Lemma~\ref{Lem:Contraction}.
\end{proof}

Recall the definition of $d_{\mathrm{scale}}$ from~\eqref{Eq:dscale}.

\begin{lemma}
  \label{lem:dscale_properties}
We have
  \begin{enumerate}
  \item $d_{\mathrm{scale}}(K', K) = d_{\mathrm{scale}}(K, K')$ for any $K, K' \in \mathcal{K}$.
  \item For any $K, K', K'' \in \mathcal{K}$ such that $d_{\mathrm{scale}}(K, K') < 1$ or $d_{\mathrm{scale}}(K', K'') < 1$, it holds that $d_{\mathrm{scale}}(K, K'') \leq 2 d_{\mathrm{scale}}(K, K') + 2 d_{\mathrm{scale}}(K', K'')$.
  \end{enumerate}
\end{lemma}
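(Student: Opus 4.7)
The plan is to unpack the definition~\eqref{Eq:dscale} directly for both parts; neither step requires more than elementary set manipulations.

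For symmetry, suppose $\epsilon > d_{\mathrm{scale}}(K,K')$. Since the set of valid $\epsilon'$ in the definition is upward-closed (because $0 < \alpha \leq \beta$ implies $\alpha K \subseteq \beta K$ for any $K \in \mathcal{K}$), we have $\frac{1}{1+\epsilon} K \subseteq K' \subseteq (1+\epsilon)K$. Multiplying both inclusions through by $\frac{1}{1+\epsilon}$ (on the left) and by $1+\epsilon$ (on the right) yields $\frac{1}{1+\epsilon} K' \subseteq K \subseteq (1+\epsilon) K'$, i.e.~$d_{\mathrm{scale}}(K',K) \leq \epsilon$. Taking $\epsilon \searrow d_{\mathrm{scale}}(K,K')$ and reversing the roles of $K, K'$ gives the equality in part~1.

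For part~2, write $\delta_1 := d_{\mathrm{scale}}(K,K')$ and $\delta_2 := d_{\mathrm{scale}}(K',K'')$, and fix $\epsilon_1 > \delta_1$, $\epsilon_2 > \delta_2$. By the same upward-closedness argument as above, $\frac{1}{1+\epsilon_1} K \subseteq K' \subseteq (1+\epsilon_1)K$ and $\frac{1}{1+\epsilon_2} K' \subseteq K'' \subseteq (1+\epsilon_2)K'$. Composing these chains and using again the convexity/star-shapedness of $K$ at the origin gives
\[
\frac{1}{(1+\epsilon_1)(1+\epsilon_2)} K \subseteq K'' \subseteq (1+\epsilon_1)(1+\epsilon_2) K,
\]
so $d_{\mathrm{scale}}(K,K'') \leq (1+\epsilon_1)(1+\epsilon_2) - 1 = \epsilon_1 + \epsilon_2 + \epsilon_1\epsilon_2$. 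Letting $\epsilon_i \searrow \delta_i$ yields $d_{\mathrm{scale}}(K,K'') \leq \delta_1 + \delta_2 + \delta_1\delta_2$.

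Finally, the hypothesis that $\delta_1 < 1$ or $\delta_2 < 1$ is used to absorb the cross term: if $\delta_1 < 1$ then $\delta_1\delta_2 \leq \delta_2$, and if $\delta_2 < 1$ then $\delta_1\delta_2 \leq \delta_1$; in either case $\delta_1\delta_2 \leq \delta_1 + \delta_2$, giving $d_{\mathrm{scale}}(K,K'') \leq 2(\delta_1+\delta_2)$ as required. There is no genuine obstacle here; the only care needed is in justifying the passage from the infimum $\delta_i$ to valid $\epsilon_i$ arbitrarily close to it, which is handled by the monotonicity of the inclusion conditions in $\epsilon$.
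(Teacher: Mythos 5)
Your proof is correct and follows essentially the same route as the paper: part~1 by rearranging the defining inclusions, and part~2 by chaining inclusions for $K \subseteq K'$ and $K' \subseteq K''$ to get the multiplicative bound $(1+\epsilon_1)(1+\epsilon_2)-1$ and then using the hypothesis $\delta_1 < 1$ or $\delta_2 < 1$ to absorb the cross term. The only cosmetic difference is that you pass to the limit $\epsilon_i \searrow \delta_i$ before absorbing the cross term, whereas the paper fixes $\epsilon \in (0,1]$ directly and absorbs $\epsilon\epsilon'$ there; both are fine, and your explicit appeal to upward-closedness of the admissible $\epsilon$-set (justified by convexity and $0 \in \mathrm{int}(K)$) is a sensible bit of extra care.
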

\begin{proof}
The first claim follows from the fact that $\frac{1}{1+\epsilon} K \subseteq K'$ if and only if $K \subseteq (1 + \epsilon) K'$.

For the second claim, let $K, K', K'' \in \mathcal{K}$, and suppose without loss of generality that $d_{\mathrm{scale}}(K, K') < 1$.  Then there exist $\epsilon \in (0,1]$ and $\epsilon' > 0$ such that $(1 + \epsilon)^{-1} K \subseteq K' \subseteq (1 + \epsilon) K$ and $(1 + \epsilon')^{-1} K'' \subseteq K' \subseteq (1 + \epsilon') K''$. Since
  \[
    \frac{1}{(1+\epsilon)(1+\epsilon')} K'' \subseteq \frac{1}{1+\epsilon} K' \subseteq K \subseteq (1+\epsilon)K' \subseteq (1+\epsilon)(1+\epsilon') K'',
    \]
    we have that $d_{\mathrm{scale}}(K, K'') \leq \epsilon + \epsilon' + \epsilon \epsilon' \leq 2 \epsilon + 2 \epsilon'$. Since this holds for any $\epsilon > d_{\mathrm{scale}}(K, K')$ and $\epsilon' > d_{\mathrm{scale}}(K', K'')$, the claim follows.
  \end{proof}

\section{Technical lemmas}

  \begin{lemma}
    \label{Lem:VarianceSupRelation}
    Let $h \in \mathcal{F}_1$.  
    \begin{enumerate}[(i)]
    \item There exists a universal constant $c > 0$ such that $\sigma_h \geq c/h(\mu_h)$;
    \item There exists a universal constant $C > 0$ such that $\sigma_h \leq C/\sup_{r \in \mathbb{R}} h(r)$.
    \end{enumerate}
  \end{lemma}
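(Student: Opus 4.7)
The plan is to reduce both inequalities to a standard normalisation and then invoke classical one-dimensional bounds that have already been cited elsewhere in the paper.

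The first step is to observe that both claims are translation invariant (neither $h(\mu_h)$, nor $\sup_r h(r)$, nor $\sigma_h$ changes if we shift $h$ horizontally) and that they scale compatibly: if, for $a > 0$, we set $h_a(r) := a h(a r)$, then $\sigma_{h_a} = \sigma_h/a$, $\mu_{h_a} = \mu_h/a$, and consequently $h_a(\mu_{h_a}) = a h(\mu_h)$ and $\sup_r h_a(r) = a \sup_r h(r)$. It follows that the products $\sigma_h \cdot h(\mu_h)$ and $\sigma_h \cdot \sup_r h(r)$ are both invariant under the two-parameter family $h \mapsto a h(a(\cdot) + b)$. Consequently, we may assume without loss of generality that $\mu_h = 0$ and $\sigma_h = 1$, so that (i) reduces to showing $h(0) \geq c$ for some universal $c > 0$ and (ii) reduces to showing $\sup_r h(r) \leq C$ for some universal $C > 0$.

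For (ii), under the normalisation $\mu_h = 0$ and $\sigma_h = 1$, the envelope bound \citet[][Proposition~S2(iii)]{feng2018multivariate} (already used repeatedly in this appendix) yields $h(r) \leq e^{-|r| + 1}$ for all $r \in \mathbb{R}$, and hence $\sup_r h(r) \leq e$, so the bound (ii) holds with $C = e$.

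For (i), again under the same normalisation, we invoke the lower-bound portion of \citet[][Theorem~5.14(a)]{lovasz2007geometry} (which is used in the same form in the proof of Lemma~\ref{Lem:LogConcaveCentralMass}): any log-concave density $h$ on $\mathbb{R}$ with mean $0$ and variance $1$ satisfies $h(0) \geq 2^{-7}$. Thus (i) holds with $c = 2^{-7}$. There is no significant obstacle in this proof; the only thing to verify is the compatible scaling of the two products, after which both inequalities are immediate consequences of results already cited in the paper.
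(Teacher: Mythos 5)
Your proof is correct and follows essentially the same route as the paper's: normalise to $\mu_h = 0$, $\sigma_h = 1$ (the paper does this by defining $f(r) := \sigma_h h(\sigma_h r + \mu_h)$ rather than arguing scale invariance of the products, but these are the same argument), then invoke the classical one-dimensional bounds. The only differences are cosmetic: for part (ii) you use the $e^{-|r|+1}$ envelope of \citet[Proposition~S2(iii)]{feng2018multivariate} (giving $C = e$) where the paper uses $\|f\|_\infty \leq 2^9$ from \citet[Theorem~5.14(b) and (d)]{lovasz2007geometry}, and for part (i) the lower bound $f(0) \geq 2^{-7}$ is actually Theorem~5.14(d) rather than 5.14(a) (the latter gives the slightly weaker but equally serviceable $f(0) \geq 2^{-8}$); neither discrepancy affects the validity of the argument.
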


  \begin{proof}
(i)  Let $f(r) := \sigma_h h(\sigma_h r + \mu_h)$, so $f$ is a log-concave density with $\mu_f = 0$ and $\sigma_f=1$. Then
    \[
 \sigma_h = \frac{f(0)}{h(\mu_h)} \geq \frac{2^{-7}}{h(\mu_h)},
    \]
where the final inequality follows from \citet[][Theorem~5.14(d)]{lovasz2007geometry}.

(ii) Since $h$ is upper semi-continuous and $h(r) \rightarrow 0$ as $r \rightarrow \pm \infty$, there exists $r_0 \in \mathbb{R}$ such that $h(r_0) = \sup_{r \in \mathbb{R}} h(r)$.  Thus, with $f$ as above,
    \[
\sigma_h \leq \frac{f\bigl((r_0-\mu_h)/\sigma_h\bigr)}{h(r_0)} \leq \frac{\|f\|_\infty}{\sup_{r \in \mathbb{R}} h(r)} \leq \frac{2^9}{\sup_{r \in \mathbb{R}} h(r)},
    \]
by \citet[][Theorem~5.14(b) and (d)]{lovasz2007geometry}.
  \end{proof}

\begin{lemma}
  \label{Lem:DifferentialEntropyBound}
  For any $f \in \mathcal{F}_1$ with $\sigma_f = 1$, we have that
  \[
    -\frac{1}{2} - \frac{1}{2}\log (2\pi) \leq    \int_{-\infty}^\infty f \log f \leq 9 \log 2.
  \]
\end{lemma}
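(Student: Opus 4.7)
The statement decomposes cleanly into two independent bounds, both short.

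For the \textbf{upper bound}, the plan is to observe that for a log-concave density on $\mathbb{R}$ with unit variance, the sup-norm is uniformly bounded. Specifically, Lemma~\ref{Lem:VarianceSupRelation}(ii) (with inspection of its proof, which uses \citet[][Theorem~5.14(b) and (d)]{lovasz2007geometry}) gives $\|f\|_\infty \leq 2^9$ whenever $\sigma_f = 1$. Since $f$ is a density and $\log f \leq \log \|f\|_\infty = 9\log 2$ pointwise wherever $f > 0$, integrating against $f$ yields $\int_{-\infty}^\infty f \log f \leq 9\log 2$.

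For the \textbf{lower bound}, I would invoke the standard Gaussian-maximises-entropy argument: let $\phi_{\mu,\sigma}$ denote the density of $N(\mu_f, 1)$. By nonnegativity of the Kullback--Leibler divergence (Gibbs' inequality),
\[
0 \leq \int_{-\infty}^\infty f \log \frac{f}{\phi_{\mu_f,1}} = \int_{-\infty}^\infty f \log f + \frac{1}{2}\log(2\pi) + \frac{1}{2}\int_{-\infty}^\infty (r-\mu_f)^2 f(r)\, dr,
\]
where the last integral equals $\sigma_f^2 = 1$. Rearranging yields $\int f \log f \geq -\frac{1}{2} - \frac{1}{2}\log(2\pi)$.

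The only subtlety is ensuring the integrals are well-defined. The integral $\int f \log \phi_{\mu_f,1}$ is automatically finite because $\log \phi_{\mu_f,1}(r) = -\frac{1}{2}\log(2\pi) - \frac{1}{2}(r-\mu_f)^2$ and $f$ has finite second moment. For $\int f \log f$, the upper-bound argument above shows $(f \log f)_+$ is integrable, so the integral is well-defined in $[-\infty,\infty)$; Gibbs' inequality then rules out $-\infty$. No genuine obstacle arises; the whole proof is a few lines.
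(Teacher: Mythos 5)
Your proof is correct and follows essentially the same route as the paper: the upper bound via $\|f\|_\infty \leq 2^9$ from \citet[Theorem~5.14(b) and (d)]{lovasz2007geometry}, and the lower bound via Gibbs' inequality against a unit-variance Gaussian. The paper normalises to $\mu_f = 0$ by location invariance while you keep $\mu_f$ explicit, but this is cosmetic; your additional remark on well-definedness of the integrals is a sound and harmless extra.
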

\begin{proof}
By the location invariance of the entropy functional, we may assume without loss of generality that $\mu_f = 0$.  Since $\|f\|_\infty \leq 2^9$ \citep[][Theorem~5.14(b) and (d)]{lovasz2007geometry}, we have
  \begin{align*}
    \int_{-\infty}^\infty f\log f \leq 9 \log 2.
  \end{align*}
Now let $g(x) := (2\pi)^{-1/2}e^{-x^2/2}$, so by the non-negativity of Kullback--Leibler divergence, 
  \begin{align*}
    \int_{-\infty}^\infty f \log f \geq \int_{-\infty}^\infty f \log g = - \frac{1}{2}\int_{-\infty}^\infty x^2 f(x) \, dx - \frac{1}{2}\log (2\pi) = -\frac{1}{2} - \frac{1}{2}\log (2\pi),
  \end{align*}
as required.
\end{proof}
The next lemma provides basic properties of the function $\rho$ defined in~\eqref{Eq:RhoDefinition}.
\begin{lemma}
  \label{Lem:RhoProperties}
  For any $p \in \mathbb{N}$ and $a \geq 0$, we have
  \begin{enumerate}[(i)]
        \item $\rho(s)$ is increasing;
        \item $\frac{\rho(s)}{s}$ is decreasing;
        \item $\rho(s) \in [1,p]$ for all $s \in (0,\infty)$.
    \end{enumerate}
\end{lemma}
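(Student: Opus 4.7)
The plan is to introduce the substitution $x := a/(a+s) \in [0,1)$ (for $s \in (0,\infty)$), which dramatically simplifies $\rho$. Using the identities $(a+s) = a/x$ and $s = a(1-x)/x$ when $a>0$, together with the factorisation $1 - x^p = (1-x)(1 + x + \cdots + x^{p-1})$, a direct computation yields the clean representation
\[
\rho(s) \;=\; \frac{p(1-x)}{1-x^p} \;=\; \frac{p}{1 + x + x^2 + \cdots + x^{p-1}}.
\]
The degenerate case $a = 0$ (where $x = 0$) gives $\rho(s) \equiv p$, also consistent with this formula.

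From this representation, parts (i) and (iii) follow essentially for free. The map $s \mapsto x = a/(a+s)$ is (weakly) decreasing from $1^-$ (or $0$, if $a=0$) down to $0$, while $x \mapsto 1 + x + \cdots + x^{p-1}$ is increasing on $[0,1)$ and takes values in $[1, p)$, with value $1$ at $x = 0$ and limit $p$ as $x \to 1^-$. Composing, $s \mapsto \rho(s)$ is increasing (proving (i)), and the denominator lies in $[1, p]$, so $\rho(s) \in [1, p]$ (proving (iii)).

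For part (ii), we bypass the substitution and work directly with
\[
\frac{\rho(s)}{s} \;=\; \frac{p(a+s)^{p-1}}{(a+s)^p - a^p}.
\]
Setting $u = a+s$, it suffices to show that $u \mapsto u^{p-1}/(u^p - a^p)$ is decreasing on $(a,\infty)$. A straightforward quotient-rule differentiation gives
\[
\frac{d}{du}\frac{u^{p-1}}{u^p - a^p} \;=\; -\,\frac{u^{p-2}\bigl\{u^p + (p-1)a^p\bigr\}}{(u^p - a^p)^2} \;\leq\; 0,
\]
which is manifestly non-positive, establishing (ii). There is no genuine obstacle here; the whole lemma is an elementary algebraic/calculus exercise, and the only mild subtlety is the uniform treatment of the $a = 0$ case, which is handled transparently by the substitution above.
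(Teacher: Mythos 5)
Your proof is correct and follows essentially the same route as the paper's: a change of variable to $x = a/(a+s)$ (the paper uses the reciprocal $\alpha = (a+s)/a$) that reduces $\rho$ to an elementary rational function, with the $a=0$ case handled separately. Your geometric-series factorisation $\rho = p/(1+x+\cdots+x^{p-1})$ is a slightly cleaner way to read off (i) and (iii), and your explicit quotient-rule computation for (ii) makes precise what the paper merely asserts, but there is no substantive difference in the argument.
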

\begin{proof}
$(i)$ If $a > 0$, define $\alpha := (a+s)/a \in (1,\infty)$, an increasing function of $s$.  Then
  \begin{align*}
    \rho(s) & =  \frac{(a + s)^{p-1} p s}{(a + s)^p - a^p} = \frac{ \alpha^{p-1} p (\alpha - 1)}{\alpha^p - 1} 
              = p \biggl( 1 - \frac{\alpha^{p-1} - 1}{\alpha^p - 1} \biggr),
  \end{align*}
which is increasing in $\alpha$, as required.  If $a=0$, then $\rho(s) = p$, so the claim is also true.

$(ii)$ If $a > 0$, then
  \begin{align*}
    \frac{\rho(s)}{s} = \frac{p}{a} \frac{\alpha^{p-1} }{\alpha^p - 1},
  \end{align*}
which is decreasing in $\alpha$. If $a = 0$, then $\rho(s)/s = p/s$ and the claim also follows.

$(iii)$ When $a=0$, the result follows because $\rho(s) = p$ for all $s \in (0,\infty)$.  When $a > 0$, the lower bound follows from $(i)$ and the fact that $\lim_{s \searrow 0} \rho(s) = 1$, while the upper bound follows from $(i)$ and the fact that $\lim_{s \rightarrow \infty} \rho(s) = p$.
%
\end{proof}

\begin{lemma}
  \label{Lem:EvilInequality}
  For any $p \geq 1$ and $r_0 > a \geq 0$, we have that
  \[
    r_0 \leq \frac{p}{p+1} \frac{r_0^{p+1} - a^{p+1}}{r_0^p - a^p}
    + \frac{r_0^p - a^p}{r_0^{p-1}  p}.
  \]
\end{lemma}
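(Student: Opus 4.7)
The plan is to reduce to a one-variable inequality via the substitution $t := a/r_0 \in [0, 1)$. Dividing both sides by $r_0$ gives
\[
1 \leq \frac{p}{p+1}\cdot\frac{1-t^{p+1}}{1-t^p} + \frac{1-t^p}{p},
\]
and multiplying through by the positive quantity $p(p+1)(1-t^p)$ and expanding will show that the claim is equivalent to $G(t) \geq 0$ on $[0,1]$, where
\[
G(t) := 1 + (p+1)(p-2)\,t^p - p^2\, t^{p+1} + (p+1)\, t^{2p}.
\]
A direct computation gives $G(1) = 0$, so the task reduces to showing that $t = 1$ is a global minimiser of $G$ on $[0, 1]$.

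The core of the argument will be a monotonicity analysis. Differentiating yields
\[
G'(t) = p(p+1)\, t^{p-1}\, \psi(t), \qquad \psi(t) := (p - 2) - p\, t + 2\, t^p,
\]
so on $(0, 1)$ the sign of $G'$ is governed by $\psi$. Since $\psi(1) = 0$ and $\psi'(t) = p(2 t^{p-1} - 1)$, for $p \geq 2$ the function $\psi$ is strictly decreasing on $[0, t_*]$ and strictly increasing on $[t_*, 1]$, where $t_* := 2^{-1/(p-1)} \in (0, 1)$; in particular $\psi(t_*) < 0$. The case $p = 1$ reduces to $\psi(t) = t - 1 \leq 0$ and is dispatched in one line.

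It will then suffice to split on the sign of $\psi(0) = p - 2$. If $p \in \{1, 2\}$, then $\psi(0) \leq 0$ combines with $\psi(t_*) < 0$ and the monotonicity of $\psi$ to give $\psi \leq 0$ throughout $[0, 1]$; thus $G$ is non-increasing and $G(t) \geq G(1) = 0$. If $p \geq 3$, then $\psi(0) = p - 2 > 0$, and continuity together with the unimodal shape of $\psi$ produces a unique root $t_1 \in (0, t_*)$, with $\psi > 0$ on $[0, t_1)$ and $\psi < 0$ on $(t_1, 1)$; hence $G$ is non-decreasing on $[0, t_1]$ and non-increasing on $[t_1, 1]$, so its minimum on $[0, 1]$ is attained at an endpoint, and $\min\{G(0), G(1)\} = \min\{1, 0\} = 0$. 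In every case $G \geq 0$, which is the claim.

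The only delicate step is tracking the shape of $\psi$ when $p \geq 3$, but the unimodal profile on $[0, 1]$ together with the boundary values $\psi(0) > 0$ and $\psi(1) = 0$ pins down the sign pattern completely, so no genuine obstacle arises; the bookkeeping in the expansion of $G$ is the most laborious ingredient.
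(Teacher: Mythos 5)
Your proof is correct, and it takes a genuinely different route from the paper's. The paper substitutes $x := 1 - (a/r_0)^p$ and reduces the claim to showing that $t_p(x) := \{1-(1-x)^{(p+1)/p}\}/x$ is concave on $(0,1)$ for $p \geq 2$, which it verifies by a second-derivative computation; the inequality then follows from concavity plus the boundary values at $x \to 0^+$ and $x = 1$. You instead substitute $t := a/r_0$, clear the denominator to obtain a polynomial $G(t) = 1 + (p+1)(p-2)t^p - p^2 t^{p+1} + (p+1)t^{2p}$, and run a first-derivative sign analysis: the factorisation $G'(t) = p(p+1)t^{p-1}\psi(t)$ with $\psi(t) = (p-2) - pt + 2t^p$ exposes a unimodal function with $\psi(1)=0$, and splitting on the sign of $\psi(0) = p-2$ determines the monotonicity profile of $G$ completely, yielding $\min_{[0,1]} G = \min\{G(0),G(1)\} = 0$. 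Your route is arguably more elementary: it works entirely with a polynomial, avoids the fractional-power expression and the second-derivative estimate that the paper needs (the paper's bound $\{1 - (1-x)^{1/p}\}(1-x)^{1-1/p} < x/p - (p-1)x^2/(2p^2)$ is the delicate step there), and the case split is entirely transparent. Both proofs are valid; yours is self-contained and, if anything, easier to audit. One cosmetic remark: since the lemma is stated for real $p \geq 1$ while your case split is written as $p \in \{1,2\}$ versus $p \geq 3$, it would be cleaner to split on $p \leq 2$ versus $p > 2$ (i.e., on the sign of $\psi(0)$); the argument is unchanged, and this also covers non-integer $p$ in $(1,2)$ and $(2,3)$ without comment.
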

\begin{proof}
Writing $x := 1 - (a/r_0)^p$, we are required to prove that for $x \in (0,1]$,
  \begin{align}
    1 \leq \frac{p}{p+1} \frac{1 - (1-x)^{(p+1)/p}}{x} + \frac{x}{p}.
  \end{align}
The inequality holds for sufficiently small $x > 0$ and at $x = 1$, and also when $p=1$. To finish the proof, it suffices to show that $t_p(x) := \frac{1 - (1 - x)^{(p+1)/p}}{x}$ is concave on $(0,1)$ when $p \geq 2$.  But for $x \in (0,1)$, 
\begin{align*}
p^2(1-x)^{1-1/p}x^3 t_p''(x) &= 2p^2\bigl\{1 - (1-x)^{1/p}\}(1-x)^{1-1/p} - p(2-x)x - x^2 \\
&< 2p^2\biggl\{\frac{x}{p} - \frac{(p-1)x^2}{2p^2}\biggr\} - p(2-x)x - x^2 = 0,
\end{align*}
as required.
\end{proof}

\begin{lemma}
  \label{lem:linearize_power}
  Let $p \geq 1$ and $x \in [-1/p, 1/p]$. Then
  \[
    | (1 + x)^p - (1 + px) | \leq \frac{e}{2}(px)^2.
  \]
\end{lemma}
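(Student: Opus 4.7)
\medskip

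\noindent\emph{Plan of proof.} My plan is to apply Taylor's theorem with Lagrange remainder to the $C^2$ function $f(t) := (1+t)^p$ on an interval containing $0$ and $x$, noting that $f'(t) = p(1+t)^{p-1}$ and $f''(t) = p(p-1)(1+t)^{p-2}$. For any $x \in [-1/p, 1/p]$, there will exist $\xi$ between $0$ and $x$ (so in particular $|\xi| \leq 1/p$) such that
\[
(1+x)^p \;=\; 1 + px + \tfrac{1}{2}\,p(p-1)(1+\xi)^{p-2}\,x^2.
\]
Since $p \geq 1$ gives $p(p-1) \geq 0$, the remainder is nonnegative, and it suffices to establish the pointwise bound $p(p-1)(1+\xi)^{p-2} \leq ep^2$ uniformly in $\xi \in [-1/p, 1/p]$.

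The monotonicity of $\xi \mapsto (1+\xi)^{p-2}$ depends on the sign of $p-2$, so I would split into two cases. When $p \geq 2$, the function is nondecreasing, so its supremum on $[-1/p, 1/p]$ is $(1+1/p)^{p-2} \leq (1+1/p)^p$; using the standard fact that $x \mapsto (1+1/x)^x$ is strictly increasing on $(0,\infty)$ with limit $e$, this is bounded above by $e$. Then $p(p-1)(1+\xi)^{p-2} \leq ep(p-1) \leq ep^2$, as required.

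The second case is $1 \leq p < 2$. At $p=1$ the factor $p(p-1)$ vanishes and there is nothing to show, so suppose $p \in (1,2)$. Now $\xi \mapsto (1+\xi)^{p-2}$ is nonincreasing on $[-1/p,1/p]$, so the supremum is attained at $\xi = -1/p$, giving $(1+\xi)^{p-2} \leq \bigl(\tfrac{p-1}{p}\bigr)^{p-2}$. Consequently
\[
p(p-1)(1+\xi)^{p-2} \;\leq\; p^{3-p}(p-1)^{p-1},
\]
so the desired bound $\leq ep^2$ reduces after simplification to $\bigl(\tfrac{p-1}{p}\bigr)^{p-1} \leq e$, which is immediate since $(p-1)/p \in (0,1)$ and $p - 1 > 0$ give a quantity strictly less than $1 \leq e$.

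The argument is essentially routine; the only minor subtlety is the sub-case $1 < p < 2$, where the two factors $p-1$ and $(1+\xi)^{p-2}$ move in opposite directions as $p$ varies and one has to check that their product remains controlled, which is handled cleanly by the substitution above.
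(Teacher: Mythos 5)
Your proof takes the same basic route as the paper's (Taylor with Lagrange remainder, then bound the second-order coefficient), but yours is more careful and in fact repairs a small gap in the paper's one-line justification. The paper writes the same expansion $(1+x)^p = 1+px + \tfrac12 p(p-1)(1+\bar{x})^{p-2}x^2$ and then asserts simply that $(1+\bar{x})^{p-2} \leq e$. That inequality is correct for integer $p \geq 1$ (and for all $p \geq 2$), but it is \emph{false} for non-integer $p \in (1,2)$ with $\bar{x}$ near $-1/p$: e.g.\ $p = 1.1$, $\bar{x} = -1/1.1$ gives $(1+\bar{x})^{p-2} \approx 8.7 > e$. The lemma itself still holds in that regime because the factor $p-1$ is small, which is exactly what your split into the cases $p \geq 2$ and $1 < p < 2$ exploits: you track the product $p(p-1)(1+\xi)^{p-2}$ directly, reduce the hard sub-case to $\bigl((p-1)/p\bigr)^{p-1} \leq e$, and observe that this is trivially at most $1$. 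So your argument is correct, complete as stated for all real $p \geq 1$, and strictly tighter than what the paper writes; the paper's proof only goes through because in its applications $p$ is a (dimension) integer.
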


\begin{proof}
  By Taylor's theorem, there exists $\bar{x}$ on the line segment joining $0$ and $x$ such that
  \[
    (1 + x)^p = 1 + px + (1 + \bar{x})^{p-2} \frac{p (p-1)}{2} x^2.
  \]
  The lemma follows by noting that $(1 + \bar{x})^{p-2} \leq e$.
\end{proof}

\newpage

\bibliographystyle{dcu}
\bibliography{refs}

\end{document}